\newcommand\reallywidehat[1]{%
\savestack{\tmpbox}{\stretchto{%
  \scaleto{%
    \scalerel*[\widthof{\ensuremath{#1}}]{\kern.1pt\mathchar"0362\kern.1pt}%
    {\rule{0ex}{\textheight}}
  }{\textheight}%
}{2.4ex}}%
\stackon[-6.9pt]{#1}{\tmpbox}%
}
\newcommand{\sV}{{\tt V}}
\newcommand{\Id}{\mathrm{Id}}
\newcommand{\D}{{\mathbb D}}
\newcommand{\Out}{\mathop{{\rm Out}}\nolimits}
\renewcommand{\phi}{\varphi}
\newcommand\fM {\mathfrak M}
\newcommand\Han {\mathrm{Han}(\C_+)}
\newcommand\HanP {\mathrm{Han}(\C_+)_+}
\newcommand\CM {\mathcal{CM}\left(\R_+\right)}
\renewcommand{\phi}{\varphi}
\renewcommand{\epsilon}{\varepsilon}
\newcommand*{\fancyrefproplabelprefix}{prop}
\newcommand*{\frefpropname}{Proposition}
\newcommand*{\fancyrefthmlabelprefix}{thm}
\newcommand*{\frefthmname}{Theorem}
\newcommand*{\fancyreflemlabelprefix}{lem}
\newcommand*{\freflemname}{Lemma}
\newcommand*{\fancyrefcorlabelprefix}{cor}
\newcommand*{\frefcorname}{Corollary}
\renewcommand*{\fancyrefeqlabelprefix}{eq}
\newcommand*{\fancyrefproblabelprefix}{prob}
\newcommand*{\frefprobname}{Problem}
\newcommand*{\fancyrefdeflabelprefix}{def}
\newcommand*{\frefdefname}{Definition}
\newcommand*{\fancyrefapplabelprefix}{app}
\newcommand*{\frefappname}{Appendix}
\renewcommand*{\fancyrefseclabelprefix}{sec}
\renewcommand*{\frefsecname}{Section}
\newcommand*{\fancyrefconlabelprefix}{con}
\newcommand*{\frefconname}{Conjecture}
\newcommand*{\fancyrefremlabelprefix}{rem}
\newcommand*{\frefremname}{Remark}
\newcommand*{\fancyrefexlabelprefix}{ex}
\newcommand*{\frefexname}{Example}
\DeclarePairedDelimiterX\braket[2]{\langle}{\rangle}{#1 \delimsize\vert #2}
\begin{document} 
\renewcommand\thepage{}


\title{\textbf{Regular one-parameter groups,\\reflection positivity and their application to
\\Hankel operators and standard subspaces
\\~
\\~
\\Reguläre Einparameter-Gruppen,\\Reflexionspositivität und deren Anwendung auf
\\Hankel-Operatoren und Standard-Unterräume
}}
\date{}
\author{}

\maketitle

{\center\Large{~
\\~
\\~
\\Der Naturwissenschaftlichen Fakultät
\\der Friedrich-Alexander-Universität
\\Erlangen-Nürnberg
\\zur
\\Erlangung des Doktorgrades Dr. rer. nat.
\\~
\\~
\\~
\\~
\\~
\\vorgelegt von
\\\textbf{Jonas Schober}
\\~}}

\newpage
{\Large{~
\\Als Dissertation genehmigt von der Naturwissenschaftlichen Fakultät der Friedrich-Alexander-Universität Erlangen-Nürnberg.
\vspace{15cm}
\\Tag der mündlichen Prüfung: 2. August 2023
\vspace{1cm}
\\Gutachter: Prof. Dr. Karl-Hermann Neeb
\\\hspace*{2.53cm}Prof. Dr. Gandalf Lechner
\\\hspace*{2.53cm}Prof. Dr. Yoh Tanimoto
}}

\newpage
\setcounter{page}{1}
\renewcommand\thepage{\roman{page}}
\section*{Zusammenfassung}
Standard-Unterräume sind viel studierte Objekte in der algebraischen Quantenfeldtheorie (AQFT). Gegeben einen Standard-Unterraum \(\sV\) in einem Hilbertraum \(\cH\), ist man interessiert an unitären Einparameter-Gruppen auf \(\cH\) für welche \(U_t \sV \subeq \sV\) für alle \(t \in \R_+\) gilt. Falls \((\sV,U)\) ein nicht-degeneriertes Standard-Paar auf \(\cH\) ist, d.h. der selbst\-adjungierte infinitesimale Erzeuger von \(U\) ist ein positiver Operator mit trivialem Kern, haben wir zwei klassische Resultate mit dem Satz von Borchers, welcher nicht-degenerierte Standard-Paare mit Darstellungen der affinen Gruppe \(\mathrm{Aff}(\R)\) mit positiver Energie in Verbindung setzt und dem Satz von Longo--Witten, laut welchem die Halbgruppe der unitären Endomorphismen von \(\sV\) mit der Halbgruppe der symmetrischen operatorwertigen inneren Funktionen auf der oberen Halbebene identifiziert werden kann.

In dieser Arbeit beweisen wir Sätze ähnlich denen von Borchers und Longo--Witten auch für allgemeinere Fälle von unitären Einparameter-Gruppen ohne die Annahme, dass deren infinitesimaler Erzeuger positiv ist. Wir wollen diese Annahme durch die schwächere Annahme ersetzen, dass das Tripel \((\cH,\sV,U)\) eine sogenannte reelle reguläre Einparameter-Gruppe ist.

Nach Bereitstellung der grundlegenden Theorie von regulären Einparameter-Gruppen, werden wir reguläre Einparameter-Gruppen untersuchen, die infinitesimal erzeugt werden von operator\-wertigen Pick-Funktionen. Wir werden Kriterien bereit\-stellen, um zu prüfen, wann eine Pick-Funktion eine reguläre Einparameter-Gruppe generiert und in diesem Fall werden wir Formeln für den Multiplizi\-täten\-raum der regulären Einparameter-Gruppe bereitstellen und zeigen, dass die Dimension des Multiplizitätenraumes mit der Komposition von Pick-Funktionen verträglich ist.

Wir werden auch reflexionspositive reguläre Einparameter-Gruppen untersuchen und eine Normalform für diese angeben. Wir tun dies, indem wir sie in Verbindung setzen mit reflexionspositiven Hilberträumen der Form \((L^2(\R,\cK),H^2(\C_+,\cK),\theta_h)\) mit einem komplexen Hilbertraum \(\cK\) und einer Funktion \(h \in L^\infty(\R,\U(\cK))\), wobei die Involution \(\theta_h\) auf \(L^2(\R,\cK)\) durch \({(\theta_h f)(x) = h(x) \cdot f(-x)}\), \(x \in \R\), gegeben ist. In dem multiplizitätsfreien Fall \(\cK = \R\) geben wir eine vollständige Klassifikation aller Funktionen \({h \in L^\infty(\R,\T)}\), für welche das Tripel \((L^2(\R,\C),H^2(\C_+),\theta_h)\) ein maxi\-maler reflexions\-positiver Hilbertraum ist. Außerdem geben wir eine explizite Beschreibung der Osterwalder--Schrader--Transformation dieser reflexionspositiven Hilberträume.

Wir werden diese Resultate anwenden, um zu beweisen, dass jeder positive kontraktive Hankel-Operator auf \(H^2(\C_+)\) zu einer Involution \(\theta_h\) auf \(L^2(\R,\C)\) erweitert werden kann, für welche das Tripel \((L^2(\R,\C),H^2(\C_+),\theta_h)\) ein maximaler reflexionspositiver Hilbertraum ist.

Abschließend wenden wir unsere Resultate über (reflexionspositive) reguläre Einparameter-Gruppen auf den Kontext von Standard-Unterräumen an und benutzen sie, um Analoga für den Satz von Borchers und den Satz von Longo--Witten auch für solche Paare \((\sV,U)\) zu beweisen, für welche das Tripel \((\cH,\sV,U)\) eine reelle reguläre Einparameter-Gruppe ist bzw. für welche das Quadrupel \((\cH,\sV,U,J_\sV)\) eine reelle reflexionspositive reguläre Einparameter-Gruppe ist.

\newpage
\section*{Abstract}
Standard subspaces are a well-studied object in algebraic quantum field theory (AQFT). Given a standard subspace \(\sV\) of a Hilbert space \(\cH\), one is interested in unitary one-parameter groups on \(\cH\) with \(U_t \sV \subeq \sV\) for every \(t \in \R_+\). If \((\sV,U)\) is a non-degenerate standard pair on \(\cH\), i.e. the self-adjoint infinitesimal generator of \(U\) is a positive operator with trivial kernel, two classical results are given by Borchers' Theorem, relating non-degenerate standard pairs to positive energy representations of the affine group \(\mathrm{Aff}(\R)\) and the Longo--Witten Theorem, stating that the semigroup of unitary endomorphisms of \(\sV\) can be identified with the semigroup of symmetric operator-valued inner functions on the upper half-plane.

In this thesis, we prove results similar to the theorems of Borchers and of Longo--Witten for a more general framework of unitary one-parameter groups without the assumption that their infinitesimal generator is positive. We replace this assumption by the weaker assumption that the triple \((\cH,\sV,U)\) is a so-called real regular one-parameter group.

After providing some basic theory for regular one-parameter groups, we investigate regular one-parameter groups that are infinitesimally generated by operator-valued Pick functions. We provide criteria for a Pick function to generate a regular one-parameter group, and in this case, we provide a formula for the multiplicity space of the regular one-parameter group and show that the dimension of the multiplicity space is compatible with composition of Pick functions.

We also investigate reflection positive regular one-parameter groups and provide a normal form for them. We do this by linking them to reflection positive Hilbert spaces of the form \((L^2(\R,\cK),H^2(\C_+,\cK),\theta_h)\) with some complex Hilbert space \(\cK\) and some function \(h \in L^\infty(\R,\U(\cK))\), where the involution \(\theta_h\) on \(L^2(\R,\cK)\) is given by \({(\theta_h f)(x) = h(x) \cdot f(-x)}\), \(x \in \R\). In the multi\-plicity free case \(\cK = \R\) we give a full classification of all functions \({h \in L^\infty(\R,\T)}\) for which the triple \((L^2(\R,\C),H^2(\C_+),\theta_h)\) is a maximal reflection positive Hilbert space. Also, we give an explicit description of the Osterwalder--Schrader transform of these reflection positive Hilbert spaces.

We apply these results to prove that every positive contractive Hankel operator on \(H^2(\C_+)\) can be extended to an involution \(\theta_h\) on \(L^2(\R,\C)\) for which the triple \((L^2(\R,\C),H^2(\C_+),\theta_h)\) is a maximal reflection positive Hilbert space.

Finally, we apply our results about (reflection positive) regular one-parameter groups in the context of standard subspaces and use them to provide analogs of Borchers' Theorem and the Longo--Witten Theorem to pairs \((\sV,U)\) for which the triple \((\cH,\sV,U)\) is a real regular one-parameter group or for which the quadruple \((\cH,\sV,U,J_\sV)\) is a real reflection positive regular one-parameter group respectively.

\newpage
\section*{Acknowledgment}
First of all, I want to thank my supervisor Prof. Dr. Karl-Hermann Neeb for the great support he gave me throughout the years. He was always there to answer my questions, point out references, or discuss new ideas. I want to thank him for accompanying me from the beginning of my studies of mathematics at the university until the end of this PhD project.

~

I also want to thank Tobias Simon for always being there to discuss and explain mathematical concepts and ideas that we encountered in courses or seminaries we visited together. Also a big thanks to him for proofreading this thesis.

~

Further, I want to thank my mother Dagmar Barbara Schober for always being there for me in the last 25 years and supporting me in everything I did, both in my personal and professional life. I am also deeply grateful to her for always spell-checking my mathematical texts -- including this one -- even though she understands very little about mathematics.

~

Another big thank you goes to my father Alexander Schober, who always encouraged me to proceed a scientific path. Without him, I would probably not have studied math and without him, I would not be at this point in life.

~

Finally, I want to thank my wife Amy Gómez Mederos for being an immense support to me. I know that especially this last period of time of me writing this thesis demanded many sacrifices from her and I am very grateful that anyhow she supported me in every way possible.

\newpage
~
\newpage
\tableofcontents

\newpage
\section*{Notation}

\subsection*{Sets}
\begin{itemize}
\item We set \(\gls*{R+} \coloneqq \left(0,\infty\right)\) and \(\R_{\geq 0} \coloneqq \left[0,\infty\right)\) and define \(\R_- \coloneqq -\R_+\) and \(\R_{\leq 0} \coloneqq -\R_{\geq 0}\).
\item We set \(\R^\times \coloneqq \R \setminus \{0\}\) and \(\C^\times \coloneqq \C \setminus \{0\}\).
\item We define the complex half-planes
\begin{equation*}
\gls*{C+} \coloneqq \left\lbrace z \in \C : \mathrm{Im}\left(z\right) > 0\right\rbrace, \quad \C_r \coloneqq \left\lbrace z \in \C : \mathrm{Re}\left(z\right) > 0\right\rbrace
\end{equation*}
and
\begin{equation*}
\C_- \coloneqq -\C_+, \quad \C_l \coloneqq -\C_r.
\end{equation*}
\item We set
\[\gls*{Torus} \coloneqq \{z \in \C: |z| = 1\} \quad \text{and} \quad \gls*{D} \coloneqq \{z \in \C: |z| < 1\}.\]
\item By \(\gls*{C8}\) we denote the Riemann sphere.
\item For any open subset \(U \subseteq \C_\infty\) we denote by \(\gls*{O}\left(U\right)\) the set of holomorphic functions on \(U\).
\item For a set \(X\) and an involution \(\theta: X \to X\), we define
\[X^\theta \coloneqq \{x \in X: \theta x = x\}.\]
\item For an algebra \(\cA\) and a subset \(E \subeq \cA\) we define the \textit{commutant} \(\gls*{E'}\) by
\[E' \coloneqq \{a \in \cA: (\forall b \in E) \,ab=ba\}.\]
\end{itemize}

\subsection*{Hilbert spaces}
\begin{itemize}
\item We denote scalar products in a Hilbert space by \(\gls*{ZZZbraket}\), being linear in the second argument.
\item If not mentioned otherwise, we assume that all occuring Hilbert spaces are separable.
\item For a Hilbert space \(\cH\) we denote by \(\gls*{BH}\) the set of bounded linear operators on \(\cH\), by \(\gls*{SH}\) the set of self-adjoint linear operators on \(\cH\) and by \(S(\cH)_+\) the set of positive linear operators on~\(\cH\).
\item For a Hilbert space \(\cH\) and a closed subspace \(\cK \subeq \cH\) we denote by \(\gls*{PK} \in B(\cH)\) the orthogonal projection onto \(\cK\) and by \(\gls*{pK}: \cH \to \cK\) the partial isometry with
\[p_\cK\big|_\cK = \id_\cK \quad \text{and} \quad p_\cK\big|_{\cK^\perp} = 0.\]
\item For a complex Hilbert space \(\cH\) and a one-parameter (semi)group \(U: \R_{(\geq 0)} \to B(\cH)\), we write \(\gls*{DU}\) for the unbounded operator und \(\cH\) defined on
\[\cD(\partial U) \coloneqq \left\{v \in \cH: \lim_{t \to 0} \frac{1}{it}(U_t-\textbf{1})v \text{\,\,exists}\right\}\]
by
\[\partial U v = \lim_{t \to 0} \frac{1}{it}(U_t-\textbf{1})v\]
(cf. \cite[Def. 5.8]{Ne17}).
\end{itemize}

\subsection*{\(\gls*{Lp}\)-spaces}
\begin{itemize}
\item For \(1 \leq p \leq \infty\) by \(\cL^p\left(\Omega,\mathbb{K},\mu\right)\) we denote the vector space of \mbox{\(p\)-integrable} functions with values in the field \(\mathbb{K}=\R,\C\) on the measurable space \(\Omega\) with respect to the measure \(\mu\). Further, we write \(L^p(\Omega,\K,\mu)\) for the corresponding Banach space of equivalence classes of such functions with respect to the equivalence relation given by
\[f \sim g \qquad :\Leftrightarrow \qquad \left(\exists E \subeq \Omega\right) : \left(\mu(E) = 0 \quad \text{and} \quad \forall x \in \Omega \setminus E : f(x) = g(x)\right).\]
\item For \(n \in \N\), we denote the Lebesgue-measure on \(\R^n\) by \(\gls*{ln}\).
\item For \(\Omega \subseteq \R^n\) and \(1 \leq p \leq \infty\) we set
\[L^p(\Omega,\K) \coloneqq L^p\left(\Omega,\K,\lambda_n\big|_\Omega\right).\]
\item For a Hilbert space \(\cH\) over a field \(\mathbb{K}=\R,\C\), we define the Hilbert space
\[L^2(\Omega,\cH,\mu) \coloneqq L^2(\Omega,\mathbb{K},\mu) \,\hat{\otimes}\, \cH.\]
Interpreting \(L^2(\Omega,\cH,\mu)\) as \(\cH\)-valued, almost everywhere defined functions on \(\R\) with
\[(f \otimes v)(x) = f(x) \cdot v, \quad f \in L^2(\Omega,\mathbb{K},\mu), v \in \cH, x \in \R,\]
the scalar product on \(L^2(\Omega,\cH,\mu)\) is given by
\[\braket*{f}{g} \coloneqq \int_\Omega \braket*{f(x)}{g(x)}_\cH\,dx.\]
\item For a Hilbert space \(\cH\) over a field \(\mathbb{K} = \R,\C\) and \(g \in L^\infty(\Omega,\mathbb{K},\mu)\), we define the multiplication operator \(\gls*{Mg} \in B(L^2(\Omega,\cH,\mu))\) by
\[(M_g f)(x) = g(x) \cdot f(x), \quad x \in \R.\]
\item Let \(\cH\) be a Hilbert space over a field \(\mathbb{K}=\R,\C\) and let \((\Omega,\mu)\) be a \(\sigma\)-finite measure space. We call a function \(f: \Omega \to B(\cH)\) \textit{weak-\(*\)-measurable} if, for every trace-class operator \(T \in B_1(\cH)\), the function
\[f_T: \Omega \to \K, \quad x \mapsto \mathrm{Tr}(Tf(x))\]
is measurable. We write \(\cL^\infty(\Omega,B(\cH),\mu)\) for the set of bounded weak-\(*\)-measurable functions \(f: \Omega \to B(\cH)\). Further, we write \(L^\infty(\Omega,B(\cH),\mu)\) for the corresponding Banach space of equivalence classes of such functions with respect to the equivalence relation given by
\[f \sim g \qquad :\Leftrightarrow \qquad \left(\exists E \subeq \Omega\right) : \left(\mu(E) = 0 \quad \text{and} \quad \forall x \in \Omega \setminus E : f(x) = g(x)\right).\]
For \(g \in L^\infty(\Omega,B(\cH),\mu)\) we define the multiplication operator \(M_g \in B(L^2(\Omega,\cH,\mu))\) by
\[(M_g f)(x) = g(x) \cdot f(x), \quad x \in \R.\]
\end{itemize}

\subsection*{Complexification}
Let \(\cH\) be a real Hilbert space.
\begin{itemize}
\item We denote the \textit{complexification} of \(\cH\) by
\[\cH_\C \coloneqq \C \otimes_\R \cH.\]
\item By \(\cC_\cH\) we denote the complex conjugation
\[\gls*{CH}: \cH_\C \to \cH_\C, \quad z \otimes v \mapsto \overline{z} \otimes v.\]
\item We will identify \(\cH\) with the real subspace \(\cH_\C^{\cC_\cH} = \R \otimes_\R \cH\). Under this identification, we have \(\cH_\C = \cH + i\cH\).
\item For an operator \(A \in B(\cH)\), by \(\gls*{AC} \in B(\cH_\C)\), we denote the operator with
\[A_\C(x+iy) = Ax + iAy \quad \forall x,y \in \cH.\]
\end{itemize}

\subsection*{Tensor products}
\begin{itemize}
\item For two Hilbert spaces \(\cH,\cK\) we define the \textit{tensor product Hilbert space}
\[\cH \,\hat{\gls*{ZZZtp}}\, \cK\]
as the completion of the vector space \(\cH \otimes \cK\) with respect to the sesquilinear map on \(\cH \otimes \cK\) defined by
\[\braket*{a \otimes b}{v \otimes w} \coloneqq \braket*{a}{v}_\cH \cdot \braket*{b}{w}_\cK,\]
where \(\braket*{\cdot}{\cdot}_\cH\) denotes the scalar product on \(\cH\) and \(\braket*{\cdot}{\cdot}_\cK\) the scalar product on \(\cK\) respectively (cf. \cite[Sec. 1.20]{Sa71}, \cite[Def. 4.1]{Ne17}).
\item For two Hilbert spaces \(\cH,\cK\) we consider the inclusion
\[\iota: B(\cH) \otimes B(\cK) \to B(\cH \,\hat{\otimes}\, \cK)\]
with
\[\iota(A \otimes B)(v \otimes w) = (Av) \otimes (Bw)\]
(cf. \cite[Sec. 1.20]{Sa71}, \cite[Lem. 4.3]{Ne17}). Now, given two von Neumann algebras \(\cM \subeq B(\cH)\) and \(\cN \subeq B(\cK)\), we define their tensor product by
\[\cM \overline{\otimes} \cN \coloneqq \iota(\cM \otimes \cN)''.\]
\end{itemize}

\newpage
\setcounter{page}{1}
\renewcommand\thepage{\arabic{page}}
\section{Introduction}
In algebraic quantum field theory (AQFT), objects of current interest are standard subspaces, i.e. real subspaces \(\sV \subeq \cH\) of a complex Hilbert space \(\cH\) that satisfy
\[\sV \cap i\sV = \{0\} \qquad \text{and} \qquad \oline{\sV + i\sV} = \cH\]
(cf. \cite{Ne21}, \cite{MN21}, \cite{NO21}, \cite{NOO21}). Given a standard subspace \(\sV \subeq \cH\), one has the densely defined Tomita operator
\[T_\sV : \sV+i\sV \to \cH, \quad x+iy \mapsto x-iy\]
as well as the anti-unitary involution \(J_\sV\) and the positive densely defined operator \(\Delta_\sV\) that one obtains by the polar decomposition
\[T_\sV = J_\sV \Delta_\sV^{\frac 12}\]
of the Tomita operator. This pair \((\Delta_\sV,J_\sV)\) is referred to as the pair of modular objects of \(\sV\). In the context of standard subspaces, one often studies unitary one-parameter groups on \(\cH\) whose semigroup in positive direction consists of unitary operators that map the standard subspace \(\sV\) into itself, i.e. unitary one-parameter groups \(U: \R \to \U(\cH)\) with
\[U_t \sV \subeq \sV \quad \forall t \in \R_+.\]
If the self-adjoint infinitesimal generator
\[\partial U \coloneqq \lim_{t \to 0} \frac{1}{it}(U_t-\textbf{1})\]
of such a unitary one-parameter group \(U\) additionally is a positive operator with trivial kernel, i.e.
\[\partial U \geq 0 \qquad \text{and} \qquad \ker (\partial U) = \{0\},\]
one calls \((\sV,U)\) a non-degenerate standard pair on \(\cH\). Two classical results about these non-degenerate standard pairs are given by Borchers' Theorem (cf. \fref{thm:Borchers}, \cite[Thm. II.9]{Bo92}, \cite[Thm.~2.2.1]{Lo08}) and the Longo--Witten Theorem (cf. \cite[Thm. 2.6]{LW11}). The theorem by Borchers states that, given a standard pair \((\sV,U)\) on a Hilbert space \(\cH\), one can, using \(U\) and the modular data \((\Delta_\sV,J_\sV)\), construct a so-called strictly positive energy representation (cf. \fref{def:PosEnergyRep}) of the affine group \(\mathrm{Aff}(\R)\) on the Hilbert space \(\cH\). By the representation theory of \(\mathrm{Aff}(\R)\), one then knows that there exists a real Hilbert space \(\cK\) and a unitary operator
\[\psi: \cH \to L^2(\R,\cK_\C)^\sharp\]
with
\[\psi(\sV) = H^2(\C_+,\cK_\C)^\sharp \qquad \text{and} \qquad \psi \circ U_t \circ \psi^{-1} = S_t \quad \forall t \in \R.\]
Here \(L^2(\R,\cK_\C)^\sharp\) denotes the Hilbert space of square-integrable \(\cK_\C\)-valued functions one \(\R\) that are invariant under the involution \(\sharp\) defined by
\[f^\sharp(x) = \overline{f(-x)}, \quad x \in \R\]
and by \(H^2(\C_+,\cK_\C)^\sharp\) we denote the corresponding Hardy space, i.e. the subspace of \(L^2(\R,\cK_\C)^\sharp\) whose Fourier transform vanishes on the negative half-line. Further, \(S_t\) denotes the unitary operator on \(L^2(\R,\cK_\C)^\sharp\) defined by
\[(S_t f)(x) \coloneqq e^{itx} f(x), \quad x \in \R.\]
Under the identification
\[\cH \cong L^2(\R,\cK_\C)^\sharp \qquad \text{and} \qquad \sV \cong H^2(\C_+,\cK_\C)^\sharp\]
the theorem by Longo and Witten then states that the semigroup
\[\cE(\sV,U) \coloneqq \{A \in \U(\cH): A\sV \subeq \sV, \,(\forall t \in \R)\, U_t A = A U_t\}\]
is precisely given by the semigroup \(\Inn(\C_+,B(\cK_\C))^\sharp\) of symmetric inner functions on the upper half-plane (cf. \fref{def:Inner}).

Since also unitary representations with non-positive infinitesimal generator are a subject of investigation in AQFT (cf. \cite{Ni22}), in this thesis, we want to provide theorems similar to the ones of Borchers and of Longo--Witten for a more general framework of unitary one-parameter groups \(U: \R \to \U(\cH)\) with
\[U_t \sV \subeq \sV \quad \forall t \in \R_+,\]
without the additional assumption that the infinitesimal generator of \(U\) is positive. We want to replace this assumption by the weaker assumption that -- as it was the case for non-degenerate standard pairs -- there exists an isomorphism between \((\cH,\sV,U)\) and \((L^2(\R,\cK_\C)^\sharp,H^2(\C_+,\cK_\C)^\sharp,S)\) for some real Hilbert space \(\cK\). By the Lax--Phillips Theorem (\fref{thm:LaxPhillipsReal},\cite[Thm. 1]{LP64}), this condition is equivalent to \((\cH,\sV,U)\) being a so-called real regular one-parameter group.

~

In Chapter 2, we will introduce regular one-parameter groups, i.e. triples \((\cE,\cE_+,U)\) of a complex Hilbert space \(\cE\), a closed subspace \(\cE_+ \subeq \cE\) and a unitary one-parameter group \(U: \R \to \U(\cE)\) that satisfy
\[U_t \cE_+ \subeq \cE_+ \quad \forall t \geq 0, \qquad \bigcap_{t \in \R} U_t \cE_+ = \{0\} \qquad \text{and} \qquad \overline{\bigcup_{t \in \R} U_t \cE_+} = \cE.\]
By the Lax--Phillips Theorem (\fref{thm:LaxPhillipsReal}, \fref{thm:LaxPhillipsComplex}) these regular one-parameter groups are always unitarily equivalent to \((L^2(\R,\cK_\C)^\sharp,H^2(\C_+,\cK_\C)^\sharp,S)\) for some real Hilbert space \(\cK\) if \(\cE\) is a real Hilbert space or to \((L^2(\R,\cK),H^2(\C_+,\cK),S)\) for some complex Hilbert space \(\cK\) if \(\cE\) is a complex Hilbert space. In this chapter we show that the so obtained multiplicity space \(\cK\) is unique up to isomorphism (cf. \fref{thm:kernelAdjointGeneral}, \fref{thm:kernelAdjointGeneralReal}) and that -- up to unitary equivalence -- the space \(\cE_+\) is unique, i.e. if \((\cE,\cE_+,U)\) and \((\cE,\cE_+',U)\) are both regular one-parameter groups, then
\[(\cE,\cE_+,U) \cong (\cE,\cE_+',U)\]
via some isomorphism of regular one-parameter groups (cf. \fref{thm:IndependentFromSubspace}).

~

In Chapter 3, we study the semigroup \(\Inn(\C_+,B(\cH))\) of inner functions appearing in the context of the Longo--Witten Theorem. One-parameter semigroups in \(\Inn(\C_+,B(\cH))\) are infinitesi\-mally generated by operator-valued Pick functions (cf. \fref{def:Pick}), i.e. they are of the form
\[U^F_t \coloneqq e^{itF}, \quad t \in \R,\]
for some Pick function \(F\) with values in \(B(\cH)\). For \(\dim \cH < \infty\) we give criteria on the function \(F\) that are equivalent to \((L^2(\R,\cH),H^2(\C_+,\cH),U^F)\) being a regular one-parameter group (cf. \fref{thm:SpecNonDeg}, \fref{thm:SpecNonDegZeroSet}). In these cases, we are also interested in calculating the multiplicity space \(\cK_F\) of the regular one-parameter group \((L^2(\R,\cH),H^2(\C_+,\cH),U^F)\). A formula for this multiplicity space \(\cK_F\) is provided in \fref{thm:BigLWTheorem}. An interesting open problem in this context is whether similar results also hold in the case of an infinite-dimensional Hilbert space \(\cH\).

Finally, we will prove that the dimension of the multiplicity space is compatible with composition of Pick functions: Given Pick functions \(f\) and \(g\) with values in \(\C\) and a Pick function \(F\) with values in \(B(\cH)\) such that \((L^2(\R,\C),H^2(\C_+),U^f)\), \((L^2(\R,\C),H^2(\C_+),U^g)\) and \((L^2(\R,\cH),H^2(\C_+,\cH),U^F)\) are regular one-parameter groups, the function \(f \circ F \circ g\) is also a Pick function such that the triple \((L^2(\R,\cH),H^2(\C_+,\cH),U^{f \circ F \circ g})\) is a regular one-parameter group and for its multiplicity space \(\cK_{f \circ F \circ g}\) one has
\[\dim \cK_{f \circ F \circ g} = \dim \cK_f \cdot \dim \cK_F \cdot \dim \cK_g\]
(cf. \fref{cor:CompNonDeg}, \fref{thm:InvariantHomo}).

~

In Chapter 4, we add another structure to regular one-parameter groups that corresponds to the involution \(J_\sV\) in the context of standard subspaces. Concretely, we consider so-called reflection positive regular one-parameter groups. These are quadruples \((\cE,\cE_+,U,\theta)\), where \((\cE,\cE_+,U)\) is a regular one-parameter group and \(\theta\) is an involution on \(\cE\) with
\[\theta U_t = U_{-t}\theta \quad \forall t \in \R\]
such that \((\cE,\cE_+,\theta)\) is a reflection positive Hilbert space, i.e.
\begin{equation*}
\braket*{\xi}{\theta\xi} \geq 0 \qquad \forall \xi \in \mathcal{E}_+
\end{equation*}
(cf. \fref{def:RPHS}). This will lead us to the question for which functions \(h \in L^\infty(\R,\U(\cK))^\flat\) the triple \((L^2(\R,\cK),H^2(\C_+,\cK),\theta_h)\) is a reflection positive Hilbert space, where \(L^\infty(\R,\U(\cK))^\flat\) denotes the set of essentially bounded measurable functions on \(\R\) with values in \(\U(\cK)\) that are invariant under the involution \(\flat\) defined by
\[f^\flat(x) \coloneqq f(-x)^*, \quad x \in \R\]
and by \(\theta_h\) we denote the involution on \(L^2(\R,\cK)\) defined by
\[(\theta_h f)(x) = h(x) \cdot f(-x), \quad x \in \R.\]
This problem has already been described in \cite[Prob. 4.4.5]{NO18} and is in full generality still open and should be subject of further investigation. However we were able to classify all such functions \(h\) under the two additional assumptions that \(\cK = \R\) and that \((L^2(\R,\R),H^2(\C_+),\theta_h)\) is a maximal reflection positive Hilbert space. The maximality here is to be understood in the way that, for every subspace \(\cE_+ \supeq H^2(\C_+)\) for which \((L^2(\R,\R),\cE_+,\theta_h)\) is a reflection positive Hilbert space, one has \(\cE_+ = H^2(\C_+)\) (cf. \fref{def:maxRPHS}). A full characterization of the functions \(h \in L^\infty(\R,\T)^\flat\) for which the triple \((L^2(\R,\R),H^2(\C_+),\theta_h)\) is a maximal reflection positive Hilbert space is given in \fref{thm:GeneralizedMaxPosForm}.

Finally, given a function \(h \in L^\infty(\R,\T)^\flat\) for which the triple \((L^2(\R,\R),H^2(\C_+),\theta_h)\) is a maximal reflection positive Hilbert space, in \fref{cor:OSTrafoFromMeasures} we give an explicit description of the Osterwalder--Schrader transform (cf. \cite[Def. 3.1.1]{NO18}, \fref{def:OSTrafo}) of this reflection positive Hilbert space. Here, given a reflection positive Hilbert space \((\cE,\cE_+,\theta)\) and defining
\[\cN_\theta \coloneqq \{\eta \in \cE_+: \braket*{\eta}{\theta \eta} = 0\},\]
the Osterwalder--Schrader transform \(\hat \cE\) of \(\cE_+\) is defined as the Hilbert space completion of \(\cE_+ \slash \cN_\theta\) with respect to the scalar product
\[\braket*{\eta + \cN_\theta}{\xi+\cN_\theta}_{\hat \cE} \coloneqq \braket*{\eta}{\theta \xi}, \quad \eta,\xi \in \cE_+.\]

~

In Chapter 5, we apply our results from Chapter 4 to Hankel operators, i.e. bounded linear operators \({H \in B(H^2(\C_+))}\) that satisfy
\[S_t^* H = H S_t \quad \forall t \in \R_+\]
(cf. \fref{def:Hankel}), where
\[(S_t f)(x) = e^{itx} f(x), \quad f \in H^2(\C_+), x \in \R, t \in \R_+.\]
 We will see that, given a function \(h \in L^\infty(\R,\T)^\flat\) for which the triple \(\left(L^2\left(\R,\C\right),H^2\left(\C_+\right),\theta_h\right)\) is a maximal reflection positive Hilbert space, denoting by \(p_+\) the partial isometry from \(L^2(\R,\C)\) onto its subspace \(H^2(\C_+)\), the operator
 \[H_h \coloneqq p_+ \theta_h p_+^*\]
 is a positive Hankel operator with \({\left\lVert H_h\right\rVert \leq 1}\). The question we investigate in Chapter 5, which has already been under investigation in \cite{ANS22}, is if every positive Hankel operator \(H\) with \(\left\lVert H\right\rVert \leq 1\) is of this form. In \cite[Sec. 4]{ANS22} it has already been shown that this is true if one allows to change the scalar product on \(H^2(\C_+)\). We prove that in fact, the statement is true without changing the scalar product (cf. \fref{thm:HankelHasUnimodMeasure}).

~

In Chapter 6 we apply the results of the previous chapters in the context of standard subspaces. Given a complex Hilbert space \(\cH\), a standard subspace \(\sV \subeq \cH\), and a strongly continuous one-parameter group \(U: \R \to \U(\cH)\), we consider regular pairs, i.e. pairs \((\sV,U)\), for which \((\cH,\sV,U)\) is a real regular one-parameter group and reflection positive regular pairs, i.e. pairs \((\sV,U)\), for which \((\cH,\sV,U,J_\sV)\) is a real reflection positive regular one-parameter group (cf. \fref{def:RegPair}). First, we prove that both of these conditions are fulfilled if \((\sV,U)\) is a non-degenerate standard pair, to show that we are, in fact, dealing with a generalization of non-degenerate standard pairs (cf. \fref{cor:standardPairIsRPRegPair}). We will also show that in both cases, the triple \((\cH,\sV,J_\sV)\) is a maximal reflection positive Hilbert space (RPHS) (cf. \fref{prop:StandardSubspaceMaxPos}).

By the Lax--Phillips Theorem we know that, for every regular pair \((\sV,U)\) on a Hilbert space \(\cH\), the regular one-parameter group \((\cH,\sV,U)\) is equivalent to \((L^2(\R,\cK_\C)^\sharp,H^2(\C_+,\cK_\C)^\sharp,S)\) for some real Hilbert space \(\cK\). Here the multiplicity free case \(\cK = \R\) corresponds to the representation \(U\) being \(\R\)-cyclic, i.e.
\[\overline{\spann_\R U(\R)v} = \cH\]
(cf. \fref{prop:MultiplicityRCyclic}).

In \fref{thm:LWExtended}, we show that, for all regular pairs \((\sV,U)\), identifying
\[\cH \cong \cH_\cK \coloneqq L^2(\R,\cK_\C)^\sharp \qquad \text{and} \quad \sV \cong \sV_\cK \coloneqq H^2(\C_+,\cK_\C)^\sharp,\]
the semigroup
\[\cE(\sV,U) \coloneqq \{A \in \U(\cH): A\sV \subeq \sV, \,(\forall t \in \R)\, U_t A = A U_t\}\]
is given by a subsemigroup of the semigroup \(\Inn(\C_+,B(\cK_\C))^\sharp\) of symmetric inner functions and thereby provide a generalization of the Longo--Witten Theorem (cf. \cite[Thm. 2.6]{LW11}). In this theorem, we also provide some conditions under which semigroups in \(\cE(\sV,U)\) extend to regular one-parameter groups by using the results from Chapter 3.
\newpage
Many of our main results are summarized in the following table:
\vspace{0.2cm}
\begin{center}
\addtolength\tabcolsep{1.3pt}
\begin{tabular}{|c||c|c|c|} 
\hline
\raisebox{30pt}{\phantom{M}}\boldmath\((\sV,U)\)\raisebox{-30pt}{\phantom{M}} & \bf regular pair & \(\substack{\text{\normalsize \bf reflection positive} \\ \text{\normalsize \bf regular pair}}\) & \(\substack{\text{\normalsize \bf non-degenerate} \\ \text{\normalsize \bf standard pair}}\) \\
\hline\hline
\raisebox{30pt}{\phantom{M}} \bf Group \boldmath\(G\)\raisebox{-30pt}{\phantom{M}} & \(\R\) & \(\R \rtimes \{-1,1\}\) & \(\mathrm{Aff}(\R) = \R \rtimes \R^\times\) \\
\hline
\raisebox{30pt}{\phantom{M}}\(\substack{\text{\normalsize \bf Representation} \\ \text{\normalsize \boldmath\(\rho: G \to \mathrm{AU}(\cH)\)}}\)\raisebox{-30pt}{\phantom{M}}  & \(\rho(t) = U_t\) & \(\substack{\text{\normalsize \(\rho(t,1) = U_t\)} \\ \text{\normalsize \(\rho(0,-1) = J_\sV\)} \\ \text{\normalsize (\fref{def:RPHS})}}\) & \(\substack{\text{\normalsize \(\rho(t,1) = U_t\)} \\ \text{\normalsize \(\rho(0,e^s) = \Delta^{-\frac{is}{2\pi}}\)} \\ \text{\normalsize \(\rho(0,-1) = J_\sV\)} \\ \text{\normalsize (\fref{thm:Borchers})}}\) \\
\hline
\raisebox{30pt}{\phantom{M}}\(\substack{\text{\normalsize \bf Normal form} \\ \text{\normalsize \bf for \boldmath\((\cH,\sV,U,J_\sV)\)}}\) \raisebox{-30pt}{\phantom{M}} & \(\substack{\text{\normalsize \((\cH_\cK,\sV_\cK,S,\theta)\) with} \\ \text{\normalsize \((\cH_\cK,\sV_\cK,\theta)\) being a} \\ \text{\normalsize maximal RPHS} \\ \text{\normalsize (\fref{thm:StandardSubspaceROPG})}}\) & \(\substack{\text{\normalsize \((\cH_\cK,\sV_\cK,S,\theta_h)\) with} \\ \text{\normalsize \(h = h^\sharp = h^\flat\) and}\\ \text{\normalsize \((\cH_\cK,\sV_\cK,\theta_h)\) being a} \\ \text{\normalsize maximal RPHS} \\ \text{\normalsize (\fref{thm:StandardSubspaceRPROPG})}}\) & \(\substack{\text{\normalsize \((\cH_\cK,\sV_\cK,S,\theta_{i \cdot \sgn})\)} \\ \text{\normalsize (\fref{thm:StandardSubspaceSP})}}\) \\
\hline
\raisebox{30pt}{\phantom{M}}\(\substack{\text{\normalsize \bf Normal form} \\ \text{\normalsize \bf for \boldmath\((\cH,\sV,U,J_\sV)\)}\\ \text{\normalsize \bf if \boldmath\(U\) is \boldmath\(\R\)-cyclic}}\) \raisebox{-30pt}{\phantom{M}} & \(\substack{\text{\normalsize \((\cH_\R,\sV_\R,S,\theta)\) with} \\ \text{\normalsize \((\cH_\R,\sV_\R,\theta)\) being a} \\ \text{\normalsize maximal RPHS} \\ \text{\normalsize (\fref{thm:StandardSubspaceROPG})}}\) & \(\substack{\text{\normalsize \((\cH_\R,\sV_\R,S,\theta_{h_\nu})\) with} \\ \text{\normalsize \(\nu \in \mathcal{BM}^{\mathrm{fin}}([0,\infty]) \setminus \{0\}\)} \\ \text{\normalsize (\fref{thm:StandardSubspaceRPROPG})}}\) & \(\substack{\text{\normalsize \((\cH_\R,\sV_\R,S,\theta_{i \cdot \sgn}) =\)} \\ \text{\normalsize \((\cH_\R,\sV_\R,S,\theta_\nu)\) with} \\ \text{\normalsize \(d\nu(\lambda) = \frac 2{1+\lambda^2}\),} \\ \text{\normalsize (\fref{thm:StandardSubspaceSP})}}\) \\
\hline
\raisebox{30pt}{\phantom{M}}\(\substack{\text{\normalsize \bf OS transform} \\ \text{\normalsize \bf of \boldmath\((\cH,\sV,J_\sV)\)}\\ \text{\normalsize \bf if \boldmath\(U\) is \boldmath\(\R\)-cyclic}}\)\raisebox{-30pt}{\phantom{M}} & \(\substack{\text{\normalsize\(\overline{\sV_\R \slash \cN_\theta}\)} \\ \text{\normalsize (\fref{def:OSTrafo})}}\) & \(\substack{\text{\normalsize\(L^2(\R_+,\C,\cT \nu)\)} \\ \text{\normalsize (\fref{cor:OSTrafoFromMeasures})}}\) & \(\substack{\text{\normalsize\(L^2(\R_+,\C,2\lambda_1)\)} \\ \text{\normalsize (\fref{cor:OSTrafoFromMeasures},} \\ \text{\normalsize \fref{ex:Lebesgue})}}\) \\
\hline
\end{tabular}
\end{center}

\newpage
\section{Regular one-parameter groups}
In this chapter, we will introduce the concept of a regular one-parameter group (ROPG). Using the Lax--Phillips Theorem, we will introduce a standard form for these ROPGs and later, we will provide some basic theory about ROPGs.

\subsection{ROPGs and the Lax--Phillips Theorem}
\begin{definition}
Consider a triple \((\cE,\cE_+,U)\) of a complex Hilbert space \(\cE\), a closed subspace \(\cE_+ \subeq \cE\) and a unitary one-parameter group \(U: \R \to \U(\cE)\).
\begin{enumerate}[\rm (a)]
\item We call \((\cE,\cE_+,U)\) a \textit{semi-regular one-parameter group (\gls*{SROPG})}, if
\[U_t \cE_+ \subeq \cE_+ \quad \forall t \geq 0 \qquad \text{and} \qquad \bigcap_{t \in \R} U_t \cE_+ = \{0\}.\]
\item We define the unitary one-parameter group \(\check U: \R \to \U(\cE)\) by \(\check U_t = U_{-t}\), \(t \in \R\).
\item We call \((\cE,\cE_+,U)\) a \textit{regular one-parameter group (\gls*{ROPG})}, if \((\cE,\cE_+,U)\) and \((\cE,\cE_+^\perp,\check U)\) are both a SROPG.
\item We call a (S)ROPG \((\cE,\cE_+,U)\) real/complex, if \(\cE\) is real/complex.
\item We say that two ROPGs \((\cE,\cE_+,U)\) and \((\cE',\cE_+',U')\) are equivalent, if there exists a unitary operator \(\psi: \cE \to \cE'\) such that
\[\psi(\cE_+) = \cE_+' \quad \text{and} \quad \psi \circ U_t = U_t' \circ \psi \quad \forall t \in \R.\]
We call such an operator \(\psi\) an \textit{isomorphism between \((\cE,\cE_+,U)\) and \((\cE',\cE_+',U')\)}.
\end{enumerate}
\end{definition}
\begin{remark}\label{rem:ROPGAlternative}
For \(t \geq 0\) one has \(U_t \cE_+ \subeq \cE_+\), if and only if
\[\cE_+^\perp \subeq (U_t \cE_+)^\perp = U_t \cE_+^\perp = \check U_{-t} \cE_+^\perp,\]
which is equivalent to \(\check U_t \cE_+^\perp \subeq \cE_+^\perp\). Further
\[\left(\bigcap_{t \in \R} \check U_t \cE_+^\perp\right)^\perp = \overline{\bigcup_{t \in \R} \left(\check U_t \cE_+^\perp\right)^\perp} = \overline{\bigcup_{t \in \R} \check U_t \cE_+} = \overline{\bigcup_{t \in \R} U_t \cE_+}.\]
Therefore \((\cE,\cE_+,U)\) is a ROPG, if and only if
\[U_t \cE_+ \subeq \cE_+ \quad \forall t \geq 0\]
and
\[\bigcap_{t \in \R} U_t \cE_+ = \{0\} \qquad \text{and} \qquad \overline{\bigcup_{t \in \R} U_t \cE_+} = \cE.\]
\end{remark}
The following theorem gives us a normal form for ROPGs:
\begin{thm}\label{thm:LaxPhillips}{\rm\textbf{(Lax--Phillips)}(cf. \cite[Thm. 1]{LP64}\cite[Thm. 4.4.1]{NO18}, \cite[Thm. 3.3]{Fr20})}
For every real/com\-plex Hilbert space \(\cK\), the triple \((L^2(\R,\cK),L^2(\R_+,\cK),\tau)\) is a real/complex ROPG, where the one-parameter group \(\gls*{tau}: \R \to U(L^2(\R,\cK))\) is the translation action defined by
\[(\tau_tf)(p) = f(p-t), \qquad t,p \in \R.\]
Further, every real/complex ROPG \((\cE,\cE_+,U)\) is equivalent to \((L^2(\R,\cK),L^2(\R_+,\cK),\tau)\) for some real/complex Hilbert space \(\cK\).
\end{thm}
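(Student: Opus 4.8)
The plan is to prove the two assertions separately, using the equivalent reformulation of the ROPG conditions from Remark~\ref{rem:ROPGAlternative}.

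First, that $(L^2(\R,\cK),L^2(\R_+,\cK),\tau)$ is a ROPG (real or complex) is a direct verification. The translation action $\tau$ is a strongly continuous unitary one-parameter group on $L^2(\R,\cK)$, and for every $t \in \R$ one has $\tau_t L^2(\R_+,\cK) = L^2([t,\infty),\cK)$; in particular $\tau_t L^2(\R_+,\cK) \subeq L^2(\R_+,\cK)$ for $t \geq 0$. Hence $\bigcap_{t \in \R}\tau_t L^2(\R_+,\cK)$ consists of the functions vanishing a.e.\ on each half-line $[t,\infty)$, so it is $\{0\}$, while $\bigcup_{t \in \R}\tau_t L^2(\R_+,\cK)$ contains every $L^2$-function with essentially bounded-below support and is therefore dense in $L^2(\R,\cK)$. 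By Remark~\ref{rem:ROPGAlternative} this is exactly the ROPG property.

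For the converse the idea is to recognize $(\cE,U)$ as the minimal unitary dilation of the isometric semigroup obtained by restricting $U$ to $\cE_+$, and to classify that semigroup. Let $(\cE,\cE_+,U)$ be a ROPG and set $S_t := U_t|_{\cE_+}$ for $t \geq 0$. Since $U_t\cE_+ \subeq \cE_+$ and $U_t$ is unitary, $(S_t)_{t\geq 0}$ is a strongly continuous semigroup of isometries on $\cE_+$; moreover the subspaces $U_t\cE_+$ are decreasing in $t \geq 0$ because $U_{s+t}\cE_+ = U_s(U_t\cE_+) \subeq U_s\cE_+$, so $\bigcap_{t\geq 0}U_t\cE_+ = \bigcap_{t\in\R}U_t\cE_+ = \{0\}$, i.e.\ $(S_t)$ has no unitary part. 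By the continuous analogue of the Wold decomposition (Cooper's theorem: a completely non-unitary strongly continuous isometric semigroup is unitarily equivalent to the translation semigroup on $L^2(\R_+,\cK)$ for a Hilbert space $\cK$ unique up to isomorphism) there is a unitary $\cE_+ \to L^2(\R_+,\cK)$ intertwining $(S_t)$ with $(\tau_t|_{L^2(\R_+,\cK)})$. Finally $(\cE,U)$ is a unitary dilation of $(\cE_+,(S_t))$, and the remaining condition $\oline{\bigcup_{t\in\R}U_t\cE_+} = \cE$ from Remark~\ref{rem:ROPGAlternative} is precisely minimality of this dilation; since minimal unitary dilations are unique up to unitary equivalence and $(L^2(\R,\cK),\tau)$ is the minimal unitary dilation of $(L^2(\R_+,\cK),\tau|)$ (an explicit check via the density argument above), one gets a unitary $\psi\colon \cE \to L^2(\R,\cK)$ with $\psi U_t = \tau_t \psi$ and $\psi(\cE_+) = L^2(\R_+,\cK)$. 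The real case is handled by running the same argument over $\R$ (Wold decomposition and unitary dilation theory being available for real Hilbert spaces), or by complexifying, applying the complex statement, and descending along the canonical conjugation.

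The substantive step is the continuous Wold/Cooper decomposition and the extraction of the multiplicity $\cK$: the generator of $(S_t)$ is unbounded and only maximal dissipative, so one must pass to its Cayley transform — where the discrete Wold decomposition of a single isometry applies and the wandering subspace supplies $\cK$ — or construct the translation representation directly from the family $\cE_+ \ominus U_t\cE_+$. An alternative that bypasses semigroup-of-isometries theory is spectral: by Stone's theorem $U_t = e^{it\,\partial U}$, the three ROPG conditions force $\partial U$ to have purely absolutely continuous spectrum equal to $\R$ of uniform multiplicity $\dim \cK$ (so $U \cong \tau$ after a Fourier transform), and then an operator-valued Beurling--Lax argument identifies the image of $\cE_+$ with a shift-invariant subspace that normalizes to $L^2(\R_+,\cK)$; the price there is the operator-valued Beurling--Lax theorem.
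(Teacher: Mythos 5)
The paper never proves this theorem: it is imported from \cite{LP64}, \cite[Thm. 4.4.1]{NO18} and \cite[Thm. 3.3]{Fr20}, so there is no internal argument to compare yours against. Your route — restrict $U$ to $\cE_+$, note that $(U_t|_{\cE_+})_{t\geq 0}$ is a strongly continuous isometric semigroup which is pure because the family $U_t\cE_+$ is decreasing in $t\in\R$ (so $\bigcap_{t\geq 0}U_t\cE_+=\bigcap_{t\in\R}U_t\cE_+=\{0\}$), invoke Cooper's continuous Wold decomposition to identify $(\cE_+,U|_{\cE_+})$ with $(L^2(\R_+,\cK),\tau|)$, and then use uniqueness of the minimal unitary extension, minimality being precisely $\overline{\bigcup_{t\in\R}U_t\cE_+}=\cE$ — is exactly the standard proof in the cited sources, and the forward direction is the same routine verification. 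So at the level of strategy the proposal is sound. One small slip: $\bigcap_{t\in\R}\tau_tL^2(\R_+,\cK)$ consists of the functions supported in \emph{every} half-line $[t,\infty)$, i.e.\ vanishing a.e.\ on every $(-\infty,t)$, not ``vanishing a.e.\ on each $[t,\infty)$''; the conclusion $\{0\}$ is of course unaffected.

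The one place where real work is still owed is the real case, which you dispatch in a single sentence although the theorem is stated (and later used) in both the real and the complex form. ``Running the same argument over $\R$'' is not automatic: the usual proofs of Cooper's theorem go through the cogenerator or the Cayley transform of the generator and thus use the complex structure, so you would need either a genuinely field-independent construction of the translation representation or a reference for a real version. ``Complexify and descend'' also has content: after applying the complex statement to $(\cE_\C,(\cE_+)_\C,U_\C)$ you obtain an antiunitary involution $J=\psi\,\cC_\cE\,\psi^{-1}$ on $L^2(\R,\cK')$ commuting with $\tau$ and preserving $L^2(\R_+,\cK')$, and you must show it can be normalized, by a unitary commuting with $\tau$ and fixing $L^2(\R_+,\cK')$, to the pointwise conjugation coming from a real form of $\cK'$. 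That step is of the same flavour as the commutant computations the paper carries out later (\fref{thm:SchurDimComplex}, \fref{thm:SchurDimReal}): the commutant of $\tau(\R)\cup\{P_{L^2(\R_+,\cK')}\}$ is $B(\cK')\cdot\textbf{1}$, which reduces the problem to writing the resulting ``symmetric'' unitary on $\cK'$ as $v$ times its conjugate inverse — doable, but it is an argument, not a remark. Either spell this out or lean on the real statement in \cite{LP64}/\cite{NO18}; everything else in the proposal is fine, including the alternative spectral/Beurling--Lax route you sketch, which is a legitimate (if heavier) substitute for the Cooper-plus-extension argument.
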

\begin{remark}
In the notation used in the theorem we identify \(L^2(\R_+,\cK)\) with the subspace
\[\left\{f \in L^2(\R,\cK): f\big|_{(-\infty,0]}\equiv 0\right\} \subeq L^2(\R,\cK).\]
\end{remark}
The version of the Lax--Phillips Theorem that we will use in the following is slightly different, since we will change to the Fourier transformed picture. For this, we need some definitions:
\begin{definition}
Let \(\cK\) be a real Hilbert space and \(\cC_\cK\) be the complex conjugation on \(\cK_\C\). 
\begin{enumerate}[\rm (a)]
\item In the following we identify \(L^2(\R,\cK)\) with the subspace
\[\left\{f \in L^2(\R,\cK_\C): \left(\forall x \in \R\right) \, f(x) \in \cK_\C^{\cC_\cK} \right\} \subeq L^2(\R,\cK_\C).\]
\item By \(\sharp: L^2(\R,\cK_\C) \to L^2(\R,\cK_\C)\), we denote the involution defined by
\[\gls*{fsharp}(x) = \cC_\cK f(-x).\]
\item For a complex Hilbert space \(\cH\) we consider the Fourier transform \(\gls*{fourier}: L^2(\R,\cH) \to L^2(\R,\cH)\) given by
\[(\cF f)(p) \coloneqq \frac 1{\sqrt{2\pi}} \int_\R e^{-ipx}f(x) \,dx\]
for every \(f \in L^1(\R,\C) \cap L^2(\R,\C)\).
\end{enumerate}
\end{definition}
To change to the Fourier-transformed picture, we first notice that
\[(\cF^{-1} \tau_t f)(x) = \frac 1{\sqrt{2\pi}} \int_\R e^{ipx}f(p-t) \,dp = \frac 1{\sqrt{2\pi}} \int_\R e^{i(p+t)x}f(x) \,dp = e^{itx} (\cF^{-1} f)(x).\]
Further, for a real Hilbert space \(\cK\), we have
\[(\cF^{-1} \cC_\cK f)(x) = \frac 1{\sqrt{2\pi}} \int_\R e^{ipx}\cC_\cK f(p) \,dp = \cC_\cK \frac 1{\sqrt{2\pi}} \int_\R e^{-ipx}f(p) \,dp = (\cF^{-1} f)^\sharp(x),\]
so
\[\cF^{-1}L^2(\R,\cK) = L^2(\R,\cK_\C)^\sharp.\]
Further, for a complex Hilbert space \(\cH\), by \cite[Thm. 5.28]{RR94}, we have 
\[\cF^{-1}L^2(\R_+,\cH) = H^2(\C_+,\cH),\]
where \(H^2(\C_+,\cH) \subeq L^2(\R,\cH)\) denotes the \(\cH\)-valued Hardy space (cf. \fref{sec:HardySpaces}).

With this information, one immediately gets a version of the Lax--Phillips Theorem in the Fourier transformed picture:
\begin{thm}\label{thm:LaxPhillipsComplex}{\rm\textbf{(Lax--Phillips, complex Fourier version)}}
For every complex Hilbert space \(\cK\) the triple \((L^2(\R,\cK),H^2(\C_+,\cK),S)\) with the one-parameter group
\[\gls*{S}: \R \to U(L^2(\R,\cK)), \quad (S_tf)(x) = e^{itx}f(x)\]
is a complex ROPG and every complex ROPG \((\cE,\cE_+,U)\) is equivalent to \((L^2(\R,\cK),H^2(\C_+,\cK),S)\) for some complex Hilbert space \(\cK\).
\end{thm}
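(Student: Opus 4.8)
The plan is to obtain \fref{thm:LaxPhillipsComplex} as an immediate corollary of the classical Lax--Phillips Theorem \fref{thm:LaxPhillips} by transporting its normal form $(L^2(\R,\cK),L^2(\R_+,\cK),\tau)$ through the inverse Fourier transform, relying on the two computations displayed just before the statement.

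First I would note that the Fourier transform, initially only defined on $L^1(\R,\C)\cap L^2(\R,\C)$, extends by Plancherel's theorem to a unitary operator on $L^2(\R,\C)$, and hence, after tensoring with $\cK$, to a unitary operator $\cF\colon L^2(\R,\cK)\to L^2(\R,\cK)$ for every complex Hilbert space $\cK$; in particular $\cF^{-1}$ is unitary. The identity $(\cF^{-1}\tau_t f)(x)=e^{itx}(\cF^{-1}f)(x)$ established above is precisely the intertwining relation $\cF^{-1}\circ\tau_t=S_t\circ\cF^{-1}$ for all $t\in\R$, and the (vector-valued) Paley--Wiener theorem \cite[Thm. 5.28]{RR94} gives $\cF^{-1}\big(L^2(\R_+,\cK)\big)=H^2(\C_+,\cK)$. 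Together these say that $\cF^{-1}$ is an isomorphism of triples from $(L^2(\R,\cK),L^2(\R_+,\cK),\tau)$ onto $(L^2(\R,\cK),H^2(\C_+,\cK),S)$ in the sense of the definition of equivalence of ROPGs.

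From here the two assertions follow formally. For the first, \fref{thm:LaxPhillips} tells us that $(L^2(\R,\cK),L^2(\R_+,\cK),\tau)$ is a complex ROPG; since the defining conditions of a ROPG, in the form $U_t\cE_+\subeq\cE_+$ for $t\ge0$, $\bigcap_{t\in\R}U_t\cE_+=\{0\}$ and $\overline{\bigcup_{t\in\R}U_t\cE_+}=\cE$ (cf.\ \fref{rem:ROPGAlternative}), are manifestly preserved under any isomorphism of triples, the image $(L^2(\R,\cK),H^2(\C_+,\cK),S)$ is again a complex ROPG. For the second, given an arbitrary complex ROPG $(\cE,\cE_+,U)$, \fref{thm:LaxPhillips} supplies a complex Hilbert space $\cK$ and an isomorphism onto $(L^2(\R,\cK),L^2(\R_+,\cK),\tau)$; composing it with $\cF^{-1}$ from the previous step exhibits $(\cE,\cE_+,U)$ as equivalent to $(L^2(\R,\cK),H^2(\C_+,\cK),S)$.

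As this is a pure transport-of-structure argument, there is essentially no obstacle. The only points deserving care are the passage from the scalar to the $\cK$-valued Paley--Wiener identification $\cF^{-1}L^2(\R_+,\cK)=H^2(\C_+,\cK)$ (obtained by tensoring with $\cK$, or coordinatewise in an orthonormal basis of $\cK$) and, if one wishes to be thorough, the observation that $S$ is a well-defined strongly continuous unitary one-parameter group, which is immediate from $\lvert e^{itx}\rvert=1$ and dominated convergence.
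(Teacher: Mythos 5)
Your proposal is correct and follows essentially the same route as the paper: the text establishes the intertwining identity $\cF^{-1}\circ\tau_t = S_t\circ\cF^{-1}$ and the Paley--Wiener identification $\cF^{-1}L^2(\R_+,\cH)=H^2(\C_+,\cH)$ (via \cite[Thm. 5.28]{RR94}) immediately before the statement and then obtains \fref{thm:LaxPhillipsComplex} by transporting the normal form of \fref{thm:LaxPhillips} through $\cF^{-1}$, exactly as you do. Nothing further is needed.
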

\begin{thm}\label{thm:LaxPhillipsReal}{\rm\textbf{(Lax--Phillips, real Fourier version)}}
For every real Hilbert space \(\cK\) the triple \((L^2(\R,\cK_\C)^\sharp,H^2(\C_+,\cK_\C)^\sharp,S)\) with the one-parameter group
\[S: \R \to U(L^2(\R,\cK_\C)^\sharp), \quad (S_tf)(x) = e^{itx}f(x)\]
is a real ROPG and every real ROPG \((\cE,\cE_+,U)\) is equivalent to \((L^2(\R,\cK_\C)^\sharp,H^2(\C_+,\cK_\C)^\sharp,S)\) for some real Hilbert space \(\cK\).
\end{thm}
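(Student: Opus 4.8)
The plan is to derive \fref{thm:LaxPhillipsReal} from the translation form \fref{thm:LaxPhillips} by transporting everything through the inverse Fourier transform, exactly in the spirit of the computation made just before the statement for the complex case. The one object that does all the work is the restriction of $\cF^{-1}$ to $L^2(\R,\cK)$: I claim it is an $\R$-linear unitary (i.e.\ orthogonal) bijection onto $L^2(\R,\cK_\C)^\sharp$ that intertwines $\tau_t$ with $S_t$ and sends $L^2(\R_+,\cK)$ onto $H^2(\C_+,\cK_\C)^\sharp$.

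First I would collect the three facts already recorded in the preceding computation. (i) $\cF$ is unitary on $L^2(\R,\cK_\C)$ by Plancherel, and $\cF^{-1}\tau_t = S_t\,\cF^{-1}$ for all $t \in \R$. (ii) $\cF^{-1}\cC_\cK = {}^\sharp\circ\cF^{-1}$, so $f$ is $\cC_\cK$-fixed iff $\cF^{-1}f$ is $\sharp$-fixed; hence $\cF^{-1}$ carries the real subspace $L^2(\R,\cK)$ bijectively onto $L^2(\R,\cK_\C)^\sharp$. (iii) $\cF^{-1}L^2(\R_+,\cK_\C) = H^2(\C_+,\cK_\C)$ by \cite[Thm. 5.28]{RR94}. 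Combining (ii) and (iii) with the evident identity $L^2(\R_+,\cK) = L^2(\R_+,\cK_\C)\cap L^2(\R,\cK)$ inside $L^2(\R,\cK_\C)$ gives $\cF^{-1}L^2(\R_+,\cK) = H^2(\C_+,\cK_\C)\cap L^2(\R,\cK_\C)^\sharp = H^2(\C_+,\cK_\C)^\sharp$.

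With this in hand the two assertions are immediate. By \fref{thm:LaxPhillips} the triple $(L^2(\R,\cK),L^2(\R_+,\cK),\tau)$ is a real ROPG, and $\cF^{-1}|_{L^2(\R,\cK)}$ is, by the above, an isomorphism of it onto $(L^2(\R,\cK_\C)^\sharp,H^2(\C_+,\cK_\C)^\sharp,S)$; since the defining conditions in the form of \fref{rem:ROPGAlternative} — namely $U_t\cE_+\subeq\cE_+$ for $t\ge 0$, $\bigcap_{t}U_t\cE_+=\{0\}$ and $\overline{\bigcup_t U_t\cE_+}=\cE$ — are manifestly preserved by any unitary intertwiner sending $\cE_+$ onto $\cE_+'$, the transported triple is again a real ROPG. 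Conversely, an arbitrary real ROPG $(\cE,\cE_+,U)$ is equivalent to some $(L^2(\R,\cK),L^2(\R_+,\cK),\tau)$ by \fref{thm:LaxPhillips}, and composing that equivalence with $\cF^{-1}$ (and using that equivalence of ROPGs is transitive) yields an equivalence with $(L^2(\R,\cK_\C)^\sharp,H^2(\C_+,\cK_\C)^\sharp,S)$.

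There is no genuine obstacle here; the only point requiring actual (if routine) checking is the compatibility of the real structure — equivalently of the involution $\sharp$ — with the Fourier transform and with the passage to the Hardy space, i.e.\ steps (ii) and (iii), which is precisely what produces the correspondence $L^2(\R_+,\cK)\leftrightarrow H^2(\C_+,\cK_\C)^\sharp$. Everything else is the bookkeeping of transporting a structure along a unitary and invoking transitivity of equivalence; indeed one could phrase the whole argument simply as "identical to the proof of \fref{thm:LaxPhillipsComplex}, carried out $\sharp$-equivariantly".
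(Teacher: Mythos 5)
Your argument is correct and is exactly the paper's: the theorem is stated there as an immediate consequence of the computation preceding it, which establishes $\cF^{-1}\tau_t=S_t\cF^{-1}$, $\cF^{-1}\cC_\cK = {}\sharp\circ\cF^{-1}$ (hence $\cF^{-1}L^2(\R,\cK)=L^2(\R,\cK_\C)^\sharp$), and $\cF^{-1}L^2(\R_+,\cK_\C)=H^2(\C_+,\cK_\C)$, after which one transports \fref{thm:LaxPhillips} along $\cF^{-1}$ just as you do. Your only added detail, the identification $\cF^{-1}L^2(\R_+,\cK)=H^2(\C_+,\cK_\C)\cap L^2(\R,\cK_\C)^\sharp=H^2(\C_+,\cK_\C)^\sharp$, is a correct and harmless spelling-out of what the paper leaves implicit.
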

The one-parameter group \(S\) appearing in the last two theorems will appear often in the following sections. Many times, an object of interest will be its commutant \(S(\R)'\). To compute this commutant, we need the following theorem:
\begin{thm}\label{thm:CommutantTensor}{\rm (cf. \cite[Thm. 1]{RD75})}
Let \(\cH,\cK\) be Hilbert spaces and let \(M \subeq B(\cH)\) and \(N \subeq B(\cK)\) be von Neumann algebras. Then
\[(M \,\overline{\otimes}\, N)' = M' \,\overline{\otimes}\, N'.\]
\end{thm}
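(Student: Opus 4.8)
\medskip
\noindent\textbf{Proof proposal.}
One inclusion is elementary and the other is the commutation theorem, which is the real content. First I would dispatch $M' \,\overline{\otimes}\, N' \subseteq (M \,\overline{\otimes}\, N)'$: for $a \in M'$, $b \in N'$, $x \in M$, $y \in N$ one checks that $\iota(a \otimes b)$ and $\iota(x \otimes y)$ commute on simple tensors $v \otimes w \in \cH \,\hat{\otimes}\, \cK$, hence on all of $\cH \,\hat{\otimes}\, \cK$ by linearity and continuity. Thus $\iota(M' \otimes N')$ lies in $\iota(M \otimes N)' = (M \,\overline{\otimes}\, N)'$, and since the latter is a von Neumann algebra we may pass to bicommutants to obtain $M' \,\overline{\otimes}\, N' = \iota(M' \otimes N')'' \subseteq (M \,\overline{\otimes}\, N)'$.

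For the reverse inclusion $(M \,\overline{\otimes}\, N)' \subseteq M' \,\overline{\otimes}\, N'$, the plan is to invoke Tomita--Takesaki modular theory. First I would reduce to the case that $M$ carries a cyclic and separating vector $\xi_0$ and $N$ carries a cyclic and separating vector $\eta_0$: every von Neumann algebra is, up to spatial isomorphism and amplification, of this form, and both the statement and the operation $\overline{\otimes}$ are stable under these manipulations. With such vectors fixed, $\xi_0 \otimes \eta_0$ is cyclic for $M \,\overline{\otimes}\, N$ because it is already cyclic for the algebraic tensor product $M \otimes N$ (as $M\xi_0$ and $N\eta_0$ are dense), and the same argument applied to $\iota(M' \otimes N') \subseteq (M \,\overline{\otimes}\, N)'$ shows it is cyclic for the commutant, hence separating for $M \,\overline{\otimes}\, N$.

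The central step is then to identify the modular data of $(M \,\overline{\otimes}\, N, \xi_0 \otimes \eta_0)$. Writing $S_M = J_M \Delta_M^{1/2}$ and $S_N = J_N \Delta_N^{1/2}$ for the Tomita operators of the two factors, I would show that the Tomita operator $S$ of $M \,\overline{\otimes}\, N$ is the closure of $S_M \otimes S_N$, so that $J = J_M \otimes J_N$ and $\Delta$ is the closure of $\Delta_M \otimes \Delta_N$. Granting this, the fundamental theorem of modular theory gives
\[
(M \,\overline{\otimes}\, N)' = J\,(M \,\overline{\otimes}\, N)\,J = (J_M \otimes J_N)(M \,\overline{\otimes}\, N)(J_M \otimes J_N).
\]
Since $(J_M \otimes J_N)\iota(x \otimes y)(J_M \otimes J_N) = \iota\big(J_M x J_M \otimes J_N y J_N\big)$ and $J_M M J_M = M'$, $J_N N J_N = N'$, conjugation by $J_M \otimes J_N$ carries $\iota(M \otimes N)$ onto $\iota(M' \otimes N')$, hence its weak closure $M \,\overline{\otimes}\, N$ onto $M' \,\overline{\otimes}\, N'$. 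Combined with the first paragraph this yields the claimed equality.

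The hard part will be the identification of $S$ as $\overline{S_M \otimes S_N}$ with the correct polar decomposition. The subtlety is that $S_M$ and $S_N$ are unbounded conjugate-linear operators, so one must argue carefully that $S_M \otimes S_N$, defined on the algebraic tensor product of the respective domains, is closable, that its closure is precisely the graph-closure of $\iota(x \otimes y)(\xi_0 \otimes \eta_0) \mapsto \iota(x^* \otimes y^*)(\xi_0 \otimes \eta_0)$ — so that no spurious extra domain appears — and that the polar decomposition of this closure is $(J_M \otimes J_N)\,\overline{\Delta_M^{1/2} \otimes \Delta_N^{1/2}}$. I would handle this by exhibiting a common core, for instance the linear span of the $\iota(x \otimes y)(\xi_0 \otimes \eta_0)$ or an analytic subspace obtained from the functional calculus of the commuting positive operators $\Delta_M \otimes 1$ and $1 \otimes \Delta_N$, verifying the defining relation for $S$ on that core and appealing to uniqueness of the polar decomposition. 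Once the core argument is in place the rest is routine bookkeeping.
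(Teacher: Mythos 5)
The paper never proves this statement: it is quoted directly from Rieffel--Van Daele \cite[Thm. 1]{RD75}, so there is no in-paper argument to compare yours with. What you sketch is the classical Tomita--Takesaki route (Takesaki's commutation theorem): reduce to algebras in standard form, identify the modular objects of \((M\,\overline{\otimes}\,N,\ \xi_0\otimes\eta_0)\) as \(J_M\otimes J_N\) and the closure of \(\Delta_M\otimes\Delta_N\), and then conjugate by \(J_M\otimes J_N\). That route is correct, and you flag the genuinely delicate point in the right way: one must show that \(\iota(M\otimes N)(\xi_0\otimes\eta_0)\) is a core for the Tomita operator \(S\) of the tensor product (Kaplansky strong-\(*\) density does this), and then identify the polar decomposition of \(\overline{S_M\otimes S_N}\) by checking the candidate antiunitary and positive parts agree with \(S\) on that core and invoking uniqueness. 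The result the paper cites is a deliberately different proof: Rieffel and Van Daele's argument is elementary, works with bounded operators and real subspaces, and avoids modular theory and any reduction to standard form entirely; your proof buys conceptual transparency (the commutant is literally \(J(\cdot)J\)) at the price of the full strength of Tomita's theorem.

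Two caveats on your reduction step, which is the only place your outline is genuinely glossed. First, ``up to spatial isomorphism and amplification'' is not enough: a von Neumann algebra on a given Hilbert space need not admit a cyclic and separating vector (e.g. \(B(\cH)\) on \(\cH\)); one must also compress by a projection in the commutant, passing through the GNS representation of a faithful normal state, which exists because the algebras here act on separable spaces and are therefore \(\sigma\)-finite. Second, the bookkeeping for that reduction uses the commutant of an amplification, \((M\otimes\C\1)' = M'\,\overline{\otimes}\,B(\ell^2)\), which is itself a special case of the theorem and must be proved directly by the matrix-entry argument so that the reduction is not circular. With those two points filled in, your argument is a complete and standard proof of the cited theorem.
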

\begin{prop}\label{prop:SCommutant}
Let \(\cK\) be a complex Hilbert space and \(S: \R \to U(L^2(\R,\cK))\) be the one-parameter group defined by
\[(S_tf)(x) = e^{itx}f(x).\]
Then
\[S(\R)' = M\left(L^\infty(\R,B(\cK))\right).\]
\end{prop}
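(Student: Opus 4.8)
The plan is to pass to the tensor product picture and reduce the statement to \fref{thm:CommutantTensor}. The inclusion ``$\supeq$'' is immediate: for $g \in L^\infty(\R,B(\cK))$, $f \in L^2(\R,\cK)$ and $t \in \R$,
\[(S_t M_g f)(x) = e^{itx}\,g(x)f(x) = g(x)\,e^{itx}f(x) = (M_g S_t f)(x),\qquad x \in \R,\]
because the scalar $e^{itx}$ commutes with $g(x)\in B(\cK)$; hence $M_g \in S(\R)'$. The whole content is therefore the reverse inclusion, and I would establish it as follows.

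Under the identification $L^2(\R,\cK) = L^2(\R,\C)\,\hat{\otimes}\,\cK$ from the Notation section we have $S_t = M_{e_t}\otimes\id_\cK$ with $e_t(x) = e^{itx}$ and $M_{e_t}\in B(L^2(\R,\C))$, so $S(\R)'$ is the commutant of $\{M_{e_t}\otimes\id_\cK : t\in\R\}$, equivalently of the von Neumann algebra $\cM_0\,\overline{\otimes}\,\C\id_\cK$, where $\cM_0 \coloneqq \{M_{e_t}:t\in\R\}''$. The first ingredient is the classical scalar fact that $\cM_0 = M(L^\infty(\R,\C))$, the multiplication algebra of $L^\infty(\R,\C)$ on $L^2(\R,\C)$, which is maximal abelian. (Sketch: by Stone's theorem $M_{e_t} = e^{itQ}$ with $Q$ multiplication by $x$; the von Neumann algebra generated by a unitary one-parameter group equals the one generated by the spectral projections of its generator, and the projections $M_{\chi_E}$, $E\subeq\R$ Borel, generate $M(L^\infty(\R,\C))$ since simple functions are uniformly dense in $L^\infty(\R,\C)$ while $M(L^\infty(\R,\C))$ is weakly closed.) Granting this, \fref{thm:CommutantTensor} together with $\cM_0' = \cM_0$ and $(\C\id_\cK)' = B(\cK)$ gives
\[S(\R)' = \big(M(L^\infty(\R,\C))\,\overline{\otimes}\,\C\id_\cK\big)' = M(L^\infty(\R,\C))\,\overline{\otimes}\,B(\cK).\]

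It then remains to identify $M(L^\infty(\R,\C))\,\overline{\otimes}\,B(\cK)$ with $M(L^\infty(\R,B(\cK)))$. For ``$\supeq$'' one notes that $M_g = \sum_i M_{\chi_{E_i}}\otimes T_i$ lies in the algebraic tensor product for a simple function $g = \sum_i \chi_{E_i}T_i$, $T_i \in B(\cK)$, and that a general weak-$*$-measurable $g$ is an almost-everywhere strong-operator limit of such simple functions (using that the bounded part of $B(\cK)$ is separable and metrizable in the strong operator topology when $\cK$ is separable); this is also a special case of the theory of decomposable operators over the constant-fibre direct integral. For ``$\subeq$'' I would argue by matrix coefficients: any $A$ in the tensor product commutes with all $M_\phi\otimes\id_\cK$, $\phi\in L^\infty(\R,\C)$, again by \fref{thm:CommutantTensor}; choosing an orthonormal basis $(e_j)_{j\in J}$ of $\cK$ (countable by separability) and writing $A = (A_{jk})_{j,k\in J}$ with $A_{jk}\in B(L^2(\R,\C))$, commutation forces $A_{jk} = M_{g_{jk}}$ for some $g_{jk}\in L^\infty(\R,\C)$ by maximal abelianness. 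One assembles these into a single $g\in L^\infty(\R,B(\cK))$ with $M_g = A$: for each finite $F\subeq J$ the compression of $A$ to $\mathrm{span}\{e_j:j\in F\}\,\hat{\otimes}\,L^2(\R,\C)$ is multiplication by the matrix-valued function $x\mapsto (g_{jk}(x))_{j,k\in F}$, whose operator norm as a multiplication operator is the essential supremum of the pointwise operator norms and is thus $\le\|A\|$; intersecting the resulting conull sets over the countably many finite $F$ gives a conull set off which $(g_{jk}(x))_{j,k\in J}$ defines an operator $g(x)\in B(\cK)$ with $\|g(x)\|\le\|A\|$, and weak-$*$-measurability of $g$ follows from measurability of $x\mapsto\braket*{g(x)e_k}{e_j} = g_{jk}(x)$. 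Checking $M_g = A$ on the total set of elementary tensors finishes the identification.

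I expect this last step — the identification of the von Neumann tensor product with the operator-valued multiplication algebra, equivalently the assembly of the scalar multipliers $g_{jk}$ into one bounded weak-$*$-measurable $B(\cK)$-valued symbol — to be the main obstacle; the decisive point is that finite compressions of $A$ remain multiplication operators (by finite-matrix-valued $L^\infty$-functions, for which the operator norm equals the essential supremum of the pointwise norms), which transports the uniform bound $\|A\|$ down to an almost-everywhere pointwise bound. Everything else is the one-line commutation check, a citation for the maximal abelian multiplication algebra on $L^2(\R,\C)$, and routine bookkeeping around \fref{thm:CommutantTensor}. Alternatively, one can bypass the abstract tensor product and run the matrix-coefficient argument directly on $S(\R)'$, using only $\{M_{e_t}\}' = M(L^\infty(\R,\C))$ as input.
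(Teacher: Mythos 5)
Your argument is correct, and its skeleton coincides with the paper's proof of \fref{prop:SCommutant}: write $S_t = M_{e_t}\otimes \textbf{1}_\cK$ under $L^2(\R,\cK)\cong L^2(\R,\C)\,\hat{\otimes}\,\cK$, reduce the commutant computation to \fref{thm:CommutantTensor} together with maximal abelianness of $M(L^\infty(\R,\C))$, and finally identify $M(L^\infty(\R,\C))\,\overline{\otimes}\,B(\cK)$ with $M(L^\infty(\R,B(\cK)))$. The difference lies in how you justify the two inputs that the paper disposes of by citation. For the fact that $S(\R)$ generates $M(L^\infty(\R,\C))\,\overline{\otimes}\,\C\textbf{1}_\cK$, the paper invokes \fref{prop:etDense} (weak-$*$ density of $\spann\{e_t\}$ in $L^\infty(\R,\C)$, via injectivity of the Fourier transform), whereas you argue through Stone's theorem and the spectral projections of the position operator; both are standard and equivalent in effect. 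For the final identification, the paper simply cites \cite[Cor. 2.9.3]{Sa71} and \cite[Thm. 1.22.13]{Sa71}, whereas you prove it by hand: the inclusion of $M(L^\infty(\R,B(\cK)))$ via simple-function approximation, and the reverse inclusion by the matrix-coefficient argument with the norm transported through finite compressions. Your version buys self-containedness (essentially reproving the Sakai result on decomposable operators, correctly, including the point that the essential supremum of pointwise norms controls finite truncations and hence yields a bounded weak-$*$-measurable symbol), at the cost of length; note also that your ``$\supeq$'' half of the identification could be shortened by observing that $M_g$ commutes with $M(L^\infty(\R,\C))\,\overline{\otimes}\,\C\textbf{1}_\cK$ and applying \fref{thm:CommutantTensor} once more, avoiding the measurable-selection approximation altogether.
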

\begin{proof}
Considering the functions \(e_t \in L^\infty(\R,\C)\) with \(e_t(x) = e^{itx}\) and identifying
\[L^2(\R,\cK) \cong L^2(\R,\C) \,\hat{\otimes}\, \cK,\]
we have that
\[S_t = M_{e_t} \otimes \textbf{1}_\cK, \quad t \in \R.\]
Then, by \fref{prop:etDense}, the set \(S(\R)\) spans a weakly dense subspace in \(M(L^\infty(\R,\C)) \,\overline{\otimes}\, \C\textbf{1}_\cK\). This, by \fref{thm:CommutantTensor}, yields
\begin{align*}
S(\R)' &= \left(M(L^\infty(\R,\C))  \,\overline{\otimes}\, \C\textbf{1}_\cK\right)' = M(L^\infty(\R,\C))'  \,\overline{\otimes}\, (\C\textbf{1}_\cK)' = M(L^\infty(\R,\C))  \,\overline{\otimes}\, B(\cK),
\end{align*}
using \cite[Cor. 2.9.3]{Sa71} in the last step. The statement now follows since, by \cite[Thm.~1.22.13]{Sa71}, we have
\[M(L^\infty(\R,\C)) \,\overline{\otimes}\, B(\cK) = M\left(L^\infty(\R,B(\cK))\right). \qedhere\]
\end{proof}

\subsection{Formulas for the multiplicity space}
In this section, given a ROPG \((\cE,\cE_+,U)\), we will provide some ways to calculate the space \(\cK\) appearing in the Lax--Phillips Theorem, thereby showing that this space \(\cK\) is unique up to isomorphism.
\begin{theorem}\label{thm:SchurDimComplex}
Let \((\cE,\cE_+,U)\) be a complex ROPG and \(\cK\) a complex Hilbert space such that \((\cE,\cE_+,U)\) is equivalent to \((L^2(\R,\cK),H^2(\C_+,\cK),S)\). Then
\[B(\cK) \cong (U(\R) \cup \{P_{\cE_+}\})'.\]
\end{theorem}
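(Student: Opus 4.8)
The plan is to transport the problem to the translation model, where the relevant commutant can be computed by hand. First, note that $(\cE,\cE_+,U)\mapsto(U(\R)\cup\{P_{\cE_+}\})'$ only depends on the isomorphism class of the ROPG: if $\psi$ is an isomorphism of ROPGs, then conjugation by $\psi$ carries the corresponding commutants onto each other, since $\psi P_{\cE_+}\psi^{-1}=P_{\cE_+'}$ and $\psi U_t\psi^{-1}=U_t'$. Next, the computations preceding \fref{thm:LaxPhillipsReal} show that the Fourier transform $\cF$ intertwines $S_t$ with $\tau_t$ and sends $H^2(\C_+,\cK)$ onto $L^2(\R_+,\cK)$, so $\cF$ is an isomorphism of ROPGs from $(L^2(\R,\cK),H^2(\C_+,\cK),S)$ onto $(L^2(\R,\cK),L^2(\R_+,\cK),\tau)$ with the \emph{same} $\cK$. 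Composing with the equivalence from the hypothesis, $(\cE,\cE_+,U)$ is isomorphic to the translation model, and it remains to prove
\[(\tau(\R)\cup\{P\})' = \{M_A : A \in B(\cK)\},\]
where $P := M_{\chi_{\R_+}}$ is the orthogonal projection onto $L^2(\R_+,\cK)$ and $M_A\in B(L^2(\R,\cK))$ denotes the operator $(M_Af)(x)=Af(x)$. Since $A\mapsto M_A$ is manifestly an injective unital $*$-homomorphism of $B(\cK)$, it is an isomorphism onto its image, so the theorem follows once that image is identified with the commutant on the right.

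The inclusion $\supseteq$ is immediate. For $\subseteq$, let $T\in(\tau(\R)\cup\{P\})'$. Since $T$ commutes with every $\tau_t$ and with $P$, it commutes with $\tau_a P\tau_{-a}=M_{\chi_{(a,\infty)}}$ for every $a\in\R$. These operators generate, as a von Neumann algebra, the diagonal algebra $M(L^\infty(\R,\C))$ on $L^2(\R,\cK)$: they include $M_{\chi_I}$ for every bounded interval $I$, hence all multiplications by simple functions, and these are strongly dense in $M(L^\infty(\R,\C))$. Its commutant is $M(L^\infty(\R,B(\cK)))$, by the same tensor-product computation as in the proof of \fref{prop:SCommutant} (using \fref{thm:CommutantTensor} and $L^2(\R,\cK)\cong L^2(\R,\C)\,\hat{\otimes}\,\cK$, so that $M(L^\infty(\R,\C))=M(L^\infty(\R,\C))\,\overline{\otimes}\,\C\mathbf{1}_\cK$). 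Hence $T=M_g$ for some $g\in L^\infty(\R,B(\cK))$. Finally, $M_g$ commuting with every $\tau_t$ forces $g(\,\cdot\,-t)=g(\,\cdot\,)$ in $L^\infty(\R,B(\cK))$ for all $t\in\R$; testing against the matrix coefficients $x\mapsto\langle u, g(x)v \rangle$, using that a translation-invariant essentially bounded scalar function is almost everywhere constant (e.g.\ by convolving with a mollifier), and invoking separability of $\cK$, we conclude that $g$ equals a constant $A\in B(\cK)$ almost everywhere. Thus $T=M_A$, which finishes the computation.

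I expect the only step that is not routine bookkeeping to be the identification of the commutant of the diagonal algebra on $L^2(\R,\cK)$ with the decomposable operators $M(L^\infty(\R,B(\cK)))$; this is a standard direct-integral fact, of exactly the type already used in the proof of \fref{prop:SCommutant}, and should be cited rather than reproved. An alternative that stays in the $H^2$-picture is to use \fref{prop:SCommutant} directly to get $(S(\R)\cup\{P_+\})'=M(L^\infty(\R,B(\cK)))\cap\{P_+\}'$, observe that $M_g$ commutes with $P_+$ iff $M_g$ and $M_g^{*}=M_{g^{*}}$ both leave $H^2(\C_+,\cK)$ invariant, i.e.\ iff $g$ and $g^{*}$ are both $B(\cK)$-valued $H^\infty$-multipliers, and note that this forces $g$ to be constant (its Fourier transform would be supported at the origin); I would keep the translation-picture argument as the primary one, since it needs no Hardy-space input beyond the Fourier computation already present in the text.
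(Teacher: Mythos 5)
Your proposal is correct, but your primary argument takes a genuinely different route from the paper. The paper stays in the Fourier (Hardy space) picture: after reducing to the standard model, it applies \fref{prop:SCommutant} to write the commuting operator as $M_f$ with $f \in L^\infty(\R,B(\cK))$, and then uses Hardy-space structure theory --- \fref{prop:H2InclusionFunctions} to conclude $f \in H^\infty(\C_+,B(\cK)) \cap H^\infty(\C_-,B(\cK))$, and \fref{cor:HInftyIntersection} to conclude that $f$ is constant. You instead pass through the Fourier transform to the translation model and exploit the pair $(\tau,P)$ jointly: conjugating $P$ by translations yields all half-line projections $M_{\chi_{(a,\infty)}}$, so the von Neumann algebra generated by $\tau(\R)\cup\{P\}$ already contains the full diagonal algebra $M(L^\infty(\R,\C))$; its commutant is $M(L^\infty(\R,B(\cK)))$ by the same tensor-product computation as in \fref{prop:SCommutant} (via \fref{thm:CommutantTensor}), and translation invariance of the symbol then forces it to be a.e.\ constant (here you correctly lean on separability of $\cK$, which the paper assumes throughout). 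Your route buys independence from Hardy-space input: it needs no boundary-value theory, only the maximal-abelian fact for $L^\infty$ and a standard ``translation-invariant $\Rightarrow$ constant'' argument, and it makes transparent that the key point is that translations together with one half-line projection generate the whole diagonal algebra. The paper's route is shorter given that \fref{prop:H2InclusionFunctions} and \fref{cor:HInftyIntersection} are developed anyway and reused elsewhere (e.g.\ in \fref{thm:SchurDimReal} and \fref{prop:thetahUnique}). Your secondary sketch in the last paragraph is essentially the paper's proof, up to the cosmetic difference that you justify constancy of an element of $H^\infty(\C_+)\cap H^\infty(\C_-)$ by a Fourier-support argument rather than by citing \fref{cor:HInftyIntersection}. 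One small presentational remark: your opening observation that the commutant only depends on the isomorphism class of the ROPG is exactly what licenses the paper's ``without loss of generality'' reduction, so it is worth stating, as you did, rather than leaving implicit.
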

\begin{proof}
Without loss of generality we can assume that \((\cE,\cE_+,U) = (L^2(\R,\cK),H^2(\C_+,\cK),S)\). Let \(A \in (S(\R) \cup \{P_{H^2(\C_+,\cK)}\})'\). Then \(A \in S(\R)'\), by \fref{prop:SCommutant}, implies that \(A=M_f\) with \(f \in L^\infty(\R,B(\cK))\). Further, \({A \in \{P_{H^2(\C_+,\cK)}\}'}\) implies
\[M_f H^2(\C_+,\cK) \subeq H^2(\C_+,\cK) \quad \text{and} \quad M_f H^2(\C_-,\cK) \subeq H^2(\C_-,\cK),\]
which, by \fref{prop:H2InclusionFunctions}, yields
\[f \in H^\infty(\C_+,B(\cK)) \quad \text{and} \quad f \in H^\infty(\C_-,B(\cK)),\]
so \(f = C \cdot \textbf{1}\) for some \(C \in B(\cK)\) by \fref{cor:HInftyIntersection}.

Conversely, for every \(C \in B(\cK)\) one has \(M_{C \cdot \textbf{1}} \in (S(\R) \cup \{P_{H^2(\C_+,\cK)}\})'\), so
\[(S(\R) \cup \{P_{H^2(\C_+,\cK)}\})' = B(\cK) \cdot M_\textbf{1}. \qedhere\]
\end{proof}
\begin{theorem}\label{thm:SchurDimReal}
Let \((\cE,\cE_+,U)\) be a real ROPG and \(\cK\) a real Hilbert space such that \((\cE,\cE_+,U)\) is equivalent to \((L^2(\R,\cK_\C)^\sharp,H^2(\C_+,\cK_\C)^\sharp,S)\). Then
\[B(\cK)^2 \cong (U(\R) \cup \{P_{\cE_+}\})'.\]
\end{theorem}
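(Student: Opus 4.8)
The plan is to deduce the real statement from the complex one, \fref{thm:SchurDimComplex}, by passing to complexifications. Both sides of the claimed isomorphism depend only on the equivalence class of $(\cE,\cE_+,U)$, so I may assume $(\cE,\cE_+,U) = (L^2(\R,\cK_\C)^\sharp,H^2(\C_+,\cK_\C)^\sharp,S)$. The key observation is that $\sharp$ is an antiunitary involution of the \emph{complex} Hilbert space $L^2(\R,\cK_\C)$ whose fixed-point set is exactly $\cE$; hence $L^2(\R,\cK_\C)$ is the complexification $\cE_\C$ of $\cE$, with associated conjugation $\sharp$. On the Fourier side $\sharp$ acts by pointwise application of $\cC_\cK$, which preserves supports, so $\sharp$ leaves both $H^2(\C_+,\cK_\C)$ and $H^2(\C_-,\cK_\C)$ invariant; therefore $H^2(\C_+,\cK_\C) = \cE_+ + i\cE_+$ is the complexification of $\cE_+$, its orthogonal complement $H^2(\C_-,\cK_\C)$ is the complexification of $\cE_+^\perp$ (taken inside $\cE$), and the $S_t$ on $L^2(\R,\cK_\C)$ are the complexifications of the $S_t$ on $\cE$.

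Next I would invoke the canonical isomorphism of real $*$-algebras $B(\cE) \xrightarrow{\;\sim\;} \{B \in B(L^2(\R,\cK_\C)) : B\sharp = \sharp B\}$, $A \mapsto A_\C$. Under it $U_t$ corresponds to $S_t$ and $P_{\cE_+}$ to $P_{H^2(\C_+,\cK_\C)}$, so
\[(U(\R) \cup \{P_{\cE_+}\})' \;\cong\; \bigl\{B \in (S(\R) \cup \{P_{H^2(\C_+,\cK_\C)}\})' : B\sharp = \sharp B \bigr\}.\]
Applying \fref{thm:SchurDimComplex} to the complex ROPG $(L^2(\R,\cK_\C),H^2(\C_+,\cK_\C),S)$, whose multiplicity space is the complex Hilbert space $\cK_\C$, gives $(S(\R) \cup \{P_{H^2(\C_+,\cK_\C)}\})' = B(\cK_\C)\cdot M_{\mathbf 1}$, the multiplication operators by constant $B(\cK_\C)$-valued functions. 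A short computation yields $\sharp\,M_{C\cdot\mathbf 1}\,\sharp = M_{(\cC_\cK C \cC_\cK)\cdot\mathbf 1}$, so under the identification $B(\cK_\C)\cdot M_{\mathbf 1} \cong B(\cK_\C)$ conjugation by $\sharp$ becomes the conjugation $C \mapsto \cC_\cK C\cC_\cK$ of $B(\cK_\C)$. It then remains to identify the fixed-point real $*$-algebra of this conjugation with $B(\cK)^2$, which one does by unwinding the real structure $\cK \subseteq \cK_\C$.

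I expect the genuinely delicate point to be only this last identification of the $\cC_\cK$-fixed part of $B(\cK_\C)$ — which is where the passage from $B(\cK)$ in \fref{thm:SchurDimComplex} to $B(\cK)^2$ takes place; everything preceding it is routine but needs care, in particular: that $\sharp$ is the conjugation realizing $\cE$ as a real form of $L^2(\R,\cK_\C)$, that it is compatible with the Hardy-space splitting and with $S$, and that complexification turns the real commutant into the $\sharp$-invariant part of the complex commutant. Conversely, every $M_{C\cdot\mathbf 1}$ with $\cC_\cK C\cC_\cK = C$ preserves $\cE$ and commutes with $U(\R)$ and $P_{\cE_+}$, which gives the reverse containment and closes the argument.
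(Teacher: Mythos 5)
Your reduction to the standard model, the identification of \(L^2(\R,\cK_\C)\) as the complexification of \(\cE\) with conjugation \(\sharp\), the compatibility of \(\sharp\) with the Hardy-space splitting and with \(S\), and the translation of the real commutant into the \(\sharp\)-commuting part of \((S(\R)\cup\{P_{H^2(\C_+,\cK_\C)}\})'\) are all correct and run parallel to the paper's argument (you quietly use the equality \((S(\R)\cup\{P_{H^2(\C_+,\cK_\C)}\})' = B(\cK_\C)\cdot M_{\mathbf 1}\) from the \emph{proof} of the complex theorem rather than its statement, but that is harmless). The genuine gap is your final step. What your dictionary produces is \(\{M_{C\cdot\mathbf 1} : C\in B(\cK_\C),\ \cC_\cK C\cC_\cK = C\}\), and the fixed points of the conjugation \(C\mapsto \cC_\cK C\cC_\cK\) inside the \emph{complex-linear} operators \(B(\cK_\C)\) are exactly the complexifications \(D_\C\) with \(D\in B(\cK)\); this algebra is isomorphic to \(B(\cK)\), not to \(B(\cK)^2\) (for \(\cK=\R\) it is \(\R\), not \(\R^2\)). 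So ``unwinding the real structure'' cannot deliver the second copy of \(B(\cK)\): carried out honestly, your chain of identifications ends at \(B(\cK)\), and the last sentence of your proposal is not merely unproven but false as stated.

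The discrepancy sits exactly where your route meets the paper's final step. The paper passes from \(C\in\{\cC_\cK\}'\) to \(B(\cK)^2\) via \(\Phi(A,B)(x+iy)=Ax+iBy\), i.e.\ it reads \(\{\cC_\cK\}'\) inside the \emph{real-linear} maps of \(\cK_\C\); but the constants \(C\) produced by the Hardy-space argument (and by your complexification dictionary) are complex-linear, and a merely real-linear fiber map \(\Phi(A,B)\) with \(A\neq B\), applied pointwise on \(L^2(\R,\cK_\C)^\sharp\), does \emph{not} commute with \(S_t\): \(S_t\) multiplies pointwise by the non-real scalar \(e^{itx}\), and only complex-linear fiber maps commute with all such multiplications (test with \(C=\cC_\cK\), \(\cK=\R\), on a nowhere-vanishing element of \(H^2(\C_+)^\sharp\)). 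So along your route the commutant taken in the bounded \(\R\)-linear operators on \(\cE\) comes out as \(B(\cK)\), and there is no way to conjure the factor \(B(\cK)^2\) from the fixed-point algebra of \(\cC_\cK\) in \(B(\cK_\C)\). To close the argument you must either exhibit concretely which operators on \(\cE\), commuting with \(U(\R)\) and \(P_{\cE_+}\), realize the second copy of \(B(\cK)\) (your present framework shows there are none), or make explicit a different notion of commutant under which the stated formula is the one being proved; as written, the proposal does not establish the theorem.
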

\begin{proof}
Without loss of generality we can assume that \((\cE,\cE_+,U) = (L^2(\R,\cK_\C)^\sharp,H^2(\C_+,\cK_\C)^\sharp,S)\). Let \(A \in (S(\R) \cup \{P_{H^2(\C_+,\cK_\C)^\sharp}\})'\). Then \(A \in S(\R)'\), by \fref{prop:SCommutant}, implies that \(A=M_f\) with \({f \in L^\infty(\R,B(\cK_\C))}\). Further, \({A \in \{P_{H^2(\C_+,\cK_\C)^\sharp}\}'}\) implies
\[M_f H^2(\C_+,\cK_\C)^\sharp \subeq H^2(\C_+,\cK_\C)^\sharp \quad \text{and} \quad M_f H^2(\C_-,\cK_\C)^\sharp \subeq H^2(\C_-,\cK_\C)^\sharp.\]
Since \(H^2(\C_+,\cK_\C) = H^2(\C_+,\cK_\C)^\sharp + i H^2(\C_+,\cK_\C)^\sharp\), this implies
\[M_f H^2(\C_+,\cK_\C) \subeq H^2(\C_+,\cK_\C) \quad \text{and} \quad M_f H^2(\C_-,\cK_\C) \subeq H^2(\C_-,\cK_\C),\]
which, by \fref{prop:H2InclusionFunctions} and \fref{cor:HInftyIntersection}, yields \(f = C \cdot \textbf{1}\) for some \(C \in B(\cK_\C)\). Further, \(AL^2(\R,\cK_\C)^\sharp \subeq L^2(\R,\cK_\C)^\sharp\) implies that \([\cC_\cK,C] = 0\), so \(C \in \{\cC_\cK\}'\). Conversely, for every \(C \in \{\cC_\cK\}'\) one has \(M_{C \cdot \textbf{1}} \in (S(\R) \cup \{P_{H^2(\C_+,\cK_\C)^\sharp}\})'\), so
\[(S(\R) \cup \{P_{H^2(\C_+,\cK_\C)^\sharp}\})' = \{\cC_\cK\}' \cdot M_\textbf{1}.\]
The statement then follows by the fact that
\[\Phi: B(\cK)^2 \to \{\cC_\cK\}', \quad \Phi(A,B)(x+iy) = Ax+iBy, \quad A,B \in B(\cK), \quad x,y \in \cK\]
defines an isomorphism.
\end{proof}
\begin{lem}\label{lem:QSpaceReal}
Let \(\cK\) be a real Hilbert space and \(\lambda \in \C_+\). Then
\[\left(Q_\lambda \cdot \cK_\C\right) \cap H^2(\C_+,\cK_\C)^\sharp = \begin{cases} Q_\lambda \cdot \cK & \text{if } \lambda \in i\R_+ \\ \{0\} & \text{if } \lambda \in \C_+ \setminus i\R_+, \end{cases}\]
with \(Q_\lambda \in H^2(\C_+)\) defined by
\[Q_\lambda(z) \coloneqq \frac 1{2\pi i} \frac 1{z-\overline{\lambda}}\]
(cf. \fref{prop:H2RKHS}).
\end{lem}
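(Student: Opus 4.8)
The statement concerns the Hardy space $H^2(\C_+,\cK_\C)^\sharp$ with $\cK$ real, and asks to identify which scalar multiples $Q_\lambda \cdot v$ (with $v \in \cK_\C$) lie in this $\sharp$-invariant Hardy space. The plan is to unwind the two defining properties: membership in $H^2(\C_+,\cK_\C)$ and invariance under $\sharp$. The first is automatic since $Q_\lambda \in H^2(\C_+)$ (by \fref{prop:H2RKHS}) and $Q_\lambda \cdot v \in H^2(\C_+,\cK_\C)$ for any $v \in \cK_\C$. So the real content is: for which $\lambda \in \C_+$ and which $v$ is $(Q_\lambda \cdot v)^\sharp = Q_\lambda \cdot v$?

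**Computing the $\sharp$-involution on $Q_\lambda \cdot v$.** I would compute directly: for $f = Q_\lambda \cdot v$, one has $f^\sharp(x) = \cC_\cK f(-x) = \overline{Q_\lambda(-x)}\,\cC_\cK v$, using that $Q_\lambda(z) = \frac{1}{2\pi i}\frac{1}{z-\overline\lambda}$ takes values in $\C$ so $\cC_\cK$ acts only on $v$. Now $\overline{Q_\lambda(-x)} = \overline{\frac{1}{2\pi i}\frac{1}{-x-\overline\lambda}} = \frac{1}{-2\pi i}\frac{1}{-x-\lambda} = \frac{1}{2\pi i}\frac{1}{x+\lambda}$. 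Comparing, $Q_{-\overline{\lambda}}(x) = \frac{1}{2\pi i}\frac{1}{x - \overline{(-\overline\lambda)}} = \frac{1}{2\pi i}\frac{1}{x+\lambda}$, so $\overline{Q_\lambda(-x)} = Q_{-\overline\lambda}(x)$, and hence $f^\sharp = Q_{-\overline\lambda} \cdot (\cC_\cK v)$ as boundary functions on $\R$. Note $\lambda \in \C_+$ forces $-\overline\lambda \in \C_+$, consistent with $f^\sharp \in H^2(\C_+,\cK_\C)$.

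**Extracting the condition.** The equation $Q_\lambda \cdot v = Q_{-\overline\lambda} \cdot \cC_\cK v$ as elements of $H^2(\C_+,\cK_\C)$ splits into two regimes. If $v \neq 0$, then $Q_\lambda$ and $Q_{-\overline\lambda}$ must be proportional as functions; since the reproducing kernels $Q_\mu$ for distinct $\mu \in \C_+$ are linearly independent (they have poles at distinct points $\overline\mu$ in $\C_-$, or one can appeal to \fref{prop:H2RKHS}), proportionality forces $\lambda = -\overline\lambda$, i.e. $\Re\lambda = 0$, i.e. $\lambda \in i\R_+$. In that case $-\overline\lambda = \lambda$ and $Q_\lambda = Q_{-\overline\lambda}$, so the condition reduces to $v = \cC_\cK v$, i.e. $v \in \cK_\C^{\cC_\cK} = \cK$. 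Conversely, any $Q_\lambda \cdot v$ with $\lambda \in i\R_+$ and $v \in \cK$ is manifestly $\sharp$-fixed and lies in $H^2(\C_+,\cK_\C)$. This gives the first case $Q_\lambda \cdot \cK$. For $\lambda \in \C_+ \setminus i\R_+$, no nonzero $v$ can satisfy the proportionality constraint, so the intersection is $\{0\}$, giving the second case.

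**Main obstacle.** The only genuinely non-routine point is justifying that $Q_\lambda$ and $Q_{-\overline\lambda}$ are linearly independent in $H^2(\C_+)$ when $\lambda \neq -\overline\lambda$; this is where I would cite \fref{prop:H2RKHS} (linear independence of reproducing kernels at distinct points, equivalently the Gram matrix of $\{Q_\lambda, Q_{-\overline\lambda}\}$ being nonsingular), or simply observe that as holomorphic functions on $\C_+$ they have simple poles at the distinct points $\overline\lambda$ and $-\lambda$ of $\C_-$. Everything else is the bookkeeping identity $\overline{Q_\lambda(-x)} = Q_{-\overline\lambda}(x)$ together with the definition of the $\sharp$-involution and of $\cK_\C^{\cC_\cK} = \cK$.
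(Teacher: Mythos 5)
Your proposal is correct and follows essentially the same route as the paper: compute $(Q_\lambda\cdot v)^\sharp = Q_{-\overline{\lambda}}\cdot \cC_\cK v$, then deduce for $v\neq 0$ that $\lambda=-\overline{\lambda}$ (the paper phrases this as the Möbius ratio $\tfrac{z+\lambda}{z-\overline{\lambda}}$ being constant, which is the same distinct-poles observation you make), and finally reduce the case $\lambda\in i\R_+$ to $\cC_\cK v=v$. The only cosmetic difference is that you work with boundary values on $\R$ while the paper works directly with the holomorphic extension at $-\overline{z}\in\C_+$; both are fine.
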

\begin{proof}
For \(v \in \cK_\C\), one has
\begin{align*}
(Q_\lambda \cdot v)^\sharp(z) &= \cC_\cK(Q_\lambda(-\overline{z}) \cdot v) = \overline{Q_\lambda(-\overline{z})} \cdot \cC_\cK v
\\&= \overline{\frac 1{2\pi i} \frac 1{-\overline{z}-\overline{\lambda}}} \cdot \cC_\cK v = \frac 1{2\pi i} \frac 1{z+\lambda} \cdot \cC_\cK v = Q_{-\overline{\lambda}}(z) \cdot \cC_\cK v.
\end{align*}
Therefore \((Q_\lambda \cdot v)^\sharp = Q_\lambda \cdot v\), if and only if
\[Q_{-\overline{\lambda}}(z) \cdot \cC_\cK v = Q_{\lambda}(z) \cdot v \quad \forall z \in \C_+.\]
For \(v \neq 0\) this implies that the map
\[\C_+ \ni z \mapsto \frac{Q_{\lambda}(z)}{Q_{-\overline{\lambda}}(z)} = \frac{z+\lambda}{z-\overline{\lambda}}\]
is constant, which yields \(\lambda = -\overline{\lambda}\) and therefore \(\lambda \in i\R_+\). For \(\lambda \in i\R_+\) we then have \((Q_\lambda \cdot v)^\sharp = Q_\lambda \cdot v\), if and only if \(\cC_\cK v = v\). We conclude that \(Q_\lambda \cdot v \in H^2(\C_+,\cK_\C)^\sharp\) is equivalent to \(\lambda \in i\R_+\) and \(v \in \cK_\C^{\cC_\cK} \cong \cK\).
\end{proof}
\begin{lem}\label{lem:kernelAdjoint}
Let \(\cK\) be a Hilbert space and set
\[\cE_+ \coloneqq \begin{cases} H^2(\C_+,\cK) & \text{if } \cK \text{ complex} \\ H^2(\C_+,\cK_\C)^\sharp & \text{if } \cK \text{ real}. \end{cases}\]
Further, let
\[A \coloneqq p_{\cE_+} (\partial S) p_{\cE_+}^* \in B(\cE_+).\]
Then, if \(\cK\) is a complex Hilbert space, for every \(\lambda \in \C_+\), one has
\[\ker\left(\overline{\lambda}\textbf{1}-A^*\right) = Q_\lambda \cdot \cK,\]
 and if \(\cK\) is a real Hilbert space, one has
\[\ker\left(\overline{\lambda}\textbf{1}-A^*\right) = \begin{cases} Q_\lambda \cdot \cK & \text{if } \lambda \in i\R_+ \\ \{0\} & \text{if } \lambda \in \C_+ \setminus i\R_+. \end{cases}\]
\end{lem}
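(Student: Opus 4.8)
The plan is to identify $\partial S$ with the (generally unbounded, self-adjoint) multiplication operator $M_x$ by the spectral variable, so that $A$ is the compression of $M_x$ to the Hardy space, and then to compute the defect space through the general identity $\ker(\overline{\lambda}\textbf{1}-A^*)=\ker\big((A-\lambda\textbf{1})^*\big)=\big(\operatorname{ran}(A-\lambda\textbf{1})\big)^\perp$, valid for any densely defined operator. First I would record that $\partial S=M_x$: since $(S_tf)(x)=e^{itx}f(x)$, one has $\frac{1}{it}(S_t-\textbf{1})f(x)\to xf(x)$. Hence $\cD(A)=\{f\in\cE_+:xf\in L^2\}$, which is dense in $\cE_+$ (in the Fourier picture of \fref{thm:LaxPhillips} it is the set of $H^1$-functions supported on $[0,\infty)$). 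For $f\in\cD(A)$ I would check that $xf$ again lies in $\cE_+$ — in the Fourier picture $\widehat{xf}=i\widehat f\,'$ is again supported on $[0,\infty)$, which in particular forces $\widehat f(0)=0$; this is standard Hardy-space bookkeeping — so that $Af=p_{\cE_+}(M_xf)=xf$ and $(A-\lambda\textbf{1})f=(x-\lambda)f$, whose holomorphic extension to $\C_+$ is $z\mapsto(z-\lambda)F(z)$, with $F$ the holomorphic extension of $f$.

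The heart of the argument is then, in the complex case, the identity
\[\operatorname{ran}(A-\lambda\textbf{1})=\{h\in H^2(\C_+,\cK):h(\lambda)=0\}.\]
The inclusion ``$\subseteq$'' is immediate from the previous step. For ``$\supseteq$'', given $h$ with $h(\lambda)=0$ I would write $h=b_\lambda h_1$ with the scalar Blaschke factor $b_\lambda(z)=\frac{z-\lambda}{z-\overline{\lambda}}$; then $h_1=h/b_\lambda\in H^2(\C_+,\cK)$ (the zero of $h$ at $\lambda$ cancels the zero of $b_\lambda$), and $f\coloneqq h/(x-\lambda)=h_1/(x-\overline{\lambda})\in H^2(\C_+,\cK)$ because $(z-\overline{\lambda})^{-1}\in H^\infty(\C_+)$, while $xf=h+\lambda f\in L^2$, so $f\in\cD(A)$ and $(A-\lambda\textbf{1})f=h$. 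Now $\{h:h(\lambda)=0\}$ is exactly the orthogonal complement of $Q_\lambda\cdot\cK$: by the reproducing property of \fref{prop:H2RKHS}, $h(\lambda)=0$ if and only if $\braket*{Q_\lambda\cdot v}{h}=0$ for all $v\in\cK$, and $Q_\lambda\cdot\cK$ is closed since $v\mapsto Q_\lambda\cdot v$ is bounded below. Taking orthogonal complements in the displayed identity gives $\ker(\overline{\lambda}\textbf{1}-A^*)=Q_\lambda\cdot\cK$, which is the complex case.

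For a real Hilbert space $\cK$ I would run the same computation inside the $\sharp$-invariant subspace $\cE_+=H^2(\C_+,\cK_\C)^\sharp$, using $H^2(\C_+,\cK_\C)=H^2(\C_+,\cK_\C)^\sharp+iH^2(\C_+,\cK_\C)^\sharp$ to transfer the description of the image of $A-\lambda\textbf{1}$ from the complex case; this yields $\ker(\overline{\lambda}\textbf{1}-A^*)=(Q_\lambda\cdot\cK_\C)\cap H^2(\C_+,\cK_\C)^\sharp$, and \fref{lem:QSpaceReal} evaluates this intersection as $Q_\lambda\cdot\cK$ when $\lambda\in i\R_+$ and as $\{0\}$ otherwise. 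Beyond the standard but slightly delicate Hardy-space facts used above (that $xf\in H^2$ once $f\in H^2$ and $xf\in L^2$, and that $h/b_\lambda\in H^2$ when $h(\lambda)=0$), the point requiring most care is the $\sharp$-bookkeeping in the real case — the generator $M_x$ anticommutes with $\sharp$, so one must be precise about the ambient space in which the compression and the adjoint are formed — and this is exactly what \fref{lem:QSpaceReal} is designed to absorb.
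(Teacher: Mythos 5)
Your complex-case argument is correct, but it follows a genuinely different route from the paper's. The paper never computes the range of \(A-\lambda\textbf{1}\): it checks directly, via the reproducing kernel of \fref{prop:H2RKHS}, that each \(Q_\lambda\cdot v\) is an eigenvector of \(A^*\) for the eigenvalue \(\overline{\lambda}\), and for the converse it pairs a kernel element \(f\) with the test vectors \(Q_a\cdot v-Q_b\cdot v\in D(A)\), \(a,b\in\C_+\), which produces the two-point identity \((\overline{\lambda}-a)f(a)=(\overline{\lambda}-b)f(b)\) and hence \(f=Q_\lambda\cdot v\) with \(v=2\pi i(a-\overline{\lambda})f(a)\). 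You instead identify \(\operatorname{ran}(A-\lambda\textbf{1})\) exactly with the hyperplane \(\{h\in H^2(\C_+,\cK):h(\lambda)=0\}=(Q_\lambda\cdot\cK)^\perp\) and dualize via \(\ker(A^*-\overline{\lambda}\textbf{1})=\big(\operatorname{ran}(A-\lambda\textbf{1})\big)^\perp\). Your route needs the two Hardy-space facts you flag (that \(f\in H^2\) and \(xf\in L^2\) force \(xf\in H^2\), and division by the Blaschke factor, i.e.\ the vector-valued analogue of \fref{prop:BlaschkeFactorization}); in exchange you obtain the sharper statement that the range is closed and equal to the full hyperplane, and the surjectivity step is very transparent. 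The paper's route avoids any factorization and regularity bookkeeping, at the price of the somewhat ad hoc choice of test vectors.

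In the real case your plan coincides, in substance, with the paper's endgame (intersect with the \(\sharp\)-fixed space and apply \fref{lem:QSpaceReal}), but the phrase ``transfer the description of the image from the complex case'' hides the one point that actually needs saying. Since \(M_x\) anticommutes with \(\sharp\), the operator \(A-\lambda\textbf{1}\) does not act within \(\cE_+=H^2(\C_+,\cK_\C)^\sharp\), so the range/orthocomplement identity cannot be run inside the real space; one must explain why testing a \(\sharp\)-fixed \(f\) only against the real domain \(D(A)\cap\cE_+\) already forces the complex eigenvalue equation. The decomposition you invoke is exactly what delivers this: \(\sharp\) preserves \(\{g\in H^2(\C_+,\cK_\C):xg\in L^2\}\), so this domain equals its \(\sharp\)-fixed part plus \(i\) times it, and the condition \(\braket*{f}{(A-\lambda\textbf{1})g}=0\), being complex-linear in \(g\), extends from the real part to the whole domain — but this should be spelled out, and note that \fref{lem:QSpaceReal} only evaluates the intersection \((Q_\lambda\cdot\cK_\C)\cap H^2(\C_+,\cK_\C)^\sharp\); it does not absorb this adjoint bookkeeping. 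The paper handles the same point by rerunning its computation with test vectors \(Q_a\cdot v-Q_b\cdot v\), \(a,b\in i\R_+\), \(v\in\cK\), extending to \(v\in\cK_\C\) by linearity and then passing from \(i\R_+\) to all of \(\C_+\) with the identity theorem.
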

\begin{proof}
We first consider the case that \(\cK\) is a complex Hilbert space. Since
\[\partial S = \lim_{t \to 0} \frac{S_t - \textbf{1}}{it} = \lim_{t \to 0} \frac{M_{e^{it \Id}} - \textbf{1}}{it} = M_{\Id},\]
for \(f \in D(A)\), we have \((Af)(z) = z f(z)\). Then, for \(\lambda \in \C_+\), \(v \in \cK\) and \(f \in D(A)\), by \fref{prop:H2RKHS}, we get
\[\braket*{Q_\lambda \cdot v}{Af} = \braket*{v}{\lambda f(\lambda)}_\cK = \lambda \braket*{v}{f(\lambda)}_\cK = \lambda \braket*{Q_\lambda \cdot v}{f} = \braket*{\overline{\lambda} Q_\lambda \cdot v}{f},\]
so \(Q_\lambda \cdot v \in D(A^*)\) with \(A^*Q_\lambda \cdot v = \overline{\lambda}Q_\lambda \cdot v\) and therefore \(Q_\lambda \cdot v \in \ker\left(\overline{\lambda}\textbf{1}-A^*\right)\).

Now, conversely, let \(f \in \ker\left(\overline{\lambda}\textbf{1}-A^*\right)\). For every \(v \in \cK\) and \(a,b \in \C_+\), we have
\[(Q_a \cdot v-Q_b \cdot v)(z) = \frac{1}{2 \pi i} \left(\frac{1}{z-\overline{a}}-\frac{1}{z-\overline{b}}\right) \cdot v = \frac{1}{2 \pi i} \frac{\overline{a}-\overline{b}}{(z-\overline{a})(z-\overline{b})} \cdot v,\]
so \(Q_a \cdot v-Q_b \cdot v \in D(A)\) with
\begin{align*}
(A(Q_a \cdot v-Q_b \cdot v))(z) &= z \cdot \frac{1}{2 \pi i} \frac{\overline{a}-\overline{b}}{(z-\overline{a})(z-\overline{b})} \cdot v 
\\&= \frac{1}{2 \pi i} \left(\frac{\overline{a}}{z-\overline{a}}-\frac{\overline{b}}{z-\overline{b}}\right) \cdot v = (\overline{a}Q_a \cdot v-\overline{b}Q_b \cdot v)(z).
\end{align*}
This leads to
\begin{align*}
\braket*{v}{\overline{\lambda}f(a)-\overline{\lambda}f(b)}_\cK &= \braket*{Q_a \cdot v-Q_b \cdot v}{\overline{\lambda}f} = \braket*{Q_a \cdot v-Q_b \cdot v}{A^*f} 
\\&= \braket*{A(Q_a \cdot v-Q_b \cdot v)}{f} = \braket*{\overline{a}Q_a \cdot v-\overline{b}Q_b \cdot v}{f}
\\&= a\braket*{v}{f(a)}_\cK-b \braket*{v}{f(b)}_\cK = \braket*{v}{a f(a)-b f(b)}_\cK.
\end{align*}
Since this is true for every \(v \in \cK\), we get
\[(\overline{\lambda}-a)f(a) = (\overline{\lambda}-b)f(b)\]
and therefore, since \(Q_\lambda(b) = \frac 1{2\pi i} \frac 1{b-\overline{\lambda}}\), we have
\[f(b) = \frac{a-\overline{\lambda}}{b-\overline{\lambda}}f(a) = Q_\lambda(b) \cdot 2\pi i(a - \overline{\lambda})f(a) = (Q_\lambda \cdot v)(b)\]
with \(v \coloneqq 2\pi i(a - \overline{\lambda})f(a) \in \cK\). This shows that
\[\ker\left(\overline{\lambda}\textbf{1}-A^*\right) = Q_\lambda \cdot \cK.\]
Now, in the case of a real Hilbert space \(\cK\), analogously to the complex case, one shows that \(Q_\lambda \cdot v \in \ker\left(\overline{\lambda}\textbf{1}-A^*\right)\) if \(Q_\lambda \cdot v \in H^2(\C_+,\cK_\C)^\sharp\), so if \(\lambda \in i\R_+\) and \(v \in \cK\) (cf. \fref{lem:QSpaceReal}). Also, as in the complex case, one shows that, for every \(f \in \ker\left(\overline{\lambda}\textbf{1}-A^*\right)\), one has
\[\braket*{v}{\overline{\lambda}f(a)-\overline{\lambda}f(b)}_{\cK_\C} = \braket*{v}{a f(a)-b f(b)}_{\cK_\C}\]
for every \(a,b \in i\R_+\) and \(v \in \cK\). But since \(\cK = \C \otimes_\R \cK\), this also implies
\[\braket*{v}{\overline{\lambda}f(a)-\overline{\lambda}f(b)}_{\cK_\C} = \braket*{v}{a f(a)-b f(b)}_{\cK_\C}\]
for every \(v \in \cK_\C\) and therefore, as in the complex case, one concludes that
\[f(b) = (Q_\lambda \cdot v)(b)\]
with \(v \coloneqq 2\pi i(a - \overline{\lambda})f(a)\) for every \(b \in i\R_+\). Since both functions \(f\) and \(Q_\lambda \cdot v\) are holomorphic and coincide on \(i\R_+\), they are equal, so \(f \in Q_\lambda \cdot \cK_\C\). Using \fref{lem:QSpaceReal}, this shows that
\[\ker\left(\overline{\lambda}\textbf{1}-A^*\right) = (Q_\lambda \cdot \cK_\C) \cap H^2(\C_+,\cK_\C)^\sharp = \begin{cases} Q_\lambda \cdot \cK & \text{if } \lambda \in i\R_+ \\ \{0\} & \text{if } \lambda \in \C_+ \setminus i\R_+. \end{cases} \qedhere\]
\end{proof}
\begin{thm}\label{thm:kernelAdjointGeneral}
Let \((\cE,\cE_+,U)\) be a complex ROPG and \(\cK\) a complex Hilbert space such that \((\cE,\cE_+,U)\) is equivalent to \((L^2(\R,\cK),H^2(\C_+,\cK),S)\). Further, let
\[A \coloneqq p_{\cE_+} (\partial U) p_{\cE_+}^* \in B(\cE_+).\]
Then, for every \(\lambda \in \C_+\), one has
\[\cK \cong \ker\left(\overline{\lambda}\textbf{1}-A^*\right).\]
\end{thm}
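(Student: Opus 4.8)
The plan is to reduce to the concrete model provided by the Lax--Phillips Theorem and then quote \fref{lem:kernelAdjoint}, which already carries all the analytic content. By hypothesis there is a unitary $\psi\colon\cE\to L^2(\R,\cK)$ with $\psi(\cE_+)=H^2(\C_+,\cK)$ and $\psi U_t\psi^{-1}=S_t$ for all $t\in\R$. Since the conclusion $\cK\cong\ker(\overline{\lambda}\mathbf{1}-A^*)$ is unchanged if we transport $(\cE,\cE_+,U)$ along such a $\psi$, I may assume $(\cE,\cE_+,U)=(L^2(\R,\cK),H^2(\C_+,\cK),S)$. The one thing to check in this reduction is that $\psi$ carries $A=p_{\cE_+}(\partial U)p_{\cE_+}^*$ to $p_{H^2(\C_+,\cK)}(\partial S)p_{H^2(\C_+,\cK)}^*$, together with the analogous statement for the adjoints: indeed $\psi$ intertwines $U$ with $S$, hence the generators $\partial U$ and $\partial S$ on matching domains, while $\psi|_{\cE_+}$ is a unitary of $\cE_+$ onto $H^2(\C_+,\cK)$ that maps $\cE_+^\perp$ onto $H^2(\C_+,\cK)^\perp$, so it conjugates $p_{\cE_+}$ and $p_{\cE_+}^*$ into $p_{H^2(\C_+,\cK)}$ and $p_{H^2(\C_+,\cK)}^*$; consequently $A^*$ and $\ker(\overline{\lambda}\mathbf{1}-A^*)$ are transported unitarily as well. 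Since $\partial U$ is unbounded, this bookkeeping should be carried out at the level of densely defined operators (using that for a unitary $V$ and a densely defined operator $A$ one has $(VAV^{-1})^*=VA^*V^{-1}$).

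With this reduction in place, \fref{lem:kernelAdjoint} (the complex case) applies verbatim and gives $\ker(\overline{\lambda}\mathbf{1}-A^*)=Q_\lambda\cdot\cK$ for every $\lambda\in\C_+$. It then remains to identify $Q_\lambda\cdot\cK$ with $\cK$ as a Hilbert space. The map $\iota_\lambda\colon\cK\to H^2(\C_+,\cK)$, $v\mapsto Q_\lambda\cdot v$, is linear with range exactly $Q_\lambda\cdot\cK$, and by the reproducing property of $Q_\lambda$ (\fref{prop:H2RKHS}) one computes $\braket*{Q_\lambda\cdot v}{Q_\lambda\cdot w}=c_\lambda\braket*{v}{w}_\cK$ for a fixed scalar $c_\lambda$ (namely $c_\lambda=Q_\lambda(\lambda)$); since the left-hand side is a genuine inner product, $c_\lambda>0$. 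Hence $c_\lambda^{-1/2}\iota_\lambda$ is an isometry of $\cK$ onto $Q_\lambda\cdot\cK$; in particular $Q_\lambda\cdot\cK$ is closed (consistently with its being the kernel of a closed operator), and $\cK\cong Q_\lambda\cdot\cK=\ker(\overline{\lambda}\mathbf{1}-A^*)$, which is the assertion.

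I do not expect a real obstacle here: essentially all the work is already contained in \fref{lem:kernelAdjoint}, and the present statement is just its coordinate-free reformulation. The only points that warrant a little care are the routine verification that the Lax--Phillips isomorphism genuinely intertwines the compressed, unbounded generators and their adjoints (hence their eigenspaces), and the elementary observation that multiplication by the reproducing kernel $Q_\lambda$ is, up to a positive constant, a unitary from $\cK$ onto its range.
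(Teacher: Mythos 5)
Your argument is correct and is essentially the paper's proof: the paper simply reduces to the standard model and invokes \fref{lem:kernelAdjoint}, exactly as you do, with the transport of the compressed generator under the Lax--Phillips unitary and the identification \(Q_\lambda\cdot\cK\cong\cK\) (via \(\braket*{Q_\lambda\cdot v}{Q_\lambda\cdot w}=Q_\lambda(\lambda)\braket*{v}{w}_\cK\), \(Q_\lambda(\lambda)=\tfrac{1}{4\pi\,\Im\lambda}>0\)) left implicit. Your added bookkeeping is sound and fills in exactly those routine steps.
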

\begin{proof}
This follows immediately from \fref{lem:kernelAdjoint}.
\end{proof}
\begin{thm}\label{thm:kernelAdjointGeneralReal}
Let \((\cE,\cE_+,U)\) be a real ROPG and \(\cK\) a real Hilbert space such that \((\cE,\cE_+,U)\) is equivalent to \((L^2(\R,\cK_\C)^\sharp,H^2(\C_+,\cK_\C)^\sharp,S)\). Further, let
\[A \coloneqq p_{\cE_+} (\partial U) p_{\cE_+}^* \in B(\cE_+).\]
Then, for every \(\lambda \in i\R_+\), one has
\[\cK \cong \ker\left(\overline{\lambda}\textbf{1}-A^*\right).\]
\end{thm}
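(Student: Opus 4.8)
The plan is to run exactly the argument that proves \fref{thm:kernelAdjointGeneral}, adapted to the real setting, so that the statement becomes an immediate consequence of the real part of \fref{lem:kernelAdjoint}. First I would use the hypothesis (or, equivalently, \fref{thm:LaxPhillipsReal}) to fix an isomorphism of ROPGs $\psi\colon\cE\to L^2(\R,\cK_\C)^\sharp$ with $\psi(\cE_+) = H^2(\C_+,\cK_\C)^\sharp$ and $\psi\circ U_t = S_t\circ\psi$ for all $t\in\R$. Conjugation by $\psi$ sends the generator $\partial U$ to $\partial S$, and since $\psi$ restricts to a unitary $\psi_+\colon\cE_+\to H^2(\C_+,\cK_\C)^\sharp$ one checks directly that $p_{\cE_+'}\psi = \psi_+ p_{\cE_+}$ (and correspondingly for the adjoints), where $\cE_+' = H^2(\C_+,\cK_\C)^\sharp$. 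Hence $\psi_+$ intertwines $A = p_{\cE_+}(\partial U)p_{\cE_+}^*$ with the operator $A' \coloneqq p_{\cE_+'}(\partial S)p_{\cE_+'}^*$ appearing in \fref{lem:kernelAdjoint}, and therefore also their adjoints; in particular $\ker\bigl(\overline\lambda\textbf{1}-A^*\bigr) = \psi_+^{-1}\ker\bigl(\overline\lambda\textbf{1}-(A')^*\bigr)$ is unitarily isomorphic to the kernel computed there.

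Next I would simply quote the real case of \fref{lem:kernelAdjoint}: for $\lambda\in i\R_+$ it yields $\ker\bigl(\overline\lambda\textbf{1}-A^*\bigr)\cong Q_\lambda\cdot\cK$, with $Q_\lambda\in H^2(\C_+)$ the reproducing kernel from \fref{prop:H2RKHS}. (For $\lambda\in\C_+\setminus i\R_+$ the same lemma gives $\{0\}$, which is precisely why the assertion is restricted to purely imaginary $\lambda$: the $\sharp$-invariant real form only sees the imaginary half-axis of $\C_+$.)

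Finally I would identify $Q_\lambda\cdot\cK$ with $\cK$ as a real Hilbert space by the map $v\mapsto \lVert Q_\lambda\rVert_{H^2(\C_+)}^{-1}\,Q_\lambda\cdot v$. This map is linear, it is isometric because a direct-integral computation gives $\langle Q_\lambda\cdot v,\,Q_\lambda\cdot w\rangle = \lVert Q_\lambda\rVert_{H^2(\C_+)}^2\,\langle v,w\rangle_{\cK_\C}$ (and $\lVert Q_\lambda\rVert_{H^2(\C_+)}$ is finite and nonzero since $Q_\lambda\in H^2(\C_+)\setminus\{0\}$), and by \fref{lem:QSpaceReal} it carries $\cK = \cK_\C^{\cC_\cK}$ onto $(Q_\lambda\cdot\cK_\C)\cap H^2(\C_+,\cK_\C)^\sharp = Q_\lambda\cdot\cK$, again using $\lambda\in i\R_+$. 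Composing the three steps gives $\cK\cong\ker\bigl(\overline\lambda\textbf{1}-A^*\bigr)$. I do not expect a real obstacle: all the analytic substance is already carried by \fref{lem:kernelAdjoint} and \fref{lem:QSpaceReal}, and the only points requiring a little care are that unitary equivalence of ROPGs genuinely transports the compressed generator $A$ (so that it may be replaced by its Lax--Phillips model) and the elementary norm identity that promotes the bijection $\cK\to Q_\lambda\cdot\cK$ to a unitary.
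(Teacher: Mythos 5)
Your proposal is correct and follows essentially the same route as the paper, whose proof of this theorem is simply to invoke \fref{lem:kernelAdjoint}; you merely spell out the routine steps (transport of the compressed generator under the Lax--Phillips isomorphism and the unitary identification \(\cK \cong Q_\lambda\cdot\cK\)) that the paper leaves implicit.
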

\begin{proof}
This follows immediately from \fref{lem:kernelAdjoint}.
\end{proof}
Since the kernel \(\ker\left(\overline{\lambda}\textbf{1}-A^*\right)\) appearing in the last two theorems just depends on \((\cE,\cE_+,U)\) and not on the choice of \(\cK\), this theorem shows that \(\cK\) is unique up to isomorphism. We therefore can formulate the following definition:
\begin{definition}
For every complex/real ROPG \((\cE,\cE_+,U)\), by the Lax--Phillips Theorem (\fref{thm:LaxPhillipsComplex}/\fref{thm:LaxPhillipsReal}) and \fref{thm:kernelAdjointGeneral}/\fref{thm:kernelAdjointGeneralReal}, there exists an --- up to isomorphism --- unique complex/real Hilbert space \(\cK\) such that the ROPG \((\cE,\cE_+,U)\) is equivalent to \((L^2(\R,\cK),H^2(\C_+,\cK),S)\) or \((L^2(\R,\cK_\C)^\sharp,H^2(\C_+,\cK_\C)^\sharp,S)\) respectively. We call this Hilbert space \(\cK\) the \textit{multiplicity space of \((\cE,\cE_+,U)\)}.
\end{definition}
In order to not having to prove everything separately for real and complex ROPGs, we now show how they are linked to each other:
\begin{prop}\label{prop:ROPGRealToComplex}
Let \(\cE\) be a real Hilbert space, \(\cE_+ \subeq \cE\) be a closed subspace and \(U: \R \to \U(\cE)\) be a unitary one-parameter group. Then \((\cE,\cE_+,U)\) is a real ROPG, if and only if \((\cE_\C,(\cE_+)_\C,U_\C)\) is a complex ROPG. In this case, if the multiplicity space of \((\cE,\cE_+,U)\) is \(\cK\), then the multiplicity space of \((\cE_\C,(\cE_+)_\C,U_\C)\) is given by \(\cK_\C\).
\end{prop}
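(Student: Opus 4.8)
The plan is to first establish the ROPG equivalence using the reformulation in \fref{rem:ROPGAlternative}, and then to deduce the statement about the multiplicity space from the Lax--Phillips Theorem.

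\emph{Step 1: the equivalence.} By \fref{rem:ROPGAlternative}, $(\cE,\cE_+,U)$ is a real ROPG if and only if $U_t\cE_+\subeq\cE_+$ for all $t\geq 0$, $\bigcap_{t\in\R}U_t\cE_+=\{0\}$ and $\overline{\bigcup_{t\in\R}U_t\cE_+}=\cE$, and the same characterization applies to $(\cE_\C,(\cE_+)_\C,U_\C)$. Since $(U_\C)_t$ is the complexification of $U_t$, one has $(U_\C)_t(\cE_+)_\C=(U_t\cE_+)_\C$, so I would reduce everything to three elementary facts about complexification of (closed) real subspaces of $\cE$: (i) $W_\C\cap\cE=W$, so that $V_\C\subeq W_\C$ implies $V\subeq W$ while the converse inclusion is trivial; (ii) $\bigcap_j(W_j)_\C=\bigl(\bigcap_jW_j\bigr)_\C$ for any family $\{W_j\}$, which follows from the uniqueness of the decomposition $z=x+iy$ with $x,y\in\cE$; and (iii) $\overline{W_\C}=(\overline W)_\C$, obtained by taking real and imaginary parts of approximating sequences. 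Since $t\mapsto U_t\cE_+$ is decreasing (for $s\leq t$ one has $U_t\cE_+=U_sU_{t-s}\cE_+\subeq U_s\cE_+$), the union $\bigcup_tU_t\cE_+$ runs over a chain, so complexification distributes over it as well. Combining (i)--(iii), all three defining conditions transfer in both directions, which proves the equivalence.

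\emph{Step 2: the multiplicity space.} Suppose $(\cE,\cE_+,U)$ is a real ROPG with multiplicity space $\cK$. By \fref{thm:LaxPhillipsReal} there is an isomorphism of real ROPGs $\psi$ from $(\cE,\cE_+,U)$ onto $(L^2(\R,\cK_\C)^\sharp,H^2(\C_+,\cK_\C)^\sharp,S)$, and its complexification $\psi_\C$ is an isomorphism of complex ROPGs from $(\cE_\C,(\cE_+)_\C,U_\C)$ onto $\bigl((L^2(\R,\cK_\C)^\sharp)_\C,(H^2(\C_+,\cK_\C)^\sharp)_\C,S_\C\bigr)$. The key point is then the general fact that, for an anti-unitary involution $\sigma$ on a complex Hilbert space $\cX$, the natural map $(\cX^\sigma)_\C\to\cX$ is a complex-unitary isomorphism, because $\cX=\cX^\sigma\oplus i\cX^\sigma$ is a real orthogonal direct sum. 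Applied to $\cX=L^2(\R,\cK_\C)$ and $\sigma=\sharp$, this identifies $(L^2(\R,\cK_\C)^\sharp)_\C$ with $L^2(\R,\cK_\C)$; under this identification $(H^2(\C_+,\cK_\C)^\sharp)_\C$ corresponds to $H^2(\C_+,\cK_\C)$ — using $H^2(\C_+,\cK_\C)=H^2(\C_+,\cK_\C)^\sharp+iH^2(\C_+,\cK_\C)^\sharp$ as in the proof of \fref{thm:SchurDimReal} — and $S_\C$ corresponds to $S$. Hence $(\cE_\C,(\cE_+)_\C,U_\C)$ is equivalent to $(L^2(\R,\cK_\C),H^2(\C_+,\cK_\C),S)$, so by the definition of the multiplicity space (via \fref{thm:LaxPhillipsComplex}) its multiplicity space is $\cK_\C$.

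\emph{Main obstacle.} There is little conceptual content; the work is the bookkeeping around complexification — in particular checking that $(\,\cdot\,)_\C$ commutes with $\bigcap$, with $\overline{\bigcup}$, and with restriction to $\cE$, and the verification that the identification $(\cX^\sigma)_\C\cong\cX$ carries the Hardy subspace and the group $S$ to the expected objects. A more computational alternative to Step 2 would apply \fref{thm:kernelAdjointGeneralReal} and \fref{thm:kernelAdjointGeneral}: the compression operator attached to $(\cE_\C,(\cE_+)_\C,U_\C)$ is the complexification $A_\C$ of the one attached to $(\cE,\cE_+,U)$, so that for $\lambda\in i\R_+$ one gets $\ker(\overline\lambda\textbf{1}-(A_\C)^*)=\bigl(\ker(\overline\lambda\textbf{1}-A^*)\bigr)_\C\cong\cK_\C$; this route, however, first requires making precise how $\partial U$ and $p_{\cE_+}$ are to be read in the real case.
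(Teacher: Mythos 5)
Your proposal is correct and follows essentially the same route as the paper: the equivalence is obtained by checking that complexification commutes with the invariance, intersection and closed-union conditions (your facts (i)--(iii) just spell out what the paper states tersely), and the multiplicity statement is exactly the paper's argument of complexifying the Lax--Phillips isomorphism $\psi$ and identifying $(L^2(\R,\cK_\C)^\sharp)_\C\cong L^2(\R,\cK_\C)$ and $(H^2(\C_+,\cK_\C)^\sharp)_\C\cong H^2(\C_+,\cK_\C)$. No gaps worth flagging.
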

\begin{proof}
For \(t \in \R\), we have
\[(U_t)_\C (\cE_+)_\C = (U_t)_\C (\cE_+ + i \cE_+) = U_t \cE_+ + i U_t\cE_+,\]
which immediately implies that \((U_t)_\C (\cE_+)_\C \subeq (\cE_+)_\C\), if and only if \(U_t \cE_+ \subeq \cE_+\). Further, we get
\[\bigcap_{t \in \R} (U_t)_\C (\cE_+)_\C = \left(\bigcap_{t \in \R} U_t \cE_+\right) + i \left(\bigcap_{t \in \R} U_t \cE_+\right)\]
and
\[\bigcup_{t \in \R} (U_t)_\C (\cE_+)_\C = \left(\bigcup_{t \in \R} U_t \cE_+\right) + i \left(\bigcup_{t \in \R} U_t \cE_+\right).\]
This proves that \((\cE,\cE_+,U)\) is a real ROPG, if and only if \((\cE_\C,(\cE_+)_\C,U_\C)\) is a complex ROPG.

Now let \((\cE,\cE_+,U)\) be a real ROPG with multiplicity space \(\cK\). Then there exists a isomorphism \(\psi\) of the real ROPG \((\cE,\cE_+,U)\) and \((L^2(\R,\cK_\C)^\sharp,H^2(\C_+,\cK_\C)^\sharp,S)\). Then, the map \(\psi_\C\) is an isomorphism of complex ROPG. This, together with the fact that
\[L^2(\R,\cK_\C) = L^2(\R,\cK_\C)^\sharp + i L^2(\R,\cK_\C)^\sharp \cong L^2(\R,\cK_\C)^\sharp_\C\]
and
\[H^2(\C_+,\cK_\C) = H^2(\C_+,\cK_\C)^\sharp + i H^2(\C_+,\cK_\C)^\sharp \cong H^2(\C_+,\cK_\C)^\sharp_\C\]
implies that the complex ROPG \((\cE_\C,(\cE_+)_\C,U_\C)\) is equivalent to \((L^2(\R,\cK_\C),H^2(\C_+,\cK_\C),S)\) and therefore has multiplicity space \(\cK_\C\).
\end{proof}
In the following sections, we will always first prove our results just for complex ROPGs and will later translate our final theorems also to the real case using \fref{prop:ROPGRealToComplex}.

\subsection{Independence of the multiplicity space from \(\cE_+\)}
In this section, we want to answer the question of whether the multiplicity space of a ROPG \((\cE,\cE_+,U)\) depends on the subspace \(\cE_+\) when one fixes the Hilbert space \(\cE\) and the unitary one-parameter group \(U\). For this we consider our standard example \((L^2(\R,\cK),H^2(\C_+,\cK),S)\) and want to vary the space \(\cE_+\), so we consider ROPGs of the form \((L^2(\R,\cK),\cE_+,S)\) for a closed subspace \(\cE_+ \subeq L^2(\R,\cK)\) and want to calculate their multiplicity space.

The theorem by Halmos (\cite[Thm. 3]{Ha61}) that we will use to solve this problem has been formulated for the Hardy space on the unit disc \(\D\) instead of the Hardy space on the upper half-plane \(\C_+\), but via Cayley transform we will be able to switch between these two pictures. To formulate this theorem by Halmos, we need some definitions:
\begin{definition}
Let \(\cK\) be a complex Hilbert space.
\begin{enumerate}[\rm (a)]
\item We write
\[L^2(\T,\cK) \coloneqq L^2(\T,\cK,\mu) \quad \text{and} \quad L^\infty(\T,B(\cK)) \coloneqq L^\infty(\T,B(\cK),\mu),\]
where \(\mu\) is the Haar measure on the group \(\T\) normalized by \(\mu(\T) = 1\). Further, we write \(H^2(\D,\cK) \subeq L^2(\T,\cK)\) for the corresponding Hardy space on the disc (cf. \fref{def:HardySpaceDisc}).
\item For \(\Omega = \T,\R\), we call \(F \in L^\infty(\Omega,\mathrm{B}(\cK))\) \textit{rigid}, if there exists a subspace \(\fU \subeq \cK\) such that for almost every \(z \in \Omega\) the operator \(F(z)\) is a partial isometry with initial subspace \(\fU\).
\end{enumerate}
\end{definition}
\begin{definition}
Let \(\cH\) be a Hilbert space. We then say a closed subspace \(\fM \subeq \cH\) \textit{reduces an operator \(A \in \mathrm{B}(\cH)\)} if \([A,P_\fM] = 0\). We say that a closed subspace \(\fM \subeq \cH\) is \textit{irreducible for \(A\)}, if no non-trivial subspace of \(\fM\) reduces \(A\).
\end{definition}
\begin{theorem}\label{thm:Halmos}{\rm(cf. \cite[Thm. 3]{Ha61})}
Let \(\cK\) be a complex Hilbert space and let \(\fM \subeq L^2(\T,\cK)\) be a closed subspace. Then \(\fM\) is irreducible for the operator \(M_{\Id_\T} \in \mathrm{B}\left(L^2(\T,\cK)\right)\) and satisfies \(M_{\Id_\T} \fM \subeq \fM\), if and only if there exists a rigid function \(F \in L^\infty(\T,\mathrm{B}(\cK))\) such that
\[\fM = M_F H^2(\D,\cK).\]
\end{theorem}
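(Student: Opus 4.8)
This is the vector-valued Beurling--Lax--Halmos theorem in the boundary ($\T$) picture, so I would split the proof into the two implications, essentially all the work sitting in the ``only if'' direction. For the ``if'' direction, suppose $\fM = M_F H^2(\D,\cK)$ with $F \in L^\infty(\T,B(\cK))$ rigid, say $F(x)$ is a.e.\ a partial isometry with initial space $\fU$. Since $M_{\Id_\T}$ commutes with $M_F$ and multiplication by $\Id_\T$ preserves $H^2(\D,\cK)$, the inclusion $M_{\Id_\T}\fM\subseteq\fM$ is immediate. For irreducibility, let $\fN\subseteq\fM$ be closed with $[M_{\Id_\T},P_\fN]=0$; as $M_{\Id_\T}$ is unitary this gives $M_{\Id_\T}^n\fN=\fN$ for all $n\in\Z$, hence $\fN\subseteq\bigcap_{n\ge0}M_{\Id_\T}^n\fM$. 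Writing $F(x)=F(x)P_\fU$ shows $\fM = M_F H^2(\D,\fU)$ with $M_F$ isometric on $L^2(\T,\fU)$, and $\bigcap_{n\ge0}\Id_\T^n H^2(\D,\fU)=\{0\}$ (no nonzero $H^2$-function vanishes to infinite order at the origin), so applying the isometry $M_F$ forces $\fN=\{0\}$; thus $\fM$ is irreducible for $M_{\Id_\T}$.

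For the ``only if'' direction, assume $M_{\Id_\T}\fM\subseteq\fM$ and $\fM$ irreducible for $M_{\Id_\T}$. Step 1 (purity): the subspace $\fM_r:=\bigcap_{n\ge0}M_{\Id_\T}^n\fM$ satisfies $M_{\Id_\T}\fM_r=\fM_r$, hence $[M_{\Id_\T},P_{\fM_r}]=0$ since $M_{\Id_\T}$ is unitary, so $\fM_r\subseteq\fM$ is a reducing subspace and thus $\fM_r=\{0\}$ by irreducibility. Step 2 (Wold decomposition): then $V:=M_{\Id_\T}|_\fM$ is a pure isometry of $\fM$, so with the wandering subspace $\fW:=\fM\ominus M_{\Id_\T}\fM$ the Wold decomposition gives the orthogonal sum $\fM=\bigoplus_{n\ge0}M_{\Id_\T}^n\fW$, and the spaces $M_{\Id_\T}^n\fW$, $n\in\Z$, are mutually orthogonal because $M_{\Id_\T}$ is unitary on $L^2(\T,\cK)$. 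Step 3 (pointwise orthonormality): fix an orthonormal basis $(w_j)_j$ of $\fW$, countable by separability of all Hilbert spaces; for $n\ge1$ one has $M_{\Id_\T}^n w_j\in M_{\Id_\T}\fM\perp\fW$, so the integrable function $x\mapsto\langle w_j(x),w_k(x)\rangle_\cK$ has all nonzero Fourier coefficients equal to zero, hence coincides a.e.\ with its mean $\langle w_j,w_k\rangle=\delta_{jk}$; thus $(w_j(x))_j$ is an orthonormal system in $\cK$ for a.e.\ $x$, and in particular $\dim\fW\le\dim\cK$.

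Step 4 (the rigid symbol): choose a closed subspace $\fU\subseteq\cK$ with an orthonormal basis $(e_j)_j$ indexed by the same set, and define $F(x)\in B(\cK)$ by $F(x)e_j:=w_j(x)$ and $F(x)|_{\fU^\perp}:=0$; by Step 3 this is a.e.\ a partial isometry with initial space $\fU$, it is weak-$*$ measurable as the pointwise strong limit of the finite-rank weak-$*$ measurable maps $x\mapsto\sum_{j\le N}\langle e_j,\cdot\rangle\,w_j(x)$, and $\|F(x)\|\le1$, so $F\in L^\infty(\T,B(\cK))$ is rigid. Step 5 (identification): since $F(x)=F(x)P_\fU$, $M_F H^2(\D,\cK)$ is the closed linear span of $\{x\mapsto x^nF(x)e_j:n\ge0,\,j\}$, i.e.\ of $\bigcup_{n\ge0}M_{\Id_\T}^n\fW$, which by the orthogonality of the $M_{\Id_\T}^n\fW$ and Step 2 equals $\bigoplus_{n\ge0}M_{\Id_\T}^n\fW=\fM$. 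The main obstacle is the ``only if'' direction, and within it the passage from the abstract wandering subspace $\fW$ to the concrete rigid symbol $F$: the key enabling step is the Fourier computation of Step 3 (which uses $\T$-harmonic analysis and, via separability, allows the a.e.\ identities to be combined over the countably many basis vectors), after which the dimension bound $\dim\fW\le\dim\cK$, the measurability and boundedness of $F$, and the final identification are all routine. Alternatively, the ``only if'' direction may simply be quoted as the vector-valued Beurling--Lax--Halmos theorem.
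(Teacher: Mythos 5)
The paper itself contains no proof of this statement: it is quoted directly from Halmos (\cite[Thm.~3]{Ha61}), so there is no internal argument to compare yours against. Your proof is correct, and it is essentially Halmos's original Beurling--Lax--Halmos argument: purity of $M_{\Id_\T}\big|_\fM$ forced by irreducibility (the residual space $\fM_r=\bigcap_{n\geq 0}M_{\Id_\T}^n\fM$ is reducing, hence zero), the Wold decomposition $\fM=\bigoplus_{n\geq 0}M_{\Id_\T}^n\fW$ with $\fW=\fM\ominus M_{\Id_\T}\fM$, the Fourier-coefficient computation showing that an orthonormal basis of $\fW$ is pointwise a.e.\ orthonormal in $\cK$ (here the blanket separability assumption of the paper is exactly what lets you intersect the countably many a.e.\ conditions, as you note), and the assembly of the rigid symbol $F$ from these boundary values. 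The ``if'' direction is also sound; the two places that genuinely require the care you gave them are the injectivity of $M_F$ on $L^2(\T,\fU)$ (needed to pass $\bigcap_{n\geq 0}\Id_\T^n H^2(\D,\fU)=\{0\}$ through $M_F$, and also to see that $M_F H^2(\D,\fU)$ is closed) and the weak-$*$ measurability and a.e.\ definedness of $F(x)u=\sum_j\braket*{e_j}{u}\,w_j(x)$, both of which you address correctly. So your proposal is a complete, self-contained proof following the same route as the cited source.
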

To use the previous theorem, we have to switch from the unit disc \(\D\) to the upper half-plane~\(\C_+\). We will do this by using the biholomorphic Cayley transform
\[h(z) \coloneqq \frac{z-i}{z+i}\]
that has the property that
\[h(\R) = \T \setminus \{1\} \quad \text{and} \quad h(\C_+) = \D.\]
Then \(h\) induces an isometric isomorphism of the Hilbert spaces \(L^2(\T,\cK)\) and \(L^2(\R,\cK)\) given by
\[\Gamma: L^2(\T,\cK) \to L^2(\R,\cK), \quad (\Gamma f)(x) = \frac {1}{\sqrt{\pi}(x+i)}(f \circ h)\left(x\right),\]
which which preserves the corresponding Hardy spaces, i.e.
\[\Gamma(H^2(\D,\cK)) = H^2(\C_+,\cK)\]
(cf. \cite[Cor. 5.24, Sec. 5.15]{RR94}). Using this map \(\Gamma\), we get the following analogue to \fref{thm:Halmos}:
\begin{thm}\label{thm:HalmosUpper}
Let \(\cK\) be a complex Hilbert space and \(\fN \subeq L^2(\R,\cK)\) be a closed subspace such that \((L^2(\R,\cK),\fN,S)\) is a SROPG. Then there exists a rigid function \(F \in L^\infty(\R,\mathrm{B}(\cK))\) such that
\[\fN = M_F H^2(\C_+,\cK).\]
\end{thm}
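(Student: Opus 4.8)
The plan is to transfer the problem from the upper half plane to the unit disc via the unitary $\Gamma$ and then invoke Halmos' theorem (\fref{thm:Halmos}). Put $\fM \coloneqq \Gamma^{-1}(\fN)$, a closed subspace of $L^2(\T,\cK)$. A short computation with the explicit formulas for $\Gamma$ and $\Gamma^{-1}$ shows that conjugation by $\Gamma$ turns $M_{\Id_\T}$ into multiplication by the Cayley transform $h$, i.e. $\Gamma M_{\Id_\T}\Gamma^{-1} = M_h$ on $L^2(\R,\cK)$, and likewise $\Gamma M_G\Gamma^{-1} = M_{G\circ h}$ for any $G \in L^\infty(\T,B(\cK))$. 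Consequently $M_{\Id_\T}\fM \subeq \fM$ is equivalent to $M_h\fN\subeq\fN$, and $\fM$ being irreducible for $M_{\Id_\T}$ is equivalent to $\fN$ being irreducible for $M_h$. So it suffices to verify these two properties of $\fN$ and afterwards transport the conclusion of \fref{thm:Halmos} back to $\C_+$.

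To see $M_h\fN\subeq\fN$, write $h = \textbf{1} - 2i\cdot\frac{1}{x+i}$. Since $\partial S = M_{\Id}$ (cf. the proof of \fref{lem:kernelAdjoint}), the resolvent formula gives $M_{1/(x+i)} = (\partial S + i\textbf{1})^{-1} = \frac 1i\int_0^\infty e^{-t}S_t\,dt$, a weakly convergent integral of the operators $S_t$ with $t\geq 0$. Each such $S_t$ maps $\fN$ into $\fN$ by hypothesis, hence so does every Riemann sum approximating this integral, and since $\fN$ is closed, so does the weak limit $M_{1/(x+i)}$. Therefore $M_h\fN\subeq\fN$, and hence $M_{\Id_\T}\fM \subeq \fM$.

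For the irreducibility, let $\fL\subeq\fN$ be a closed subspace that reduces $M_h$, so $P_\fL$ commutes with $M_h$ and hence with $M_h^*$. Because $h$ maps $\R$ almost everywhere bijectively onto $\T$, the identity $e^{itx} = g_t(h(x))$ with $g_t(w) \coloneqq e^{-t(1+w)/(1-w)} \in L^\infty(\T,\T)$ exhibits every $S_t = M_{e_t}$ as an element $M_{g_t\circ h}$ of the von Neumann algebra generated by $M_h$; hence $P_\fL$ commutes with $S_t$ for all $t\in\R$, i.e. $S_t\fL = \fL$ for every $t\in\R$. Together with $\fL\subeq\fN$ this gives $\fL = S_t\fL\subeq S_t\fN$ for every $t\in\R$, so $\fL\subeq\bigcap_{t\in\R}S_t\fN = \{0\}$. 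Thus $\fN$ admits no non-trivial reducing subspace for $M_h$, i.e. $\fM$ is irreducible for $M_{\Id_\T}$.

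Now \fref{thm:Halmos} applies and yields a rigid $G\in L^\infty(\T,B(\cK))$ with $\fM = M_G H^2(\D,\cK)$. Applying $\Gamma$ and using $\Gamma(H^2(\D,\cK)) = H^2(\C_+,\cK)$ together with $\Gamma M_G\Gamma^{-1} = M_{G\circ h}$, we obtain $\fN = M_F H^2(\C_+,\cK)$ with $F\coloneqq G\circ h\in L^\infty(\R,B(\cK))$. Since $h$ and $h^{-1}$ carry Lebesgue-null sets to Lebesgue-null sets, and $G(z)$ is for almost every $z$ a partial isometry with one fixed initial space $\fU\subeq\cK$, the same holds for $F(x) = G(h(x))$ for almost every $x\in\R$; hence $F$ is rigid, which finishes the proof. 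I expect the irreducibility step to be the main point: one must recognise that $\{S_t : t\in\R\}$ already lies in the von Neumann algebra generated by $M_h$, so that a subspace of $\fN$ reducing $M_h$ is automatically invariant under $S$ in both time directions — a situation excluded precisely by the condition $\bigcap_{t\in\R}S_t\fN = \{0\}$.
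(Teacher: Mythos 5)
Your proof is correct, and it follows the same overall skeleton as the paper: conjugate by the Cayley isometry \(\Gamma\), check the two hypotheses of \fref{thm:Halmos}, and pull the rigid symbol back to \(\C_+\). Where you differ is in how the two hypotheses are verified, and both of your substitutes are sound. For the invariance \(M_{\Id_\T}\fM \subeq \fM\) the paper invokes the weak-\(*\) density of \(\spann\{e_t : t \geq 0\}\) in \(H^\infty(\C_+)\) (\fref{prop:etDense}), whereas you use the elementary Laplace/resolvent identity \(M_{\frac 1{x+i}} = \frac 1i \int_0^\infty e^{-t} S_t\,dt\) together with \(h = \textbf{1} - 2i\,\frac 1{x+i}\); since the integral is a norm limit of combinations of \(S_t\fN \subeq \fN\) with \(t \geq 0\) and \(\fN\) is closed, this is fine and avoids the density lemma altogether. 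For irreducibility the paper passes from \([M_{\Id_\T},P_V]=0\) to \([M_{\Id_\T^n},P_V]=0\), \(n \in \Z\), and uses the weak-\(*\) density of trigonometric polynomials (\fref{prop:PolynomialsDense}) to get commutation with all of \(L^\infty\); you instead observe \(e_t = g_t \circ h\) with \(g_t \in L^\infty(\T,\T)\), so \(S_t = g_t(M_h)\) lies in \(\{M_h,M_h^*\}''\) and hence commutes with any projection commuting with \(M_h\) and \(M_h^*\) — conceptually the same punchline (a reducing subspace is \(S_t\)-invariant in both time directions, hence sits in \(\bigcap_{t\in\R} S_t\fN = \{0\}\)), but implemented via Borel functional calculus rather than polynomial density. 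A small bonus of your write-up is that you justify why \(F = G \circ h\) is again rigid (null sets transport under \(h\)), a point the paper leaves implicit; conversely, note that your integral converges in norm, not merely weakly, which only strengthens the closedness argument you use.
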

\begin{proof}
We first show that the subspace \(\fM \coloneqq \Gamma^{-1}\fN\) is irreducible for \(M_{\Id_\T}\). So let \(V \subeq \fM\) be a reducing subspace for \(M_{\Id_\T}\). Then \([M_{\Id_\T},P_V] = 0\) and therefore, since \(M_{\Id_\T}\) is unitary, we also get
\[[M_{\Id_\T}^*,P_V] = [M_{\Id_\T}^*,P_V^*] = -[M_{\Id_\T},P_V]^* = -0^* = 0.\]
This implies that
\[[M_{\Id_\T}^n,P_V] = [M_{\Id_\T^n},P_V] = 0 \quad \forall n \in \Z.\]
By \fref{prop:PolynomialsDense} we have
\[\overline{\spann \{\Id_\T^n: n \in \Z\}} = L^\infty(\T,\C),\]
where the closure is taken with respect to the weak-\(*\)-topology on \(L^\infty(\T,\C)\). Therefore
\[[M_f,P_V] = 0 \quad \forall f \in L^\infty(\T,\C).\]
Using Cayley transform to switch to the upper half-plane, setting \(\tilde V \coloneqq \Gamma V\), we get
\[[M_f,P_{\tilde V}] = 0 \quad \forall f \in L^\infty(\R,\C).\]
Since \(S_t = M_{e_t}\) for the functions \(e_t \in L^\infty(\R,\C)\) with \(e_t(x) \coloneqq e^{itx}\), we get
\[[S_t,P_{\tilde V}] = 0 \quad \forall t \in \R.\]
Therefore, for every \(t \in \R\), we get 
\[\tilde V = P_{\tilde V} \tilde V= S_t S_{-t} P_{\tilde V} \tilde V = S_t P_{\tilde V} S_{-t} \tilde V \subeq S_t \tilde V = S_t \Gamma V \subeq S_t \Gamma \fM = S_t \fN,\]
so
\[\tilde V \subeq \bigcap_{t \in \R}S_t \fN = \{0\}\]
which yields \(\tilde V = \{0\}\) and therefore also
\[V = \Gamma^{-1} \tilde V = \{0\}.\]
This shows that no non-trivial subspace of \(\fM\) reduces \(U\), and therefore \(\fM\) is irreducible. Further, by \fref{prop:etDense}, we have
\begin{align*}
M_{\Id_\T} \fM &\subeq H^\infty(\D,\C) \cdot \fM = H^\infty(\D,\C) \cdot \Gamma^{-1}\fN = \Gamma^{-1} \left(H^\infty(\C_+,\C) \cdot \fN\right)
\\&= \Gamma^{-1} \left(\overline{\bigcup_{t \geq 0} M_{e_t} \fN}\right) = \Gamma^{-1} \left(\overline{\bigcup_{t \geq 0} S_t \fN}\right) \subeq \Gamma^{-1} \fN = \fM.
\end{align*}
Therefore we can apply \fref{thm:Halmos} and get a rigid function \(\tilde F \in L^\infty(\T,\mathrm{B}(\cK))\) such that
\[\fM = M_{\tilde F} H^2(\D,\cK).\]
Transforming to the upper half-plane, for the rigid function
\[F \coloneqq \tilde F \circ h \in L^\infty(\R,\mathrm{B}(\cK)),\]
we get
\[\fN = \Gamma^{-1}\fM = \Gamma^{-1}M_{\tilde F} H^2(\D,\cK) = M_F \Gamma^{-1} H^2(\D,\cK) = M_F H^2(\C_+,\cK). \qedhere\]
\end{proof}
Given a complex Hilbert space \(\cK\) and a closed subspace \(\cE_+\subeq L^2(\R,\cK)\) such that the triple \((L^2(\R,\cK),\cE_+,S)\) is a ROPG, by the Lax--Phillips Theorem (\fref{thm:LaxPhillipsComplex}), there exists another complex Hilbert space \(\cK'\) such that \((L^2(\R,\cK),\cE_+,S)\) is equivalent to \((L^2(\R,\cK'),H^2(\C_+,\cK'),S)\). The following result tells us that in fact \(\cK' = \cK\):
\begin{cor}\label{cor:StandardIndependentFromSubspace}
Let \(\cK\) be a complex Hilbert space and \(\cE_+ \subeq L^2(\R,\cK)\) be a closed subspace such that \((L^2(\R,\cK),\cE_+,S)\) is a ROPG. Then \((L^2(\R,\cK),\cE_+,S)\) is equivalent to \((L^2(\R,\cK),H^2(\C_+,\cK),S)\).
\end{cor}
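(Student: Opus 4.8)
The plan is to run the subspace $\cE_+$ through the Halmos-type structure theorem \fref{thm:HalmosUpper} and then match multiplicity spaces. Since $(L^2(\R,\cK),\cE_+,S)$ is in particular a SROPG, \fref{thm:HalmosUpper} produces a rigid function $F \in L^\infty(\R,B(\cK))$ with $\cE_+ = M_F H^2(\C_+,\cK)$; let $\fU \subeq \cK$ be the common initial subspace of the partial isometries $F(x)$. Writing $P_\fU$ both for the orthogonal projection $\cK \to \fU$ and for the associated constant projection $\mathbf{1}\otimes P_\fU$ on $L^2(\R,\cK)\cong L^2(\R,\C)\,\hat\otimes\,\cK$, the identity $F(x) = F(x)P_\fU$ gives $M_F = M_F P_\fU$; and since $P_\fU$ commutes with multiplication by scalar $H^\infty$-functions it preserves the Hardy space and maps $H^2(\C_+,\cK)$ onto $H^2(\C_+,\fU)$, so $\cE_+ = M_F H^2(\C_+,\fU)$. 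Because $F(x)$ is isometric on $\fU$, one has $\lVert M_F g\rVert = \lVert g\rVert$ for $g \in L^2(\R,\fU)$; thus $M_F$ restricts to an isometry $L^2(\R,\fU)\to L^2(\R,\cK)$ that intertwines $S$ with $S$ (scalar multiplication operators commute with $M_F$) and carries $H^2(\C_+,\fU)$ onto $\cE_+$.

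Next I would pin down the range of this isometry. Since $S_t\cE_+ = M_F S_t H^2(\C_+,\fU)$ and $M_F$ is isometric on $L^2(\R,\fU)$ (hence a homeomorphism onto its closed image), we get
\[
\overline{\bigcup_{t\in\R} S_t\cE_+} \;=\; M_F\,\overline{\bigcup_{t\in\R} S_t H^2(\C_+,\fU)} \;=\; M_F\, L^2(\R,\fU),
\]
the last equality being the cyclicity half of Lax--Phillips (\fref{thm:LaxPhillipsComplex}) for $(L^2(\R,\fU),H^2(\C_+,\fU),S)$. On the other hand $(L^2(\R,\cK),\cE_+,S)$ is a ROPG, so by \fref{rem:ROPGAlternative} the left-hand side equals $L^2(\R,\cK)$. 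Hence $M_F\colon L^2(\R,\fU)\to L^2(\R,\cK)$ is a surjective isometry, i.e.\ a unitary, and it is an isomorphism of ROPGs from $(L^2(\R,\fU),H^2(\C_+,\fU),S)$ onto $(L^2(\R,\cK),\cE_+,S)$.

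It remains to show $\fU \cong \cK$ as complex Hilbert spaces; given a unitary $W\colon\fU\to\cK$, the operator $\mathrm{id}_{L^2(\R,\C)}\otimes W$ is an isomorphism of ROPGs from $(L^2(\R,\fU),H^2(\C_+,\fU),S)$ onto $(L^2(\R,\cK),H^2(\C_+,\cK),S)$, and composing with the previous isomorphism gives the claim. To see $\fU\cong\cK$, I would use that the range of $M_F$ restricted to $L^2(\R,\fU)$ is exactly $\{h\in L^2(\R,\cK): h(x)\in\overline{F(x)\fU}\text{ for a.e. }x\}$ (the preimage of such an $h$ being $g(x)=F(x)^*h(x)$, using $F(x)^*F(x)=P_\fU$ and $F(x)F(x)^*=P_{F(x)\fU}$); surjectivity of $M_F$ then forces $\overline{F(x)\fU}=\cK$ for almost every $x$, so $F(x)$ is a unitary $\fU\to\cK$ for a.e.\ $x$ and any such value furnishes $W$. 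Alternatively one can bypass the pointwise analysis and argue abstractly: an $S$-equivariant unitary $L^2(\R,\fU)\to L^2(\R,\cK)$ can exist only if $\dim\fU=\dim\cK$, since the generators $\partial S$ on the two sides are multiplication by $\mathrm{id}$ with uniform spectral multiplicities $\dim\fU$ and $\dim\cK$.

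I expect the genuinely delicate point to be this final identification $\fU\cong\cK$: the pointwise description of the range of $M_F$ carries a measurable-selection subtlety (one must know $x\mapsto P_{\overline{F(x)\fU}}$ is measurable, which follows from weak-$*$-measurability of $F$ and rigidity), and the cleanest write-up will probably either handle that carefully or route around it via the spectral-multiplicity remark above. Everything else is bookkeeping layered on top of \fref{thm:HalmosUpper} and \fref{thm:LaxPhillipsComplex}, which do the real work.
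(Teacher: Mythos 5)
Your proof is correct and takes essentially the same route as the paper: both apply \fref{thm:HalmosUpper} to write \(\cE_+ = M_F H^2(\C_+,\cK)\) and then use the cyclicity condition \(\overline{\bigcup_{t \in \R} S_t \cE_+} = L^2(\R,\cK)\) to force \(F(x)\fU = \cK\) for almost every \(x\) (the paper does this via the projection-valued multiplication operator \((Pf)(x) = P_{F(x)\cK}f(x)\), which is exactly the operator behind your pointwise description of the range of \(M_F\)). The only difference is packaging: rather than factoring through \((L^2(\R,\fU),H^2(\C_+,\fU),S)\) and then identifying \(\fU \cong \cK\), the paper fixes a single point \(x_0\) with \(F(x_0)\cK = \cK\), sets \(G(x) \coloneqq F(x)F(x_0)^*\), which is unitary-valued almost everywhere, and takes \(\psi \coloneqq M_G^{-1}\) as the isomorphism --- precisely your route (a) with \(W = F(x_0)\).
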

\begin{proof}
By \fref{thm:HalmosUpper} there exists a rigid function \(F \in L^\infty(\R,\mathrm{B}(\cK))\) such that
\[\cE_+ = M_F H^2(\C_+,\cK).\]
Now, we define
\[P: L^2(\R,\cK) \to L^2(\R,\cK), \quad (Pf)(x) = P_{F(x)\cK} f(x).\]
Then \(P\) is a projection with
\[PM_F = M_F \qquad \text{and} \qquad P \circ S_t = S_t \circ P, \quad t \in \R.\]
Now, for \(t \in \R\), we have
\[S_t \cE_+ = S_t M_F H^2(\C_+,\cK) = S_t P M_F H^2(\C_+,\cK) = P S_t M_F H^2(\C_+,\cK) \subeq P L^2(\R,\cK),\]
so
\[L^2(\R,\cK) = \overline{\bigcup_{t \in \R} S_t \cE_+} \subeq P L^2(\R,\cK).\]
This yields \(P = \textbf{1}\) and therefore \(P_x = \textbf{1}\) for almost every \(x \in \R\), which implies that \(F(x)\cK = \cK\) for almost every \(x \in \R\). Choosing such an element \(x_0 \in \R\), the operator \(F(x_0)\) then is a partial isometry with initial subspace \(\fU\) and final subspace \(\cK\) and therefore \(I \coloneqq F(x_0)^*\) is an isometry with initial subspace \(\cK\) and final subspace \(\fU\). We now define \(G \in L^\infty(\R,\mathrm{B}(\cK))\) by
\[G(x) \coloneqq F(x)I.\]
Then
\[M_G H^2(\C_+,\cK) = M_F I H^2(\C_+,\cK) = M_F H^2(\C_+,\fU) = M_F H^2(\C_+,\cK) = \cE_+,\]
identifying \(H^2(\C_+,\fU)\) with the subspace
\[\{f \in H^2(\C_+,\cK): (\forall z \in \C_+) \,f(z) \in \fU\} \subeq H^2(\C_+,\cK).\]
Further, since \(I\) is an isometry with initial subspace \(\cK\) and final subspace \(\fU\) and for almost every \(x \in \R\) the operator \(F(x)\) is a partial isometry with initial subspace \(\fU\) and final subspace \(\cK\), for almost every \(x \in \R\) the operator \(G(x)\) is a partial isometry with initial subspace \(\cK\) and final subspace \(\cK\) and therefore \(G(x)\) is unitary. This implies that also \(M_G\) is unitary. We now set
\[\psi \coloneqq M_G^{-1} \in \U(L^2(\R,\cK)).\]
Then
\[\psi\left(\cE_+\right) = H^2(\C_+,\cK)\]
and
\[\psi \circ S_t = S_t \circ \psi \quad \forall t \in \R. \qedhere\]
\end{proof}

\begin{thm}\label{thm:IndependentFromSubspace}
Let \(\cE\) be a Hilbert space and \(U\) be a unitary one-parameter group on \(\cE\). Further, let \(\cE_+,\cE_+' \subeq \cE\) be two closed subspaces such that \((\cE,\cE_+,U)\) and \((\cE,\cE_+',U)\) are ROPGs. Then \((\cE,\cE_+,U)\) is equivalent to \((\cE,\cE_+',U)\).
\end{thm}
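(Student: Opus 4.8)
The plan is to deduce the claim from \fref{cor:StandardIndependentFromSubspace} after passing to a Lax--Phillips normal form. Suppose first that \(\cE\) is a complex Hilbert space. By \fref{thm:LaxPhillipsComplex} there are a complex Hilbert space \(\cK\) and an isomorphism \(\phi\colon \cE \to L^2(\R,\cK)\) of \((\cE,\cE_+,U)\) onto \((L^2(\R,\cK),H^2(\C_+,\cK),S)\), so that \(\phi \circ U_t = S_t \circ \phi\) for all \(t \in \R\) and \(\phi(\cE_+) = H^2(\C_+,\cK)\). Set \(\fN \coloneqq \phi(\cE_+')\). Since \(\phi\) is unitary and intertwines \(U\) with \(S\), the triple \((L^2(\R,\cK),\fN,S)\) is the image of the ROPG \((\cE,\cE_+',U)\) under \(\phi\), hence itself a ROPG (one checks the conditions of \fref{rem:ROPGAlternative} transport along \(\phi\)), and \(\phi^{-1}\) is an isomorphism of \((L^2(\R,\cK),\fN,S)\) onto \((\cE,\cE_+',U)\). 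Applying \fref{cor:StandardIndependentFromSubspace} to \(\fN \subeq L^2(\R,\cK)\) gives an equivalence \((L^2(\R,\cK),\fN,S) \cong (L^2(\R,\cK),H^2(\C_+,\cK),S)\). Composing the three equivalences, \((\cE,\cE_+,U) \cong (L^2(\R,\cK),H^2(\C_+,\cK),S) \cong (L^2(\R,\cK),\fN,S) \cong (\cE,\cE_+',U)\), which settles the complex case.

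In the real case the obstacle is that \fref{cor:StandardIndependentFromSubspace} is only stated for complex Hilbert spaces, and an isomorphism obtained after complexifying need not respect the real structure; the way around this is to compare multiplicity spaces. Let \(\cE\) be real and let \(\cK_1,\cK_2\) be the multiplicity spaces of \((\cE,\cE_+,U)\) and \((\cE,\cE_+',U)\). By \fref{prop:ROPGRealToComplex} the complexifications \((\cE_\C,(\cE_+)_\C,U_\C)\) and \((\cE_\C,(\cE_+')_\C,U_\C)\) are complex ROPGs with multiplicity spaces \((\cK_1)_\C\) and \((\cK_2)_\C\); since they live on the same pair \((\cE_\C,U_\C)\), the complex case just proven shows they are equivalent, hence have isomorphic multiplicity spaces, so \((\cK_1)_\C \cong (\cK_2)_\C\) as complex Hilbert spaces. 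Comparing dimensions and using \(\dim_\C(\cK_j)_\C = \dim_\R \cK_j\) yields \(\cK_1 \cong \cK_2\) as real Hilbert spaces.

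Finally I would turn \(\cK_1 \cong \cK_2\) into an equivalence of the original triples: fixing a unitary \(u\colon \cK_1 \to \cK_2\), consider the operator \(\Phi\colon L^2(\R,(\cK_1)_\C) \to L^2(\R,(\cK_2)_\C)\) with \((\Phi f)(x) = u_\C(f(x))\). Since \(u_\C\) maps \(\cK_1\) into \(\cK_2\) it intertwines the complex conjugations \(\cC_{\cK_1}\) and \(\cC_{\cK_2}\), so \(\Phi\) commutes with the \(\sharp\)-involutions and restricts to a unitary \(L^2(\R,(\cK_1)_\C)^\sharp \to L^2(\R,(\cK_2)_\C)^\sharp\); it also commutes with \(S\) (which is scalar multiplication) and carries \(H^2(\C_+,(\cK_1)_\C)^\sharp\) onto \(H^2(\C_+,(\cK_2)_\C)^\sharp\). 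Hence \(\Phi\) is an isomorphism between the two real standard models, which by \fref{thm:LaxPhillipsReal} and the definition of the multiplicity space are equivalent to \((\cE,\cE_+,U)\) and \((\cE,\cE_+',U)\) respectively; so these two are equivalent. (Alternatively, one could first establish the evident real analogue of \fref{cor:StandardIndependentFromSubspace} — by complexifying the subspace, applying the complex corollary, and comparing multiplicity spaces as above — and then repeat the complex argument verbatim.) I expect the only genuine work to be this bookkeeping in the real case; the complex case is immediate from \fref{cor:StandardIndependentFromSubspace}.
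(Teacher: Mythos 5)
Your proposal is correct and follows essentially the same route as the paper: the complex case is exactly the paper's argument (transport \(\cE_+'\) through a Lax--Phillips normal form and apply \fref{cor:StandardIndependentFromSubspace}), and the real case is handled, as in the paper, by complexifying via \fref{prop:ROPGRealToComplex}. Your real-case bookkeeping (comparing \((\cK_1)_\C \cong (\cK_2)_\C\), deducing \(\cK_1 \cong \cK_2\), and writing down the intertwiner of the real standard models) just spells out the step the paper compresses into the assertion that real ROPGs are equivalent iff their complexifications are.
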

\begin{proof}
We first consider the case of complex ROPGs. Then, by the Lax--Phillips Theorem (\fref{thm:LaxPhillipsComplex}), there exists a complex Hilbert space \(\cK\) such that the ROPG \((\cE,\cE_+,U)\) is equivalent to \((L^2(\R,\cK),H^2(\C_+,\cK),S)\). Let \(\psi\) be the corresponding isomorphism of ROPGs. Then \((\cE,\cE_+',U)\), using the same isomorphism \(\psi\), is equivalent to \((L^2(\R,\cK),\psi \cE_+',S)\). By \fref{cor:StandardIndependentFromSubspace} we know that \((L^2(\R,\cK),\psi \cE_+',S)\) is equivalent to \((L^2(\R,\cK),H^2(\C_+,\cK),S)\) and therefore also \((\cE,\cE_+,U)\) and \((\cE,\cE_+',U)\) are equivalent.

In the case of real ROPGs, we notice that, by \fref{prop:ROPGRealToComplex}, \((\cE,\cE_+,U)\) and \((\cE,\cE_+',U)\) are equivalent, if and only if \((\cE_\C,(\cE_+)_\C,U_\C)\) and \((\cE_\C,(\cE_+')_\C,U_\C)\) are equivalent, which, as just seen, is indeed the case.
\end{proof}
\begin{remark}
This theorem shows that, fixing a Hilbert space \(\cE\) and a unitary one-parameter group \(U\) on \(\cE\), there is --- up to equivalence --- only one possible choice of a subspace \(\cE_+\) such that \((\cE,\cE_+,U)\) is a ROPG.
\end{remark}

\newpage
\section{ROPGs generated by Pick functions}
In the last section, we answered the question, given a ROPG \((\cE,\cE_+,U)\), what happens when one fixes the Hilbert space \(\cE\) and the unitary one-parameter group \(U\) and varies the subspace \(\cE_+\). In this chapter, we want to answer the question, what happens when one fixes the Hilbert space \(\cE\) and the subspace \(\cE_+\) and varies the unitary one-parameter group \(U\), so we will have two ROPGs of the form \((\cE,\cE_+,U)\) and \((\cE,\cE_+,U')\). In the complex case, by the Lax--Phillips Theorem, we then find a complex Hilbert space \(\cH\) such that \((\cE,\cE_+,U)\) is equivalent to \((L^2(\R,\cH),H^2(\C_+,\cH),S)\) via some isomorphism \(\psi\) of ROPGs. Using that same isomorphism \(\psi\), the ROPG \((\cE,\cE_+,U')\) is equivalent to \((L^2(\R,\cH),H^2(\C_+,\cH),\psi U'\psi^{-1})\). Therefore, our question of interest is, for which unitary one-parameter groups \(U\) the triple \((L^2(\R,\cH),H^2(\C_+,\cH),U)\) is a complex ROPG and if so, how to calculate the corresponding multiplicity space.

\subsection{Regular Pick functions}
We will not give a complete characterization of all unitary one-parameter groups \(U\) such that the triple \((L^2(\R,\cH),H^2(\C_+,\cH),U)\) is a complex ROPG, but we will do so under two more assumptions. The first assumption is that our multiplicity space \(\cH\) is finite-dimensional. Our second assumption is, that the unitary one-parameter group \(U\) and our standard unitary one-parameter group \(S\) commute, i.e.
\[U_t S_s = S_s U_t \quad \forall s,t \in \R.\]
This last condition, by \fref{prop:SCommutant}, is equivalent to \(U_t \in M \left(L^\infty(\R,B(\cH))\right)\) for every \(t \in \R\). Then the condition
\[U_t H^2(\C_+,\cH) \subeq H^2(\C_+,\cH) \quad \forall t \geq 0,\]
by \fref{prop:H2InclusionFunctions}, implies that
\[U_t \in M\left(H^\infty(\C_+,B(\cH))\right) \quad \forall t \geq 0.\]
The unitarity of \(U_t\) then implies that
\[U_t \in M\left(\Inn(\C_+,B(\cH))\right) \quad \forall t \geq 0\]
(cf. \fref{def:Inner}), which, by \fref{thm:OPGOfInnerFunctions}, implies that \(U\) is infinitesimally generated by a Pick function, defined as follows:
\begin{definition}(cf. \fref{def:Pick})
Let \(\cH\) be a finite-dimensional complex Hilbert space.
\begin{enumerate}[\rm (a)]
\item We call a function \({F \in \cO(\C_+,B(\cH))}\) an \textit{operator-valued Pick function} if
\[\Im(F(z)) \coloneqq \frac 1{2i} (F(z)-F(z)^*) \geq 0, \quad \forall z \in \C_+.\]
\item We write \(\mathrm{Pick}(\C_+,B(\cH))\) for the set of operator-valued Pick functions.
\item We write \(\mathrm{Pick}(\C_+,B(\cH))_\R\) for the set of Pick functions \(F \in \mathrm{Pick}(\C_+,B(\cH))\), for which the limit
\[F_*(x) \coloneqq \lim_{\epsilon \downarrow 0} F(x+i\epsilon)\]
exists for almost every \(x \in \R\) and is self-adjoint. We say such a Pick function has self-adjoint boundary values.
\item We set
\[\mathrm{Pick}(\C_+) \coloneqq \mathrm{Pick}(\C_+,B(\C)) \quad \text{and} \quad \mathrm{Pick}(\C_+)_\R \coloneqq \mathrm{Pick}(\C_+,B(\C))_\R\]
and, in the latter case, we say these functions have real boundary values.
\end{enumerate}
\end{definition}
Since we want to analyze ROPGs, we are interested in Pick functions, that are infinitesimal generators for ROPGs. Here, we make the following definition:
\begin{definition}
Let \(\cH\) be a finite-dimensional complex Hilbert space. We call a Pick function \(F \in \mathrm{Pick}(\C_+,B(\cH))_\R\) \textit{regular} if
\[\bigcap_{t \in \R}e^{itF_*} \cdot H^2(\C_+,\cH) = \{0\} \quad \text{and} \quad \overline{\bigcup_{t \in \R}e^{itF_*} \cdot H^2(\C_+,\cH)} = L^2(\R,\cH).\]
We write \(F \in \mathrm{Pick}(\C_+,B(\cH))_\R^\mathrm{reg}\) for the set of all regular Pick functions.
\end{definition}
\begin{remark}
Given this definition, a Pick function \(F \in \mathrm{Pick}(\C_+,B(\cH))_\R\) is regular, if and only if the triple \((L^2(\R,\cH),H^2(\C_+,\cH),U^F)\) with
\[\gls*{UF}_t \coloneqq M_{e^{itF_*}}, \quad t \in \R,\]
is a complex ROPG. Here, we use that the condition
\[e^{itF_*} \cdot H^2(\C_+,\cH) \subeq H^2(\C_+,\cH) \quad \forall t \geq 0\]
is fulfilled for every Pick function \(F \in \mathrm{Pick}(\C_+,B(\cH))_\R\), since \(e^{itF} \in H^\infty(\C_+,B(\cH))\) for all \(t \geq 0\).
\end{remark}
The goal of this section now is to give a classification of all regular Pick functions. For this, we will use the fact that the boundary values of a Pick function are a measurable function (cf. \fref{lem:BoundaryMeasurable}) and, therefore, can be measurably diagonalized using the following results:
\begin{thm}\label{thm:measurableDiagonalizationPos}{\rm(cf. \cite[Thm. 2.1]{QR14})}
Let \(X\) be a measurable space and \(n \in \N\). Further, let \({F: X \to \mathrm{Mat}(n \times n)}\) be measurable such that \(F(x)\) is a positive definite matrix for every \(x \in X\). Then there exist measurable functions \(\Lambda,U: X \to \mathrm{Mat}(n \times n)\) such that \(\Lambda(x)\) is diagonal, \(U(x)\) is unitary and \(F(x) = U(x) \Lambda(x) U(x)^*\) for all \(x \in X\).
\end{thm}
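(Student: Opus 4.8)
The plan is to produce the diagonal part $\Lambda$ by an explicit measurable construction of the ordered eigenvalues, and then to obtain the unitary part $U$ from a measurable selection theorem applied to the compact group $\U(n)$, rather than by any attempt to diagonalize ``pointwise continuously''.

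For the eigenvalues, I would first note that the coefficients of the characteristic polynomial $p_x(t) \coloneqq \det(t\1 - F(x))$ are polynomial expressions in the entries of $F(x)$, hence measurable functions of $x$, and that $p_x$ has $n$ positive real roots counted with multiplicity since $F(x)$ is positive definite. Invoking the standard fact that the map sending a monic real-rooted polynomial to the decreasing tuple of its roots is continuous on the relevant coefficient set (roots depend continuously on coefficients, and sorting is continuous), the functions $\lambda_1 \geq \dots \geq \lambda_n$ given by the ordered roots of $p_x$ are measurable in $x$. Then $\Lambda(x) \coloneqq \diag(\lambda_1(x),\dots,\lambda_n(x))$ is the required measurable diagonal matrix, and by the spectral theorem $F(x)$ is unitarily conjugate to $\Lambda(x)$ for every $x$.

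For the unitary part, I would equip $\U(n) \subeq \mathrm{Mat}(n\times n)$ with its Borel $\sigma$-algebra and consider the set-valued map
\[\Phi: X \to 2^{\U(n)}, \qquad \Phi(x) \coloneqq \{V \in \U(n) : V \Lambda(x) V^* = F(x)\}.\]
By the previous paragraph each $\Phi(x)$ is non-empty, and it is closed in the compact metric space $\U(n)$, being the zero set of the continuous map $V \mapsto \lVert V\Lambda(x)V^* - F(x)\rVert$. Moreover $g(x,V) \coloneqq \lVert V\Lambda(x)V^* - F(x)\rVert$ is measurable in $x$ and continuous in $V$, i.e. Carath\'eodory, which is exactly what is needed to see that $\Phi$ is a measurable closed-valued multifunction from the measurable space $X$ into the Polish space $\U(n)$. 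Applying the Kuratowski--Ryll-Nardzewski selection theorem to $\Phi$ yields a measurable map $U: X \to \U(n)$ with $U(x) \in \Phi(x)$ for all $x$, i.e. $F(x) = U(x)\Lambda(x)U(x)^*$; reading $U$ as $\mathrm{Mat}(n\times n)$-valued finishes the proof.

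The main obstacle is the measurability bookkeeping: one must check carefully that the ordered-eigenvalue map is Borel (continuous dependence of roots on coefficients, restricted to the real-rooted case) and that $\Phi$ satisfies the hypothesis of the selection theorem, namely that $\{x : \Phi(x) \cap W \neq \eset\}$ is measurable for every open $W \subeq \U(n)$; the latter is where the Carath\'eodory property of $g$ together with compactness of $\U(n)$ enters, and it is precisely this global constraint-set viewpoint that sidesteps the discontinuity of the spectral projections of $F(x)$ at coincidences of eigenvalues. Everything else --- positivity of the roots, the spectral theorem giving non-emptiness of $\Phi(x)$, and re-interpreting the selection as the pair of matrix-valued functions $\Lambda$ and $U$ --- is routine. (An alternative, more hands-on route is induction on $n$ after partitioning $X$ according to the multiplicity pattern of the eigenvalues, but this is messier to make rigorous and I would prefer the selection-theorem argument.)
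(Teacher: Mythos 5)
Your argument is sound, but note that the paper itself does not prove this statement at all: it is quoted verbatim from the literature (\cite[Thm.\ 2.1]{QR14}), so there is no in-paper proof to compare against. The reference (and the ``hands-on'' alternative you mention) proceeds essentially by stratifying $X$ according to the eigenvalue multiplicity pattern and constructing measurable eigenvectors explicitly on each stratum, which is exactly the bookkeeping your selection-theorem route is designed to avoid. Your two steps check out: the ordered eigenvalues are measurable (continuity of the root multiset in the coefficients plus sorting, or even more directly the Lipschitz dependence of $\lambda_k$ on a Hermitian matrix via Weyl's inequality), and the multifunction $\Phi(x)=\{V\in\U(n): V\Lambda(x)V^{*}=F(x)\}$ has non-empty compact values and is weakly measurable: for closed $C\subeq\U(n)$ one has $\Phi(x)\cap C\neq\eset$ iff $\inf_{V\in D}g(x,V)=0$ for a countable dense $D\subeq C$ (compactness of $C$ and continuity of $g(x,\cdot)$ give attainment), and writing an open $W$ as a countable union of closed sets transfers this to open sets; so Kuratowski--Ryll-Nardzewski applies. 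Two points are worth making explicit if you write this up: first, the verification of weak measurability just sketched is the only place where compactness of $\U(n)$ is genuinely used, so it should be spelled out rather than asserted; second, your choice of Kuratowski--Ryll-Nardzewski (rather than an Aumann/von Neumann type projection argument) is essential, because $X$ is an arbitrary measurable space with no completeness hypothesis, and only the former theorem works at that level of generality. With those details included, your proof is a correct and self-contained alternative to citing \cite{QR14}.
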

\begin{cor}\label{cor:measurableDiagonalization}
Let \(X\) be a measurable space and \(n \in \N\). Further, let \({F: X \to \mathrm{Mat}(n \times n)}\) be measurable such that \(F(x)\) is a self-adjoint matrix for every \(x \in X\). Then there exist measurable functions \(\Lambda,U: X \to \mathrm{Mat}(n \times n)\) such that \(\Lambda(x)\) is diagonal, \(U(x)\) is unitary and \(F(x) = U(x) \Lambda(x) U(x)^*\) for all \(x \in X\).
\end{cor}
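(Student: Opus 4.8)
The plan is to reduce the self-adjoint case to the positive definite case already established in \fref{thm:measurableDiagonalizationPos} by means of a measurable shift by a multiple of the identity. Since $F$ is not assumed to be bounded on $X$, a constant shift will in general not make $F$ positive definite everywhere, so the shift must be allowed to depend measurably on $x$; this is the only point in the argument that requires a moment's attention.

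First I would set $c\colon X \to \R$, $c(x) \coloneqq 1 + \lVert F(x)\rVert$. This is measurable because $F$ is measurable and the operator norm is a continuous function of the matrix entries; alternatively one may take $c(x) \coloneqq 1 + \tr\!\big(F(x)^2\big)$, which is a polynomial in the entries of $F(x)$ and hence manifestly measurable, and which still satisfies $c(x) > \lVert F(x)\rVert$ for every $x$. Then the function $G \colon X \to \mathrm{Mat}(n \times n)$ defined by $G(x) \coloneqq F(x) + c(x)\mathbf{1}$ is measurable, and for each $x \in X$ the matrix $G(x)$ is self-adjoint with smallest eigenvalue at least $c(x) - \lVert F(x)\rVert \geq 1 > 0$, hence positive definite.

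Next I would apply \fref{thm:measurableDiagonalizationPos} to $G$ to obtain measurable functions $U, \tilde\Lambda \colon X \to \mathrm{Mat}(n \times n)$ such that $U(x)$ is unitary, $\tilde\Lambda(x)$ is diagonal, and $G(x) = U(x)\tilde\Lambda(x)U(x)^*$ for all $x \in X$. Finally, setting $\Lambda(x) \coloneqq \tilde\Lambda(x) - c(x)\mathbf{1}$ — which is again diagonal and measurable — one has
\[
U(x)\Lambda(x)U(x)^* \;=\; U(x)\tilde\Lambda(x)U(x)^* - c(x)\mathbf{1} \;=\; G(x) - c(x)\mathbf{1} \;=\; F(x)
\]
for every $x \in X$, which is exactly the assertion. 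Beyond the measurability of the $x$-dependent shift $c$ (equivalently, of $x \mapsto \lVert F(x)\rVert$, for which the trace surrogate above is a convenient workaround), there is no genuine obstacle here; the corollary is essentially a bookkeeping consequence of the positive definite statement.
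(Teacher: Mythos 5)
Your argument is correct, but it takes a genuinely different route from the paper. You reduce to \fref{thm:measurableDiagonalizationPos} by an additive, $x$-dependent scalar shift: $G(x) = F(x) + c(x)\mathbf{1}$ with a measurable $c(x) > \lVert F(x)\rVert$, diagonalize $G$, and subtract the shift back, using that $c(x)\mathbf{1}$ is invariant under conjugation by $U(x)$; your trace surrogate $c(x) = 1 + \tr(F(x)^2)$ indeed works, since $1+t^2 > t$ for all real $t$ and $\tr(F(x)^2) \geq \lVert F(x)\rVert^2$ for self-adjoint $F(x)$, and it makes measurability of the shift immediate. The paper instead applies \fref{thm:measurableDiagonalizationPos} to $\exp \circ F$, which is positive definite pointwise, and sets $\Lambda \coloneqq \log \circ \tilde\Lambda$; this exploits the functional-calculus equivariance $\exp(F(x)) = U(x)\tilde\Lambda(x)U(x)^* \Rightarrow F(x) = U(x)\log(\tilde\Lambda(x))U(x)^*$ (valid because $F(x)$ is self-adjoint, so $\log\exp F(x) = F(x)$ and $F(x)$, $\exp F(x)$ share eigenvectors). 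The paper's route needs no auxiliary shift function but quietly relies on properties of the matrix exponential and logarithm under conjugation; your route is more elementary, using only that scalar multiples of the identity commute with everything, at the small cost of having to verify measurability of the shift, which you handle explicitly. Both proofs are complete and correct.
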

\begin{proof}
By applying \fref{thm:measurableDiagonalizationPos} to the function \(\exp \circ F\), we obtain measurable functions \(\tilde \Lambda, U\) such that \(\tilde \Lambda(x)\) is diagonal, \(U(x)\) is unitary and \((\exp \circ F)(x) = U(x) \tilde \Lambda(x) U(x)^*\) for all \(x \in X\). The statement then follows by choosing \(\Lambda \coloneqq \log \circ \tilde \Lambda\).
\end{proof}
We now can formulate our first classification theorem of regular Pick functions using the following definition:
\begin{definition}\label{def:DFM}
For a Pick function \(F \in \mathrm{Pick}(\C_+,B(\cH))_\R\) and \(M \subeq \R\), we define
\[\gls*{DFM} \coloneqq \{y \in \R: F_*(y) \text{ exists and } M \cap \Spec(F_*(y)) \neq \eset\}.\]
Further, for \(x \in \R\), we set
\[D(F,x) \coloneqq D(F,\{x\}) = \{y \in \R: F_*(y) \text{ exists and } x \in \Spec(F_*(y))\}.\]
\end{definition}
\newpage
\begin{thm}\label{thm:SpecNonDeg}
Let \(\cH\) be a finite-dimensional complex Hilbert space and \(F \in \mathrm{Pick}(\C_+,B(\cH))_\R\). Then the following are equivalent:
\begin{enumerate}[\rm (a)]
\item \(F\) is regular.
\item \(\Spec(F(z)) \subeq \C_+\) for all \(z \in \C_+\).
\item \(\ker(x\textbf{1}-M_{F_*}) \cap H^2(\C_+,\cH) = \{0\}\) for all \(x \in \R\).
\item \(\R \setminus D(F,x)\) has non-zero Lebesgue measure for all \(x \in \R\).
\item \(D(F,x)\) is a Lebesgue zero-set for all \(x \in \R\).
\end{enumerate}
\end{thm}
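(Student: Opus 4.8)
The plan is to prove the cycle of implications (a)$\Rightarrow$(c)$\Rightarrow$(e)$\Rightarrow$(d)$\Rightarrow$(b)$\Rightarrow$(a), so that the whole statement reduces to five links, two of which are short. For (a)$\Rightarrow$(c): if some $x\in\R$ admitted a nonzero $f\in\ker(x\textbf{1}-M_{F_*})\cap H^2(\C_+,\cH)$, then $U^F_sf=e^{isx}f$ for every $s\in\R$, hence $f=U^F_s(e^{-isx}f)\in e^{isF_*}\cdot H^2(\C_+,\cH)$ for all $s$, contradicting $\bigcap_{s\in\R}e^{isF_*}\cdot H^2(\C_+,\cH)=\{0\}$, which is part of the definition of regularity. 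The step (e)$\Rightarrow$(d) is trivial.

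The main ingredient feeding both (c)$\Rightarrow$(e) and (d)$\Rightarrow$(b) is a ``constant eigenvector'' lemma: if $x\in\R$ and $x\in\Spec F(z_0)$ for some $z_0\in\C_+$, then there is a fixed $0\neq v\in\cH$ with $F(z)v=xv$ for all $z\in\C_+$, hence $F_*(y)v=xv$ for a.e.\ $y$. To prove it, note $z\mapsto\langle v,F(z)v\rangle$ is a scalar Pick function taking the real value $x\|v\|^2$ at the interior point $z_0$ (where $v$ is an eigenvector), hence is constant; so $\langle v,\Im F(z)v\rangle=0$, and $\Im F(z)\geq0$ forces $\Im F(z)v=0$ for all $z$. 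Feeding this into the Herglotz--Nevanlinna integral representation of the operator Pick function $F$ shows that both the linear term and the representing measure annihilate $v$, so $F(z)v$ equals the constant $xv$. Granting the lemma, (d)$\Rightarrow$(b) is the contrapositive: failure of (b) supplies such an $x,z_0$, whence $D(F,x)$ is co-null and $\R\setminus D(F,x)$ is Lebesgue-null. For (c)$\Rightarrow$(e), again the contrapositive: if $D(F,x)$ has positive measure, then since every matrix entry of $F$ is a $\C$-linear combination of scalar Pick functions (polarize $v\mapsto\langle v,F(z)v\rangle$) and hence of bounded type, $\det(F(z)-x\textbf{1})$ is of bounded type with boundary values $\det(F_*(y)-x\textbf{1})$ vanishing on a positive-measure set; the uniqueness theorem for the Nevanlinna class forces $\det(F(z)-x\textbf{1})\equiv0$, so $x\in\Spec F(z_0)$ for some $z_0$, and the lemma yields a constant $v$ with $F_*(y)v=xv$ a.e.; then $0\neq Q_\lambda\cdot v\in\ker(x\textbf{1}-M_{F_*})\cap H^2(\C_+,\cH)$ (cf.\ \fref{prop:H2RKHS}), contradicting (c).

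For (b)$\Rightarrow$(a): when $t\geq0$ the self-adjoint part of $itF(z)$ is $-t\,\Im F(z)\leq0$, so $e^{itF(z)}$ is a contraction on $\C_+$ with unitary boundary values, i.e.\ $e^{itF}\in\Inn(\C_+,B(\cH))$; hence $U^F_t$ preserves $H^2(\C_+,\cH)$ and $U^F_{-t}$ preserves $H^2(\C_-,\cH)=H^2(\C_+,\cH)^\perp$ for $t\geq0$, and it remains to prove $\bigcap_{s\in\R}e^{isF_*}\cdot H^2(\C_+,\cH)=\{0\}$ and $\overline{\bigcup_{s\in\R}e^{isF_*}\cdot H^2(\C_+,\cH)}=L^2(\R,\cH)$. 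For the first, if $g=U^F_sh_s$ with $h_s\in H^2(\C_+,\cH)$, $\|h_s\|=\|g\|$, for all $s\geq0$, then at each $z_0\in\C_+$ the value $\|h_s(z_0)\|$ stays bounded (point evaluations are continuous on $H^2$) while $\|e^{isF(z_0)}\|\to0$ as $s\to\infty$, because $\Spec(isF(z_0))$ lies in a half-plane $\{\Re\leq-\delta\}$, $\delta>0$, using (b) and $\dim\cH<\infty$; so $g(z_0)=0$ for every $z_0$, i.e.\ $g=0$. The second condition is equivalent to $\bigcap_{s\in\R}e^{isF_*}\cdot H^2(\C_-,\cH)=\{0\}$; here I would pass to the transpose $F^T(z):=F(z)^T$, which is again a Pick function with self-adjoint boundary values (since $\langle v,F^T(z)v\rangle=\langle\overline v,F(z)\overline v\rangle$) and has $\Spec F^T(z)=\Spec F(z)\subseteq\C_+$, so the first part applies to $F^T$. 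Entrywise complex conjugation swaps $H^2(\C_+,\cH)$ with $H^2(\C_-,\cH)$ and sends $e^{-isF_*}$ to $e^{is(F^T)_*}$, so it carries any $g\in\bigcap_{s}e^{isF_*}\cdot H^2(\C_-,\cH)$ into $\bigcap_{t\geq0}e^{it(F^T)_*}\cdot H^2(\C_+,\cH)=\{0\}$, forcing $g=0$. Hence $F$ is regular.

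I expect the delicate step to be the density statement inside (b)$\Rightarrow$(a): reducing $\overline{\bigcup_s e^{isF_*}\cdot H^2(\C_+,\cH)}=L^2$ to the ``shrinking'' statement requires passing through the transposed Pick function and carefully tracking how entrywise conjugation interacts with the two Hardy spaces and with $e^{isF_*}$. The verification that $\det(F(z)-x\textbf{1})$ is of bounded type, and the appeal to the integral representation in the eigenvector lemma, are the other spots needing care, although both are standard for finite-dimensional operator Pick functions.
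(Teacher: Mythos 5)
Your proposal is correct: the cycle (a)$\Rightarrow$(c)$\Rightarrow$(e)$\Rightarrow$(d)$\Rightarrow$(b)$\Rightarrow$(a) is complete, each link holds, and it establishes the theorem — but it follows a genuinely different route from the paper, which instead proves the two triangles (a)$\Rightarrow$(c)$\Rightarrow$(b)$\Rightarrow$(a) and (c)$\Rightarrow$(e)$\Rightarrow$(d)$\Rightarrow$(c). Your ``constant eigenvector'' lemma is exactly the core of the paper's (c)$\Rightarrow$(b), but the paper avoids the operator-valued Herglotz representation (it only states the scalar version, \fref{thm:GePick}) by the self-contained trick of showing $w\mapsto\left\lVert(F(w)-x)v\right\rVert^2$ is holomorphic (using $F(w)^*v=F(w)v$) with vanishing imaginary part, hence constant $=0$; your appeal to the operator Nevanlinna representation is a standard but external ingredient, and the same goes for the Nevanlinna-class uniqueness theorem you use in (c)$\Rightarrow$(e) to conclude $\det(F(\cdot)-x\textbf{1})\equiv 0$ from boundary vanishing on a positive-measure set — the paper instead builds a kernel element directly via measurable diagonalization (\fref{cor:measurableDiagonalization}) and \fref{thm:HardyAlmostEverywhereNonZero}, and proves (d)$\Rightarrow$(c) rather than your (d)$\Rightarrow$(b). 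In (b)$\Rightarrow$(a) your treatment of the intersection is cleaner than the paper's: you replace the Jordan--Chevalley decomposition and the iterated reproducing-kernel limits by the finite-dimensional stability fact that $\left\lVert e^{isF(z_0)}\right\rVert\to 0$ as $s\to\infty$ when $\Spec(F(z_0))\subseteq\C_+$, combined with the uniform bound $\left\lVert h_s(z_0)\right\rVert\leq\left\lVert Q_{z_0}\right\rVert\left\lVert g\right\rVert$ from \fref{prop:H2RKHS}; just phrase the decay correctly (the matrix $iF(z_0)$ has spectrum in $\{\Re<0\}$, so $e^{s\,iF(z_0)}\to 0$, with the usual polynomial-times-exponential bound absorbing non-normality — ``$\Spec(isF(z_0))\subseteq\{\Re\leq-\delta\}$'' alone does not bound the norm). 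For the density part, your transpose-plus-entrywise-conjugation bookkeeping is an equivalent substitute for the paper's reflected function $F^\flat(z)=-F(\overline z)^*$ on $\C_-$. In sum, your route is shorter and avoids Jordan decompositions and measurable diagonalization, at the cost of importing two standard facts (operator-valued Herglotz representation, bounded-type uniqueness) that the paper's more self-contained argument does not need.
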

\begin{proof}
\begin{itemize}
\item[(a) \(\Rightarrow\) (c):] Let \(F\) be regular and \(x \in \R\). Further, let \(v \in \ker(x\textbf{1}-M_{F_*}) \cap H^2(\C_+,\cH)\). Then
\[\{0\} = \bigcap_{t \in \R}e^{itF_*}H^2(\C_+,\cH) \supeq \bigcap_{t \in \R}e^{itF_*}\C v = \bigcap_{t \in \R}e^{itx}\C v = \C v,\]
which implies \(v=0\).
\item[(b) \(\Rightarrow\) (a):] Assume that \(\Spec(F(z)) \subeq \C_+\) for all \(z \in \C_+\). Let \(h \in \bigcap_{t \in \R}e^{itF_*}H^2(\C_+,\cH)\). Now, for \(z \in \C_+\), let \(F(z)^*= D(z) + N(z)\) be the Jordan--Chevalley Decomposition of \(F(z)^*\) into a diagonalizable matrix \(D(z)\) and a nilpotent matrix \(N(z)\). Further, setting \(n \coloneqq \dim \cH\), we choose a basis \(v_1(z),\dots,v_n(z)\) of \(\cH\) of eigenvectors of \(D(z)\) and let \(\lambda_1(z),\dots,\lambda_n(z)\) be the corresponding eigenvalues, so
\[D(z)v_k(z) = \lambda_k(z)v_k(z)\]
for all \(k=1,\dots,n\). Then, for every \(t \in \R\) and \(k=1,\dots,n\), we have
\begin{align*}
\braket*{Q_z \cdot v_k(z)}{e^{itF_*}h} &= \braket*{v_k(z)}{e^{itF(z)}h(z)} = \braket*{e^{-itF(z)^*}v_k(z)}{h(z)}
\\&= \braket*{e^{-it\lambda_k(z)}\left(\sum_{j=0}^n \frac 1{j!}\left(-itN(z)\right)^j\right)v_k(z)}{h(z)}.
\end{align*}
with \(Q_z\) as in \fref{prop:H2RKHS}.
This yields
\begin{align*}
\frac{e^{-it\overline{\lambda_k(z)}}}{t^n} \braket*{Q_z \cdot v_k(z)}{e^{itF_*}h} = \braket*{\left(\sum_{j=0}^n \frac 1{j!t^{n-j}}\left(-iN(z)\right)^j\right)v_k(z)}{h(z)}.
\end{align*}
We further have
\[\left|\braket*{Q_z \cdot v_k(z)}{e^{itF_*}h}\right| \leq \left\lVert Q_z\right\rVert \cdot \left\lVert v_k(z)\right\rVert \cdot \left\lVert h\right\rVert \quad \text{and} \quad \left|\frac{e^{-it\overline{\lambda_k(z)}}}{t^n}\right| = \frac{e^{-t\Im(\lambda_k(z))}}{t^n} \xrightarrow{t \to \infty} 0,\]
using that by assumption \(\lambda_k(z) \in \C_+\) and therefore \(\Im(\lambda_k(z)) > 0\). Then, for \(t \to \infty\), we get
\[0 = \braket*{\frac 1{n!}\left(-iN(z)\right)^n v_k(z)}{h(z)}.\]
Therefore
\[\braket*{Q_z \cdot v_k(z)}{e^{itF_*}h} = \braket*{e^{-it\lambda_k(z)}\left(\sum_{j=0}^{n-1} \frac 1{j!}\left(-itN(z)\right)^j\right)v_k(z)}{h(z)}.\]
Now, repeating the same argument, one shows that
\[0 = \braket*{\frac 1{j!}\left(-iN(z)\right)^j v_k(z)}{h(z)}\]
for all \(j=0,\dots,n\). Especially, for \(j=0\), we get
\[0 = \braket*{v_k(z)}{h(z)}.\]
Since this is true for all \(k=1,\dots,n\) and the vectors \(v_1(z),\dots,v_n(z)\) form a basis of \(\cH\), we have \(h(z) = 0\) for every \(z \in \C_+\) and therefore \(h = 0\), so
\[\bigcap_{t \in \R}e^{itF_*}H^2(\C_+,\cH) = \{0\}.\]
On the other hand, using the same argument for the operator-valued Pick function \(F^\flat\) on the lower half-plane \(\C_-\) defined by
\[F^\flat(z) \coloneqq -F(\overline{z})^*, \quad z \in \C_+,\]
 one gets
\[\bigcap_{t \in \R} e^{it F^\flat_*}H^2(\C_-,\cH) = \{0\}.\]
Then, using that
\[F^\flat_* = -F^*_* = -F_* \quad \text{and} \quad H^2(\C_-,\cH) = H^2(\C_+,\cH)^\perp,\]
as seen in \fref{rem:ROPGAlternative}, we get
\begin{align*}
\overline{\bigcup_{t \in \R}e^{itF_*}H^2(\C_+,\cH)} &= \left(\bigcap_{t \in \R} e^{it F^\flat_*}H^2(\C_-,\cH)\right)^\perp  = \{0\}^\perp = L^2(\R,\cH),
\end{align*}
so \(F\) is regular.
\item[(c) \(\Rightarrow\) (b):] Assume that \(\Spec(F(z)) \not\subeq \C_+\) for some \(z \in \C_+\). Since, by \fref{prop:SpecImPos}(a), we have \(\Spec(F(z)) \subeq \overline{\C_+}\), there exists \(x \in \R\) and a \(v \in \cH \setminus \{0\}\) with \(F(z)v = xv\). Then, by \fref{prop:SpecImPos}(b), we have \(\Im(F(z))v = 0\). We now define the Pick function
\[f: \C_+ \to \C, \quad w \mapsto \braket*{v}{F(w)v}.\]
Then
\[\Im(f(z)) = \braket*{v}{\Im(F(z))v} = 0\]
and therefore, by \fref{lem:PickMaxModPrin}, we get that \(f\) is constant. So
\[\braket*{v}{\Im(F(w))v} = \Im(f(w)) = \Im(f(z)) = 0\]
for all \(w \in \C_+\). Since \(\Im(F(w)) \geq 0\), this implies that
\[0 = \Im(F(w))v = \frac 1{2i}(F(w)-F(w)^*)v\]
and therefore \(F(w)^*v = F(w)v\) for all \(w \in \C_+\).
We now define the function
\[g: \C_+ \to \C, \quad w \mapsto \left\lVert(F(w)-x)v\right\rVert^2.\]
Since
\begin{align*}
g(w) &= \braket*{F(w)v}{F(w)v} - x \braket*{F(w)v}{v} - x \braket*{v}{F(w)v} + x^2 \braket*{v}{v}
\\&= \braket*{F(w)^*v}{F(w)v} - x \braket*{F(w)^*v}{v} - x \braket*{v}{F(w)v} + x^2 \left\lVert v\right\rVert^2
\\&= \braket*{v}{F(w)^2v} - 2x \braket*{v}{F(w)v} + x^2 \left\lVert v\right\rVert^2,
\end{align*}
the function \(g\) is holomorphic with \(\Im(g) \equiv 0\), which implies that \(g\) is constant. Therefore, for every \(w \in \C_+\), we have
\[\left\lVert(F(w)-x)v\right\rVert^2 = g(w) = g(z) = 0,\]
which implies
\[F(w)v = xv.\]
Taking limits, this also implies
\[F_*(y)v = xv \qquad \forall y \in \R.\]
Now, for every function \(h \in H^2(\C_+) \setminus \{0\}\), we have have \(h \otimes v \in H^2(\C_+,\cH) \setminus \{0\}\) with
\[(x\textbf{1}-M_{F_*})(h \otimes v) = 0,\]
so especially \(\ker(x\textbf{1}-M_{F_*}) \cap H^2(\C_+,\cH) \neq \{0\}\).
\item[(d) \(\Rightarrow\) (c):] Assume that there exists \(v \in \ker(x\textbf{1}-M_{F_*}) \cap H^2(\C_+,\cH)\) with \(v \neq 0\) for some \(x \in \R\). Since \(v \in H^2(\C_+,\cH)\), by \fref{thm:HardyAlmostEverywhereNonZero}, we have \(v(y) \neq 0\) for almost every \(y \in \R\) and therefore
\[\ker(x\textbf{1}-F_*(y)) \supeq \C v(y) \neq \{0\}\]
for almost every \(y \in \R\). This implies that the Lebesgue measure of \(\R \setminus D(F,x)\) is zero.
\item[(e) \(\Rightarrow\) (d):] This follows immediately from the fact that \(\R\) has non-zero Lebesgue measure.
\item[(c) \(\Rightarrow\) (e):] Assume that \(D(F,x)\) is no Lebesgue zero-set for some \(x \in \R\). Then there is a \(R>0\) such that \(\lambda_1(D(F,x) \cap [-R,R])>0\), denoting by \(\lambda_1\) the Lebesgue measure on \(\R\). Setting \(n \coloneqq \dim \cH\), by \fref{cor:measurableDiagonalization} there exists a measurable function \(U: \R \to \mathrm{Mat}(n \times n)\) such that \(U(x)\) is unitary for all \(x \in \R\) and functions \(d_1,\dots,d_n: \R \to \R\) such that, after the identification \(\cH \cong \C^n\), we have
\[F_*(y) = U(y) \diag(d_1(y),\dots,d_n(y))U(y)^*\]
for almost every \(y \in \R\). For \(k = 1,\dots,n\), we define the measurable sets
\[M_k \coloneqq d_k^{-1}(x) \setminus \bigcup_{j=1}^{k-1}d_j^{-1}(x).\]
Denoting by \(\{e_1,\dots,e_n\}\) the canonical ONB of \(\cH \cong \C^n\), we now consider the measurable function
\[g \coloneqq \sum_{k=1}^n \chi_{M_k \cap [-R,R]}Ue_k.\]
One has
\[\left\lVert g\right\rVert^2 = \sum_{k=1}^n \lambda_1(M_k \cap [-R,R]) = \lambda_1(D(F,x) \cap [-R,R]) \in ]0,2R],\]
so \(g \in L^2(\R,\cH) \setminus \{0\}\).
Further, for almost every \(y \in \R\), we have
\begin{align*}
F_*(y)g(y) &= \sum_{k=1}^n \chi_{M_k \cap [-R,R]}(y)F_*(y)U(y)e_k = \sum_{k=1}^n \chi_{M_k \cap [-R,R]}(y)d_k(y)U(y)e_k
\\&= \sum_{k=1}^n \chi_{M_k \cap [-R,R]}(y)xU(y)e_k = xg(y),
\end{align*}
so \(g \in \ker(x\textbf{1}-M_{F_*})\). Since \(g\big|_{\R \setminus [-R,R]} \equiv 0\), we know that \(g \notin H^2(\C_-,\cH)\) by \fref{thm:HardyAlmostEverywhereNonZero}. Denoting by \(P_\pm\) the projections onto \(H^2(\C_\pm,\cH)\) and setting \(g_\pm \coloneqq P_\pm g\), we therefore have \(g_+ \neq 0\). Now, for every \(t \in \R\), we have
\[e^{itx}g_+ + e^{itx}g_- = e^{itx}g = e^{itF_*}g = e^{itF_*}g_+ + e^{itF_*}g_-.\]
For \(t \geq 0\) we have \(e^{itF_*}g_+ \in H^2(\C_+,\cH)\), so applying \(P_\pm\) to both sides, we get
\[e^{itx}g_+ = e^{itF_*}g_+ + P_+ e^{itF_*}g_- \quad \text{and} \quad e^{itx}g_- = P_- e^{itF_*}g_-.\]
This implies
\[\left\lVert P_+ e^{itF_*}g_-\right\rVert^2 = \left\lVert e^{itF_*}g_-\right\rVert^2 - \left\lVert P_- e^{itF_*}g_-\right\rVert^2 = \left\lVert g_-\right\rVert^2 - \left\lVert e^{itx}g_-\right\rVert^2 = 0,\]
so \(P_+ e^{itF_*}g_- = 0\) and therefore
\[e^{itx}g_+ = e^{itF_*}g_+ \quad \text{and} \quad e^{itx}g_- = e^{itF_*}g_-\]
for all \(t \geq 0\). This implies that
\[g_+,g_- \in \ker(x\textbf{1}-M_{F_*}),\]
so since \(g_+ \neq 0\), we especially get
\[\ker(x\textbf{1}-M_{F_*}) \cap H^2(\C_+,\cH) \neq \{0\}. \qedhere\]
\end{itemize}
\end{proof}
We now will see some consequences of this theorem:
\begin{cor}\label{cor:ConstNonDeg}
A function \(f \in \mathrm{Pick}(\C_+)_\R\) is regular, if and only if \(f\) is non-constant.
\end{cor}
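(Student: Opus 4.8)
The plan is to read off the result directly from \fref{thm:SpecNonDeg} specialized to $\cH = \C$. Since $\dim \C = 1$, for $f \in \mathrm{Pick}(\C_+)_\R$ the value $f(z)$ is a scalar and $\Spec(f(z)) = \{f(z)\}$, so condition~(b) of \fref{thm:SpecNonDeg} becomes simply $f(z) \in \C_+$ for all $z \in \C_+$, that is, $\Im(f(z)) > 0$ for every $z \in \C_+$. Thus it suffices to show that $f$ is non-constant if and only if $\Im \circ f$ is strictly positive on $\C_+$, and then to invoke the equivalence (a) $\Leftrightarrow$ (b) of \fref{thm:SpecNonDeg}.

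For one implication I would argue by contraposition: if $f$ is constant, say $f \equiv c$, then the boundary values $f_*(x)$ all equal $c$, and since $f \in \mathrm{Pick}(\C_+)_\R$ these are self-adjoint, hence real; so $c \in \R$ and $\Im(f(z)) = 0$ for all $z \in \C_+$. Therefore condition~(b) of \fref{thm:SpecNonDeg} fails and $f$ is not regular.

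For the converse, suppose $f$ is non-constant. As a Pick function $f$ satisfies $\Im(f(z)) \ge 0$ on $\C_+$; if $\Im(f(z_0)) = 0$ held for some $z_0 \in \C_+$, then \fref{lem:PickMaxModPrin} would force $f$ to be constant, contradicting the assumption. Hence $\Im(f(z)) > 0$ for every $z \in \C_+$, condition~(b) of \fref{thm:SpecNonDeg} holds, and $f$ is regular.

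I do not anticipate any real obstacle here: the only point that needs a moment of care is the observation that, inside $\mathrm{Pick}(\C_+)_\R$, a constant function is necessarily a real constant, which is precisely what makes the dichotomy with non-constancy clean; everything else is an immediate translation of \fref{thm:SpecNonDeg}(b) to the scalar setting.
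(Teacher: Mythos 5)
Your proof is correct and takes essentially the same route as the paper: both reduce to condition (b) of \fref{thm:SpecNonDeg} in the scalar case, use \fref{lem:PickMaxModPrin} to get strict positivity of $\Im f$ for non-constant $f$, and note that a constant function in $\mathrm{Pick}(\C_+)_\R$ must be a real constant with $\Im f \equiv 0$. No gaps.
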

\begin{proof}
If \(f \in \mathrm{Pick}(\C_+)_\R\) is constant, than, since \(f\) has real boundary values, there is a \(\lambda \in \R\) with \(f \equiv \lambda\), so \(\Im f(z) = 0\) for every \(z \in \C_+\). If on the other hand \(f \in \mathrm{Pick}(\C_+)_\R\) is non-constant, then, by \fref{lem:PickMaxModPrin}, we have \(\Im f(z) > 0\) for every \(z \in \C_+\). Therefore \(f\) is non-constant, if and only if
\[\Spec(f(z)) = \{f(z)\} \subeq \C_+\]
for every \(z \in \C_+\) which, by \fref{thm:SpecNonDeg}, is equivalent to the function \(f\) being regular.
\end{proof}
Since, by \fref{thm:SpecNonDeg}, for \(f \in \mathrm{Pick}(\C_+)_\R^{\mathrm{reg}}\) and \(F \in \mathrm{Pick}(\C_+,B(\cH))_\R^{\mathrm{reg}}\), for \(z \in \C_+\), one has
\[f(z) \in \C_+ \quad \text{and} \quad \Spec(f(z)) \subeq \C_+,\]
the operators
\[F(f(z)) \in B(\cH) \quad \text{and} \quad f(F(z)) \in B(\cH)\]
are defined. Therefore, the maps
\[\circ: \mathrm{Pick}(\C_+,B(\cH))_\R^{\mathrm{reg}} \times \mathrm{Pick}(\C_+)_\R^{\mathrm{reg}} \to \mathrm{Pick}(\C_+,B(\cH)), \quad (F \circ f)(z) \coloneqq F(f(z))\]
and
\[\circ: \mathrm{Pick}(\C_+)_\R^{\mathrm{reg}} \times \mathrm{Pick}(\C_+,B(\cH))_\R^{\mathrm{reg}} \to \mathrm{Pick}(\C_+,B(\cH)), \quad (f \circ F)(z) \coloneqq f(F(z))\]
are also defined. The following theorem shows that their image, in fact, is again \(\mathrm{Pick}(\C_+,B(\cH))_\R^{\mathrm{reg}}\):
\begin{cor}\label{cor:CompNonDeg}
Let \(\cH\) be a finite-dimensional complex Hilbert space. Further, let \(f,g \in \mathrm{Pick}(\C_+)_\R\) and \(F \in \mathrm{Pick}(\C_+,B(\cH))_\R\) all be regular. Then \(f \circ F \circ g\) is also regular.
\end{cor}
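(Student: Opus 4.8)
The plan is to reduce to the spectral characterization of regularity in \fref{thm:SpecNonDeg}: a function in $\mathrm{Pick}(\C_+,B(\cH))_\R$ is regular iff its spectrum at every $z\in\C_+$ lies in $\C_+$ (condition (b)). It is cleanest to prove two statements separately --- that $F\circ g$ is regular, and that $f\circ G$ is regular for every regular operator-valued Pick function $G$ --- and then apply the second with $G=F\circ g$.

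First the elementary inputs. As $f,g$ are regular, they are non-constant by \fref{cor:ConstNonDeg}, so $\Im f(z),\Im g(z)>0$ on $\C_+$ by \fref{lem:PickMaxModPrin}, i.e. $f(\C_+),g(\C_+)\subseteq\C_+$; and as $F$ is regular, \fref{thm:SpecNonDeg} gives $\Spec(F(w))\subseteq\C_+$ for all $w\in\C_+$. Hence for $z\in\C_+$ we have $g(z)\in\C_+$, so $F\circ g$ is a holomorphic operator-valued Pick function with $\Spec\big((F\circ g)(z)\big)=\Spec(F(g(z)))\subseteq\C_+$; since $f$ is holomorphic near this (compact) spectrum, $f(F(g(z)))$ is defined by the holomorphic functional calculus and depends holomorphically on $z$. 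That $f\circ F\circ g$ is again a Pick function is the operator version of ``Herglotz functions preserve non-negative imaginary part'': from a Nevanlinna representation $f(\zeta)=\alpha+\beta\zeta+\int_\R\tfrac{1+t\zeta}{t-\zeta}\,d\rho(t)$ (with $\alpha\in\R$, $\beta\ge 0$, and $\rho$ a finite positive measure) one gets, for $A$ with $\Spec(A)\subseteq\C_+$,
\[
\Im f(A)=\beta\,\Im A+\int_\R(1+t^2)\,(t\mathbf 1-A^*)^{-1}(\Im A)(t\mathbf 1-A)^{-1}\,d\rho(t)\ \ge\ 0,
\]
applied with $A=F(g(z))$ (which has $\Im A\ge 0$ because $g(z)\in\C_+$). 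The spectral condition is then immediate from the spectral mapping theorem: $\Spec\big((f\circ F\circ g)(z)\big)=f\big(\Spec(F(g(z)))\big)\subseteq f(\C_+)\subseteq\C_+$ for every $z\in\C_+$.

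So everything comes down to the remaining point --- that $f\circ F\circ g$ in fact lies in $\mathrm{Pick}(\C_+,B(\cH))_\R$, i.e. has \emph{self-adjoint} boundary values a.e.\ --- and this is where I expect the real work to be. A priori the composition only lands in $\mathrm{Pick}(\C_+,B(\cH))$ (as the discussion before the corollary notes), and the argument must use that $f,g$ have real and $F$ self-adjoint boundary values: the Pick function $\zeta\mapsto\zeta+i$ on $\C$ has spectrum in $\C_+$ but non-real boundary values, so condition (b) alone is not enough. Morally the boundary value of $f\circ F\circ g$ at $x$ is $f\big(F_*(g_*(x))\big)$, which is self-adjoint since $F_*(x)$ is self-adjoint and $f$ has real boundary values; the issue is to make this a.e.\ identity rigorous --- controlling the measure-zero exceptional sets of $f$ and of $F$ and the fact that $g(x+i\varepsilon)$ need not approach $g_*(x)$ vertically. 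Unless composition-stability of the class $\mathrm{Pick}(\cdot)_\R$ has already been set up in the earlier Pick-function material, the tool I would use is the Cayley transform $h(\zeta)=\frac{\zeta-i}{\zeta+i}$: composing a regular Pick function with $h$ (via functional calculus in the operator case) yields a bounded analytic \emph{inner} function on $\C_+$, since $h$ sends $\R$ into $\T$ and self-adjoint operators to unitaries. Then: boundary maps of inner functions pull Lebesgue-null sets back to Lebesgue-null sets (the bounded harmonic extension of $\chi_E$ with $|E|=0$ is $\equiv 0$, so its composite with an inner function, having boundary data $\chi_{\phi_*^{-1}(E)}$, vanishes), which prevents the exceptional sets of $f$ and $F$ from being charged after pre-composition with the inner transforms of $g$ and $F$; Fatou's theorem supplies a.e.\ vertical boundary limits of $h\circ(f\circ F\circ g)$; and Lindelöf's theorem identifies these limits, along the curves $\varepsilon\mapsto g(x+i\varepsilon)$ and the eigenvalue curves of $F$ (handled by measurable diagonalization, \fref{cor:measurableDiagonalization}, using $\dim\cH<\infty$), with non-tangential boundary values of the inner transforms of $f\circ F$ and of $F$, which are unitary --- a determinant computation then upgrades ``$|\det|=1$ a.e.'' to unitarity a.e.\ (all singular values of a contraction equal $1$). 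This makes $h\circ(f\circ F\circ g)$ inner, hence $(f\circ F\circ g)_*$ self-adjoint a.e., and \fref{thm:SpecNonDeg}~(b)$\Rightarrow$(a) completes the proof.
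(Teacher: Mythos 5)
Your core argument is exactly the paper's proof: the paper's entire argument is that, by \fref{thm:SpecNonDeg}, $f(\C_+)\subeq\C_+$, $g(\C_+)\subeq\C_+$ and $\Spec(F(w))\subeq\C_+$ for $w\in\C_+$, so the spectral mapping theorem gives $\Spec((f\circ F\circ g)(z))=f(\Spec(F(g(z))))\subeq f(\C_+)\subeq\C_+$, and (b)$\Rightarrow$(a) of \fref{thm:SpecNonDeg} finishes. Your verification that the composition is again a Pick function (via the Nevanlinna representation and the identity for $\Im f(A)$) is correct, though the paper relegates this to the discussion preceding the corollary.

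Where you go beyond the paper is the membership in $\mathrm{Pick}(\C_+,B(\cH))_\R$. You are right that \fref{thm:SpecNonDeg} is stated only for Pick functions with self-adjoint boundary values, so strictly one must know that $(f\circ F\circ g)_*$ exists a.e.\ and is self-adjoint before invoking (b)$\Rightarrow$(a); the paper's proof, and the text before the corollary (which only places the composition in $\mathrm{Pick}(\C_+,B(\cH))$), is silent on this, so your concern is legitimate rather than redundant. Two cautions about your sketch of that step, which is the only part of your proposal that is not actually carried out. First, your one-line justification of the null-set pullback property of inner boundary maps already assumes that the boundary data of the composite is $\chi_E\circ\phi_*$, which is essentially the composition-of-boundary-values statement you are in the middle of proving; the classical argument uses open neighbourhoods of $E$ and harmonic measure (equivalently, absolute continuity of the pushforward of Lebesgue measure under $\phi_*$), and the Fatou-plus-Lindel\"of identification should be quoted or proved in that form. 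Second, the genuinely hard operator-valued ingredient is that the boundary eigenvalue distribution of $F_*$ (and then of $(F\circ g)_*$) does not charge Lebesgue null sets, so that the exceptional sets of $f$ and of $F$ are a.e.\ avoided; that statement is precisely \fref{thm:SpecNonDegZeroSet}(a)$\Rightarrow$(b), which the paper proves only later and by means of this very corollary, so your filled-in argument must stay independent of it. Your Lindel\"of/measurable-diagonalization plan aims at that, but note that Fatou gives a.e.\ vertical limits of the composite (or of its determinant), not of $f$ along the individual eigenvalue curves of $F(g(x+i\epsilon))$, so the identification of the limit along those curves, and the fact that they terminate at eigenvalues of $(F\circ g)_*(x)$, are details still owed.
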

\begin{proof}
By \fref{thm:SpecNonDeg}, for \(z \in \C_+\), we have
\[\{f(z)\} = \Spec(f(z)) \subeq \C_+, \quad \{g(z)\} = \Spec(g(z)) \subeq \C_+ \quad \text{and} \quad \Spec(F(z)) \subeq \C_+.\]
Therefore, for \(z \in \C_+\), we get
\[\Spec((f \circ F \circ g)(z)) = f(\Spec(F(g(z)))) \subeq f(\C_+) \subeq \C_+,\]
which by \fref{thm:SpecNonDeg} implies that \(f \circ F \circ g\) is regular.
\end{proof}
With help of this corollary, we can prove a theorem with a generalization of condition (e) in \fref{thm:SpecNonDeg}:
\begin{thm}\label{thm:SpecNonDegZeroSet}
Let \(\cH\) be a finite-dimensional complex Hilbert space and \(F \in \mathrm{Pick}(\C_+,B(\cH))_\R\). Then the following are equivalent:
\begin{enumerate}[\rm (a)]
\item \(F\) is regular.
\item \(D(F,M)\) is a Lebesgue zero-set for all Lebesgue zero-sets \(M \subeq \R\).
\end{enumerate}
\end{thm}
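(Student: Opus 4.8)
The implication (b) $\Rightarrow$ (a) is immediate: singletons are Lebesgue zero-sets, so (b) says in particular that $D(F,x) = D(F,\{x\})$ is a zero-set for every $x \in \R$, which is condition (e) of \fref{thm:SpecNonDeg}; hence $F$ is regular. For the converse I would pass to the distribution of the boundary eigenvalues. Put $n \coloneqq \dim \cH$ and use \fref{cor:measurableDiagonalization} to choose measurable $d_1,\dots,d_n \colon \R \to \R$ with $\Spec(F_*(y)) = \{d_1(y),\dots,d_n(y)\}$ (with multiplicities) for almost every $y$. Then for any $M \subeq \R$ the sets $D(F,M)$ and $\bigcup_{k=1}^{n} d_k^{-1}(M)$ agree up to a Lebesgue null set, so it suffices to prove that the finite positive measure
\[
\Sigma \coloneqq \sum_{k=1}^{n} (d_k)_* \gamma, \qquad d\gamma(y) \coloneqq \frac{1}{\pi(1+y^2)}\,dy,
\]
is absolutely continuous with respect to Lebesgue measure: since $\gamma$ and $\lambda_1$ are mutually absolutely continuous, $\Sigma \ll \lambda_1$ forces $\gamma(D(F,M)) \le \sum_k \gamma(d_k^{-1}(M)) = \Sigma(M) = 0$, hence $\lambda_1(D(F,M)) = 0$, for every $\lambda_1$-null set $M$.

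To control $\Sigma$ the plan is to look at the scalar holomorphic function $z \mapsto \tr\big((F(z) - \lambda\textbf{1})^{-1}\big)$ for $\lambda \in \C_-$. Here $F(z) - \lambda\textbf{1}$ is invertible for every $z \in \C_+$ because $\Spec(F(z)) \subeq \overline{\C_+}$ by \fref{prop:SpecImPos}; and since the trace of a resolvent is the sum of $(\mu_j - \lambda)^{-1}$ over the eigenvalues $\mu_j$ of $F(z)$ (with multiplicity), all of which lie in $\overline{\C_+}$, this function is holomorphic and bounded on $\C_+$, with $\big|\tr((F(z)-\lambda\textbf{1})^{-1})\big| \le n/|\Im(\lambda)|$ --- crucially this holds even though $F(z)$ need not be normal or diagonalizable. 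A bounded holomorphic function on $\C_+$ is the Poisson integral of its boundary values, which for a.e. $y$ are $\tr\big((F_*(y)-\lambda\textbf{1})^{-1}\big) = \sum_{k=1}^{n} (d_k(y)-\lambda)^{-1}$; evaluating the Poisson formula at $z = i$ yields, for every $\lambda \in \C_-$,
\[
\int_\R \frac{d\Sigma(t)}{t - \lambda} = \int_\R \sum_{k=1}^{n} \frac{1}{d_k(y) - \lambda}\, d\gamma(y) = \tr\big((F(i)-\lambda\textbf{1})^{-1}\big) = \sum_{j=1}^{n} \frac{1}{\mu_j - \lambda},
\]
where $\mu_1, \dots, \mu_n$ are the eigenvalues of $F(i)$.

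Regularity now enters through \fref{thm:SpecNonDeg}: since $F$ is regular we have $\Spec(F(i)) \subeq \C_+$, i.e. $\Im(\mu_j) > 0$ for every $j$. Writing $P_\epsilon(u) \coloneqq \tfrac{1}{\pi}\,\epsilon(u^2 + \epsilon^2)^{-1}$, the identity above gives, for all $x \in \R$ and $\epsilon > 0$,
\[
(P_\epsilon * \Sigma)(x) = -\frac{1}{\pi}\,\Im\!\left(\int_\R \frac{d\Sigma(t)}{t - (x - i\epsilon)}\right) = \frac{1}{\pi}\sum_{j=1}^{n} \frac{\Im(\mu_j) + \epsilon}{(\Re(\mu_j) - x)^2 + (\Im(\mu_j) + \epsilon)^2} \le \frac{1}{\pi}\sum_{j=1}^{n} \frac{1}{\Im(\mu_j)},
\]
a bound uniform in $x$ and $\epsilon$. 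As $P_\epsilon * \Sigma \to \Sigma$ weakly-$*$ when $\epsilon \downarrow 0$, lower semicontinuity on open sets gives $\Sigma(U) \le \big(\tfrac1\pi \sum_j \Im(\mu_j)^{-1}\big)\lambda_1(U)$ for every open $U \subeq \R$, hence the same estimate for every Borel set by outer regularity; thus $\Sigma \ll \lambda_1$, which completes the proof.

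The step I expect to be the crux is the passage from the operator $(F(z) - \lambda\textbf{1})^{-1}$ to its trace: the operator norm of a resolvent is not controlled by the distance of $\lambda$ to the spectrum when $F(z)$ is far from normal, so one really must work with the symmetric function $\tr((F(z)-\lambda\textbf{1})^{-1})$, whose boundedness is automatic. The remaining ingredients --- the Poisson representation of bounded holomorphic functions on $\C_+$, the weak-$*$ convergence of Poisson means of a finite measure, and outer regularity --- are routine, but the absolute continuity of $\Sigma$ genuinely uses $\Im(\mu_j) > 0$, i.e. regularity; without it $\Sigma$ could pick up atoms or a singular part, and $D(F,M)$ could fail to be null for some null $M$.
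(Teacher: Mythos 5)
Your proof is correct, and it takes a genuinely different route from the paper's. For (a) \(\Rightarrow\) (b) the paper argues by contraposition, staying inside its own toolkit: from a null set \(M\) with \(\lambda_1(D(F,M))>0\) it builds, via the Herglotz representation (\fref{thm:GePick}), a scalar Pick function \(F_\mu\) associated with a singular measure carried by \(M\), observes that \(\eta \circ F_\mu \circ F\) with \(\eta(z) = -\frac 1z\) has \(0\) in the spectrum of its boundary values on a set of positive measure (so it is non-regular by \fref{thm:SpecNonDeg}), and then uses \fref{cor:ConstNonDeg} and \fref{cor:CompNonDeg} to conclude that \(F\) itself is non-regular. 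You instead prove a quantitative strengthening: the boundary eigenvalue distribution \(\Sigma = \sum_k (d_k)_*\gamma\) satisfies \(\Sigma \leq \big(\tfrac 1\pi \sum_j \Im(\mu_j)^{-1}\big)\lambda_1\), from which (b) is immediate; in fact your Cauchy-transform identity determines \(\Sigma\) exactly as the sum of the harmonic measures at the eigenvalues of \(F(i)\), a L\"owner-type statement about boundary distributions that the paper never extracts. The delicate points are exactly where you flag them, and they hold: \(\tr\big((F(z)-\lambda\textbf{1})^{-1}\big)\) is the sum of \((\mu_j(z)-\lambda)^{-1}\) over eigenvalues with algebraic multiplicity, so the bound \(n/|\Im(\lambda)|\) follows from \(\Spec(F(z)) \subeq \overline{\C_+}\) (\fref{prop:SpecImPos}) without any normality assumption; its boundary values agree a.e.\ with \(\sum_k (d_k(y)-\lambda)^{-1}\) because \(F_*\) exists a.e.\ and inversion is continuous at the self-adjoint limits; and regularity enters only through \(\Spec(F(i)) \subeq \C_+\) via \fref{thm:SpecNonDeg}. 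What your route buys is this sharper, measure-theoretic conclusion and independence from the composition machinery of \fref{cor:CompNonDeg}; what it costs is importing classical half-plane facts not recorded in the paper's appendix (the Poisson representation of bounded holomorphic functions on \(\C_+\), weak-\(*\) convergence of Poisson means, outer regularity), whereas the paper's argument is self-contained in its own lemmas. If you write it up, spell out the a.e.\ identification of the non-tangential boundary values of the trace of the resolvent with \(\tr\big((F_*(y)-\lambda\textbf{1})^{-1}\big)\), and note that the measurable diagonalization only holds off a null set, which suffices since \(D(F,M) \subeq \bigcup_k d_k^{-1}(M)\) up to a null set.
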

\begin{proof}
\begin{itemize}
\item[(b) \(\Rightarrow\) (a):] Follows immediately from \fref{thm:SpecNonDeg}, using the fact that \(\{x\}\) is a Lebesgue zero-set for every \(x \in \R\).
\item[(a) \(\Rightarrow\) (b):] We assume that there is a Lebesgue zero-set \(M \subeq \R\) such that \(D(F,M)\) has positive Lebesgue measure. By \fref{cor:measurableDiagonalization} there exists a measurable function \(U: \R \to \mathrm{Mat}(n \times n)\) such that \(U(x)\) is unitary for all \(x \in \R\) and measurable functions \(d_1,\dots,d_n: \R \to \R \cup \{\infty\}\) such that, after the identification \(\cH \cong \C^n\), we have
\[F_*(y) = U(y) \diag(d_1(y),\dots,d_n(y))U(y)^*\]
for every \(y \in \R\) for which \(F_*(y)\) exists and \(d_1(y),\dots,d_n(y) = \infty\) for every \(y \in \R\) for which \(F_*(y)\) doesn't exist. Then
\[D(F,M) = \bigcup_{k=1}^n d_k^{-1}(M).\]
So, by assumption, there exists \(K \in \{1,\dots,n\}\) such that \(d_K^{-1}(M)\) has positive Lebesgue measure. We now choose \(R>0\) such that
\[\lambda_1\left(d_K^{-1}(M) \cap [-R,R]\right)>0.\]
Then
\[\mu(E) \coloneqq \lambda_1\left(d_K^{-1}(M \cap E) \cap [-R,R]\right)\]
defines a measure \(\mu\) on \(\R\) with
\[\mu(\R) = \lambda_1\left(d_K^{-1}(M) \cap [-R,R]\right),\]
which implies
\[0 < \mu(\R) \leq 2R < \infty,\]
so \(\mu\) is non-zero and finite. Further, since \(M\) is a Lebesgue zero-set and \({\mu(\R \setminus M) = 0}\), it is also Lebesgue-singular. Therefore, by \fref{thm:GePick},
\[F_\mu(z) \coloneqq \int_\R \frac 1{\lambda-z} - \frac \lambda{1+\lambda^2} \,d\mu(\lambda)\]
defines a Pick function with real boundary-values. Further, by \fref{thm:GePick}, setting
\[S_\mu \coloneqq \left\{x \in \R: \lim_{\epsilon \downarrow 0} \Im(F_\mu(x+i\epsilon)) = +\infty\right\},\]
we have \(\mu(E) = \mu(E \cap S_\mu)\) for every measurable subset \(E \subeq \R\). In particular, this implies that
\[0 \neq \mu(\R) = \mu(S_\mu) = \lambda_1\left(d_K^{-1}(M \cap S_\mu) \cap [-R,R]\right).\]
Further, defining the Pick function
\[\eta: \C_+ \to \C_+, \quad z \mapsto -\frac 1z,\]
we get
\[(\eta \circ F_\mu)_*(x) = 0\]
for every \(x \in S_\mu\). For \(x \in d_K^{-1}(M \cap S_\mu) \cap [-R,R]\), by definition, we have that
\[\eset \neq (M \cap S_\mu) \cap \Spec(F_*(x)),\]
so
\[\eset \neq (\eta \circ F_\mu)_*(M \cap S_\mu) \cap \Spec((\eta \circ F_\mu \circ F)_*(x)) = \{0\} \cap \Spec((\eta \circ F_\mu \circ F)_*(x))\]
and therefore
\[0 \in \Spec((\eta \circ F_\mu \circ F)_*(x)).\]
We then obtain
\[D(\eta \circ F_\mu \circ F,0) \supeq d_K^{-1}(M \cap S_\mu) \cap [-R,R]\]
and therefore \(D(\eta \circ F_\mu \circ F,0)\) is no Lebesgue zero-set, so, by \fref{thm:SpecNonDeg}, we have that \(\eta \circ F_\mu \circ F\) is non-regular. Since \(\eta\) and \(F_\mu\) are non-constant and therefore, by \fref{cor:ConstNonDeg}, regular, we get, by \fref{cor:CompNonDeg}, that \(F\) is non-regular. \qedhere
\end{itemize}
\end{proof}
After now having some characterizations for regular Pick functions, i.e. the Pick functions \(F \in \mathrm{Pick}(\C_+,B(\cH))_\R\) such that the triple \((L^2(\R,\cH),H^2(\C_+,\cH),U^F)\) is a complex ROPG, we now want to show how to use these characterizations for real ROPGs. This question is answered in the following theorem:
\begin{theorem}\label{thm:RealToComplexPick}
Let \(\cH\) be a real Hilbert space and \(F \in \mathrm{Pick}(\C_+,B(\cH_\C))_\R\). Then \(U^F\) defines a unitary one-parameter group on \(L^2(\R,\cH_\C)^\sharp\), if and only if \(F = F^\sharp\) with
\[F^\sharp(z) \coloneqq -\cC_\cH F(-\overline{z})\cC_\cH, \quad z \in \C_+.\]
In this case the triple \((L^2(\R,\cH_\C)^\sharp,H^2(\C_+,\cH_\C)^\sharp,U^F)\) is a real ROPG with multiplicity space \(\cK\), if and only if the triple \((L^2(\R,\cH),H^2(\C_+,\cH_\C),U^F)\) is a complex ROPG with multiplicity space \(\cK_\C\).
\end{theorem}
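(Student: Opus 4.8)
The plan is to prove the two assertions in turn. For the first, the key observation is that $L^2(\R,\cH_\C)^\sharp$ is precisely the fixed-point real form of the antilinear involution $\sharp$ on $L^2(\R,\cH_\C)$, so — since $L^2(\R,\cH_\C)=L^2(\R,\cH_\C)^\sharp + iL^2(\R,\cH_\C)^\sharp$ — a bounded $\C$-linear operator leaves $L^2(\R,\cH_\C)^\sharp$ invariant if and only if it commutes with $\sharp$. Hence $U^F$ restricts to a unitary one-parameter group on $L^2(\R,\cH_\C)^\sharp$ exactly when $\sharp\, U^F_t\,\sharp = U^F_t$ for all $t\in\R$. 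So the first step is the direct computation of $\sharp\,U^F_t\,\sharp$: for $f\in L^2(\R,\cH_\C)$ and $x\in\R$ one gets
\[
(\sharp\, M_{e^{itF_*}}\,\sharp f)(x) = \cC_\cH\, e^{itF_*(-x)}\,\cC_\cH\, f(x),
\]
and, using that $\cC_\cH$ is an antiunitary involution (so $\cC_\cH(itA)\cC_\cH=-it\,\cC_\cH A\cC_\cH$ and therefore $\cC_\cH e^{itA}\cC_\cH=e^{-it\,\cC_\cH A\cC_\cH}$) together with $F^\sharp_*(x)=-\cC_\cH F_*(-x)\cC_\cH$, this equals $e^{itF^\sharp_*(x)}f(x)$. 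Thus $\sharp\,U^F_t\,\sharp = U^{F^\sharp}_t$.

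Before concluding I would record that $F^\sharp\in\mathrm{Pick}(\C_+,B(\cH_\C))_\R$: for $z\in\C_+$ one has $-\overline z\in\C_+$ and a short computation gives $\Im F^\sharp(z)=\cC_\cH\,\Im F(-\overline z)\,\cC_\cH\geq 0$, while $F^\sharp_*(x)=-\cC_\cH F_*(-x)\cC_\cH$ exists for a.e.\ $x$ and is self-adjoint. Hence $U^{F^\sharp}$ is a genuine unitary one-parameter group on $L^2(\R,\cH_\C)$. Now if $U^F$ leaves $L^2(\R,\cH_\C)^\sharp$ invariant, then $U^{F^\sharp}_t=U^F_t$ for all $t$; comparing infinitesimal generators ($\partial U^F=M_{F_*}$, and likewise for $F^\sharp$) gives $F_*=F^\sharp_*$ a.e., and since $e^{iF}$ and $e^{iF^\sharp}$ are then bounded holomorphic $B(\cH_\C)$-valued functions on $\C_+$ with the same a.e.\ boundary values they coincide, whence $F=F^\sharp$. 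The converse implication is immediate from $\sharp\,U^F_t\,\sharp=U^{F^\sharp}_t=U^F_t$. This settles the first assertion.

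For the second assertion, assume $F=F^\sharp$, so by the first part $U\coloneqq U^F\big|_{L^2(\R,\cH_\C)^\sharp}$ is a unitary one-parameter group on the real Hilbert space $\cE\coloneqq L^2(\R,\cH_\C)^\sharp$, and $U_t\cE_+\subeq\cE_+$ for $t\geq 0$ with $\cE_+\coloneqq H^2(\C_+,\cH_\C)^\sharp$, since $e^{itF}\in H^\infty(\C_+,B(\cH_\C))$ for $t\geq 0$. I would then invoke the identifications $L^2(\R,\cH_\C)=L^2(\R,\cH_\C)^\sharp+iL^2(\R,\cH_\C)^\sharp\cong\cE_\C$ and $H^2(\C_+,\cH_\C)=H^2(\C_+,\cH_\C)^\sharp+iH^2(\C_+,\cH_\C)^\sharp\cong(\cE_+)_\C$, under which $U_\C$ corresponds to $U^F$ on $L^2(\R,\cH_\C)$. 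With these identifications the claim is exactly \fref{prop:ROPGRealToComplex} applied to $(\cE,\cE_+,U)$: $(\cE,\cE_+,U)$ is a real ROPG if and only if $(\cE_\C,(\cE_+)_\C,U_\C)\cong(L^2(\R,\cH_\C),H^2(\C_+,\cH_\C),U^F)$ is a complex ROPG, and in that case the multiplicity space $\cK$ of the former corresponds to the multiplicity space $\cK_\C$ of the latter.

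As for where the work lies: the only genuinely computational step is the conjugation identity $\sharp\,U^F_t\,\sharp=U^{F^\sharp}_t$, where one must keep careful track of the sign introduced by the antilinearity of $\cC_\cH$ in the exponent, together with the accompanying check that $F^\sharp$ is again a Pick function with self-adjoint boundary values (so that $U^{F^\sharp}$ is defined and the rigidity of $H^\infty$-functions can be used to pass from $F_*=F^\sharp_*$ a.e.\ to $F=F^\sharp$). Once this is in place, the first assertion is a formal consequence of "preserving the real form $\Leftrightarrow$ commuting with $\sharp$", and the second is a direct translation into \fref{prop:ROPGRealToComplex}; I do not anticipate any further obstacle.
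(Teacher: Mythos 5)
Your proposal is correct and takes essentially the same route as the paper: the same pointwise computation showing that invariance of \(L^2(\R,\cH_\C)^\sharp\) under \(U^F\) amounts to \(F_*=F^\sharp_*\) almost everywhere (hence \(F=F^\sharp\)), followed by an appeal to \fref{prop:ROPGRealToComplex} for the statement about multiplicity spaces. The only quibble is your final "whence \(F=F^\sharp\)" from \(e^{iF}=e^{iF^\sharp}\), since \(\exp\) is not injective; but this is immediately repaired (e.g.\ use \(e^{itF}=e^{itF^\sharp}\) in \(H^\infty\) for all \(t\) and differentiate at \(t=0\) at each \(z\in\C_+\)), and the paper itself asserts this equivalence without further argument.
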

\begin{proof}
One has
\[e^{itF_*} \cdot L^2(\R,\cH_\C)^\sharp \subeq L^2(\R,\cH_\C)^\sharp \quad \forall t \in \R,\]
if and only if
\begin{align*}
\left(e^{itF_*} \cdot f\right)(x) &= \left(e^{itF_*} \cdot f\right)^\sharp(x) = \cC_\cH e^{itF_*(-x)} f(-x)
\\&= e^{-it\cC_\cH F_*(-x)\cC_\cH} \cC_\cH f(-x) = e^{it F^\sharp_*(x)} f^\sharp(x) = e^{it F^\sharp_*(x)} f(x)
\end{align*}
for all \(t \in \R\), \(f \in L^2(\R,\cH_\C)^\sharp\) and almost all \(x \in \R\). This is the case, if and only if
\[F_*(x) = F^\sharp_*(x)\]
for almost all \(x \in \R\), which is equivalent to \(F = F^\sharp\). The second statement follows by \fref{prop:ROPGRealToComplex}.
\end{proof}

\subsection{The multiplicity free case}
In the last section, given a finite-dimensional complex Hilbert space \(\cH\), we have seen for which Pick functions \({F \in \mathrm{Pick}(\C_+,B(\cH))_\R}\) the triple \((L^2(\R,\cH),H^2(\C_+,\cH),U^F)\) is a complex ROPG and likewise, given a finite-dimensional real Hilbert space \(\cH\), for which \(F \in \mathrm{Pick}(\C_+,B(\cH_\C))_\R\) the triple \((L^2(\R,\cH_\C)^\sharp,H^2(\C_+,\cH_\C)^\sharp,U^F)\) is a real ROPG.

The goal of the following sections will be, given a regular Pick function \({F \in \mathrm{Pick}(\C_+,B(\cH))_\R}\) or \(F \in \mathrm{Pick}(\C_+,B(\cH_\C))_\R\) respectively, to calculate the multiplicity space for the complex ROPG \((L^2(\R,\cH),H^2(\C_+,\cH),U^F)\) or the real ROPG \((L^2(\R,\cH_\C)^\sharp,H^2(\C_+,\cH_\C)^\sharp,U^F)\) respectively.

We will later provide an answer in the general case of any finite-dimensional Hilbert space \(\cH\). In this section, we will discuss the multiplicity-free case \(\cH = \C\). Although not generalizable to a case with multiplicities, this proof gives a good idea of the behavior of Pick functions and how they produce multiplicities.

By \fref{thm:GePick}, every function \(F \in \mathrm{Pick}(\C_+)_\R\) is of the form
\[F(z) = C + Dz + \int_\R \frac 1{\lambda - z} - \frac{\lambda}{1+\lambda^2} \,d\mu(\lambda)\]
with \(C \in \R\), \(D \geq 0\) and some singular Borel measure \(\mu\) on \(\R\) supported on the set
\[S_\mu \coloneqq \left\{x \in \R: \lim_{\epsilon \downarrow 0} \Im(F(x+i\epsilon)) = +\infty\right\}.\]
We set
\[S_F \coloneqq \begin{cases} S_\mu & \text{if }D = 0 \\ S_\mu \cup \{\infty\} & \text{if }D > 0\end{cases}\]
(cf. \fref{rem:GePick}). With this definition, we can formulate our theorem:
\begin{theorem}\label{thm:PickMultiplicity}
Let \(F \in \mathrm{Pick}(\C_+)_\R\) such that \(S_F\) is a non-empty closed subset of \(\overline{\R}\). Then \((L^2(\R,\C),H^2(\C_+),U^F)\) is a complex ROPG and its multiplicity space \(\cK\) is given by
\[\cK \coloneqq \begin{cases} \C^n & \text{if }|S_F| = n \in \N \\ \ell^2(\N,\C) & \text{if }|S_F| = \infty.\end{cases}\]
\end{theorem}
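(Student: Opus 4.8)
The plan is to settle regularity directly, then to compute the multiplicity space as the dimension of a model space built from $F$, and finally to read that dimension off the Nevanlinna data $(C,D,\mu)$ of $F$.

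\emph{Regularity.} Since $S_F\neq\emptyset$, either $D>0$ or $\mu\neq 0$, so $F$ is not a real constant; by \fref{cor:ConstNonDeg} it is therefore regular, which by the remark following the definition of regular Pick functions means precisely that the triple $(L^2(\R,\C),H^2(\C_+),U^F)$, with $U^F_t=M_{e^{itF_*}}$, is a complex ROPG. Hence a multiplicity space exists and the point is to identify it.

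\emph{Reduction to a model space.} I would invoke \fref{thm:kernelAdjointGeneral}: for any fixed $\lambda\in\C_+$ the multiplicity space satisfies $\cK\cong\ker(\overline\lambda\mathbf 1-A^*)=\mathrm{ran}(A-\lambda\mathbf 1)^{\perp}$, where $A:=p_{H^2(\C_+)}(\partial U^F)p_{H^2(\C_+)}^{*}$ and $\partial U^F=M_{F_*}$, so $A$ is the compression of the multiplication operator $M_{F_*}$ to $H^2(\C_+)$. The key observation is that $\Phi_\lambda:=\dfrac{F-\lambda}{F-\overline\lambda}=\phi_\lambda\circ F$ is an inner function on $\C_+$: since $\Im(F-\overline\lambda)\geq\Im\lambda>0$ on $\C_+$, the function $F-\overline\lambda$ has no zeros there and $\tfrac1{F-\overline\lambda}\in H^\infty(\C_+)$, so $\Phi_\lambda\in\cO(\C_+)$; from $\Im F(z)\geq0$ one gets $|F(z)-\lambda|\leq|F(z)-\overline\lambda|$, so $|\Phi_\lambda|\leq1$ on $\C_+$; and since $F_*$ is real a.e., $|F_*(x)-\lambda|=|F_*(x)-\overline\lambda|$, so $|\Phi_\lambda|=1$ a.e.\ on $\R$. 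I would then prove $\mathrm{ran}(A-\lambda\mathbf 1)=M_{\Phi_\lambda}H^2(\C_+)$. For ``$\supseteq$'': for $g\in H^2(\C_+)$ put $f:=\tfrac{g}{F-\overline\lambda}\in H^\infty(\C_+)\cdot H^2(\C_+)\subseteq H^2(\C_+)$; then $F_*f\in L^2(\R)$ and $(A-\lambda\mathbf 1)f=P_{H^2(\C_+)}((F_*-\lambda)f)=P_{H^2(\C_+)}(\Phi_\lambda g)=\Phi_\lambda g$. For ``$\subseteq$'': for $f$ in the domain of $A$ write $(F_*-\lambda)f=\Phi_\lambda\cdot(F-\overline\lambda)f$; the factor $(F-\overline\lambda)f$ is holomorphic on $\C_+$, lies in the Smirnov class (product of an $H^2$-function and a Pick function), and on $\R$ has the same modulus as $(F_*-\lambda)f\in L^2(\R)$ because $|F_*-\lambda|=|F_*-\overline\lambda|$, hence $(F-\overline\lambda)f\in H^2(\C_+)$ and $(A-\lambda\mathbf 1)f=\Phi_\lambda(F-\overline\lambda)f\in M_{\Phi_\lambda}H^2(\C_+)$. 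Since $M_{\Phi_\lambda}H^2(\C_+)$ is closed, this gives $\cK\cong H^2(\C_+)\ominus M_{\Phi_\lambda}H^2(\C_+)$, the model space of $\Phi_\lambda$.

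\emph{Computing the dimension.} The zeros of $\Phi_\lambda$ in $\C_+$ are precisely the solutions of $F(z)=\lambda$ in $\C_+$, counted with multiplicity; moreover $F-\lambda$ has no zeros in $\overline{\C_-}$ (on $\C_-$ one has $\Im F\leq0$, and $F_*$ is real a.e.\ on $\R$) and none at the real poles of $F$, so these are all the zeros of $F-\lambda$ in $\C$. If $\mu$ is a finite sum of $m$ point masses then $F$ is a rational function and $F-\lambda=(p-\lambda q)/q$, where $q$ is the product of the $m$ linear factors coming from the atoms and $\deg(p-\lambda q)=m+\mathbf 1_{D>0}=|S_F|$ (the leading terms do not cancel, as $\lambda\notin\R$); hence $F-\lambda$ has exactly $|S_F|$ zeros, $\Phi_\lambda$ is a finite Blaschke product of degree $|S_F|$, and $\cK\cong\C^{|S_F|}$. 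If $\mu$ is not a finite sum of point masses then $|S_F|=\infty$ (every atom of $\mu$ lies in $S_\mu$, and a non-zero continuous singular part forces $S_\mu$ to be uncountable), while $\Phi_\lambda$ is not a finite Blaschke product --- otherwise $F=\tfrac{\overline\lambda\Phi_\lambda-\lambda}{\Phi_\lambda-1}$ would be rational with finitely atomic Nevanlinna measure --- so its model space is infinite dimensional and $\cK\cong\ell^2(\N,\C)$. This yields the claimed formula for $\cK$.

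\emph{Main obstacle.} The conceptual step is to recognize $\Phi_\lambda=\phi_\lambda\circ F$ as the inner function governing the problem and to establish $\mathrm{ran}(A-\lambda\mathbf 1)=M_{\Phi_\lambda}H^2(\C_+)$. The one delicate technical point inside this is the assertion $(F-\overline\lambda)f\in H^2(\C_+)$, which relies on Pick functions belonging to the Smirnov class $N^+(\C_+)$ (equivalently, $\tfrac1{F-\overline\lambda}$ being outer); I expect this to be the step requiring the most care, and it should be available from the Hardy-space preliminaries. The zero-count for rational $F$ is elementary complex analysis but uses the strict monotonicity of $F$ on the complement of its poles to exclude real and lower-half-plane zeros; everything downstream of the model-space identification is bookkeeping with inner functions.
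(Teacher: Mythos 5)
Your argument is correct, but it is not the route the paper takes for this theorem. The paper's proof stays entirely on the ``spectral'' side: since \(S_F\) is closed, \(\overline{\R}\setminus S_F\) decomposes into countably many intervals \((a_j,b_j)_{j\in J}\) with \(|J|=|S_F|\), on each of which \(F_*\) is an increasing \(C^1\)-diffeomorphism onto \(\R\) (\fref{prop:PickBoundaryDiffeo}); the change of variables then yields an explicit unitary \(L^2(\R,\C)\to L^2(\R,\ell^2(J,\C))\) intertwining \(U^F\) with \(S\), and \fref{thm:IndependentFromSubspace} identifies the multiplicity space as \(\ell^2(J,\C)\). In particular the closedness of \(S_F\) is genuinely used there, whereas your proof never needs it. Your route instead goes through \fref{thm:kernelAdjointGeneral}, identifies \(\ker(\overline\lambda\textbf{1}-A^*)\) with the model space \(H^2(\C_+)\ominus(\phi_\lambda\circ F)H^2(\C_+)\), and counts its dimension from the Nevanlinna data; this is essentially the strategy the paper deploys only later, for the matrix-valued case (\fref{prop:PickKernelForm}, \fref{prop:FiniteCharacterization}, \fref{thm:BigLWTheorem}), except that you prove the range identity \(\mathrm{ran}(A-\lambda\textbf{1})=M_{\phi_\lambda\circ F}H^2(\C_+)\) directly rather than via the Lax--Phillips isomorphism, so there is no circularity. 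What each approach buys: the paper's diagonalization makes visible how every real value is hit exactly \(|S_F|\) times by \(F_*\) (its stated pedagogical purpose), but does not generalize to multiplicities; yours is uniform with the general theory, drops the closedness hypothesis, and localizes all analytic difficulty in one step --- the assertion \((F-\overline\lambda)f\in H^2(\C_+)\), which rests on Pick functions lying in the Smirnov class \(N^+\) plus the Smirnov maximum principle. That fact is classical (it is in Rosenblum--Rovnyak, which the paper cites) but is not among the paper's Hardy-space preliminaries, so if this were written out for the thesis you would need to add it; as you present it, the step is correctly identified and correctly used, and the rest (rational inner \(\Rightarrow\) finite Blaschke product, zero count \(m+\mathbf 1_{D>0}=|S_F|\), infinite-dimensionality when \(\mu\) is not finitely atomic via \(F=\frac{\overline\lambda\phi_\lambda\circ F-\lambda}{\phi_\lambda\circ F-1}\)) is sound bookkeeping.
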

\begin{proof}
Since \(S_F\) is non-empty, the function \(F\) is non-constant and therefore, by \fref{cor:ConstNonDeg}, the triple \((L^2(\R,\C),H^2(\C_+),U^F)\) is a complex ROPG. We now calculate its multiplicity space.

By \fref{lem:countableUnion} there is a countable family \(\left(a_j,b_j\right)_{j \in J}\) in \(\overline{\R} \times \overline{\R}\) such that
\[\overline{\R} \setminus S_F = \bigsqcup_{j \in J} \left(a_j,b_j\right).\]
One has \(S_F = \{a_j: j \in J\}\) and therefore \(|S_F| = |J|\). Further, by \fref{prop:PickBoundaryDiffeo}, for every \(j \in J\), the map
\[F_j \coloneqq F_*\big|_{\left(a_j,b_j\right)}: \left(a_j,b_j\right) \to \R\]
is monotonically increasing, bijective, and continuously differentiable. Therefore, for every function \(g \in L^1(\R,\C) \cong L^1\left(\overline{\R},\C\right)\), one has
\begin{align*}
\int_\R g(x) \,dx &= \int_{\overline{\R} \setminus S_F} g(x) \,dx = \sum_{j \in J}\int_{(a_j,b_j)}g(x)\,dx = \sum_{j \in J}\int_\R g\left(F_j^{-1}(y)\right) \left(F_j^{-1}\right)'(y)\,dy.
\end{align*}
This implies that the map
\[V: L^2(\R,\C) \to \ell^2(J,L^2(\R,\C)), \quad f \mapsto \left(\sqrt{\left(F_j^{-1}\right)'} \cdot \left(f \circ F_j^{-1}\right)\right)_{j \in J}\]
is a unitary isomorphism of Hilbert spaces. Further, by the Fubini-Tonelli Theorem, the map
\[T: \ell^2(J,L^2(\R,\C)) \to L^2(\R,\ell^2(J,\C)), \quad T\left(\left(f_j\right)_{j\in J}\right)(x) = \left(f_j(x)\right)_{j \in J},\]
is a unitary isomorphism of Hilbert spaces. Then, for \(f \in L^2(\R,\C)\) and \(t \in \R\), one has
\begin{align*}
(T \circ V)(U^F_t f)(x) &= (T \circ V)\left(e^{itF_*} f\right)(x) = T\left(\left(\sqrt{\left(F_j^{-1}\right)'} \cdot e^{it\left(F_* \circ F_j^{-1}\right)}\left(f \circ F_j^{-1}\right)\right)_{j \in J}\right)(x)
\\&=\left(\sqrt{\left(F_j^{-1}\right)'(x)} \cdot e^{it\left(F_* \circ F_j^{-1}\right)(x)}\left(f \circ F_j^{-1}\right)(x)\right)_{j \in J}
\\&=\left(\sqrt{\left(F_j^{-1}\right)'(x)} \cdot e^{itx}\left(f \circ F_j^{-1}\right)(x)\right)_{j \in J}
\\&=e^{itx} \cdot \left(\sqrt{\left(F_j^{-1}\right)'(x)} \cdot \left(f \circ F_j^{-1}\right)(x)\right)_{j \in J}
\\&= e^{itx} \cdot \left(T \circ V\right)(f)(x) = \left(S_t \circ T \circ V\right)(f)(x),
\end{align*}
so
\[(T \circ V) \circ U^F_t = S_t \circ (T \circ V) \quad \forall t \in \R.\]
Now set
\[\cE_+ \coloneqq \left(T \circ V\right)\left(H^2(\C_+)\right) \subeq L^2(\R,\ell^2(J,\C)).\]
Then, for every \(t \in \R\), one has
\begin{align*}
S_t \cE_+ &= \left(S_t \circ \left(T \circ V\right)\right)\left(H^2(\C_+)\right) = \left(\left(T \circ V\right) \circ U^F_t\right)\left(H^2(\C_+)\right),
\end{align*}
so, using that \((L^2(\R,\C),H^2(\C_+),U^F)\) is a complex ROPG, we get
\[S_t \cE_+ = \left(T \circ V\right)\left(U^F_t H^2(\C_+)\right) \subeq \left(T \circ V\right)\left(H^2(\C_+)\right) = \cE_+ \quad \forall t \geq 0\]
and
\[\bigcap_{t \in \R} S_t \cE_+ = \left(T \circ V\right)\left(\bigcap_{t \in \R} U^F_t H^2(\C_+)\right) = \left(T \circ V\right)\left(\{0\}\right) = \{0\}\]
and
\[\overline{\bigcup_{t \in \R} S_t \cE_+} = \left(T \circ V\right)\left(\overline{\bigcup_{t \in \R} U^F_t H^2(\C_+)}\right) = \left(T \circ V\right)\left(L^2(\R,\C)\right) = L^2(\R,\ell^2(J,\C)).\]
This implies that the triple \((L^2(\R,\ell^2(J,\C)),\cE_+,S)\) is a complex ROPG, which is equivalent to \((L^2(\R,\C),H^2(\C_+),U^F)\) via the isomorphism \(T \circ V\). Further, by \fref{thm:IndependentFromSubspace} the complex ROPG \((L^2(\R,\ell^2(J,\C)),\cE_+,S)\) is equivalent to \((L^2(\R,\ell^2(J,\C)),H^2(\C_+,\ell^2(J,\C)),S)\). So, setting \({\cK \coloneqq \ell^2(J,\C)}\), we have that \((L^2(\R,\C),H^2(\C_+),U^F)\) is equivalent to \((L^2(\R,\cK),H^2(\C_+,\cK),S)\) and therefore \(\cK = \ell^2(J,\C)\) is the multiplicity space of \((L^2(\R,\C),H^2(\C_+),U^F)\). The statement then follows since
\[\ell^2(J,\C) \cong \C^{|J|} = \C^{|S_F|}\]
if \(|J| = |S_F| \in \N\) and
\[\ell^2(J,\C) \cong \ell^2(\N,\C)\]
if \(|J| = |S_F| = \infty\).
\end{proof}
\begin{remark}
One has \(|S_F| \in \N\), if and only if the measure in the Herglotz representation of \(F\) is supported on a finite set, i.e. if there are \(\lambda_1,\dots,\lambda_n \in \R\) and \(a_1,\dots,a_n \in \R_+\) such that
\[F(z) = C + Dz + \sum_{k=1}^n \frac{a_k}{\lambda_k-z}, \quad z \in \C_+,\]
for some \(C \in \R\) and \(D \in \R_{\geq 0}\). In this case, one has
\[|S_F| = \begin{cases} n & \text{if } D= 0 \\ n+1 & \text{if } D > 0.\end{cases}\]
\end{remark}
We will now also provide an analogue to \fref{thm:PickMultiplicity} in the real case:
\begin{thm}\label{thm:PickMultiplicityReal}
Let \(F \in \mathrm{Pick}(\C_+)_\R\) such that \(S_F\) is a non-empty closed subset of \(\overline{\R}\). Then \((L^2(\R,\C)^\sharp,H^2(\C_+)^\sharp,U^F)\) is a real ROPG, if and only if
\[F(z) = - \overline{F(-\overline{z})} \quad \forall z \in \C_+\]
and in this case its multiplicity space \(\cK\) is given by
\[\cK \coloneqq \begin{cases} \R^n & \text{if }|S_F| = n \in \N \\ \ell^2(\N,\R) & \text{if }|S_F| = \infty.\end{cases}\]
\end{thm}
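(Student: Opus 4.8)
The plan is to reduce everything to the multiplicity-free complex case \fref{thm:PickMultiplicity} via the real-to-complex comparison \fref{thm:RealToComplexPick}. I would apply the latter with the real Hilbert space $\cH = \R$, so that $\cH_\C = \C$ and $\cC_\cH$ is ordinary complex conjugation on $\C$; then for a scalar Pick function $F \in \mathrm{Pick}(\C_+)_\R$ one computes $F^\sharp(z) = -\cC_\cH F(-\overline{z})\cC_\cH = -\overline{F(-\overline{z})}$, so the abstract symmetry $F = F^\sharp$ appearing in \fref{thm:RealToComplexPick} is exactly the concrete condition $F(z) = -\overline{F(-\overline{z})}$ for all $z \in \C_+$.

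First I would settle the stated equivalence. By \fref{thm:RealToComplexPick}, the operators $U^F_t = M_{e^{itF_*}}$ leave $L^2(\R,\C)^\sharp$ invariant for all $t$ --- equivalently $U^F$ is a unitary one-parameter group on $L^2(\R,\C)^\sharp$ --- if and only if $F = F^\sharp$. Thus if $F(z) \neq -\overline{F(-\overline{z})}$ for some $z \in \C_+$, the triple $(L^2(\R,\C)^\sharp,H^2(\C_+)^\sharp,U^F)$ cannot be a real ROPG, since $U^F$ is not even a one-parameter group on that space. Conversely, when $F = F^\sharp$, the second part of \fref{thm:RealToComplexPick} says that $(L^2(\R,\C)^\sharp,H^2(\C_+)^\sharp,U^F)$ is a real ROPG if and only if its complexification $(L^2(\R,\C),H^2(\C_+),U^F)$ is a complex ROPG; and the latter holds by \fref{thm:PickMultiplicity}, whose hypothesis --- $S_F$ a non-empty closed subset of $\overline{\R}$ --- is precisely the standing assumption of the theorem. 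Hence under that assumption $F(z) = -\overline{F(-\overline{z})}$ is necessary and sufficient for the triple to be a real ROPG.

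Next I would read off the multiplicity space. By \fref{thm:PickMultiplicity} the complex ROPG $(L^2(\R,\C),H^2(\C_+),U^F)$ has multiplicity space $\C^n$ if $|S_F| = n \in \N$ and $\ell^2(\N,\C)$ if $|S_F| = \infty$. By the multiplicity clause of \fref{thm:RealToComplexPick}, the real ROPG has multiplicity space $\cK$ exactly when this complex ROPG has multiplicity space $\cK_\C$; since a real Hilbert space is determined up to isomorphism by its dimension and $\dim_\C \cK_\C = \dim_\R \cK$, this forces $\cK \cong \R^n$ in the finite case and $\cK \cong \ell^2(\N,\R)$ in the infinite case, as asserted. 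There is no real obstacle in this argument once \fref{thm:RealToComplexPick} and \fref{thm:PickMultiplicity} are in hand; the only care needed is in unwinding the involution condition $F = F^\sharp$ to $F(z) = -\overline{F(-\overline{z})}$ in the one-dimensional setting, and in tracking which Hilbert space --- $L^2(\R,\C)^\sharp$ or its complexification $L^2(\R,\C)$ --- each cited statement refers to, so that the two halves of \fref{thm:RealToComplexPick} are applied with consistent data.
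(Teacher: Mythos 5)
Your proposal is correct and follows exactly the paper's route: the paper proves this theorem by citing \fref{thm:RealToComplexPick} together with \fref{thm:PickMultiplicity}, and your argument simply spells out the same reduction (specializing $F^\sharp$ to the scalar case and transferring the multiplicity space from $\C^n$ or $\ell^2(\N,\C)$ to $\R^n$ or $\ell^2(\N,\R)$). No further comment is needed.
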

\begin{proof}
This follows immediately by \fref{thm:RealToComplexPick} and \fref{thm:PickMultiplicity}.
\end{proof}
\begin{remark}
The condition \(F(z) = - \overline{F(-\overline{z})}\), for a Pick function
\[F(z) = C + Dz + \int_\R \frac 1{\lambda-z}-\frac \lambda{1+\lambda^2} \,d\mu(\lambda)\]
is equivalent to the measure \(\mu\) being symmetric, i.e. \(\mu(A) = \mu(-A)\) for all measurable subsets \(A \subeq \R\). In this case, the Pick function can be written as
\begin{align*}
F(z) &= C + Dz - \frac{\mu(\{0\})}z + \int_{\R_+} \frac 1{\lambda-z}+\frac 1{-\lambda-z} \,d\mu(\lambda)
\\&= C + Dz - \frac{\mu(\{0\})}z + \int_{\R_+} \frac {2z}{\lambda^2-z^2} \,d\mu(\lambda), \qquad \qquad z \in \C_+.
\end{align*}
\end{remark}

\subsection{The general case}
After considering the one-dimensional case, in this section, given any finite-dimensional complex Hilbert space \(\cH\) and a regular Pick function \({F \in \mathrm{Pick}(\C_+,B(\cH))_\R}\), we want to calculate the multiplicity space for the complex ROPG \((L^2(\R,\cH),H^2(\C_+,\cH),U^F)\), as well as provide an analogue in the real case. We will do this by using \fref{thm:kernelAdjointGeneral}, in which appears the operator
\[p\left(\partial U^F\right)p^* = p M_{F_*} p^* = M_F\]
with \(p \coloneqq p_{H^2(\C_+,\cH)}\).
Therefore, given a Pick function \({F \in \mathrm{Pick}(\C_+,B(\cH))_\R}\) and \(\lambda \in \C_+\), we have to better understand the kernel \(\ker\left(\overline{\lambda}\textbf{1}-M_F^*\right) \subeq H^2(\C_+,\cH)\). We will see that this kernel has a property that we will call \(M_{H^\infty}^*\)-stability. Therefore, the following subsection will be dedicated to the theory of \(M_{H^\infty}^*\)-stable subspaces.

\subsubsection{\(M_{H^\infty}^*\)-stable subspaces and finite Blaschke--Potapov products}
Throughout this subsection, by \(\perp\) we will denote the orthogonal complement in the Hilbert space \(L^2(\R,\cH)\) and by \(\gls*{ZZZO+}\) the orthogonal complement in the Hilbert space \(H^2(\C_+,\cH)\).
\begin{definition}
Let \(\cH\) be a complex Hilbert space. We set
\[\gls*{P+} \coloneqq P_{H^2(\C_+,\cH)} \in B(L^2(\R,\cH)).\]
A subspace \(V \subeq H^2(\C_+,\cH)\) is called \textit{\(M_{H^\infty}^*\)-stable}, if
\[P_+ M_\phi^* V \subeq V\]
for every \(\phi \in H^\infty(\C_+)\).
\end{definition}
It turns out that \(M_{H^\infty}^*\)-stable subspaces are closely related to SROPGs:
\begin{prop}\label{prop:stableCharakterizationSROPG}
Let \(\cH\) be a complex Hilbert space. A subspace \(V \subeq H^2(\C_+,\cH)\) is \(M_{H^\infty}^*\)-stable, if and only if \((L^2(\R,\cH),V^{\perp_+},S)\) is a SROPG.
\end{prop}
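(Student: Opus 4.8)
The plan is to reduce everything to the exponential generators $e_t(x)=e^{itx}$ — for which $S_t=M_{e_t}$ — and then to bootstrap from the family $\{e_t:t\ge 0\}$ to all of $H^\infty(\C_+)$ via \fref{prop:etDense}. First I would record the facts I will use repeatedly: for $t\ge 0$ one has $e_t\in H^\infty(\C_+)$, so that $M_{e_t}H^2(\C_+,\cH)\subseteq H^2(\C_+,\cH)$, and on $L^2(\R,\cH)$ the adjoint is $M_{e_t}^*=M_{\overline{e_t}}$; moreover $(L^2(\R,\cH),H^2(\C_+,\cH),S)$ is a complex ROPG by \fref{thm:LaxPhillipsComplex}, so in particular $\bigcap_{t\in\R}S_tH^2(\C_+,\cH)=\{0\}$. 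I would treat $V$ as a closed subspace (the case relevant for later applications; for a general subspace the statement should be read with $\overline V$).

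For the implication ``$V$ is $M_{H^\infty}^*$-stable $\Rightarrow$ $(L^2(\R,\cH),V^{\perp_+},S)$ is a SROPG'' I would check the two defining axioms. For invariance, fix $v\in V$, $w\in V^{\perp_+}$ and $t\ge 0$; using $w\in H^2(\C_+,\cH)$,
\[
\langle v,S_tw\rangle \;=\; \langle M_{e_t}^*v,w\rangle \;=\; \langle P_+M_{e_t}^*v,w\rangle ,
\]
which vanishes since $P_+M_{e_t}^*v\in V$ by $M_{H^\infty}^*$-stability applied to $\phi=e_t\in H^\infty(\C_+)$, while $w\perp_+ V$; hence $S_tV^{\perp_+}\subseteq V^{\perp_+}$ for $t\ge 0$. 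For the triviality of the intersection, $V^{\perp_+}\subseteq H^2(\C_+,\cH)$ gives $\bigcap_{t\in\R}S_tV^{\perp_+}\subseteq\bigcap_{t\in\R}S_tH^2(\C_+,\cH)=\{0\}$, which is the remaining axiom.

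For the converse I would assume $(L^2(\R,\cH),V^{\perp_+},S)$ is a SROPG and use only that $S_tV^{\perp_+}\subseteq V^{\perp_+}$ for $t\ge 0$. Reading the displayed identity backwards, for $v\in V$, $t\ge 0$ and every $w\in V^{\perp_+}$ one gets $\langle P_+M_{e_t}^*v,w\rangle=\langle v,S_tw\rangle=0$, so $P_+M_{e_t}^*v\in H^2(\C_+,\cH)$ is $\perp_+$-orthogonal to $V^{\perp_+}$, i.e.\ $P_+M_{e_t}^*v\in(V^{\perp_+})^{\perp_+}=V$. By linearity, $P_+M_\psi^*V\subseteq V$ for every $\psi$ in the span of $\{e_t:t\ge 0\}$. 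To conclude, I would use \fref{prop:etDense} to write an arbitrary $\phi\in H^\infty(\C_+)$ as a weak-$*$ limit in $L^\infty(\R,\C)$ of a norm-bounded net $(\psi_i)$ from that span; then $M_{\psi_i}^*\to M_\phi^*$ in the weak operator topology, so $P_+M_{\psi_i}^*v\to P_+M_\phi^*v$ weakly for each $v\in V$, and since every $P_+M_{\psi_i}^*v$ lies in $V$ and $V$ is norm-closed, hence weakly closed, the limit $P_+M_\phi^*v$ lies in $V$. Thus $V$ is $M_{H^\infty}^*$-stable.

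I expect the only delicate point to be this last approximation step: I must make sure that \fref{prop:etDense} supplies a \emph{uniformly bounded} net converging to $\phi$ in the weak-$*$ topology of $L^\infty(\R,\C)$ — this is exactly what upgrades weak-$*$ convergence of the symbols to weak-operator convergence of the multiplication operators — and that $V$ is used as a weakly closed (equivalently norm-closed and convex) subspace. Everything else is routine manipulation with the projection $P_+$ and the identity $S_t=M_{e_t}$.
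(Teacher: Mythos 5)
Your proof is correct and follows essentially the same route as the paper's: the identical computation with $S_t=M_{e_t}$ in both directions (including the use of $V^{\perp_+}\subeq H^2(\C_+,\cH)$ for the trivial-intersection axiom), followed by \fref{prop:etDense} to pass from $\{e_t:t\ge 0\}$ to all of $H^\infty(\C_+)$, which is exactly the step the paper compresses into ``which, by \fref{prop:etDense} implies the statement.'' The one point you flag as delicate---needing a uniformly bounded net---is in fact unnecessary: for $f,g\in L^2(\R,\cH)$ the function $x\mapsto\langle f(x),g(x)\rangle$ lies in $L^1(\R,\C)$, so weak-$*$ convergence of the symbols in $L^\infty(\R,\C)$ already yields weak-operator convergence of the multiplication operators without any norm bound, and the limit then stays in the (norm-closed, hence weakly closed) subspace $V$ as you argue.
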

\begin{proof}
We first assume that \(V\) is a \(M_{H^\infty}^*\)-stable subspace, so especially \(P_+S_t^* V \subeq V\) for every \(t \geq 0\) since \(S_t = M_{e_t}\) for the functions \(e_t \in L^\infty(\R,\C)\) with
\[\gls*{et}(x) \coloneqq e^{itx}.\]
Then, for \(t \geq 0\), \(f \in V^{\perp_+}\) and \(g \in V\), one has
\[\braket*{S_t f}{g} = \braket*{f}{S_t^* g} = \braket*{P_+ f}{S_t^* g} = \braket*{f}{P_+ S_t^* g} = 0,\]
using that \(P_+ S_t^* g \in V\). This shows that \(S_t V^{\perp_+} \subeq V^{\perp_+}\) for \(t \geq 0\). Since also
\[\bigcap_{t \in \R} S_t V^{\perp_+} \subeq \bigcap_{t \in \R} S_t H^2(\C_+,\cH) = \{0\},\]
we get that \((L^2(\R,\cH),V^{\perp_+},S)\) is a SROPG.

Now, conversely assume that \((L^2(\R,\cH),V^{\perp_+},S)\) is a SROPG. Then, for \(t \geq 0\), \(f \in V^{\perp_+}\) and \(g \in V\), one has
\[\braket*{f}{P_+ S_t^* g} = \braket*{S_t f}{g} = 0,\]
using that \(S_t f \in V^{\perp_+}\). This shows that
\[P_+ M_{e_t}^* V = P_+ S_t^* V \subeq V \quad t \geq 0,\]
which, by \fref{prop:etDense} implies the statement.
\end{proof}
\begin{cor}\label{cor:stableCharakterizationRigid}
Let \(\cH\) be a complex Hilbert space. A subspace \(V \subeq H^2(\C_+,\cH)\) is \(M_{H^\infty}^*\)-stable, if and only if it is of the form
\[V = \left(M_\phi H^2(\C_+,\cH)\right)^{\perp_+}\]
for some function \(\phi \in H^\infty(\C_+,B(\cH))\).
\end{cor}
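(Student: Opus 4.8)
The plan is to reduce both implications to \fref{prop:stableCharakterizationSROPG}, which identifies $M_{H^\infty}^*$-stability of $V$ with the assertion that $(L^2(\R,\cH),V^{\perp_+},S)$ is a SROPG, together with the Halmos-type structure theorem \fref{thm:HalmosUpper}. Throughout I take $V$ to be a \emph{closed} subspace of $H^2(\C_+,\cH)$; for a general subspace one first passes to $\overline{V}$, which is again $M_{H^\infty}^*$-stable and satisfies $\overline{V}^{\,\perp_+} = V^{\perp_+}$.

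For the implication ``$\Leftarrow$'' I would argue directly, without invoking \fref{thm:HalmosUpper}. Suppose $V = \big(M_\phi H^2(\C_+,\cH)\big)^{\perp_+}$ with $\phi \in H^\infty(\C_+,B(\cH))$, and fix $\psi \in H^\infty(\C_+)$ and $g \in V$. Since $\psi$ is scalar-valued, $M_\psi$ commutes with $M_\phi$, and since $\psi \in H^\infty(\C_+)$ we have $M_\psi H^2(\C_+,\cH) \subeq H^2(\C_+,\cH)$; hence for every $f \in H^2(\C_+,\cH)$,
\[
\braket*{M_\phi f}{P_+ M_\psi^* g} = \braket*{P_+ M_\phi f}{M_\psi^* g} = \braket*{M_\phi f}{M_\psi^* g} = \braket*{M_\psi M_\phi f}{g} = \braket*{M_\phi M_\psi f}{g} = 0,
\]
using $P_+ M_\phi f = M_\phi f$, that $M_\phi M_\psi f \in M_\phi H^2(\C_+,\cH)$, and that $g$ is orthogonal to $M_\phi H^2(\C_+,\cH)$. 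As $f$ ranges over $H^2(\C_+,\cH)$ this shows $P_+ M_\psi^* g \in H^2(\C_+,\cH)$ is orthogonal to $M_\phi H^2(\C_+,\cH)$, i.e. $P_+ M_\psi^* g \in V$. That is exactly $M_{H^\infty}^*$-stability, and this direction is a one-line computation.

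For ``$\Rightarrow$'', suppose the closed subspace $V$ is $M_{H^\infty}^*$-stable. By \fref{prop:stableCharakterizationSROPG} the triple $(L^2(\R,\cH),V^{\perp_+},S)$ is a SROPG, so \fref{thm:HalmosUpper} provides a rigid function $F \in L^\infty(\R,B(\cH))$ with $V^{\perp_+} = M_F H^2(\C_+,\cH)$. Since $V^{\perp_+} \subeq H^2(\C_+,\cH)$ we get $M_F H^2(\C_+,\cH) \subeq H^2(\C_+,\cH)$, so \fref{prop:H2InclusionFunctions} forces $F \in H^\infty(\C_+,B(\cH))$. As $V$ is closed in $H^2(\C_+,\cH)$, taking orthogonal complements twice gives $V = (V^{\perp_+})^{\perp_+} = \big(M_F H^2(\C_+,\cH)\big)^{\perp_+}$, and $\phi := F$ is the desired function. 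The argument contains no genuinely new step beyond the already-established \fref{thm:HalmosUpper}; the only things to be careful about are keeping the two orthogonal complements apart ($\perp_+$ inside $H^2(\C_+,\cH)$ versus $\perp$ inside $L^2(\R,\cH)$) and the observation that the rigid $L^\infty$-function supplied by \fref{thm:HalmosUpper} is automatically an $H^\infty(\C_+,B(\cH))$-function precisely because its range $V^{\perp_+}$ sits inside $H^2(\C_+,\cH)$, which is where \fref{prop:H2InclusionFunctions} is used.
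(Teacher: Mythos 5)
Your proof is correct and, for the substantive direction, it is exactly the paper's: \fref{prop:stableCharakterizationSROPG} together with \fref{thm:HalmosUpper} give $V^{\perp_+} = M_F H^2(\C_+,\cH)$ for a rigid $F$, the inclusion $V^{\perp_+} \subeq H^2(\C_+,\cH)$ forces $F \in H^\infty(\C_+,B(\cH))$ via \fref{prop:H2InclusionFunctions}, and one concludes by taking the double complement. The only (harmless) deviations are that you prove the converse by a direct inner-product computation instead of checking, as the paper does, that $\left(L^2(\R,\cH),M_\phi H^2(\C_+,\cH),S\right)$ is a SROPG and citing \fref{prop:stableCharakterizationSROPG} again, and that you make explicit the closedness of $V$ needed for $V=(V^{\perp_+})^{\perp_+}$, which the paper leaves implicit.
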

\begin{proof}
Let \(V \subeq H^2(\C_+,\cH)\) be \(M_{H^\infty}^*\)-stable. Then, by \fref{prop:stableCharakterizationSROPG} and \fref{thm:HalmosUpper} there exists a rigid function \(\phi \in L^\infty(\R,\mathrm{B}(\cH))\) such that
\[M_\phi H^2(\C_+,\cH) = V^{\perp_+} \subeq H^2(\C_+,\cH).\]
The last inclusion in particular implies \(\phi \in H^\infty(\C_+,\mathrm{B}(\cH))\) by \fref{prop:H2InclusionFunctions}.

Conversely, for every \(\phi \in H^\infty(\C_+,\mathrm{B}(\cH))\), one has
\[S_t \left(M_\phi H^2(\C_+,\cH)\right) = M_\phi \left(S_t H^2(\C_+,\cH)\right) \subeq M_\phi H^2(\C_+,\cH) \quad \forall t \geq 0\]
and
\[\bigcap_{t \in \R} S_t \left(M_\phi H^2(\C_+,\cH)\right) \subeq \bigcap_{t \in \R} S_t H^2(\C_+,\cH) = \{0\}.\]
Therefore \((L^2(\R,\cH),M_\phi H^2(\C_+,\cH),S)\) is a SROPG, which, by \fref{prop:stableCharakterizationSROPG}, is equivalent to \(\left(M_\phi H^2(\C_+,\cH)\right)^{\perp_+}\) being a \(M_{H^\infty}^*\)-stable subspace.
\end{proof}
When working with \(M_{H^\infty}^*\)-stable subspaces, the following result is often useful:
\begin{lem}\label{lem:ProjectionEquation}
Let \(\cH\) be a complex Hilbert space and \(\phi,\psi \in H^\infty(\C_+,B(\cH))\). Then
\[P_+ M_\phi^* P_+ M_\psi^* = P_+ M_{\psi \cdot \phi}^*.\]
\end{lem}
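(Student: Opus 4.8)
The plan is to reduce the identity to two elementary facts about multiplication operators on $L^2(\R,\cH)$. The first is that multiplication operators compose pointwise: $(M_\psi M_\phi f)(x) = \psi(x)\phi(x)f(x) = ((\psi\cdot\phi)f)(x)$, so $M_\psi M_\phi = M_{\psi\cdot\phi}$, and taking adjoints gives $M_\phi^* M_\psi^* = M_{\psi\cdot\phi}^*$. The second is that, since $\phi \in H^\infty(\C_+,B(\cH))$, the operator $M_\phi$ leaves $H^2(\C_+,\cH)$ invariant (this is exactly the content used via \fref{prop:H2InclusionFunctions}), which in operator form reads $M_\phi P_+ = P_+ M_\phi P_+$.

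First I would record the adjoint form of the invariance. Taking adjoints in $M_\phi P_+ = P_+ M_\phi P_+$ yields $P_+ M_\phi^* = P_+ M_\phi^* P_+$, equivalently $P_+ M_\phi^*(\textbf{1}-P_+) = 0$. Then the computation is immediate:
\[
P_+ M_\phi^* P_+ M_\psi^* = P_+ M_\phi^* M_\psi^* - P_+ M_\phi^*(\textbf{1}-P_+) M_\psi^* = P_+ M_\phi^* M_\psi^* = P_+ M_{\psi\cdot\phi}^*,
\]
where the middle term vanishes by the displayed identity and the last equality is $M_\phi^* M_\psi^* = M_{\psi\cdot\phi}^*$ from the first paragraph. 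This gives the claim.

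There is no genuine obstacle here; the only point requiring care is the order of the operator-valued product. On $L^2(\R,\cH)$ one has $M_\psi M_\phi = M_{\psi\cdot\phi}$ (not $M_{\phi\cdot\psi}$), so after passing to adjoints the composition $P_+ M_\phi^* P_+ M_\psi^*$ produces precisely $M_{\psi\cdot\phi}^*$, matching the right-hand side of the statement. It is worth noting that in the scalar case this recovers the familiar Toeplitz-operator identity $T_{\bar\phi}T_{\bar\psi} = T_{\bar\phi\bar\psi}$ valid when $\phi$ (equivalently $\overline{\bar\phi}$) lies in $H^\infty$.
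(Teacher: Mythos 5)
Your proof is correct and takes essentially the same route as the paper: both rest on the key identity \(P_+ M_\phi^*(\textbf{1}-P_+) = 0\) — you obtain it by taking adjoints of the invariance relation \(M_\phi P_+ = P_+ M_\phi P_+\), while the paper equivalently observes that \(M_\phi^*\) maps \((\textbf{1}-P_+)L^2(\R,\cH) = H^2(\C_-,\cH)\) into itself — and both then finish with \(M_\phi^* M_\psi^* = (M_\psi M_\phi)^* = M_{\psi\cdot\phi}^*\). Your remark about the order of the product is exactly the point the paper's last line makes as well.
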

\begin{proof}
We have
\[P_+M_\phi^*(\textbf{1}-P_+)L^2(\R,\cH) = P_+M_\phi^*H^2(\C_-,\cH) \subeq P_+H^2(\C_-,\cH) = \{0\},\]
so \(P_+M_\phi^*(\textbf{1}-P_+) = 0\) and therefore
\[P_+M_\phi^*P_+ = P_+M_\phi^*,\]
which implies
\[P_+ M_\phi^* P_+ M_\psi^* = P_+ M_\phi^* M_\psi^* = P_+ \left(M_\psi M_\phi\right)^* = P_+ M_{\psi \cdot \phi}^*. \qedhere\]
\end{proof}
\begin{lem}\label{lem:SpaceLokalDecomp}
Let \(\cH\) be a complex Hilbert space and \(V \subeq H^2(\C_+,\cH)\) be a finite-dimensional \(M_{H^\infty}^*\)-stable subspace. Then, if \(V \neq \{0\}\), there exist \(\omega \in \C_+\) and \(v \in \cH \setminus \{0\}\) such that \(Q_\omega \cdot v \in V\).
\end{lem}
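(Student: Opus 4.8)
The plan is to obtain the desired vector as a single eigenvector of a compressed multiplication operator.

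First I fix $\mu \in \C_+$ and recall that $Q_\mu(z) = \frac{1}{2\pi i}\frac{1}{z-\overline\mu}$ is bounded on $\C_+$ (since $|z-\overline\mu|\ge \Im\mu>0$ there), so $Q_\mu \in H^\infty(\C_+)\cap H^2(\C_+)$. Because $V$ is $M_{H^\infty}^*$-stable, the operator $A_\mu \coloneqq P_+ M_{Q_\mu}^*\big|_V$ maps $V$ into itself; as $V$ is a nonzero finite dimensional complex vector space and $\C$ is algebraically closed, $A_\mu$ has a nonzero eigenvector $w \in V$, say $A_\mu w = c\,w$ with $c \in \C$. (The fact that the operators $P_+M_\phi^*|_V$, $\phi\in H^\infty(\C_+)$, form a commuting family via \fref{lem:ProjectionEquation} is not needed here — a single operator suffices.)

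The core of the argument is to compute $A_\mu w$ explicitly. On the real line $\overline{Q_\mu(x)} = \frac{-1}{2\pi i}\frac{1}{x-\mu}$, and since $\frac{1}{\,\cdot\,-\mu}\in L^\infty(\R)$ the function $M_{Q_\mu}^* w = \overline{Q_\mu}\cdot w$ lies in $L^2(\R,\cH)$. I then split
\[\frac{w(x)}{x-\mu} \;=\; \frac{w(x)-w(\mu)}{x-\mu} \;+\; \frac{w(\mu)}{x-\mu}.\]
The second summand is a constant vector times $\frac{1}{\,\cdot\,-\mu}$, whose holomorphic continuation has its only pole at $\mu\in\C_+$, hence it lies in $H^2(\C_-,\cH) = H^2(\C_+,\cH)^\perp$ (the orthogonal splitting already used in the proof of \fref{thm:SpecNonDeg}); the first summand is an $L^2(\R,\cH)$-function with a removable singularity at $\mu$ and therefore lies in $H^2(\C_+,\cH)$. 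Applying $P_+$ gives $A_\mu w = \frac{-1}{2\pi i}\frac{w(\,\cdot\,)-w(\mu)}{\,\cdot\,-\mu}$.

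Finally I combine this with $A_\mu w = c\,w$ to get $-\frac{1}{2\pi i}\big(w(z)-w(\mu)\big) = c\,(z-\mu)\,w(z)$ for all $z\in\C_+$. If $c=0$ then $w$ is constant, forcing $w=0$ (the only constant function in $L^2(\R)$), contradicting $w\neq 0$; so $c\neq 0$, and solving the identity yields $w(z) = \frac{v_0}{z-\eta}$ with $v_0 \in \cH$ proportional to $w(\mu)$ and $\eta = \mu - \frac{1}{2\pi i c}\in\C$. Since $w$ is holomorphic on $\C_+$, has $L^2$ boundary values, and is not identically zero, $\eta$ cannot lie in $\C_+\cup\R$, so $\eta\in\C_-$; writing $\eta=\overline\omega$ with $\omega\in\C_+$ gives $w = Q_\omega\cdot v$ with $v\coloneqq 2\pi i\,v_0 \in \cH$, and $v\neq 0$ because $w\neq 0$. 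Hence $Q_\omega\cdot v\in V$, which is the assertion. The only slightly delicate point is the explicit identity for $P_+ M_{Q_\mu}^* w$, i.e.\ that the $H^2(\C_+,\cH)$-part of $\overline{Q_\mu}\,w$ is the divided difference $\frac{w(z)-w(\mu)}{z-\mu}$; this rests solely on the decomposition $L^2(\R,\cH) = H^2(\C_+,\cH)\oplus H^2(\C_-,\cH)$ and elementary Hardy-space facts from the earlier sections, and if one wishes to avoid asserting membership of the divided difference in $H^2(\C_+,\cH)$ directly, one can instead feed the identity $P_+(\overline{Q_\mu}w)=c w$ into a Liouville-type argument (the relevant combination extends holomorphically across $\R$ and has linear growth) to reach the same conclusion. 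Alternatively the whole computation can be phrased through the compression $A = p_{H^2(\C_+,\cH)}(\partial S)p_{H^2(\C_+,\cH)}^*$ and its resolvent, exactly as in \fref{lem:kernelAdjoint}.
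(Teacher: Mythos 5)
Your argument is correct in substance and takes a genuinely different route from the paper's. The paper works with the whole family $(P_+S_t^*)\big|_V$, $t\ge 0$: by \fref{lem:ProjectionEquation} this is a one-parameter semigroup of contractions on the finite dimensional space $V$, the Lumer--Phillips theorem writes it as $e^{tA}$ for some $A\in B(V)$, an eigenvector $f$ of $A$ satisfies $P_+S_t^*f=e^{t\mu}f$, and the Fourier transform then forces $\cF f = e^{p\mu}\tilde v$ on $\R_+$ and $0$ on $\R_-$, which gives $f=Q_\omega\cdot v$ with $\omega=-i\overline{\mu}\in\C_+$. You instead compress a \emph{single} operator, $A_\mu=P_+M_{Q_\mu}^*\big|_V$ with the fixed scalar symbol $Q_\mu\in H^\infty(\C_+)$, take an eigenvector, and identify it by an explicit divided-difference (resolvent-type) computation. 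This avoids both Lumer--Phillips and the passage to the Fourier side, and is arguably more elementary; the paper's route, on the other hand, needs no explicit formula for $P_+M_\phi^*$ and reduces everything to the translation semigroup on the Fourier side. The endgame in both proofs is the same elimination: $c\ne 0$ (resp.\ $\Re\mu<0$), a pole location argument, and the identification with $Q_\omega\cdot v$.

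One step is stated too quickly: ``holomorphic on $\C_+$ with $L^2$ boundary values'' does \emph{not} in general imply membership in $H^2(\C_+,\cH)$ (for instance $z\mapsto e^{-iz}Q_i(z)$ is holomorphic on $\C_+$ with boundary values of the same modulus as $Q_i$, but $\int_\R|e^{-i(x+iy)}Q_i(x+iy)|^2\,dx\to\infty$ as $y\to\infty$), so the ``therefore'' for the divided difference $g(z)=\frac{w(z)-w(\mu)}{z-\mu}$ needs an actual estimate. The claim itself is true and routine: fix $\delta\in(0,\Im\mu)$; on the compact square $\{|x-\Re\mu|\le\delta,\ |y-\Im\mu|\le\delta\}\subeq\C_+$ the function $g$ is bounded, while off a $\delta$-neighbourhood of $\mu$ one has $\|g(z)\|\le\delta^{-1}(\|w(z)\|+\|w(\mu)\|)$ together with $\int_{|x-\Re\mu|\ge\delta}|x+iy-\mu|^{-2}dx\le 2/\delta$, so $\sup_{y>0}\int_\R\|g(x+iy)\|^2dx<\infty$; alternatively one checks on the Fourier side that the difference quotient of a function with spectrum in $\R_+$ again has spectrum in $\R_+$. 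You flag this point yourself and your fallback options (a Liouville-type argument, or routing the computation through the compression as in \fref{lem:kernelAdjoint}) are viable, but as written the justification of that single implication is the only place where the proof is thinner than it should be; with it supplied, the remaining steps ($c\ne 0$, $w(z)=v_0/(z-\eta)$, and $\eta\in\C_-$ because a pole in $\C_+$ or on $\R$ is incompatible with $w\in H^2$) are all sound.
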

\begin{proof}
Since \(V\) is \(M_{H^\infty}^*\)-stable, we have
\[P_+S_t^*V \subeq V \quad \forall t \geq 0.\]
Further, for \(s,t \geq 0\) and \(f \in V\), by \fref{lem:ProjectionEquation},
one has
\[\left(P_+S_s^*\right)\left(P_+S_t^*\right) = P_+S_{(s+t)}^*.\]
Since \(\left\lVert P_+S_t^*\right\rVert \leq 1\), this implies that
\[M_t \coloneqq (P_+S_t^*) \big|_V^V\]
defines a one-parameter semigroup of contractions. Therefore, by the Lumer–Phillips Theorem (cf.~\cite[Thm. 3.15, Cor. 3.20]{EN00}), there exists an operator \(A \in B(V)\) such that
\[M_t = e^{tA} \quad \forall t \geq 0.\]
As an operator on a finite-dimensional vector space, the operator \(A\) has at least one eigenvalue with a corresponding eigenvector, so there exist \(\mu \in \C\) and \(f \in V \setminus \{0\}\) such that \(A f = \mu f\). This implies that
\[e^{t\mu} f = e^{tA} f = M_t f \quad \forall t \geq 0.\]
Applying Fourier transform, for all \(t \geq 0\), we get
\[e^{t\mu} (\cF f)(p) = (\cF M_t f)(p) = (\cF f)(p+t)\]
for almost every \(p \in \R_+\). This implies that there exists a \(\tilde v \in \cH \setminus \{0\}\) such that
\[(\cF f)(p) = \begin{cases} e^{p\mu} \tilde v & \text{for } p > 0 \\ 0 & \text{for } p \leq 0,\end{cases}\]
using that \(f \in V \subeq H^2(\C_+,\cH)\), which yields that \(\cF f\) is supported on \(\R_+\). Then the fact that \(\cF f \in L^2(\R,\cH)\) implies \(\Re \mu < 0\). Setting \(\omega = -i \overline{\mu}\) we therefore have \(\omega \in \C_+\). Also we set \(v \coloneqq \sqrt{2\pi} \cdot \tilde v \neq 0\). Then, using that \(f \in V \subeq H^2(\C_+,\cH)\), for \(z \in \C_+\), inverse Fourier transformation gives us
\begin{align*}
f(z) &= \frac 1{\sqrt{2\pi}}\int_0^\infty e^{ipz}e^{p\mu} \,dp \cdot \tilde v = \frac{1}{2\pi}\int_0^\infty e^{ip(z-\overline{\omega})} \,dp \cdot v = \frac i{2\pi} \cdot \frac 1{z-\overline{\omega}} \cdot v = Q_\omega(z) \cdot v. \qedhere
\end{align*}
\end{proof}
We will use this lemma to link \(M_{H^\infty}^*\)-stable subspaces to so-called Blaschke--Potapov products, which are defined as follows:
\begin{definition}{\textrm{(cf. \cite{CHL22})}}
Let \(\cH\) be a complex Hilbert space.
\begin{enumerate}[\rm (a)]
\item For \(\omega \in \C_+\) we define \(\phi_\omega \in \Inn(\C_+)\) (cf. \fref{def:Inner}) by
\[\phi_\omega(z) \coloneqq \frac{z-\omega}{z-\overline{\omega}}.\]
\item A \textit{Blaschke--Potapov factor} is a function of the form
\[\phi_\omega P + (1-P) \in \Inn(\C_+,B(\cH))\]
for some \(\omega \in \C_+\) and some finite rank projection \(P \in B(\cH)\).
\item For \(\omega \in \C_+\) and \(v \in \cH\) we define the corresponding Blaschke--Potapov factor
\[\gls*{phiov} \coloneqq (\textbf{1} - P_{\C v}) + \phi_\omega P_{\C v}.\]
\item A \textit{finite Blaschke--Potapov product} is a function of the form
\[u \prod_{n=1}^N (\phi_{\omega_n} P_n + (1-P_n)) \in \Inn(\C_+,B(\cH))\]
with \(\omega_1,\dots,\omega_N \in \C_+\), finite rank projections \(P_1,\dots,P_N \in B(\cH)\) and \(u \in \U(\cH)\).
\item The \textit{degree} of a finite Blaschke--Potapov product
\[\phi = u \prod_{n=1}^N (\phi_{\omega_n} P_n + (1-P_n))\]
is defined as
\[\gls*{degBlaschke} \coloneqq \sum_{n=1}^N \dim P_n \cH \in \N_0.\]
\end{enumerate}
\end{definition}
\begin{remark}
\begin{enumerate}[\rm (a)]
\item In the case \(\cH = \C\) Blaschke--Potapov factors and finite Blaschke--Potapov products are precisely Blaschke factors and finite Blaschke products (cf. \fref{def:Blaschke}).
\item For a finite-dimensional subspace \(V \subeq \cH\), \(\omega \in \C_+\) and \(u \in \U(\cH)\) we have
\[u \left(\phi_\omega P_V + (1-P_V)\right) = \left(\phi_\omega P_{uV} + (1-P_{uV})\right) u.\]
This identity shows that the product of two finite Blaschke--Potapov products is again a finite \mbox{Blaschke--Potapov} product.
\item We will see that the degree is well defined in \fref{prop:FiniteCharacterization}.
\item By definition, for two finite Blaschke--Potapov products \(\phi\) and \(\psi\), one has
\[\deg(\phi \cdot \psi) = \deg(\phi) + \deg(\psi).\]
\item If \(P\) is a finite rank projection and \((b_1,\dots,b_m)\) is an orthogonal basis of \(P\cH\), then
\[\phi_\omega P + (1-P) = \prod_{j=1}^m \phi_{\omega,b_j}. \]
Therefore every finite Blaschke--Potapov product \(\phi\) is of the form
\[\phi = u \prod_{n=1}^{\deg(\phi)} \phi_{\omega_n,v_n}\]
with \(\omega_1,\dots,\omega_{\deg(\phi)} \in \C_+\), \(v_1,\dots,v_{\deg(\phi)} \in \cH\) and \(u \in \U(\cH)\).
\end{enumerate}
\end{remark}
The following lemma will be useful later when analyzing finite Blaschke--Potapov products:
\begin{lem}\label{lem:BlaschkeOrthogonal}
Let \(\cH\) be a complex Hilbert space, \(v \in \cH\) and \(\omega \in \C_+\). Then
\[\left(Q_\omega \cdot v\right)^{\perp_+} = M_{\phi_{\omega,v}} H^2(\C_+,\cH).\]
\end{lem}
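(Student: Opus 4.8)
The plan is to prove the two inclusions separately, after disposing of the trivial case $v=0$: there $\phi_{\omega,v}=\textbf{1}$, so $M_{\phi_{\omega,v}}H^2(\C_+,\cH)=H^2(\C_+,\cH)$, while $\left(Q_\omega\cdot 0\right)^{\perp_+}=\{0\}^{\perp_+}=H^2(\C_+,\cH)$ as well. So assume $v\neq 0$ and, after rescaling $v$ (which changes neither side, since $\left(Q_\omega\cdot v\right)^{\perp_+}$ is the orthogonal complement of the one-dimensional space $\C Q_\omega v$ and $\phi_{\omega,v}$ depends on $v$ only through $P_{\C v}$), that $\|v\|=1$.

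For the inclusion $\subseteq$ I would argue as follows. Since $\phi_{\omega,v}\in\Inn(\C_+,B(\cH))\subseteq H^\infty(\C_+,B(\cH))$, $M_{\phi_{\omega,v}}$ maps $H^2(\C_+,\cH)$ into itself by \fref{prop:H2InclusionFunctions}. For $f\in H^2(\C_+,\cH)$, \fref{prop:H2RKHS} gives $\braket*{Q_\omega\cdot v}{\phi_{\omega,v}f}=\braket*{v}{\phi_{\omega,v}(\omega)f(\omega)}_\cH$. Because $\phi_\omega(\omega)=0$ one has $\phi_{\omega,v}(\omega)=\textbf{1}-P_{\C v}$, hence $\braket*{v}{(\textbf{1}-P_{\C v})f(\omega)}_\cH=\braket*{(\textbf{1}-P_{\C v})v}{f(\omega)}_\cH=0$. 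Thus $M_{\phi_{\omega,v}}H^2(\C_+,\cH)\perp Q_\omega\cdot v$, which is the claimed inclusion.

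For the inclusion $\supseteq$, given $g\in\left(Q_\omega\cdot v\right)^{\perp_+}$ I would split it pointwise along $\cH=\C v\oplus(\C v)^\perp$. Put $\psi(z):=\braket*{v}{g(z)}_\cH$; then $P_{\C v}g=\psi\,v\in H^2(\C_+,\cH)$, so $\psi\in H^2(\C_+)$, and by \fref{prop:H2RKHS} $\psi(\omega)=\braket*{v}{g(\omega)}_\cH=\braket*{Q_\omega\cdot v}{g}=0$. Now comes the one external ingredient: since $\psi\in H^2(\C_+)$ vanishes at $\omega\in\C_+$ and $\phi_\omega$ is the Blaschke factor with a simple zero at $\omega$, the quotient $\widetilde\psi:=\psi/\phi_\omega$ again lies in $H^2(\C_+)$ (classical Hardy space theory, cf.\ \cite{RR94}). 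Setting $f:=(\textbf{1}-P_{\C v})g+\widetilde\psi\,v\in H^2(\C_+,\cH)$, a direct check gives $\phi_{\omega,v}f=(\textbf{1}-P_{\C v})g+\phi_\omega\widetilde\psi\,v=(\textbf{1}-P_{\C v})g+\psi\,v=g$, so $g=M_{\phi_{\omega,v}}f\in M_{\phi_{\omega,v}}H^2(\C_+,\cH)$.

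The only step with genuine content is the divisibility claim $\psi(\omega)=0\Rightarrow\psi/\phi_\omega\in H^2(\C_+)$ — equivalently, that the model space $H^2(\C_+)\ominus\phi_\omega H^2(\C_+)$ is exactly the line $\C Q_\omega$ — and this is where I expect the work to sit (though it is standard). If one prefers to keep the argument self-contained, it can be sidestepped by a codimension count: the first inclusion shows that $M_{\phi_{\omega,v}}H^2(\C_+,\cH)$, being the range of the isometry $M_{\phi_{\omega,v}}$, is a closed subspace contained in the codimension-one subspace $\left(Q_\omega\cdot v\right)^{\perp_+}$; writing $H^2(\C_+,\cH)=H^2(\C_+,\C v)\oplus H^2(\C_+,(\C v)^\perp)$, on which $M_{\phi_{\omega,v}}$ acts as $M_{\phi_\omega}\oplus\textbf{1}$, reduces equality to the purely scalar fact that $\dim\bigl(H^2(\C_+)\ominus\phi_\omega H^2(\C_+)\bigr)=1$.
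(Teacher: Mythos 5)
Your proof is correct and follows essentially the same route as the paper: decompose pointwise along \(\cH = \C v \oplus (\C v)^\perp\), reduce to the scalar statement that a function in \(H^2(\C_+)\) vanishes at \(\omega\) if and only if it lies in \(\phi_\omega H^2(\C_+)\), and conclude. The divisibility fact you cite from classical Hardy space theory is exactly the paper's own \fref{prop:BlaschkeFactorization}, which is what its proof invokes; writing the argument as two inclusions rather than an equivalence chain is only a cosmetic difference.
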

\begin{proof}
For \(v = 0\) both sides are equal to \(H^2(\C_+,\cH)\), so we can assume that \(v \neq 0\). One has \(f \in \left(Q_\omega \cdot v\right)^{\perp_+}\), if and only if
\[0 = \braket*{Q_\omega \cdot v}{f} = \braket*{v}{f(\omega)}.\]
Writing \(f = \tilde f \oplus f_v\) with \(\tilde f \in H^2(\C_+,\cH \ominus \C v)\) and \(f_v \in H^2(\C_+,\C v) \cong H^2(\C_+)\), this is equivalent to the function \(f_v\) being zero in \(\omega\), so, by \fref{prop:BlaschkeFactorization}, \(f_v = \phi_\omega \cdot g\) for some \(g \in H^2(\C_+)\). Therefore
\[\left(Q_\omega \cdot v\right)^{\perp_+} = (\textbf{1} - P_{\C v})H^2(\C_+,\cH) + \phi_\omega P_{\C v}H^2(\C_+,\cH) = M_{\phi_{\omega,v}} H^2(\C_+,\cH). \qedhere\]
\end{proof}
Using this lemma, we can connect \(M_{H^\infty}^*\)-stable subspaces to Blaschke--Potapov products:
\begin{prop}\label{prop:FiniteCharacterization}
Let \(\cH\) be a complex Hilbert space and \(V \subeq H^2(\C_+,\cH)\). Then \(V\) is a finite-dimensional \(M_{H^\infty}^*\)-stable subspace, if and only if there exists a finite Blaschke--Potapov product \(\phi\) such that
\[V = \left(M_\phi H^2(\C_+,\cH)\right)^{\perp_+}\]
and in this case
\[\deg(\phi) = \dim V.\]
\end{prop}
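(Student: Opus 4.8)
The plan is to prove the two implications separately; the degree identity $\deg(\phi) = \dim\bigl(M_\phi H^2(\C_+,\cH)\bigr)^{\perp_+}$ will fall out of each direction, and since the right-hand side depends only on the function $\phi$ this simultaneously establishes that $\deg$ is well defined, as announced in the preceding remark.

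\emph{``$\Leftarrow$''.} Let $\phi$ be a finite Blaschke--Potapov product and put $V \coloneqq \bigl(M_\phi H^2(\C_+,\cH)\bigr)^{\perp_+}$. Since $\phi \in \Inn(\C_+,B(\cH)) \subseteq H^\infty(\C_+,B(\cH))$, \fref{cor:stableCharakterizationRigid} shows $V$ is $M_{H^\infty}^*$-stable. For the dimension, write $\phi = u\prod_{n=1}^N\phi_{\omega_n,v_n}$ with $u\in\U(\cH)$, all $v_n\neq 0$, and $N = \deg(\phi)$ (as in the remark following the definition of Blaschke--Potapov products). Since $M_u$ is unitary on $L^2(\R,\cH)$ and maps $H^2(\C_+,\cH)$ onto itself, one may assume $u = \textbf{1}$. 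With $\psi_0 \coloneqq \textbf{1}$ and $\psi_k \coloneqq \phi_{\omega_1,v_1}\cdots\phi_{\omega_k,v_k}$, each $\psi_k$ is inner with a.e.\ unitary boundary values, so the $M_{\psi_k}$ are unitary on $L^2(\R,\cH)$ and $M_{\psi_N}H^2(\C_+,\cH)\subseteq\dots\subseteq M_{\psi_0}H^2(\C_+,\cH) = H^2(\C_+,\cH)$ is a decreasing chain of closed subspaces. Telescoping this chain, and observing that $M_{\psi_{k-1}}H^2(\C_+,\cH)\ominus M_{\psi_k}H^2(\C_+,\cH) = M_{\psi_{k-1}}\bigl(M_{\phi_{\omega_k,v_k}}H^2(\C_+,\cH)\bigr)^{\perp_+}$ because $M_{\psi_{k-1}}$ is unitary, gives $V = \bigoplus_{k=1}^N M_{\psi_{k-1}}\bigl(M_{\phi_{\omega_k,v_k}}H^2(\C_+,\cH)\bigr)^{\perp_+}$; by \fref{lem:BlaschkeOrthogonal} each summand equals $M_{\psi_{k-1}}\bigl(\C\,Q_{\omega_k}\cdot v_k\bigr)$, which is one-dimensional since $v_k\neq 0$. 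Hence $\dim V = N = \deg(\phi)$.

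\emph{``$\Rightarrow$''.} Let $V\subseteq H^2(\C_+,\cH)$ be a finite-dimensional $M_{H^\infty}^*$-stable subspace; I argue by induction on $n\coloneqq\dim V$. For $n = 0$ we have $V = \{0\} = \bigl(M_{\textbf{1}}H^2(\C_+,\cH)\bigr)^{\perp_+}$, and $\textbf{1}$ has degree $0$. For $n\geq 1$, \fref{lem:SpaceLokalDecomp} produces $\omega\in\C_+$ and $v\in\cH\setminus\{0\}$ with $Q_\omega\cdot v\in V$; then $V^{\perp_+}\perp Q_\omega\cdot v$, so \fref{lem:BlaschkeOrthogonal} gives $V^{\perp_+}\subseteq(Q_\omega\cdot v)^{\perp_+} = M_{\phi_{\omega,v}}H^2(\C_+,\cH)$. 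As $\phi_{\omega,v}$ has a.e.\ unitary boundary values, $M_{\phi_{\omega,v}}$ is unitary on $L^2(\R,\cH)$; set $W\coloneqq M_{\phi_{\omega,v}}^*V^{\perp_+}$, so that $W\subseteq H^2(\C_+,\cH)$ by the inclusion just noted. Applying the isometry $M_{\phi_{\omega,v}}|_{H^2(\C_+,\cH)}$ to $H^2(\C_+,\cH) = W\oplus W^{\perp_+}$ and combining with the decomposition $H^2(\C_+,\cH) = M_{\phi_{\omega,v}}H^2(\C_+,\cH)\oplus\C\,Q_\omega\cdot v$ of \fref{lem:BlaschkeOrthogonal} and with $\C\,Q_\omega\cdot v\subseteq V$ yields $V = M_{\phi_{\omega,v}}W^{\perp_+}\oplus\C\,Q_\omega\cdot v$; in particular $\dim W^{\perp_+} = n-1$. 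Next, $W^{\perp_+}$ is again $M_{H^\infty}^*$-stable: by \fref{cor:stableCharakterizationRigid} write $V^{\perp_+} = M_\sigma H^2(\C_+,\cH)$ with $\sigma\in H^\infty(\C_+,B(\cH))$; since the boundary values of $\phi_{\omega,v}$ are a.e.\ unitary, $M_{\phi_{\omega,v}}^*M_\sigma = M_\psi$ with $\psi(x) = \phi_{\omega,v}(x)^*\sigma(x)$, and $W = M_\psi H^2(\C_+,\cH)\subseteq H^2(\C_+,\cH)$ forces $\psi\in H^\infty(\C_+,B(\cH))$ by \fref{prop:H2InclusionFunctions}, so \fref{cor:stableCharakterizationRigid} applies to $W^{\perp_+}$. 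By the induction hypothesis there is a finite Blaschke--Potapov product $\chi$ with $\deg(\chi) = n-1$ and $W = M_\chi H^2(\C_+,\cH)$. Then $V^{\perp_+} = M_{\phi_{\omega,v}}W = M_{\phi_{\omega,v}\chi}H^2(\C_+,\cH)$, and $\phi_{\omega,v}\chi$ is a finite Blaschke--Potapov product with $\deg(\phi_{\omega,v}\chi) = 1 + (n-1) = n = \dim V$, which closes the induction.

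I expect the inductive step of ``$\Rightarrow$'' to be the main obstacle: pulling the reproducing-kernel vector $Q_\omega\cdot v$ out of $V$ via \fref{lem:SpaceLokalDecomp}, ``dividing out'' the Blaschke--Potapov factor $\phi_{\omega,v}$ so that the dimension drops by exactly one, and --- most delicately --- checking that the quotient $W^{\perp_+}$ is still $M_{H^\infty}^*$-stable, for which one must keep careful track of the two orthogonal complements (in $L^2(\R,\cH)$ and in $H^2(\C_+,\cH)$) and reuse the characterizations of $M_{H^\infty}^*$-stable subspaces (\fref{cor:stableCharakterizationRigid}) and of $H^\infty$-multipliers (\fref{prop:H2InclusionFunctions}). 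By contrast, the ``$\Leftarrow$'' direction and the degree count amount to routine telescoping.
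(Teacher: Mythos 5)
Your proof is correct and follows essentially the same route as the paper: the forward direction is the same induction built on \fref{lem:SpaceLokalDecomp} and \fref{lem:BlaschkeOrthogonal}, dividing out one factor \(\phi_{\omega,v}\) per step, and the backward direction with the degree count is the same unitary telescoping of the nested subspaces \(M_{\psi_k}H^2(\C_+,\cH)\). The only cosmetic deviations are that you obtain the \(M_{H^\infty}^*\)-stability of the quotient space from \fref{cor:stableCharakterizationRigid} together with \fref{prop:H2InclusionFunctions} (note the corollary as stated gives \(V=\left(M_\sigma H^2(\C_+,\cH)\right)^{\perp_+}\), so strictly one should work with the closure of \(M_\sigma H^2(\C_+,\cH)\), which changes nothing), and the dimension drop from an orthogonal-decomposition bookkeeping, whereas the paper uses \fref{lem:ProjectionEquation} and the explicit identity \(M_{\phi_{\omega,v}}^*\,Q_\omega\cdot v=Q_{\overline{\omega}}\cdot v\).
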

\begin{proof}
We first prove by induction over \(N \in \N_0\), that every \(M_{H^\infty}^*\)-stable subspace \(V \subeq H^2(\C_+,\cH)\) with \(\dim V = N\) is of the form \(V = \left(M_\phi H^2(\C_+,\cH)\right)^{\perp_+}\) for some finite Blaschke--Potapov product \(\phi\) with \(\deg(\phi) = N\). For \(N=0\) we have
\[V = \{0\} = \left(H^2(\C_+,\cH)\right)^{\perp_+} = \left(M_\textbf{1}H^2(\C_+,\cH)\right)^{\perp_+}\]
with \(\deg(\textbf{1}) = 0\).

We now assume that the statement is true for some \(N \in \N_0\) and let \(V \subeq H^2(\C_+,\cH)\) be a \mbox{\(M_{H^\infty}^*\)-stable} subspace with \(\dim V = N +1\). Then, by \fref{lem:SpaceLokalDecomp}, there exist \(\omega \in \C_+\) and \(v \in \cH \setminus \{0\}\) with \(Q_\omega \cdot v \in V\). Then, by \fref{lem:BlaschkeOrthogonal}, we get
\[V^{\perp_+} \subeq (\C Q_\omega \cdot v)^{\perp_+} = M_{\phi_{\omega,v}} H^2(\C_+,\cH).\]
Using that \(M_{\phi_{\omega,v}}^{-1} = M_{\phi_{\omega,v}}^*\), this yields
\[M_{\phi_{\omega,v}}^* V^{\perp_+} \subeq H^2(\C_+,\cH).\]
Now, we set
\[W \coloneqq \left(M_{\phi_{\omega,v}}^* V^{\perp_+}\right)^{\perp_+}.\]
Then
\begin{align*}
W &= P_+\left(M_{\phi_{\omega,v}}^* P_+V^\perp\right)^\perp = P_+M_{\phi_{\omega,v}}^* \left( P_+V^\perp\right)^\perp = P_+M_{\phi_{\omega,v}}^* \left(V + H^2(\C_-,\cH)\right) = P_+M_{\phi_{\omega,v}}^* V.
\end{align*}
So, for \(\phi \in H^\infty(\C_+)\), by \fref{lem:ProjectionEquation}, one has
\begin{align*}
P_+ M_\phi^* W &= P_+M_\phi^* P_+ M_{\phi_{\omega,v}}^* V = P_+ M_{\phi_{\omega,v} \cdot \phi}^* V
\\&= P_+ M_{\phi \cdot \phi_{\omega,v}}^* V = P_+ M_{\phi_{\omega,v}}^* P_+M_\phi^* V \subeq P_+ M_{\phi_{\omega,v}}^* V = W
\end{align*}
and therefore \(W\) is \(M_{H^\infty}^*\)-stable. Further, using that
\[M_{\phi_{\omega,v}}^* Q_\omega(z) \cdot v = \frac{z-\overline{\omega}}{z-\omega} \cdot \frac 1{2\pi} \cdot \frac i{z-\overline{\omega}} \cdot v = \frac 1{2\pi} \cdot \frac i{z-\omega} \cdot v = Q_{\overline{\omega}}(z) \cdot v,\]
we have
\begin{align*}
M_{\phi_{\omega,v}}^* V = M_{\phi_{\omega,v}}^* (V \ominus \C Q_\omega \cdot v) \oplus M_{\phi_{\omega,v}}^* (\C Q_\omega \cdot v) = M_{\phi_{\omega,v}}^* (V \ominus \C Q_\omega \cdot v) \oplus \C Q_{\overline{\omega}} \cdot v.
\end{align*}
Therefore, since \(Q_{\overline{\omega}} \in H^2(\C_-)\) and
\[V \ominus \C Q_\omega \cdot v \subeq (\C Q_\omega \cdot v)^{\perp_+} = M_{\phi_{\omega,v}} H^2(\C_+,\cH)\]
we get
\begin{align*}
W = P_+ M_{\phi_{\omega,v}}^* V = P_+ \left(M_{\phi_{\omega,v}}^* (V \ominus \C Q_\omega \cdot v) \oplus \C Q_{\overline{\omega}} \cdot v\right) = M_{\phi_{\omega,v}}^* (V \ominus \C Q_\omega \cdot v),
\end{align*}
which implies
\[\dim W = \dim V - \dim \left(\C Q_\omega \cdot v\right) = (N+1) - 1 = N.\] 
We, therefore, can apply the assumption of our induction and get a finite Blaschke--Potapov product \(\psi\) with \(\deg(\psi) = N\) such that
\(W = \left(M_\psi H^2(\C_+,\cH)\right)^{\perp_+}\). Setting \(\phi \coloneqq \phi_{\omega,v} \cdot \psi\), we get
\[\deg(\phi) = \deg(\phi_{\omega,v} \cdot \psi) = \deg(\psi) + \deg(\phi_{\omega,v}) = N + 1\]
and
\[V = \left(M_{\phi_{\omega,v}} W^{\perp_+}\right)^{\perp_+} = \left(M_{\phi_{\omega,v}} M_\psi H^2(\C_+,\cH)\right)^{\perp_+} = \left(M_\phi H^2(\C_+,\cH)\right)^{\perp_+}.\]

For the converse implication, we assume that \(\phi\) is a finite Blaschke--Potapov product, so
\[\phi = u \prod_{n=1}^N \phi_{\omega_n,v_n}\]
for some \(\omega_1,\dots,\omega_N \in \C_+\), \(v_1,\dots,v_N \in \cH\) and \(u \in \U(\cH)\). Setting
\[W_n \coloneqq u \prod_{j=1}^n \phi_{\omega_j,v_j} \cdot H^2(\C_+,\cH), \qquad n=0,\dots,N,\]
we have \(W_0 \supeq W_1 \supeq \dots \supeq W_N\) and
\begin{align*}
\dim(W_n^\perp \cap W_{n-1}) &= \dim \left(u \prod_{j=1}^{n-1} \phi_{\omega_j,v_j} \cdot \left(\left(\phi_{\omega_n,v_n} \cdot H^2(\C_+,\cH)\right)^\perp \cap H^2(\C_+,\cH)\right)\right)
\\&= \dim (M_{\phi_{\omega_n,v_n}} H^2(\C_+,\cH))^{\perp_+} = \dim \left(\C Q_{\omega_n} \cdot v_n\right) = 1,
\end{align*}
for \(n=1,\dots,N\), using \fref{lem:BlaschkeOrthogonal} in the penultimate step. Then, for \(V \coloneqq \left(M_\phi H^2(\C_+,\cH)\right)^{\perp_+}\), we have
\[V = W_N^\perp \cap W_0 = \bigoplus_{n=1}^N \left(W_n^\perp \cap W_{n-1}\right)\]
and therefore
\begin{align*}
\dim V &= \sum_{n=1}^N \dim \left(W_n^\perp \cap W_{n-1}\right) = \sum_{n=1}^N 1 = N = \deg(\phi) < \infty.
\end{align*}
It remains to show that \(V\) is a \(M_{H^\infty}^*\)-stable subspace, but this follows by \fref{cor:stableCharakterizationRigid}.
\end{proof}
Finally, we will provide a way to calculate the degree of a finite Blaschke--Potapov product. For this, we need the following definition:
\begin{definition}
For a non-empty open subset \(U \subeq \C\), \(\xi \in U\) and \(f \in \cO(U \setminus \{\xi\})\) with Laurent series
\[f(z) = \sum_{n \in \Z} a_n (z-\xi)^n\]
we define the \textit{order of \(f\) in \(\xi\)} by
\[\gls*{Ord}(f,\xi) \coloneqq \min\{n \in \Z: a_n \neq 0\}\]
if that minimum exists. Else we set
\[\mathrm{Ord}(f,\xi) \coloneqq -\infty\]
if \(a_n \neq 0\) for arbitrarily small \(n \in \Z\) and
\[\mathrm{Ord}(f,\xi) \coloneqq \infty\]
if \(a_n = 0\) for all \(n \in \Z\).

Further, for an open subset \(U \subeq \C_\infty\) with \(\infty \in U\) and \(f \in \cO(U \setminus \{\infty\})\), we set
\[\mathrm{Ord}(f,\infty) = \mathrm{Ord}\left(z \mapsto f\left(\frac 1z\right),0\right).\]
\end{definition}
\begin{remark}
One can easily see that
\[\mathrm{Ord}(f \cdot g,\xi) = \mathrm{Ord}(f,\xi) + \mathrm{Ord}(g,\xi)\]
if 
\[\mathrm{Ord}(f,\xi) \neq -\infty \neq \mathrm{Ord}(g,\xi).\]
This especially implies that
\[\mathrm{Ord}(c \cdot f,\xi) = \mathrm{Ord}(f,\xi)\]
for every constant \(c \in \C^\times\).
\end{remark}
\begin{prop}\label{prop:BlaschkeDegFormula}
Let \(\cH\) be a finite-dimensional complex Hilbert space and \(\phi\) be a finite Blaschke--Potapov product in \(H^\infty(\C_+,B(\cH))\). Then
\[\deg(\phi) = \sum_{\xi \in \C_+} \mathrm{Ord}(\det \circ \phi,\xi).\]
\end{prop}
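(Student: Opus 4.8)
The plan is to reduce the statement to the case of a single Blaschke--Potapov factor and then combine multiplicativity of the determinant with additivity of the order function. First I would invoke the remark following the definition of finite Blaschke--Potapov products (together with \fref{prop:FiniteCharacterization}) to write
\[\phi = u \prod_{n=1}^{\deg(\phi)} \phi_{\omega_n,v_n}, \qquad \omega_1,\dots,\omega_{\deg(\phi)} \in \C_+, \quad v_1,\dots,v_{\deg(\phi)} \in \cH, \quad u \in \U(\cH),\]
where we may assume each $v_n \neq 0$, since a factor $\phi_{\omega_n,0} = \textbf{1}$ is trivial and can be dropped without changing $\phi$ or $\deg(\phi)$.

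Next I would compute $\det \circ \phi_{\omega,v}$ for a single factor with $v \neq 0$. Choosing an orthonormal basis of $\cH$ whose first vector spans $\C v$, the operator $\phi_{\omega,v}(z) = (\textbf{1} - P_{\C v}) + \phi_\omega(z) P_{\C v}$ becomes the diagonal matrix $\diag(\phi_\omega(z),1,\dots,1)$, so $\det \circ \phi_{\omega,v} = \phi_\omega$. The scalar Blaschke factor $\phi_\omega(z) = \frac{z-\omega}{z-\overline{\omega}}$ is holomorphic and non-vanishing on $\C_+$ except for a single simple zero at $\omega$ (its pole $\overline{\omega}$ lies in $\C_-$), hence $\mathrm{Ord}(\phi_\omega,\omega) = 1$ and $\mathrm{Ord}(\phi_\omega,\xi) = 0$ for all $\xi \in \C_+ \setminus \{\omega\}$. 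In particular $\sum_{\xi \in \C_+} \mathrm{Ord}(\det \circ \phi_{\omega,v},\xi) = 1$.

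Then I would globalize. By multiplicativity of the determinant, $\det \circ \phi = \det(u) \cdot \prod_{n=1}^{\deg(\phi)} (\det \circ \phi_{\omega_n,v_n})$, where $\det(u) \in \C^\times$ is a nonzero constant. Each factor $\det \circ \phi_{\omega_n,v_n} = \phi_{\omega_n}$ is holomorphic and $\not\equiv 0$ on $\C_+$, so $\det \circ \phi$ is holomorphic and $\not\equiv 0$ on $\C_+$ as well; thus all orders below are finite (in particular $\neq -\infty$) and, for each fixed $\xi$, the additivity of $\mathrm{Ord}$ from the remark preceding the proposition applies, the constant $\det(u)$ contributing nothing:
\[\mathrm{Ord}(\det \circ \phi,\xi) = \sum_{n=1}^{\deg(\phi)} \mathrm{Ord}(\phi_{\omega_n},\xi), \qquad \xi \in \C_+.\]
Summing over $\xi \in \C_+$ (both sums are finite for fixed $\phi$, so the interchange is legitimate) and using the single-factor computation gives
\[\sum_{\xi \in \C_+} \mathrm{Ord}(\det \circ \phi,\xi) = \sum_{n=1}^{\deg(\phi)} \sum_{\xi \in \C_+} \mathrm{Ord}(\phi_{\omega_n},\xi) = \sum_{n=1}^{\deg(\phi)} 1 = \deg(\phi).\]

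I do not expect a serious obstacle here: once the factorization $\phi = u\prod \phi_{\omega_n,v_n}$ is available, this is essentially a bookkeeping argument. The only point requiring a little care is verifying the hypothesis $\mathrm{Ord}(\,\cdot\,,\xi) \neq -\infty$ before applying the additivity of the order — this is guaranteed by the observation that $\det \circ \phi$ is a nonzero holomorphic function on $\C_+$ — and checking that the sum over $\xi \in \C_+$ really has only finitely many nonzero terms, which follows from the same product formula.
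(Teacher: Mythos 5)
Your proof is correct and follows essentially the same route as the paper: factor \(\phi\) into Blaschke--Potapov factors, compute the determinant of each factor, and combine multiplicativity of \(\det\) with additivity of \(\mathrm{Ord}\). The only cosmetic difference is that you first reduce to rank-one factors \(\phi_{\omega,v}\) via the remark, whereas the paper computes \(\det(\phi_\omega(z)P+(1-P)) = \phi_\omega(z)^{\dim P\cH}\) directly for a general factor; both yield the same bookkeeping.
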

\begin{proof}
For a Blaschke--Potapov factor
\[\phi_\omega P + (1-P)\]
we have
\[\det(\phi_\omega(z) P + (1-P)) = \left(\frac{z-\omega}{z-\overline{\omega}}\right)^{\dim P\cH},\]
so
\[\mathrm{Ord}\left(\det \circ (\phi_\omega P + (1-P)),\xi\right) = \begin{cases} \dim P\cH & \text{if } \xi = \omega \\ 0 & \text{if } \xi \in \C_+ \setminus \{\omega\}\end{cases}\]
and therefore
\[\sum_{\xi \in \C_+} \mathrm{Ord}(\det \circ (\phi_\omega P + (1-P)),\xi) = \dim P\cH.\]
Now, for a finite Blaschke--Potapov product
\[\phi = u \prod_{n=1}^N (\phi_{\omega_n} P_n + (1-P_n))\]
and \(\xi \in \C_+\), we get
\begin{align*}
\mathrm{Ord}(\det \circ \phi,\xi) &= \mathrm{Ord}\left(\det \circ \left(u \prod_{n=1}^N (\phi_{\omega_n} P_n + (1-P_n))\right),\xi\right)
\\&= \mathrm{Ord}\left(\det(u) \cdot \prod_{n=1}^N (\det \circ (\phi_{\omega_n} P_n + (1-P_n))),\xi\right)
\\&= \sum_{n=1}^N \mathrm{Ord}\left(\det \circ (\phi_{\omega_n} P_n + (1-P_n)),\xi\right),
\end{align*}
using that \(\det(u) \in \T\). Therefore
\begin{align*}
\sum_{\xi \in \C_+}\mathrm{Ord}(\det \circ \phi,\xi) &= \sum_{\xi \in \C_+}\sum_{n=1}^N \mathrm{Ord}\left(\det \circ (\phi_{\omega_n} P_n + (1-P_n)),\xi\right)
\\&=\sum_{n=1}^N \sum_{\xi \in \C_+}\mathrm{Ord}\left(\det \circ (\phi_{\omega_n} P_n + (1-P_n)),\xi\right)
\\&=\sum_{n=1}^N \dim P_n \cH = \deg(\phi). \qedhere
\end{align*}
\end{proof}

\subsubsection{The kernel space}
In this subsection, given a regular Pick function \(F \in \mathrm{Pick}(\C_+,B(\cH))_\R\), we will apply our results obtained in the last subsection to the kernel space \(\ker\left(\overline{\lambda}\textbf{1}-M_F^*\right)\) appearing in \fref{thm:kernelAdjointGeneral}. We first prove that, in fact, \(\ker\left(\overline{\lambda}\textbf{1}-M_F^*\right)\) is a \(M_{H^\infty}^*\)-stable subspace:
\begin{lemma}\label{lem:kernelShiftInvariant}
Let \(\cH\) be a Hilbert space and \(F \in \mathrm{Pick}(\C_+,B(\cH))_\R\). Then, for every \(\lambda \in \C_+\) the space \(\ker\left(\overline{\lambda}\textbf{1}-M_F^*\right)\) is \(M_{H^\infty}^*\)-stable.
\end{lemma}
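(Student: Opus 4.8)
Recall from the start of this subsection that $M_F=p_{H^2(\C_+,\cH)}(\partial U^F)p_{H^2(\C_+,\cH)}^*$ is the (densely defined, closed) operator on $H^2(\C_+,\cH)$ acting by $(M_Ff)(z)=F(z)f(z)$ on the natural domain $\cD(M_F)=\{f\in H^2(\C_+,\cH): (z\mapsto F(z)f(z))\in H^2(\C_+,\cH)\}$. For $\phi\in H^\infty(\C_+)$ write $T_\phi\coloneqq M_\phi\big|_{H^2(\C_+,\cH)}$ for the bounded multiplication operator on $H^2(\C_+,\cH)$ (well defined since $\phi\in H^\infty(\C_+)$); since $M_\phi H^2(\C_+,\cH)\subeq H^2(\C_+,\cH)$ one has $T_\phi^*=P_+M_\phi^*\big|_{H^2(\C_+,\cH)}$, so that a subspace $V\subeq H^2(\C_+,\cH)$ is $M_{H^\infty}^*$-stable precisely when $T_\phi^*V\subeq V$ for all $\phi\in H^\infty(\C_+)$. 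Thus, fixing $\lambda\in\C_+$ and $\phi\in H^\infty(\C_+)$, it suffices to show that $g\in\ker(\overline\lambda\textbf{1}-M_F^*)$ implies $T_\phi^*g\in\ker(\overline\lambda\textbf{1}-M_F^*)$.

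The first step is that $T_\phi$ preserves $\cD(M_F)$ and commutes with $M_F$ there: for $f\in\cD(M_F)$ the function $z\mapsto F(z)(\phi(z)f(z))=\phi(z)\,(F(z)f(z))$ is the product of $\phi\in H^\infty(\C_+)$ with an element of $H^2(\C_+,\cH)$, hence again lies in $H^2(\C_+,\cH)$; therefore $T_\phi f=\phi f\in\cD(M_F)$ and $M_FT_\phi f=T_\phi M_F f$. The second step is the adjoint computation. Let $g\in\ker(\overline\lambda\textbf{1}-M_F^*)$, i.e. $g\in\cD(M_F^*)$ with $M_F^*g=\overline\lambda g$. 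Then for every $f\in\cD(M_F)$, using the first step and the boundedness of $T_\phi$,
\begin{align*}
\braket*{M_Ff}{T_\phi^*g}&=\braket*{T_\phi M_Ff}{g}=\braket*{M_FT_\phi f}{g}\\
&=\braket*{T_\phi f}{M_F^*g}=\overline\lambda\braket*{T_\phi f}{g}=\braket*{f}{\overline\lambda\,T_\phi^*g}.
\end{align*}
Since this holds for all $f\in\cD(M_F)$, it says exactly that $T_\phi^*g\in\cD(M_F^*)$ with $M_F^*(T_\phi^*g)=\overline\lambda\,T_\phi^*g$, i.e. $T_\phi^*g\in\ker(\overline\lambda\textbf{1}-M_F^*)$. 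As $\phi\in H^\infty(\C_+)$ was arbitrary, $\ker(\overline\lambda\textbf{1}-M_F^*)$ is $M_{H^\infty}^*$-stable.

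The only point needing care -- and the one I would double-check -- is the bookkeeping for the \emph{unbounded} operator $M_F$: one must know that $\cD(M_F)$ is exactly the set described above (so that $T_\phi$ genuinely maps it into itself and $M_FT_\phi=T_\phi M_F$ holds literally), which is where the identification $p M_{F_*}p^*=M_F$ from the start of the subsection enters, and that $\cD(M_F)$ is dense so that $M_F^*$ is meaningful (this is clear, e.g. since differences $Q_a\cdot v-Q_b\cdot v$ lie in $\cD(M_F)$ because $F$ grows at most linearly, and such elements span a dense subspace). Beyond this I expect no obstacle: the statement is essentially the remark that an eigenspace of the adjoint of a multiplication operator is stable under the compressed multiplications $T_\phi^*$ by bounded holomorphic symbols, simply because those symbols commute with $M_F$. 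For a real Hilbert space $\cH$ one may either observe that the computation above is insensitive to the scalar field, or pass to the complexification as in \fref{prop:ROPGRealToComplex}.
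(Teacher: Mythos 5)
Your proof is correct and takes essentially the same route as the paper: both arguments rest on the facts that multiplication by \(\phi\in H^\infty(\C_+)\) preserves \(\cD(M_F)\) and commutes with \(M_F\) there, followed by an adjoint computation. The paper phrases the last step as \(\braket*{P_+M_\phi^*g}{(\lambda\textbf{1}-M_F)f}=0\) for all \(f\in\cD(M_F)\), i.e.\ membership in \(\mathrm{ran}(\lambda\textbf{1}-M_F)^\perp=\ker(\overline{\lambda}\textbf{1}-M_F^*)\), whereas you verify the defining relation of the adjoint directly; this is the same computation.

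One parenthetical remark in your write-up is not right, though it does not affect the main argument: the density of \(\cD(M_F)\) cannot be justified by the elements \(Q_a\cdot v-Q_b\cdot v\) together with the assertion that \(F\) grows at most linearly. For a general \(F\in\mathrm{Pick}(\C_+,B(\cH))_\R\) the boundary values may blow up at finite real points; for instance \(F(z)=-\tfrac 1z\) has \(F_*(x)=-\tfrac 1x\), and since \(Q_a\cdot v-Q_b\cdot v\) does not vanish at \(0\), the product \(F_*\cdot(Q_a\cdot v-Q_b\cdot v)\) fails to be square-integrable near \(0\), so these elements need not lie in \(\cD(M_F)\); that trick is specific to the symbol \(F(z)=z\) used in \fref{lem:kernelAdjoint}. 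Density does hold, e.g.\ because \(e^{itF}\) is inner for \(t\ge 0\), so \(\big(U^F_t\big|_{H^2(\C_+,\cH)}\big)_{t\ge 0}\) is a strongly continuous semigroup of isometries whose densely defined generator is a restriction of \(M_F\); this point is also left implicit in the paper.
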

\begin{proof}
Let \(\phi \in H^\infty(\C_+)\) and \(g \in \ker\left(\overline{\lambda}\textbf{1}-M_F^*\right)\). Then, for \(f \in D(M_F)\), we have
\[\braket*{P_+M_\phi^*g}{(\lambda\textbf{1}-M_F)f} = \braket*{g}{M_\phi(\lambda\textbf{1}-M_F)f} = \braket*{g}{(\lambda\textbf{1}-M_F)M_\phi f} = \braket*{(\overline{\lambda}\textbf{1}-M_F^*)g}{M_\phi f} = 0,\]
where we use that \(M_\phi f \in D(F)\) with \(M_F(M_\phi f) = M_\phi M_Ff\). So \(P_+M_\phi^*g \in \ker\left(\overline{\lambda}\textbf{1}-M_F^*\right)\).
\end{proof}
This lemma together with \fref{cor:stableCharakterizationRigid} implies that there exists a function \(\phi \in H^\infty(\C_+,B(\cH))\) such that
\[\ker\left(\overline{\lambda}\textbf{1}-M_F^*\right) = \left(M_\phi H^2(\C_+,\cH)\right)^{\perp_+}.\]
We want to give an explicit formula for \(\phi\). For this, we need the following lemmata:
\begin{lem}\label{lem:compPickBounded}
Let \(\cH\) be a finite-dimensional complex Hilbert space and \(F \in \mathrm{Pick}(\C_+,B(\cH))_\R\) be a regular Pick function. Then
\[\Phi_F: L^\infty(\R,\C) \to L^\infty(\R,B(\cH)), \quad f \mapsto f \circ F_*\]
is a well-defined linear contraction.
\end{lem}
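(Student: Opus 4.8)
The plan is to give meaning to $f\circ F_*$ through the measurable self\-adjoint functional calculus, and then to check, in turn, well-definedness (independence of the chosen representative of the $L^\infty$-class of $f$), measurability, the contraction estimate, and linearity. The only genuinely non-routine ingredient is that regularity of $F$ must be invoked — via \fref{thm:SpecNonDegZeroSet} — to rule out that the spectrum of $F_*(x)$ meets a prescribed null set for $x$ in a set of positive measure.

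First I would fix a Borel-measurable representative $\tilde f$ of the class $f$, and, replacing it on the null set $\{\lvert\tilde f\rvert>\lVert f\rVert_{L^\infty}\}$ by $0$, arrange that $\lVert\tilde f\rVert_{\sup}\le\lVert f\rVert_{L^\infty}$. By \fref{lem:BoundaryMeasurable} the boundary function $F_*$ is measurable and self-adjoint wherever it exists, so \fref{cor:measurableDiagonalization} gives, after a fixed identification $\cH\cong\C^n$ with $n=\dim\cH$, measurable maps $U\colon\R\to\mathrm{Mat}(n\times n)$ (unitary-valued) and $d_1,\dots,d_n\colon\R\to\R$ with $F_*(x)=U(x)\diag(d_1(x),\dots,d_n(x))U(x)^*$ for almost every $x$. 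I would then define
\[(\Phi_F(f))(x)\coloneqq U(x)\,\diag\big(\tilde f(d_1(x)),\dots,\tilde f(d_n(x))\big)\,U(x)^*,\]
which is nothing but $\tilde f(F_*(x))$ in the usual functional calculus and therefore depends only on $F_*(x)$ and $\tilde f$, not on the chosen diagonalization. As a finite product of matrix-valued measurable functions this is measurable, hence weak-$*$-measurable, and $\lVert(\Phi_F(f))(x)\rVert_{\mathrm{op}}=\max_k\lvert\tilde f(d_k(x))\rvert\le\lVert\tilde f\rVert_{\sup}\le\lVert f\rVert_{L^\infty}$ for almost every $x$; so $\Phi_F(f)\in L^\infty(\R,B(\cH))$ with $\lVert\Phi_F(f)\rVert_\infty\le\lVert f\rVert_\infty$. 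Linearity is immediate, since the calculus acts entrywise on the diagonal.

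The step that uses the hypothesis is well-definedness: if $\tilde f$ and $\tilde g$ are two Borel representatives of $f$, they agree off a Lebesgue null set $N\subeq\R$. For every $x$ for which $F_*(x)$ exists and $\Spec(F_*(x))\cap N=\eset$ — equivalently, $x\notin D(F,N)$ — each eigenvalue $d_k(x)$ lies outside $N$, so $\tilde f(d_k(x))=\tilde g(d_k(x))$ for all $k$ and hence $\tilde f(F_*(x))=\tilde g(F_*(x))$. Since $N$ is a Lebesgue zero-set and $F$ is regular, $D(F,N)$ is a Lebesgue zero-set by \fref{thm:SpecNonDegZeroSet}; together with the fact that $\{x:F_*(x)\text{ does not exist}\}$ is null (so that $\{x:d_k(x)\in N\}\subeq D(F,N)$ up to a null set), this shows $\tilde f(F_*(x))=\tilde g(F_*(x))$ for almost every $x$, so $\Phi_F(f)$ is a well-defined element of $L^\infty(\R,B(\cH))$.

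The main obstacle is precisely this last point: without regularity, $\Spec(F_*)$ could take values in a chosen null set on a positive-measure set of $x$, and then $f\circ F_*$ would not be well defined; \fref{thm:SpecNonDegZeroSet} is exactly the tool that excludes this, and everything else (measurable diagonalization, entrywise functional calculus, the operator-norm bound) is standard bookkeeping.
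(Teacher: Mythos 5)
Your proposal is correct and follows essentially the same route as the paper: the decisive step in both is to use regularity via \fref{thm:SpecNonDegZeroSet} to show that altering \(f\) on a Lebesgue null set \(N\) changes \(f\circ F_*\) only on \(D(F,N)\), which is again a null set, so the map factors through the \(L^\infty\)-classes; the pointwise definition of \(f(F_*(x))\), the norm bound \(\lVert f\circ F_*\rVert_\infty\le\lVert f\rVert_\infty\) and linearity are handled the same way. You merely spell out the measurable-diagonalization and functional-calculus bookkeeping (via \fref{lem:BoundaryMeasurable} and \fref{cor:measurableDiagonalization}) that the paper leaves implicit, which is a harmless elaboration rather than a different argument.
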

\begin{proof}
For a function \(f \in \cL^\infty(\R,\C)\) the expression \(f \circ F_*\) makes sense as an element in \(L^\infty(\R,B(\cH))\), since for almost every \(x \in \R\) the boundary value \(F_*(x)\) is self-adjoint and therefore has real eigenvalues. Also it is clear that this map is linear and satisfies \(\left\lVert \Phi_F\right\rVert \leq 1\) since \(\left\lVert f \circ F_*\right\rVert_\infty \leq \left\lVert f\right\rVert_\infty\) for every \(f \in \cL^\infty(\R,\C)\). It remains to show that this map factorizes over the quotient map
\[q: \cL^\infty(\R,\C) \to \cL^\infty(\R,\C) \slash \cN = L^\infty(\R,B(\cH)),\]
where, by \(\cN\), we denote the subspace of almost everywhere null functions. So let \(f \in \cN\). Then there is a Lebesgue zero-set \(M\) such that \(f\big|_{\R \setminus M} \equiv 0\) and
\[\{x \in \R: f \circ F_* \neq 0\} \subeq \{x \in \R: \Spec(F_*(x)) \cap M \neq \eset\} = D(F,M),\]
which, by \fref{thm:SpecNonDegZeroSet} is a Lebesgue zero-set. Therefore \(f \circ F_* = 0 \in L^\infty(\R,B(\cH))\).
\end{proof}
\begin{lem}\label{lem:firstCompRelation}
Let \(\cH,\cK\) be complex Hilbert spaces with \(\dim \cH < \infty\) and \(F \in \mathrm{Pick}(\C_+,B(\cH))_\R\) be regular. Further, let
\[\psi: L^2(\R,\cH) \to L^2(\R,\cK)\]
be a unitary map such that
\[\psi \circ U^F_t = S_t \circ \psi \qquad \forall t \in \R,\]
where \(S_t \coloneqq M_{e^{it\mathrm{Id}}}\). Then, for every \(h \in L^\infty(\R,\C)\), one has
\[\psi \circ M_{h \circ F_*} = M_h \circ \psi.\]
\end{lem}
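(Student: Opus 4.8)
The plan is to start from the hypothesis $\psi \circ U^F_t = S_t \circ \psi$ for all $t \in \R$, i.e. $\psi \circ M_{e^{itF_*}} = M_{e^{it\mathrm{Id}}} \circ \psi$, and to bootstrap from this the relation $\psi \circ M_{h \circ F_*} = M_h \circ \psi$ for all $h \in L^\infty(\R,\C)$. The first step is to rewrite the hypothesis in the form $\psi M_{e_t \circ F_*} = M_{e_t} \psi$, where $e_t(x) = e^{itx}$, noticing that $e^{itF_*} = e_t \circ F_*$ and $e^{it\mathrm{Id}} = e_t$; this puts both sides in the shape of the desired conclusion, already established on the family $\{e_t : t \in \R\}$. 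So the task reduces to propagating the identity from this family to all of $L^\infty(\R,\C)$.

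The natural mechanism is a weak-$*$ density and continuity argument. By \fref{prop:etDense} (invoked in the same way as in the proof of \fref{prop:SCommutant} and in \fref{cor:stableCharakterizationRigid}), the linear span of $\{e_t : t \in \R\}$ is weak-$*$ dense in $L^\infty(\R,\C)$; since taking the multiplication operator is weak-$*$-to-WOT continuous and linear, we immediately get $\psi M_{h \circ F_*} = M_h \psi$ for every $h$ in the span, and then for every $h$ in the weak-$*$ closure, provided that the map $h \mapsto M_{h\circ F_*}$ is itself weak-$*$-to-WOT continuous. This is exactly where \fref{lem:compPickBounded} enters: it tells us that $\Phi_F \colon h \mapsto h \circ F_*$ is a well-defined contraction $L^\infty(\R,\C) \to L^\infty(\R,B(\cH))$, so $M_{h \circ F_*}$ is a bounded operator of norm $\le \lVert h\rVert_\infty$, and the assignment $h \mapsto M_{h \circ F_*}$ is linear and bounded. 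The concrete step is to check weak-$*$ continuity of $\Phi_F$: for a net $h_\alpha \to h$ weak-$*$ in $L^\infty(\R,\C)$ and fixed $g_1, g_2 \in L^2(\R,\cH)$, one writes $\braket*{g_1}{M_{h_\alpha \circ F_*} g_2} = \int_\R \braket*{g_1(x)}{(h_\alpha \circ F_*)(x) g_2(x)}_\cH \, dx$; using the measurable diagonalization $F_*(x) = U(x)\diag(d_1(x),\dots,d_n(x))U(x)^*$ from \fref{cor:measurableDiagonalization}, this integral becomes $\sum_{k=1}^n \int_\R h_\alpha(d_k(x)) \, c_k(x) \, dx$ for suitable $L^1$-functions $c_k$ built from $g_1, g_2, U$, and each integral is of the form $\int_\R h_\alpha \, d(\text{pushforward of } c_k\,\lambda_1 \text{ under } d_k)$, hence converges to $\int h \, d(\cdots)$ by definition of weak-$*$ convergence against finite measures. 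Thus $M_{h_\alpha \circ F_*} \to M_{h \circ F_*}$ in WOT, giving the continuity we need.

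With these pieces in hand, the argument closes: the set $\mathcal{A} := \{h \in L^\infty(\R,\C) : \psi M_{h \circ F_*} = M_h \psi\}$ is a linear subspace of $L^\infty(\R,\C)$ that contains $\{e_t : t \in \R\}$ and is closed under weak-$*$ limits (since both $h \mapsto M_{h\circ F_*}$ and $h \mapsto M_h$ are weak-$*$-to-WOT continuous and $\psi(\cdot)$, $(\cdot)\psi$ are WOT-continuous on bounded sets, and $\mathcal{A}$ intersected with any norm ball is weak-$*$ closed, which suffices by Krein--Smulian or by the fact that the relevant nets can be taken bounded by the uniform boundedness coming from $\lVert \Phi_F\rVert \le 1$); hence $\mathcal{A} = L^\infty(\R,\C)$.

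The main obstacle I anticipate is the weak-$*$ continuity of $h \mapsto M_{h\circ F_*}$, specifically handling the composition $h \circ F_*$ correctly: weak-$*$ convergence does not interact well with nonlinear operations in general, so one really must reduce to the scalar pushforward-measure picture via measurable diagonalization, and one must be careful that the pushforward measures $d_{k*}(c_k \lambda_1)$ are finite (which holds because $c_k \in L^1$) and that the diagonalization can be chosen measurably on all of $\R$ (supplied by \fref{cor:measurableDiagonalization}). A secondary technical point is the passage from weak-$*$-dense span to the whole space while keeping nets bounded — this is routine given $\lVert \Phi_F \rVert \le 1$ and $\lVert M_h \rVert = \lVert h \rVert_\infty$, but should be stated carefully.
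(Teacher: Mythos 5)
Your overall route is the same as the paper's: the paper simply observes that \(h \mapsto \psi^{-1}M_h\psi\) and \(h \mapsto M_{h\circ F_*}\) are bounded linear maps (the latter via \fref{lem:compPickBounded}) that agree on \(\{e_t : t \in \R\}\), and concludes by the weak-\(*\) density from \fref{prop:etDense}. Your instinct that this conclusion really rests on weak-\(*\)-to-WOT continuity of both maps, not mere norm boundedness, is correct, and your attempt to prove that continuity for \(h \mapsto M_{h\circ F_*}\) via measurable diagonalization is the right refinement of the paper's terse argument.

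However, the justification at the decisive step is wrong as stated. You write that \(\int_\R h_\alpha \, d\bigl((d_k)_*(c_k\lambda_1)\bigr) \to \int_\R h \, d\bigl((d_k)_*(c_k\lambda_1)\bigr)\) holds ``by definition of weak-\(*\) convergence against finite measures.'' Weak-\(*\) convergence in \(L^\infty(\R,\C) = L^1(\R,\C)'\) only gives convergence of integrals against measures of the form \(f\,d\lambda_1\) with \(f \in L^1(\R,\C)\); it says nothing about integration against a general finite Borel measure, and indeed for a singular measure the expression \(\int h_\alpha\,d\mu\) is not even well defined on equivalence classes in \(L^\infty(\R,\C,\lambda_1)\). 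The point you flag (finiteness of the pushforwards) is not the issue; what is actually needed is that each pushforward \((d_k)_*(c_k\lambda_1)\) is absolutely continuous with respect to \(\lambda_1\). This is exactly where the regularity of \(F\) must be invoked: if \(\lambda_1(M)=0\), then \(d_k^{-1}(M)\) is contained, up to a null set, in \(D(F,M)\), which is a Lebesgue null set by \fref{thm:SpecNonDegZeroSet}, so \((d_k)_*(c_k\lambda_1)(M)=0\). Radon--Nikodym then gives an \(L^1\)-density \(g_k\), and weak-\(*\) convergence applied to \(g_k\) yields the desired limit. With this insertion your argument closes correctly (and it makes visible that regularity is used here in the same way as in \fref{lem:compPickBounded}); without it, the key continuity claim is unsupported.
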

\begin{proof}
The linear maps
\[L^\infty(\R,\C) \to B(L^2(\R,\cH)), \quad h \mapsto \psi^{-1} \circ M_h \circ \psi\]
and
\[L^\infty(\R,\C) \to B(L^2(\R,\cH)), \quad h \mapsto M_{h \circ F_*}\]
are both bounded, where for the boundedness of the second one we use \fref{lem:compPickBounded}. Further, they coincide on the subset \(\{e^{it \Id} : t \in \R\} \subeq L^\infty(\R,\C)\). Since these functions span a weakly dense subspace by \fref{prop:etDense}, the two functions coincide, so, for every \(h \in L^\infty(\R,\C)\), we have
\[\psi^{-1} \circ M_h \circ \psi = M_{h \circ F_*}. \qedhere\]
\end{proof}
\begin{prop}\label{prop:PickKernelForm}
Let \(\cH\) be a finite-dimensional complex Hilbert space, \(F \in \mathrm{Pick}(\C_+,B(\cH))_\R\) be regular and \(\lambda \in \C_+\). Then
\[\ker\left(\overline{\lambda}\textbf{1}-M_F^*\right) = (M_{\phi_\lambda \circ F} H^2(\C_+,\cH))^{\perp_+}.\]
\end{prop}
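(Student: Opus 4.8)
The plan is to transport the asserted identity to the standard model $(L^2(\R,\cK),H^2(\C_+,\cK),S)$ through a Lax--Phillips isomorphism, where both sides have essentially been computed already, and to push the left-hand side across using the intertwining relation \fref{lem:firstCompRelation}. Before doing so I would check that the statement is well posed, i.e.\ that $\phi_\lambda\circ F\in H^\infty(\C_+,B(\cH))$ with boundary values $\phi_\lambda\circ F_*$. Since $F$ is regular, \fref{thm:SpecNonDeg} gives $\Spec(F(z))\subseteq\C_+$ for all $z\in\C_+$, so $\overline{\lambda}\notin\Spec(F(z))$ and
\[\phi_\lambda(F(z))=\textbf{1}-2i\,\Im(\lambda)\,\bigl(F(z)-\overline{\lambda}\textbf{1}\bigr)^{-1}\]
is holomorphic in $z$. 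Because $\Im(F(z))\geq 0$, the numerical range of $F(z)$ lies in $\overline{\C_+}$, whence $\bigl\|(F(z)-\overline{\lambda}\textbf{1})^{-1}\bigr\|\leq 1/\Im(\lambda)$ and therefore $\|\phi_\lambda(F(z))\|\leq 3$; thus $\phi_\lambda\circ F\in H^\infty(\C_+,B(\cH))$, and its boundary values are $\phi_\lambda\circ F_*$ since $F(x+i\epsilon)\to F_*(x)$ for almost every $x$ and $\phi_\lambda$ is holomorphic near the (real) spectrum of the self-adjoint operator $F_*(x)$. In particular $M_{\phi_\lambda\circ F}$ is the multiplication operator by $\phi_\lambda\circ F_*$ and preserves $H^2(\C_+,\cH)$.

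Since $F$ is regular, $(L^2(\R,\cH),H^2(\C_+,\cH),U^F)$ is a complex ROPG; let $\cK$ be its multiplicity space and, by \fref{thm:LaxPhillipsComplex}, pick a ROPG isomorphism $\psi\colon L^2(\R,\cH)\to L^2(\R,\cK)$ onto $(L^2(\R,\cK),H^2(\C_+,\cK),S)$, so that $\psi U^F_t=S_t\psi$ and $\psi$ restricts to a unitary $\psi_+\colon H^2(\C_+,\cH)\to H^2(\C_+,\cK)$. On the right-hand side, \fref{lem:firstCompRelation} applied with $h=\phi_\lambda|_\R\in L^\infty(\R,\C)$ gives $\psi\,M_{\phi_\lambda\circ F_*}=M_{\phi_\lambda}\psi$, hence $\psi\bigl(M_{\phi_\lambda\circ F}H^2(\C_+,\cH)\bigr)=\phi_\lambda H^2(\C_+,\cK)$ and, taking orthogonal complements inside the Hardy spaces, $\psi_+\bigl((M_{\phi_\lambda\circ F}H^2(\C_+,\cH))^{\perp_+}\bigr)=(\phi_\lambda H^2(\C_+,\cK))^{\perp_+}$. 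On the left-hand side, differentiating the intertwining relation gives $\psi(\partial U^F)\psi^{-1}=\partial S=M_{\Id}$; since $\psi$ carries $H^2(\C_+,\cH)$ onto $H^2(\C_+,\cK)$ it intertwines the corresponding projections, so conjugating $M_F=p_{H^2(\C_+,\cH)}(\partial U^F)p_{H^2(\C_+,\cH)}^{*}$ by $\psi_+$ yields $\psi_+M_F\psi_+^{-1}=A$ with $A=p_{H^2(\C_+,\cK)}(\partial S)p_{H^2(\C_+,\cK)}^{*}$ the operator of \fref{lem:kernelAdjoint}; hence $\psi_+\bigl(\ker(\overline{\lambda}\textbf{1}-M_F^*)\bigr)=\ker(\overline{\lambda}\textbf{1}-A^*)$.

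It thus remains to prove the model identity $\ker(\overline{\lambda}\textbf{1}-A^*)=(\phi_\lambda H^2(\C_+,\cK))^{\perp_+}$. By \fref{lem:kernelAdjoint} the left-hand side equals $Q_\lambda\cdot\cK$. For the right-hand side, scalar Blaschke factorization (\fref{prop:BlaschkeFactorization}) shows $\phi_\lambda H^2(\C_+,\cK)=\{f\in H^2(\C_+,\cK):f(\lambda)=0\}$, and by the reproducing property $\braket*{Q_\lambda\cdot v}{f}=\braket*{v}{f(\lambda)}_\cK$ of \fref{prop:H2RKHS} this set equals $\bigcap_{v\in\cK}(Q_\lambda\cdot v)^{\perp_+}$; its orthogonal complement in $H^2(\C_+,\cK)$ is therefore $\overline{\Spann\{Q_\lambda\cdot v:v\in\cK\}}=Q_\lambda\cdot\cK$ (a closed subspace, since $v\mapsto Q_\lambda\cdot v$ is a nonzero scalar multiple of an isometry). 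Applying $\psi_+^{-1}$ to the model identity then gives the claim.

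I expect the only genuinely delicate point to be the well-posedness step, namely that $\phi_\lambda\circ F$ really is a bounded holomorphic $B(\cH)$-valued function with boundary values $\phi_\lambda\circ F_*$; once that is in place the proof is just a transport of \fref{lem:kernelAdjoint} along $\psi$, combined with \fref{lem:firstCompRelation}. An alternative route that sidesteps \fref{lem:firstCompRelation} would instead use that $\ker(\overline{\lambda}\textbf{1}-M_F^*)$ is $M_{H^\infty}^*$-stable (\fref{lem:kernelShiftInvariant}), hence equal to $(M_\phi H^2(\C_+,\cH))^{\perp_+}$ for a finite Blaschke--Potapov product $\phi$ (\fref{cor:stableCharakterizationRigid}, \fref{prop:FiniteCharacterization}), and then identify $\phi$ with $\phi_\lambda\circ F$ up to a constant unitary; but the transport argument above is shorter and cleaner.
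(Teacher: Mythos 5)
Your proof is correct and follows essentially the same route as the paper's: transport along a Lax--Phillips isomorphism, identify the kernel in the standard model via \fref{lem:kernelAdjoint}, identify $(Q_\lambda\cdot\cK)^{\perp_+}$ with $M_{\phi_\lambda}H^2(\C_+,\cK)$ (the content of \fref{lem:BlaschkeOrthogonal}), and pull $M_{\phi_\lambda}$ back through $\psi$ using \fref{lem:firstCompRelation}. The only difference is that you make explicit two points the paper leaves implicit --- that $\phi_\lambda\circ F\in H^\infty(\C_+,B(\cH))$ with boundary values $\phi_\lambda\circ F_*$, and that $\psi_+$ conjugates the compression $M_F$ into the model operator $A$ --- which is a welcome but not essentially different elaboration.
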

\begin{proof}
Let \(\cK\) be the multiplicity space of \((L^2(\R,\cH),H^2(\C_+,\cH),U^F)\) and \(\psi: L^2(\R,\cH) \to L^2(\R,\cK)\) be an isomorphism between \((L^2(\R,\cH),H^2(\C_+,\cH),U^F)\)  and \((L^2(\R,\cK),H^2(\C_+,\cK),S)\). Then, by \fref{lem:kernelAdjoint}, we have
\[\psi \ker\left(\overline{\lambda}\textbf{1}-M_F^*\right) = Q_\lambda \cdot \cK,\]
so, by \fref{lem:BlaschkeOrthogonal} and \fref{lem:firstCompRelation}, we have
\begin{align*}
\ker\left(\overline{\lambda}\textbf{1}-M_F^*\right)^{\perp_+} &= \psi^{-1}(Q_\lambda \cdot \cK)^{\perp_+} = \psi^{-1} M_{\phi_\lambda} H^2(\C_+,\cK)
\\&= M_{\phi_\lambda \circ F} \psi^{-1} H^2(\C_+,\cK) = M_{\phi_\lambda \circ F} H^2(\C_+,\cH),
\end{align*}
which proves the equation since the kernel is a closed subspace.
\end{proof}
\begin{cor}\label{cor:PickFiniteCharacterizationBlaschke}
Let \(\cH\) be a finite-dimensional complex Hilbert space, \(F \in \mathrm{Pick}(\C_+,B(\cH))_\R\) be regular and \(\lambda \in \C_+\). Then, one has
\[\dim \ker\left(\overline{\lambda}\textbf{1}-M_F^*\right) < \infty,\]
if and only if \(\phi_\lambda \circ F\) is a finite Blaschke--Potapov product and in this case
\[\dim \ker\left(\overline{\lambda}\textbf{1}-M_F^*\right) = \deg(\phi_\lambda \circ F).\]
\end{cor}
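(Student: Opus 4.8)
The plan is to read the corollary off from \fref{prop:PickKernelForm}, which says that, writing $V \coloneqq \ker\left(\overline{\lambda}\textbf{1}-M_F^*\right)$,
\[V = \left(M_{\phi_\lambda \circ F}\,H^2(\C_+,\cH)\right)^{\perp_+},\]
together with the description of finite-dimensional $M_{H^\infty}^*$-stable subspaces in \fref{prop:FiniteCharacterization}. First I would note that, since $F$ is regular, \fref{thm:SpecNonDeg} gives $\Spec(F(z)) \subeq \C_+$ for every $z \in \C_+$, so that $\phi_\lambda\circ F$ is a well-defined element of $\Inn(\C_+,B(\cH))$: it is contractive on $\C_+$ because $\Im(F(z))\ge 0$ and $\lambda\in\C_+$, and its boundary values $\phi_\lambda(F_*(x))$ are unitary since $F_*(x)$ is self-adjoint and $|\phi_\lambda|\equiv 1$ on $\R$. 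The implication ``$\phi_\lambda\circ F$ is a finite Blaschke--Potapov product $\Rightarrow \dim V < \infty$ with $\dim V = \deg(\phi_\lambda\circ F)$'' is then immediate from the ``if'' part of \fref{prop:FiniteCharacterization} applied with $\phi \coloneqq \phi_\lambda\circ F$, using the displayed identity (and the fact that $V$, being a kernel, is closed).

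For the converse I would assume $\dim V < \infty$. By \fref{lem:kernelShiftInvariant} the subspace $V$ is $M_{H^\infty}^*$-stable, so \fref{prop:FiniteCharacterization} supplies a finite Blaschke--Potapov product $\phi$ with $V = \left(M_\phi H^2(\C_+,\cH)\right)^{\perp_+}$ and $\deg(\phi) = \dim V$. Taking orthogonal complements in $H^2(\C_+,\cH)$ and comparing with \fref{prop:PickKernelForm} yields the equality of shift-invariant subspaces
\[M_{\phi_\lambda\circ F}\,H^2(\C_+,\cH) = M_\phi\,H^2(\C_+,\cH).\]
It then suffices to upgrade this to the statement that $\phi_\lambda\circ F$ is itself a finite Blaschke--Potapov product of degree $\deg(\phi)$; this is the only step that is not pure bookkeeping.

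To carry it out I would set $u \coloneqq (\phi_\lambda\circ F)^*\phi \in L^\infty(\R,B(\cH))$, which has unitary boundary values since $\phi_\lambda\circ F$ and $\phi$ do. From $M_\phi H^2(\C_+,\cH) \subeq M_{\phi_\lambda\circ F}H^2(\C_+,\cH)$ one gets $M_u H^2(\C_+,\cH) \subeq H^2(\C_+,\cH)$, hence $u \in H^\infty(\C_+,B(\cH))$ by \fref{prop:H2InclusionFunctions}; the reverse inclusion gives $u^{-1} = \phi^*(\phi_\lambda\circ F) \in H^\infty(\C_+,B(\cH))$ in the same way. Thus for every $z \in \C_+$ the operator $u(z)$ is invertible with $\left\lVert u(z)\right\rVert\le 1$ and $\left\lVert u(z)^{-1}\right\rVert\le 1$, so (here $\dim\cH<\infty$ enters) $u(z) \in \U(\cH)$. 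The map $z\mapsto u(z)^* = u(z)^{-1}$ is then both holomorphic (it lies in $H^\infty(\C_+,B(\cH))$) and anti-holomorphic (being the pointwise adjoint of the holomorphic function $u$), hence constant, so $u \equiv u_0$ for some $u_0 \in \U(\cH)$ and $\phi_\lambda\circ F = \phi\,u_0$. Finally, repeatedly using the identity $v(\phi_\omega P + (1-P)) = (\phi_\omega\, vPv^* + (1-vPv^*))\,v$ for $v \in \U(\cH)$, one pushes $u_0$ to the left past all factors of $\phi$; this exhibits $\phi_\lambda\circ F = \phi u_0$ as a finite Blaschke--Potapov product whose factors involve the same points $\omega_n$ and projections of the same ranks as those of $\phi$, whence $\deg(\phi_\lambda\circ F) = \deg(\phi) = \dim V$.

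The main obstacle is the uniqueness step of the previous paragraph --- an operator-valued Beurling--Lax type fact that two inner functions generating the same shift-invariant subspace of $H^2(\C_+,\cH)$ agree up to a constant unitary on the right. The ingredients it rests on (the adjoint of a holomorphic operator function is anti-holomorphic; a contraction-valued holomorphic function with contraction-valued inverse is unitary-valued in finite dimensions; the finite Blaschke--Potapov products are closed under right multiplication by a constant unitary without change of degree) are elementary, and the substantive earlier inputs are just \fref{prop:PickKernelForm}, \fref{lem:kernelShiftInvariant}, \fref{prop:FiniteCharacterization} and \fref{prop:H2InclusionFunctions}.
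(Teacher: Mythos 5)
Your proof is correct and follows the paper's own route essentially verbatim: \fref{lem:kernelShiftInvariant}, \fref{prop:FiniteCharacterization} and \fref{prop:PickKernelForm} reduce the corollary to the fact that two inner functions with the same shift-invariant range \(M_\phi H^2(\C_+,\cH) = M_{\phi_\lambda \circ F} H^2(\C_+,\cH)\) coincide up to a constant unitary on the right. The only difference is that you re-derive this last fact by hand (via \fref{prop:H2InclusionFunctions}, unitarity of \(u(z)\) in finite dimensions, and the holomorphic/anti-holomorphic constancy argument), whereas the paper simply invokes \fref{cor:H2EqualFunctions}, whose proof is the same argument packaged through \fref{cor:HInftyIntersection}.
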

\begin{proof}
By \fref{lem:kernelShiftInvariant} and \fref{prop:FiniteCharacterization} we have that
\[\dim \ker\left(\overline{\lambda}\textbf{1}-M_F^*\right) < \infty,\]
if and only if there exists a finite Blaschke--Potapov product \(\phi\) with \(\deg(\phi) = \dim \ker\left(\overline{\lambda}\textbf{1}-M_F^*\right)\) such that
\[\ker\left(\overline{\lambda}\textbf{1}-M_F^*\right) = \left(M_\phi H^2(\C_+,\cH)\right)^{\perp_+}.\]
By \fref{prop:PickKernelForm} we also have
\[\ker\left(\overline{\lambda}\textbf{1}-M_F^*\right) = \left(M_{\phi_\lambda \circ F} H^2(\C_+,\cH)\right)^{\perp_+},\]
so we get
\[M_\phi H^2(\C_+,\cH) = M_{\phi_\lambda \circ F} H^2(\C_+,\cH)\]
and therefore, by \fref{cor:H2EqualFunctions}, we have
\[\phi_\lambda \circ F = \phi \cdot u\]
for some unitary operator \(u \in \U(\cH)\). This shows that
\[\dim \ker\left(\overline{\lambda}\textbf{1}-M_F^*\right) < \infty,\]
if and only if \(\phi_\lambda \circ F\) is a finite Blaschke--Potapov product with
\[\deg(\phi_\lambda \circ F) = \deg(\phi) = \dim \ker\left(\overline{\lambda}\textbf{1}-M_F^*\right). \qedhere\]
\end{proof}
With these results, we get a characterization of the multiplicity space of a ROPG infinitesimally generated by a Pick function:
\begin{prop}\label{prop:DimByOrd}
Let \(\cH\) be a finite-dimensional complex Hilbert space, \(F \in \mathrm{Pick}(\C_+,B(\cH))_\R\) be regular and let \(\cK\) be the multiplicity space of \((L^2(\R,\cH),H^2(\C_+,\cH),U^F)\). Then \(\dim \cK < \infty\), if and only if for one -- or equivalently for all -- \(\lambda \in \C_+\) the function \(\phi_\lambda \circ F\) is a finite Blaschke--Potapov product. In this case
\[\dim \cK = \deg(\phi_\lambda \circ F) = \sum_{\xi \in \C_+} \mathrm{Ord}(\det \circ \phi_\lambda \circ F,\xi) \quad \forall \lambda \in \C_+.\]
\end{prop}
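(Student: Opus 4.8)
The plan is to reduce everything to the characterization of $\ker(\overline{\lambda}\mathbf{1}-M_F^*)$ already obtained, combined with the observation from \fref{thm:kernelAdjointGeneral} that this kernel computes the multiplicity space. First I would recall that, by \fref{thm:kernelAdjointGeneral} applied to the complex ROPG $(L^2(\R,\cH),H^2(\C_+,\cH),U^F)$ — which is indeed a ROPG since $F$ is regular — one has, for every $\lambda \in \C_+$,
\[
\cK \cong \ker\left(\overline{\lambda}\mathbf{1}-A^*\right), \qquad A = p_{H^2(\C_+,\cH)}(\partial U^F)p_{H^2(\C_+,\cH)}^*,
\]
and since $\partial U^F = M_{F_*}$ (as noted in the discussion preceding this subsection), $A = M_F$ in the notation of the kernel-space results. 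Hence $\dim \cK = \dim \ker(\overline{\lambda}\mathbf{1}-M_F^*)$ for every $\lambda \in \C_+$. In particular, $\dim\cK < \infty$ if and only if this kernel is finite-dimensional for one — equivalently all — $\lambda \in \C_+$.

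Next I would invoke \fref{cor:PickFiniteCharacterizationBlaschke}: for $F$ regular and $\lambda \in \C_+$, the kernel $\ker(\overline{\lambda}\mathbf{1}-M_F^*)$ is finite-dimensional if and only if $\phi_\lambda \circ F$ is a finite Blaschke--Potapov product, and in that case its dimension equals $\deg(\phi_\lambda \circ F)$. Chaining this with the isomorphism $\cK \cong \ker(\overline{\lambda}\mathbf{1}-M_F^*)$ from the previous paragraph immediately gives: $\dim\cK < \infty$ iff $\phi_\lambda\circ F$ is a finite Blaschke--Potapov product (for one, hence all, $\lambda$), and in that case $\dim\cK = \deg(\phi_\lambda\circ F)$. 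Note that \fref{cor:CompNonDeg} guarantees $\phi_\lambda\circ F$ is well-defined here, since $\phi_\lambda \in \mathrm{Pick}(\C_+)_\R$ is non-constant hence regular and $\Spec(F(z))\subeq\C_+$ by \fref{thm:SpecNonDeg}; but strictly we only need that $\phi_\lambda\circ F \in H^\infty(\C_+,B(\cH))$, which holds because $\phi_\lambda$ maps $\C_+$ into $\D$.

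Finally, for the last equality I would apply \fref{prop:BlaschkeDegFormula}, which states that for any finite Blaschke--Potapov product $\phi$ in $H^\infty(\C_+,B(\cH))$ one has $\deg(\phi) = \sum_{\xi\in\C_+}\mathrm{Ord}(\det\circ\phi,\xi)$; applying this to $\phi = \phi_\lambda\circ F$ yields $\dim\cK = \sum_{\xi\in\C_+}\mathrm{Ord}(\det\circ\phi_\lambda\circ F,\xi)$ for every $\lambda\in\C_+$. Since all three named results are already available in the excerpt, there is essentially no obstacle here — the proposition is a bookkeeping corollary. The only mild point of care is making explicit why the finiteness statement is independent of $\lambda$: this follows because $\dim\ker(\overline{\lambda}\mathbf{1}-M_F^*) = \dim\cK$ for \emph{every} $\lambda \in \C_+$ by \fref{thm:kernelAdjointGeneral}, so if $\phi_{\lambda_0}\circ F$ is a finite Blaschke--Potapov product for one $\lambda_0$, then the kernel is finite-dimensional, hence $\dim\cK<\infty$, hence by \fref{cor:PickFiniteCharacterizationBlaschke} $\phi_\lambda\circ F$ is a finite Blaschke--Potapov product for all $\lambda$.
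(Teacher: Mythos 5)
Your argument is correct and is essentially the paper's own proof: it combines \fref{thm:kernelAdjointGeneral} (to identify $\cK$ with $\ker(\overline{\lambda}\mathbf{1}-M_F^*)$ for every $\lambda$), \fref{cor:PickFiniteCharacterizationBlaschke}, and \fref{prop:BlaschkeDegFormula} in the same way. The extra remark making the $\lambda$-independence of the finiteness explicit is a fine, if minor, elaboration of what the paper leaves implicit.
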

\begin{proof}
This follows immediately by \fref{cor:PickFiniteCharacterizationBlaschke} and \fref{prop:BlaschkeDegFormula}, since by \fref{thm:kernelAdjointGeneral} we have 
\[\cK \cong \ker\left(\overline{\lambda}\textbf{1}-M_F^*\right) \quad \forall \lambda \in \C_+. \qedhere\]
\end{proof}

\subsubsection{Holomorphic extensions}
To avoid dealing with infinities, in the following, we switch from the upper half-plane \(\C_+\) to the unit disc \(\D\) via Cayley transform, so we consider the holomorphic map:
\[\gls*{h}: \C_+ \to \D, \quad z \mapsto \frac{z-i}{z+i}.\]
Setting \(f_\lambda \coloneqq \det \circ \phi_\lambda \circ F \circ h^{-1}: \D \to \C\) for a regular Pick function \(F \in \mathrm{Pick}(\C_+,B(\cH))_\R\) and \(\lambda \in \C_+\), we would like to write the sum appearing in \fref{prop:DimByOrd}, using the argument principle, as
\[\sum_{\xi \in \C_+} \mathrm{Ord}(\det \circ \phi_\lambda \circ F,\xi) = \sum_{\omega \in \D} \mathrm{Ord}(f_\lambda,\omega) = \frac 1{2\pi i} \int_{f_\lambda \circ \gamma} \frac 1z \,dz\]
for some closed path \(\gamma\) in \(\C\) enclosing the closed unit disc \(\overline{\D}\). For this to be true, we would need that the function \(f_\lambda\) has a holomorphic extension to a simply connected domain containing the closed unit disc \(\overline{\D}\). We will later see that the right notion of extendability is the following:
\begin{definition}\label{def:LambdaExt}
Let \(\cH\) be a finite-dimensional complex Hilbert space and \(\lambda \in \C_+\). We call \(F \in \mathrm{Pick}(\C_+,B(\cH))_\R\) \mbox{\textit{\(\lambda\)-extendable}}, if there exists a finite set \(E_\lambda \subeq \C_-\) such that
\[\phi_\lambda \circ F: \C_+ \to B(\cH)\]
can be holomorphically extended to a function \(F_\lambda: \C_\infty \setminus E_\lambda \to B(\cH)\), such that, for every \(v,w \in \cH\), the matrix coefficient
\[\C_\infty \setminus E_\lambda \ni z \mapsto \braket*{v}{F_\lambda(z)w} \in \C\]
is meromorphic. For a \(\lambda\)-extendable Pick function \(F\), we write \(\gls*{Elmin}\) for the minimal choice of \(E_\lambda\) and \(\gls*{Flmin}\) for the corresponding extension.
\end{definition}
To classify the \(\lambda\)-extendable Pick functions, we need the following lemmata:
\begin{lem}\label{lem:ratBlaschkePotapov}{\rm{(cf. \cite[Cor. 3.7, Def. 3.3]{CHL22})}}
Let \(\cH\) be a complex Hilbert space and let \(\phi: \C_+ \to B(\cH)\) be an inner function. Then \(\phi\) is a finite Blaschke--Potapov product, if and only if there is a finite Blaschke product \(\theta \in \Inn(\C_+)\) (cf. \fref{def:Blaschke}) such that
\[M_\theta H^2(\C_+,\cH) \subeq M_\phi H^2(\C_+,\cH).\]
\end{lem}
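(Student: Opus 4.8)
I would prove the two implications separately; the forward one is a direct computation with inner functions, and the converse reduces to \fref{prop:FiniteCharacterization}.

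For ``$\Rightarrow$'', write $\phi = u\prod_{n=1}^N(\phi_{\omega_n}P_n + (1-P_n))$ with $u\in\U(\cH)$, $\omega_1,\dots,\omega_N\in\C_+$ and finite rank projections $P_n$; each factor is invertible off $\{\omega_n\}$ with a simple pole of its inverse at $\omega_n$, so the pointwise inverse $\phi^{-1}$ is meromorphic on $\C_+$ with all poles contained in the finite set $\{\omega_1,\dots,\omega_N\}$ and of order at most $N$ at each of them. Setting $\theta\coloneqq\prod_{n=1}^N\phi_{\omega_n}^N$, a finite scalar Blaschke product, the product $\theta\phi^{-1}$ is then holomorphic on all of $\C_+$ — the zeros of $\theta$ absorb the poles of $\phi^{-1}$ — extends continuously to $\overline{\C_+}$ away from $\{\omega_1,\dots,\omega_N\}$ while staying bounded near those points, and has unitary (hence bounded) boundary values; so $\theta\phi^{-1}\in H^\infty(\C_+,B(\cH))$, and therefore
\[M_\theta H^2(\C_+,\cH) = M_{\theta\phi^{-1}}\,M_\phi H^2(\C_+,\cH)\subeq M_\phi H^2(\C_+,\cH).\]

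For ``$\Leftarrow$'', suppose $M_\theta H^2(\C_+,\cH)\subeq M_\phi H^2(\C_+,\cH)$ for a finite scalar Blaschke product $\theta$. Since $\phi$ is inner, $\phi\in H^\infty(\C_+,B(\cH))$, so by \fref{cor:stableCharakterizationRigid} the subspace $V\coloneqq\left(M_\phi H^2(\C_+,\cH)\right)^{\perp_+}$ is $M_{H^\infty}^*$-stable. Taking orthogonal complements in $H^2(\C_+,\cH)$ of the assumed inclusion and using that $\theta$ is scalar gives
\[V\subeq\left(M_\theta H^2(\C_+,\cH)\right)^{\perp_+} = \left(H^2(\C_+)\ominus\theta H^2(\C_+)\right)\,\hat{\otimes}\,\cH;\]
the space $H^2(\C_+)\ominus\theta H^2(\C_+)$ is finite-dimensional of dimension $\deg\theta$ because $\theta$ is a finite Blaschke product, so $V$ is finite-dimensional. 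Then \fref{prop:FiniteCharacterization} yields a finite Blaschke--Potapov product $\psi$ with $V = \left(M_\psi H^2(\C_+,\cH)\right)^{\perp_+}$, hence $M_\psi H^2(\C_+,\cH) = M_\phi H^2(\C_+,\cH)$, and by uniqueness of the generating inner function up to a constant unitary right factor (\fref{cor:H2EqualFunctions}) we get $\phi = \psi\cdot w$ for some $w\in\U(\cH)$, so $\phi$ is itself a finite Blaschke--Potapov product.

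The step I expect to be the crux is the finiteness argument in ``$\Leftarrow$'': it is essential that $\left(H^2(\C_+)\ominus\theta H^2(\C_+)\right)\,\hat{\otimes}\,\cH$ be finite-dimensional in order to invoke \fref{prop:FiniteCharacterization} (which rests on \fref{lem:SpaceLokalDecomp}), and this uses $\dim\cH<\infty$, the standing hypothesis of this subsection — for genuinely infinite-dimensional $\cH$ the equivalence in fact fails, since for $\phi = \phi_\omega P + (1-P)$ with $P$ of infinite rank one has $M_{\phi_\omega}H^2(\C_+,\cH)\subeq M_\phi H^2(\C_+,\cH)$ without $\phi$ being a finite Blaschke--Potapov product. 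Granting finite-dimensionality, the only structural input is \fref{prop:FiniteCharacterization}; everything else is routine bookkeeping with Hardy spaces, inner functions, and their multiplication operators.
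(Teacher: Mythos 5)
The paper does not actually prove this lemma --- it is imported from \cite[Cor. 3.7, Def. 3.3]{CHL22} --- so there is no internal proof to match; your self-contained argument through the paper's own machinery is a genuinely different route and a useful one. Your converse direction is set up correctly: \(V=(M_\phi H^2(\C_+,\cH))^{\perp_+}\) is \(M_{H^\infty}^*\)-stable by \fref{cor:stableCharakterizationRigid}, the assumed inclusion gives \(V\subeq (M_\theta H^2(\C_+,\cH))^{\perp_+}=(H^2(\C_+)\ominus\theta H^2(\C_+))\,\hat{\otimes}\,\cH\), and once this space is finite dimensional, \fref{prop:FiniteCharacterization} together with \fref{cor:H2EqualFunctions} (plus the unstated but harmless fact that \(M_\phi H^2(\C_+,\cH)\) and \(M_\psi H^2(\C_+,\cH)\) are closed, multiplication by inner functions being isometric) yields \(\phi=\psi\cdot w\) with \(w\in\U(\cH)\), hence a finite Blaschke--Potapov product.

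Two points need repair. First, in the forward direction the step ``unitary (hence bounded) boundary values, so \(\theta\phi^{-1}\in H^\infty(\C_+,B(\cH))\)'' is not a valid inference: \(z\mapsto e^{-iz}\) has unimodular boundary values and is unbounded on \(\C_+\), and your continuity statement does not control the behaviour at infinity either. The clean argument is purely algebraic: with \(\theta\coloneqq\prod_{n=1}^N\phi_{\omega_n}\) (no \(N\)-th powers needed), distributing the scalar factors gives \(\phi^{-1}\theta=\bigl(\prod_{n}(P_n+\phi_{\omega_n}(\textbf{1}-P_n))\bigr)u^{-1}\) up to the order of the factors, a product of inner functions, hence an element of \(H^\infty(\C_+,B(\cH))\) of norm at most \(1\); then \(M_\theta H^2(\C_+,\cH)=M_\phi\bigl(M_{\phi^{-1}\theta}H^2(\C_+,\cH)\bigr)\subeq M_\phi H^2(\C_+,\cH)\) by \fref{prop:H2InclusionFunctions}. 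Second, your converse really does need \(\dim\cH<\infty\), but there is no ``standing hypothesis of this subsection'' to appeal to: the lemma, like \fref{prop:FiniteCharacterization} and \fref{cor:stableCharakterizationRigid}, is stated for an arbitrary complex Hilbert space. Your counterexample is nevertheless correct for the paper's definition of Blaschke--Potapov factors (finite-rank projections): for \(\phi=\phi_\omega P+(\textbf{1}-P)\) with \(P\) of infinite rank the inclusion holds with \(\theta=\phi_\omega\), while \((M_\phi H^2(\C_+,\cH))^{\perp_+}\supeq Q_\omega\cdot P\cH\) is infinite dimensional, so \(\phi\) cannot be a finite Blaschke--Potapov product by \fref{prop:FiniteCharacterization}. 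So what you have proved is the lemma under the additional hypothesis \(\dim\cH<\infty\) --- which is the only case the paper uses, in \fref{thm:holomExt} --- and you should state that hypothesis explicitly (or adopt conventions in which Blaschke--Potapov factors may carry arbitrary projections) rather than attribute it to a blanket assumption that is not there.
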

\begin{lem}\label{lem:SpectrumPerturbation}
Let \(\cH\) be a finite-dimensional Hilbert space and \(A \in B(\cH)\) be normal. Then, for every \(S \in B(\cH)\), one has
\[\Spec(A+S) \subeq \Spec(A) + U_{\leq \left\lVert S\right\rVert}(0).\]
\end{lem}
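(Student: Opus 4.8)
The plan is to reformulate the claimed inclusion: since $\Spec(A)$ is a non-empty finite set, a point $\mu \in \C$ lies in $\Spec(A) + U_{\leq \lVert S\rVert}(0)$ if and only if $\dist(\mu,\Spec(A)) \leq \lVert S\rVert$. So it suffices to show that every $\mu \in \Spec(A+S)$ satisfies $\dist(\mu,\Spec(A)) \leq \lVert S\rVert$, and I would argue this by contradiction, assuming $\dist(\mu,\Spec(A)) > \lVert S\rVert$. In particular $\mu \notin \Spec(A)$, so $A - \mu\textbf{1}$ is invertible.

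The next step is to exploit normality of $A$ to control the resolvent. Writing $A = V \diag(\lambda_1,\dots,\lambda_n) V^*$ with $V$ unitary and $\{\lambda_1,\dots,\lambda_n\} = \Spec(A)$, one gets $A - \mu\textbf{1} = V\diag(\lambda_1-\mu,\dots,\lambda_n-\mu)V^*$ and hence the identity
\[\lVert (A-\mu\textbf{1})^{-1}\rVert = \max_{k}\frac{1}{|\lambda_k-\mu|} = \frac{1}{\min_k |\lambda_k-\mu|} = \frac{1}{\dist(\mu,\Spec(A))}.\]
Then I would factor $A + S - \mu\textbf{1} = (A-\mu\textbf{1})\bigl(\textbf{1} + (A-\mu\textbf{1})^{-1}S\bigr)$ and estimate
\[\lVert (A-\mu\textbf{1})^{-1}S\rVert \leq \lVert (A-\mu\textbf{1})^{-1}\rVert \cdot \lVert S\rVert = \frac{\lVert S\rVert}{\dist(\mu,\Spec(A))} < 1.\]
By the Neumann series, $\textbf{1} + (A-\mu\textbf{1})^{-1}S$ is invertible, so $A + S - \mu\textbf{1}$ is a product of two invertible operators and therefore invertible, contradicting $\mu \in \Spec(A+S)$. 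Hence $\dist(\mu,\Spec(A)) \leq \lVert S\rVert$, which is the assertion.

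I do not expect a serious obstacle: the whole argument is a short resolvent computation. The only place where normality is genuinely used is the resolvent-norm identity $\lVert (A-\mu\textbf{1})^{-1}\rVert = \dist(\mu,\Spec(A))^{-1}$; for non-normal $A$ one would only obtain a bound involving the condition number of the diagonalizing matrix, so this is exactly the step to carry out carefully.
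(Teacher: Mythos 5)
Your proposal is correct and follows essentially the same route as the paper: the contrapositive reduction to $\dist(\mu,\Spec(A))>\lVert S\rVert$, the use of normality to identify $\lVert (A-\mu\textbf{1})^{-1}\rVert^{-1}$ with $\dist(\mu,\Spec(A))$, and the conclusion that the perturbed operator is invertible. The paper merely leaves the final invertibility step implicit (citing the norm inequality), whereas you spell out the Neumann-series factorization; this is the standard justification and introduces no gap.
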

\begin{proof}
Let
\[\mu \in \C \setminus (\Spec(A) + U_{\leq \left\lVert S\right\rVert}(0)).\]
Then
\[\dist(\mu,\Spec(A)) > \left\lVert S\right\rVert.\]
Since \(A\) and therefore \(A-\mu \textbf{1}\) is normal, we have
\begin{align*}
\left\lVert(A-\mu \textbf{1})^{-1}\right\rVert^{-1} &= \min\{|\lambda|: \lambda \in \Spec(A-\mu \textbf{1})\}
\\&= \min\{|\lambda-\mu|: \lambda \in \Spec(A)\} = \dist(\mu,\Spec(A)) > \left\lVert S\right\rVert,
\end{align*}
which implies that
\[(A -\mu \textbf{1}) + S = (A+S) - \mu \textbf{1}\]
is invertible and therefore \(\mu \in \C \setminus \Spec(A+S)\).
\end{proof}
We now can formulate our theorem characterizing \(\lambda\)-extendable Pick functions using the following definition:
\begin{definition}\label{def:RationalPick}
Let \(\cH\) be a finite-dimensional complex Hilbert space. We call a Pick function \(F \in \mathrm{Pick}(\C_+,B(\cH))_\R\) \textit{rational}, if
there are \(C \in S(\cH)\), \(D \in S(\cH)_+\) and \(A_1,\dots,A_m \in S(\cH)_+\)
such that
\[F(z) = C + zD + \sum_{j=1}^m \frac {1}{\lambda_j-z}A_j\]
and we call
\[\gls*{degPick} \coloneqq \rk(D) + \sum_{j=1}^m \rk(A_j)\]
the \textit{degree of \(F\)}. Further, we write \(\gls*{Rat}\) for the set of rational Pick functions and set
\[\deg(F) \coloneqq \infty\]
for all \(F \in \mathrm{Pick}(\C_+,B(\cH))_\R \setminus \mathrm{Rat}(\cH)\).
\end{definition}
\begin{thm}\label{thm:holomExt}
Let \(\cH\) be a finite-dimensional complex Hilbert space, \(F \in \mathrm{Pick}(\C_+,B(\cH))_\R\) be regular and \(\lambda \in \C_+\). Then the following are equivalent:
\begin{enumerate}[\rm (a)]
\item \(\phi_\lambda \circ F\) is a finite Blaschke--Potapov product.
\item \(F\) is \(\lambda\)-extendable.
\item \(F \in \mathrm{Rat}(\cH)\).
\end{enumerate}
\end{thm}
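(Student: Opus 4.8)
The plan is to prove the cyclic chain (c) $\Rightarrow$ (b) $\Rightarrow$ (a) $\Rightarrow$ (c). As a preliminary I would record that, by \fref{prop:SpecImPos}, $\Spec(F(z)) \subeq \overline{\C_+}$, so, since $\overline\lambda \in \C_-$, the operator $(\phi_\lambda \circ F)(z) = (F(z) - \lambda\mathbf{1})(F(z) - \overline\lambda\mathbf{1})^{-1}$ is well defined and holomorphic in $z \in \C_+$; the identity $\lVert(F(z)-\lambda\mathbf{1})u\rVert^2 - \lVert(F(z)-\overline\lambda\mathbf{1})u\rVert^2 = -4\,\Im(\lambda)\,\braket*{u}{\Im(F(z))u} \leq 0$ shows it is contraction-valued on $\C_+$, and its almost-everywhere boundary values $(F_*(x)-\lambda\mathbf{1})(F_*(x)-\overline\lambda\mathbf{1})^{-1}$ are unitary because $F_*(x)$ is self-adjoint; hence $\phi_\lambda \circ F \in \Inn(\C_+,B(\cH))$. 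For (c) $\Rightarrow$ (b): if $F \in \mathrm{Rat}(\cH)$ then $F$, and therefore also $\phi_\lambda \circ F = (F-\lambda\mathbf{1})(F-\overline\lambda\mathbf{1})^{-1}$, is a rational matrix function (write the inverse via the adjugate); being bounded on $\C_+$ it can have no pole anywhere on $\overline{\C_+} \cup \{\infty\}$ (a pole there would force a matrix coefficient to blow up along $\C_+$), so its finitely many poles all lie in $\C_-$ and $F$ is $\lambda$-extendable with $E_\lambda$ the pole set.

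For (b) $\Rightarrow$ (a) I would use \fref{lem:ratBlaschkePotapov}: it suffices to produce a finite Blaschke product $\theta \in \Inn(\C_+)$ with $M_\theta H^2(\C_+,\cH) \subeq M_{\phi_\lambda \circ F} H^2(\C_+,\cH)$. From $\lambda$-extendability, $\phi_\lambda \circ F$ extends to a rational matrix function holomorphic on $\overline{\C_+} \cup \{\infty\}$, so $\det(\phi_\lambda \circ F)$ is a rational scalar function, holomorphic on $\overline{\C_+}\cup\{\infty\}$, of modulus $\leq 1$ on $\C_+$ and of modulus $1$ on $\R \cup \{\infty\}$. Dividing out its finitely many zeros $\omega_1,\dots,\omega_d \in \C_+$ by $\theta \coloneqq \prod_{k=1}^d \phi_{\omega_k}$ and applying the maximum and minimum modulus principles to $\det(\phi_\lambda \circ F)/\theta$ gives $\det(\phi_\lambda \circ F) = c\,\theta$ with $|c|=1$, i.e.\ $\det(\phi_\lambda \circ F)$ is a finite Blaschke product. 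Then $\theta \cdot (\phi_\lambda \circ F)^{-1} = \tfrac1c\,\adj(\phi_\lambda \circ F)$ is holomorphic on $\overline{\C_+}\cup\{\infty\}$, hence lies in $H^\infty(\C_+,B(\cH))$, which gives the desired inclusion $M_\theta H^2(\C_+,\cH) \subeq M_{\phi_\lambda \circ F}H^2(\C_+,\cH)$; so $\phi_\lambda \circ F$ is a finite Blaschke--Potapov product.

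For (a) $\Rightarrow$ (c): a finite Blaschke--Potapov product is a rational matrix function (each Blaschke--Potapov factor has rational matrix coefficients), and since $\phi_\lambda(w)=1$ would force $\lambda = \overline\lambda$ we have $1 \notin \Spec((\phi_\lambda\circ F)(z))$ for $z \in \C_+$, so $F = (\overline\lambda(\phi_\lambda \circ F) - \lambda\mathbf{1})((\phi_\lambda \circ F) - \mathbf{1})^{-1}$ is a rational matrix function as well. It then remains to show a rational $F$ that is Pick with self-adjoint boundary values lies in $\mathrm{Rat}(\cH)$. Since $F_*$ is self-adjoint a.e., the rational function $z \mapsto F(\overline z)^*$ agrees with $F$ on an interval of $\R$, hence $F(\overline z)^* = F(z)$ identically; thus $F$ is holomorphic off $\R$, with only finitely many poles, located at real points $\lambda_1,\dots,\lambda_m$ and possibly $\infty$. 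At each $\lambda_j$ this reflection symmetry forces the Laurent coefficients of $F$ to be self-adjoint, and testing $\Im(F(\lambda_j + \epsilon e^{i\alpha})) \geq 0$ for $\alpha \in (0,\pi)$ and $\epsilon \downarrow 0$ rules out a pole of order $\geq 2$ (the leading term changes sign with $\alpha$) and forces the residue to equal $-A_j$ with $A_j \geq 0$, so the principal part at $\lambda_j$ is $\tfrac{A_j}{\lambda_j - z}$. The same directional test along rays to $\infty$ forces the polynomial part of $F$ to be $C + zD$ with $C = C^*$, and taking $z = iR \to i\infty$ in $\Im(F)\geq 0$ gives $D \geq 0$. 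Hence $F(z) = C + zD + \sum_{j=1}^m \tfrac{A_j}{\lambda_j - z}$, the form of \fref{def:RationalPick}, so $F \in \mathrm{Rat}(\cH)$.

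The step I expect to cost the most effort is (a) $\Rightarrow$ (c): it is easy to see that $F$ becomes a global rational matrix function, but pinning down the normal form --- simple real poles with positive semidefinite residues, together with a polynomial part $C + zD$ where $C = C^*$ and $D \geq 0$ --- needs the pole-order and growth analysis via the sign of $\Im F$ along rays, which is in effect the operator-valued counterpart of the scalar Nevanlinna representation \fref{thm:GePick}. The other point requiring care is the determinant-and-adjugate construction of the Blaschke product $\theta$ in (b) $\Rightarrow$ (a), which is precisely what makes \fref{lem:ratBlaschkePotapov} applicable.
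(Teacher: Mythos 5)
Your proof is correct, but it follows a genuinely different route from the paper, which proves the two equivalences \((a)\Leftrightarrow(b)\) and \((b)\Leftrightarrow(c)\) rather than your cycle \((c)\Rightarrow(b)\Rightarrow(a)\Rightarrow(c)\). Your \((c)\Rightarrow(b)\) is a substantial shortcut: since \(\phi_\lambda\circ F\) is contraction-valued on \(\C_+\) (your preliminary identity) and has rational matrix coefficients once \(F\in\mathrm{Rat}(\cH)\), boundedness on \(\C_+\) forces all poles of the entries of \((F-\overline\lambda\textbf{1})^{-1}=\adj(F-\overline\lambda\textbf{1})/\det(F-\overline\lambda\textbf{1})\) into \(\C_-\); the paper instead runs a longer spectral-perturbation argument (\fref{lem:SpectrumPerturbation}, a Möbius reduction to \(D=0\), and Riemann's removable singularity theorem) to build the extension by hand. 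Your \((b)\Rightarrow(a)\) uses the same key input as the paper, \fref{lem:ratBlaschkePotapov}, but constructs the scalar Blaschke product \(\theta\) differently: you show \(\det(\phi_\lambda\circ F)\) is itself a finite Blaschke product by a maximum/minimum modulus argument and then use \(\theta\cdot(\phi_\lambda\circ F)^{-1}=\tfrac1c\adj(\phi_\lambda\circ F)\in H^\infty(\C_+,B(\cH))\), whereas the paper takes \(\theta=\prod_\omega\phi_{\overline\omega}^{m_\omega}\) built from the (conjugated) poles of \(F_\lambda^{\mathrm{min}}\) and the companion function \(h(z)=g(\overline z)^*\); both yield the inclusion \(M_\theta H^2\subeq M_{\phi_\lambda\circ F}H^2\). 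Finally, in place of the paper's \((b)\Rightarrow(c)\), which invokes the scalar Nevanlinna representation \fref{thm:GePick} coefficientwise plus the cited result {\rm[Sc20, Thm. 4.3.4]} to assemble the operator form, you prove \((a)\Rightarrow(c)\) self-containedly: invert \(\phi_\lambda\) (legitimate since \(1\notin\Spec(\phi_\lambda(F(z)))\)), use the reflection symmetry \(F(\overline z)^*=F(z)\) to confine the finitely many poles to \(\R\cup\{\infty\}\) with self-adjoint Laurent coefficients, and run the ray test on \(\Im F\) to force simple real poles with residues \(-A_j\le 0\) and an affine part \(C+zD\) with \(D\ge 0\); this trades the external citation for a short operator-valued Nevanlinna argument in the rational case. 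Two details you compress and should spell out in a write-up: the final assembly of the normal form is the exact partial-fraction identity (\(F\) minus its principal parts and affine part is pole-free on \(\C_\infty\), hence constant and absorbable into \(C\)), and in \((b)\Rightarrow(a)\) the membership \(\adj(\phi_\lambda\circ F)\in H^\infty\) uses that the entries of \(\phi_\lambda\circ F\) are bounded by \(1\) on \(\C_+\) because it is contraction-valued there.
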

\newpage
\begin{proof}
\begin{itemize}
\item[(a) \(\Rightarrow\) (b):]
Let
\[\phi_\lambda \circ F = u \prod_{n=1}^N (\phi_{\omega_n} P_n + (1-P_n))\]
with \(\omega_1,\dots,\omega_N \in \C_+\), projections \(P_1,\dots,P_N \in B(\cH)\) and \(u \in \U(\cH)\). All the functions \(\phi_{\omega_n}\) can be holomorphically extended to \(\C \setminus \{\overline{\omega_n}\}\), so setting \(E \coloneqq \{\overline{\omega_1},\dots,\overline{\omega_N}\} \subeq \C_-\), we get that \(\phi_\lambda \circ F\) can be holomorphically extended to a function on \(\C \setminus E\). To get the holomorphic extension in \(\infty\), we notice that
\[(\phi_\lambda \circ F)\left(\frac 1z\right) = u \prod_{n=1}^N \left(\frac{\frac 1z - \omega_n}{\frac 1z - \overline{\omega_n}} P_n + (1-P_n)\right) = u \prod_{n=1}^N \left(\frac{1 - \omega_n z}{1 - \overline{\omega_n}z} P_n + (1-P_n)\right),\]
which shows that \(\phi_\lambda \circ F\) can be holomorphically extended to \(\infty\) with the value
\[F_\lambda^{\mathrm{min}}(\infty) = u \prod_{n=1}^N \left(\frac{1 - \omega_n \cdot 0}{1 - \overline{\omega_n} \cdot 0} P_n + (1-P_n)\right) = u \prod_{n=1}^N \left(P_n + (1-P_n)\right) = u.\]
That, for every \(v,w \in \cH\), the matrix coefficient
\[\C_\infty \setminus E_\lambda \ni z \mapsto \braket*{v}{F(z)w} \in \C\]
is meromorphic, follows from the fact that all the functions \(\phi_{\omega_n}\), \(n=1,\dots,N\), are meromorphic.
\item[(b) \(\Rightarrow\) (a):]
Since \(F\) is \(\lambda\)-extendable, all the matrix coefficients of \(F_\lambda^{\mathrm{min}}\) are meromorphic and therefore rational functions. Since their poles are located in \(E_\lambda^{\mathrm{min}} \subeq \C_-\), we can write
\[F_\lambda^{\mathrm{min}}(z) = \left(\prod_{\omega \in E_\lambda^{\mathrm{min}}}(z-\omega)^{-m_\omega}\right) P(z),\]
where \(P\) is a matrix-valued function whose entries are polynomials and \(m_\omega \in \N\) for every \(\omega \in E_\lambda^{\mathrm{min}}\). Since \(F_\lambda^{\mathrm{min}}\) can be holomorphically extended to \(\infty\), we know that, for all \(v,w \in \cH\), the degree of the polynomial \(\C \ni z \mapsto \braket*{v}{P(z)w}\) is at most \(\sum_{\omega \in E_\lambda^{\mathrm{min}}}^N m_\omega\). This shows that for the function
\[g(z) = \left(\prod_{\omega \in E_\lambda^{\mathrm{min}}}(z-\overline{\omega})^{-m_\omega}\right) P(z)\]
one has \(g \in H^\infty(\C_-,B(\cH))\). Further, defining the Blaschke product
\[\theta \coloneqq \prod_{\omega \in E_\lambda^{\mathrm{min}}}\phi_{\overline{\omega}}^{m_\omega}\]
we have \(F_\lambda^{\mathrm{min}} = \theta g\). We now define \(h \in H^\infty(\C_+,B(\cH))\) by \(h(z) \coloneqq g(\overline{z})^*\). Then, for \(x \in \R\), we have
\[F_\lambda^{\mathrm{min}}(x) = \theta(x) h(x)^*.\]
Since \(F_\lambda^{\mathrm{min}}\) and \(\theta\) have unitary boundary values, also \(h\) has unitary boundary values and therefore \(\theta = F_\lambda^{\mathrm{min}} h\), which yields
\[M_\theta H^2(\C_+,\cH) = M_{F_\lambda^{\mathrm{min}}} M_h H^2(\C_+,\cH) \subeq M_{F_\lambda^{\mathrm{min}}} H^2(\C_+,\cH) = M_{\phi_\lambda \circ F} H^2(\C_+,\cH).\]
By \fref{lem:ratBlaschkePotapov} this implies that \(\phi_\lambda \circ F\) is a finite Blaschke--Potapov product.
\item[(b) \(\Rightarrow\) (c):]
Since \(F\) is \(\lambda\)-extendable, all the matrix coefficients of \(F_\lambda^{\mathrm{min}}\) are meromorphic and therefore rational functions. This implies that also \(\det \circ \left(F_\lambda^{\mathrm{min}} - \textbf{1}\right)\) is a rational function and therefore the set
\[T \coloneqq \{x \in \R : 1 \in \Spec\left(F_\lambda^{\mathrm{min}}(x)\right)\}\]
is finite. For \(x \in \R \setminus T\), we have \(\phi_\lambda^{-1}\left(F_\lambda^{\mathrm{min}}(x)\right) \in B(\cH)\),
which implies that the limit
\[\lim_{\epsilon \downarrow 0} F(x+i \epsilon) = \phi_\lambda^{-1} \left(\lim_{\epsilon \downarrow 0} F_\lambda^{\mathrm{min}}(x+i \epsilon)\right) = \phi_\lambda^{-1}\left(F_\lambda^{\mathrm{min}}(x)\right)\]
exists. This implies that for every \(v \in \cH\) we have
\[\lim_{\epsilon \downarrow 0} \,\braket*{v}{F(x+i \epsilon)v} \neq \infty,\]
so the measure corresponding to the Pick function
\[\C_+ \ni z \mapsto \braket*{v}{F(z)v},\]
by \fref{thm:GePick} is supported on the finite set \(T\). Together with \cite[Thm. 4.3.4]{Sc20}, this implies that there are operators \((A_t)_{t \in T} \subeq S(\cH)_+\) and an operator \(C \in S(\cH)\) and an operator \(D \in S(\cH)_+\) such that
\[F(z) = C + zD + \sum_{t \in T}\frac 1{t-z}A_t,\]
which implies that \(F\) is a rational Pick function.
\item[(c) \(\Rightarrow\) (b):]
Let \(F \in \mathrm{Rat}(\cH)\), so
\[F(z) = C + zD + \sum_{j=1}^m \frac {1}{\lambda_j-z}A_j\]
for some \(C \in S(\cH)\), \(D \in S(\cH)_+\) and \(A_1,\dots,A_m \in S(\cH)_+\). By replacing \(F\) with \(F \circ m^{-1}\), where \(m\) is an automorphism of the Riemann sphere with \(m(\C_+) = \C_+\), chosen in a suitable way, for example as the Möbius transformation
\[m(z) = \frac 1{\xi-z} \quad \text{for } \xi \in \R \setminus \{\lambda_1,\dots,\lambda_n\},\]
we can, without loss of generality, assume that \(D=0\), so
\[F(z) = C + \sum_{j=1}^m \frac {1}{\lambda_j-z}A_j.\]
Then \(F\) can be holomorphically extended to a function
\[\tilde F: \C_\infty \setminus \{\lambda_1,\dots,\lambda_m\} \to B(\cH).\]
Now, for \(k \in \{1,\dots,m\}\), we define
\[F_k: \C_\infty \setminus \{\lambda_1,\dots,\lambda_{k-1},\lambda_{k+1},\lambda_m\} \to B(\cH), \quad z \mapsto C + \sum_{\substack{j \in \{1,\dots,m\} \\ j\neq k}} \frac {1}{\lambda_j-z}A_j\]
and set
\[K \coloneqq \max\left\{\left\lVert F_k(\lambda_k)\right\rVert : k=1,\dots,m\right\}.\]
Since the spectra \((\Spec(A_k))_{k=1,\dots,m}\) are all finite, there are \(a_1,\dots,a_m>0\) such that
\[\Spec(A_k) \subeq \{0\} \cup [a_k,\infty) \quad \forall k \in \{1,\dots,m\}.\]
We now choose \(\epsilon_1,\dots,\epsilon_m>0\) such that the sets
\[U_{\epsilon_1}(\lambda_1), \dots, U_{\epsilon_m}(\lambda_m)\]
are pairwise disjoint and
\[\left\lVert F_k(z) - F_k(\lambda_k)\right\rVert \leq 1 \quad \text{and} \quad \left|\frac 1{z-\lambda_k}\right|a_k \geq 2K+4\]
for every \(z \in U_{\epsilon_k}(\lambda_k) \setminus \{\lambda_k\}\). Then, for every \(z \in U_{\epsilon_k}(\lambda_k) \setminus \{\lambda_k\}\), one has
\[\left\lVert \tilde F(z) - \frac 1{z-\lambda_k}A_k\right\rVert = \left\lVert F_k(z)\right\rVert \leq \left\lVert F_k(\lambda_k)\right\rVert + \left\lVert F_k(z) - F_k(\lambda_k)\right\rVert \leq K + 1\]
and
\[\Spec\left(\frac 1{z-\lambda_k}A_k\right) \subeq \{0\} \cup \C \setminus U_{\leq \left|\frac 1{z-\lambda_k}\right|a_k}(0) \subeq \{0\} \cup \C \setminus U_{\leq 2K+4}(0).\]
Therefore, by \fref{lem:SpectrumPerturbation}, using that \(A_k\) is self-adjoint and therefore \(\frac 1{z-\lambda_k}A_k\) is normal, we get
\begin{align*}
\Spec(\tilde F(z)) &\subeq \Spec\left(\frac 1{z-\lambda_k}A_k\right) + U_{\leq K+1}(0) \subeq \left(\{0\} \cup \C \setminus U_{\leq 2K+4}(0)\right) + U_{\leq K+1}(0)
\\&= U_{\leq K+1}(0) \cup \C \setminus U_{\leq (2K+4)-(K+1)}(0) = U_{\leq K+1}(0) \cup \C \setminus U_{\leq K+3}(0)
\\&\subeq \C \setminus U_1(-(K+2)i).
\end{align*}
This shows, that, denoting for \(\omega \in \C_+\) by \(\tilde \phi_\omega\) the holomorphic extension of the function \(\phi_\omega\) to \(\C_\infty \setminus \{\overline{\omega}\}\), the function \(\tilde \phi_{(K+2)i} \circ \tilde F\) is bounded on the open set
\[U \coloneqq \bigcup_{k=1}^m U_{\epsilon_k}(\lambda_k) \setminus \{\lambda_k\}.\]
Further, for every \(k=1,\dots,m\), the set \(\{\lambda_k\} \cup U\) is an open neighbourhood of \(\lambda_k\). Then Riemann's Theorem on removable singularities implies that \(\tilde \phi_{(K+2)i} \circ \tilde F\) can be holomorphically extended to \(\{\lambda_1,\dots,\lambda_m\}\), which gives us a holomorphic extension
\[H: \C_\infty \setminus I \to B(\cH)\]
of \(\tilde \phi_{(K+2)i} \circ \tilde F\) and therefore of \(\phi_{(K+2)i} \circ F\), where
\[I \coloneqq \left\{z \in \C: -(K+2)i \in \Spec(\tilde F(z))\right\}.\]
We now notice that \(z \in I\), if and only if
\[\det\left(\tilde F(z) + (K+2)i\right) = 0,\]
which, since \(\det\) is a polynomial in its entries and \(\tilde F\) has meromorphic matrix coefficients, implies that \(I\) is finite. Since
\[\Spec(H(z)) = \Spec(\phi_{(K+2)i}(F(z))) = \phi_{(K+2)i} \left(\Spec(F(z))\right) \subeq \phi_{(K+2)i}(\C_+) = \D \quad \forall z \in \C_+\]
by \fref{thm:SpecNonDeg} and
\[\Spec(H(x)) \subeq \partial \D \quad \forall x \in \R \cup \{\infty\}\]
because \(F\) has self-adjoint boundary values, we have \(I \subeq \C_-\). 

Analogously, for \(\lambda \in \C_+\), one argues that
\[J_\lambda \coloneqq \left\{z \in \C: \overline{\lambda} \in \Spec(\tilde F(z))\right\}\]
is a finite subset of \(\C_-\). This implies that
\[H_\lambda: \C_\infty \setminus (I \cup J_\lambda) \to B(\cH), \quad z \mapsto (\tilde \phi_\lambda \circ \tilde \phi_{(K+2)i}^{-1} \circ H)(z)\]
defines an holomorphic extension of \(\phi_\lambda \circ F\) to \(\C_\infty \setminus (I \cup J_\lambda)\) with \(I \cup J_\lambda\) being finite.
Since \(\phi_\lambda\) and all matrix coefficients of \(F\) are rational functions, also all matrix coefficients of \(\phi_\lambda \circ F\) are rational functions, which implies that \(F\) is \(\lambda\)-extendable. \qedhere
\end{itemize}
\end{proof}
\begin{cor}\label{cor:FiniteCharacterizationIntegral}
Let \(\cH\) be a finite-dimensional complex Hilbert space, \(F \in \mathrm{Pick}(\C_+,B(\cH))_\R\) be regular and let \(\cK\) be the multiplicity space of \((L^2(\R,\cH),H^2(\C_+,\cH),U^F)\). Then \(\dim \cK < \infty\), if and only if \(F \in \mathrm{Rat}(\cH)\). In this case, for every \(\lambda \in \C_+\), there exists \(R_\lambda > 1\) such that the function
\[f_\lambda \coloneqq \det \circ \phi_\lambda \circ F \circ h^{-1}: \D \to \C\]
has a holomorphic extension \(\tilde f_\lambda: R_\lambda \D \to \C\) and then for every \(r \in (1,R_\lambda)\) one has
\[\dim \cK = \frac 1{2\pi i} \int_{\tilde f_\lambda \circ \gamma_r}\frac 1z\,dz\]
for the closed path
\[\gamma_r: [0,2\pi] \to \C^\times, \quad t \mapsto r e^{it}.\]
\end{cor}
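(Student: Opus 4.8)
The plan is to combine \fref{prop:DimByOrd} with the rationality characterization in \fref{thm:holomExt} and then evaluate the resulting finite sum of orders by the argument principle, after transporting the determinant from $\C_+$ to $\D$ by the Cayley transform $h$. The equivalence $\dim\cK<\infty\iff F\in\mathrm{Rat}(\cH)$ I would dispose of at once: by \fref{prop:DimByOrd}, $\dim\cK<\infty$ holds precisely when $\phi_\lambda\circ F$ is a finite Blaschke--Potapov product for one --- equivalently every --- $\lambda\in\C_+$, and by \fref{thm:holomExt} this happens precisely when $F\in\mathrm{Rat}(\cH)$.

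So assume $F\in\mathrm{Rat}(\cH)$ and fix $\lambda\in\C_+$. By \fref{thm:holomExt}, $F$ is $\lambda$-extendable, so $\phi_\lambda\circ F$ extends to $F_\lambda^{\mathrm{min}}\colon\C_\infty\setminus E_\lambda^{\mathrm{min}}\to B(\cH)$ with $E_\lambda^{\mathrm{min}}\subseteq\C_-$ finite and with meromorphic matrix coefficients, and $\phi_\lambda\circ F$ is moreover a finite Blaschke--Potapov product. From the explicit form of such products (as in the proof of \fref{prop:BlaschkeDegFormula}) one reads off that $\det\circ F_\lambda^{\mathrm{min}}$ is a unimodular constant times a finite Blaschke product: all its zeros lie in $\C_+$, all its poles in $\C_-$, and it is unimodular --- hence nonvanishing --- on $\R\cup\{\infty\}$, using that $F$ has self-adjoint boundary values. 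Since $h$ is the Möbius transformation with $h(\C_+)=\D$, $h(\R\cup\{\infty\})=\T$ and $h(\C_-)=\C_\infty\setminus\overline{\D}$, it follows that $f_\lambda=\det\circ\phi_\lambda\circ F\circ h^{-1}$ is holomorphic on $\D$ and extends to a meromorphic function $\tilde f_\lambda$ on $\C_\infty$ all of whose zeros lie in $\D$, all of whose poles lie in $\C_\infty\setminus\overline{\D}$, and which is nonvanishing on $\T$. Because that pole set is finite and lies in the open exterior of $\overline{\D}$, I can choose $R_\lambda>1$ so small that $\tilde f_\lambda$ is holomorphic --- and, all zeros being in $\D$, also zero-free on $1\le|z|<R_\lambda$ --- on $R_\lambda\D$.

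For the remaining identity I would fix $r\in(1,R_\lambda)$. Since $\tilde f_\lambda$ has no zeros on $\{|z|=r\}$ the curve $\tilde f_\lambda\circ\gamma_r$ misses $0$, and the substitution $w=\tilde f_\lambda(z)$ rewrites $\frac{1}{2\pi i}\int_{\tilde f_\lambda\circ\gamma_r}\frac1z\,dz$ as $\frac{1}{2\pi i}\int_{\gamma_r}\frac{\tilde f_\lambda'(z)}{\tilde f_\lambda(z)}\,dz$. By the argument principle for $\tilde f_\lambda$ on $\{|z|<r\}$ --- where it is holomorphic and pole-free --- this equals the number of zeros of $\tilde f_\lambda$ there, counted with multiplicity, i.e. $\sum_{\omega\in\D}\mathrm{Ord}(f_\lambda,\omega)$, since there are no zeros in the annulus $1\le|z|<r$. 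As $h^{-1}$ is biholomorphic this sum equals $\sum_{\xi\in\C_+}\mathrm{Ord}(\det\circ\phi_\lambda\circ F,\xi)$, which by \fref{prop:DimByOrd} is $\dim\cK$; this gives the asserted formula for every $\lambda\in\C_+$.

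The only genuinely delicate point is the bookkeeping around the Cayley transform --- verifying that $h$ carries the zeros of $\det\circ F_\lambda^{\mathrm{min}}$ into $\D$ and its poles into $\C_\infty\setminus\overline{\D}$, so that the closed annulus $1\le|z|\le r$ is free of both zeros and poles of $\tilde f_\lambda$; this is exactly what makes the winding-number integral count precisely the $\C_+$-zeros appearing in \fref{prop:DimByOrd}. It rests on the finite Blaschke--Potapov structure of $\phi_\lambda\circ F$ from \fref{thm:holomExt} (zeros in $\C_+$, poles in $\C_-$) together with the unimodularity of the boundary values; granting these, the rest of the argument is routine.
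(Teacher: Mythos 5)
Your proposal is correct and follows essentially the same route as the paper: the equivalence via \fref{prop:DimByOrd} and \fref{thm:holomExt}, the holomorphic extension of \(f_\lambda\) beyond \(\overline{\D}\) supplied by \(\lambda\)-extendability (with the exceptional set landing outside \(\overline{\D}\) under the Cayley transform), the observation that the Blaschke--Potapov structure of \(\phi_\lambda\circ F\) forces all zeros of \(\tilde f_\lambda\) into \(\D\), and then the argument principle on \(\{|z|<r\}\). Your extra step of identifying \(\det\circ F_\lambda^{\mathrm{min}}\) explicitly as a unimodular constant times a finite Blaschke product is a harmless elaboration of the paper's appeal to \fref{prop:DimByOrd}, not a different argument.
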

\begin{proof}
By \fref{prop:DimByOrd} and \fref{thm:holomExt} we immediately get that \(\dim \cK < \infty\), if and only if \(F \in \mathrm{Rat}(\cH)\) and that in this case, for every \(\lambda \in \C_+\), one has
\[\dim \cK = \sum_{\xi \in \C_+} \mathrm{Ord}(\det \circ \phi_\lambda \circ F,\xi).\]
Further, by \fref{thm:holomExt}, the function \(F\) is \(\lambda\)-extendable, which immediately implies that there exists a finite subset \(E \subeq \C \setminus \overline{\D}\) such that the function \(f_\lambda\) can be holomorphically extended to \(\C \setminus E\). Setting
\[R_\lambda \coloneqq 1 + \dist(E,\overline{\D}) \in (1,\infty),\]
we have that \(R_\lambda\D \subeq \C \setminus E\) and therefore there exists a holomorphic extension \(\tilde f_\lambda\) of \(f_\lambda\) to \(R_\lambda\D\). For this extension, we get
\[\dim \cK = \sum_{\xi \in \C_+} \mathrm{Ord}(\det \circ \phi_\lambda \circ F,\xi) = \sum_{\omega \in \D} \mathrm{Ord}(\det \circ \phi_\lambda \circ F \circ h^{-1},\omega) = \sum_{\omega \in \D} \mathrm{Ord}(\tilde f_\lambda,\omega).\]
Further, by \fref{prop:DimByOrd} the function \(\phi_\lambda \circ F\) is a Blaschke--Potapov product and therefore no holomorphic extension of \(\det \circ \phi_\lambda \circ F\) has a zero on \(\C_\infty \setminus \D\), which implies that \(\tilde f_\lambda\) has no zeros on \(R_\lambda\D \setminus \D\). For \(r \in (1,R_\lambda)\) the argument principle then gives us
\begin{align*}
\dim \cK = \sum_{\omega \in \D} \mathrm{Ord}(\tilde f_\lambda,\omega) = \frac 1{2\pi i} \int_{\partial(r\D)}\frac{\tilde f_\lambda'(\xi)}{\tilde f_\lambda(\xi)}\,d\xi = \frac 1{2\pi i} \int_{\tilde f_\lambda \circ \gamma_r}\frac 1z \,dz
\end{align*}
for the closed path
\[\gamma_r: [0,2\pi] \to \C^\times, \quad t \mapsto r e^{it}. \qedhere\]
\end{proof}

\subsubsection{The multiplicity space and the degree of Pick functions}
In this subsection, we will provide a way to express the degree of a regular Pick function as an integral in a similar fashion as the integral
\[\frac 1{2\pi i} \int_{\tilde f_\lambda \circ \gamma_r}\frac 1z \,dz\]
appearing in \fref{cor:FiniteCharacterizationIntegral}, and finally, we will compare these two integrals to obtain an explicit formula for the multiplicity space of ROPGs that are infinitesimally generated by a Pick function. We start with the following lemma:
\begin{lemma}\label{lem:RatPoleOrder}
Let \(\cH\) be a finite-dimensional complex Hilbert space and \(F \in \mathrm{Rat}(\cH)\) be regular. Let \(C \in S(\cH)\), \(D \in S(\cH)_+\) and \(A_1,\dots,A_m \in S(\cH)_+\) and \(\lambda_1,\dots,\lambda_m \in \R\) such that
\[F(z) = C + zD + \sum_{j=1}^m \frac {1}{\lambda_j-z}A_j, \quad z \in \C_+\]
and let \(\tilde F\) be the holomorphic extension of \(F\) to \(\C \setminus \{\lambda_1,\dots,\lambda_m\}\). Then, for \(\lambda \in \C_+\), one has
\[\mathrm{Ord}\left(\det \circ (\tilde F-\overline{\lambda} \textbf{1}),x\right) = \begin{cases} -\mathrm{rank}(D) & \text{if } x=\infty \\ -\mathrm{rank}(A_k) & \text{if } x=\lambda_k \\ 0 & \text{if } x \in \R \setminus \{\lambda_1,\dots,\lambda_m\}.\end{cases}\]
\end{lemma}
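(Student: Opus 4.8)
The statement is a purely local computation of the order of the scalar meromorphic function $z \mapsto \det(\tilde F(z) - \overline\lambda\,\mathbf 1)$ at the points of $\R\cup\{\infty\}$, so I would work one point at a time. First, fix $\lambda\in\C_+$ and note that $\overline\lambda\notin\R$, so $\overline\lambda\mathbf 1 - \tilde F(z)$ is invertible for every $z\in\R\setminus\{\lambda_1,\dots,\lambda_m\}$: indeed for such $z$ the value $\tilde F(z)=F_*(z)$ is self-adjoint (as $F\in\mathrm{Pick}(\C_+,B(\cH))_\R$), hence has real spectrum, which cannot contain $\overline\lambda$. This immediately gives $\mathrm{Ord}(\det\circ(\tilde F-\overline\lambda\mathbf 1),x)=0$ for $x\in\R\setminus\{\lambda_1,\dots,\lambda_m\}$, and more importantly shows that the only possible poles/zeros of $\det\circ(\tilde F-\overline\lambda\mathbf 1)$ on $\R\cup\{\infty\}$ are at the $\lambda_k$ and at $\infty$, so the order there is automatically $\le 0$ (the function is holomorphic and nonvanishing in a punctured neighbourhood).

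For the pole at $x=\lambda_k$: write $\tilde F(z) = \frac{1}{\lambda_k - z}A_k + G_k(z)$ where $G_k(z) = C + zD + \sum_{j\neq k}\frac{1}{\lambda_j-z}A_j$ is holomorphic near $\lambda_k$. Then $\tilde F(z)-\overline\lambda\mathbf 1 = \frac{1}{\lambda_k-z}\bigl(A_k + (\lambda_k - z)(G_k(z)-\overline\lambda\mathbf 1)\bigr)$, so
\[
\det(\tilde F(z)-\overline\lambda\mathbf 1) = (\lambda_k-z)^{-n}\,\det\!\bigl(A_k + (\lambda_k-z)(G_k(z)-\overline\lambda\mathbf 1)\bigr),\qquad n=\dim\cH.
\]
The remaining task is to compute $\mathrm{Ord}$ at $\lambda_k$ of the entire function $H_k(z):=\det(A_k + (\lambda_k-z)(G_k(z)-\overline\lambda\mathbf 1))$, i.e.\ the vanishing order of $z\mapsto\det(A_k + (\lambda_k-z)B_k + O((\lambda_k-z)^2))$ at $z=\lambda_k$, where $B_k := G_k(\lambda_k)-\overline\lambda\mathbf 1$ is invertible (same argument: $G_k(\lambda_k)$ is a boundary value of the Pick function, self-adjoint, so its spectrum is real). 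Decomposing $\cH = \ker A_k \oplus (\ker A_k)^\perp$ and using that $A_k$ is positive hence $A_k|_{(\ker A_k)^\perp}$ is invertible, a block-matrix / Schur-complement expansion of the determinant shows that $H_k$ vanishes at $\lambda_k$ exactly to order $\dim\ker A_k = n - \mathrm{rank}(A_k)$, the contribution coming from the $\ker A_k$ block where the leading term is $(\lambda_k-z)\,P_{\ker A_k}B_k P_{\ker A_k}$, which is invertible on $\ker A_k$ because $B_k$ is. Combining with the $(\lambda_k-z)^{-n}$ factor gives $\mathrm{Ord}(\det\circ(\tilde F-\overline\lambda\mathbf 1),\lambda_k) = -n + (n-\mathrm{rank}(A_k)) = -\mathrm{rank}(A_k)$.

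For $x=\infty$: substitute $w = 1/z$ and study $\tilde F(1/w) - \overline\lambda\mathbf 1 = \tfrac 1w D + \bigl(C-\overline\lambda\mathbf 1 + \sum_j \tfrac{w}{\lambda_j w - 1}A_j\bigr) =: \tfrac 1w D + \tilde G(w)$ with $\tilde G$ holomorphic at $w=0$ and $\tilde G(0) = C - \overline\lambda\mathbf 1$ invertible. This is the exact analogue of the pole computation with $A_k$ replaced by $D$ and $\lambda_k-z$ replaced by $w$, so the same Schur-complement argument yields $\mathrm{Ord}(\det\circ(\tilde F-\overline\lambda\mathbf 1),\infty) = -\mathrm{rank}(D)$. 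I would present the $\lambda_k$ case and the $\infty$ case as one lemma applied twice (to a germ of the form $\det(w^{-1}A + \text{holomorphic, invertible at }0)$), which is cleanest.

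The only genuinely nontrivial step is the block-determinant computation of the vanishing order of $\det(A + w\,B + O(w^2))$ at $w=0$ when $A\ge 0$ and $B$ is invertible; everything else (invertibility away from the poles, the partial-fraction rewriting, the change of variables at $\infty$) is bookkeeping. If one prefers to avoid the explicit Schur complement, an alternative is to diagonalize $A = \mathrm{diag}(a_1,\dots,a_n)$ with $a_i>0$ for $i\le r=\mathrm{rank}(A)$ and $a_i=0$ for $i>r$, and expand $\det(A+wB+\dots)$ along the multilinearity of the determinant in its columns: each of the $n-r$ zero columns of $A$ contributes one factor of $w$ to the lowest-order term, and the coefficient of $w^{n-r}$ is $\bigl(\prod_{i\le r}a_i\bigr)\cdot\det\bigl(P_{\ker A}\,B\,P_{\ker A}\bigr)\neq 0$ since $B$ is invertible (so its compression to $\ker A$ is invertible — here one uses $B = G(\lambda_k)-\overline\lambda\mathbf 1$ and that $\mathrm{Im}(\overline\lambda\mathbf 1) = -\mathrm{Im}(\lambda)\mathbf 1 < 0$ while $\mathrm{Im}(G(\lambda_k)) \ge 0$ on boundary values forces invertibility of every compression, not just of $B$ itself). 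I expect the subtlety that the compression of $B$ to $\ker A$ is invertible — not merely $B$ — to be the point needing the most care, and the imaginary-part positivity of Pick functions (Proposition on $\Im$-positivity cited earlier, together with $\Im(\overline\lambda)<0$) is exactly what delivers it.
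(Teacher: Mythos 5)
Your proposal is correct; the determinant bookkeeping at the heart of it is the same as in the paper, but you organize the argument along a genuinely different route. The paper treats \(x=\infty\) first: it multiplies \(\tilde F-\overline{\lambda}\mathbf{1}\) by the regularizing factor \(Q(z)=P+\tfrac 1z(\mathbf{1}-P)\) (with \(P\) the projection onto \(\ker D\)), whose determinant has order \(\mathrm{rank}(D)\) at \(\infty\), and shows that \(\det\bigl(Q(z)(\tilde F(z)-\overline{\lambda}\mathbf{1})\bigr)\) converges to the nonzero limit \(\det\bigl(p(C-\overline{\lambda}\mathbf{1})p^*\bigr)\cdot\det(qDq^*)\); the finite poles are then handled by composing with the Möbius transformation \(m(z)=\lambda_k-\tfrac 1z\), which turns the \(A_k\)-term into the linear term, and invoking \fref{cor:ConstNonDeg} and \fref{cor:CompNonDeg} to reduce to the \(\infty\)-case. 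You instead prove one local lemma — the vanishing order of \(\det(A+wB+O(w^2))\) at \(w=0\) equals \(\dim\ker A\) when \(A\geq 0\) and the compression of \(B\) to \(\ker A\) is invertible — and apply it directly at each \(\lambda_k\) (after the partial-fraction split \(\tilde F=\tfrac 1{\lambda_k-z}A_k+G_k\)) and at \(\infty\) (after substituting \(w=1/z\)). The invertibility mechanisms also differ slightly: the paper uses that the compression of \(C\) (resp.\ \(G_k(\lambda_k)\)) to the kernel is self-adjoint, so subtracting \(\overline{\lambda}\notin\R\) makes it invertible, whereas you observe \(\Im\bigl(G_k(\lambda_k)-\overline{\lambda}\mathbf{1}\bigr)=\Im(\lambda)\mathbf{1}>0\), which forces every compression to be invertible — both are valid and essentially equivalent. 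What your route buys is that it avoids the Möbius reduction and any appeal to the composition corollaries (regularity of \(F\) is in fact never used in your computation); what the paper's route buys is that the block computation is carried out only once, in the convenient limit form with \(Q(z)\). One cosmetic point: your aside that the order at \(\lambda_k\) and \(\infty\) is ``automatically \(\leq 0\)'' is neither needed nor fully justified as stated (non-vanishing in a punctured neighbourhood does not by itself control the order at the point), but nothing downstream depends on it.
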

\begin{proof}
For \(x \in \R \setminus \{\lambda_1,\dots,\lambda_m\}\), we have \(\tilde F(x) \in S(\cH)\) and therefore \(\det(\tilde F(x)-\overline{\lambda} \textbf{1}) \neq 0\), so \(\mathrm{Ord}(\det \circ (\tilde F-\overline{\lambda} \textbf{1}),x) = 0\).

For \(x = \infty\), let \(P\) be the projection onto \(\ker(D)\) and set
\[Q(z) \coloneqq P + \frac 1z(\textbf{1}-P)\]
Then
\[\det(Q(z)) = \left(\frac 1z\right)^{\mathrm{rank}(D)},\]
so
\[\mathrm{Ord}\left(\det \circ Q,\infty\right) = \mathrm{rank}(D).\]
Further, defining the partial isometries
\[p \coloneqq P\big|^{\ker(D)}: \cH \to \ker(D) \quad \text{and} \quad q \coloneqq (1-P)\big|^{\ker(D)^\perp}: \cH \to \ker(D)^\perp\]
and using that \(Q(z) \cdot zD = D = (1-P)D\), we have
\begin{align*}
\det\left(Q(z)\left(\tilde F(z)-\overline{\lambda} \textbf{1}\right)\right) &=\det\left(Q(z)\left(C + zD + \sum_{j=1}^m \frac {1}{\lambda_j-z}A_j-\overline{\lambda} \textbf{1}\right)\right)
\\&= \det\left(Q(z)\left(C + \sum_{j=1}^m \frac {1}{\lambda_j-z}A_j-\overline{\lambda} \textbf{1}\right)+(1-P)D\right)
\\&\xrightarrow{|z| \to \infty} \det\left(P(C-\overline{\lambda} \textbf{1}) + (1-P)D\right) = \det(p(C-\overline{\lambda} \textbf{1})p^*) \cdot \det(qDq^*)
\\&=\det(pCp^*-\overline{\lambda} \textbf{1}) \cdot \det(qDq^*) \neq 0,
\end{align*}
where the last inequality follows since \(\det(qDq^*) \neq 0\) by definition of \(q\) and \(\det(pCp^*-\overline{\lambda} \textbf{1}) \neq 0\) follows by the fact that \(pCp^*\) is self-adjoint and therefore just has real eigenvalues. This implies that
\[\mathrm{Ord}\left(\det \circ \left(Q \cdot \left(\tilde F-\overline{\lambda} \textbf{1}\right)\right),\infty\right) = 0\]
and therefore
\[\mathrm{Ord}\left(\det \circ \left(\tilde F-\overline{\lambda} \textbf{1}\right),\infty\right) = -\mathrm{Ord}\left(\det \circ Q,\infty\right) = -\mathrm{rank}(D).\]
Now, for \(x=\lambda_k\), we consider the Möbius transform
\[m(z) \coloneqq \lambda_k - \frac 1z.\]
Then, using that
\begin{align*}
\frac 1{\lambda_j-\left(\lambda_k - \frac 1z\right)} &= \frac 1{\left(\lambda_j-\lambda_k\right) + \frac 1z} = \frac 1{\lambda_j-\lambda_k} \cdot \frac z{z + \frac 1{\lambda_j- \lambda_k}}
\\&= \frac 1{\lambda_j-\lambda_k} \left(1 - \frac {\frac 1{\lambda_j- \lambda_k}}{z + \frac 1{\lambda_j- \lambda_k}}\right) = \frac 1{\lambda_j-\lambda_k} - \frac {\frac 1{(\lambda_j- \lambda_k)^2}}{z + \frac 1{\lambda_j- \lambda_k}} = \frac 1{\lambda_j-\lambda_k} + \frac {\frac 1{(\lambda_j- \lambda_k)^2}}{\frac 1{\lambda_k- \lambda_j}-z},
\end{align*}
we have
\begin{align*}
(\tilde F \circ m)(z) &= C + \left(\lambda_k - \frac 1z\right)D + \sum_{j=1}^m \frac 1{\lambda_j-\left(\lambda_k - \frac 1z\right)}A_j
\\&=\left(C + \lambda_k D + \sum_{\substack{j \in \{1,\dots,m\} \\ j \neq k}} \frac 1{\lambda_j- \lambda_k}A_j\right) + zA_k + \frac 1{0-z}D + \sum_{\substack{j \in \{1,\dots,m\} \\ j \neq k}} \frac {\frac 1{(\lambda_j- \lambda_k)^2}}{\frac 1{\lambda_k- \lambda_j}-z}A_j,
\end{align*}
so by the previous argument, using that \(F \circ m\) is again regular by \fref{cor:ConstNonDeg} and \fref{cor:CompNonDeg}, we get
\begin{align*}
-\mathrm{rank}(A_k) &= \mathrm{Ord}\left(\det \circ \left(\tilde F \circ m-\overline{\lambda} \textbf{1}\right),\infty\right)
\\&= \mathrm{Ord}\left(\det \circ \left(\tilde F-\overline{\lambda} \textbf{1}\right)\circ m,m^{-1}(\lambda_k)\right) = \mathrm{Ord}\left(\det \circ \left(\tilde F-\overline{\lambda} \textbf{1}\right),\lambda_k\right). \qedhere
\end{align*}
\end{proof}
\begin{prop}\label{prop:DegFIntegralFormula}
Let \(\cH\) be a finite-dimensional complex Hilbert space, \(F \in \mathrm{Rat}(\cH)\) be regular and \(\lambda \in \C_+\). Then there exists \(M_\lambda > 1\) and a meromorphic extension \(\tilde g_\lambda\) on \(M_\lambda\D\) of the function
\[g_\lambda \coloneqq \det \circ (F - \overline{\lambda}) \circ h^{-1}: \D \to \C\]
such that \(\tilde g_\lambda\) has no zeros or poles on \(M_\lambda\D \setminus \overline{\D}\). Then, for every \(r \in (1,M_\lambda)\), one has
\[\deg(F) = -\frac 1{2\pi i} \int_{\tilde g_\lambda \circ \gamma_r}\frac 1z\,dz\]
for the closed path
\[\gamma_r: [0,2\pi] \to \C^\times, \quad t \mapsto r e^{it}.\]
\end{prop}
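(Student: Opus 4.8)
The plan is to realize $\tilde g_\lambda$ as the restriction to a disc of a rational function on the Riemann sphere, to pin down where its zeros and poles sit using regularity of $F$, and then to evaluate the integral by the argument principle exactly as in the proof of \fref{cor:FiniteCharacterizationIntegral}, invoking \fref{lem:RatPoleOrder} at the final step. Concretely, fix the representation $F(z) = C + zD + \sum_{j=1}^m \frac 1{\lambda_j-z}A_j$ from \fref{def:RationalPick}, let $\tilde F \colon \C_\infty \setminus \{\lambda_1,\dots,\lambda_m\} \to B(\cH)$ be its holomorphic extension (with a pole at $\infty$ precisely when $D \neq 0$), and note that $z \mapsto \det(\tilde F(z)-\overline{\lambda}\textbf{1})$ is a rational function on $\C_\infty$, not identically zero since for $z \in \C_+$ regularity and \fref{thm:SpecNonDeg} give $\Spec(\tilde F(z)) \subeq \C_+$. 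Composing with the Möbius transformation $h^{-1}$ then yields a rational function $G_\lambda \coloneqq \det \circ (\tilde F - \overline{\lambda}\textbf{1}) \circ h^{-1}$ on $\C_\infty$ that agrees with $g_\lambda$ on $\D$, because $h^{-1}(\D) = \C_+$ and there $F = \tilde F$.

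The heart of the argument is locating the zeros and poles of $G_\lambda$. From the explicit form of $F$ one has the reflection identity $\tilde F(\overline{z}) = \tilde F(z)^*$, so complex-conjugating the inclusion $\Spec(\tilde F(\overline{z})) \subeq \C_+$ gives $\Spec(\tilde F(z)) \subeq \C_-$ for $z \in \C_-$; moreover for real $x \notin \{\lambda_j\}$ the operator $\tilde F(x)$ is self-adjoint with real spectrum, as is $\tilde F(\infty) = C$ when $D = 0$. Since $\overline{\lambda} \in \C_-$, it follows that $\det(\tilde F(z)-\overline{\lambda}\textbf{1}) = 0$ forces $z \in \C_-$, so every zero of $G_\lambda$ lies in $h(\C_-) = \C_\infty \setminus \overline{\D}$; meanwhile the poles of $\det \circ (\tilde F - \overline{\lambda}\textbf{1})$ are among those of $\tilde F$, hence in $\{\lambda_1,\dots,\lambda_m,\infty\} \subeq \R \cup \{\infty\} = h^{-1}(\T)$, so every pole of $G_\lambda$ lies on $\T$. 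As $G_\lambda$ has only finitely many zeros, I may pick $M_\lambda \in (1,\infty)$ so small that $M_\lambda\D$ contains no zero of $G_\lambda$; then $\tilde g_\lambda \coloneqq G_\lambda\big|_{M_\lambda\D}$ is a meromorphic extension of $g_\lambda$ with no zeros or poles in $M_\lambda\D \setminus \overline{\D}$, which is the first assertion.

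For $r \in (1,M_\lambda)$ the path $\gamma_r$ meets no zero or pole of $\tilde g_\lambda$, so the argument principle --- applied as in \fref{cor:FiniteCharacterizationIntegral}, but now counting poles with negative order --- gives
\[\frac 1{2\pi i}\int_{\tilde g_\lambda \circ \gamma_r}\frac 1z\,dz = \sum_{\omega \in r\D}\mathrm{Ord}(\tilde g_\lambda,\omega) = \sum_{\omega \in \T}\mathrm{Ord}(G_\lambda,\omega),\]
the last equality because $G_\lambda$ has no zeros or poles in $\overline{\D}\setminus\T$ while $\overline{\D} \subeq r\D$. Transporting back along $h$ and applying \fref{lem:RatPoleOrder},
\begin{align*}
\sum_{\omega \in \T}\mathrm{Ord}(G_\lambda,\omega) &= \sum_{x \in \R \cup \{\infty\}}\mathrm{Ord}\bigl(\det \circ (\tilde F - \overline{\lambda}\textbf{1}),x\bigr) \\
&= -\mathrm{rank}(D) - \sum_{k=1}^m \mathrm{rank}(A_k) = -\deg(F),
\end{align*}
and rearranging yields the claimed identity.

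The step I expect to be the main obstacle is the spectral bookkeeping in the middle paragraph: one must make sure $G_\lambda$ is genuinely zero- and pole-free in the closed annulus between $\T$ and $\partial(r\D)$ --- for which both the reflection identity $\tilde F(\overline{z}) = \tilde F(z)^*$ and \fref{thm:SpecNonDeg} are needed --- and that it is exactly the poles sitting on $\T$ that lie inside $r\D$ and hence contribute, so that their orders, computed by \fref{lem:RatPoleOrder}, assemble precisely into $-\deg(F)$.
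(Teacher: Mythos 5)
Your proposal is correct and follows essentially the same route as the paper's own proof: extend $g_\lambda$ to the rational function $\det\circ(\tilde F-\overline{\lambda}\textbf{1})\circ h^{-1}$, use \fref{thm:SpecNonDeg} together with the self-adjointness of the boundary values (equivalently the reflection identity $\tilde F(\overline z)=\tilde F(z)^*$) to confine the zeros to the exterior of $\overline{\D}$ and the poles to $\T$, choose $M_\lambda$ by the distance from $\overline{\D}$ to the finite zero set, and evaluate the integral by the argument principle plus \fref{lem:RatPoleOrder}. The only cosmetic difference is that you note the superfluous fact $\Spec(\tilde F(z))\subeq\C_-$ for $z\in\C_-$, which is not needed for the zero-location argument.
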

\begin{proof}
Let \(C \in S(\cH)\), \(D \in S(\cH)_+\) and \(A_1,\dots,A_m \in S(\cH)_+\) and \(\lambda_1,\dots,\lambda_m \in \R\) such that
\[F(z) = C + zD + \sum_{j=1}^m \frac {1}{\lambda_j-z}A_j, \quad z \in \C_+\]
and let \(\tilde F\) be the holomorphic extension of \(F\) to \(\C \setminus \{\lambda_1,\dots,\lambda_m\}\). Then all poles of \(F\) are located on \(\R \cup \{\infty\}\) and by \fref{thm:SpecNonDeg} we have
\[\Spec(F(z)) \subeq \C_+ \quad \forall z \in \C_+.\]
This shows that defining the holomorphic extension
\[g \coloneqq \det \circ (\tilde F - \overline{\lambda}) \circ h^{-1}: \C \setminus \{1,h(\lambda_1),\dots,h(\lambda_m)\} \to \C\]
of \(g_\lambda\), all zeroes of \(g\) are located in \(\C \setminus \overline{\D}\) and since
\[\tilde F(\overline{z}) = \tilde F(z)^*, \quad z \in \C_+,\]
all poles of \(g\) are located in \(\partial \D\). Further, since \(F \in \mathrm{Rat}(\cH)\) the function \(g\) is rational and therefore the set \(g^{-1}(\{0\})\) is finite. The first statement then follows by setting
\[M_\lambda \coloneqq 1 + \dist\left(\D,g^{-1}(\{0\})\right)\]
and
\[\tilde g_\lambda \coloneqq g\big|_{M_\lambda \D}.\]
Now, using that \(\tilde g_\lambda\) has no zeros and that all its poles are located on \(\partial \D\), for every \(r \in (1,M_\lambda)\), the argument principle gives us
\begin{align*}
-\frac 1{2\pi i} \int_{\tilde g_\lambda \circ \gamma_r}\frac 1z\,dz &= -\sum_{\omega \in \partial \D} \mathrm{Ord}\left(\det \circ \left(\tilde F-\overline{\lambda} \textbf{1}\right) \circ h^{-1},\omega\right)
\\&= -\sum_{x \in \R \cup \{\infty\}} \mathrm{Ord}\left(\det \circ \left(\tilde F-\overline{\lambda} \textbf{1}\right),x\right) = \mathrm{rank}(D) + \sum_{j=1}^m \mathrm{rank}(A_j) = \deg(F),
\end{align*}
using \fref{lem:RatPoleOrder} in the penultimate step.
\end{proof}
Now, we can finally state our theorem:
\begin{thm}\label{thm:BigLWTheorem}
Let \(\cH\) be a finite-dimensional complex Hilbert space, \(F \in \mathrm{Pick}(\C_+,B(\cH))_\R\) be regular and let \(\cK\) be the multiplicity space of \((L^2(\R,\cH),H^2(\C_+,\cH),U^F)\). Then
\[\dim \cK = \deg F,\]
so
\[\cK \cong \begin{cases} \C^{\deg F} & \text{if } F \in \mathrm{Rat}(\cH) \\ \ell^2(\N,\C) & \text{if } F \notin \mathrm{Rat}(\cH).\end{cases}\]
\end{thm}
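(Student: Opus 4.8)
The plan is to split into the two cases $F\notin\mathrm{Rat}(\cH)$ and $F\in\mathrm{Rat}(\cH)$, handling the first by a short finiteness argument and the second by passing to the rational extension of $F$ to the Riemann sphere and counting orders of $\det\circ(\tilde F-\lambda\mathbf 1)$. If $F\notin\mathrm{Rat}(\cH)$, then by \fref{prop:DimByOrd} together with the equivalence (a)$\Leftrightarrow$(c) of \fref{thm:holomExt} (equivalently, directly by \fref{cor:FiniteCharacterizationIntegral}) one has $\dim\cK=\infty$, while $\deg F=\infty$ by \fref{def:RationalPick}, so $\dim\cK=\deg F$. Since $\cH$ is separable, the equivalence $(\cE,\cE_+,U)\cong(L^2(\R,\cK),H^2(\C_+,\cK),S)$ from \fref{thm:LaxPhillipsComplex} forces $\cK$ to be separable, and an infinite-dimensional separable complex Hilbert space is $\cong\ell^2(\N,\C)$.

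Now assume $F\in\mathrm{Rat}(\cH)$, say $F(z)=C+zD+\sum_{j=1}^m\frac{1}{\lambda_j-z}A_j$ with $C\in S(\cH)$ and $D,A_1,\dots,A_m\in S(\cH)_+$, let $\tilde F\colon\C_\infty\setminus\{\lambda_1,\dots,\lambda_m\}\to B(\cH)$ be the resulting rational extension (so $\tilde F(\bar z)=\tilde F(z)^*$), and fix $\lambda\in\C_+$. By \fref{prop:DimByOrd}, \fref{thm:holomExt} and \fref{prop:BlaschkeDegFormula}, $\phi_\lambda\circ F$ is a finite Blaschke--Potapov product and $\dim\cK=\sum_{\xi\in\C_+}\mathrm{Ord}(\det\circ\phi_\lambda\circ F,\xi)$. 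Because $\Spec(F(z))\subseteq\C_+$ for $z\in\C_+$ by \fref{thm:SpecNonDeg}, on $\C_+$ one has $\phi_\lambda(\tilde F(z))=(\tilde F(z)-\lambda\mathbf 1)(\tilde F(z)-\overline\lambda\mathbf 1)^{-1}$, and hence the rational function $\det\circ\phi_\lambda\circ F$ on $\C_\infty$ equals $\dfrac{\det\circ(\tilde F-\lambda\mathbf 1)}{\det\circ(\tilde F-\overline\lambda\mathbf 1)}$. For $\xi\in\C_+$ the denominator is holomorphic and nonzero at $\xi$ (again $\overline\lambda\notin\Spec(\tilde F(\xi))$), so the order of the quotient at $\xi$ equals the order of the numerator, giving $\dim\cK=\sum_{\xi\in\C_+}\mathrm{Ord}(\det\circ(\tilde F-\lambda\mathbf 1),\xi)$.

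Next I would evaluate this order sum. Put $R:=\det\circ(\tilde F-\lambda\mathbf 1)$, a (nonzero) rational function on $\C_\infty$, so $\sum_{\xi\in\C_\infty}\mathrm{Ord}(R,\xi)=0$. On $\C_-$ the extension $\tilde F$ is holomorphic and, using $\tilde F(\bar z)=\tilde F(z)^*$, the spectrum $\Spec(\tilde F(z))$ for $z\in\C_-$ is the complex conjugate of $\Spec(F(\bar z))\subseteq\C_+$, hence lies in $\C_-$ and excludes $\lambda$; thus $R$ has no zeros or poles in $\C_-$. Therefore $\sum_{\xi\in\C_+}\mathrm{Ord}(R,\xi)=-\sum_{x\in\R\cup\{\infty\}}\mathrm{Ord}(R,x)$, and \fref{lem:RatPoleOrder}—whose proof uses only that the scalar subtracted from $\tilde F$ is non-real, so it applies verbatim with $\overline\lambda$ replaced by $\lambda$—evaluates the right-hand sum to $-\mathrm{rank}(D)-\sum_{j=1}^m\mathrm{rank}(A_j)=-\deg F$. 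Hence $\dim\cK=\deg F$, and since $\cK$ is finite-dimensional, $\cK\cong\C^{\deg F}$.

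The main obstacle is the bookkeeping of the previous paragraph: one must be sure that $\tilde F$ has all its poles on $\R\cup\{\infty\}$ and is spectrally valued in the appropriate half-plane off $\R$ (both consequences of regularity via \fref{thm:SpecNonDeg} and of $\tilde F(\bar z)=\tilde F(z)^*$), and that \fref{lem:RatPoleOrder} is genuinely insensitive to whether the subtracted scalar lies in $\C_+$ or $\C_-$; the identification of the extension $F_\lambda^{\mathrm{min}}$ of $\phi_\lambda\circ F$ with $\phi_\lambda\circ\tilde F$ is exactly what \fref{thm:holomExt} provides. As an alternative packaging that avoids orders, one can instead compare the contour integrals of \fref{cor:FiniteCharacterizationIntegral} and \fref{prop:DegFIntegralFormula}: writing $\det\circ\phi_\lambda\circ F\circ h^{-1}=\big(\det\circ(\tilde F-\lambda\mathbf 1)\circ h^{-1}\big)/g_\lambda$ and checking that the first factor winds zero times about the origin along $\gamma_r$ (its zeros in $\D$ and its poles in $\overline{\D}$ both number $\deg F$, by the same count), one obtains $\dim\cK=0-(-\deg F)=\deg F$.
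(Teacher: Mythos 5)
Your proof is correct, but in the rational case it takes a genuinely different route from the paper's. The paper compares the two contour integrals of \fref{cor:FiniteCharacterizationIntegral} and \fref{prop:DegFIntegralFormula} via the explicit homotopy \(H(s,t)=\det\big((F_\lambda^{\mathrm{min}}\circ h^{-1})(re^{it})-s\mathbf 1\big)\), \(s\in[0,1]\), interpolating between \(\tilde f_\lambda\circ\gamma_r\) and \((\overline{\lambda}-\lambda)^{\dim\cH}\cdot\tilde g_\lambda^{-1}\circ\gamma_r\); the key point there is that a Blaschke--Potapov product satisfies \(\lVert F_\lambda^{\mathrm{min}}(z)v\rVert\geq\lVert v\rVert\) on \(\C_-\), so the homotopy avoids \(0\) and the winding numbers agree up to sign. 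You instead work with the divisor of the single rational function \(R=\det\circ(\tilde F-\lambda\mathbf 1)\) on \(\C_\infty\): its zeros lie in \(\C_+\) and, after cancelling the nonvanishing denominator \(\det\circ(\tilde F-\overline{\lambda}\mathbf 1)\), they count \(\dim\cK\) by \fref{prop:DimByOrd} and \fref{thm:holomExt}; its poles lie on \(\R\cup\{\infty\}\) and count \(\deg F\) by \fref{lem:RatPoleOrder}; there is nothing in \(\C_-\); and the total degree of the divisor is zero. This is more elementary --- it replaces two applications of the argument principle plus homotopy invariance by the single fact that a rational function on the sphere has as many zeros as poles --- and it makes transparent where the two quantities live, while the paper's version has the merit of directly reconciling the two integral formulas it had already established (your closing remark is essentially that reconciliation done by hand). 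Two small points you rightly address and that are genuinely needed: \fref{lem:RatPoleOrder} is stated for \(\overline{\lambda}\), but its proof only uses that the subtracted scalar is non-real, so it applies with \(\lambda\) (alternatively, the symmetry \(\overline{\det(\tilde F(\overline{z})-\lambda\mathbf 1)}=\det(\tilde F(z)-\overline{\lambda}\mathbf 1)\) transfers the order count at points of \(\R\cup\{\infty\}\)); and the identity \(\det\circ\phi_\lambda\circ F=\det\circ(\tilde F-\lambda\mathbf 1)\,/\,\det\circ(\tilde F-\overline{\lambda}\mathbf 1)\) holds on \(\C_+\) by the rational functional calculus because \(\overline{\lambda}\notin\Spec(F(z))\) there (\fref{thm:SpecNonDeg}). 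Your treatment of the non-rational case and the separability argument for \(\cK\cong\ell^2(\N,\C)\) coincide with the paper's.
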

\begin{proof}
We first consider the case \(F \in \mathrm{Rat}(\cH)\). Let \(\lambda \in \C_+\). We choose \(R_\lambda\) and \(\tilde f_\lambda\) as in \fref{cor:FiniteCharacterizationIntegral} and \(M_\lambda\) and \(\tilde g_\lambda\) as in \fref {prop:DegFIntegralFormula}, so \(\tilde f_\lambda\) is an extension of 
\[f_\lambda \coloneqq \det \circ \phi_\lambda \circ F \circ h^{-1}\]
and 
\(\tilde g_\lambda\) is an extension of 
\[g_\lambda \coloneqq \det \circ (F - \overline{\lambda}) \circ h^{-1}\]
and for every \(r \in (1,R)\) with \(R \coloneqq \min\{R_\lambda,M_\lambda\}\) we have
\[\dim \cK = \frac 1{2\pi i} \int_{\tilde f_\lambda \circ \gamma_r}\frac 1z\,dz \quad \text{and} \quad \deg(F) = -\frac 1{2\pi i} \int_{\tilde g_\lambda \circ \gamma_r}\frac 1z\,dz\]
for the closed path
\[\gamma_r: [0,2\pi] \to \C^\times, \quad t \mapsto r e^{it}.\]
We now fix \(r \in (1,R)\) and consider the homotopy of paths
\[H: [0,1] \times [0,2\pi] \to \C, \quad (s,t) \mapsto \det \left(\left(F_\lambda^{\mathrm{min}} \circ h^{-1}\right)(re^{it})-s\textbf{1}\right),\]
where \(F_\lambda^{\mathrm{min}}\) is defined as in \fref{def:LambdaExt}. For \(z \in \D\) we have
\[\det \left(\left(F_\lambda^{\mathrm{min}} \circ h^{-1}\right)(z)-0 \cdot \textbf{1}\right) = \det \left(\left(\phi_\lambda \circ F \circ h^{-1}\right)(z)\right) = f_\lambda(z)\]
and using that
\[\phi_\lambda(\omega)-1 = \frac{\omega-\lambda}{\omega-\overline{\lambda}} - 1 = \frac{\overline{\lambda}-\lambda}{\omega-\overline{\lambda}}\]
we get
\begin{align*}
\det \left(\left(F_\lambda^{\mathrm{min}} \circ h^{-1}\right)(z)-1 \cdot \textbf{1}\right) &= \det \left(\left(\phi_\lambda \circ F \circ h^{-1}\right)(z)-\textbf{1}\right) = \det \left(\left(\frac{\overline{\lambda}-\lambda}{F-\overline{\lambda} \textbf{1}} \circ h^{-1}\right)(z)\right)
\\&= (\overline{\lambda}-\lambda)^{\dim \cH} \cdot \det \left(\left(\left(F-\overline{\lambda} \textbf{1}\right) \circ h^{-1}\right)(z)\right)^{-1} = (\overline{\lambda}-\lambda)^{\dim \cH} \cdot g_\lambda^{-1}.
\end{align*}
Therefore
\[H(0,\cdot) = \tilde f_\lambda \circ \gamma_r \qquad \text{and} \qquad H(1,\cdot) = (\overline{\lambda}-\lambda)^{\dim \cH} \cdot \tilde g_\lambda^{-1} \circ \gamma_r,\]
which especially implies that
\[H(\{1\} \times [0,2\pi]) \subeq \C^\times.\]
We now show that also \(H([0,1) \times [0,2\pi]) \subeq \C^\times\). For this we notice that, for every \(B(\cH)\)-valued Blaschke-Potapov factor \(\phi_\omega P + (1-P)\) and every \(v \in \cH\) and \(z \in \C_- \setminus \{\overline{\omega}\}\), one has
\[|\phi_\omega(z)| = \left|\frac{z-\omega}{z-\overline{\omega}}\right| \geq 1\]
and therefore
\[\left\lVert(\phi_\omega(z) P + (1-P))v\right\rVert^2 = |\phi_\omega(z)|^2 \cdot \left\lVert Pv\right\rVert^2 + \left\lVert (1-P)v\right\rVert^2 \geq \left\lVert Pv\right\rVert^2 + \left\lVert (1-P)v\right\rVert^2 = \left\lVert v\right\rVert^2.\]
This implies that also for the function \(F_\lambda^{\mathrm{min}}\), which is a finite Blaschke-Potapov product by \fref{thm:holomExt}, one has
\[\left\lVert F_\lambda^{\mathrm{min}}(z)v\right\rVert \geq \left\lVert v\right\rVert\]
and therefore
\[\Spec \left(F_\lambda^{\mathrm{min}}(z)\right) \subeq \C \setminus \D\]
for all \(v \in \cH\) and \(z \in \C_- \setminus E_\lambda^{\mathrm{min}}\).
This yields
\[\Spec \left(\left(F_\lambda^{\mathrm{min}} \circ h^{-1}\right)(z)\right) \subeq \C \setminus \D \quad \forall z \in \C \setminus \left(\overline{\D} \cup h(E_\lambda^{\mathrm{min}})\right) \subeq R\D \setminus \overline{\D},\]
so especially
\[[0,1) \cap \Spec \left(\left(F_\lambda^{\mathrm{min}} \circ h^{-1}\right)(z)\right) = \eset \quad \forall z \in R\D \setminus \overline{\D}.\]
This implies
\[H([0,1) \times [0,2\pi]) \subeq \C^\times,\]
so we have
\[H([0,1] \times [0,2\pi]) \subeq \C^\times.\]
Then, by the homotopy invariance of the integral, we get
\begin{align*}
\int_{\tilde f_\lambda \circ \gamma_r}\frac 1z\,dz &= \int_{H(0,\cdot)}\frac 1z\,dz = \int_{H(1,\cdot)}\frac 1z\,dz
\\&= \int_{(\overline{\lambda}-\lambda)^{\dim \cH} \cdot \tilde g_\lambda^{-1} \circ \gamma_r}\frac 1z\,dz = \int_{\tilde g_\lambda^{-1} \circ \gamma_r}\frac 1z\,dz = -\int_{\tilde g_\lambda \circ \gamma_r}\frac 1z\,dz,
\end{align*}
which yields
\[\dim \cK = \frac 1{2\pi i} \int_{\tilde f_\lambda \circ \gamma_r}\frac 1z\,dz = -\frac 1{2\pi i} \int_{\tilde g_\lambda \circ \gamma_r}\frac 1z\,dz = \deg(F).\]
Now, in the case \(F \notin \mathrm{Rat}(\cH)\), by definition we have \(\deg(F) = \infty\) and by \fref{prop:DimByOrd} we also have \(\dim \cK = \infty\). Further, by assumption, one has \(L^2(\R,\cH) \cong L^2(\R,\cK)\) and since \(L^2(\R,\cH)\) is separable, also \(L^2(\R,\cK)\) has to be separable, which implies that also \(\cK\) is separable and therefore \(\cK \cong \ell^2(\N,\C)\).
\end{proof}
After giving a formula for the multiplicity space for complex ROPGs that are infinitesimally generated by Pick functions, we want to do the same for real ROPGs:
\begin{thm}\label{thm:BigLWTheoremReal}
Let \(\cH\) be a finite-dimensional real Hilbert space, \(F \in \mathrm{Pick}(\C_+,B(\cH_\C))_\R\) be regular with \(F = F^\sharp\) and let \(\cK\) be the multiplicity space of \((L^2(\R,\cH_\C)^\sharp,H^2(\C_+,\cH_\C)^\sharp,U^F)\). Then
\[\dim \cK = \deg F,\]
so
\[\cK \cong \begin{cases} \R^{\deg F} & \text{if } F \in \mathrm{Rat}(\cH_\C) \\ \ell^2(\N,\R) & \text{if } F \notin \mathrm{Rat}(\cH_\C).\end{cases}\]
\end{thm}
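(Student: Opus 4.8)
The plan is to reduce directly to the complex statement \fref{thm:BigLWTheorem} via \fref{thm:RealToComplexPick}, exactly as \fref{thm:PickMultiplicityReal} was deduced from \fref{thm:PickMultiplicity}. Since $F = F^\sharp$, \fref{thm:RealToComplexPick} guarantees that $U^F$ is a well defined unitary one-parameter group on $L^2(\R,\cH_\C)^\sharp$; and since $F$ is a regular $B(\cH_\C)$-valued Pick function, the triple $(L^2(\R,\cH_\C),H^2(\C_+,\cH_\C),U^F)$ is a complex ROPG by definition of regularity for Pick functions. The second assertion of \fref{thm:RealToComplexPick} then tells us that $(L^2(\R,\cH_\C)^\sharp,H^2(\C_+,\cH_\C)^\sharp,U^F)$ is a real ROPG whose multiplicity space $\cK$ is characterized by the requirement that the complex ROPG above has multiplicity space $\cK_\C$.

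Next I would invoke \fref{thm:BigLWTheorem} for the complex ROPG $(L^2(\R,\cH_\C),H^2(\C_+,\cH_\C),U^F)$: its multiplicity space, which we have just identified as $\cK_\C$, has complex dimension $\deg F$. Complexification carries an orthonormal basis $(e_i)_i$ of the real Hilbert space $\cK$ to an orthonormal basis $(1\otimes e_i)_i$ of $\cK_\C$, so $\dim\cK = \dim\cK_\C = \deg F$, which is the claimed equality. For the explicit description: if $F\in\mathrm{Rat}(\cH_\C)$ then $\deg F$ is a nonnegative integer and a real Hilbert space of that dimension is isomorphic to $\R^{\deg F}$; if $F\notin\mathrm{Rat}(\cH_\C)$ then $\deg F=\infty$, and $\cK$ is separable because it embeds isometrically, via the Lax--Phillips isomorphism of \fref{thm:LaxPhillipsReal}, into the separable space $L^2(\R,\cH_\C)^\sharp$, so $\cK\cong\ell^2(\N,\R)$.

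I do not expect a genuine obstacle here: the statement is essentially bookkeeping on top of \fref{thm:BigLWTheorem} and \fref{thm:RealToComplexPick}. The only points requiring a little care are checking that the hypotheses match up — that ``regular'' for the $B(\cH_\C)$-valued Pick function $F$ is literally the notion used in \fref{thm:BigLWTheorem}, and that $F=F^\sharp$ is exactly the symmetry condition demanded by \fref{thm:RealToComplexPick} — and recording that complexification preserves the Hilbert-space dimension, including in the infinite-dimensional separable case.
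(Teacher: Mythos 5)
Your proof is correct and follows exactly the paper's route: the paper's own argument is the one-line reduction via \fref{thm:RealToComplexPick} and \fref{thm:BigLWTheorem}, and you have simply spelled out the bookkeeping (dimension preservation under complexification and separability in the infinite-dimensional case) that the paper leaves implicit.
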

\begin{proof}
This follows immediately by \fref{thm:RealToComplexPick} and \fref{thm:BigLWTheorem}.
\end{proof}
\begin{remark}
For a Pick function \(F \in \mathrm{Rat}(\cH_\C)\) of the form
\[F(z) = C + zD + \sum_{j=1}^m \frac {1}{\lambda_j-z}A_j, \quad z \in \C_+\]
one has
\[F^\sharp(z) = \left(-\cC_\cH C\cC_\cH\right) +z \left(\cC_\cH D\cC_\cH\right) + \sum_{j=1}^m \frac {1}{-\lambda_j-z} \left(\cC_\cH A_j\cC_\cH\right), \quad z \in \C_+.\]
Therefore the condition \(F = F^\sharp\) is fulfilled, if and only if \(F\) is of the form
\[F(z) = C + zD - \frac 1z A_0 + \sum_{j=1}^m \frac {1}{\lambda_j-z}A_j + \sum_{j=1}^m \frac {1}{-\lambda_j-z}\left(\cC_\cH A_j\cC_\cH\right), \quad z \in \C_+\]
with \(C \in S(\cH)\), \(D \in S(\cH)_+\), \(A_0 \in S(\cH)_+\), \(A_1,\dots,A_m \in S(\cH)_+\) and \(\lambda_1,\dots,\lambda_m \in \R_+\)
such that
\[-\cC_\cH C\cC_\cH = C, \quad \cC_\cH D\cC_\cH = D \quad \text{and} \quad \cC_\cH A_0\cC_\cH = A_0.\]
\end{remark}
\begin{example}
We consider the function
\[F: \C_+ \to \mathrm{Mat}(2 \times 2,\R), \quad z \mapsto \begin{pmatrix}1 & 1 \\ 1 & z\end{pmatrix},\]
which is a function in \(\mathrm{Pick}(\C_+,B(\cH))_\R\), since for \(z \in \C_+\), one has
\[\Im(F(z)) = \begin{pmatrix}0 & 0 \\ 0 & \Im(z)\end{pmatrix} \geq 0\]
and for every \(x \in \R\) the matrix
\[\lim_{\epsilon \to 0}\begin{pmatrix}1 & 1 \\ 1 & x+i\epsilon\end{pmatrix} = \begin{pmatrix}1 & 1 \\ 1 & x\end{pmatrix}\]
is self-adjoint. Further, for \(z \in \C_+\) and \(t \in \R\), we have
\[\det(t\textbf{1}-F(z)) = (t-1)(t-z)-1 = (t^2 - t - 1) + z(1-t),\]
which implies
\[\Im(\det(t\textbf{1}-F(z))) = (1-t) \Im(z),\]
so \(\Im(\det(t\textbf{1}-F(z))) = 0\), if and only if \(t=1\), but for \(t=1\), we have
\[\det(\textbf{1}-F(z)) = -1 \neq 0,\]
so, for every \(z \in \C_+\), we have
\[\Spec(F(z)) \cap \R = \eset.\]
Since by \fref{prop:SpecImPos}, we have \(\Spec(F(z)) \subeq \overline{\C_+}\), we get \(\Spec(F(z)) \subeq \C_+\) and therefore \(F\) is regular by \fref{thm:SpecNonDeg}. Since
\[F(z) = \begin{pmatrix}1 & 1 \\ 1 & 0\end{pmatrix} + z \begin{pmatrix}0 & 0 \\ 0 & 1\end{pmatrix},\]
by \fref{thm:BigLWTheorem}, for the multiplicity space \(\cK\) of \((L^2(\R,\C^2),H^2(\C_+,\C^2),U^F)\) we get
\[\dim \cK = \deg(F) = \mathrm{rank}\left(\begin{pmatrix}0 & 0 \\ 0 & 1\end{pmatrix}\right) = 1\]
and therefore \(\cK \cong \C\).

One can also see this by considering the function \(\phi_\lambda \circ F\) for \(\lambda \in \C_+\). By \fref{prop:DimByOrd}, the function \(\phi_\lambda \circ F\) should be a Blaschke--Potapov product with
\[\deg(\phi_\lambda \circ F) = \dim \cK = 1.\]
In fact, one can calculate that
\[\phi_\lambda \circ F = u \cdot \left(\phi_\omega \cdot P + (\textbf{1}-P)\right)\]
for
\[\omega \coloneqq \lambda + \frac 1{1-\lambda} \in \C_+, \qquad u \coloneqq \begin{pmatrix} \frac{1-\lambda}{1-\overline{\lambda}} & 0 \\ 0 & 1 \end{pmatrix} \in U(\C^2)\]
and the projection
\[P \coloneqq \frac 1{1 + \frac 1{|1-\lambda|^2}}\begin{pmatrix}\frac 1{|1- \lambda|^2} & \frac{-1}{1- \lambda} \\ \frac{-1}{1- \overline{\lambda}} & 1 \end{pmatrix} \in B(\C^2)\]
with \(\mathrm{rank}(P) = 1\).

In \fref{thm:PickMultiplicity}, we have seen that in the case of a scalar-valued Pick function, the dimension of the multiplicity space is exactly the number of copies of \(\R\) appearing in the boundary values. This also reflects in this matrix-valued example since every real number appears exactly one time as an eigenvalue on the boundary as the following graph shows:
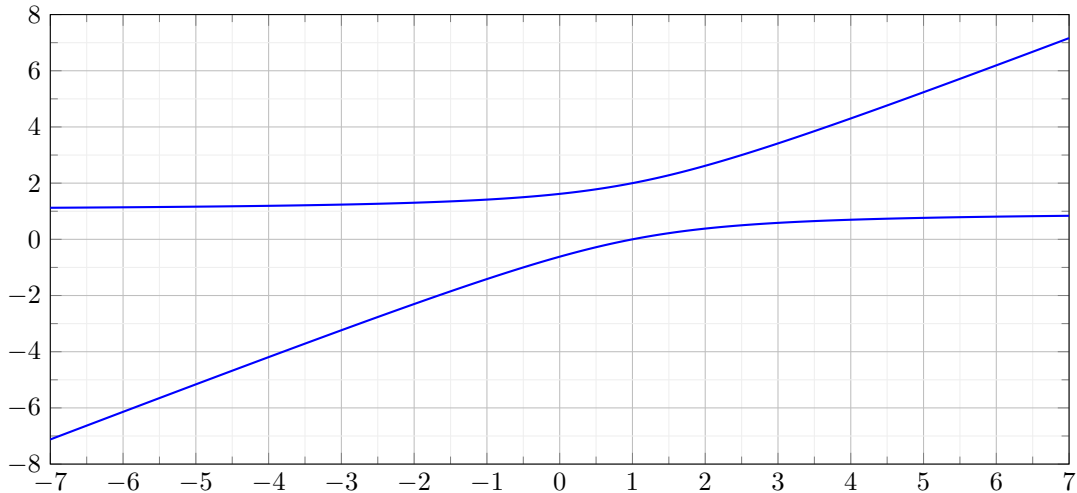
\begin{figure}[H]
\begin{tikzpicture}
\begin{axis}[
    xmin = -7, xmax = 7,
    ymin = -8, ymax = 8,
    xtick distance = 1,
    ytick distance = 2,
    grid = both,
    minor tick num = 1,
    major grid style = {lightgray},
    minor grid style = {lightgray!25},
    width = \textwidth,
    height = 0.5\textwidth]
    \addplot[
        domain = -7:7,
        samples = 200,
        smooth,
        thick,
        blue,
    ] {(x+1+sqrt(x*x-2*x+5))/2};
        \addplot[
        domain = -7:7,
        samples = 200,
        smooth,
        thick,
        blue,
    ] {(x+1-sqrt(x*x-2*x+5))/2};
\end{axis}
\end{tikzpicture}
\caption*{Plot of the eigenvalues of \(x \mapsto \begin{pmatrix}1 & 1 \\ 1 & x\end{pmatrix}\)}
\end{figure}
\end{example}

\subsection{The degree of Pick functions and composition}
In \fref{cor:CompNonDeg} we have seen that for regular Pick functions \(f,g \in \mathrm{Pick}(\C_+)_\R\) and a regular Pick function \(F \in \mathrm{Pick}(\C_+,B(\cH))_\R\) the composition \(f \circ F \circ g\) is again regular. By \fref{thm:BigLWTheorem} we know that the dimension of the multiplicity space of the complex ROPG \((L^2(\R,\cH),H^2(\C_+,\cH),U^{f \circ F \circ g})\) is given by \(\deg(f \circ F \circ g)\). In this section, we want to answer the question of whether there is an easy way to calculate the degree \(\deg(f \circ F \circ g)\) if one knows the degrees of the Pick functions \(f,g\) and \(F\). In other words: We will investigate how the degree behaves under composition. For this, we make some definitions:
\begin{definition}
Let \(\cH,\cK\) be Hilbert spaces. Then, we define the canonical unitary maps
\[\gls*{TH}: L^2(\R,\C)  \,\hat{\otimes}\, \cH \to L^2(\R,\cH), \quad f \otimes v \mapsto f \cdot v\]
and
\[\gls*{IHK} \coloneqq T_{\cH  \,\hat{\otimes}\, \cK} \circ \left(T_\cH^{-1} \otimes \textbf{1}\right): L^2(\R,\cH)  \,\hat{\otimes}\, \cK \to L^2(\R,\cH  \,\hat{\otimes}\, \cK).\]
Finally, considering the unitary map
\[\gls*{sHK}: \cH  \,\hat{\otimes}\, \cK \to \cK  \,\hat{\otimes}\, \cH, \quad v \otimes w \mapsto w \otimes v,\]
we define
\[\gls*{EHK} \coloneqq \left(T_\cK \otimes \textbf{1}\right) \circ \left(\textbf{1} \otimes \sigma_\cH^\cK\right) \circ \left(T_\cH^{-1} \otimes \textbf{1}\right): L^2(\R,\cH)  \,\hat{\otimes}\, \cK \to L^2(\R,\cK)  \,\hat{\otimes}\, \cH.\]
\end{definition}
\begin{lemma}\label{lem:secondCompRelation}
Let \(\cH,\cK\) be complex Hilbert spaces and let \(g \in \mathrm{Pick}(\C_+)_\R\). Further, let
\[\psi: L^2(\R,\C) \to L^2(\R,\cK)\]
be a unitary map such that
\[\psi \circ U^g_t = S_t \circ \psi\]
for every \(t \in \R\). Then, for every \(F \in L^\infty(\R,B(\cH))\), one has
\[\left(M_F \otimes \textbf{1}\right) \circ \left(E_\cK^\cH \circ \left(\psi \otimes \textbf{1}\right) \circ T_\cH^{-1}\right) = \left(E_\cK^\cH \circ \left(\psi \otimes \textbf{1}\right) \circ T_\cH^{-1}\right) \circ M_{F \circ g_*}.\]
\end{lemma}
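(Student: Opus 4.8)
The plan is to imitate the proof of \fref{lem:firstCompRelation}: we reduce the asserted identity to that scalar intertwining relation together with the elementary observation that the factor‑permuting unitary $E_\cK^\cH$ commutes with multiplication by constant operators, and then invoke weak‑$*$ density of elementary tensors in $L^\infty(\R,B(\cH))$. Abbreviate $\Psi \coloneqq E_\cK^\cH \circ (\psi \otimes \textbf{1}) \circ T_\cH^{-1}$, which is a unitary from $L^2(\R,\cH)$ onto $L^2(\R,\cH) \hat{\otimes} \cK$; the claim is $(M_F \otimes \textbf{1}) \Psi = \Psi M_{F \circ g_*}$ for every $F \in L^\infty(\R,B(\cH))$. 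First I would note that, since a unitary $\psi$ with $\psi \circ U^g_t = S_t \circ \psi$ exists, the group $U^g$ cannot be scalar, so $g$ is non‑constant and hence regular by \fref{cor:ConstNonDeg}; then \fref{thm:SpecNonDegZeroSet}, applied to the scalar Pick function $g$, shows that $g_*$ pulls Lebesgue null sets back to Lebesgue null sets, so $F \circ g_*$ is a well‑defined element of $L^\infty(\R,B(\cH))$ and $F \mapsto M_{F \circ g_*}$ is a normal $*$‑homomorphism into $B(L^2(\R,\cH))$.

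Next I would verify the identity in the two elementary cases $F = C$ constant and $F = h \cdot \textbf{1}_\cH$ scalar. For a constant operator $F = C \in B(\cH)$, under the identification $T_\cH$ one has $M_C = \textbf{1} \otimes C$, and since $\psi$ acts only on the $L^2(\R,\C)$‑factor while $E_\cK^\cH$ only permutes the Hilbert‑space factors $\cH$ and $\cK$, unwinding the definition of $\Psi$ (using the definitions of $T_\cH$, $\sigma_\cK^\cH$ and $E_\cK^\cH$) gives $(M_C \otimes \textbf{1}) \Psi = \Psi M_C$; as $C \circ g_* = C$, this is the desired identity. For a scalar function $F = h \cdot \textbf{1}_\cH$ with $h \in L^\infty(\R,\C)$, under $T_\cH$ the operator $M_{(h \cdot \textbf{1}_\cH)\circ g_*} = M_{(h \circ g_*)\cdot\textbf{1}_\cH}$ on $L^2(\R,\cH)$ becomes $M_{h \circ g_*} \otimes \textbf{1}_\cH$ with $M_{h \circ g_*}$ acting on $L^2(\R,\C)$; commuting it past $\psi \otimes \textbf{1}$ turns $M_{h \circ g_*}$ into $M_h$ on $L^2(\R,\cK)$ by \fref{lem:firstCompRelation} (applied with the Hilbert space $\C$ in place of its $\cH$ and the regular Pick function $g$ in place of its $F$), and then commuting past $E_\cK^\cH$ — which leaves the $L^2(\R,\C)$‑coordinate fixed and only permutes the remaining factors — recovers $M_{h \cdot \textbf{1}_\cH} \otimes \textbf{1}$ on $L^2(\R,\cH) \hat{\otimes} \cK$; hence $(M_{h\cdot\textbf{1}_\cH} \otimes \textbf{1})\Psi = \Psi M_{(h\cdot\textbf{1}_\cH)\circ g_*}$.

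To conclude, for a product $F = h \cdot C$ I would use $M_{hC} = M_{h\cdot\textbf{1}_\cH} M_C$ together with the two special cases to get $(M_{hC} \otimes \textbf{1})\Psi = (M_{h\cdot\textbf{1}_\cH}\otimes\textbf{1})(M_C\otimes\textbf{1})\Psi = (M_{h\cdot\textbf{1}_\cH}\otimes\textbf{1})\Psi M_C = \Psi M_{h\circ g_*} M_C = \Psi M_{(hC)\circ g_*}$, so the identity holds on the algebraic tensor product $L^\infty(\R,\C) \otimes B(\cH)$; both sides depend normally on $F$ (the map $F \mapsto M_F$ is normal and the remaining operations are compositions with the fixed unitary $\Psi$), and this $*$‑subalgebra is weakly‑$*$ dense in $L^\infty(\R,B(\cH))$, so the identity extends to all $F \in L^\infty(\R,B(\cH))$. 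The only real difficulty is the bookkeeping of which of the three tensor factors $L^2(\R,\C)$, $\cH$, $\cK$ each operator acts on when it is commuted past $T_\cH$, $\sigma_\cK^\cH$ and $E_\cK^\cH$; conceptually the statement is just \fref{lem:firstCompRelation} tensored with $\cH$ and conjugated by a coordinate permutation, and the one place where the hypotheses are genuinely used is in securing the regularity of $g$, needed both to invoke \fref{lem:firstCompRelation} and to guarantee well‑definedness and normality of $F \mapsto M_{F \circ g_*}$.
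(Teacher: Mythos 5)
Your route is genuinely different from the paper's and its core computations are sound, but one step is asserted rather than proved. The paper does not argue by density in $F$ at all: it fixes an arbitrary $F \in L^\infty(\R,B(\cH))$, evaluates both sides on elementary vectors $f \otimes v$, expands along orthonormal bases $(w_j)$ of $\cK$ and $(v_i)$ of $\cH$ to rewrite $\left(\left(M_F\otimes\textbf{1}\right)\circ E_\cK^\cH\right)(f\otimes v) = E_\cK^\cH\bigl(\sum_i (\braket*{v_i}{Fv}\cdot f)\otimes v_i\bigr)$, and then applies \fref{lem:firstCompRelation} to the bounded scalar functions $\braket*{v_i}{Fv}$ to pull $\psi$ through. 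So the whole statement follows in one stroke, with no approximation of $F$ and hence no continuity question. Your version instead verifies the identity for constant $F=C$ and scalar $F=h\cdot\textbf{1}_\cH$ (both of these checks, and the product step $F=h\cdot C$, are correct, and your use of \fref{lem:firstCompRelation} with $\C$ in place of $\cH$ is exactly right), and then wants to pass from the algebraic tensor product $L^\infty(\R,\C)\otimes B(\cH)$ to all of $L^\infty(\R,B(\cH))$. Your observation that the existence of the intertwiner $\psi$ forces $g$ to be non-constant, hence regular by \fref{cor:ConstNonDeg}, is a nice point that the paper leaves implicit when it invokes \fref{lem:firstCompRelation}.

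The gap is in the extension step: you assert that $F \mapsto M_{F\circ g_*}$ is a \emph{normal} $*$-homomorphism, but only justify well-definedness (via \fref{thm:SpecNonDegZeroSet}). A $*$-homomorphism between von Neumann algebras need not be $\sigma$-weakly continuous, so normality of the pullback along $g_*$ requires an argument — for instance, that for $t \in L^1(\R,\C)$ the pushforward $(g_*)_*(t\,dx)$ is absolutely continuous with respect to Lebesgue measure (which again uses \fref{thm:SpecNonDegZeroSet}), so that the predual map exists; the operator-valued case then needs a further reduction. Without some such argument your weak-$*$ density step does not close. The cleanest repair inside your framework avoids normality altogether: with $\cH$ separable, pick finite-rank projections $P_n \uparrow \textbf{1}_\cH$; then $P_nFP_n$ has entries $\braket*{e_i}{Fe_j}\in L^\infty(\R,\C)$, hence lies in the algebraic tensor product, satisfies $(P_nFP_n)\circ g_* = P_n\,(F\circ g_*)\,P_n$, and $M_{P_nFP_n}\to M_F$ and $M_{(P_nFP_n)\circ g_*}\to M_{F\circ g_*}$ strongly by dominated convergence, which lets you pass to the limit on both sides of the identity. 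Alternatively, adopt the paper's basis-expansion computation, which handles general $F$ directly and renders the whole approximation issue moot.
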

\begin{proof}
For \(f \in L^2(\R,\cK)\) and \(v \in \cH\), choosing a basis \((w_j)_{j \in J}\) of \(\cK\), we have
\begin{align*}
(\left(M_F \otimes \textbf{1}\right) \circ E_\cK^\cH)(f \otimes v) &= \left(\left(M_F \otimes \textbf{1}\right) \circ \left(T_\cK \otimes \textbf{1}\right) \circ \left(\textbf{1} \otimes \sigma_\cH^\cK\right) \circ \left(T_\cH^{-1} \otimes \textbf{1}\right)\right)\left(f \otimes v\right)
\\&= \left(\left(M_F \otimes \textbf{1}\right) \circ \left(T_\cK \otimes \textbf{1}\right) \circ \left(\textbf{1} \otimes \sigma_\cH^\cK\right)\right)\left(\sum_{j \in J} \braket*{w_j}{f} \otimes w_j \otimes v\right)
\\&= \left(\left(M_F \otimes \textbf{1}\right) \circ \left(T_\cK \otimes \textbf{1}\right)\right)\left(\sum_{j \in J} \braket*{w_j}{f} \otimes v \otimes w_j\right)
\\&= \left(M_F \otimes \textbf{1}\right)\left(\sum_{j \in J} \left(\braket*{w_j}{f} \cdot v\right) \otimes w_j\right)
\\&= \sum_{j \in J} \left(\braket*{w_j}{f} \cdot Fv\right) \otimes w_j.
\end{align*}
Then, choosing a basis \((v_i)_{i \in I}\) of \(\cH\), we get
\begin{align*}
(\left(M_F \otimes \textbf{1}\right) \circ E_\cK^\cH)(f \otimes v) &= \sum_{j \in J} \left(\braket*{w_j}{f} \cdot Fv\right) \otimes w_j
\\&=\sum_{j \in J} \left(\braket*{w_j}{f} \cdot \sum_{i \in I} \braket*{v_i}{Fv} \cdot v_i\right) \otimes w_j
\\&= \sum_{i \in I} \sum_{j \in J} \left(\braket*{v_i}{Fv} \cdot  \braket*{w_j}{f} \cdot v_i\right) \otimes w_j
\\&= E_\cK^\cH \left(\sum_{i \in I} \sum_{j \in J} \left(\braket*{v_i}{Fv} \cdot  \braket*{w_j}{f} \cdot w_j\right) \otimes v_i\right)
\\&= E_\cK^\cH \left(\sum_{i \in I} \left(\braket*{v_i}{Fv} \cdot  f\right) \otimes v_i\right).
\end{align*}
Therefore, for \(f \in L^2(\R,\C)\) and \(v \in \cH\), we have
\begin{align*}
(\left(M_F \otimes \textbf{1}\right) \circ E_\cK^\cH \circ \left(\psi \otimes \textbf{1}\right))(f \otimes v) &= (\left(M_F \otimes \textbf{1}\right) \circ E_\cK^\cH)((\psi f) \otimes v)
\\&= E_\cK^\cH \left(\sum_{i \in I} \left(\braket*{v_i}{Fv} \cdot  \psi f\right) \otimes v_i\right).
\end{align*}
Then, using that \(F\) and therefore also \(\braket*{v_i}{Fv}\) is a bounded function and applying \fref{lem:firstCompRelation}, yields
\begin{align*}
(\left(M_F \otimes \textbf{1}\right) \circ E_\cK^\cH \circ \left(\psi \otimes \textbf{1}\right))(f \otimes v) &= E_\cK^\cH \left(\sum_{i \in I} \left(\braket*{v_i}{Fv} \cdot  \psi f\right) \otimes v_i\right)
\\&= E_\cK^\cH \left(\sum_{i \in I} \left(\psi(\braket*{v_i}{(F \circ g_*)v} \cdot f)\right) \otimes v_i\right)
\\&= \left(E_\cK^\cH \circ \left(\psi \otimes \textbf{1}\right)\right) \left(\sum_{i \in I} \left(\braket*{v_i}{(F \circ g_*)v} \cdot f\right) \otimes v_i\right)
\\&= \left(E_\cK^\cH \circ \left(\psi \otimes \textbf{1}\right) \circ T_\cH^{-1}\right) \left((F \circ g_*) \cdot f \cdot v\right)
\\&= \left(E_\cK^\cH \circ \left(\psi \otimes \textbf{1}\right) \circ T_\cH^{-1} \circ M_{F \circ g_*} \circ T_\cH\right) \left(f \otimes v\right),
\end{align*}
so
\[\left(M_F \otimes \textbf{1}\right) \circ E_\cK^\cH \circ \left(\psi \otimes \textbf{1}\right) = E_\cK^\cH \circ \left(\psi \otimes \textbf{1}\right) \circ T_\cH^{-1} \circ M_{F \circ g_*} \circ T_\cH. \qedhere\]
\end{proof}
With help of this lemma, we can now prove our theorem:
\begin{theorem}\label{thm:InvariantHomo}
Let \(\cH\) be a finite-dimensional complex Hilbert space and let \(f,g \in \mathrm{Pick}(\C_+)_\R\) and \(F \in \mathrm{Pick}(\C_+,B(\cH))_\R\) be regular Pick functions. Then
\[\deg(f \circ F \circ g) = \deg(f) \cdot \deg(F) \cdot \deg(g).\]
\end{theorem}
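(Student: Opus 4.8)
The plan is to translate the identity into a statement about multiplicity spaces and then to reduce it to two one-sided composition lemmas. By \fref{thm:BigLWTheorem} (applied to $F$, to $f$ and $g$ with $\cH=\C$, and to $f\circ F$ and $f\circ F\circ g$, all of which are regular Pick functions by \fref{cor:CompNonDeg}) the degree of a regular Pick function equals the Hilbert-space dimension of the multiplicity space of the associated complex ROPG. Writing $\cK_f,\cK_F,\cK_g,\cK_{f\circ F\circ g}$ for the multiplicity spaces of $U^f,U^F,U^g,U^{f\circ F\circ g}$, it therefore suffices to prove
\[\cK_{f\circ F\circ g}\cong\cK_f\,\hat{\otimes}\,\cK_F\,\hat{\otimes}\,\cK_g.\]
Since $f,g,F$ are non-constant by \fref{cor:ConstNonDeg}, each of $\deg f,\deg g,\deg F$ is $\geq 1$, so none of the three multiplicity spaces is trivial and the cardinal arithmetic of $\dim$ on tensor products matches the $\N_0\cup\{\infty\}$-arithmetic of $\deg$. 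Writing $f\circ F\circ g=(f\circ F)\circ g$ and recalling that $f\circ F\in\mathrm{Pick}(\C_+,B(\cH))_\R^{\mathrm{reg}}$ again by \fref{cor:CompNonDeg}, it remains to establish the following two facts.

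\textbf{Right composition.} For a regular $G\in\mathrm{Pick}(\C_+,B(\cH))_\R$ and a regular $g\in\mathrm{Pick}(\C_+)_\R$ one has $\cK_{G\circ g}\cong\cK_G\,\hat{\otimes}\,\cK_g$. To see this, fix a Lax--Phillips isomorphism $\psi_g: L^2(\R,\C)\to L^2(\R,\cK_g)$ of the ROPG $U^g$, and set $\Phi\coloneqq E_{\cK_g}^{\cH}\circ(\psi_g\otimes\1)\circ T_\cH^{-1}: L^2(\R,\cH)\to L^2(\R,\cH)\,\hat{\otimes}\,\cK_g$. Applying \fref{lem:secondCompRelation} to the $L^\infty$-function $x\mapsto e^{itG_*(x)}$, together with the almost-everywhere identity $(G\circ g)_*=G_*\circ g_*$ (a Borel-functional-calculus statement), shows that $\Phi$ intertwines $U^{G\circ g}_t$ with $U^G_t\otimes\1$; hence $(L^2(\R,\cH)\,\hat{\otimes}\,\cK_g,\Phi(H^2(\C_+,\cH)),U^G\otimes\1)$ is a complex ROPG equivalent to $(L^2(\R,\cH),H^2(\C_+,\cH),U^{G\circ g})$. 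On the other hand, tensoring a Lax--Phillips isomorphism $\psi_G$ of $U^G$ with $\1_{\cK_g}$ and using the canonical identification $L^2(\R,\cK_G)\,\hat{\otimes}\,\cK_g\cong L^2(\R,\cK_G\,\hat{\otimes}\,\cK_g)$ (which also carries $S\otimes\1$ to $S$ and the Hardy subspace to the Hardy subspace) shows that $(L^2(\R,\cH)\,\hat{\otimes}\,\cK_g,H^2(\C_+,\cH)\,\hat{\otimes}\,\cK_g,U^G\otimes\1)$ is a complex ROPG with multiplicity space $\cK_G\,\hat{\otimes}\,\cK_g$. By \fref{thm:IndependentFromSubspace} these two ROPGs (same Hilbert space, same one-parameter group) are equivalent, so $\cK_{G\circ g}\cong\cK_G\,\hat{\otimes}\,\cK_g$.

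\textbf{Left composition.} For a regular $f\in\mathrm{Pick}(\C_+)_\R$ and a regular $G\in\mathrm{Pick}(\C_+,B(\cH))_\R$ (with $\dim\cH<\infty$) one has $\cK_{f\circ G}\cong\cK_f\,\hat{\otimes}\,\cK_G$. Fix a Lax--Phillips isomorphism $\psi_G: L^2(\R,\cH)\to L^2(\R,\cK_G)$ of $U^G$. For fixed $t$ put $h_t\coloneqq e^{itf_*}\in L^\infty(\R,\C)$; then $U^{f\circ G}_t=M_{h_t\circ G_*}$, again using $(f\circ G)_*=f_*\circ G_*$ in the functional-calculus sense. \fref{lem:firstCompRelation}, whose finite-dimensionality hypothesis is exactly $\dim\cH<\infty$, gives $\psi_G\circ M_{h_t\circ G_*}=M_{h_t}\circ\psi_G$, and under $L^2(\R,\cK_G)\cong L^2(\R,\C)\,\hat{\otimes}\,\cK_G$ the operator $M_{h_t}$ becomes $U^f_t\otimes\1$. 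Thus $\psi_G$ intertwines $U^{f\circ G}$ with $U^f\otimes\1$, and the same endgame as before --- tensoring a Lax--Phillips isomorphism of $U^f$ with $\1_{\cK_G}$ and invoking \fref{thm:IndependentFromSubspace} --- yields $\cK_{f\circ G}\cong\cK_f\,\hat{\otimes}\,\cK_G$.

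Combining the two facts (Right composition with the operator-valued function $f\circ F$ and inner scalar function $g$, then Left composition with $f$ and $F$), one gets $\cK_{f\circ F\circ g}=\cK_{(f\circ F)\circ g}\cong\cK_{f\circ F}\,\hat{\otimes}\,\cK_g\cong(\cK_f\,\hat{\otimes}\,\cK_F)\,\hat{\otimes}\,\cK_g$, and taking dimensions together with \fref{thm:BigLWTheorem} gives $\deg(f\circ F\circ g)=\deg(f)\cdot\deg(F)\cdot\deg(g)$. I expect the main obstacle to be largely organisational: keeping the numerous canonical unitaries ($T_\cH$, the $I_\cH^{\cK}$- and $E_\cH^{\cK}$-type maps, the flip $\sigma_\cH^{\cK}$) straight while verifying that one-parameter groups, subspaces and Hardy spaces match up after each identification. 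The one genuinely analytic point is the almost-everywhere identity $(G\circ g)_*=G_*\circ g_*$ interpreted via Borel functional calculus, where regularity of $G$ enters through \fref{thm:SpecNonDegZeroSet} to guarantee that $g_*$ sends almost every point into the set on which $G_*$ is defined and self-adjoint; everything else reduces to the already-established compatibility lemmas \fref{lem:firstCompRelation} and \fref{lem:secondCompRelation} and the uniqueness theorem \fref{thm:IndependentFromSubspace}.
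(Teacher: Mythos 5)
Your proposal is correct and essentially reproduces the paper's proof: both arguments identify the multiplicity space of \(U^{f\circ F\circ g}\) with \(\cK_f\,\hat{\otimes}\,\cK_F\,\hat{\otimes}\,\cK_g\) by chaining the Lax--Phillips isomorphisms \(\psi_f,\psi_F,\psi_g\) through the canonical unitaries \(T_\cH\), \(E_{\cK_g}^{\cH}\), \(I_{\cK_f}^{\cK_F}\) with the intertwining supplied by \fref{lem:firstCompRelation} and \fref{lem:secondCompRelation}, and then take dimensions via \fref{thm:BigLWTheorem} and \fref{cor:CompNonDeg} (the boundary-value identity you flag is used implicitly in the paper in exactly the same places). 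Your only deviations are organisational: you split the single chain into a right- and a left-composition step and invoke \fref{thm:IndependentFromSubspace}, which is in fact superfluous, since your intertwiner \(\Phi=E_{\cK_g}^{\cH}\circ(\psi_g\otimes\textbf{1})\circ T_\cH^{-1}\) already carries \(H^2(\C_+,\cH)\) onto \(H^2(\C_+,\cH)\,\hat{\otimes}\,\cK_g\), just as in the paper's one-chain argument.
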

\begin{proof}
By \fref{thm:BigLWTheorem}, there exist Hilbert spaces \(\cK_f,\cK_F,\cK_g\) and unitary operators
\begin{align*}
\psi_f&: L^2(\R,\C) \to L^2(\R,\cK_f)
\\\psi_F&: L^2(\R,\cH) \to L^2(\R,\cK_F)
\\\psi_g&: L^2(\R,\C) \to L^2(\R,\cK_g)
\end{align*}
such that
\begin{align*}
\psi_f\left(H^2(\C_+)\right) &= H^2(\C_+,\cK_f) \quad \text{and} \quad \psi_f \circ U^f_t = S_t \circ \psi_f
\\\psi_F\left(H^2(\C_+,\cH)\right) &= H^2(\C_+,\cK_F) \quad \text{and} \quad \psi_F \circ U^F_t = S_t \circ \psi_F
\\\psi_g\left(H^2(\C_+)\right) &= H^2(\C_+,\cK_g) \quad \text{and} \quad \psi_g \circ U^g_t = S_t \circ \psi_g
\end{align*}
and
\begin{align*}
\dim \cK_f &= \deg(f)
\\\dim \cK_F &= \deg(F)
\\\dim \cK_g &= \deg(g).
\end{align*}
The idea of the proof is now to have a sequence of complex ROPGs \((\cE,\cE_+,U)\) such that each one of them is equivalent to the next one and the first one is given by \((L^2(\R,\cH),H^2(\C_+,\cH),U^{f \circ F \circ g})\) and the last one is given by \((L^2(\R,\cK_f  \,\hat{\otimes}\, \cK_F  \,\hat{\otimes}\, \cK_g),H^2(\C_+,\cK_f  \,\hat{\otimes}\, \cK_F  \,\hat{\otimes}\, \cK_g),S_t)\). The Hilbert spaces and isomorphisms of ROPGs as well as the unitary one-parameter groups are indicated in the following diagram (the spaces \(\cE_+\) are omitted since they are given by the corresponding Hardy spaces):
\[
\begin{tikzcd}[column sep=2.5pc,row sep=5pc]
\color{blue}e^{it(f \circ F \circ g)_*}&L^2(\R,\cH) \arrow{r}{T_\cH^{-1}} & L^2(\R,\C)  \,\hat{\otimes}\, \cH \arrow{dl}{\psi_g \otimes \textbf{1}} &
\\&L^2(\R,\cK_g)  \,\hat{\otimes}\, \cH \arrow{r}{E_{\cK_g}^\cH} & L^2(\R,\cH)  \,\hat{\otimes}\, \cK_g \arrow{dl}{\psi_F \otimes \textbf{1}} & \color{blue}e^{it(f \circ F)_*} \otimes \textbf{1}
\\\color{blue}e^{itf_*} \otimes \textbf{1} &L^2(\R,\cK_F)  \,\hat{\otimes}\, \cK_g \arrow{r}{T_{\cK_F}^{-1} \otimes \textbf{1}} & L^2(\R,\C)  \,\hat{\otimes}\, \cK_F  \,\hat{\otimes}\, \cK_g \arrow{dl}{\psi_f \otimes \textbf{1} \otimes \textbf{1}} & \color{blue}e^{itf_*} \otimes \textbf{1} \otimes \textbf{1}
\\\color{blue}S_t \otimes \textbf{1} \otimes \textbf{1} &L^2(\R,\cK_f)  \,\hat{\otimes}\, \cK_F  \,\hat{\otimes}\, \cK_g \arrow{r}{I_{\cK_f}^{\cK_F  \,\hat{\otimes}\, \cK_g}} & L^2(\R,\cK_f  \,\hat{\otimes}\, \cK_F  \,\hat{\otimes}\, \cK_g) & \color{blue}S_t
\end{tikzcd}
\]
We will now carry out the details. First, we define the map
\[\psi_0 \coloneqq I_{\cK_f}^{\cK_F} \circ \left(\psi_f \otimes \textbf{1}\right) \circ T_{\cK_F}^{-1} \circ \psi_F: L^2(\R,\cH) \to L^2(\R,\cK_f  \,\hat{\otimes}\, \cK_F).\]
Since all occurring maps are unitary and preserve the corresponding Hardy spaces, \(\psi\) is unitary and we have
\[\psi_0\left(H^2(\C_+,\cH)\right) = H^2(\C_+,\cK_f  \,\hat{\otimes}\, \cK_F).\]
Further
\begin{align*}
S_t \circ \psi_0 &= S_t \circ I_{\cK_f}^{\cK_F} \circ \left(\psi_f \otimes \textbf{1}\right) \circ T_{\cK_F}^{-1} \circ \psi_F = I_{\cK_f}^{\cK_F} \circ \left(S_t \circ \psi_f \otimes \textbf{1}\right) \circ T_{\cK_F}^{-1} \circ \psi_F
\\&= I_{\cK_f}^{\cK_F} \circ \left(\psi_f \circ U^f_t \otimes \textbf{1}\right) \circ T_{\cK_F}^{-1} \circ \psi_F = I_{\cK_f}^{\cK_F} \circ \left(\psi_f \otimes \textbf{1}\right) \circ T_{\cK_F}^{-1} \circ U^f_t \circ \psi_F
\\&= I_{\cK_f}^{\cK_F} \circ \left(\psi_f \otimes \textbf{1}\right) \circ T_{\cK_F}^{-1} \circ \psi_F \circ U^{f \circ F}_t = \psi_0 \circ U^{f \circ F}_t,
\end{align*}
where the penultimate equality follows by \fref{lem:firstCompRelation}. We now define the map
\[\psi \coloneqq I_{\cK_f  \,\hat{\otimes}\, \cK_F}^{\cK_g} \circ \left(\psi_0 \otimes \textbf{1}\right) \circ \left(E_{\cK_g}^\cH \circ \left(\psi_g \otimes \textbf{1}\right) \circ T_\cH^{-1}\right): L^2(\R,\cH) \to L^2(\R,\cK_f  \,\hat{\otimes}\, \cK_F  \,\hat{\otimes}\, \cK_g)\]
Since all occurring maps are unitary and preserve the corresponding Hardy spaces, \(\psi\) is unitary and we have
\[\psi\left(H^2(\C_+,\cH)\right) = H^2(\C_+,\cK_f  \,\hat{\otimes}\, \cK_F  \,\hat{\otimes}\, \cK_g).\]
We now show that \(\psi\) is also an intertwining operator. We have
\begin{align*}
S_t \circ \psi &= S_t \circ I_{\cK_f  \,\hat{\otimes}\, \cK_F}^{\cK_g} \circ \left(\psi_0 \otimes \textbf{1}\right) \circ \left(E_{\cK_g}^\cH \circ \left(\psi_g \otimes \textbf{1}\right) \circ T_\cH^{-1}\right)
\\&= I_{\cK_f \otimes \cK_F}^{\cK_g} \circ \left(S_t \circ \psi_0 \otimes \textbf{1}\right) \circ \left(E_{\cK_g}^\cH \circ \left(\psi_g \otimes \textbf{1}\right) \circ T_\cH^{-1}\right)
\\&= I_{\cK_f \otimes \cK_F}^{\cK_g} \circ \left(\psi_0 \circ U^{f \circ F}_t \otimes \textbf{1}\right) \circ \left(E_{\cK_g}^\cH \circ \left(\psi_g \otimes \textbf{1}\right) \circ T_\cH^{-1}\right)
\\&= I_{\cK_f \otimes \cK_F}^{\cK_g} \circ \left(\psi_0 \otimes \textbf{1}\right) \circ \left(U^{f \circ F}_t \otimes \textbf{1}\right) \circ \left(E_{\cK_g}^\cH \circ \left(\psi_g \otimes \textbf{1}\right) \circ T_\cH^{-1}\right)
\\&= I_{\cK_f \otimes \cK_F}^{\cK_g} \circ \left(\psi_0 \otimes \textbf{1}\right) \circ \left(E_{\cK_g}^\cH \circ \left(\psi_g \otimes \textbf{1}\right) \circ T_\cH^{-1}\right) \circ U^{f \circ F \circ g}_t = \psi \circ U^{f \circ F \circ g}_t,
\end{align*}
where the penultimate equality follows by \fref{lem:secondCompRelation}. The statement now follows with \fref{thm:BigLWTheorem} by
\[\deg(f \circ F \circ g) = \dim (\cK_f  \,\hat{\otimes}\, \cK_F  \,\hat{\otimes}\, \cK_g) = \dim \cK_f \cdot \dim \cK_F \cdot \dim \cK_g = \deg(f) \cdot \deg(F) \cdot \deg(g). \qedhere\]
\end{proof}
Now, the question is, composing with which functions from left and right leaves the degree of a Pick function unchanged. By \fref{thm:InvariantHomo} these are exactly the Pick functions \(f \in \mathrm{Pick}(\C_+)_\R\) with \(\deg(f) = 1\) (such a function is non-constant and therefore by \fref{cor:ConstNonDeg} always regular). We will see that these are exactly the Möbius transformations:
\begin{definition}
We write \(\gls*{Mob}\left(\C_\infty\right)\) for the group of Möbius transformations of the Riemann sphere \(\C_\infty\), i.e. for the set of functions \(m: \C_\infty \to \C_\infty\) such that there are \(a,b,c,d \in \C\) with \(ad-bc = 1\) and
\[m(z) = \frac{az+b}{cz+d}.\]
Further, we set
\begin{align*}
\mathrm{Mob}(\C_+) &\coloneqq \{m \in \mathrm{Mob}\left(\C_\infty\right): m(\C_+) = \C_+\}
\\&= \left\{m \in \mathrm{Mob}\left(\C_\infty\right): (\exists a,b,c,d \in \R) : ad-bc = 1 \text{ and } m(z) = \frac{az+b}{cz+d}\right\}.
\end{align*}
\end{definition}
\begin{prop}\label{prop:InvariantMob}
One has
\[\mathrm{Mob}(\C_+) = \{f \in \mathrm{Pick}(\C_+)_\R : \deg(f) = 1\}.\]
\end{prop}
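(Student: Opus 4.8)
The plan is to prove the two inclusions separately, in each case passing through the explicit partial-fraction normal form of a rational Pick function supplied by Definition~\ref{def:RationalPick} together with the attached degree formula. Recall that in the scalar case $\cH = \C$ one has $S(\C) = \R$ and $S(\C)_+ = \R_{\geq 0}$, and that $\deg(f) < \infty$ is by definition equivalent to $f \in \mathrm{Rat}(\C)$.

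For $\mathrm{Mob}(\C_+) \subseteq \{f \in \mathrm{Pick}(\C_+)_\R : \deg(f) = 1\}$ I would take $m(z) = \tfrac{az+b}{cz+d}$ with $a,b,c,d \in \R$ and $ad-bc = 1$. The identity $\Im(m(z)) = \tfrac{(ad-bc)\,\Im(z)}{|cz+d|^2} = \tfrac{\Im(z)}{|cz+d|^2}$ shows $\Im(m(z)) > 0$ for $z \in \C_+$, so $m \in \mathrm{Pick}(\C_+)$; and since the coefficients are real, $m_*(x) = m(x) \in \R$ exists for every $x \in \R$ apart from the at most one zero of $cz+d$, so $m \in \mathrm{Pick}(\C_+)_\R$. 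To read off the degree I distinguish two cases. If $c = 0$, then $ad = 1$ forces $d \neq 0$ and $m(z) = a^2 z + ab$, which has the form $C + zD$ with $D = a^2 > 0$; hence $m \in \mathrm{Rat}(\C)$ and $\deg(m) = \rk(D) = 1$. If $c \neq 0$, polynomial division together with $ad-bc = 1$ gives $m(z) = \tfrac{a}{c} + \tfrac{1/c^2}{(-d/c) - z}$, which has the form $C + \tfrac{A_1}{\lambda_1 - z}$ with $\lambda_1 = -d/c \in \R$ and $A_1 = 1/c^2 > 0$; hence $m \in \mathrm{Rat}(\C)$ and $\deg(m) = \rk(A_1) = 1$.

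For the reverse inclusion I would start from $f \in \mathrm{Pick}(\C_+)_\R$ with $\deg(f) = 1$, so that $f \in \mathrm{Rat}(\C)$ and $f(z) = C + zD + \sum_{j=1}^m \tfrac{A_j}{\lambda_j - z}$ with $C \in \R$, $D \geq 0$, $A_j \geq 0$ and $\lambda_j \in \R$; after discarding the vanishing terms one may assume $A_j > 0$ for all $j$. Since the rank of a nonnegative real number is just the indicator of being nonzero, $1 = \deg(f) = \rk(D) + \sum_j \rk(A_j)$ leaves exactly two possibilities. Either $D > 0$ and $m = 0$, so $f(z) = C + zD = \tfrac{\sqrt{D}\,z + C/\sqrt{D}}{0 \cdot z + 1/\sqrt{D}}$, which has real coefficients of determinant $1$; or $D = 0$ and there is a single pole, so $f(z) = C + \tfrac{A_1}{\lambda_1 - z} = \tfrac{-Cz + (C\lambda_1 + A_1)}{-z + \lambda_1}$, whose coefficient determinant equals $A_1 > 0$, so dividing the four coefficients by $\sqrt{A_1}$ normalizes the determinant to $1$. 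In either case $f$ is a Möbius transformation with real coefficients, hence $f \in \mathrm{Mob}(\C_+)$ by the description of this group recorded in its definition.

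I do not expect a serious obstacle: the argument is essentially the two-case computation of the partial-fraction expansion of a Möbius transformation. The only points needing mild care are checking that a real-coefficient Möbius map with $ad-bc = 1$ genuinely lands in $\C_+$ and has real boundary values --- both immediate from the displayed formula for $\Im(m(z))$ --- and keeping track of the normalization $ad-bc = 1$ versus merely $ad-bc > 0$, which only rescales the representative coefficients by a positive constant and so does not affect membership in either set.
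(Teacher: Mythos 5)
Your proof is correct and follows essentially the same route as the paper: both directions are handled via the explicit partial-fraction normal form of a degree-one rational Pick function and the two-case computation for $c=0$ versus $c\neq 0$, exactly as in the paper's argument (the paper routes the characterization of $\deg(f)=1$ through Theorem~\ref{thm:BigLWTheorem}, while you read it off directly from Definition~\ref{def:RationalPick}, which amounts to the same thing). Your extra verification of the Pick property via $\Im(m(z)) = \Im(z)/|cz+d|^2$ and the determinant normalization are fine details the paper leaves implicit.
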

\begin{proof}
By \fref{thm:BigLWTheorem}, for \(f \in \mathrm{Pick}(\C_+)_\R\), one has that \(\deg(f) = 1\), if and only if \(f\) is of the form
\[f(z) = c + dz\]
with \(c \in \R\) and \(d > 0\) or of the form
\[f(z) = c + \frac a{\lambda-z} = \frac{(c\lambda + a) - cz}{\lambda - z}\]
with \(c,\lambda \in \R\) and \(a>0\). In both cases \(f \in \mathrm{Mob}(\C_+)\). Conversely, for \(f \in \mathrm{Mob}(\C_+)\), there exist \(a,b,c,d \in \R\) with \(ad-bc = 1\) and
\[f(z) = \frac{az+b}{cz+d}.\]
For \(c \neq 0\), we have
\[f(z) = \frac{az+b}{cz+d} = \frac 1c \cdot \frac{acz+bc}{cz+d} = \frac 1c \cdot \left(a + \frac{bc - ad}{cz+d}\right) = \frac 1c \cdot \left(a - \frac{1}{cz+d}\right) = \frac ac + \frac{\frac 1{c^2}}{-\frac dc-z}\]
and for \(c = 0\), we have \(ad = ad - bc = 1\) and therefore
\[f(z) = \frac{az+b}{d} = \frac bd + \frac ad z = \frac bd + a^2 z,\]
so in every case \(f \in \mathrm{Pick}(\C_+)_\R\) with \(\deg(f) = 1\).
\end{proof}
\begin{cor}\label{cor:InvariantMobConj}
Let \(\cH\) be a finite-dimensional complex Hilbert space and let \(F \in \mathrm{Pick}(\C_+,B(\cH))_\R\) be regular.
Then, for \(f,g \in \mathrm{Pick}(\C_+)_\R\), one has
\[\deg(f \circ F \circ g) = \deg(F),\]
if and only if \(f,g \in \mathrm{Mob}(\C_+)\).
\end{cor}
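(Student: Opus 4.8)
The plan is to read the corollary off from the multiplicativity of the degree, \fref{thm:InvariantHomo}, combined with \fref{prop:InvariantMob}, which identifies the degree-one scalar Pick functions with $\mathrm{Mob}(\C_+)$.

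A little bookkeeping first. We may assume $\cH \neq \{0\}$, the statement being trivial otherwise. For the composition $f \circ F \circ g$ to be defined, $f$ and $g$ must be regular, hence non-constant by \fref{cor:ConstNonDeg}; a non-constant element of $\mathrm{Pick}(\C_+)_\R$ has a non-vanishing term in its Herglotz representation, so $\deg(f) \geq 1$ and $\deg(g) \geq 1$. Likewise a regular $F$ is non-constant, so $\deg(F) \geq 1$ (a rational Pick function of degree $0$ is constant, and a non-rational one has degree $\infty$). Finally, by \fref{cor:CompNonDeg} the composite $f \circ F \circ g$ is again regular, so \fref{thm:InvariantHomo} applies to it.

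Assume now $F \in \mathrm{Rat}(\cH)$, so $\deg(F) \in \N$. By \fref{thm:InvariantHomo},
\[
\deg(f \circ F \circ g) = \deg(f)\cdot\deg(F)\cdot\deg(g),
\]
and since $\deg(F)$ is a positive integer this equals $\deg(F)$ exactly when $\deg(f)\cdot\deg(g) = 1$, i.e. (both factors being positive integers) when $\deg(f) = \deg(g) = 1$; by \fref{prop:InvariantMob} this is equivalent to $f,g \in \mathrm{Mob}(\C_+)$, which is the claim. If instead $F \notin \mathrm{Rat}(\cH)$, i.e. $\deg(F) = \infty$, then for $f,g \in \mathrm{Mob}(\C_+)$ we get $\deg(f\circ F\circ g) = 1\cdot\infty\cdot 1 = \deg(F)$, giving the ``if'' direction; while for any admissible (hence non-constant) $f,g$ one already has $\deg(f\circ F\circ g) = \deg(f)\cdot\infty\cdot\deg(g) = \infty = \deg(F)$, so in the non-rational case the degree identity is automatic and the corollary is to be read with $F$ rational (equivalently $\dim\cK_F < \infty$ by \fref{thm:BigLWTheorem}).

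The argument is essentially mechanical once \fref{thm:InvariantHomo} and \fref{prop:InvariantMob} are in hand; the only points needing care are the elementary lower bound $\deg \geq 1$ for regular Pick functions — this is what upgrades ``$\deg(f)\deg(g)=1$'' to ``$\deg(f)=\deg(g)=1$'' — and the dichotomy $F\in\mathrm{Rat}(\cH)$ versus $F\notin\mathrm{Rat}(\cH)$ discussed above, which is really the only subtlety in the ``only if'' direction.
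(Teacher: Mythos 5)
Your proof is correct and follows the same route as the paper, whose entire proof is the citation of \fref{thm:InvariantHomo} together with \fref{prop:InvariantMob}: cancel \(\deg(F)\) in \(\deg(f\circ F\circ g)=\deg(f)\cdot\deg(F)\cdot\deg(g)\) and identify the degree-one scalar Pick functions with \(\mathrm{Mob}(\C_+)\), your lower bound \(\deg\geq 1\) for regular Pick functions being exactly what justifies the final step. Your remark about \(F\notin\mathrm{Rat}(\cH)\) is a genuine observation rather than a defect of your argument: the cancellation implicit in the paper's one-line proof needs \(\deg(F)<\infty\), and for \(\deg(F)=\infty\) any regular \(f,g\) (Möbius or not) give \(\deg(f\circ F\circ g)=\deg(f)\cdot\infty\cdot\deg(g)=\infty=\deg(F)\), so the ``only if'' direction of the corollary as literally stated fails in that case and the statement must indeed be read with \(F\in\mathrm{Rat}(\cH)\) (equivalently \(\dim\cK_F<\infty\)), exactly as you note.
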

\begin{proof}
This follows immediately by \fref{prop:InvariantMob} and  \fref{thm:InvariantHomo}.
\end{proof}
\begin{cor}
Let \(\cH\) be a finite-dimensional complex Hilbert space and let \(F \in \mathrm{Pick}(\C_+,B(\cH))_\R\) be regular.
Then, for \(m,n \in \mathrm{Mob}(\C_+)\), one has
\[m \circ F \circ n \in \mathrm{Rat}(\cH) \Leftrightarrow F \in \mathrm{Rat}(\cH).\]
\end{cor}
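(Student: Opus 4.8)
The plan is to derive this equivalence directly from \fref{cor:InvariantMobConj} together with \fref{thm:BigLWTheorem}, so that no new computation is needed. First I would record that every $m \in \mathrm{Mob}(\C_+)$ is a non-constant element of $\mathrm{Pick}(\C_+)_\R$ and hence regular by \fref{cor:ConstNonDeg}; consequently, by \fref{cor:CompNonDeg}, the composition $m \circ F \circ n$ again lies in $\mathrm{Pick}(\C_+,B(\cH))_\R$ and is regular, so in particular its degree $\deg(m \circ F \circ n)$ is well defined in the sense of \fref{def:RationalPick}.

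The key step is then to invoke \fref{cor:InvariantMobConj}, which, precisely because $m,n \in \mathrm{Mob}(\C_+)$, gives $\deg(m \circ F \circ n) = \deg(F)$. Combining this with \fref{thm:BigLWTheorem} and the convention of \fref{def:RationalPick} — under which, for any regular $G \in \mathrm{Pick}(\C_+,B(\cH))_\R$, one has $\deg(G) < \infty$ if and only if $G \in \mathrm{Rat}(\cH)$ — I obtain
\[ m \circ F \circ n \in \mathrm{Rat}(\cH) \iff \deg(m \circ F \circ n) < \infty \iff \deg(F) < \infty \iff F \in \mathrm{Rat}(\cH), \]
which is the assertion.

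I do not expect a genuine obstacle here: the statement is a formal consequence of the degree identity already established in \fref{cor:InvariantMobConj} and the characterization of rationality through finiteness of the degree in \fref{thm:BigLWTheorem}. The only point requiring a moment's care is the well-definedness of $\deg(m \circ F \circ n)$, i.e. that $m \circ F \circ n$ is a regular Pick function with self-adjoint boundary values, which is exactly what \fref{cor:ConstNonDeg} and \fref{cor:CompNonDeg} supply. As an alternative route one could bypass the degree entirely and argue via multiplicity spaces, identifying $\cK_{m \circ F \circ n}$ with $\cK_m \,\hat\otimes\, \cK_F \,\hat\otimes\, \cK_n$ as in the proof of \fref{thm:InvariantHomo}, noting $\dim \cK_m = \dim \cK_n = 1$ by \fref{prop:InvariantMob}, and concluding $\dim \cK_{m \circ F \circ n} = \dim \cK_F$, so that one is finite exactly when the other is.
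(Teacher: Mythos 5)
Your argument is correct and coincides with the paper's own proof: the paper derives the equivalence by exactly the same chain $m \circ F \circ n \in \mathrm{Rat}(\cH) \Leftrightarrow \deg(m \circ F \circ n) < \infty \Leftrightarrow \deg(F) < \infty \Leftrightarrow F \in \mathrm{Rat}(\cH)$, citing \fref{cor:InvariantMobConj} and \fref{thm:BigLWTheorem}. Your additional remarks on regularity of the composition and the alternative route via multiplicity spaces are fine but not needed beyond what the cited results already provide.
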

\begin{proof}
By \fref{cor:InvariantMobConj} and \fref{thm:BigLWTheorem}, we have
\[m \circ F \circ n \in \mathrm{Rat}(\cH) \Leftrightarrow \deg(m \circ F \circ n) < \infty \Leftrightarrow \deg(F) < \infty \Leftrightarrow F \in \mathrm{Rat}(\cH). \qedhere\]
\end{proof}

\newpage
\section{Reflection positivity on the Hardy space}
In the last two chapters, we were analyzing ROPGs. In this chapter, we will add another structure to them. Concretely we will consider quadruples \((\cE,\cE_+,U,\theta)\), where \((\cE,\cE_+,U)\) is a ROPG and \(\theta\) is an involution on \(\cE\) with certain compatibility conditions. At the beginning of this chapter, we will show that, to classify these reflection positive ROPGs, one needs to better understand so-called reflection positive Hilbert spaces, which we will introduce in the following section:

\subsection{Reflection positive Hilbert spaces and ROPGs}
\begin{definition}\label{def:RPHS}{(cf. \cite[Def. 2.1.1, Def. 4.2.1]{NO18})}
Consider a triple \((\cE,\cE_+,\theta)\) of a Hilbert space \(\cE\), a closed subspace \(\cE_+ \subeq \cE\) and a unitary involution \(\theta \in \U(\cE)\).
\begin{enumerate}[\rm (a)]
\item We call \((\cE,\cE_+,\theta)\) a \textit{reflection positive Hilbert space (\gls*{RPHS})}, if
\begin{equation*}
\braket*{\xi}{\theta\xi} \geq 0 \qquad \forall \xi \in \mathcal{E}_+.
\end{equation*}
\item We call a quadruple \((\cE,\cE_+,U,\theta)\) of a RPHS \((\cE,\cE_+,\theta)\) and a strongly continuous unitary one-parameter group \(U: \R \to \U(\cE)\) a \textit{reflection positive unitary one-parameter group}, if
\[U_t \cE_+ \subeq \cE_+ \qquad \forall t \geq 0\]
and
\[\theta \circ U_t = U_{-t} \circ \theta \qquad \forall t \in \R.\]
\item We call a quadruple \((\cE,\cE_+,U,\theta)\) a \textit{reflection positive ROPG}, if \((\cE,\cE_+,U,\theta)\) is a reflection positive unitary one-parameter group and \((\cE,\cE_+,U)\) is a ROPG.
\item We say that a reflection positive Hilbert space, unitary one-parameter group, or ROPG is real/complex, if \(\cE\) is real/complex.
\item We say that two reflection positive ROPGs \((\cE,\cE_+,U,\theta)\) and \((\cE',\cE_+',U',\theta')\) are equivalent, if there exists a unitary operator \(\psi: \cE \to \cE'\) such that \(\psi\) is an isomorphism between the ROPGs \((\cE,\cE_+,U)\) and \((\cE',\cE_+',U')\) that additionally satisfies
\[\psi \circ \theta = \theta' \circ \psi.\]
We call such an operator \(\psi\) an \textit{isomorphism between \((\cE,\cE_+,U,\theta)\) and \((\cE',\cE_+',U',\theta')\)}.
\end{enumerate}
\end{definition}
We first want to investigate the relation between real and complex reflection positive ROPGs. Here we have the following propositions:
\begin{prop}\label{prop:RPHSRealComplex}
Let \(\cE\) be a real Hilbert space, \(\cE_+ \subeq \cE\) be a closed subspace and \(\theta \in \U(\cH)\) be an involution. Then \((\cE,\cE_+,\theta)\) is a real RPHS, if and only if \((\cE_\C,(\cE_+)_\C,\theta_\C)\) is a complex RPHS.
\end{prop}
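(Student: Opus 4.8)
The plan is to reduce the equivalence to a single computation of $\langle \xi, \theta_\C \xi\rangle$ for $\xi \in (\cE_+)_\C$, expressed through the real inner product on $\cE$. First I would dispatch the structural part: since $\theta$ is a unitary involution on $\cE$, its complexification satisfies $(\theta_\C)^2 = (\theta^2)_\C = \1$, and $\theta_\C$ is isometric because $\|\theta_\C(u + iv)\|^2 = \|\theta u\|^2 + \|\theta v\|^2 = \|u\|^2 + \|v\|^2 = \|u + iv\|^2$ for $u, v \in \cE$; hence $\theta_\C$ is again a unitary involution on $\cE_\C$, and (as already used in \fref{prop:ROPGRealToComplex}) $(\cE_+)_\C = \cE_+ + i\cE_+$ is a closed subspace of $\cE_\C$. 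So only the positivity condition is genuinely at stake.

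Next I would compute. Writing an arbitrary $\xi \in (\cE_+)_\C$ as $\xi = u + iv$ with $u, v \in \cE_+$, we have $\theta_\C \xi = \theta u + i\theta v$, and expanding the Hermitian inner product of $\cE_\C$ (conjugate-linear in the first and linear in the second slot, and restricting to the real inner product of $\cE$) gives
\[
\langle \xi, \theta_\C \xi\rangle = \big(\langle u, \theta u\rangle + \langle v, \theta v\rangle\big) + i\big(\langle u, \theta v\rangle - \langle v, \theta u\rangle\big).
\]
The crucial point is that a unitary involution is self-adjoint, so on the real Hilbert space $\cE$ we have $\langle v, \theta u\rangle = \langle \theta v, u\rangle = \langle u, \theta v\rangle$ by symmetry of the real inner product; the imaginary part therefore vanishes and
\[
\langle \xi, \theta_\C \xi\rangle = \langle u, \theta u\rangle + \langle v, \theta v\rangle \in \R.
\]

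Both implications are then immediate. If $(\cE, \cE_+, \theta)$ is a real RPHS, then for $u, v \in \cE_+$ both summands are nonnegative, so $\langle \xi, \theta_\C \xi\rangle \geq 0$ and $(\cE_\C, (\cE_+)_\C, \theta_\C)$ is a complex RPHS; conversely, taking $v = 0$ shows $\langle u, \theta u\rangle = \langle u, \theta_\C u\rangle \geq 0$ for every $u \in \cE_+$, so $(\cE, \cE_+, \theta)$ is a real RPHS. I do not expect a real obstacle here: the only care needed is to match the paper's inner-product convention, to use $\theta^* = \theta$, and to recall that by construction the real inner product of $\cE$ is the restriction of (the real part of) the Hermitian form on $\cE_\C$ — all of which is built into the definition of the complexification given in the Notation section.
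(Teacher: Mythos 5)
Your proposal is correct and follows essentially the same route as the paper: expand \(\braket*{u+iv}{\theta_\C(u+iv)}\), use the symmetry of the real scalar product together with \(\theta^* = \theta\) to kill the cross terms, and read off both implications (the converse by taking \(v=0\)). The extra checks that \(\theta_\C\) is a unitary involution and \((\cE_+)_\C\) is closed are fine but not needed beyond what the paper already takes for granted.
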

\begin{proof}
Using that the scalar product on \(\cE\) is real-valued and that \(\theta\) is a unitary involution, we get
\[\braket*{v}{\theta w} = \braket*{\theta w}{v} = \braket*{w}{\theta v} \quad \forall v,w \in \cE.\]
This yields
\[\braket*{(v+iw)}{\theta_\C(v+iw)} = \braket*{v}{\theta v} + \braket*{w}{\theta w} + i \braket*{v}{\theta w} - i \braket*{w}{\theta v} = \braket*{v}{\theta v} + \braket*{w}{\theta w} \quad \forall v,w \in \cE,\]
which immediately implies that
\[\left(\braket*{\xi}{\theta \xi} \geq 0 \quad \forall \xi \in \cE_+\right) \quad \Leftrightarrow \quad \left(\braket*{\xi}{\theta_\C \xi} \geq 0 \quad \forall \xi \in (\cE_+)_\C\right). \qedhere\]
\end{proof}
\begin{prop}\label{prop:RPROPGRealToComplex}
Let \(\cE\) be a real Hilbert space, \(\cE_+ \subeq \cE\) be a closed subspace, \(U: \R \to \U(\cE)\) be a unitary one-parameter group and \(\theta \in \U(\cH)\) be an involution. Then \((\cE,\cE_+,U,\theta)\) is a real reflection positive ROPG, if and only if \((\cE_\C,(\cE_+)_\C,U_\C,\theta_\C)\) is a complex reflection positive ROPG.
\end{prop}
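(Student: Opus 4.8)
The plan is to reduce the statement to the two building blocks already at hand: \fref{prop:ROPGRealToComplex} for the underlying (complex resp.\ real) ROPG and \fref{prop:RPHSRealComplex} for reflection positivity, leaving only the equivariance relation $\theta U_t = U_{-t}\theta$ to be transported across complexification by hand.

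First I would spell out \fref{def:RPHS}: the quadruple $(\cE,\cE_+,U,\theta)$ is a real reflection positive ROPG exactly when (i) $(\cE,\cE_+,\theta)$ is a real RPHS; (ii) $U$ is a strongly continuous unitary one-parameter group with $U_t\cE_+\subeq\cE_+$ for all $t\ge 0$ and $\theta U_t=U_{-t}\theta$ for all $t\in\R$; and (iii) $(\cE,\cE_+,U)$ is a real ROPG (and analogously for the complexified data $(\cE_\C,(\cE_+)_\C,U_\C,\theta_\C)$). By \fref{prop:RPHSRealComplex}, (i) is equivalent to $(\cE_\C,(\cE_+)_\C,\theta_\C)$ being a complex RPHS. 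By \fref{prop:ROPGRealToComplex}, (iii) is equivalent to $(\cE_\C,(\cE_+)_\C,U_\C)$ being a complex ROPG; this also takes care of the invariance $U_t\cE_+\subeq\cE_+$ and its complex counterpart, since a ROPG already entails it (cf.\ \fref{rem:ROPGAlternative}). Strong continuity of $U$ and of $U_\C$ are equivalent via the identity $\lVert(U_\C)_t(v+iw)-(v+iw)\rVert^2=\lVert U_tv-v\rVert^2+\lVert U_tw-w\rVert^2$ for $v,w\in\cE$, so this clause also passes both ways.

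The one genuinely new point is the equivariance $\theta U_t=U_{-t}\theta$, which I would treat by functoriality of complexification: for $A,B\in B(\cE)$ one has $(AB)_\C=A_\C B_\C$, the one-parameter group $U_\C$ is by definition given by $(U_\C)_t=(U_t)_\C$, and the map $A\mapsto A_\C$ is injective because under the identification $\cE=\cE_\C^{\cC_\cE}$ the operator $A_\C$ restricts to $A$. Hence $(\theta U_t)_\C=\theta_\C(U_\C)_t$ and $(U_{-t}\theta)_\C=(U_\C)_{-t}\theta_\C$, so $\theta U_t=U_{-t}\theta$ for all $t$ if and only if $\theta_\C(U_\C)_t=(U_\C)_{-t}\theta_\C$ for all $t$. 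Combining this with the equivalences for (i) and (iii) yields the proposition. I do not expect a real obstacle here: the content is bookkeeping over the clauses of \fref{def:RPHS}, and the only care needed is to verify that no clause --- invariance, equivariance, strong continuity, reflection positivity --- is lost when passing between $\cE$ and $\cE_\C$, each of which is handled either by the cited propositions or by the same ``$v+iw$'' computation.
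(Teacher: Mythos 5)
Your proposal is correct and follows essentially the same route as the paper: reduce to \fref{prop:ROPGRealToComplex} for the ROPG clause and \fref{prop:RPHSRealComplex} for reflection positivity, then transport the relation $\theta U_t = U_{-t}\theta$ through complexification using multiplicativity and injectivity of $A \mapsto A_\C$ (the paper phrases this as $\theta_\C (U_t)_\C \theta_\C = (\theta U_t \theta)_\C$, which is the same observation). Your extra remarks on strong continuity and invariance are harmless bookkeeping already subsumed in the cited propositions.
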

\begin{proof}
By \fref{prop:ROPGRealToComplex} we have that \((\cE,\cE_+,U)\) is a real ROPG, if and only if \((\cE_\C,(\cE_+)_\C,U_\C)\) is a complex ROPG. Further, by \fref{prop:RPHSRealComplex}, we have that \((\cE,\cE_+,\theta)\) is a real RPHS, if and only if \((\cE_\C,(\cE_+)_\C,\theta_\C)\) is a complex RPHS. The statement then follows since
\[\theta_\C (U_t)_\C \theta_\C = (\theta U_t \theta)_\C \quad \forall t \in \R\]
and therefore
\[\theta_\C (U_t)_\C \theta_\C = (U_{-t})_\C \quad \Leftrightarrow \quad \theta U_t \theta = U_{-t}. \qedhere\]
\end{proof}
This theorem shows that we can reduce our analysis of reflection positive ROPGs to the complex case. Given a complex reflection positive ROPG \((\cE,\cE_+,U,\theta)\), by the Lax--Phillips Theorem (\fref{thm:LaxPhillipsComplex}), there exists a complex Hilbert space \(\cK\) such that \((\cE,\cE_+,U,\theta)\) is equivalent to \((L^2(\R,\cK),H^2(\C_+,\cK),S,\tilde \theta)\) for some involution \(\tilde \theta\) on \(L^2(\R,\cK)\). To classify the involutions \(\tilde \theta\) for which \((L^2(\R,\cK),H^2(\C_+,\cK),S,\tilde \theta)\) is a reflection positive ROPG, we need the following definitions:
\begin{definition}\label{def:defTheta}
Let \(\cK\) be a complex Hilbert space and \(X\) be a set.
\begin{enumerate}[\rm (a)]
\item For function \(f: \R \to X\) we define the function
\[\gls*{R}f: \R \to X \quad x \mapsto f(-x).\]
\item For \(h \in L^\infty(\R,B(\cK))\) we set
\[\gls*{thetah} \coloneqq M_h \circ R \in B(L^2(\R,\cK)).\]
\item We define the involution
\begin{equation*}
\flat:  L^\infty\left(\R,B(\cK)\right) \to L^\infty\left(\R,B(\cK)\right), \quad \gls*{hflat} \left(x\right) \coloneqq (Rh)(x)^* = h\left(-x\right)^*.
\end{equation*}
\item We write \(L^\infty(\R,\U(\cK))\) for the set of functions \(f \in L^\infty(\R,B(\cK))\) such that \(f(x) \in \U(\cK)\) for almost every \(x \in \R\). In the case of \(\cK = \C\) we write \(L^\infty(\R,\T)\) for the set of functions \(f \in L^\infty(\R,\C) \cong L^\infty(\R,B(\C))\) such that \(f(x) \in \T \cong \U(\C)\) for almost every \(x \in \R\).
\end{enumerate}
\end{definition}
The following results will show how these definitions relate to reflection positive ROPGs of the form \((L^2(\R,\cK),H^2(\C_+,\cK),S,\theta)\):
\begin{lemma}\label{lem:thetahForm}
Let \(\cK\) be a complex Hilbert space and \(\theta \in B(L^2(\R,\cK))\). Then
\[\theta \circ S_t = S_{-t} \circ \theta \quad \forall t \in \R,\]
if and only if there exists \(h \in L^\infty(\R,B(\cK))\) such that \(\theta = \theta_h\).
\end{lemma}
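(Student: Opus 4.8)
The plan is to exploit two elementary identities for the flip $R$ of \fref{def:defTheta}: namely $R^2 = \mathbf{1}$ and $R \circ M_{e_t} = M_{e_{-t}} \circ R$ for every $t \in \R$, where $e_t(x) = e^{itx}$, so that $S_t = M_{e_t}$ and hence $R \circ S_t \circ R = S_{-t}$. Since $e_t$ is scalar-valued, $M_{e_t}$ commutes with $M_h$ for every $h \in L^\infty(\R,B(\cK))$. These are the only computational facts needed, and they are immediate from the pointwise definitions.

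For the ``if'' direction I would simply compute: if $\theta = \theta_h = M_h \circ R$, then
\[\theta \circ S_t = M_h \circ R \circ M_{e_t} = M_h \circ M_{e_{-t}} \circ R = M_{e_{-t}} \circ M_h \circ R = S_{-t} \circ \theta_h = S_{-t} \circ \theta,\]
which is exactly the claimed intertwining relation.

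For the ``only if'' direction, the key step is to reduce to \fref{prop:SCommutant}. Assume $\theta \circ S_t = S_{-t} \circ \theta$ for all $t \in \R$. Using $S_{-t} = R \circ S_t \circ R$ and $R^2 = \mathbf{1}$, this rewrites as $R \circ \theta \circ S_t = S_t \circ R \circ \theta$, so the bounded operator $R \circ \theta$ lies in $S(\R)'$. By \fref{prop:SCommutant} we have $S(\R)' = M\left(L^\infty(\R,B(\cK))\right)$, hence $R \circ \theta = M_g$ for some $g \in L^\infty(\R,B(\cK))$. Then $\theta = R \circ M_g$, and since $(R \circ M_g f)(x) = g(-x) f(-x) = \big((Rg)(x)\big) (Rf)(x)$ for a.e.\ $x$, we get $R \circ M_g = M_{Rg} \circ R = \theta_{Rg}$. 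Setting $h \coloneqq Rg \in L^\infty(\R,B(\cK))$ gives $\theta = \theta_h$, as desired.

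The only nontrivial ingredient is \fref{prop:SCommutant} (which itself rests on the tensor-product commutation theorem \fref{thm:CommutantTensor}); everything else is bookkeeping with the flip $R$. No delicate measurability issues arise, since $Rg$ is bounded and weak-$*$-measurable whenever $g$ is. I therefore do not anticipate a genuine obstacle here; the ``hard part'', such as it is, is merely recognizing that the intertwining relation with $S_{-t}$ turns into a commutation relation with $S_t$ after composing with $R$.
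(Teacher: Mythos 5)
Your proof is correct and follows essentially the same route as the paper: both directions reduce, via the identities $R^2=\mathbf{1}$ and $R\circ S_t\circ R=S_{-t}$, to the commutant computation of \fref{prop:SCommutant}, the only cosmetic difference being that you work with $R\circ\theta$ where the paper works with $\theta\circ R$ (and absorbs the flip via $h=Rg$). No gaps.
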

\begin{proof}
We have
\[S_t \circ \theta = S_t \circ \left(\theta \circ R\right) \circ R = \left(\theta \circ R\right) \circ S_t \circ R + [S_t,\theta \circ R] \circ R = \theta \circ S_{-t} + [S_t,\theta \circ R] \circ R.\]
This implies that 
\[\theta \circ S_t = S_{-t} \circ \theta \quad \forall t \in \R,\]
if and only if
\[\theta \circ R \in S(\R)' = M(L^\infty(\R,B(\cK))),\]
using \fref{prop:SCommutant} in the last step. This is equivalent to the existence of \(h \in M(L^\infty(\R,B(\cK)))\) such that \(\theta \circ R = M_h\), so \(\theta = M_h \circ R = \theta_h\).
\end{proof}
\begin{lemma}\label{lem:thetahUnitary}
Let \(\cK\) be a complex Hilbert space and \(h \in L^\infty(\R,B(\cK))\). Then \(\theta_h\) is a unitary involution, if and only if \(h \in L^\infty(\R,\U(\cK))^\flat\).
\end{lemma}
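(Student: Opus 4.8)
The plan is to reduce the assertion "$\theta_h$ is a unitary involution" to purely algebraic conditions on the symbol $h$, using the two structural properties of a self-adjoint unitary: $\theta_h^* = \theta_h$ and $\theta_h^2 = \textbf{1}$. First I would record three elementary identities involving multiplication operators and the reflection $R$, $(Rf)(x) = f(-x)$: that $R$ is a self-adjoint unitary with $R^2 = \textbf{1}$; that $M_g^* = M_{g^*}$ with $g^*(x) = g(x)^*$; and that $R M_g R = M_{Rg}$ for every $g \in L^\infty(\R,B(\cK))$, where $(Rg)(x) = g(-x)$. Each is a one-line computation from the definitions. From these one gets $\theta_h^* = (M_h R)^* = R M_{h^*} = \bigl(R M_{h^*} R\bigr) R = M_{R(h^*)} R = \theta_{h^\flat}$, since $R(h^*)(x) = h(-x)^* = h^\flat(x)$, and $\theta_h^2 = M_h \bigl(R M_h R\bigr) = M_h M_{Rh} = M_k$, where $k(x) = h(x) h(-x)$.

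Next I would exploit that a bounded operator is a unitary involution exactly when it is self-adjoint and squares to $\textbf{1}$. By the first computation $\theta_h = \theta_h^*$ holds iff $\theta_{h^\flat} = \theta_h$, i.e.\ (cancelling the invertible operator $R$) iff $M_{h^\flat} = M_h$, i.e.\ iff $h^\flat = h$ in $L^\infty(\R,B(\cK))$ — here I invoke the standard fact that a multiplication operator determines its symbol almost everywhere, tested against functions $\chi_A \otimes v$ and using separability of $\cK$. Taking adjoints, $h^\flat = h$ is equivalent to $h(-x) = h(x)^*$ for a.e.\ $x$. Under this hypothesis the second computation shows that $\theta_h^2 = \textbf{1}$ is equivalent to $h(x) h(-x) = \textbf{1}$ for a.e.\ $x$.

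Then I would close the loop. Assume $h^\flat = h$. If moreover $h(x) h(-x) = \textbf{1}$ a.e., then substituting $h(-x) = h(x)^*$ gives $h(x) h(x)^* = \textbf{1}$ a.e.; applying the identity at $-x$ (legitimate since $x \mapsto -x$ preserves Lebesgue measure) gives $h(-x) h(x) = \textbf{1}$ a.e., hence $h(x)^* h(x) = \textbf{1}$ a.e., so $h(x) \in \U(\cK)$ for a.e.\ $x$. Conversely, if $h$ is unitary-valued a.e.\ and $h^\flat = h$, then $h(x) h(-x) = h(x) h(x)^* = \textbf{1}$ a.e. Combining with the previous paragraph, $\theta_h$ is a unitary involution iff $h^\flat = h$ and $h$ is $\U(\cK)$-valued a.e., i.e.\ iff $h \in L^\infty(\R,\U(\cK))^\flat$, using that $\flat$ preserves $L^\infty(\R,\U(\cK))$ and that $X^\flat = \{f : f^\flat = f\}$ by the notation conventions.

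Every step is a routine computation; the only place that needs mild care is the measure-theoretic bookkeeping in the last step, where one must pass from "$h(x)h(-x) = \textbf{1}$ for a.e.\ $x$" to the same statement at $-x$ and make sure the finitely many almost-everywhere identities involved all hold on one common conull set before combining them pointwise. I expect this to be the main (and essentially only) subtlety.
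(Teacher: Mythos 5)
Your proof is correct and follows essentially the same route as the paper: both arguments rest on the identities \(R M_g R = M_{Rg}\) and \(\theta_h \circ \theta_h = M_{h \cdot Rh}\), together with the fact that a multiplication operator determines its symbol almost everywhere. The only (cosmetic) difference is that you split ``unitary involution'' as self-adjoint plus \(\theta_h^2 = \textbf{1}\) and deduce that \(h\) is \(\U(\cK)\)-valued afterwards, whereas the paper first notes that \(\theta_h\) is unitary iff \(h\) is \(\U(\cK)\)-valued a.e.\ and then uses \(h \cdot Rh = \textbf{1}\) to obtain \(h^\flat = h\).
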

\begin{proof}
Since \(R\) is unitary, the operator \(\theta_h\) is unitary, if and only if \(h \in L^\infty(\R,\U(\cK))\). Further, we have
\[\theta_h \circ \theta_h = M_h \circ R \circ M_h \circ R = M_h \circ M_{R h} \circ R \circ R = M_{h \cdot R h},\]
which shows that \(\theta_h\) is an involution, if and only if one has \(h \cdot R h = \textbf{1}\), which is equivalent to
\[R h = h^{-1} = h^*\]
and therefore to \(h^\flat = h\).
\end{proof}
\begin{prop}\label{prop:RPROPGFormthetah}
Let \(\cK\) be a complex Hilbert space and \(\theta \in B(L^2(\R,\cK))\) be a unitary involution. Then \((L^2(\R,\cK),H^2(\C_+,\cK),S,\theta)\) is a reflection positive ROPG, if and only if there exists a function \(h \in L^\infty(\R,\U(\cK))^\flat\) such that
\begin{enumerate}
\item \(\theta = \theta_h\).
\item \((L^2(\R,\cK),H^2(\C_+,\cK),\theta_h)\) is a RPHS.
\end{enumerate}
\end{prop}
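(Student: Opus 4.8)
The plan is to assemble the two lemmas just proved. First I would unwind \fref{def:RPHS}: the quadruple $(L^2(\R,\cK),H^2(\C_+,\cK),S,\theta)$ is a reflection positive ROPG if and only if (i) $(L^2(\R,\cK),H^2(\C_+,\cK),S)$ is a ROPG, (ii) $S_t H^2(\C_+,\cK) \subeq H^2(\C_+,\cK)$ for all $t \geq 0$, (iii) $\theta \circ S_t = S_{-t} \circ \theta$ for all $t \in \R$, and (iv) $(L^2(\R,\cK),H^2(\C_+,\cK),\theta)$ is an RPHS. Clauses (i) and (ii) hold automatically by the Lax--Phillips Theorem (\fref{thm:LaxPhillipsComplex}), so the whole proposition reduces to rephrasing (iii) and (iv) in terms of a function $h$.

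For the forward implication I would observe that a reflection positive ROPG satisfies (iii) by definition, so \fref{lem:thetahForm} produces some $h \in L^\infty(\R,B(\cK))$ with $\theta = \theta_h$; since $\theta$ is a unitary involution by hypothesis, \fref{lem:thetahUnitary} then forces $h \in L^\infty(\R,\U(\cK))^\flat$, which is statement~(1), while statement~(2) is precisely clause~(iv). For the converse I would start from $h \in L^\infty(\R,\U(\cK))^\flat$ with $\theta = \theta_h$ and $(L^2(\R,\cK),H^2(\C_+,\cK),\theta_h)$ an RPHS: \fref{lem:thetahUnitary} confirms that $\theta_h$ is a unitary involution (consistent with the hypothesis on $\theta$), \fref{lem:thetahForm} gives~(iii), (iv) is the assumed RPHS property, and (i), (ii) are automatic, so the quadruple is a reflection positive ROPG.

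I do not expect a genuine obstacle; this proposition is essentially bookkeeping on top of \fref{lem:thetahForm} and \fref{lem:thetahUnitary}. The only place to be careful is making sure the definitions line up — in particular that the notion of a reflection positive ROPG really does unpack into exactly the four clauses above, and that the covariance relation $\theta S_t = S_{-t}\theta$ in \fref{def:RPHS}(b) matches the hypothesis of \fref{lem:thetahForm} verbatim.
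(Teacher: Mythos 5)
Your proposal is correct and matches the paper's argument, which likewise derives the proposition directly from \fref{lem:thetahForm} and \fref{lem:thetahUnitary} (with the ROPG and invariance clauses supplied by \fref{thm:LaxPhillipsComplex}); you have simply spelled out the bookkeeping that the paper leaves implicit.
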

\begin{proof}
This follows immediately by \fref{lem:thetahForm} and \fref{lem:thetahUnitary}.
\end{proof}
\begin{prop}\label{prop:thetahUnique}
Let \(\cK\) be a complex Hilbert space and \(h,h' \in L^\infty(\R,\U(\cK))^\flat\) such that the quadruples \((L^2(\R,\cK),H^2(\C_+,\cK),S,\theta_h)\) and \((L^2(\R,\cK),H^2(\C_+,\cK),S,\theta_{h'})\) are reflection positive ROPGs. Then \((L^2(\R,\cK),H^2(\C_+,\cK),S,\theta_h)\) and \((L^2(\R,\cK),H^2(\C_+,\cK),S,\theta_{h'})\) are equivalent, if and only if there exists \(u \in \U(\cK)\) such that \(h' = u h u^*\).
\end{prop}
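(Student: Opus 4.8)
The plan is to prove both implications, with the ``only if'' direction resting on the Schur-type commutant computation \fref{thm:SchurDimComplex}. For the ``if'' direction, suppose there is $u \in \U(\cK)$ with $h' = u h u^*$. I would set $\psi \coloneqq M_{u \cdot \textbf{1}} \in \U(L^2(\R,\cK))$, the multiplication operator by the constant function $x \mapsto u$. Since $u$ is a fixed unitary, $\psi$ preserves $H^2(\C_+,\cK)$, and since the symbols $e_t$ of $S_t$ are scalar-valued, $\psi$ commutes with every $S_t$; hence $\psi$ is an isomorphism of the underlying ROPGs. It then remains to verify the intertwining of the involutions. Using $\theta_h = M_h \circ R$, the identity $M_a M_b = M_{ab}$, and the fact that the reflection $R$ fixes constant functions (so $R \circ M_{u \cdot \textbf{1}} = M_{u \cdot \textbf{1}} \circ R$), one computes $\psi \circ \theta_h = M_{u \cdot \textbf{1}} M_h R = M_{uh} R$ and $\theta_{h'} \circ \psi = M_{h'} R M_{u \cdot \textbf{1}} = M_{h'} M_{u \cdot \textbf{1}} R = M_{h' u} R$; since $u h(x) = \bigl(u h(x) u^*\bigr) u = h'(x) u$ for almost every $x$, these agree, so $\psi$ is an isomorphism of reflection positive ROPGs.

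For the converse, let $\psi \in \U(L^2(\R,\cK))$ be an isomorphism of the two reflection positive ROPGs. Then $\psi$ commutes with $S(\R)$, and since $\psi\bigl(H^2(\C_+,\cK)\bigr) = H^2(\C_+,\cK)$ it also commutes with $P_{H^2(\C_+,\cK)}$; hence $\psi \in \bigl(S(\R) \cup \{P_{H^2(\C_+,\cK)}\}\bigr)'$. By \fref{thm:SchurDimComplex} (more precisely by its proof, which identifies this commutant with $B(\cK) \cdot M_{\textbf{1}}$), there is $u \in B(\cK)$ with $\psi = M_{u \cdot \textbf{1}}$, and unitarity of $\psi$ forces $u \in \U(\cK)$. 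Feeding this into the intertwining condition $\psi \circ \theta_h = \theta_{h'} \circ \psi$ and repeating the computation of the first paragraph gives $M_{uh} R = M_{h' u} R$, i.e.\ $u h(x) = h'(x) u$ for almost every $x \in \R$, which is exactly $h' = u h u^*$.

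I expect no serious obstacle here: the only substantial input is the commutant computation \fref{thm:SchurDimComplex}, and everything else is bookkeeping with $\theta_h = M_h \circ R$ and the multiplicativity of $M_{(\cdot)}$. The one point that deserves care — and the reason the statement comes out in the clean form $h' = u h u^*$ rather than involving a reflected factor $R u$ — is that conjugation of $\theta_h$ by the \emph{constant} multiplication $M_{u \cdot \textbf{1}}$ commutes past $R$ untouched, so the induced twist on $L^\infty(\R,\U(\cK))^\flat$ is simply pointwise conjugation by $u$; this is precisely where constancy of $u$ (equivalently $R$-invariance of constant functions) is used.
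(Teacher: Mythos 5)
Your proposal is correct and follows essentially the same route as the paper: both directions hinge on showing that any intertwiner lies in $S(\R)'$ and preserves the Hardy space, forcing it to be $M_{u\cdot\textbf{1}}$ for a constant unitary $u$, after which the intertwining of $\theta_h$ and $\theta_{h'}$ gives $h' = u h u^*$. The only cosmetic difference is that you quote the commutant identification from the proof of \fref{thm:SchurDimComplex}, whereas the paper reruns the same short argument (via \fref{prop:SCommutant}, \fref{prop:H2InclusionFunctions} and \fref{cor:HInftyIntersection}) inline.
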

\begin{proof}
If \((L^2(\R,\cK),H^2(\C_+,\cK),S,\theta_h)\) and \((L^2(\R,\cK),H^2(\C_+,\cK),S,\theta_{h'})\) are equivalent there exists a unitary operator \(\psi \in \U(L^2(\R,\cK))\) such that
\[\psi H^2(\C_+,\cK) = H^2(\C_+,\cK), \qquad \psi \circ S_t = S_t \circ \psi \quad \forall t \in \R \qquad \text{and} \qquad \psi \circ \theta_h = \theta_{h'} \circ \psi.\]
The last of these conditions implies that \(\psi \in S(\R)'\), so by \fref{prop:SCommutant} there exists a function \(f \in M(L^\infty(\R,B(\cK)))\) such that \(\psi = M_f\). The condition
\[\psi H^2(\C_+,\cK) = H^2(\C_+,\cK)\]
together with the unitarity of \(\psi\) then implies
\[M_f H^2(\C_+,\cK) = H^2(\C_+,\cK) \quad \text{and} \quad M_f H^2(\C_-,\cK) = H^2(\C_-,\cK),\]
so, by \fref{prop:H2InclusionFunctions}, we have
\[f \in H^\infty(\C_+,B(\cK)) \cap H^\infty(\C_-,B(\cK)) = B(\cK) \cdot \textbf{1},\]
using \fref{cor:HInftyIntersection} in the last step. Therefore, since \(\psi = M_f\) is unitary, there exists \(u \in \U(\cK)\) such that \(f = u \cdot \textbf{1}\). This implies
\[\theta_{h'} = \psi \circ \theta_h \circ \psi^{-1} = M_{u \cdot \textbf{1}} \circ M_h \circ R \circ M_{u^* \cdot \textbf{1}} = M_{uhu^*} \circ R = \theta_{uhu^*}\]
and therefore \(h' = uhu^*\).

If conversely \(h' = uhu^*\), then the operator \(\psi \coloneqq M_{u \cdot \textbf{1}}\) is an isomorphism between the reflection positive ROPGs \((L^2(\R,\cK),H^2(\C_+,\cK),S,\theta_h)\) and \((L^2(\R,\cK),H^2(\C_+,\cK),S,\theta_{h'})\).
\end{proof}
With these results about our standard example, we get a normal form for complex reflection positive ROPGs:
\newpage
\begin{theorem}\label{thm:RFROPGNormalForm}{\rm \textbf{(Normal form for reflection positive ROPGs -- complex version)}}
Let \((\cE,\cE_+,U,\theta)\) be a complex reflection positive ROPG. Then there exists a --- up to isomorphism -- unique complex Hilbert space \(\cK\) and a --- up to conjugation with a unitary operator --- unique function \(h \in L^\infty(\R,\U(\cK))^\flat\) such that \((L^2(\R,\cK),H^2(\C_+,\cK),S,\theta_h)\) is a reflection positive ROPG which is equivalent to \((\cE,\cE_+,U,\theta)\).
\end{theorem}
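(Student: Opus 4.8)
The plan is to assemble the statement from three ingredients already available: the Lax--Phillips Theorem (\fref{thm:LaxPhillipsComplex}), the uniqueness of the multiplicity space (\fref{thm:kernelAdjointGeneral}), and the two structural results \fref{prop:RPROPGFormthetah} and \fref{prop:thetahUnique} about the standard example $(L^2(\R,\cK),H^2(\C_+,\cK),S,\theta_h)$. The actual work is transporting the involution $\theta$ through the various identifications and keeping track of the ambiguities.

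For existence, I would first forget $\theta$: since $(\cE,\cE_+,U)$ is a complex ROPG, \fref{thm:LaxPhillipsComplex} provides a complex Hilbert space $\cK$ --- the multiplicity space of $(\cE,\cE_+,U)$ --- together with a unitary $\psi\colon\cE\to L^2(\R,\cK)$ with $\psi(\cE_+)=H^2(\C_+,\cK)$ and $\psi\circ U_t=S_t\circ\psi$ for all $t\in\R$. Put $\tilde\theta\coloneqq\psi\circ\theta\circ\psi^{-1}$. This is a unitary involution; from $\theta U_t=U_{-t}\theta$ one gets $\tilde\theta\circ S_t=S_{-t}\circ\tilde\theta$; and the reflection positivity inequality $\braket*{\eta}{\tilde\theta\eta}\geq0$ on $H^2(\C_+,\cK)$ follows from the corresponding inequality for $\theta$ on $\cE_+$ because $\psi$ is unitary and maps $\cE_+$ onto $H^2(\C_+,\cK)$. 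Hence $(L^2(\R,\cK),H^2(\C_+,\cK),S,\tilde\theta)$ is a reflection positive ROPG and $\psi$ is an isomorphism of reflection positive ROPGs onto it. Applying \fref{prop:RPROPGFormthetah} then yields $h\in L^\infty(\R,\U(\cK))^\flat$ with $\tilde\theta=\theta_h$ and with $(L^2(\R,\cK),H^2(\C_+,\cK),\theta_h)$ a RPHS, which is the asserted normal form.

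For uniqueness, suppose $(L^2(\R,\cK'),H^2(\C_+,\cK'),S,\theta_{h'})$ is a second such quadruple equivalent to $(\cE,\cE_+,U,\theta)$. Dropping the involutions, $(L^2(\R,\cK'),H^2(\C_+,\cK'),S)$ is a ROPG equivalent to $(\cE,\cE_+,U)$, so by the uniqueness of the multiplicity space (\fref{thm:kernelAdjointGeneral}) there is a unitary $\alpha\colon\cK'\to\cK$. Pointwise multiplication $M_\alpha$ is an isomorphism of ROPGs $(L^2(\R,\cK'),H^2(\C_+,\cK'),S)\to(L^2(\R,\cK),H^2(\C_+,\cK),S)$, and since $\alpha$ is constant in the variable one computes $M_\alpha\circ\theta_{h'}\circ M_\alpha^{-1}=\theta_{\alpha h'\alpha^*}$ with $\alpha h'\alpha^*\in L^\infty(\R,\U(\cK))^\flat$. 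Replacing $(\cK',h')$ by $(\cK,\alpha h'\alpha^*)$, we may assume $\cK'=\cK$. Then $(L^2(\R,\cK),H^2(\C_+,\cK),S,\theta_h)$ and $(L^2(\R,\cK),H^2(\C_+,\cK),S,\theta_{h'})$ are both equivalent to $(\cE,\cE_+,U,\theta)$, hence to each other, and \fref{prop:thetahUnique} gives $u\in\U(\cK)$ with $h'=uhu^*$; pulling this identity back through $\alpha$ shows that the original $h'$ is unitarily conjugate to $h$. The uniqueness of $\cK$ up to isomorphism is exactly the uniqueness of the multiplicity space already invoked.

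The argument is essentially bookkeeping once the earlier results are in place; the one point that needs a little care is the reduction in the uniqueness part --- verifying that conjugating $\theta_{h'}$ along the identification $\cK'\cong\cK$ again produces an operator of the form $\theta_{\bullet}$ (so that \fref{prop:thetahUnique} is applicable), and that the only resulting ambiguity is conjugation by a fixed unitary on $\cK$, which is precisely what the statement claims. Everything else is immediate from \fref{thm:LaxPhillipsComplex}, \fref{prop:RPROPGFormthetah}, and \fref{prop:thetahUnique}.
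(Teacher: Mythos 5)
Your proposal is correct and follows essentially the same route as the paper: Lax--Phillips (\fref{thm:LaxPhillipsComplex}) to pass to the standard model, \fref{thm:kernelAdjointGeneral} for uniqueness of \(\cK\), \fref{prop:RPROPGFormthetah} to write the transported involution as \(\theta_h\), and \fref{prop:thetahUnique} for uniqueness of \(h\) up to unitary conjugation. The only difference is that you spell out the transport of \(\theta\) through \(\psi\) and the reduction \(\cK'\cong\cK\) via \(M_\alpha\) explicitly, which the paper leaves implicit.
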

\begin{proof}
By the Lax--Phillips Theorem (\fref{thm:LaxPhillipsComplex}), there exists a complex Hilbert space \(\cK\) such that \((\cE,\cE_+,U,\theta)\) is equivalent to \((L^2(\R,\cK),H^2(\C_+,\cK),S,\tilde \theta)\) for some involution \(\tilde \theta\) on \(L^2(\R,\cK)\). By \fref{thm:kernelAdjointGeneral}, this Hilbert space \(\cK\) is unique up to isomorphism. Further, by \fref{prop:RPROPGFormthetah}, we have \(\tilde \theta = \theta_h\) for some function \(h \in L^\infty(\R,\U(\cK))^\flat\). By \fref{prop:thetahUnique}, this function is unique up to conjugation with a unitary operator.
\end{proof}
We now want to use this normal form for complex reflection positive ROPGs to also get a normal form for real reflection positive ROPGs. For this, we need the following lemma:
\begin{lemma}\label{lem:thetahSymmetric}
Let \(\cK\) be a real Hilbert space and \(h \in L^\infty(\R,\U(\cK_\C))^\flat\). Then \(\theta_h\) defines a unitary involution on \(L^2(\R,\cK_\C)^\sharp\), if and only if \(h = h^\sharp\) with
\[\gls*{hsharp}(x) \coloneqq \cC_\cK h(-x)\cC_\cK, \quad z \in \C_+,\]
denoting by \(\cC_\cK\) the complex conjugation on \(\cK_\C\). In this case \((L^2(\R,\cK_\C)^\sharp,H^2(\C_+,\cK_\C)^\sharp,S,\theta_h)\) is a real reflection positive ROPG, if and only if \((L^2(\R,\cK),H^2(\C_+,\cK_\C),S,\theta_h)\) is a complex reflection positive ROPG.
\end{lemma}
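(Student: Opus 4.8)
The plan is to prove the two equivalences in turn, the first by a direct computation with the involution $\sharp$ and the second by reducing to \fref{prop:RPROPGRealToComplex}. Since $h \in L^\infty(\R,\U(\cK_\C))^\flat$, \fref{lem:thetahUnitary} (applied with $\cK_\C$ in place of $\cK$) already tells us that $\theta_h = M_h \circ R$ is a unitary involution of the complex Hilbert space $L^2(\R,\cK_\C)$, of which $L^2(\R,\cK_\C)^\sharp$ is the real form: one has $L^2(\R,\cK_\C) = L^2(\R,\cK_\C)^\sharp \oplus i\,L^2(\R,\cK_\C)^\sharp$ — for $f \in L^2(\R,\cK_\C)$ write $f = \tfrac12(f+f^\sharp) + i\,\tfrac1{2i}(f-f^\sharp)$ and use $(if)^\sharp = -i f^\sharp$ — exactly as in the proof of \fref{prop:ROPGRealToComplex}. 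Hence the clause ``$\theta_h$ defines a unitary involution on $L^2(\R,\cK_\C)^\sharp$'' is equivalent to the single statement that $\theta_h$ leaves $L^2(\R,\cK_\C)^\sharp$ invariant: invariance is clearly necessary, and the restriction of a unitary involution to an invariant closed real subspace is again a unitary involution (it is isometric for the scalar product, which is real-valued on $L^2(\R,\cK_\C)^\sharp$ because $\cC_\cK$ is antiunitary, and it is surjective onto $L^2(\R,\cK_\C)^\sharp$ since $\theta_h^2 = \id$).

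The heart of the first equivalence is the pointwise identity, valid for every $f \in L^2(\R,\cK_\C)$,
\[
(\theta_h f)^\sharp(x) = \cC_\cK\bigl((\theta_h f)(-x)\bigr) = \cC_\cK\bigl(h(-x)f(x)\bigr) = \bigl(\cC_\cK h(-x)\cC_\cK\bigr)\,\cC_\cK f(x) = h^\sharp(x)\,f^\sharp(-x) = \bigl(\theta_{h^\sharp}f^\sharp\bigr)(x),
\]
that is, $(\theta_h f)^\sharp = \theta_{h^\sharp}(f^\sharp)$, where the penultimate step uses $\cC_\cK f(x) = f^\sharp(-x)$. For $f \in L^2(\R,\cK_\C)^\sharp$ this reads $(\theta_h f)^\sharp = \theta_{h^\sharp} f$, so $\theta_h$ leaves $L^2(\R,\cK_\C)^\sharp$ invariant if and only if $\theta_h f = \theta_{h^\sharp} f$ for all $f \in L^2(\R,\cK_\C)^\sharp$, i.e. if and only if the $\C$-linear operator $\theta_h - \theta_{h^\sharp} = M_{h-h^\sharp}\circ R$ vanishes on the real form $L^2(\R,\cK_\C)^\sharp$; since that form $\C$-spans $L^2(\R,\cK_\C)$, this forces $M_{h-h^\sharp}\circ R = 0$ on all of $L^2(\R,\cK_\C)$, hence $M_{h-h^\sharp} = 0$ ($R$ being invertible), hence $h = h^\sharp$ in $L^\infty(\R,B(\cK_\C))$. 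The converse direction is immediate from the same identity.

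For the second equivalence, assume $h = h^\sharp$. Then $\cE \coloneqq L^2(\R,\cK_\C)^\sharp$ is a real Hilbert space, $\cE_+ \coloneqq H^2(\C_+,\cK_\C)^\sharp$ a closed subspace, $U \coloneqq S|_\cE$ a strongly continuous unitary one-parameter group on $\cE$ (\fref{thm:LaxPhillipsReal}), and $\theta \coloneqq \theta_h|_\cE$ a unitary involution of $\cE$ by the first part. Applying \fref{prop:RPROPGRealToComplex} to $(\cE,\cE_+,U,\theta)$ gives that $(\cE,\cE_+,U,\theta)$ is a real reflection positive ROPG if and only if $(\cE_\C,(\cE_+)_\C,U_\C,\theta_\C)$ is a complex reflection positive ROPG. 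It then only remains to identify this complexified quadruple with the one in the statement: $\cE_\C = (L^2(\R,\cK_\C)^\sharp)_\C \cong L^2(\R,\cK_\C)$, and under this identification $(\cE_+)_\C \cong H^2(\C_+,\cK_\C)^\sharp + i\,H^2(\C_+,\cK_\C)^\sharp = H^2(\C_+,\cK_\C)$ (again as in the proof of \fref{prop:ROPGRealToComplex}), while $U_\C = S$ and $\theta_\C = \theta_h$ as operators on $L^2(\R,\cK_\C)$, simply because complexifying the restriction of a $\C$-linear operator to the real form recovers the original operator. Combining these identifications with \fref{prop:RPROPGRealToComplex} yields the claim.

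The structural content is entirely supplied by \fref{lem:thetahForm}, \fref{lem:thetahUnitary} and \fref{prop:RPROPGRealToComplex}, so the only place requiring genuine care is the identity $(\theta_h f)^\sharp = \theta_{h^\sharp}(f^\sharp)$: one must track the conjugate-linearity of $\cC_\cK$ and the argument-reflection built into $\sharp$ so that every $\cC_\cK$ lands on the correct side and the reflection $x \mapsto -x$ is applied the right number of times. Everything else — the complexification identifications, the real-valuedness of the scalar product on the $\sharp$-invariant subspace, and the remark that a restricted unitary involution is again one — is routine, and I do not anticipate any obstacle beyond this bookkeeping.
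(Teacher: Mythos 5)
Your proposal is correct and follows essentially the same route as the paper: the pointwise identity \((\theta_h f)^\sharp = \theta_{h^\sharp}(f^\sharp)\) (which is exactly the paper's operator chain \((\theta_h f)^\sharp = \theta_{h^\sharp}f^\sharp\) written out pointwise), giving invariance of \(L^2(\R,\cK_\C)^\sharp\) if and only if \(h = h^\sharp\), followed by a reduction of the second claim to \fref{prop:RPROPGRealToComplex}. You merely make explicit a few steps the paper leaves implicit (that vanishing of \(M_{h-h^\sharp}\circ R\) on the real form forces \(h=h^\sharp\), and the identification of the complexified quadruple with \((L^2(\R,\cK_\C),H^2(\C_+,\cK_\C),S,\theta_h)\)), which is fine.
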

\begin{proof}
One has
\[\theta_h L^2(\R,\cK_\C)^\sharp \subeq L^2(\R,\cK_\C)^\sharp,\]
if and only if
\[\theta_h f = (\theta_h f)^\sharp = \cC_\cK R \theta_h f = \cC_\cK \theta_{R h} R f = \theta_{h^\sharp} \cC_\cK R f = \theta_{h^\sharp} f^\sharp = \theta_{h^\sharp} f\]
for all \(f \in L^2(\R,\cK_\C)^\sharp\). This is the case, if and only if \(h = h^\sharp\). The second statement follows by \fref{prop:RPROPGRealToComplex}.
\end{proof}
\begin{theorem}\label{thm:NormalformRPROPG}{\rm \textbf{(Normal form for reflection positive ROPGs -- real version)}~}
Let \((\cE,\cE_+,U,\theta)\) be a real reflection positive ROPG. Then there exists a --- up to isomorphism --- unique real Hilbert space \(\cK\) and a --- up to conjugation with a unitary operator --- unique function \(h \in L^\infty(\R,\U(\cK_\C))^\flat\) with \(h = h^\sharp\) such that \((L^2(\R,\cK_\C)^\sharp,H^2(\C_+,\cK_\C)^\sharp,S,\theta_h)\) is a reflection positive ROPG which is equivalent to \((\cE,\cE_+,U,\theta)\).
\end{theorem}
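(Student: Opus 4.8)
The plan is to imitate the proof of the complex normal form (\fref{thm:RFROPGNormalForm}) and reduce everything to the complex case by complexification, using the translation results \fref{prop:ROPGRealToComplex} and \fref{prop:RPROPGRealToComplex} together with \fref{lem:thetahSymmetric}.

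\textbf{Step 1 (Lax--Phillips reduction).} Since $(\cE,\cE_+,U)$ is a real ROPG, the real Fourier version of the Lax--Phillips Theorem (\fref{thm:LaxPhillipsReal}) provides a real Hilbert space $\cK$ and an isomorphism of ROPGs $\varphi\colon\cE\to L^2(\R,\cK_\C)^\sharp$ carrying $\cE_+$ to $H^2(\C_+,\cK_\C)^\sharp$ and $U$ to $S$; by \fref{thm:kernelAdjointGeneralReal} this $\cK$ is unique up to isomorphism. Setting $\tilde\theta\coloneqq\varphi\circ\theta\circ\varphi^{-1}$ yields a unitary involution on $L^2(\R,\cK_\C)^\sharp$ for which $(L^2(\R,\cK_\C)^\sharp,H^2(\C_+,\cK_\C)^\sharp,S,\tilde\theta)$ is again a real reflection positive ROPG equivalent to $(\cE,\cE_+,U,\theta)$. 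It remains to put $\tilde\theta$ in the form $\theta_h$.

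\textbf{Step 2 (existence of $h$).} Using the identifications $L^2(\R,\cK_\C)=L^2(\R,\cK_\C)^\sharp+iL^2(\R,\cK_\C)^\sharp$ and $H^2(\C_+,\cK_\C)=H^2(\C_+,\cK_\C)^\sharp+iH^2(\C_+,\cK_\C)^\sharp$ from the proof of \fref{prop:ROPGRealToComplex} together with \fref{prop:RPROPGRealToComplex}, the quadruple $(L^2(\R,\cK_\C),H^2(\C_+,\cK_\C),S,\tilde\theta_\C)$ is a complex reflection positive ROPG. The relation $\theta\circ U_t=U_{-t}\circ\theta$ gives $\tilde\theta_\C\circ S_t=S_{-t}\circ\tilde\theta_\C$, so \fref{lem:thetahForm} yields $h\in L^\infty(\R,B(\cK_\C))$ with $\tilde\theta_\C=\theta_h$, and \fref{lem:thetahUnitary} upgrades this to $h\in L^\infty(\R,\U(\cK_\C))^\flat$. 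Restricting back, $\tilde\theta=\theta_h$ on $L^2(\R,\cK_\C)^\sharp$, and since $\tilde\theta$ preserves $L^2(\R,\cK_\C)^\sharp$, \fref{lem:thetahSymmetric} forces $h=h^\sharp$. This proves the existence half of the statement.

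\textbf{Step 3 (uniqueness of $h$ and main obstacle).} Uniqueness of $\cK$ is \fref{thm:kernelAdjointGeneralReal}. For $h$, suppose $h,h'\in L^\infty(\R,\U(\cK_\C))^\flat$ with $h=h^\sharp$, $h'=(h')^\sharp$ give equivalent real reflection positive ROPGs $(L^2(\R,\cK_\C)^\sharp,H^2(\C_+,\cK_\C)^\sharp,S,\theta_h)$ and $(L^2(\R,\cK_\C)^\sharp,H^2(\C_+,\cK_\C)^\sharp,S,\theta_{h'})$, say via $\psi$. Complexifying $\psi$ and using the identifications above produces an equivalence of the complex reflection positive ROPGs $(L^2(\R,\cK_\C),H^2(\C_+,\cK_\C),S,\theta_h)$ and $(L^2(\R,\cK_\C),H^2(\C_+,\cK_\C),S,\theta_{h'})$; by the proof of \fref{prop:thetahUnique} this equivalence equals $M_{v\cdot\textbf{1}}$ for some $v\in\U(\cK_\C)$ with $h'=vhv^*$. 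The point where the real structure re-enters is that $\psi=M_{v\cdot\textbf{1}}$ must preserve $L^2(\R,\cK_\C)^\sharp$; arguing exactly as in \fref{thm:SchurDimReal} this forces $[v,\cC_\cK]=0$, so $v=u_\C$ for a (real) unitary $u\in\U(\cK)$ and $h'=u_\C h u_\C^*$. I expect no serious difficulty anywhere; the only genuinely delicate point is this last bookkeeping, i.e. verifying that compatibility with the $\sharp$-involution (equivalently, preservation of the real subspace) pins down the conjugating unitary up to an element of $\U(\cK)$ rather than merely $\U(\cK_\C)$.
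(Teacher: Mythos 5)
Your proposal is correct and follows essentially the same route as the paper: Lax--Phillips plus \fref{thm:kernelAdjointGeneralReal} for $\cK$, complexification via \fref{prop:RPROPGRealToComplex} with \fref{prop:RPROPGFormthetah} (your \fref{lem:thetahForm}/\fref{lem:thetahUnitary}) and \fref{lem:thetahSymmetric} to obtain $\tilde\theta=\theta_h$ with $h=h^\sharp$, and \fref{prop:thetahUnique} for uniqueness. Your Step 3, pinning the conjugating unitary down to one commuting with $\cC_\cK$ (hence in $\U(\cK)$) via the argument of \fref{thm:SchurDimReal}, is in fact a slightly more careful treatment of the uniqueness than the paper's one-line citation, and is welcome since conjugation by an arbitrary element of $\U(\cK_\C)$ need not preserve the condition $h=h^\sharp$.
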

\begin{proof}
By the Lax--Phillips Theorem (\fref{thm:LaxPhillipsReal}), there exists a real Hilbert space \(\cK\) such that \((\cE,\cE_+,U,\theta)\) is equivalent to \((L^2(\R,\cK_\C)^\sharp,H^2(\C_+,\cK_\C)^\sharp,S,\tilde \theta)\) for some involution \(\tilde \theta\) on \(L^2(\R,\cK_\C)^\sharp\). By \fref{thm:kernelAdjointGeneral}, this Hilbert space \(\cK\) is unique up to isomorphism. Then, by \fref{prop:RPROPGRealToComplex}, the quadruple \((L^2(\R,\cK_\C),H^2(\C_+,\cK_\C),S,\tilde \theta_\C)\) is a complex reflection positive ROPG. Therefore, by \fref{prop:RPROPGFormthetah} and \fref{lem:thetahSymmetric}, there exists \(h \in L^\infty(\R,\U(\cK_\C))^\flat\) with \(h = h^\sharp\) such that \(\tilde \theta = \theta_h\). By \fref{prop:thetahUnique}, this function is unique up to conjugation with an operator in \(\U(\cK_\C)\).
\end{proof}
In \fref{thm:RFROPGNormalForm} and \fref{prop:RPROPGFormthetah} we have seen that classifying all complex reflection positive ROPGs comes down to classifying all functions \(h \in L^\infty(\R,\U(\cK))^\flat\) such that the triple \((L^2(\R,\cK),H^2(\C_+,\cK),\theta_h)\) is a RPHS. To also classify the real reflection positive ROPGs, we additionally have to determine which of these functions satisfy the condition \(h = h^\sharp\).

We will not be able to answer this question in full generality. However, we will do so under two more conditions. The first one is that we deal with the multiplicity free case \(\cK = \C\), so we ask for which functions \(h \in L^\infty(\R,\T)^\flat\) the triple \((L^2(\R,\C),H^2(\C_+),\theta_h)\) is a RPHS. In this case, we have
\[h^\sharp(x) = \cC_\cK h(-x) = \overline{h(-x)} = h(-x)^* = h^\flat(x) = h(x),\]
so the extra condition for real reflection positive ROPGs is automatically fulfilled. The second condition is that the triple \((L^2(\R,\C),H^2(\C_+),\theta_h)\) is a maximal RPHS in the following sense:
\begin{definition}\label{def:maxRPHS}
Let \((\cE,\cE_+,\theta)\) be a RPHS. We call \((\cE,\cE_+,\theta)\) \textit{maximal} if, for every closed subspace \(\cE_+' \subeq \cE\) with \(\cE_+ \subeq \cE_+'\), the triple \((\cE,\cE_+',\theta)\) is a RPHS, if and only if \(\cE_+' = \cE_+\).
\end{definition}
So the rest of this chapter we will dedicate to answering the question for which functions \({h \in L^\infty(\R,\T)^\flat}\) the triple \((L^2(\R,\C),H^2(\C_+),\theta_h)\) is a maximal RPHS. We will answer this question in \fref{thm:GeneralizedMaxPosForm}. Checking if a RPHS is maximal can often be difficult. The following theorem gives us an equivalent condition that is sometimes easier to check:
\begin{thm}\label{thm:EPlusMaximal}
Let \((\cE,\cE_+,\theta)\) be a RPHS. Then the following are equivalent:
\begin{enumerate}[\rm (a)]
\item \((\cE,\cE_+,\theta)\) is a maximal RPHS.
\item There exists a dense subspace \(V \subeq \cE^\theta\) such that \(V \subeq \cE_+ + \cE^{-\theta}\).
\item One has \(\cE^\theta \subeq \cE_+ + \cE^{-\theta}\).
\end{enumerate}
\end{thm}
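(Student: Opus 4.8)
The plan is to transfer everything to the orthogonal decomposition $\cE = \cE^\theta \oplus \cE^{-\theta}$ determined by the unitary involution $\theta$, with orthogonal projection $P := \tfrac12(\mathbf 1 + \theta)$ onto $\cE^\theta$. Since $\cE^\theta \perp \cE^{-\theta}$, expanding $\xi = P\xi + (\mathbf 1 - P)\xi$ gives
\[
\braket*{\xi}{\theta\xi} = \|P\xi\|^2 - \|(\mathbf 1 - P)\xi\|^2 ,
\]
so reflection positivity of $\cE_+$ means exactly that $\|(\mathbf 1 - P)\xi\| \le \|P\xi\|$ for all $\xi \in \cE_+$. The first thing I would record is the identity
\[
\cE^\theta \cap \bigl(\cE_+ + \cE^{-\theta}\bigr) = P(\cE_+),
\]
which is immediate in both directions (if $v = P\xi$ with $\xi \in \cE_+$, write $v = \xi - (\mathbf 1 - P)\xi$; conversely apply $P$ to any decomposition $v = \xi + \eta$ with $\xi \in \cE_+$, $\eta \in \cE^{-\theta}$). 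Consequently condition (c) is equivalent to $P(\cE_+) = \cE^\theta$, and condition (b) is equivalent to $P(\cE_+)$ being dense in $\cE^\theta$.

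The second step is to observe that $P(\cE_+)$ is automatically closed: the estimate $\|\xi\|^2 = \|P\xi\|^2 + \|(\mathbf 1 - P)\xi\|^2 \le 2\|P\xi\|^2$, valid on $\cE_+$ by reflection positivity, shows that $P\big|_{\cE_+}$ is bounded below, hence has closed range. Together with the first step this already yields (b) $\Leftrightarrow$ (c): ``(c) $\Rightarrow$ (b)'' is trivial with $V = \cE^\theta$, and ``(b) $\Rightarrow$ (c)'' holds because a dense subspace of $\cE^\theta$ sitting inside the \emph{closed} subspace $P(\cE_+)$ forces $P(\cE_+) = \cE^\theta$.

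What remains is (c) $\Leftrightarrow$ (a). For ``(c) $\Rightarrow$ (a)'' I would take any closed $\cE_+' \supeq \cE_+$ with $(\cE,\cE_+',\theta)$ a RPHS and show $\cE_+' = \cE_+$: given $\xi' \in \cE_+'$, choose (using $P(\cE_+) = \cE^\theta$) some $\xi \in \cE_+$ with $P\xi = P\xi'$; then $\xi' - \xi \in \cE_+' \cap \cE^{-\theta}$, and any $\eta$ there satisfies $0 \le \braket*{\eta}{\theta\eta} = -\|\eta\|^2$, so $\eta = 0$ and $\xi' = \xi \in \cE_+$. For the contrapositive of ``(a) $\Rightarrow$ (c)'' I would start from $P(\cE_+) \subsetneq \cE^\theta$, pick (using closedness of $P(\cE_+)$) a nonzero $w_0 \in \cE^\theta$ orthogonal to $P(\cE_+)$, and set $\cE_+' := \cE_+ + \K w_0$, with $\K$ the scalar field of $\cE$. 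This is a closed subspace strictly containing $\cE_+$ (if $w_0 \in \cE_+$ then $w_0 = Pw_0 \in P(\cE_+)$, a contradiction), and for $\xi \in \cE_+$, $\lambda \in \K$ one gets $P(\xi + \lambda w_0) = P\xi + \lambda w_0$ with $P\xi \perp w_0$, so
\[
\braket*{\xi + \lambda w_0}{\theta(\xi + \lambda w_0)} = \|P\xi\|^2 + |\lambda|^2\|w_0\|^2 - \|(\mathbf 1 - P)\xi\|^2 \ge \braket*{\xi}{\theta\xi} \ge 0 ;
\]
hence $(\cE,\cE_+',\theta)$ is a RPHS and $\cE_+$ is not maximal.

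I do not expect a serious obstacle here: the argument is entirely elementary once the two identities of the first paragraph and the boundedness-below estimate are in place. The one point that needs care is the passage from ``$P(\cE_+)$ dense'' to ``$P(\cE_+) = \cE^\theta$'' in ``(b) $\Rightarrow$ (c)'', which is precisely where the closed-range property is used; in the real case one simply reads $\K w_0$ as $\R w_0$ throughout.
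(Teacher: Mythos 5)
Your proof is correct, and it rests on the same skeleton as the paper's argument --- the orthogonal decomposition \(\cE = \cE^\theta \oplus \cE^{-\theta}\), the identity \(\braket*{\xi}{\theta\xi} = \|P\xi\|^2 - \|(\mathbf{1}-P)\xi\|^2\), and the one-dimensional perturbation \(\cE_+ + \K w_0\) by a vector of \(\cE^\theta\) orthogonal to \(\cE_+\) --- but it is organized around a different key lemma. The paper proves the cycle (a)\(\Rightarrow\)(b)\(\Rightarrow\)(c)\(\Rightarrow\)(a): for (a)\(\Rightarrow\)(b) it shows \(V := \cE^\theta \cap (\cE_+ + \cE^{-\theta})\) is dense by testing maximality against \(\cE_+ + \K v\) for \(v \in V^\perp \cap \cE^\theta\), and for (b)\(\Rightarrow\)(c) it introduces the component maps \(C: v \mapsto v_+\) and \(D: v \mapsto v^{-\theta}\) on \(V\), bounds them (\(\|D\|\le 1\), \(\|C\|\le 2\)) and extends them continuously to \(\cE^\theta\). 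You instead identify \(V = P(\cE_+)\) and observe that \(P\big|_{\cE_+}\) is bounded below (\(\|\xi\|^2 \le 2\|P\xi\|^2\) on \(\cE_+\)), so \(P(\cE_+)\) is automatically closed; this makes (b)\(\Leftrightarrow\)(c) immediate and replaces the paper's continuous-extension step (your estimate is essentially the same one that gives the paper its norm bounds on \(C\) and \(D\), used once and for all). Your (c)\(\Rightarrow\)(a) is the paper's argument, slightly more direct in that you handle an arbitrary closed \(\cE_+' \supseteq \cE_+\) rather than only one-dimensional extensions, and your contrapositive of (a)\(\Rightarrow\)(c) is the paper's (a)\(\Rightarrow\)(b) perturbation in disguise (for \(w_0 \in \cE^\theta\), \(w_0 \perp P(\cE_+)\) is the same as \(w_0 \perp \cE_+\)). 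What your route buys is the cleaner intermediate fact that \(\cE^\theta \cap (\cE_+ + \cE^{-\theta})\) is always closed in a RPHS, from which the whole equivalence follows in a few lines; what the paper's route makes explicit are the bounded component operators \(\cE^\theta \to \cE_+\) and \(\cE^\theta \to \cE^{-\theta}\), which can be useful data in their own right.
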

\begin{proof}
\item[(a) \(\Rightarrow\) (b):] Let \((\cE,\cE_+,\theta)\) be a maximal RPHS and set
\[V \coloneqq \cE^\theta \cap \left(\cE_+ + \cE^{-\theta}\right).\]
We now want to show that \(V\) is dense in \(\cE^\theta\). So let \(v \in V^\perp \cap \cE^\theta\). We want to show that \(v = 0\). For \(\xi \in \cE_+\), we have \(\xi = \xi^\theta + \xi^{-\theta}\), where
\[\xi^{\pm \theta} \coloneqq \frac 12 (\xi \pm \theta \xi) \in \cE^{\pm \theta}.\]
One also has
\[\xi^\theta = \xi - \xi^{-\theta} \in \cE_+ + \cE^{-\theta},\]
so \(\xi^\theta \in V\) and therefore, using that \(\cE^\theta \perp \cE^{-\theta}\) and \(v \in V^\perp \cap \cE^\theta\), we get
\[\braket*{\xi}{v} = \braket*{\xi^\theta}{v} + \braket*{\xi^{-\theta}}{v} = 0 + 0 = 0\]
for every \(\xi \in \cE_+\), so \(v \in \cE_+^\perp\). On the other hand, for every \(\xi \in \cE_+\) and every \(t \in \K\) (assuming that \(\cE\) is a Hilbert space over the field \(\K = \R,\C\)), using that \(v \in \cE^\theta\) and that \(\braket*{\xi}{v}=0\), we have
\begin{align*}
\braket*{(\xi+tv)}{\theta(\xi+tv)} &= \braket*{\xi}{\theta \xi} + \left|t\right|^2 \braket*{v}{\theta v} + t\braket*{\xi}{\theta v} + \overline{t}\braket*{v}{\theta \xi}
\\&= \braket*{\xi}{\theta \xi} + \left|t\right|^2 \braket*{v}{\theta v} + t\braket*{\xi}{\theta v} + \overline{t}\braket*{\theta v}{\xi}
\\&= \braket*{\xi}{\theta \xi} + \left|t\right|^2 \braket*{v}{v} + t\braket*{\xi}{v} + \overline{t}\braket*{v}{\xi}
\\&= \braket*{\xi}{\theta \xi} + \left|t\right|^2 \left\lVert v\right\rVert^2 \geq 0.
\end{align*}
This shows that the triple \((\cE,\cE_+ + \K v,\theta)\) is a RPHS and therefore \(\cE_+ + \K v = \cE_+\) due to the maximality of \((\cE,\cE_+,\theta)\). This yields \(v \in \cE_+\) and therefore \(v \in \cE_+ \cap \cE_+^\perp\), so \(v=0\). This implies that \(V^\perp \cap \cE^\theta = \{0\}\) and therefore \(V\) is dense in \(\cE^\theta\).
\item[(b) \(\Rightarrow\) (c):] Let \(V \subeq \cE^\theta\) be a dense subspace such that \(V \subeq \cE_+ + \cE^{-\theta}\). Then, for every \(v \in V\), there exist \(v_+ \in \cE_+\) and \(v^{-\theta} \in \cE^{-\theta}\) such that \(v = v_+ + v^{-\theta}\). We now show that this composition is unique, so assume that there is \(u_+ \in \cE_+\) and \(u^{-\theta} \in \cE^{-\theta}\) such that \(v = u_+ + u^{-\theta}\). Then
\[\cE_+ \ni u_+ - v_+ = \left(v - u^{-\theta}\right) - \left(v - v^{-\theta}\right) = v^{-\theta} - u^{-\theta} \in \cE^{-\theta},\]
so
\[u_+ - v_+ \in \cE_+ \cap \cE^{-\theta}.\]
Since for every \(\xi \in \cE^{-\theta} \setminus \{0\}\) one has
\[\braket*{\xi}{\theta \xi} = \braket*{\xi}{- \xi} = - \left\lVert \xi\right\rVert^2 < 0,\]
we have \(\cE_+ \cap \cE^{-\theta} = \{0\}\) and therefore \(u_+ - v_+ = 0\), which implies \(u_+ = v_+\) and therefore also \(u^{-\theta} = v^{-\theta}\). So, the composition is unique and therefore defines linear maps
\[C: V \to \cE_+, \quad v \mapsto v_+ \quad \text{and} \quad D: V \to \cE^{-\theta}, \quad v \mapsto v^{-\theta},\]
of which we now want to show that they are continuous. For \(v \in V \subeq \cE^\theta \subeq \left(\cE^{-\theta}\right)^\perp\) one has
\[0 \leq \braket*{v_+}{\theta v_+} = \braket*{(v-v^{-\theta})}{\theta(v-v^{-\theta})} = \left\lVert v\right\rVert^2 - \left\lVert v^{-\theta}\right\rVert^2,\]
so
\[\left\lVert v^{-\theta}\right\rVert \leq \left\lVert v\right\rVert\]
and therefore \(\left\lVert D\right\rVert \leq 1\). Also, we have
\[\left\lVert v^+\right\rVert = \left\lVert v-v^{-\theta}\right\rVert \leq \left\lVert v\right\rVert + \left\lVert v^{-\theta}\right\rVert \leq 2\left\lVert v\right\rVert,\]
so \(\left\lVert C\right\rVert \leq 2\). Since \(V\) is dense in \(\cE^\theta\) and both \(\cE_+\) and \(\cE^{-\theta}\) are closed, we can uniquely extend the two maps to continuous maps
\[\tilde C: \cE^\theta \to \cE_+ \quad \text{and} \quad \tilde D: \cE^\theta \to \cE^{-\theta}.\]
Now let \(w \in \cE^\theta\). Then there exists a sequence \((w_n)_{n \in \N} \subeq V\) such that \(w_n \xrightarrow{n \to \infty}w\). We get
\[w = \lim_{n \to \infty}w_n = \lim_{n \to \infty}Cw_n + Dw_n = \tilde Cw +\tilde Dw \in \cE_+ + \cE^{-\theta}.\]
Since \(w \in \cE^\theta\) was chosen arbitrary, we get \(\cE^\theta \subeq \cE_+ + \cE^{-\theta}\).
\item[(c) \(\Rightarrow\) (a):] Let \(\cE^\theta \subeq \cE_+ + \cE^{-\theta}\) and let \(v \in \cE\) such that \((\cE,\cE_+ + \K v,\theta)\) is a reflection positive Hilbert space. We want to show that \(v \in \cE_+\). Let \(v = v^\theta + v^{-\theta}\) with \(v^{\pm \theta} \in \cE^{\pm \theta}\). Since \(\cE^\theta \subeq \cE_+ + \cE^{-\theta}\) there are \(v_+ \in \cE_+\) and \(u \in \cE^{-\theta}\) such that \(v^\theta = v_+ + u\). We have
\[\cE_+ + \K v \ni v_+ - v = \left(v^\theta - u\right) - \left(v^\theta + v^{-\theta}\right) = -u - v^{-\theta} \in \cE^{-\theta},\]
which yields \(v_+-v = 0\), since
\[\left(\cE_+ + \K v\right) \cap \cE^{-\theta} = \{0\}\]
due to the reflection positivity of the triple \((\cE,\cE_+ + \K v,\theta)\). Therefore \(v = v_+ \in \cE_+\), which shows that \((\cE,\cE_+,\theta)\) is a maximal RPHS.
\end{proof}
\begin{cor}\label{cor:EPlusMaximalReal}
Let \((\cE,\cE_+,\theta)\) be a real RPHS. Then \((\cE,\cE_+,\theta)\) is a maximal RPHS, if and only if \((\cE_\C,(\cE_+)_\C,\theta_\C)\) is a maximal RPHS.
\end{cor}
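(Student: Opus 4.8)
The plan is to reduce both sides of the claimed equivalence to the internal characterization of maximality provided by \fref{thm:EPlusMaximal}, which states that a RPHS $(\cF,\cF_+,\sigma)$ is maximal if and only if $\cF^\sigma \subeq \cF_+ + \cF^{-\sigma}$. By \fref{prop:RPHSRealComplex} the triple $(\cE_\C,(\cE_+)_\C,\theta_\C)$ is a complex RPHS whenever $(\cE,\cE_+,\theta)$ is a real one, so \fref{thm:EPlusMaximal} is applicable on both sides and it suffices to prove
\[\cE^\theta \subeq \cE_+ + \cE^{-\theta} \quad \Longleftrightarrow \quad (\cE_\C)^{\theta_\C} \subeq (\cE_+)_\C + (\cE_\C)^{-\theta_\C}.\]

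First I would identify the eigenspaces of $\theta_\C$. Since $\theta_\C(v+iw) = \theta v + i\theta w$ for $v,w \in \cE$, a vector $v+iw$ is fixed by $\theta_\C$ precisely when $\theta v = v$ and $\theta w = w$, so $(\cE_\C)^{\theta_\C} = \cE^\theta + i\cE^\theta = (\cE^\theta)_\C$, and likewise $(\cE_\C)^{-\theta_\C} = (\cE^{-\theta})_\C$. Combining this with $(\cE_+)_\C = \cE_+ + i\cE_+$ gives $(\cE_+)_\C + (\cE_\C)^{-\theta_\C} = (\cE_+ + \cE^{-\theta}) + i(\cE_+ + \cE^{-\theta}) = (\cE_+ + \cE^{-\theta})_\C$, where $\cE_+ + \cE^{-\theta}$ is the (not necessarily closed) algebraic sum. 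Thus the displayed equivalence becomes $(\cE^\theta)_\C \subeq (\cE_+ + \cE^{-\theta})_\C$ versus $\cE^\theta \subeq \cE_+ + \cE^{-\theta}$.

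Then I would record the elementary fact that for arbitrary real subspaces $A, B \subeq \cE$ one has $A \subeq B$ if and only if $A_\C \subeq B_\C$: the forward implication is immediate from $A_\C = A + iA$, and conversely, given $a \in A \subeq A_\C \subeq B_\C$, write $a = b_1 + ib_2$ with $b_1,b_2 \in B \subeq \cE$; applying the complex conjugation $\cC_\cE$ and using that $a, b_1, b_2$ are all fixed by it yields $a = b_1 - ib_2$, hence $b_2 = 0$ and $a = b_1 \in B$. Applying this with $A = \cE^\theta$ and $B = \cE_+ + \cE^{-\theta}$ establishes the equivalence above, and then \fref{thm:EPlusMaximal}, applied once to $(\cE,\cE_+,\theta)$ and once to $(\cE_\C,(\cE_+)_\C,\theta_\C)$, gives the corollary.

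There is no genuine obstacle here; the argument is a routine complexification bookkeeping. The only points deserving care are keeping $\cE_+ + \cE^{-\theta}$ as an algebraic (non-closed) sum throughout, so that one never inadvertently passes to closures when moving between $\cE$ and $\cE_\C$, and the descent of the inclusion from the complexification back to $\cE$, which is precisely the conjugation computation in the previous paragraph.
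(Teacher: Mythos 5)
Your proposal is correct and follows essentially the same route as the paper: reduce both sides via \fref{thm:EPlusMaximal} to the inclusion \(\cE^\theta \subeq \cE_+ + \cE^{-\theta}\) and its complexification, using \((\cE_\C)^{\pm\theta_\C} = (\cE^{\pm\theta})_\C\) and \((\cE_+)_\C + (\cE_\C)^{-\theta_\C} = (\cE_+ + \cE^{-\theta})_\C\). The only difference is that you spell out the descent \(A_\C \subeq B_\C \Rightarrow A \subeq B\) via the conjugation argument, which the paper leaves implicit.
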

\begin{proof}
We have
\[(\cE_\C)^{\theta_\C} = (\cE^\theta)_\C \quad \text{and} \quad (\cE_+)_\C + (\cE_\C)^{-\theta_\C} = (\cE_+ + \cE^{-\theta})_\C.\]
This shows that
\[\cE^\theta \subeq \cE_+ + \cE^{-\theta} \quad \Leftrightarrow \quad (\cE_\C)^{\theta_\C} \subeq (\cE_+)_\C + (\cE_\C)^{-\theta_\C}.\]
The statement then follows by \fref{thm:EPlusMaximal}.
\end{proof}

\subsection{Involutions with fixed points in the Hardy space}
Our first step in classifying all functions \({h \in L^\infty(\R,\T)^\flat}\) such that the triple \((L^2(\R,\C),H^2(\C_+),\theta_h)\) is a maximal RPHS is to classify all such functions under the additional assumption that \(\theta_h\) has a fixed point in \(H^2(\C_+)\), i.e. that \(\ker(\theta_h - \textbf{1}) \neq \{0\}\). We will later drop that assumption and consider the general case.

\subsubsection{Fixed points in the Hardy space and outer functions}
We start by proving the following result that, given a RPHS \((L^2(\R,\C),H^2(\C_+),\theta_h)\), one can split off inner functions (cf. \fref{def:Inner}) from the function \(h\):
\begin{lemma}\label{lem:InnerSplitOff}
Let \({h \in L^\infty(\R,\T)^\flat}\) and \(\phi \in \Inn(\C_+)\). Then the triple \((L^2(\R,\C),H^2(\C_+),\theta_{(\phi \phi^\flat)h})\) is a RPHS, if and only if \((L^2(\R,\C),M_{\phi}^{-1} H^2(\C_+),\theta_h)\) is a RPHS. In this case also the triple \((L^2(\R,\C),H^2(\C_+),\theta_h)\) is a RPHS.
\end{lemma}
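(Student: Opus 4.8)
The plan is to prove the two assertions of the lemma in turn — first the stated equivalence, then the final ``in this case'' consequence — and the whole argument is essentially a bookkeeping computation with the operations $R$ and $\flat$. Before anything else I would record the preliminary facts: since $\phi \in \Inn(\C_+)$, the boundary function is unimodular, $|\phi(x)| = 1$ for a.e.\ $x \in \R$, so $M_\phi \in \U(L^2(\R,\C))$ with $M_\phi^{-1} = M_{\overline\phi} = M_\phi^*$; and $(\phi\phi^\flat)h$ again lies in $L^\infty(\R,\T)^\flat$, since it is unimodular and, using that $\flat$ is multiplicative on scalar functions together with $(\phi^\flat)^\flat = \phi$ and $h^\flat = h$, one has $\bigl((\phi\phi^\flat)h\bigr)^\flat = \phi^\flat\phi h = (\phi\phi^\flat)h$. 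By \fref{lem:thetahUnitary} this guarantees that all three operators $\theta_h$, $\theta_{(\phi\phi^\flat)h}$, and (trivially) $\theta_h$ again are unitary involutions, so that the triples in the statement are genuine candidates for RPHS.

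The key step is the operator identity
\[
\theta_{(\phi\phi^\flat)h} = M_\phi\,\theta_h\,M_\phi^{-1}.
\]
To obtain it I would use that $R M_\psi = M_{R\psi} R$ for every $\psi \in L^\infty(\R,\C)$, where $(R\psi)(x) = \psi(-x)$, together with $\overline{R\phi} = \phi^\flat$ in the scalar case; then
\[
M_\phi\,\theta_h\,M_\phi^{-1} = M_\phi M_h R M_{\overline\phi} = M_\phi M_h M_{\overline{R\phi}} R = M_{\phi\cdot\phi^\flat\cdot h}\,R = \theta_{(\phi\phi^\flat)h}.
\]
So $\theta_{(\phi\phi^\flat)h}$ is just the conjugate of $\theta_h$ by the unitary multiplication operator $M_\phi$.

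From this the equivalence is immediate: for every $\xi \in L^2(\R,\C)$,
\[
\braket*{\xi}{\theta_{(\phi\phi^\flat)h}\xi} = \braket*{\xi}{M_\phi\theta_h M_\phi^{-1}\xi} = \braket*{M_\phi^{-1}\xi}{\theta_h M_\phi^{-1}\xi},
\]
and as $\xi$ ranges over $H^2(\C_+)$ the vector $M_\phi^{-1}\xi$ ranges over $M_\phi^{-1}H^2(\C_+)$; hence $\braket*{\xi}{\theta_{(\phi\phi^\flat)h}\xi}\ge 0$ for all $\xi \in H^2(\C_+)$ exactly when $\braket*{\eta}{\theta_h\eta}\ge 0$ for all $\eta \in M_\phi^{-1}H^2(\C_+)$. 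For the last assertion, since $\phi$ is inner we have $M_\phi H^2(\C_+)\subeq H^2(\C_+)$, hence $H^2(\C_+)\subeq M_\phi^{-1}H^2(\C_+)$; so if $(L^2(\R,\C),M_\phi^{-1}H^2(\C_+),\theta_h)$ is a RPHS, the positivity of $\braket*{\cdot}{\theta_h\cdot}$ holds on the larger space and a fortiori on the subspace $H^2(\C_+)$, which makes $(L^2(\R,\C),H^2(\C_+),\theta_h)$ a RPHS as well. There is no real obstacle here; the only point requiring care is getting the conjugation identity $\theta_{(\phi\phi^\flat)h}=M_\phi\theta_h M_\phi^{-1}$ exactly right — tracking how $R$ commutes past multiplication operators and recognizing the resulting symbol $\phi\cdot\overline{R\phi}$ as $\phi\phi^\flat$ — plus the routine verification that $(\phi\phi^\flat)h\in L^\infty(\R,\T)^\flat$ so that the statement is well-posed.
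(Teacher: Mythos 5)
Your proof is correct and follows essentially the same route as the paper: your conjugation identity \(\theta_{(\phi\phi^\flat)h} = M_\phi\,\theta_h\,M_\phi^{-1}\) is exactly the paper's computation \(\braket*{f}{\phi\phi^\flat h\,Rf} = \braket*{\phi^* f}{h\,R(\phi^* f)}\) packaged as an operator identity, and the final assertion is obtained in both cases from \(H^2(\C_+) \subeq M_\phi^{-1}H^2(\C_+)\). The extra verification that \((\phi\phi^\flat)h \in L^\infty(\R,\T)^\flat\) is a harmless addition.
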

\begin{proof}
The triple \((L^2(\R,\C),H^2(\C_+),\theta_{(\phi \phi^\flat)h})\) is a RPHS, if and only if
\[0 \leq \braket*{f}{\theta_{(\phi \phi^\flat)h} f} = \braket*{f}{\phi \phi^\flat h Rf} = \braket*{\phi^* f}{h R (\phi^* f)} \quad \forall f \in H^2(\C_+).\]
This is equivalent to
\[\braket*{f}{\theta_h f} \geq 0 \quad \forall f \in M_{\phi^*} H^2(\C_+) = M_\phi^{-1} H^2(\C_+)\]
and therefore to \((L^2(\R,\C),M_{\phi}^{-1} H^2(\C_+),\theta_h)\) being a RPHS. Since \(M_\phi H^2(\C_+) \subeq H^2(\C_+)\) and therefore
\[H^2(\C_+) \subeq M_\phi^{-1} H^2(\C_+),\]
this especially implies
\[\braket*{f}{\theta_h f} \geq 0 \quad \forall f \in H^2(\C_+),\]
so \((L^2(\R,\C),H^2(\C_+),\theta_h)\) is a RPHS.
\end{proof}
In this subsection, we want to use this lemma to show that, for a function \({h \in L^\infty(\R,\T)^\flat}\) such that the triple \((L^2(\R,\C),H^2(\C_+),\theta_h)\) is a RPHS, the kernel \(\ker(\theta_h - \textbf{1})\) just contains outer functions, which are defined as follows:
\begin{definition}{(cf. \fref{def:Outer})}
\begin{enumerate}[\rm (a)]
\item For \(C \in \T\) and an almost everywhere defined function \(K: \R \to \R_{\geq 0}\) with
\begin{equation*}
I(K) \coloneqq \int_\R \frac{\left|\log\left(K\left(p\right)\right)\right|}{1 + p^2} \,dp < \infty
\end{equation*}
we define the function \(\Out(C,K) \in \cO(\C_+)\) by
\begin{equation*}
\Out(C,K)\left(z\right) = C \cdot \exp\left(\frac{1}{\pi i} \int_\R \left[\frac{1}{p-z} - \frac{p}{1 + p^2}\right] \log\left(K\left(p\right)\right) dp\right), \quad z \in \C_+
\end{equation*}
and call such a function an {\it outer function}.
\item We set \(\Out(K) \coloneqq \Out(1,K)\).
\item We write \(\mathrm{Out}(\C_+)\) for the set of outer functions on \(\C_+\) and set
\[\Out^2(\C_+) = \Out(\C_+) \cap H^2(\C_+).\]
\end{enumerate} 
\end{definition}
\begin{remark}
Since
\[\Out(C,K)\left(z\right) \in \T \cdot \exp(\C) = \C^\times,\]
we have \(0 \notin \Out(\C_+)\) and therefore also \(0 \notin \Out^2(\C_+)\). This also shows that, for an outer function~\(F \in \Out(\C_+)\), expressions like \(\frac 1F\) are defined.
\end{remark}
\begin{prop}\label{prop:OuterMaximal}
Let \({h \in L^\infty(\R,\T)^\flat}\) such that the triple \((L^2(\R,\C),H^2(\C_+),\theta_h)\) is a RPHS and
\[\ker(\theta_h - \textbf{1}) \cap \Out^2(\C_+) \neq \eset.\]
Then \((L^2(\R,\C),H^2(\C_+),\theta_h)\) is a maximal RPHS.
\end{prop}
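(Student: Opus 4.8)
The plan is to verify condition (b) of \fref{thm:EPlusMaximal}: I will produce a subspace $V$ of $L^2(\R,\C)^{\theta_h}$ that is dense in $L^2(\R,\C)^{\theta_h}$ and contained in $H^2(\C_+) + L^2(\R,\C)^{-\theta_h}$. Fix an element $F \in \ker(\theta_h - \textbf{1}) \cap \Out^2(\C_+)$. Then $F \in H^2(\C_+)$ is nonzero, so by \fref{thm:HardyAlmostEverywhereNonZero} its boundary values are nonzero almost everywhere on $\R$, and the relation $\theta_h F = F$ reads $h \cdot RF = F$, i.e.\ $h = F/RF$ as functions on $\R$. For $\omega \in \C_+$ the reproducing kernel $Q_\omega$ (cf.\ \fref{prop:H2RKHS}) is bounded, hence $Q_\omega + RQ_\omega$ is bounded and the product $\Phi_\omega := F\cdot(Q_\omega + RQ_\omega)$ is a well-defined element of $L^2(\R,\C)$. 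I take $V := \spann_\C\{\Phi_\omega : \omega \in \C_+\}$.

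First, $V \subseteq L^2(\R,\C)^{\theta_h}$: since $R^2 = \textbf{1}$ and $h\cdot RF = F$, one computes $\theta_h \Phi_\omega = h\cdot RF\cdot(RQ_\omega + Q_\omega) = \Phi_\omega$. Second, $V \subseteq H^2(\C_+) + L^2(\R,\C)^{-\theta_h}$: write $\Phi_\omega = 2FQ_\omega - F(Q_\omega - RQ_\omega)$. Here $FQ_\omega \in H^2(\C_+)$ because $F \in H^2(\C_+)$ and $Q_\omega \in H^\infty(\C_+)$, while $\theta_h\big(F(Q_\omega - RQ_\omega)\big) = h\cdot RF\cdot(RQ_\omega - Q_\omega) = -F(Q_\omega - RQ_\omega)$, so the second summand lies in $L^2(\R,\C)^{-\theta_h}$.

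The main step, and the one I expect to be the real obstacle, is density of $V$ in $L^2(\R,\C)^{\theta_h}$, which I would establish by duality: if $\xi \in L^2(\R,\C)^{\theta_h}$ is orthogonal to every $\Phi_\omega$, I claim $\xi = 0$. Put $w := \overline{F}\,\xi$, which lies in $L^1(\R)$ by Cauchy--Schwarz, and $u := w + Rw$, an even element of $L^1(\R)$. Expanding $\braket*{\Phi_\omega}{\xi} = 0$, substituting $x \mapsto -x$ in the term coming from $RQ_\omega$, and using $\overline{Q_\omega(x)} = -\tfrac{1}{2\pi i}\,(x-\omega)^{-1}$, one gets $\int_\R \frac{u(x)}{x-\omega}\,dx = 0$ for all $\omega \in \C_+$; as $u$ is even, replacing $\omega$ by $-\omega$ yields the same for all $\omega \in \C_-$, so the Cauchy transform of $u$ vanishes on $\C\setminus\R$. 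By the Sokhotski--Plemelj jump formula the difference of its boundary values from above and below equals $2\pi i$ times the Poisson integral of $u$, which therefore vanishes, forcing $u = 0$; that is, $\overline{F}\,\xi$ is odd, $R(\overline{F}\,\xi) = -\overline{F}\,\xi$. Feeding in $R\xi = \tfrac{RF}{F}\,\xi$, which is equivalent to $\theta_h\xi = \xi$, we get $R(\overline{F}\,\xi) = \overline{RF}\cdot R\xi = \tfrac{|RF|^2}{F}\,\xi$, so oddness becomes $\big(|F|^2 + |RF|^2\big)\xi = 0$ after multiplying by $F$; since $|F| > 0$ almost everywhere on $\R$, this forces $\xi = 0$. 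Hence $V$ is dense, and \fref{thm:EPlusMaximal} gives that $(L^2(\R,\C),H^2(\C_+),\theta_h)$ is a maximal RPHS. The delicate point throughout is the unboundedness of multiplication by $F$ and $1/F$: this is why I test against the bounded functions $Q_\omega$ and descend to $L^1$ via $\overline{F}\,\xi$ rather than trying to conjugate $\theta_h$ to the reflection $R$ by $M_F$.
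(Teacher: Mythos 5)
Your proof is correct, and while it follows the same overall skeleton as the paper's — verify condition (b) of \fref{thm:EPlusMaximal} by exhibiting a dense subspace of $L^2(\R,\C)^{\theta_h}$ made of $R$-symmetrized multiples of the fixed point $F$, and split each element as $2FQ_\omega - F(Q_\omega - RQ_\omega) \in H^2(\C_+) + L^2(\R,\C)^{-\theta_h}$, exactly parallel to the paper's $(f+Rf)F = 2fF - (f-Rf)F$ — the key density step is genuinely different. The paper symmetrizes $H^\infty(\C_+)$-multipliers and gets density from the cyclicity of outer functions (\fref{thm:outerSpan}, \fref{cor:OuterDense}), so outerness of $F$ is essential there; you instead symmetrize the Szegő kernels $Q_\omega$ and kill the orthogonal complement by a duality argument: orthogonality of $\xi$ forces the Cauchy transform of the even $L^1$-function $u = \overline F\xi + R(\overline F\xi)$ to vanish on both half planes, the Plemelj/Poisson identity gives $u=0$, and combining oddness of $\overline F\xi$ with $\theta_h\xi = \xi$ yields $(|F|^2+|RF|^2)\xi = 0$, hence $\xi = 0$ since $|F|>0$ a.e.\ by \fref{thm:HardyAlmostEverywhereNonZero}. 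Two remarks on what each route buys: your argument never uses outerness, only that $F$ is a nonzero element of $H^2(\C_+)$ fixed by $\theta_h$, so it in fact proves directly the stronger statement that the paper only reaches later as \fref{prop:kerNonTrivOuterFix} (via the inner--outer factorization and \fref{lem:InnerSplitOff}); on the other hand, it imports the standard jump/approximate-identity facts for the Cauchy and Poisson kernels, which lie outside the toolkit the paper quotes (the paper's density input is Rudin's cyclicity theorem instead), though for a fixed $\epsilon$ the identity $Cu(t+i\epsilon)-Cu(t-i\epsilon) = 2\pi i\,(P_\epsilon * u)(t)$ is elementary and $P_\epsilon * u \to u$ in $L^1$ is classical, so this is unproblematic. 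Only a cosmetic slip: $\overline{Q_\omega(x)} = +\tfrac{1}{2\pi i}(x-\omega)^{-1}$, not $-\tfrac{1}{2\pi i}(x-\omega)^{-1}$; the constant is irrelevant to the conclusion $\int_\R \frac{u(x)}{x-\omega}\,dx = 0$.
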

\begin{proof}
We choose
\[F \in \left(\ker(\theta_h - \textbf{1}) \cap \Out^2(\C_+)\right) \setminus \{0\}.\]
Then
\[F = \theta_h F = h \cdot RF\]
and therefore
\[h = \frac{F}{RF}.\]
For \(f \in L^\infty(\R,\C)\) we get
\[\theta_h(fF) = \frac {F}{RF} R(fF) = \frac {F}{RF} (R f)(RF) = (R f) F,\]
so, for \(f \in L^\infty(\R,\C)^{\pm R}\), we have \(\theta_h(fF) = \pm fF\) and therefore
\[\overline{L^\infty(\R,\C)^{\pm R}F} \subeq L^2(\R,\C)^{\pm \theta_h}.\]
Further, by \fref{cor:OuterDense}, we have
\[\overline{L^\infty(\R,\C)^{R}F + L^\infty(\R,\C)^{-R}F} = \overline{L^\infty(\R,\C)F} = L^2(\R,\C) = L^2(\R,\C)^{\theta_h} \oplus L^2(\R,\C)^{-\theta_h},\]
which yields
\[L^2(\R,\C)^{\pm \theta_h} = \overline{L^\infty(\R,\C)^{\pm R}F}.\]
Also by \fref{cor:OuterDense}, we have
\[\overline{\left(H^\infty(\C_+) + H^\infty(\C_-)\right)F} = L^2(\R,\C),\]
wherefore, for every \(f \in L^\infty(\R,\C)^{R}\), there are sequences
\[(f_n)_{n \in \N} \subeq H^\infty(\C_+) \quad \text{and} \quad (g_n)_{n \in \N} \subeq H^\infty(\C_-)\]
such that
\[\lim_{n \to \infty}(f_n+g_n)F = fF.\]
This also implies
\begin{align*}
\lim_{n \to \infty}(R(f_n+g_n))F &= \lim_{n \to \infty}(R(f_n+g_n)) \theta_h F = \theta_h \left(\lim_{n \to \infty}(f_n+g_n) F\right)
\\&= \theta_h(fF) = (R f) \theta_h F = (R f) F = fF.
\end{align*}
If we then set
\[s_n \coloneqq \frac{f_n+R g_n}2 \in H^\infty(\C_+), \quad n \in \N,\]
we get
\begin{align*}
\lim_{n \to \infty}\left(s_n+R s_n\right)F &= \lim_{n \to \infty}\left(\left(\frac{f_n+R g_n}2\right)+R \left(\frac{f_n+R g_n}2\right)\right)F
\\&= \frac 12 \left(\lim_{n \to \infty}\left(f_n+g_n\right)F + \lim_{n \to \infty}(R\left(f_n+g_n\right))F\right)
\\&= \frac 12 \left(fF + fF\right) = fF.
\end{align*}
This shows that the set
\[V \coloneqq \left\{(f+R f)F : f \in H^\infty(\C_+)\right\}\]
is dense in \(L^\infty(\R,\C)^{R}F\) and therefore also in \(\overline{L^\infty(\R,\C)^{R}F} = L^2(\R,\C)^{\theta_h}\). For every element \(v = (f+R f)F \in V\), we have
\begin{align*}
v &= (f+R f)F = 2fF - (f-R f)F
\\&\in H^\infty(\C_+)F + L^\infty(\R,\C)^{-R}F \subeq H^2(\C_+) + L^2(\R,\C)^{-\theta_h},
\end{align*}
so
\[V \subeq H^2(\C_+) + L^2(\R,\C)^{-\theta_h}.\]
This, by \fref{thm:EPlusMaximal}, implies that \((L^2(\R,\C),H^2(\C_+),\theta_h)\) is a maximal RPHS.
\end{proof}
\begin{cor}\label{cor:kernelOuterOrZero}
Let \({h \in L^\infty(\R,\T)^\flat}\) such that the triple \((L^2(\R,\C),H^2(\C_+),\theta_h)\) is a RPHS. Then
\[\ker(\theta_h - \textbf{1}) \cap H^2(\C_+) \subeq \{0\} \cup \Out^2(\C_+).\]
\end{cor}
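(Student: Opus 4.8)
The plan is to show that any nonzero $F \in \ker(\theta_h - \textbf{1}) \cap H^2(\C_+)$ is outer, by splitting off its inner part using \fref{lem:InnerSplitOff} and then invoking the maximality produced by \fref{prop:OuterMaximal}. Since $0 \notin \Out^2(\C_+)$, this is exactly the asserted inclusion $\ker(\theta_h - \textbf{1}) \cap H^2(\C_+) \subeq \{0\} \cup \Out^2(\C_+)$.

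First I would fix $0 \neq F \in \ker(\theta_h - \textbf{1}) \cap H^2(\C_+)$ and write its inner--outer factorization $F = \phi\,G$ with $\phi \in \Inn(\C_+)$ and $G \in \Out^2(\C_+) \setminus \{0\}$, so that the claim reduces to showing that $\phi$ is a unimodular constant. From $F = \theta_h F = h \cdot RF$ and $R(\phi G) = (R\phi)(RG)$ one obtains
\[
h = \frac{F}{RF} = \frac{\phi}{R\phi} \cdot \frac{G}{RG} = (\phi\,\phi^\flat)\, h', \qquad h' \coloneqq \frac{G}{RG},
\]
using that $\phi\,\phi^\flat = \phi/R\phi$ since $\phi$ has unimodular boundary values. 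As $|h| = 1$ a.e.\ forces $|F(x)| = |F(-x)|$, hence $|G(x)| = |RG(x)|$, a.e., the function $h'$ lies in $L^\infty(\R,\T)^\flat$, and by construction $\theta_{h'}G = h'\cdot RG = G$, so $G$ is a nonzero element of $\ker(\theta_{h'} - \textbf{1}) \cap \Out^2(\C_+)$.

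Next I would apply \fref{lem:InnerSplitOff} with $h'$ in the role of $h$ and the same $\phi$: since $(\phi\phi^\flat)h' = h$ and $(L^2(\R,\C),H^2(\C_+),\theta_h)$ is a RPHS by hypothesis, the lemma yields that $(L^2(\R,\C),M_\phi^{-1}H^2(\C_+),\theta_{h'})$ is a RPHS and, moreover, that $(L^2(\R,\C),H^2(\C_+),\theta_{h'})$ is a RPHS. Because $G$ is a nonzero vector in $\ker(\theta_{h'} - \textbf{1}) \cap \Out^2(\C_+)$, \fref{prop:OuterMaximal} shows that $(L^2(\R,\C),H^2(\C_+),\theta_{h'})$ is a \emph{maximal} RPHS. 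Now $M_\phi$ is unitary on $L^2(\R,\C)$ and $M_\phi H^2(\C_+) \subeq H^2(\C_+)$, so $M_\phi^{-1}H^2(\C_+)$ is a closed subspace containing $H^2(\C_+)$ on which $\theta_{h'}$ is reflection positive; maximality forces $M_\phi^{-1}H^2(\C_+) = H^2(\C_+)$, i.e.\ $M_\phi H^2(\C_+) = H^2(\C_+)$.

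Finally I would conclude that $\phi$ is constant: from $G \in H^2(\C_+) = M_\phi H^2(\C_+)$ we get $G = \phi\,k$ with $k \in H^2(\C_+)$; writing $k = b\,u$ in inner--outer form gives $G = (\phi b)\,u$ with $\phi b$ inner, so by uniqueness of the inner--outer factorization of the outer function $G$ the inner factor $\phi b$ is a unimodular constant, whence $|\phi| \equiv 1$ on all of $\C_+$ and $\phi$ is constant by the maximum modulus principle. Then $F = \phi G$ is outer. The part that will demand the most care is the bookkeeping around the operators $R$ and $\flat$ --- in particular verifying $h = (\phi\phi^\flat)h'$ and $h' \in L^\infty(\R,\T)^\flat$ so that the hypotheses of \fref{lem:InnerSplitOff} and \fref{prop:OuterMaximal} genuinely apply; the Hardy-space ingredients (existence and uniqueness of the inner--outer factorization, unitarity of $M_\phi$ for inner $\phi$, and $M_\phi H^2(\C_+) \subeq H^2(\C_+)$) are standard and already available from the preliminary sections.
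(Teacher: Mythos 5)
Your proof is correct and follows essentially the same route as the paper: inner--outer factorization of the nonzero fixed point, twisting the symbol by $(\phi\phi^\flat)^{-1}$ (your $h'$ is exactly the paper's $k$), then \fref{lem:InnerSplitOff} together with \fref{prop:OuterMaximal}, and maximality to force $M_\phi^{-1}H^2(\C_+) = H^2(\C_+)$. The only divergence is the final deduction that $\phi$ is constant: the paper gets $\phi = C \cdot \textbf{1}$ from $M_{\phi^*}H^2(\C_+) = H^2(\C_+)$ via \fref{prop:H2InclusionFunctions} and \fref{cor:HInftyIntersection}, whereas you appeal to uniqueness of the inner--outer factorization (not explicitly stated in the paper, but standard and also derivable from \fref{thm:outerSpan} together with \fref{cor:H2EqualFunctions}); either finish works.
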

\begin{proof}
Let \(f \in \ker(\theta_h - \textbf{1}) \cap H^2(\C_+)\). If \(f = 0\) then obviously \(f \in \{0\} \cup \Out^2(\C_+)\). If \(f \neq 0\), then, by \fref{thm:OuterInnerDecomp}, there exists an outer function \({F \in \Out^2(\C_+)}\) and an inner function \(\phi \in \Inn(\C_+)\) such that
\[f = \phi \cdot F.\]
We set
\[k \coloneqq (\phi \phi^\flat)^{-1} h \in L^\infty(\R,\C)^\flat.\]
Then
\[h = (\phi \phi^\flat) k\]
and therefore, by \fref{lem:InnerSplitOff} the triples \((L^2(\R,\C),M_\phi^{-1} H^2(\C_+),\theta_k)\) and \((L^2(\R,\C),H^2(\C_+),\theta_k)\) are RPHSs. Since
\[k = (\phi \phi^\flat)^{-1} h = (\phi \phi^\flat)^{-1} \frac{f}{Rf} = \frac{\phi^{-1} f}{R (\phi^{-1} f)} = \frac F{RF},\]
we have
\[F \in \ker(\theta_k - \textbf{1}) \cap \Out^2(\C_+).\]
By \fref{prop:OuterMaximal} this implies that \((L^2(\R,\C),H^2(\C_+),\theta_k)\) is a maximal RPHS. Since we have \(H^2(\C_+) \subeq M_\phi^{-1} H^2(\C_+)\), this implies that
\[H^2(\C_+) = M_\phi^{-1} H^2(\C_+) = M_{\phi^*} H^2(\C_+),\]
which yields \(\phi^* \in H^\infty(\C_+)\) by \fref{prop:H2InclusionFunctions} and therefore \(\phi \in H^\infty(\C_-)\). On the other hand \(\phi \in H^\infty(\C_+)\), so by \fref{cor:HInftyIntersection} we get that \(\phi = C \cdot \textbf{1}\) for some \(C \in \T\). This implies that
\[f = \phi \cdot F = C \cdot F \in \Out^2(\C_+). \qedhere\]
\end{proof}
\begin{prop}\label{prop:kerNonTrivOuterFix}
Let \({h \in L^\infty(\R,\T)^\flat}\) such that the triple \((L^2(\R,\C),H^2(\C_+),\theta_h)\) is a RPHS. If
\[\ker(\theta_h - \textbf{1}) \cap H^2(\C_+) \neq \{0\}\]
then there exists an outer function \(F \in \Out^2(\C_+)\) such that
\[h = \frac F{RF}\]
and the triple \((L^2(\R,\C),H^2(\C_+),\theta_h)\) is a maximal RPHS.
\end{prop}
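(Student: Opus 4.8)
The plan is to read off the statement directly from the two results just proved. First I would choose a nonzero element
\[
f \in \bigl(\ker(\theta_h - \textbf{1}) \cap H^2(\C_+)\bigr) \setminus \{0\},
\]
which exists by hypothesis. By \fref{cor:kernelOuterOrZero} one has $\ker(\theta_h-\textbf{1})\cap H^2(\C_+)\subeq\{0\}\cup\Out^2(\C_+)$, and since $f\neq 0$ this forces $f\in\Out^2(\C_+)$. In particular $\ker(\theta_h-\textbf{1})\cap\Out^2(\C_+)\neq\eset$.

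Next I would extract the formula for $h$. The relation $\theta_h f = f$ unfolds to $M_h R f = f$, that is, $h(x)\,f(-x)=f(x)$ for almost every $x\in\R$. Since $f$ is a nonzero Hardy-space function (indeed an outer function, hence zero-free in the sense of the remark following the definition of outer functions), its boundary values are nonzero almost everywhere by \fref{thm:HardyAlmostEverywhereNonZero}; thus $Rf$ is nonzero almost everywhere and dividing gives $h = \tfrac{f}{Rf}$. Setting $F\coloneqq f\in\Out^2(\C_+)$ yields $h=\tfrac{F}{RF}$ as claimed.

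Finally, maximality is immediate: since $\ker(\theta_h-\textbf{1})\cap\Out^2(\C_+)\neq\eset$, \fref{prop:OuterMaximal} applies verbatim and shows that $(L^2(\R,\C),H^2(\C_+),\theta_h)$ is a maximal RPHS. I do not expect a genuine obstacle here; the substance is entirely contained in \fref{cor:kernelOuterOrZero} and \fref{prop:OuterMaximal}, which are already available, and the only point requiring a moment of care is that the quotient $\tfrac{F}{RF}$ is only meaningful once one knows that $F$ is outer (so that $RF$ does not vanish on a set of positive measure) — and this is exactly what \fref{cor:kernelOuterOrZero} supplies.
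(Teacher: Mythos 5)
Your proposal is correct and follows essentially the same route as the paper: take a nonzero fixed point in \(H^2(\C_+)\), use \fref{cor:kernelOuterOrZero} to conclude it is outer, read off \(h = \tfrac{F}{RF}\) from \(\theta_h F = F\), and invoke \fref{prop:OuterMaximal} for maximality. Your extra remark that \(RF\) is nonzero almost everywhere (justifying the division) is a point the paper leaves implicit, but the substance is identical.
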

\begin{proof}
By \fref{cor:kernelOuterOrZero} we immediately get an outer function \(F \in \Out^2(\C_+)\) such that
\[F = \theta_h F = h \cdot RF,\]
which is equivalent to \(h = \frac F{RF}\). This, by \fref{prop:OuterMaximal}, implies that \((L^2(\R,\C),H^2(\C_+),\theta_h)\) is a maximal RPHS.
\end{proof}

\subsubsection{Involutions with fixed points and reflection positive functions}
In \fref{prop:kerNonTrivOuterFix} we have seen that the functions \({h \in L^\infty(\R,\T)^\flat}\), we are looking for, are of the form \(h = \frac F{RF}\) for some outer function \(F \in \Out^2(\C_+)\). To answer the question for which outer functions \(F \in \Out^2(\C_+)\) the triple \((L^2(\R,\C),H^2(\C_+),\theta_{\frac F{RF}})\) is a maximal RPHS, we need the definition of a reflection positive function:
\begin{definition}(cf. \cite[Def. 4.1.3]{NO18})
\begin{enumerate}[\rm (a)]
\item Let \(X\) be a non-empty set. A function
\[K: X \times X \to \R\]
is called a \textit{positive definite kernel} if, for any \(n \in \N\) and \(x_1,\dots,x_n \in X\), the matrix
\[(K(x_k,x_j))_{k,j = 1,\dots,n}\]
is a positive semidefinite matrix.
\item A function \(\phi: \R \to \R\) is called \textit{reflection positive}, if both kernels
\[\R \times \R \ni (s,t) \mapsto \phi\left(s-t\right) \quad \text{and} \quad \R_+ \times \R_+ \ni (s,t) \mapsto \phi\left(s+t\right)\]
are positive definite.
\end{enumerate}
\end{definition}
In the following, we will use the Fourier transform
\[\gls*{fourier1}: L^1(\R,\C) \to L^\infty(\R,\C)\]
with
\[(\cF_1 f)(t) = \int_\R e^{-itx} f(x) dx.\]
Since this function is injective, we can define the map
\[\cF_1^{-1}: \cF_1(L^1(\R,\C)) \to L^1(\R,\C), \quad \cF_1 f \mapsto f.\]
\begin{prop}\label{prop:outerequivalence}
Let \({h \in L^\infty(\R,\T)^\flat}\) and let
\[F \in \ker(\theta_h - \textbf{1}) \cap \Out^2(\C_+).\]
Then the following are equivalent:
\begin{enumerate}[\rm (a)]
\item \(\left(L^2\left(\R,\C\right),H^2\left(\C_+\right),\theta_h\right)\) is a maximal RPHS.
\item \(\left(L^2\left(\R,\C\right),H^2\left(\C_+\right),\theta_h\right)\) is a RPHS.
\item The function \(\R \ni t \mapsto \phi(t) \coloneqq \braket*{S_t F}{F}\) is reflection positive.
\item \(|F|^2 = \cF_1^{-1} \phi\) for some reflection positive function \(\phi \in \cF_1(L^1(\R,\C))\).
\item \(F = \Out\left(C,\sqrt{\cF_1^{-1} \phi}\right)\) for some \(C \in \T\) and some reflection positive function \(\phi \in \cF_1(L^1(\R,\C))\) with \(I\left(\sqrt{\cF_1^{-1} \phi}\right) < \infty\).
\end{enumerate}
\end{prop}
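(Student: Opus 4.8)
The plan is to route everything through condition (b), isolating the one substantive point — the translation of reflection positivity of the triple into reflection positivity of the scalar function $\phi$ — while treating the endpoints (a) and (e) as essentially formal. The equivalence (a) $\Leftrightarrow$ (b) needs almost nothing: (a) $\Rightarrow$ (b) is trivial, and for (b) $\Rightarrow$ (a) the hypothesis already supplies $F \in \ker(\theta_h - \textbf{1}) \cap \Out^2(\C_+)$, a set which is non-empty since outer functions never vanish (so $F \neq 0$); hence \fref{prop:OuterMaximal} applies directly and yields maximality. It remains to connect (b) with (c), (d), (e).

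For (b) $\Leftrightarrow$ (c): by \fref{lem:thetahForm} one has $\theta_h \circ S_t = S_{-t} \circ \theta_h$ for all $t \in \R$, so, using $\theta_h F = F$,
\[
\theta_h S_t F \;=\; S_{-t}\,\theta_h F \;=\; S_{-t} F , \qquad t \in \R ,
\]
whence for $s,t \geq 0$
\[
\braket*{S_s F}{\theta_h S_t F} \;=\; \braket*{S_s F}{S_{-t} F} \;=\; \braket*{S_{s+t} F}{F} \;=\; \phi(s+t),
\]
and likewise $\braket*{S_s F}{S_t F} = \phi(s-t)$. The latter identity shows $(s,t) \mapsto \phi(s-t)$ is a Gram kernel of the family $(S_s F)_{s \in \R}$, hence positive definite with no assumption; moreover $h \in L^\infty(\R,\T)^\flat$ together with $h = F/RF$ (read off from $\theta_h F = F$) forces $|F(-x)| = |F(x)|$ a.e., so $\phi$ is real-valued and the term ``reflection positive function'' genuinely applies. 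For the remaining half, fix $s_1,\dots,s_n \in \R_+$ and $c_1,\dots,c_n \in \C$ and put $g := \sum_j c_j S_{s_j} F$, which lies in $H^2(\C_+)$ precisely because each $s_j \geq 0$; then $\sum_{j,k} \overline{c_j}\,c_k\,\phi(s_j+s_k) = \braket*{g}{\theta_h g}$. If (b) holds this is $\geq 0$, giving positive definiteness of $(s,t) \mapsto \phi(s+t)$ on $\R_+ \times \R_+$, i.e. (c). Conversely, if (c) holds then $\braket*{g}{\theta_h g} \geq 0$ for every $g \in \spann_\C\{S_s F : s \geq 0\}$; since $F$ is outer this span is dense in $H^2(\C_+)$ (outer vectors are cyclic for the one-sided shift semigroup, in the spirit of the density results underlying \fref{prop:OuterMaximal}), and $g \mapsto \braket*{g}{\theta_h g}$ is continuous, so (b) follows.

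The equivalences (c) $\Leftrightarrow$ (d) $\Leftrightarrow$ (e) are then bookkeeping. Unwinding the scalar product, $\phi(t) = \braket*{S_t F}{F} = \int_\R e^{-itx}\,|F(x)|^2\,dx = (\cF_1 |F|^2)(t)$, with $|F|^2 \in L^1(\R,\C)$ since $F \in H^2(\C_+)$; by injectivity of $\cF_1$, $\phi$ is the unique element of $\cF_1(L^1(\R,\C))$ with $\cF_1^{-1}\phi = |F|^2$, so (d) is literally the statement that this $\phi$ is reflection positive, i.e. (c). For (d) $\Leftrightarrow$ (e) I would invoke the structure of outer functions: since $F \in \Out^2(\C_+)$, we may write $F = \Out(C,K)$ with $C \in \T$, where $K = |F_*|$ is the boundary modulus, so $|F|^2 = K^2$ on $\R$ and $I(K) < \infty$; hence ``$|F|^2 = \cF_1^{-1}\phi$ with $\phi$ reflection positive'' is the same as ``$K = \sqrt{\cF_1^{-1}\phi}$ with $I(\sqrt{\cF_1^{-1}\phi}) < \infty$ and $\phi$ reflection positive'', i.e. $F = \Out(C,\sqrt{\cF_1^{-1}\phi})$.

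The only genuinely non-formal ingredients are the density of $\spann_\C\{S_s F : s \geq 0\}$ in $H^2(\C_+)$ — the single place where outerness of $F$ is really used — and the identification of the boundary modulus of $\Out(C,K)$ with $K$; both are classical, and the former is the one step where I would want to pin down the precise reference. Everything else reduces to manipulating $\theta_h = M_h \circ R$, the relation $\theta_h S_t = S_{-t}\theta_h$, and the eigenrelation $\theta_h F = F$, so I expect no deeper obstacle.
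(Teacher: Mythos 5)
Your proof is correct and follows essentially the same route as the paper: (a)\(\Leftrightarrow\)(b) via \fref{prop:OuterMaximal}, (b)\(\Leftrightarrow\)(c) via the Gram and Hankel kernel computations together with cyclicity of the outer function, (c)\(\Leftrightarrow\)(d) via the identity \(\phi = \cF_1|F|^2\), and (d)\(\Leftrightarrow\)(e) via the boundary modulus of outer functions. The density fact you flagged as the one reference to pin down is exactly \fref{thm:outerSpan}, which is what the paper invokes at that step.
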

\begin{proof}
\begin{itemize}
\item[(a) \(\Leftrightarrow\) (b):]
This follows immediately by \fref{prop:OuterMaximal}.
\item[(b) \(\Leftrightarrow\) (c):]
The kernel
\[\R \times \R \ni (s,t) \mapsto \phi(s-t)\]
is always positive definite, since, for \(n \in \N\), \(v_1,\dots,v_n \in \C\) and \(t_1,\dots,t_n \in \R\), we have
\begin{align*}
\sum_{k=1}^n \sum_{j=1}^n \oline{v_k}\phi\left(t_k-t_j\right)v_j &= \sum_{k=1}^n \sum_{j=1}^n \oline{v_k}\braket*{S_{t_k-t_j} F}{F}v_j = \sum_{k=1}^n \sum_{j=1}^n \oline{v_k}\braket*{S_{t_k} F}{S_{t_j} F}v_j
\\&= \braket*{\sum_{k=1}^n v_k S_{t_k} F}{\sum_{j=1}^n v_j S_{t_j} F} \geq 0.
\end{align*}
Therefore it remains to show that \(\left(L^2\left(\R,\C\right),H^2\left(\C_+\right),\theta_h\right)\) is a RPHS, if and only if the function
\[K: \R_+ \times \R_+ \to \C, \quad (s,t) \mapsto \phi(s+t)\]
is a positive definite kernel. For \(n \in \N\), \(v_1,\dots,v_n \in \C\) and \(t_1,\dots,t_n \in \R\), we have
\begin{align*}
\sum_{k=1}^n \sum_{j=1}^n \oline{v_k}\phi(t_k+t_j) v_j &= \sum_{k=1}^n \sum_{j=1}^n \oline{v_k}\braket*{S_{t_k+t_j} F}{F}v_j = \sum_{k=1}^n \sum_{j=1}^n \oline{v_k}\braket*{S_{t_k} F}{S_{-t_j} F}v_j
\\&=\sum_{k=1}^n \sum_{j=1}^n \oline{v_k}\braket*{S_{t_k} F}{S_{-t_j} \theta_h F}v_j = \sum_{k=1}^n \sum_{j=1}^n \oline{v_k}\braket*{S_{t_k} F}{\theta_h S_{t_j} F}v_j
\\&= \braket*{\left(\sum_{k=1}^n v_k S_{t_k} F\right)}{\theta_h \left(\sum_{j=1}^n v_j S_{t_j} F\right)}.
\end{align*}
This shows that \(K\) is a positive definite kernel, if and only if
\[\braket*{f}{\theta_h f} \geq 0 \quad \forall f \in \Spann S(\R_+)F.\]
Since, by \fref{thm:outerSpan}, we have	
\[H^2(\C_+) = \overline{\Spann S(\R_+)F},\]
this is equivalent to
\[\braket*{f}{\theta_h f} \geq 0 \quad \forall f \in H^2(\C_+)\]
and therefore to the triple \(\left(L^2\left(\R,\C\right),H^2\left(\C_+\right),\theta_h\right)\) being a RPHS.
\end{itemize}
\item[(c) \(\Leftrightarrow\) (d):]
This follows immediately by
\begin{equation*}
\phi(t) = \braket*{S_t F}{F} =  \int_{\R} e^{-itx}\left|F\left(x\right)\right|^2 \,dx = (\cF_1 |F|^2)(t), \quad t \in \R.
\end{equation*}
\item[(d) \(\Leftrightarrow\) (e):]
We have \(|F|^2 = \cF_1^{-1} \phi\) for some reflection positive function \(\phi\), if and only if \(|F| = \sqrt{\cF_1^{-1} \phi}\), which, by \fref{thm:OuterBetrag}, is equivalent to
\[F = \Out\left(C,\sqrt{\cF_1^{-1} \phi}\right)\]
for some \(C \in \T\).
\end{proof}
This proposition shows that any fixed point \(F \in \Out^2(\C_+)\) of an involution \(\theta_h\), such that \(\left(L^2\left(\R,\C\right),H^2\left(\C_+\right),\theta_h\right)\) is a maximal RPHS, is of the form
\[F = \Out\left(C,\sqrt{\cF_1^{-1} \phi}\right)\] for some reflection positive function \(\phi \in \cF_1(L^1(\R,\C))\) with
\[I\left(\sqrt{\cF_1^{-1} \phi}\right) < \infty.\]
The question we now have to answer is for which reflection positive function \(\phi\) these two conditions are fulfilled. For this, we have to better understand reflection positive functions, and for this, we make the following definitions:
\begin{definition}
\begin{enumerate}[\rm (a)]
\item Let \(X\) be a locally compact Hausdorff space. We denote the set of Borel measures on \(X\) by \(\gls*{BMX}\) and the subset of finite Borel measures on \(X\) by \(\gls*{BMXfin}\).
\item For a measure \(\mu \in \mathcal{BM}^{\mathrm{fin}}\left(\R_{\geq 0}\right)\) we set
\begin{equation*}
\gls*{phimu}\left(t\right) \coloneqq \int_{\R_{\geq 0}} e^{-\lambda\left|t\right|}d\mu\left(\lambda\right), \quad t \in \R.
\end{equation*}
\item For a measure \(\mu \in \mathcal{BM}^{\mathrm{fin}}\left(\R_+\right)\) we set
\begin{equation*}
\gls*{psimu}\left(p\right) \coloneqq \frac{1}{\pi}\int_{\R_+} \frac{\lambda}{\lambda^2+p^2} \,d\mu\left(\lambda\right), \quad p \in \R^\times.
\end{equation*}
\end{enumerate}
\end{definition}
The following results will motivate this definition. Here we identify \(\mathcal{BM}^{\mathrm{fin}}\left(\R_+\right)\) with the set
\[\{\mu \in \mathcal{BM}^{\mathrm{fin}}\left(\R_{\geq 0}\right): \mu(\{0\}) = 0\}.\]
\begin{theorem}\label{thm:Widder}{\rm (cf. \cite[Thm. 4.1.1, Ex. 4.1.4]{NO18})}
A function \(\phi: \R \to \R\) is reflection positive, if and only if there exists a measure \(\mu \in \mathcal{BM}^{\mathrm{fin}}\left(\R_{\geq 0}\right)\) such that \(\phi = \phi_\mu\).
\end{theorem}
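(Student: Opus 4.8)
I would treat the two implications separately, and the crucial feature of the argument is that \emph{both} positivity conditions in the definition of reflection positivity are used in the harder direction. For the implication $\phi = \phi_\mu \Rightarrow \phi$ reflection positive, the plan is to reduce everything to the elementary kernels $e^{-\lambda|t|}$. First I would record that for $\lambda \geq 0$ the kernel $(s,t) \mapsto e^{-\lambda|s-t|}$ on $\R \times \R$ is positive definite: for $\lambda > 0$ this is Bochner's theorem applied to $e^{-\lambda|x|} = \int_\R e^{ipx}\,\tfrac{1}{\pi}\tfrac{\lambda}{\lambda^2+p^2}\,dp$, which is exactly the defining relation between $\phi_\mu$ and $\psi_\mu$ specialized to $\mu = \delta_\lambda$, and for $\lambda = 0$ it is trivial. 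Second, on $\R_+ \times \R_+$ the kernel $(s,t) \mapsto e^{-\lambda(s+t)} = e^{-\lambda s}\,\overline{e^{-\lambda t}}$ is positive definite as a rank-one kernel. Since $\mu$ is finite and positive, for any $v_1,\dots,v_n \in \C$ and $t_1,\dots,t_n$ the expressions $\sum_{k,j}\overline{v_k}v_j\,\phi_\mu(t_k \mp t_j)$ equal $\int_{\R_{\geq 0}}\sum_{k,j}\overline{v_k}v_j\,e^{-\lambda(t_k \mp t_j)}\,d\mu(\lambda) \geq 0$, so both required kernels are positive definite and $\phi_\mu$ is reflection positive.

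For the converse, $\phi$ reflection positive $\Rightarrow \phi = \phi_\mu$, the plan is the standard reproducing-kernel / Osterwalder--Schrader construction. From positive definiteness of $(s,t) \mapsto \phi(s-t)$ one obtains the reproducing kernel Hilbert space $\cH_\phi$ with kernel $K(s,t) = \phi(s-t)$ and translate vectors $\phi_t$ satisfying $\braket*{\phi_s}{\phi_t} = \phi(s-t)$; in particular $\phi$ is real, even, bounded by $\phi(0)$, and the translations $\tau_r\colon \phi_t \mapsto \phi_{t+r}$ are unitary on $\cH_\phi$. Setting $\cE_+ \coloneqq \overline{\spann\{\phi_t : t \geq 0\}}$ and $\theta\colon \phi_t \mapsto \phi_{-t}$ (a unitary involution since $\phi$ is even), the second positivity condition says precisely that $(\cH_\phi,\cE_+,\theta)$ is a reflection positive Hilbert space, and for $r \geq 0$ the contraction $\tau_r$ preserves $\cE_+$, intertwines correctly with $\theta$, and hence descends to an operator $\hat\tau_r$ on the Osterwalder--Schrader space $\hat\cE$. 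A short computation gives $\braket*{\hat\tau_r\hat\phi_s}{\hat\phi_t} = \phi(s+r+t) = \braket*{\hat\phi_s}{\hat\tau_r\hat\phi_t}$, so $(\hat\tau_r)_{r \geq 0}$ is a semigroup of self-adjoint contractions with $\hat\tau_{r+s} = \hat\tau_r\hat\tau_s$, cyclic vector $\hat\phi_0$ of norm $\|\hat\phi_0\|^2 = \phi(0)$, and $\phi(r) = \braket*{\hat\phi_0}{\hat\tau_r\hat\phi_0}$ for $r \geq 0$.

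Then, writing $\hat\tau_r = e^{-rC}$ with $C = C^* \geq 0$, the spectral theorem yields $\phi(r) = \int_{\R_{\geq 0}} e^{-r\lambda}\,d\mu(\lambda)$ for $r \geq 0$, where $\mu$ is the spectral measure of $C$ in the state $\hat\phi_0$; it is finite with $\mu(\R_{\geq 0}) = \|\hat\phi_0\|^2 = \phi(0)$. Since $\phi$ is even, this gives $\phi = \phi_\mu$ on all of $\R$, with $\mu \in \mathcal{BM}^{\mathrm{fin}}(\R_{\geq 0})$.

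The main obstacle is the regularity needed for that last step: passing from the semigroup $(\hat\tau_r)_{r \geq 0}$ of self-adjoint contractions to a generator $C$ and a spectral decomposition requires strong continuity, equivalently continuity of $r \mapsto \phi(r) = \braket*{\hat\tau_r\hat\phi_0}{\hat\phi_0}$, so either continuity must be part of the standing hypotheses on reflection positive functions or it must first be extracted from the two kernel conditions. An alternative that works equally well is the classical route: the Hankel condition alone identifies $t \mapsto \phi(|t|)$ with an exponentially convex function, so Widder's theorem gives $\phi(|t|) = \int_\R e^{\lambda t}\,d\nu(\lambda)$ for a positive measure $\nu$, and then boundedness of $\phi$, which comes from the \emph{first} (Bochner-type) condition, forces $\nu$ to be finite and supported in $\R_{\leq 0}$; reflecting $\lambda \mapsto -\lambda$ produces the desired $\mu$. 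Either way the conceptual point is the same: Hankel positivity by itself still allows the growing exponentials $e^{\lambda t}$ with $\lambda > 0$, and it is precisely the positive definiteness of $(s,t) \mapsto \phi(s-t)$ on all of $\R$ that excludes them.
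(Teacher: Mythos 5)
The paper itself gives no proof of this theorem: it is quoted from \cite{NO18} (Thm.\ 4.1.1, Ex.\ 4.1.4), so there is no internal argument to compare yours with. Your proposal is the standard proof and is essentially correct. The direction \(\phi_\mu \Rightarrow\) reflection positive works exactly as you say, by integrating the two elementary positive definite kernels \(e^{-\lambda|s-t|}\) (Bochner, resp.\ triviality for \(\lambda=0\)) and \(e^{-\lambda s}e^{-\lambda t}\) (rank one) against the finite positive measure; only note that in your displayed identity the Toeplitz case must read \(e^{-\lambda|t_k-t_j|}\), with the absolute value, which is what your preceding sentences actually establish. For the converse, the GNS space of the Toeplitz kernel, the involution \(\theta\colon \phi_t\mapsto\phi_{-t}\) (legitimate since positive definiteness plus real-valuedness forces \(\phi\) even and \(|\phi|\le\phi(0)\)), the Osterwalder--Schrader transform, and the spectral theorem for the resulting self-adjoint contraction semigroup give precisely \(\phi(r)=\int_{\R_{\geq 0}}e^{-\lambda r}\,d\mu(\lambda)\) with \(\mu(\R_{\geq 0})=\phi(0)\); the contraction property of \(\hat\tau_r\) needs the usual Cauchy--Schwarz iteration, which is part of the OS machinery in \cite{NO18} and of the framework the thesis itself uses in Chapter 4.

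Your continuity caveat is genuine and cannot be removed by cleverness: with the definition as literally stated in the thesis (no continuity or measurability), the converse is false. For instance \(\phi=\chi_{\{0\}}\) has positive semidefinite Toeplitz kernel, and its Hankel kernel vanishes identically on \(\R_+\times\R_+\) because \(s+t>0\) there, so \(\phi\) is reflection positive in the stated sense; but it is not of the form \(\phi_\mu\), since \(\phi_\mu(t)\to\mu(\{0\})\) as \(t\to\infty\) forces \(\mu(\{0\})=0\), and then \(\phi_\mu(t)=0\) for some \(t\neq 0\) forces \(\mu=0\), contradicting \(\phi(0)=1\). In \cite{NO18} the result is stated for continuous functions, so continuity must indeed be taken as a standing hypothesis, exactly as you suspect; once it is, strong continuity of \(\hat\tau_r\) follows from continuity of \(r\mapsto\phi(s+r+t)\) and your argument closes, as does your alternative route through Widder's theorem on exponentially convex functions (which likewise needs continuity, with boundedness from the Toeplitz condition confining the representing measure to the correct half-line and making it finite).
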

\begin{prop}\label{prop:PhiPsiFourier}
Let \(\mu \in \mathcal{BM}^{\mathrm{fin}}\left(\R_+\right)\). Then \(\psi_\mu \in L^1(\R,\C)\) with \(\left\lVert \psi_\mu\right\rVert_1 = \mu(\R_+)\) and
\[\cF_1 \psi_\mu = \phi_\mu.\]
\end{prop}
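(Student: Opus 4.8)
The plan is to prove both assertions by a single application of Tonelli's theorem (for the $L^1$-bound) followed by Fubini's theorem (for the Fourier identity), reducing everything to the classical Fourier pair consisting of the Poisson kernel and the function $t \mapsto e^{-\lambda|t|}$.

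First I would establish integrability together with the norm formula. Since $\mu$ is a positive Borel measure and the integrand $\frac{\lambda}{\lambda^2+p^2}$ is nonnegative on $\R_+ \times \R^\times$, Tonelli's theorem gives
\[\int_\R \left|\psi_\mu(p)\right|\,dp = \frac{1}{\pi}\int_{\R_+}\left(\int_\R \frac{\lambda}{\lambda^2+p^2}\,dp\right) d\mu(\lambda).\]
For every fixed $\lambda > 0$ the inner integral equals $\big[\arctan(p/\lambda)\big]_{p=-\infty}^{p=+\infty} = \pi$, so the right-hand side is $\mu(\R_+)$, which is finite because $\mu$ is a finite measure. (That $\psi_\mu$ is only defined on $\R^\times$ is irrelevant, as $\{0\}$ is a Lebesgue null set.) Hence $\psi_\mu \in L^1(\R,\C)$ with $\left\lVert \psi_\mu\right\rVert_1 = \mu(\R_+)$.

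Next I would compute the Fourier transform. The bound just obtained shows that $(\lambda,p) \mapsto e^{-itp}\frac{\lambda}{\lambda^2+p^2}$ is integrable on $\R_+ \times \R$ for every $t \in \R$, so Fubini's theorem yields
\[(\cF_1\psi_\mu)(t) = \int_{\R_+}\left(\frac{1}{\pi}\int_\R \frac{\lambda e^{-itp}}{\lambda^2+p^2}\,dp\right)d\mu(\lambda).\]
It then remains only to identify the inner integral with $e^{-\lambda|t|}$. This is the standard Poisson-kernel computation: evaluating $\int_0^\infty e^{-\lambda s}e^{-isp}\,ds + \int_0^\infty e^{-\lambda s}e^{isp}\,ds = \frac{1}{\lambda+ip} + \frac{1}{\lambda-ip} = \frac{2\lambda}{\lambda^2+p^2}$ shows that $\cF_1\big(e^{-\lambda|\cdot|}\big)(p) = \frac{2\lambda}{\lambda^2+p^2}$, and Fourier inversion — applicable since $e^{-\lambda|\cdot|}$ is continuous and both it and its transform lie in $L^1(\R,\C)$ — gives $\frac{1}{\pi}\int_\R \frac{\lambda e^{-itp}}{\lambda^2+p^2}\,dp = e^{-\lambda|t|}$. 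Substituting this back and using that $\mu(\{0\}) = 0$, so that integration over $\R_+$ coincides with integration over $\R_{\geq 0}$, we conclude $(\cF_1\psi_\mu)(t) = \int_{\R_{\geq 0}} e^{-\lambda|t|}\,d\mu(\lambda) = \phi_\mu(t)$.

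There is no genuine obstacle in this argument; the only points requiring a little care are the justification of Fubini (supplied by the Tonelli bound from the first step) and, if one wishes to be fully self-contained, the elementary evaluation of the Poisson-kernel Fourier transform, which could equally well be cited rather than carried out.
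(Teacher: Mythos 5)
Your proposal is correct and follows essentially the same route as the paper: Tonelli's theorem gives the norm identity \(\left\lVert \psi_\mu\right\rVert_1 = \mu(\R_+)\), Fubini justifies swapping the integrals, and everything reduces to the classical Fourier pair between \(e^{-\lambda|\cdot|}\) and the Poisson kernel. The only cosmetic difference is that the paper computes the inverse transform of \(e^{-\lambda|t|}\) directly and then rescales to \(\cF_1\), whereas you compute \(\cF_1\big(e^{-\lambda|\cdot|}\big)\) and invoke \(L^1\) Fourier inversion --- the same content.
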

\begin{proof}
Since, for \(\lambda \in \R_+\), we have
\[\int_\R \frac{\lambda}{\lambda^2+p^2} \,dp = \int_\R \frac{1}{1+\left(\frac p \lambda\right)^2} \,\frac 1 \lambda \,dp = \int_\R \frac{1}{1+x^2} \,dx = \left[\arctan(x)\right]_{-\infty}^\infty = \frac \pi 2 - \left(-\frac \pi 2\right) = \pi,\]
using the Fubini-Tonelli Theorem, we get
\begin{align*}
\mu(\R_+) &= \frac{1}{\pi}\int_{\R_+} \pi \,d\mu\left(\lambda\right) = \frac{1}{\pi}\int_{\R_+} \int_\R \frac{\lambda}{\lambda^2+p^2} \,dp \,d\mu\left(\lambda\right)
\\&= \int_\R \frac{1}{\pi}\int_{\R_+} \frac{\lambda}{\lambda^2+p^2} \,d\mu\left(\lambda\right) \,dp
= \int_\R \psi_\mu(p) \,dp = \left\lVert \psi_\mu\right\rVert_1
\end{align*}
and therefore \(\psi_\mu \in L^1(\R,\C)\).

To calculate the Fourier transform, for \(\lambda \in \R_+\), we define the function \(f_\lambda \in L^1(\R,\C) \cap L^2(\R,\C)\) by
\[f_\lambda(t) = e^{-\lambda|t|}.\]
Then, for \(\lambda \in \R_+\) and \(x \in \R\), we get
\[\int_{\R_+} e^{itx} f_\lambda(t) \,dt = \int_{\R_+} e^{itx} e^{-\lambda t} \,dt = \int_{\R_+} e^{t(-\lambda+ix)} \,dt = \left[\frac 1{-\lambda+ix} e^{t(-\lambda+ix)}\right]_0^\infty = \frac 1{\lambda-ix}\]
and therefore, due to the symmetry of \(f_\lambda\), we have
\[\int_{\R_-} e^{itx} f_\lambda(t) \,dt = \int_{\R_+} e^{-itx} f_\lambda(t) \,dt = \frac 1{\lambda+ix},\]
so
\[\sqrt{2\pi} \cdot (\cF^{-1}f_\lambda)(x) = \int_\R e^{itx} f_\lambda(t) \,dt = \left[\frac 1{\lambda-ix} + \frac 1{\lambda+ix}\right] = \frac {2\lambda}{\lambda^2 + x^2} \eqqcolon g_\lambda(x).\]
This implies
\[\cF_1 g_\lambda = \sqrt{2\pi} \cdot \cF g_\lambda = 2\pi \cdot f_\lambda\]
and therefore
\[\frac{1}{\pi} \int_\R e^{-itp} \frac{\lambda}{\lambda^2+p^2} \,dp = \frac 1{2\pi} \cdot (\cF_1 g_\lambda)(t) = f_\lambda(t) = e^{-\lambda|t|}, \quad t \in \R.\]
Therefore, using the Fubini-Tonelli Theorem, we have
\begin{align*}
(\cF_1 \psi)(t) &= \int_\R e^{-itp} \psi(p) \,dp = \int_\R e^{-itp} \frac{1}{\pi}\int_{\R_+} \frac{\lambda}{\lambda^2+p^2} \,d\mu\left(\lambda\right) \,dp
\\&= \int_{\R_+} \frac{1}{\pi} \int_\R e^{-itp} \frac{\lambda}{\lambda^2+p^2} \,dp \,d\mu\left(\lambda\right) = \int_{\R_+} e^{-\lambda|t|} \,d\mu\left(\lambda\right) = \phi_\mu(t). \qedhere
\end{align*}
\end{proof}
\begin{cor}\label{cor:muFourierImage}
Let \(\mu \in \mathcal{BM}^{\mathrm{fin}}\left(\R_{\geq 0}\right)\). Then \(\phi_\mu \in \cF_1(L^1(\R,\C))\), if and only if \(\mu(\{0\}) = 0\).
\end{cor}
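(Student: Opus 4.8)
The plan is to deduce this directly from \fref{prop:PhiPsiFourier} together with the Riemann--Lebesgue lemma. For the ``if'' direction, assume $\mu(\{0\}) = 0$; then $\mu$ is a finite Borel measure on $\R_+$ under the identification fixed just before \fref{thm:Widder}, so \fref{prop:PhiPsiFourier} yields $\phi_\mu = \cF_1 \psi_\mu$ with $\psi_\mu \in L^1(\R,\C)$, whence $\phi_\mu \in \cF_1(L^1(\R,\C))$.

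For the ``only if'' direction, suppose $\mu(\{0\}) > 0$ and decompose $\mu = \mu(\{0\}) \delta_0 + \mu'$, where $\mu'$ denotes the restriction of $\mu$ to $\R_+$, so that $\mu' \in \mathcal{BM}^{\mathrm{fin}}(\R_+)$. Splitting off the atom at $0$ in the defining integral of $\phi_\mu$ gives $\phi_\mu(t) = \mu(\{0\}) + \phi_{\mu'}(t)$ for every $t \in \R$. By \fref{prop:PhiPsiFourier} we have $\phi_{\mu'} = \cF_1 \psi_{\mu'}$ with $\psi_{\mu'} \in L^1(\R,\C)$. Now if $\phi_\mu$ lay in $\cF_1(L^1(\R,\C))$, say $\phi_\mu = \cF_1 g$ with $g \in L^1(\R,\C)$, then the constant function with value $\mu(\{0\})$ would equal $\phi_\mu - \phi_{\mu'} = \cF_1(g - \psi_{\mu'})$ with $g - \psi_{\mu'} \in L^1(\R,\C)$; since the Fourier transform of an $L^1$-function vanishes at infinity, this forces $\mu(\{0\}) = 0$, contradicting $\mu(\{0\}) > 0$.

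There is essentially no substantial obstacle here: the argument is a one-line consequence of \fref{prop:PhiPsiFourier}. The only points requiring a little care are the bookkeeping of the atom at $0$ in the decomposition of $\mu$ and the (standard) invocation of the Riemann--Lebesgue lemma to rule out a nonzero constant from the range of $\cF_1$.
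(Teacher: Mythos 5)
Your proof is correct and follows essentially the same route as the paper: the same decomposition \(\mu = \mu(\{0\})\,\delta_0 + \mu'\), the same appeal to \fref{prop:PhiPsiFourier} for the part supported on \(\R_+\), and the same conclusion that a nonzero constant cannot lie in \(\cF_1(L^1(\R,\C))\) (which the paper leaves implicit and you justify via the Riemann--Lebesgue lemma). No gaps.
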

\begin{proof}
We write
\[\mu = \mu(\{0\}) \cdot \delta_0 + \tilde \mu\]
with \(\tilde \mu \in \mathcal{BM}^{\mathrm{fin}}\left(\R_+\right)\). Then
\[\phi_\mu = \mu(\{0\}) \cdot \phi_{\delta_0} + \phi_{\tilde \mu} = \mu(\{0\}) \cdot \textbf{1} + \phi_{\tilde \mu}.\]
Since, by \fref{prop:PhiPsiFourier}, we have \(\phi_{\tilde \mu} \in \cF_1(L^1(\R,\C))\), this shows that \(\psi_\mu \in \cF_1(L^1(\R,\C))\), if and only if \(\mu(\{0\}) \cdot \textbf{1} \in \cF_1(L^1(\R,\C))\). The latter is equivalent to \(\mu(\{0\}) = 0\).
\end{proof}
\begin{cor}\label{cor:phiFourierImage}
Let \(\phi: \R \to \R\) be a reflection positive function. Then \(\phi \in \cF_1(L^1(\R,\C))\), if and only if \(\phi = \phi_\mu\) for some measure \(\mu \in \mathcal{BM}^{\mathrm{fin}}\left(\R_+\right)\).
\end{cor}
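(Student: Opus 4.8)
The plan is to obtain this corollary as an immediate consequence of Theorem~\fref{thm:Widder} and Corollary~\fref{cor:muFourierImage}; no new analytic work is required, and the only point one has to keep straight is the standing identification of $\mathcal{BM}^{\mathrm{fin}}\left(\R_+\right)$ with the set of finite Borel measures on $\R_{\geq 0}$ that assign no mass to the origin.

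For the ``if'' direction, suppose $\phi = \phi_\mu$ for some $\mu \in \mathcal{BM}^{\mathrm{fin}}\left(\R_+\right)$. Under the above identification, $\mu$ is a finite Borel measure on $\R_{\geq 0}$ with $\mu(\{0\}) = 0$, so Corollary~\fref{cor:muFourierImage} applies and yields $\phi = \phi_\mu \in \cF_1(L^1(\R,\C))$. For the ``only if'' direction, assume $\phi \in \cF_1(L^1(\R,\C))$. Since $\phi$ is reflection positive by hypothesis, Theorem~\fref{thm:Widder} furnishes a measure $\mu \in \mathcal{BM}^{\mathrm{fin}}\left(\R_{\geq 0}\right)$ with $\phi = \phi_\mu$. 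Then $\phi_\mu = \phi \in \cF_1(L^1(\R,\C))$, so Corollary~\fref{cor:muFourierImage} forces $\mu(\{0\}) = 0$; via the identification this says precisely that $\mu \in \mathcal{BM}^{\mathrm{fin}}\left(\R_+\right)$, and $\phi = \phi_\mu$ then exhibits $\phi$ in the desired form.

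I do not expect any real obstacle here, since the substantive content has already been isolated in Theorem~\fref{thm:Widder} and Corollary~\fref{cor:muFourierImage}. The only mild subtlety worth making explicit in the write-up is that one does not need uniqueness of the representing measure: in the ``only if'' direction one simply takes whatever $\mu \in \mathcal{BM}^{\mathrm{fin}}\left(\R_{\geq 0}\right)$ is produced by Theorem~\fref{thm:Widder} and observes that it must be supported away from $0$.
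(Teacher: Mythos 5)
Your proposal is correct and follows exactly the paper's route: the paper proves this corollary by citing \fref{thm:Widder} together with \fref{cor:muFourierImage}, using the same identification of \(\mathcal{BM}^{\mathrm{fin}}(\R_+)\) with the measures on \(\R_{\geq 0}\) assigning no mass to \(\{0\}\). You have simply spelled out the two directions that the paper leaves as "immediate", and your remark that uniqueness of the representing measure is not needed is accurate.
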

\begin{proof}
This follows immediately by \fref{thm:Widder} and \fref{cor:muFourierImage}.
\end{proof}
We now want to see, for which measures \(\mu \in \mathcal{BM}^{\mathrm{fin}}\left(\R_+\right)\) one has
\[I\left(\sqrt{\cF_1^{-1}\phi_\mu}\right) < \infty.\]
For this, we make some definitions:
\begin{definition}
\begin{enumerate}[\rm (a)]
\item We write \(\left[0,\infty\right]\) for the one-point compactification of \(\left[0,\infty\right)\).
\item Let \(\nu \in \mathcal{BM}^{\mathrm{fin}}\left(\left[0,\infty\right]\right)\). For \(p \in \R^\times\) we consider the bounded and continuous function \({\lambda \mapsto \frac{1+\lambda^2}{p^2 + \lambda^2}}\) with \(\infty \mapsto \frac{1+\infty^2}{p^2 + \infty^2} \eqqcolon 1\) and define
\[\gls*{Psinu}(p) \coloneqq \frac{1}{\pi}\int_{[0,\infty]} \frac{1+\lambda^2}{p^2 + \lambda^2} \,d\nu\left(\lambda\right).\]
\item We define the map
\[\gls*{W}: \mathcal{BM}^{\mathrm{fin}}\left(\R_+\right) \to \mathcal{BM}^{\mathrm{fin}}\left(\left[0,\infty\right]\right), \quad d(W(\mu))(\lambda) = \frac{\lambda}{1+\lambda^2} \,d\mu(\lambda),\]
\end{enumerate}
\end{definition}
\begin{remark}
That for \(\mu \in \mathcal{BM}^{\mathrm{fin}}\left(\R_+\right)\) one actually has \(W(\mu) \in \mathcal{BM}^{\mathrm{fin}}\left([0,\infty]\right)\) follows from the fact that, for \(\lambda \in \R_+\), one has
\[0 \leq (\lambda - 1)^2 = 1+ \lambda^2 - 2 \lambda\]
and therefore \(\frac{\lambda}{1+\lambda^2} \leq \frac 12\), so
\[(W(\mu))([0,\infty]) = \int_{[0,\infty]} d(W(\mu))(\lambda) = \int_{\R_+} \frac{\lambda}{1+\lambda^2} \,d\mu(\lambda) \leq \int_{\R_+} \frac 12\,d\mu(\lambda) = \frac 12 \cdot \mu(\R_+) < \infty.\]
\end{remark}
\begin{lemma}\label{lem:BigSmallPsi}
For \(\mu \in \mathcal{BM}^{\mathrm{fin}}\left(\R_+\right)\) one has
\[\Psi_{W(\mu)} = \psi_\mu.\]
\end{lemma}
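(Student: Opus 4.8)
The plan is to compute $\Psi_{W(\mu)}$ directly from the definitions and observe that it collapses to $\psi_\mu$ after an algebraic simplification. First I would record that $W(\mu)$, by definition, is the measure on $[0,\infty]$ obtained from $\mu$ via the density $\lambda \mapsto \frac{\lambda}{1+\lambda^2}$ on $\R_+$; in particular it assigns no mass to $\{0\}$ (this uses that $\mu \in \mathcal{BM}^{\mathrm{fin}}(\R_+)$ is identified with a finite Borel measure on $\R_{\geq 0}$ with $\mu(\{0\}) = 0$) and no mass to $\{\infty\}$ (the density is only supported on $\R_+$). Hence for every bounded Borel function $f$ on $[0,\infty]$ one has $\int_{[0,\infty]} f \, d(W(\mu)) = \int_{\R_+} f(\lambda)\,\frac{\lambda}{1+\lambda^2}\, d\mu(\lambda)$, so the particular point-values of $f$ at $0$ and $\infty$ (including the convention $\infty \mapsto 1$ used in the definition of $\Psi_\nu$) are irrelevant here.

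Applying this with $f(\lambda) = \frac{1+\lambda^2}{p^2+\lambda^2}$ for a fixed $p \in \R^\times$, I would then write
\[
\Psi_{W(\mu)}(p) = \frac{1}{\pi}\int_{[0,\infty]} \frac{1+\lambda^2}{p^2+\lambda^2}\, d(W(\mu))(\lambda) = \frac{1}{\pi}\int_{\R_+} \frac{1+\lambda^2}{p^2+\lambda^2}\cdot\frac{\lambda}{1+\lambda^2}\, d\mu(\lambda) = \frac{1}{\pi}\int_{\R_+} \frac{\lambda}{p^2+\lambda^2}\, d\mu(\lambda),
\]
and the right-hand side is exactly $\psi_\mu(p)$ by definition. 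Since $p \in \R^\times$ was arbitrary, this gives $\Psi_{W(\mu)} = \psi_\mu$.

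I do not expect any genuine obstacle: the only points requiring care are the bookkeeping around the one-point compactification $[0,\infty]$, namely confirming $W(\mu)(\{0\}) = W(\mu)(\{\infty\}) = 0$ so that the integral over $[0,\infty]$ really reduces to an integral over $\R_+$, and checking that both sides are well defined (finiteness follows from the remark preceding the lemma, which shows $W(\mu) \in \mathcal{BM}^{\mathrm{fin}}([0,\infty])$, together with boundedness of $f$ for fixed $p$). Everything else is the one-line cancellation $\frac{1+\lambda^2}{p^2+\lambda^2}\cdot\frac{\lambda}{1+\lambda^2} = \frac{\lambda}{p^2+\lambda^2}$.
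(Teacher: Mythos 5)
Your proposal is correct and follows essentially the same route as the paper: plug the defining density of $W(\mu)$ into the integral defining $\Psi_{W(\mu)}$ and cancel the factor $1+\lambda^2$, which immediately yields $\psi_\mu$. The extra bookkeeping you include about $W(\mu)$ carrying no mass at $0$ or $\infty$ is sound and consistent with the paper's conventions, though the paper leaves it implicit.
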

\begin{proof}
For \(p \in \R^\times\) one has
\begin{align*}
\Psi_{W(\mu)}(p) &= \frac{1}{\pi}\int_{[0,\infty]} \frac{1+\lambda^2}{p^2 + \lambda^2} \,d(W\mu)\left(\lambda\right) = \frac{1}{\pi}\int_{\R_+} \frac{1+\lambda^2}{p^2 + \lambda^2} \cdot \frac{\lambda}{1+\lambda^2} \,d\mu\left(\lambda\right)
\\&= \frac{1}{\pi}\int_{\R_+} \frac{\lambda}{\lambda^2+p^2} \,d\mu\left(\lambda\right) = \psi_\mu(p). \qedhere
\end{align*}
\end{proof}
We now want to see that, for the functions \(\Psi_\nu\) with \(\nu \in \mathcal{BM}^{\mathrm{fin}}\left([0,\infty]\right)\) one has \(I(\Psi_\nu) < \infty\). For this, we need the following lemma:
\begin{lem}\label{lem:PsiEstimate}
Let \(\nu \in \mathcal{BM}^{\mathrm{fin}}\left([0,\infty]\right)\). Then, one has the estimate
\[\nu([0,\infty]) \cdot \vartheta(p) \leq \Psi_\nu(p) \leq \nu([0,\infty]) \cdot \eta(p), \quad p \in \R^\times,\]
with the functions
\[\vartheta: \R \to \R, \quad p \mapsto \begin{cases}1 & \text{if} \,\left|p\right| < 1 \\ \frac{1}{p^2} & \text{if} \,\left|p\right| \geq 1\end{cases}\]
and
\[\eta: \R^\times \to \R, \quad p \mapsto \begin{cases} \frac 1{p^2} &\text{if }\left|p\right| < 1 \\ 1 &\text{if }\left|p\right| \geq 1.\end{cases}\]
\end{lem}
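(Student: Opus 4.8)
The plan is to reduce the estimate to a pointwise two–sided bound, uniform in $\lambda$, on the kernel
\[
k_p(\lambda) \coloneqq \frac{1+\lambda^2}{p^2+\lambda^2}, \qquad \lambda \in [0,\infty],\quad p \in \R^\times,
\]
and then simply integrate this bound against the finite measure $\nu$.

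First I would fix $p \in \R^\times$ and regard $k_p$ as a function of $s \coloneqq \lambda^2 \in [0,\infty]$, so that $k_p = \tfrac{1+s}{p^2+s}$. A one–line computation gives $\tfrac{d}{ds}\tfrac{1+s}{p^2+s} = \tfrac{p^2-1}{(p^2+s)^2}$, hence $k_p$ is monotone on $[0,\infty]$ with the sign of $p^2-1$, and its endpoint values are $k_p(0) = \tfrac1{p^2}$ and $k_p(\infty) = 1$ (the latter in accordance with the convention fixed in the definition of $\Psi_\nu$ at the point $\infty$). Thus for $|p| < 1$ the function $k_p$ decreases from $\tfrac1{p^2}$ to $1$, giving $1 \le k_p(\lambda) \le \tfrac1{p^2}$, i.e. $\vartheta(p) \le k_p(\lambda) \le \eta(p)$; and for $|p| \ge 1$ it increases from $\tfrac1{p^2}$ to $1$, giving $\tfrac1{p^2} \le k_p(\lambda) \le 1$, i.e. again $\vartheta(p) \le k_p(\lambda) \le \eta(p)$. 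In both cases one obtains $\vartheta(p) \le k_p(\lambda) \le \eta(p)$ for every $\lambda \in [0,\infty]$ and every $p \in \R^\times$.

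Finally I would integrate this chain of inequalities over $[0,\infty]$ with respect to $\nu \in \mathcal{BM}^{\mathrm{fin}}\left([0,\infty]\right)$: since $\vartheta(p)$ and $\eta(p)$ are constant in $\lambda$, they come out of the integral and get multiplied by the total mass $\nu([0,\infty]) = \int_{[0,\infty]} d\nu(\lambda) < \infty$, which yields the asserted two–sided estimate for $\Psi_\nu(p)$, $p \in \R^\times$. I do not expect any genuine obstacle here: the proof is an elementary kernel estimate, and the only points needing a little care are the treatment of the compactification point $\lambda = \infty$ (handled by the convention in the definition of $\Psi_\nu$ together with $\vartheta(p) \le 1 \le \eta(p)$) and keeping track of the direction of monotonicity across the case split $|p| < 1$ versus $|p| \ge 1$.
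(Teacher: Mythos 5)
Your proposal is correct and follows essentially the same route as the paper: a pointwise two--sided bound $\vartheta(p)\leq \frac{1+\lambda^2}{p^2+\lambda^2}\leq \eta(p)$ uniform in $\lambda\in[0,\infty]$, followed by integration against $\nu$; the paper obtains the kernel bound by direct elementary inequalities rather than your derivative/monotonicity computation in $s=\lambda^2$, which is a purely cosmetic difference. Note that, exactly like the paper's own proof, you silently drop the factor $\frac1\pi$ in the definition of $\Psi_\nu$ (strictly one gets $\frac1\pi\,\nu([0,\infty])\vartheta(p)\leq\Psi_\nu(p)\leq\frac1\pi\,\nu([0,\infty])\eta(p)$), a harmless constant that does not affect the later applications.
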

\begin{proof}
Let \(\lambda \in [0,\infty]\). Then, for \(p \in \R^\times\) with \(\left|p\right|<1\), we have
\[1 = \frac{p^2+\lambda^2}{p^2+\lambda^2} \leq \frac{1+\lambda^2}{p^2+\lambda^2} \leq \frac{1+\lambda^2}{p^2+p^2\lambda^2} = \frac 1{p^2}\]
and for \(p \in \R^\times\) with \(\left|p\right|\geq 1\), we get
\[\frac 1{p^2} = \frac{1+\lambda^2}{p^2+p^2\lambda^2} \leq \frac{1+\lambda^2}{p^2+\lambda^2} \leq \frac{p^2+\lambda^2}{p^2+\lambda^2} = 1,\]
so, for every \(\lambda \in [0,\infty]\), we get
\[\vartheta(p) \leq \frac{1+\lambda^2}{p^2+\lambda^2} \leq \eta(p).\]
The statement then follows by integrating over \(\lambda \in [0,\infty]\) with respect to the measure \(\nu\).
\end{proof}
\begin{lem}\label{lem:PsiNuIntegral}
Let \(\nu \in \mathcal{BM}^{\mathrm{fin}}\left([0,\infty]\right) \setminus \{0\}\). Then
\begin{equation*}
I(\Psi_\nu) < \infty.
\end{equation*}
\end{lem}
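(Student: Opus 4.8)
The plan is to bound $|\log \Psi_\nu|$ pointwise by an explicit integrable function and then integrate against $(1+p^2)^{-1}\,dp$.

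First I would set $c \coloneqq \nu([0,\infty])$; since $\nu \neq 0$ we have $c > 0$, and \fref{lem:PsiEstimate} gives
\[
c\,\vartheta(p) \;\leq\; \Psi_\nu(p) \;\leq\; c\,\eta(p) \qquad \text{for all } p \in \R^\times.
\]
In particular $\Psi_\nu(p) > 0$ for every $p \in \R^\times$, so $\log \Psi_\nu(p)$ is defined for almost every $p \in \R$ and satisfies $\log c + \log \vartheta(p) \leq \log \Psi_\nu(p) \leq \log c + \log \eta(p)$.

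Next I would use the explicit form of $\vartheta$ and $\eta$: one has $\log \vartheta(p) = 0$ for $|p| < 1$ and $\log \vartheta(p) = -2\log|p|$ for $|p| \geq 1$, and symmetrically $\log \eta(p) = -2\log|p|$ for $0 < |p| < 1$ and $\log \eta(p) = 0$ for $|p| \geq 1$. Both $|\log \vartheta(p)|$ and $|\log \eta(p)|$ are therefore bounded by $2\,|\log|p||$, and combining this with the two-sided estimate above yields
\[
|\log \Psi_\nu(p)| \;\leq\; |\log c| + 2\,|\log|p|| \qquad \text{for a.e. } p \in \R.
\]

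Finally I would integrate this estimate, obtaining
\[
I(\Psi_\nu) \;=\; \int_\R \frac{|\log \Psi_\nu(p)|}{1+p^2}\,dp \;\leq\; |\log c| \int_\R \frac{dp}{1+p^2} \;+\; 2\int_\R \frac{|\log|p||}{1+p^2}\,dp .
\]
The first integral equals $\pi\,|\log c|$. For the second, I would observe that the integrand behaves like $|\log|p||$ near $p = 0$ (hence is locally integrable there) and is $O(|p|^{-2}\log|p|)$ as $|p| \to \infty$ (hence integrable at infinity); alternatively the substitution $p \mapsto 1/p$ reduces finiteness to integrability on $(0,1)$. Either way $\int_\R \frac{|\log|p||}{1+p^2}\,dp < \infty$, and therefore $I(\Psi_\nu) < \infty$. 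The argument is essentially a packaging of \fref{lem:PsiEstimate}; the only step needing a genuine (but elementary) estimate is the convergence of $\int_\R \frac{|\log|p||}{1+p^2}\,dp$, which I expect to be the main — though minor — obstacle.
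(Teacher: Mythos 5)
Your proof is correct and follows essentially the same route as the paper: both arguments rest on \fref{lem:PsiEstimate} to get the two-sided bound \(|\log \Psi_\nu(p)| \leq |\log \nu([0,\infty])| + 2|\log|p||\) and then reduce to the elementary integrability of \(\int_\R \frac{|\log|p||}{1+p^2}\,dp\). The only cosmetic difference is that the paper first normalizes \(\Psi_\nu\) by \(\nu([0,\infty])\) (citing \fref{lem:OuterHomo}) instead of absorbing the constant via the triangle inequality as you do.
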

\begin{proof}
Since \(\nu \neq 0\), we have \(\nu([0,\infty]) > 0\). Then, by \fref{lem:OuterHomo}, we have \(I(\Psi_\nu) < \infty\), if and only if \(I\left(\frac 1{\nu([0,\infty])} \cdot \Psi_\nu\right) < \infty\). By \fref{lem:PsiEstimate}, for \(p \in \R^\times\) with \(|p|<1\), we have
\[0 = \log(1) \leq \log\left(\frac 1{\nu([0,\infty])} \cdot \Psi_\nu\right) \leq \log\left(\frac 1{p^2}\right) = -2 \log(p) = 2|\log(p)|,\]
and for \(p \in \R^\times\) with \(|p|\geq 1\), we have
\[-2 |\log(p)| = -2 \log(p) = \log\left(\frac 1{p^2}\right) \leq \log\left(\frac 1{\nu([0,\infty])} \cdot \Psi_\nu\right) \leq \log(1) = 0.\]
This shows that, for any \(p \in \R^\times\), we have
\[\left|\log\left(\frac 1{\nu([0,\infty])} \cdot \Psi_\nu\right)\right| \leq 2|\log(p)|\]
and therefore, we have
\begin{align*}
\int_\R \frac{\left|\log\left(\frac 1{\nu([0,\infty])} \cdot \Psi_\nu\left(p\right)\right)\right|}{1 + p^2} \,dp &\leq \int_\R \frac{2 |\log p|}{1 + p^2} \,dp = 4 \int_{\R_+} \frac{|\log p|}{1 + p^2} \,dp
\\&= 4 \int_\R \frac{|x|}{1 + e^{2x}} \cdot e^x \,dx = 4 \int_\R \frac{|x|}{e^{-x} + e^x} \,dx < \infty. \qedhere
\end{align*}
\end{proof}
Using this lemma, we can make the following definition:
\begin{definition}\label{def:FNuHNu}
For \(\nu \in \mathcal{BM}^{\mathrm{fin}}\left([0,\infty]\right) \setminus \{0\}\) we define
\[\gls*{Fnu} \coloneqq \Out\left(\sqrt{\Psi_\nu}\right) \in \mathrm{Out}(\C_+)\]
and
\[\gls*{hnu} \coloneqq \frac{F_\nu}{R F_\nu} \in L^\infty(\R,\T)^\flat.\]
\end{definition}
\begin{remark}
\begin{enumerate}[\rm (a)]
\item That \(F_\nu\) is defined follows by \fref{lem:PsiNuIntegral}, since we have
\[I\left(\sqrt{\Psi_\nu}\right) = \frac 12 I\left(\Psi_\nu\right)< \infty.\]
\item To see that \(h_\nu\) actually defines an element in \(L^\infty(\R,\T)^\flat\) we first notice that, by \fref{thm:OuterBetrag}, we have \(|F_\nu| = \sqrt{\Psi_\nu}\) on \(\R\). Then, using that
\[\Psi_\nu(-p) = \Psi_\nu(p), \quad p \in \R^\times,\]
we get
\[|h_\nu(x)| = \frac{F_\nu(x)}{|F_\nu(-x)|} = \frac{\sqrt{\Psi_\nu(x)}}{\sqrt{\Psi_\nu(-x)}} = 1, \quad x \in \R^\times\]
and therefore \(h_\nu \in L^\infty(\R,\T)\). This then implies that
\[h_\nu^\flat(x) = \overline{h_\nu(-x)} = \frac 1{h_\nu(-x)} = \frac{RF_\nu(-x)}{F_\nu(-x)} = \frac{F_\nu(x)}{RF_\nu(x)} = h_\nu(x), \quad x \in \R^\times\]
and therefore \(h \in L^\infty(\R,\T)^\flat\). 
\end{enumerate}
\end{remark}
With this we get our theorem classifying involutions \(\theta_h\) with fixed points in the Hardy space such that \((L^2(\R,\C),H^2(\C_+),\theta_h)\) is a (maximal) RPHS:
\begin{thm}\label{thm:thetaFixedPointCharac}
Let \(h \in L^\infty(\R,\T)^\flat\).
Then the following are equivalent:
\begin{enumerate}[\rm (a)]
\item \(\left(L^2\left(\R,\C\right),H^2\left(\C_+\right),\theta_h\right)\) is a RPHS with \(\ker(\theta_h - \textbf{1}) \cap H^2(\C_+) \neq \{0\}\).
\item \(\left(L^2\left(\R,\C\right),H^2\left(\C_+\right),\theta_h\right)\) is a maximal RPHS with \(\ker(\theta_h - \textbf{1}) \cap H^2(\C_+) \neq \{0\}\).
\item \(h = h_{W(\mu)}\) for some measure \(\mu \in \mathcal{BM}^{\mathrm{fin}}\left(\R_+\right) \setminus \{0\}\).
\end{enumerate}
\end{thm}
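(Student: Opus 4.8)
The plan is to prove the equivalence by a cycle: (b) $\Rightarrow$ (a) is immediate since every maximal RPHS is in particular a RPHS, so it suffices to establish (a) $\Rightarrow$ (c) and (c) $\Rightarrow$ (b). For the latter, given $\mu \in \mathcal{BM}^{\mathrm{fin}}(\R_+) \setminus \{0\}$, set $\nu \coloneqq W(\mu) \in \mathcal{BM}^{\mathrm{fin}}([0,\infty]) \setminus \{0\}$ (nonzero because $W$ is injective on the relevant part, or simply because $\mu \ne 0$ forces $W(\mu) \ne 0$ as the density $\frac{\lambda}{1+\lambda^2}$ is strictly positive on $\R_+$). By \fref{def:FNuHNu} we have $F_\nu = \mathrm{Out}(\sqrt{\Psi_\nu}) \in \mathrm{Out}(\C_+)$ and, using \fref{lem:BigSmallPsi}, $\Psi_\nu = \Psi_{W(\mu)} = \psi_\mu$; moreover by \fref{prop:PhiPsiFourier} we have $\psi_\mu \in L^1(\R,\C)$ with $\cF_1 \psi_\mu = \phi_\mu$, which is reflection positive by \fref{thm:Widder}. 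Hence $|F_\nu|^2 = \Psi_\nu = \psi_\mu = \cF_1^{-1}\phi_\mu$ with $\phi_\mu$ reflection positive and, by \fref{cor:muFourierImage}, $\phi_\mu \in \cF_1(L^1(\R,\C))$. Also $F_\nu \in H^2(\C_+)$: this needs that $\Psi_\nu = \psi_\mu \in L^1(\R)$ so $|F_\nu|^2 \in L^1$, together with \fref{thm:OuterBetrag} giving $|F_\nu| = \sqrt{\Psi_\nu}$ on $\R$, so $F_\nu \in \mathrm{Out}^2(\C_+)$. Since $h_{W(\mu)} = \frac{F_\nu}{RF_\nu}$, we get $\theta_{h_{W(\mu)}} F_\nu = h_{W(\mu)} \cdot R F_\nu = F_\nu$, so $F_\nu \in \ker(\theta_{h_{W(\mu)}} - \mathbf{1}) \cap \mathrm{Out}^2(\C_+)$ is nonzero. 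Now apply \fref{prop:outerequivalence}: condition (d) there holds (with $\phi = \phi_\mu$), hence (a) there holds, i.e. $(L^2(\R,\C),H^2(\C_+),\theta_{h_{W(\mu)}})$ is a maximal RPHS, and $\ker(\theta_h - \mathbf{1}) \cap H^2(\C_+) \supeq \C F_\nu \ne \{0\}$. This gives (b).

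For (a) $\Rightarrow$ (c), assume $(L^2(\R,\C),H^2(\C_+),\theta_h)$ is a RPHS with $\ker(\theta_h - \mathbf{1}) \cap H^2(\C_+) \ne \{0\}$. By \fref{prop:kerNonTrivOuterFix} there is an outer function $F \in \mathrm{Out}^2(\C_+)$ with $h = \frac{F}{RF}$, and $F \in \ker(\theta_h - \mathbf{1}) \cap \mathrm{Out}^2(\C_+)$. Applying \fref{prop:outerequivalence}, condition (a) there (which holds by hypothesis, via \fref{prop:OuterMaximal}) gives condition (e): $F = \mathrm{Out}(C, \sqrt{\cF_1^{-1}\phi})$ for some $C \in \T$ and some reflection positive $\phi \in \cF_1(L^1(\R,\C))$ with $I(\sqrt{\cF_1^{-1}\phi}) < \infty$. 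By \fref{cor:phiFourierImage}, a reflection positive function lies in $\cF_1(L^1(\R,\C))$ iff it equals $\phi_\mu$ for some $\mu \in \mathcal{BM}^{\mathrm{fin}}(\R_+)$; so $\phi = \phi_\mu$. We must have $\mu \ne 0$, since $\mu = 0$ would force $\phi = 0$, hence $|F|^2 = \cF_1^{-1}\phi = 0$, contradicting $F \ne 0$. It remains to identify $h$ with $h_{W(\mu)}$. From $|F|^2 = \cF_1^{-1}\phi_\mu = \psi_\mu = \Psi_{W(\mu)}$ (using \fref{prop:PhiPsiFourier} and \fref{lem:BigSmallPsi}) and \fref{thm:OuterBetrag} we get $|F| = \sqrt{\Psi_{W(\mu)}} = |F_{W(\mu)}|$ on $\R$; since both $F$ and $F_{W(\mu)}$ are outer functions on $\C_+$, \fref{thm:OuterBetrag} gives $F = C' F_{W(\mu)}$ for some $C' \in \T$. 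Then
\[
h = \frac{F}{RF} = \frac{C' F_{W(\mu)}}{\overline{C'} R F_{W(\mu)}} \cdot \frac{1}{|C'|^{-2}} = \frac{F_{W(\mu)}}{R F_{W(\mu)}} = h_{W(\mu)},
\]
using $C'/\overline{C'} \cdot$ wait --- more carefully: $R(C'F_{W(\mu)}) = C' \cdot R F_{W(\mu)}$ since $C'$ is a constant, so $h = \frac{C'F_{W(\mu)}}{C' RF_{W(\mu)}} = \frac{F_{W(\mu)}}{RF_{W(\mu)}} = h_{W(\mu)}$. This establishes (c).

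The main obstacle I anticipate is purely bookkeeping: threading the correspondence $\mu \leftrightarrow W(\mu) \leftrightarrow \Psi_{W(\mu)} = \psi_\mu \leftrightarrow \phi_\mu$ and invoking the right auxiliary lemmas (\fref{lem:BigSmallPsi}, \fref{prop:PhiPsiFourier}, \fref{cor:muFourierImage}, \fref{cor:phiFourierImage}, \fref{thm:OuterBetrag}) in the correct direction, and being careful that all the nonvanishing/nonzero conditions ($\mu \ne 0$, $\nu \ne 0$, $F \ne 0$) are propagated consistently. There is no genuinely hard analytic step here, since the heavy lifting --- the characterization in \fref{prop:outerequivalence} and the integrability estimate \fref{lem:PsiNuIntegral} underlying the definition of $F_\nu$ --- has already been done; this theorem is the synthesis. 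One minor point worth double-checking is that the constant $C'$ relating $F$ to $F_{W(\mu)}$ genuinely cancels in the ratio $F/RF$, which it does because $R$ acts trivially on constants.
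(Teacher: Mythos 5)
Your proposal is correct and follows essentially the same route as the paper: \fref{prop:kerNonTrivOuterFix} to produce the outer fixed point, \fref{prop:outerequivalence} to pass to reflection positive functions, and \fref{cor:phiFourierImage}, \fref{prop:PhiPsiFourier}, \fref{lem:BigSmallPsi} together with \fref{thm:OuterBetrag} to identify the measure and cancel the unimodular constant, only organized as the cycle (b)$\Rightarrow$(a)$\Rightarrow$(c)$\Rightarrow$(b) instead of the paper's (a)$\Leftrightarrow$(b), (b)$\Leftrightarrow$(c). The one cosmetic fix needed is to delete the mid-proof ``wait --- more carefully'' aside and keep only the corrected computation $R(C'F_{W(\mu)}) = C'\,RF_{W(\mu)}$, which is indeed why the constant cancels.
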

\newpage
\begin{proof}
\begin{itemize}
\item[(a) \(\Leftrightarrow\) (b):] This follows immediately by \fref{prop:kerNonTrivOuterFix}.
\item[(b) \(\Rightarrow\) (c):] If \(\left(L^2\left(\R,\C\right),H^2\left(\C_+\right),\theta_h\right)\) is a maximal RPHS with
\[\ker(\theta_h - \textbf{1}) \cap H^2(\C_+) \neq \{0\},\]
then, by \fref{prop:kerNonTrivOuterFix}, there exists an outer function \(F \in \Out^2(\C_+)\) such that
\[h = \frac F{RF}.\]
For this function, we have
\[F \in \ker(\theta_h - \textbf{1}) \cap \Out^2(\C_+)\]
and therefore, by \fref{prop:outerequivalence}, we have
\[F = \Out\left(C,\sqrt{\cF_1^{-1} \phi}\right)\]
for some \(C \in \T\) and some reflection positive function \(\phi \in \cF_1(L^1(\R,\C))\) with
\[I\left(\sqrt{\cF_1^{-1} \phi}\right) < \infty.\]
By \fref{cor:phiFourierImage} we then have \(\phi = \phi_\mu\) for some measure \(\mu \in \mathcal{BM}^{\mathrm{fin}}\left(\R_+\right)\). This, by \fref{prop:PhiPsiFourier} and \fref{lem:BigSmallPsi}, yields
\[\cF_1^{-1} \phi = \cF_1^{-1} \phi_\mu = \psi_\mu = \Psi_{W(\mu)}\]
and therefore
\[F = \Out\left(C,\sqrt{\cF_1^{-1} \phi}\right) = C \cdot \Out\left(\sqrt{\Psi_{W(\mu)}}\right) = C \cdot F_{W(\mu)}.\]
This implies
\[h = \frac F{RF} = \frac{C \cdot F_{W(\mu)}}{C \cdot RF_{W(\mu)}} = \frac{F_{W(\mu)}}{RF_{W(\mu)}} = h_{W(\mu)}.\]
\item[(c) \(\Rightarrow\) (b):] If \(h = h_{W(\mu)}\) for some measure \(\mu \in \mathcal{BM}^{\mathrm{fin}}\left(\R_+\right)\), we have
\[\theta_h F_{W(\mu)} = \frac{F_{W(\mu)}}{RF_{W(\mu)}} \cdot RF_{W(\mu)} = F_{W(\mu)}\]
and therefore
\[F_{W(\mu)} \in \ker(\theta_h - \textbf{1}) \cap \Out^2(\C_+)\]
so especially
\[\ker(\theta_h - \textbf{1}) \cap H^2(\C_+) \neq \{0\}.\]
Further, by \fref{lem:BigSmallPsi} and \fref{prop:PhiPsiFourier}, we have
\[|F_{W(\mu)}|^2 = \Psi_{W(\mu)} = \psi_\mu = \cF_1^{-1} \phi_\mu\]
and therefore \(\left(L^2\left(\R,\C\right),H^2\left(\C_+\right),\theta_h\right)\) is a maximal RPHS by \fref{prop:outerequivalence}. \qedhere
\end{itemize}
\end{proof}

\subsection{General involutions}
In the last section, we have given a full characterization of the functions \(h \in L^\infty(\R,\T)^\flat\) such that \(\left(L^2\left(\R,\C\right),H^2\left(\C_+\right),\theta_h\right)\) is a maximal RPHS under the extra condition that
\[\ker(\theta_h - \textbf{1}) \cap H^2(\C_+) \neq \{0\}.\]
We have seen that these are precisely the functions of the form
\[h = h_{W(\mu)}\]
for some measure \(\mu \in \mathcal{BM}^{\mathrm{fin}}\left(\R_+\right) \setminus \{0\}\). In this section, we want to get rid of the extra assumption that \(\theta_h\) has a non-zero fixed point in the Hardy space.

\subsubsection{Fixed points outside the Hardy space and outer functions}
First we notice that the functions \(F_\nu\) and \(h_\nu\) are defined for general measures \(\nu \in \mathcal{BM}^{\mathrm{fin}}\left(\left[0,\infty\right]\right) \setminus \{0\}\) and not just for measures of the form \(W(\mu)\) for some measure \(\mu \in \mathcal{BM}^{\mathrm{fin}}\left(\R_+\right)\). Although, by \fref{thm:thetaFixedPointCharac}, these involution \(\theta_h\) have no non-zero fixed point in the Hardy space, they still formally satisfy the equation
\[\theta_h F_\nu = \frac{F_\nu}{R F_\nu} RF_\nu = F_\nu,\]
which we will interpret as them having a fixed point in a space slightly bigger than the Hardy space. We will now define this space:
\begin{definition}
\begin{enumerate}[\rm (a)]
\item We define the function \(\gls*{Lambda} \in \cO(\C_+)\) by
\[\Lambda(z) \coloneqq \frac{iz}{(z+i)^2}.\]
\item We set
\[\gls*{H2Lambda} \coloneqq \{f \in \cO(\C_+): \Lambda \cdot f \in H^2(\C_+)\}.\]
\item We define
\[\gls*{Out2Lambda} \coloneqq H^2_\Lambda(\C_+) \cap \Out(\C_+).\]
\end{enumerate}
\end{definition}
\begin{remark}\label{rem:DefLambda}
\begin{enumerate}[\rm (a)]
\item We have
\[\left|\frac{ix}{(x+i)^2}\right| = \frac{|x|}{1+x^2} \quad \forall x \in \R,\]
so by \fref{lem:OuterExamples} we have
\[\Lambda = \Out(|\Lambda|) \in \Out(\C_+).\]
Since \(|\Lambda| \in L^2(\R,\C) \cap L^\infty(\R,\C)\), by \fref{thm:OuterBetrag}, we get
\[\Lambda \in \Out^2(\C_+) \cap H^\infty(\C_+).\]
\item Since \(\Lambda \in H^\infty(\C_+)\), we get
\[M_\Lambda H^2(\C_+) \subeq H^2(\C_+)\]
and therefore
\[H^2(\C_+) \subeq H^2_\Lambda(\C_+).\]
\item Since \(0 \notin \Out(\C_+)\), we also have \(0 \notin \Out^2_\Lambda(\C_+)\).
\end{enumerate}
\end{remark}
\newpage
The following lemma about the function \(\Lambda\) will be useful later:
\begin{lemma}\label{lem:phiLambdaIdentity}
Let \(\phi_i \in \Inn(\C_+)\) be the inner function with
\[\phi_i(z) = \frac{z-i}{z+i}, \quad z \in \C_+.\]
Then
\[\frac{\Lambda(x)}{(R\Lambda)(x)} = -\frac{\phi_i(x)}{(R\phi_i)(x)} \quad \forall x \in \R.\]
\end{lemma}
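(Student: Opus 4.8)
The plan is to verify this identity by a completely direct computation, reducing both sides to the single rational function $-\tfrac{(x-i)^2}{(x+i)^2}$.

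First I would compute the reflection of $\Lambda$. Since $(R\Lambda)(x) = \Lambda(-x)$ and $(-x+i)^2 = (x-i)^2$, we get
\[(R\Lambda)(x) = \frac{i(-x)}{(-x+i)^2} = \frac{-ix}{(x-i)^2}, \qquad x \in \R.\]
Hence, for $x \in \R^\times$,
\[\frac{\Lambda(x)}{(R\Lambda)(x)} = \frac{ix}{(x+i)^2} \cdot \frac{(x-i)^2}{-ix} = -\frac{(x-i)^2}{(x+i)^2}.\]
At $x = 0$ the left-hand side is to be understood as the continuous extension of this expression (which is all that is relevant for the $L^\infty(\R,\T)^\flat$-classes $h_\nu$ occurring in the sequel), so the identity is meaningful for every $x \in \R$.

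Next I would carry out the analogous computation for $\phi_i$. From
\[(R\phi_i)(x) = \phi_i(-x) = \frac{-x-i}{-x+i} = \frac{x+i}{x-i}\]
we obtain, for $x \in \R$,
\[\frac{\phi_i(x)}{(R\phi_i)(x)} = \frac{x-i}{x+i} \cdot \frac{x-i}{x+i} = \frac{(x-i)^2}{(x+i)^2},\]
and therefore $-\dfrac{\phi_i(x)}{(R\phi_i)(x)} = -\dfrac{(x-i)^2}{(x+i)^2}$. Comparing with the first computation gives the asserted equality.

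I do not expect any real obstacle here: the argument is a short explicit manipulation of rational functions. The only point deserving a word of care is the value $x = 0$, where $\Lambda$ vanishes and the quotient $\Lambda/(R\Lambda)$ is an indeterminate form; since the lemma is used only to compare almost-everywhere-defined functions in $L^\infty(\R,\T)^\flat$, this causes no difficulty, but I would flag it explicitly in the write-up.
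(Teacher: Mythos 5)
Your proposal is correct and follows essentially the same direct computation as the paper, which likewise reduces both quotients to $-\frac{(x-i)^2}{(x+i)^2}$ (using $(-x+i)^2=(x-i)^2$); your explicit remark about the indeterminate value at $x=0$ is a reasonable extra precaution that the paper silently glosses over.
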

\begin{proof}
For \(x \in \R\) we have
\[\frac{\Lambda(x)}{(R\Lambda)(x)} = \frac{\frac{ix}{(x+i)^2}}{\frac{-ix}{(-x+i)^2}} = -\frac{(-x+i)^2}{(x+i)^2} = -\frac{\frac{x-i}{x+i}}{\frac{-x-i}{-x+i}} = -\frac{\phi_i(x)}{(R\phi_i)(x)}. \qedhere\]
\end{proof}
Every function \(f \in H^2_\Lambda(\C_+)\) can be written as \(f = \frac{g}{\Lambda}\) for some function \(g \in H^2(\C_+)\). This, in particular, implies that the limit
\[f(x) \coloneqq \lim_{\epsilon \downarrow 0} f(x+i\epsilon) = \left(\lim_{\epsilon \downarrow 0} g(x+i\epsilon)\right) \cdot \left(\lim_{\epsilon \downarrow 0} \Lambda(x+i\epsilon)\right)^{-1}\]
exists for almost every \(x \in \R\) since the corresponding limits exist for outer functions and Hardy space functions. This, in particular, allows us to interpret functions \(f \in H^2_\Lambda(\C_+)\) as almost everywhere defined functions on \(\R\), similarly to outer functions and Hardy space functions. Further, we have
\[\infty > \left\lVert g\right\rVert_2^2 = \int_\R |g(x)|^2 dx = \int_\R |f(x)|^2 \cdot |\Lambda(x)|^2 dx = \int_\R |f(x)|^2 \frac{x^2}{(1+x^2)^2}dx,\]
so
\[f \in L^2\left(\R,\C,\frac{x^2}{(1+x^2)^2}dx\right).\]
This motivates the following definition:
\begin{definition}
\begin{enumerate}[\rm (a)]
\item We define a measure \(\gls*{rho}\) on \(\R\) by
\[d\rho(x) = \frac{x^2}{(1+x^2)^2}dx.\]
\item For a function \(h \in L^\infty(\R,\C)^\flat\) we define an involution \(\gls*{Thetah} \in B(L^2\left(\R,\C,\rho\right))\) by
\[\Theta_h f \coloneqq h \cdot Rf.\]
\end{enumerate}
\end{definition}
\begin{remark}
\begin{enumerate}[\rm (a)]
\item The boundedness of the function \(x \mapsto \frac{x^2}{(1+x^2)^2}\) implies that
\[L^2(\R,\C) \subeq L^2(\R,\C,\rho).\]
\item By definition, we have
\[\Theta_h \big|_{L^2(\R,\C)}^{L^2(\R,\C)} = \theta_h.\]
\item As seen before, we can identify \(H^2_\Lambda(\C_+)\) with a subspace of \(L^2(\R,\C,\rho)\) by taking boundary values.
\end{enumerate}
\end{remark}
\newpage
The following proposition shows how useful this extension is since it shows that, although not every involution \(\theta_h\) has a non-zero fixed point in the Hardy space, the slightly extended function \(\Theta_h\) always has such a fixed point in the extended Hardy space:
\begin{prop}\label{prop:GeneralizedFixedPoint}
Let \(h \in L^\infty(\R,\C)^\flat\) such that \((L^2(\R,\C),H^2(\C_+),\theta_h)\) is a maximal RPHS. Then
\[\ker(\Theta_h-\textbf{1}) \cap H^2_\Lambda(\C_+) \neq \{0\}.\]
\end{prop}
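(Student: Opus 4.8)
The plan is to exhibit an explicit nonzero fixed vector of \(\Theta_h\) in \(H^2_\Lambda(\C_+)\), obtained by twisting a vector of the one–dimensional model space of the Blaschke factor \(\phi_i\) and dividing by \(\Lambda\). Two preliminary observations drive everything. First, since \((L^2(\R,\C),H^2(\C_+),\theta_h)\) is a RPHS in the sense of \fref{def:RPHS}, the operator \(\theta_h\) is a unitary involution, so \fref{lem:thetahUnitary} forces \(h\in L^\infty(\R,\T)^\flat\); in particular \(Rh=h^{-1}\) on \(\R\). Second, maximality together with \fref{thm:EPlusMaximal} gives \(L^2(\R,\C)^{\theta_h}\subseteq H^2(\C_+)+\ker(\theta_h+\textbf{1})\), and combining this with the orthogonal splitting \(L^2(\R,\C)=L^2(\R,\C)^{\theta_h}\oplus\ker(\theta_h+\textbf{1})\) (valid since \(\theta_h\) is a unitary involution) upgrades it to the full decomposition
\[L^2(\R,\C)=H^2(\C_+)+\ker(\theta_h+\textbf{1}).\]

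Now I would introduce \(P(z)\coloneqq\frac1{2\pi i}\,\frac1{z-i}\in H^2(\C_-)\setminus\{0\}\), which satisfies \(\phi_i\cdot P=Q_i\) with \(Q_i(z)=\frac1{2\pi i}\,\frac1{z+i}\) (cf. \fref{prop:H2RKHS}). Since \(M_{\phi_i}H^2(\C_+)=(\C Q_i)^{\perp_+}\) by \fref{lem:BlaschkeOrthogonal}, we get \(M_{\phi_i}\bigl(H^2(\C_+)+\C P\bigr)=H^2(\C_+)\). Applying the decomposition above to \(P\) yields \(P=\eta+m\) with \(\eta\in H^2(\C_+)\) and \(m\in\ker(\theta_h+\textbf{1})\); then \(m=P-\eta\in H^2(\C_+)+\C P\), and \(m\neq0\) because \(P\notin H^2(\C_+)\) while \(\eta\in H^2(\C_+)\), so \(g\coloneqq\phi_i\cdot m\) is a nonzero element of \(M_{\phi_i}\bigl(H^2(\C_+)+\C P\bigr)=H^2(\C_+)\). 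Setting \(f\coloneqq g/\Lambda\), holomorphy and zero–freeness of \(\Lambda\) on \(\C_+\) give \(f\in\cO(\C_+)\), while \(\Lambda f=g\in H^2(\C_+)\); hence \(f\in H^2_\Lambda(\C_+)\setminus\{0\}\). It remains to check \(\Theta_h f=f\): using \(Rg=(R\phi_i)(Rm)\) and \(h\cdot Rm=\theta_h m=-m\) one computes
\[\Theta_h f=h\cdot R\!\left(\frac g\Lambda\right)=\frac{R\phi_i}{R\Lambda}\,(h\cdot Rm)=-\frac{R\phi_i}{R\Lambda}\,m,\]
and \fref{lem:phiLambdaIdentity} gives \(\frac{\Lambda}{R\Lambda}=-\frac{\phi_i}{R\phi_i}\) on \(\R\), i.e. \(-\frac{R\phi_i}{R\Lambda}=\frac{\phi_i}{\Lambda}\), so \(\Theta_h f=\frac{\phi_i}{\Lambda}\,m=g/\Lambda=f\), as desired.

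The one delicate point is the choice of auxiliary data: one must twist by exactly the factor \(\phi_i\), whose zero \(i\) is the fixed point of \(z\mapsto-\overline z\), so that multiplying a vector of the \(\phi_i\)-model space by \(\phi_i\) and dividing by \(\Lambda\) lands back inside \(H^2_\Lambda(\C_+)\) — this is precisely what \fref{lem:phiLambdaIdentity} encodes, and with the wrong twist the construction fails to stay in the enlarged Hardy space. Everything else is routine; the full strength of the hypothesis enters only through the decomposition \(L^2(\R,\C)=H^2(\C_+)+\ker(\theta_h+\textbf{1})\) supplied by \fref{thm:EPlusMaximal}, and no positivity beyond what maximality already encodes is needed.
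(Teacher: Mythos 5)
Your proof is correct. The endgame coincides with the paper's: both arguments produce a nonzero \(\theta_h\)-anti-invariant element of \(M_{\phi_i}^{-1}H^2(\C_+)\), multiply it by \(\phi_i\) to land in \(H^2(\C_+)\), divide by \(\Lambda\), and verify the fixed-point identity via \fref{lem:phiLambdaIdentity} (your identity \(-\tfrac{R\phi_i}{R\Lambda}=\tfrac{\phi_i}{\Lambda}\) on \(\R^\times\) is exactly the relation the paper uses in the step marked \((\bigstar)\)). Where you differ is in how the anti-invariant vector is produced. The paper exploits maximality through its consequence that \((L^2(\R,\C),M_{\phi_i}^{-1}H^2(\C_+),\theta_h)\) is \emph{not} an RPHS: it picks \(f\in M_{\phi_i}^{-1}H^2(\C_+)\) with \(\braket*{f}{\theta_h f}<0\), corrects its \(\theta_h\)-fixed part using \fref{thm:EPlusMaximal}, and gets nontriviality from the sign of the form. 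You instead upgrade \fref{thm:EPlusMaximal}(c), together with the orthogonal splitting \(L^2=\ker(\theta_h-\textbf{1})\oplus\ker(\theta_h+\textbf{1})\), to the global decomposition \(L^2(\R,\C)=H^2(\C_+)+\ker(\theta_h+\textbf{1})\), apply it to the explicit function \(P=\phi_i^{-1}Q_i\in H^2(\C_-)\) (using \(M_{\phi_i}^{-1}H^2(\C_+)=H^2(\C_+)+\C P\), which is \fref{lem:BlaschkeOrthogonal}), and get nontriviality simply from \(P\notin H^2(\C_+)\). Your route avoids both the ``failure of positivity on the larger space'' step and the negativity argument for nonvanishing, at the small cost of identifying the one-dimensional complement \(\C P\) explicitly; the paper's route never needs that identification and makes the role of strict positivity failure visible. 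Both hinge on exactly the same two ingredients, \fref{thm:EPlusMaximal} and \fref{lem:phiLambdaIdentity}, and both fixed-point computations are identities of boundary values a.e.\ on \(\R^\times\), which is all that is needed in \(L^2(\R,\C,\rho)\).
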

\begin{proof}
Since \((L^2(\R,\C),H^2(\C_+),\theta_h)\) is a maximal RPHS, we know that \((L^2(\R,\C),M_{\phi_i}^{-1}H^2(\C_+),\theta_h)\) is not a RPHS and therefore there exists a function \(f \in M_{\phi_i}^{-1}H^2(\C_+)\) such that
\[\braket*{f}{\theta_h f} < 0.\]
Now, let
\[f = f_+ + f_- \quad \text{with} \quad f_\pm \in L^2(\R,\C)^{\pm \theta_h}.\]
Since 
\((L^2(\R,\C),H^2(\C_+),\theta_h)\) is a maximal RPHS, by \fref{thm:EPlusMaximal}, there exists a function \({\tilde f \in H^2(\C_+)}\) and a function \(\tilde f_- \in L^2(\R,\C)^{-\theta_h}\) such that
\[f_+ = \tilde f + \tilde f_-.\]
We now set
\[g \coloneqq f-\tilde f \in M_{\phi_i}^{-1}H^2(\C_+) + H^2(\C_+) = M_{\phi_i}^{-1}H^2(\C_+).\]
Then \(\tilde g \coloneqq M_{\phi_i} g \in H^2(\C_+)\). Since
\[\braket*{f}{\theta_h f} < 0 \quad \text{and} \quad \braket*{\tilde f}{\theta_h \tilde f} \geq 0,\]
we have \(f \neq \tilde f\) and therefore \(g \neq 0\), so \(\tilde g \neq 0\). Further
\[g = f-\tilde f = (f_+ + f_-) - (f_+ - \tilde f_-) = f_- + \tilde f_- \in L^2(\R,\C)^{-\theta_h},\]
so \(\theta_h g = - g\). This yields
\[\theta_h \tilde g = \theta_h M_{\phi_i}g = M_{R\phi_i} \theta_h g = -M_{R\phi_i} g = -M_{\frac{R\phi_i}{\phi_i}} M_{\phi_i}g = -M_{\frac{R\phi_i}{\phi_i}} \tilde g = M_{\frac{R\Lambda}{\Lambda}} \tilde g, \tag{\(\bigstar\)}\]
using \fref{lem:phiLambdaIdentity} in the last step. We now define
\[F \coloneqq \frac{\tilde g}{\Lambda} \in H^2_\Lambda(\C_+) \setminus \{0\}.\]
Then
\[\Theta_h F = \Theta_h \frac{\tilde g}{\Lambda} = \frac 1{R\Lambda} \Theta_h \tilde g = \frac 1{R\Lambda} \theta_h \tilde g \overset{(\bigstar)}{=} \frac 1{R\Lambda} M_{\frac{R\Lambda}{\Lambda}} \tilde g = \frac{\tilde g}{\Lambda} = F,\]
so we have
\[F \in \left(\ker(\Theta_h-\textbf{1}) \cap H^2_\Lambda(\C_+)\right) \setminus \{0\}. \qedhere\]
\end{proof}
In \fref{cor:kernelOuterOrZero} we have seen that, for any \({h \in L^\infty(\R,\T)^\flat}\) for which \((L^2(\R,\C),H^2(\C_+),\theta_h)\) is a maximal RPHS, the kernel \(\ker(\theta_h - \textbf{1}) \cap H^2(\C_+)\) just consists of outer functions. We now want to see that the same is true for the extended involution \(\Theta_h\). For this we need the following analogue of \fref{prop:OuterMaximal} for the involution \(\Theta_h\):
\begin{prop}\label{prop:maxPos}
Let \({h \in L^\infty(\R,\T)^\flat}\) such that the triple \((L^2(\R,\C),H^2(\C_+),\theta_h)\) is a RPHS and
\[\ker(\Theta_h - \textbf{1}) \cap \Out^2_\Lambda(\C_+) \neq \eset.\]
Then \((L^2(\R,\C),H^2(\C_+),\theta_h)\) is a maximal RPHS.
\end{prop}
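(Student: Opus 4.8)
The plan is to mimic the proof of \fref{prop:OuterMaximal}, but with the extended involution $\Theta_h$ and the extended Hardy space $H^2_\Lambda(\C_+)$ in place of $\theta_h$ and $H^2(\C_+)$. We pick a nonzero $F \in \ker(\Theta_h - \textbf{1}) \cap \Out^2_\Lambda(\C_+)$. Since $\Theta_h F = F$ means $h \cdot RF = F$, we again get $h = \frac{F}{RF}$. The key computational observation is the same as before: for $f \in L^\infty(\R,\C)$ one has $\Theta_h(fF) = (Rf)F$, so $\Theta_h$ intertwines $R$-eigenspaces with $\theta_h$-eigenspaces after multiplication by $F$, i.e. $\overline{L^\infty(\R,\C)^{\pm R}F} \subseteq L^2(\R,\C)^{\pm\theta_h}$ (now the closure being taken in $L^2(\R,\C)$, using that $F \in H^2_\Lambda(\C_+)$ lies in $L^2(\R,\C,\rho)$, but $fF$ can still be an honest $L^2$-function for suitable $f$).

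The one genuine difference from \fref{prop:OuterMaximal} is that $F$ need not itself lie in $L^2(\R,\C)$, so one cannot directly invoke \fref{cor:OuterDense} and \fref{thm:outerSpan} with $F$. The fix is to replace $F$ by $\Lambda \cdot F \in \Out^2(\C_+)$, which is a genuine outer function in the Hardy space by \fref{rem:DefLambda}(a) and the multiplicativity of outer functions. First I would record that $\Lambda F \in \Out^2(\C_+)$ and observe, via \fref{lem:phiLambdaIdentity} (or directly), how $\theta_h$ acts on multiples of $\Lambda F$: since $h = \frac{F}{RF}$, one has $\theta_h(\Lambda F) = h \cdot R(\Lambda F) = \frac{F}{RF} (R\Lambda)(RF) = \frac{R\Lambda}{\Lambda} \cdot \Lambda F$, so $\Lambda F$ is an eigenvector not of $\theta_h$ but is twisted by the inner-type factor $\frac{R\Lambda}{\Lambda}$, which by \fref{lem:phiLambdaIdentity} equals $-\frac{R\phi_i}{\phi_i}$. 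Then for $f \in L^\infty(\R,\C)$ one computes $\theta_h(f \Lambda F) = (Rf) \frac{R\Lambda}{\Lambda} \Lambda F$.

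From here the argument runs parallel to \fref{prop:OuterMaximal} using the outer function $G := \Lambda F \in \Out^2(\C_+)$ and the density statements \fref{cor:OuterDense} and \fref{thm:outerSpan} applied to $G$. The density of $\overline{(H^\infty(\C_+) + H^\infty(\C_-))G} = L^2(\R,\C)$ and the identity $\theta_h(fG) = (Rf)\frac{R\Lambda}{\Lambda}G$ let me build, for each $f \in L^\infty(\R,\C)^{R}$, approximating sequences with $s_n \in H^\infty(\C_+)$ so that $\lim_n(s_n + R s_n)\frac{?}{?}G \to \cdots$; I need to be a bit careful because the twist $\frac{R\Lambda}{\Lambda}$ enters, but since $\frac{R\Lambda}{\Lambda} \in L^\infty(\R,\T)$ this only rescales. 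The upshot is that the set $V := \{(f + Rf)G + (\text{correction}) : f \in H^\infty(\C_+)\}$ — more precisely the span of vectors of the form $2fG$ minus an element of $L^2(\R,\C)^{-\theta_h}$ — is dense in $L^2(\R,\C)^{\theta_h}$ and contained in $H^2(\C_+) + L^2(\R,\C)^{-\theta_h}$, so \fref{thm:EPlusMaximal} gives maximality.

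The main obstacle I expect is bookkeeping around the twist factor $\frac{R\Lambda}{\Lambda}$ and making sure the relevant vectors genuinely land in $H^2(\C_+)$ (not just in $H^2_\Lambda(\C_+)$): one has $fG = f\Lambda F \in H^\infty(\C_+) \cdot \Out^2(\C_+) \subseteq H^2(\C_+)$ for $f \in H^\infty(\C_+)$, which is exactly what \fref{thm:EPlusMaximal}(b) requires, so this should go through. A cleaner route, which I would actually adopt to avoid the twist, is to prove the analogue of \fref{prop:outerequivalence}'s equivalence ``RPHS $\Leftrightarrow$ maximal RPHS'' directly in this setting, or simply to note that $(L^2(\R,\C), M_{\phi_i}H^2(\C_+), \theta_{h'})$ with $h' = \frac{\Lambda F}{R(\Lambda F)} \cdot \frac{\phi_i}{R\phi_i}$-type adjustments reduces to the already-proven \fref{prop:OuterMaximal}; but the twisted-eigenvector computation above is the most transparent and self-contained, so that is the plan.
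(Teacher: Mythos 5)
Your overall route coincides with the paper's: pass from $F$ to $G \coloneqq \Lambda F \in \Out^2(\C_+)$, use \fref{lem:phiLambdaIdentity} to control the twist, and feed a dense family of $\theta_h$-fixed vectors lying in $H^2(\C_+) + L^2(\R,\C)^{-\theta_h}$ into \fref{thm:EPlusMaximal}. But there is a genuine gap exactly where the new difficulty sits. From $h = \frac{F}{RF} = \frac{R\Lambda}{\Lambda}\cdot\frac{G}{RG}$ one gets $\theta_h(fG) = (Rf)\,\frac{R\Lambda}{\Lambda}\,G$, and $\frac{R\Lambda}{\Lambda} = -\frac{R\phi_i}{\phi_i} = -\phi_i^{-2}$ on $\R$ is a \emph{non-constant} unimodular function; your remark that it ``only rescales'' is not correct. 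In particular the symmetrization $(s_n + Rs_n)G$ from \fref{prop:OuterMaximal} no longer produces $\theta_h$-fixed vectors: $fG$ is fixed if and only if $\phi_i f$ is $R$-antisymmetric, so the fixed vectors inside $L^\infty(\R,\C)\cdot G$ are those of the form $M_{\phi_i}^{-1}(u)\,G$ with $Ru = -u$, not $L^\infty(\R,\C)^{R}\cdot G$. Your placeholders ``$\frac{?}{?}$'' and ``(correction)'' stand precisely where this has to be resolved, and the resolution is not a routine bookkeeping step.

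The paper resolves it by conjugating with $M_{\phi_i}$: it sets $V \coloneqq M_{\phi_i}^{-1}\bigl(M_{\phi_i}^{-1}H^\infty(\C_-) \oplus \C\textbf{1} \oplus M_{\phi_i}H^\infty(\C_+)\bigr)G$, on which $\theta_h M_{\phi_i}^{-1}(fG) = M_{\phi_i}^{-1}((-Rf)G)$, so $V$ is $\theta_h$-invariant with $V^{\theta_h} = \bigl\{M_{\phi_i}^{-1}\bigl((M_{\phi_i}^{-1}(-Rg)+M_{\phi_i}g)G\bigr): g \in H^\infty(\C_+)\bigr\}$, and the identity $M_{\phi_i}^{-1}\bigl((M_{\phi_i}^{-1}(-Rg)+M_{\phi_i}g)G\bigr) = 2gG - M_{\phi_i}^{-1}\bigl((M_{\phi_i}^{-1}(Rg)+M_{\phi_i}g)G\bigr)$ exhibits each such vector as an element of $H^2(\C_+) + L^2(\R,\C)^{-\theta_h}$ (this is the precise form of your ``$2fG$ minus an anti-fixed element''). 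Equally important, the density of $V^{\theta_h}$ in $L^2(\R,\C)^{\theta_h}$ is not a consequence of \fref{cor:OuterDense} applied to $G$ alone: one first proves $\overline{V} = L^2(\R,\C)$, which uses $H^2(\C_+) = \C G + M_{\phi_i}H^2(\C_+)$ (from $G(i) \neq 0$ and \fref{prop:BlaschkeFactorization}) together with \fref{thm:outerSpan} and its reflected counterpart for the lower half-plane, and only then projects with $\frac 12(\textbf{1}+\theta_h)$; this step has no analogue in \fref{prop:OuterMaximal} and your sketch never addresses it. So the idea is the right one, but without the $M_{\phi_i}$-conjugated construction of $V$, the identification of $V^{\theta_h}$, and the density argument for $V$, the proof is incomplete.
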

\begin{proof}
By assumption, we can choose a function
\[F \in \ker(\Theta_h - \textbf{1}) \cap \Out^2_\Lambda(\C_+).\]
We set
\[G \coloneqq \Lambda \cdot F \in \mathrm{Out}^2(\C_+).\]
We now set
\[V \coloneqq M_{\phi_i}^{-1}\left(M_{\phi_i}^{-1}H^\infty(\C_-) \oplus \C\textbf{1} \oplus M_{\phi_i} H^\infty(\C_+)\right) \cdot G \subeq L^\infty(\R,\C) \cdot G \subeq L^2(\R,\C).\]
Our first goal is to prove that \(V\) is dense in \(L^2(\R,\C)\). Since, by \fref{prop:BlaschkeFactorization}, we have
\[M_{\phi_i} H^2(\C_+) = \{f \in H^2(\C_+): f(i) = 0\}\]
and \(G(i) \neq 0\), we have
\[H^2(\C_+) = \C G + M_{\phi_i} H^2(\C_+) = \C G + M_{\phi_i} \overline{H^\infty(\C_+)G},\]
using \fref{thm:outerSpan} for the last equality. This yields
\[\overline{\C G + M_{\phi_i} H^\infty(\C_+)G} \supeq \C G + M_{\phi_i} \overline{H^\infty(\C_+)G} = H^2(\C_+) \supeq H^\infty(\C_+)G\]
and therefore, using that
\[(R\phi_i)(x) = \frac{-x-i}{-x+i} = \frac{x+i}{x-i} = \left(\phi_i(x)\right)^{-1},\]
we have
\begin{align*}
\overline{M_{\phi_i}^{-1} H^\infty(\C_-)G + \C G} &= \frac{G}{RG} \cdot \overline{M_{\phi_i}^{-1}H^\infty(\C_-)RG + \C RG} = \frac{G}{RG} \cdot R \left(\overline{\C G + M_{\phi_i} H^\infty(\C_+)G}\right)
\\&\supeq \frac{G}{RG} \cdot R \left(H^\infty(\C_+)G\right) = \frac{G}{RG} H^\infty(\C_-)RG = H^\infty(\C_-)G.
\end{align*}
Using these two inclusions, we get
\begin{align*}
M_{\phi_i} \overline{V} &= \overline{\left(M_{\phi_i}^{-1} H^\infty(\C_-) + \C\textbf{1} + M_{\phi_i} H^\infty(\C_+)\right)G}
\\&= \overline{M_{\phi_i}^{-1} H^\infty(\C_-)G + \C G + M_{\phi_i} H^\infty(\C_+)G}
\\&= \overline{\overline{M_{\phi_i}^{-1}H^\infty(\C_-)G + \C G}+\overline{\C G + M_{\phi_i} H^\infty(\C_+)G}}
\\&\supeq \overline{H^\infty(\C_-)G + H^\infty(\C_+)G}
\\&= \overline{\left(H^\infty(\C_-)+H^\infty(\C_+)\right)G} = L^2(\R,\C),
\end{align*}
using \fref{cor:OuterDense} in the last step. This yields
\[\overline{V} = M_{\phi_i}^{-1} L^2(\R,\C) = L^2(\R,\C). \tag{\(\bigstar\)}\]
Since \(\Theta_h F = F\), we have
\[h = \frac F{RF} = \frac{R\Lambda}{\Lambda} \cdot \frac{G}{RG} = -\frac{R\phi_i}{\phi_i} \cdot \frac{G}{RG},\]
using \fref{lem:phiLambdaIdentity} in the last step. For \(f \in L^\infty(\R,\C)\) this yields
\begin{align*}
\theta_h M_{\phi_i}^{-1} (f \cdot G) &= -\frac{R\phi_i}{\phi_i} \cdot \frac{G}{RG} \cdot R\left( \frac{f}{\phi_i} \cdot G\right)
\\&= -\frac{R\phi_i}{\phi_i} \cdot \frac{G}{RG} \cdot \frac{Rf}{R\phi_i} \cdot RG = -\frac{Rf}{\phi_i} \cdot G = M_{\phi_i}^{-1} ((-Rf) \cdot G).
\end{align*}
This implies that \(V\) is \(\theta_h\)-invariant, since, for \(f \in H^\infty(\C_-)\), \(g \in H^\infty(\C_+)\) and \(z \in \C\), we have
\begin{align*}
\theta_h M_{\phi_i}^{-1}\left(\left(M_{\phi_i}^{-1}f + z\textbf{1} + M_{\phi_i}g\right) G\right) &= M_{\phi_i}^{-1}\left(\left(-R\left(M_{\phi_i}^{-1}f + z\textbf{1} + M_{\phi_i}g\right)\right) G\right)
\\&= M_{\phi_i}^{-1}\left(\left(M_{\phi_i}^{-1}(-Rg) -z\textbf{1} + M_{\phi_i}(-Rf)\right) G\right) \in V.
\end{align*}
This also shows that an element
\[M_{\phi_i}^{-1}\left(\left(M_{\phi_i}^{-1}f + z\textbf{1} + M_{\phi_i}g\right) G\right) \in V\]
is \(\theta_h\)-invariant, if and only if \(f = -Rg\) and \(z=0\), so
\[V^{\theta_h} = \left\{M_{\phi_i}^{-1}\left(\left(M_{\phi_i}^{-1}(-Rg) + M_{\phi_i}g\right) G\right) : g \in H^\infty(\C_+)\right\}.\]
Analogously we get
\[V^{-\theta_h} = \left\{M_{\phi_i}^{-1}\left(\left(M_{\phi_i}^{-1}(Rg) + z\textbf{1} + M_{\phi_i}g\right) G\right) : g \in H^\infty(\C_+), z \in \C\right\}.\]
Then, for every \(v = M_{\phi_i}^{-1}\left(\left(M_{\phi_i}^{-1}(-Rg) + M_{\phi_i}g\right) G\right) \in V^{\theta_h}\), one has
\begin{align*}
v &= M_{\phi_i}^{-1}\left(\left(M_{\phi_i}^{-1}(-Rg) + M_{\phi_i}g\right) G\right) = gG + M_{\phi_i}^{-1}(M_{\phi_i}^{-1}(-Rg)G)
\\&= 2gG - M_{\phi_i}^{-1}\left(\left(M_{\phi_i}^{-1}(Rg) + M_{\phi_i}g\right) G\right)
\\&\in H^\infty(\C_+)G + V^{-\theta_h} \subeq H^2(\C_+) + L^2(\R,\C)^{-\theta_h},
\end{align*}
so
\[V^{\theta_h} \subeq H^2(\C_+) + L^2(\R,\C)^{-\theta_h}.\]
Writing \(P \coloneqq \frac 12 (1+\theta_h)\), we have
\[L^2(\R,\C)^{\theta_h} = \overline{L^2(\R,\C)^{\theta_h}} \supeq \overline{V^{\theta_h}} = \overline{PV} \supeq P\overline{V} \overset{(\bigstar)}{=} PL^2(\R,\C) = L^2(\R,\C)^{\theta_h},\]
so \(\overline{V^{\theta_h}} = L^2(\R,\C)^{\theta_h}\) and therefore \(V^{\theta_h}\) is a dense subspace of \(L^2(\R,\C)^{\theta_h}\) with
\[V^{\theta_h} \subeq H^2(\C_+) + L^2(\R,\C)^{-\theta_h}.\]
This, by \fref{thm:EPlusMaximal}, implies that \((L^2(\R,\C),H^2(\C_+),\theta_h)\) is a maximal RPHS.
\end{proof}
We can now prove analogues of \fref{cor:kernelOuterOrZero} and \fref{prop:kerNonTrivOuterFix} for the involution \(\Theta_h\):
\begin{cor}\label{cor:kernelOuterOrZeroExtended}
Let \({h \in L^\infty(\R,\T)^\flat}\) such that the triple \((L^2(\R,\C),H^2(\C_+),\theta_h)\) is a maximal RPHS. Then
\[\ker(\Theta_h - \textbf{1}) \cap H^2_\Lambda(\C_+) \subeq \{0\} \cup \Out^2_\Lambda(\C_+).\]
\end{cor}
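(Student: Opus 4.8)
The plan is to run the proof of \fref{cor:kernelOuterOrZero} again, but with the Hardy space replaced by $H^2_\Lambda(\C_+)$, the involution $\theta_h$ replaced by its extension $\Theta_h$, the function $\Lambda$ playing (via \fref{lem:phiLambdaIdentity}) the role that $\phi_i$ played implicitly before, and \fref{prop:OuterMaximal} replaced by its extended analogue \fref{prop:maxPos}.

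First I would take $f \in \ker(\Theta_h-\textbf{1}) \cap H^2_\Lambda(\C_+)$ and assume $f \neq 0$. Setting $G \coloneqq \Lambda \cdot f$, we have $G \in H^2(\C_+) \setminus \{0\}$ (nonzero because $\Lambda$ has no zeros in $\C_+$), so the inner--outer factorization \fref{thm:OuterInnerDecomp} gives $G = \phi \cdot F_0$ with $\phi \in \Inn(\C_+)$ and $F_0 \in \Out^2(\C_+)$. Using that $\Lambda \in \Out^2(\C_+) \cap H^\infty(\C_+)$ is zero-free on $\C_+$ (\fref{rem:DefLambda}), the function $F \coloneqq F_0/\Lambda$ is holomorphic on $\C_+$, is a quotient of outer functions hence itself outer, and satisfies $\Lambda F = F_0 \in H^2(\C_+)$; therefore $F \in \Out^2_\Lambda(\C_+)$ and $f = \phi \cdot F$.

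Next I would split the inner factor off of $h$ exactly as in \fref{cor:kernelOuterOrZero}. Put $k \coloneqq (\phi\phi^\flat)^{-1} h$; since $\phi$ has unimodular boundary values, $\phi\phi^\flat \in L^\infty(\R,\T)^\flat$, so $k \in L^\infty(\R,\T)^\flat$. From $\Theta_h f = f$, i.e. $h = f/(Rf) = \phi F/((R\phi)(RF))$, and $\phi^\flat = (R\phi)^{-1}$ in $L^\infty(\R,\T)$, a short computation gives $k = F/(RF)$; in particular $\Theta_k F = F$, so $F \in \ker(\Theta_k-\textbf{1}) \cap \Out^2_\Lambda(\C_+)$. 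Since $(\phi\phi^\flat)k = h$ and $(L^2(\R,\C),H^2(\C_+),\theta_h)$ is (maximal, hence in particular) a RPHS, \fref{lem:InnerSplitOff} shows that both $(L^2(\R,\C),M_\phi^{-1}H^2(\C_+),\theta_k)$ and $(L^2(\R,\C),H^2(\C_+),\theta_k)$ are RPHSs, and then \fref{prop:maxPos} (applied with the fixed point $F$) upgrades the latter to a maximal RPHS.

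Finally the maximality argument of \fref{cor:kernelOuterOrZero} carries over verbatim: $M_\phi^{-1}H^2(\C_+)$ is a closed subspace of $L^2(\R,\C)$ containing $H^2(\C_+)$ on which $\theta_k$ is reflection positive, so maximality forces $M_\phi^{-1}H^2(\C_+) = H^2(\C_+)$, i.e. $M_{\phi^*}H^2(\C_+) = H^2(\C_+)$; by \fref{prop:H2InclusionFunctions} this gives $\phi^* \in H^\infty(\C_+)$, hence $\phi \in H^\infty(\C_-) \cap H^\infty(\C_+) = \C\textbf{1}$ by \fref{cor:HInftyIntersection}, so $\phi = C\textbf{1}$ with $C \in \T$ and $f = C\cdot F \in \Out^2_\Lambda(\C_+)$. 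The only steps requiring genuine care are the $\flat$-bookkeeping that produces the identity $k = F/(RF)$ and the checks that $\Out^2_\Lambda(\C_+)$ is stable under division by the outer function $\Lambda$ and under multiplication by a unimodular constant; the overall skeleton is identical to \fref{cor:kernelOuterOrZero}, so no substantially new obstacle is expected.
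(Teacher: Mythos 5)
Your proposal is correct and follows essentially the same route as the paper's proof: write $f=\phi\cdot F$ with $\phi$ inner and $F\in\Out^2_\Lambda(\C_+)$ via the inner--outer factorization of $\Lambda f$, split $\phi\phi^\flat$ off $h$ with \fref{lem:InnerSplitOff}, use \fref{prop:maxPos} to get maximality for $\theta_k$, and conclude $\phi$ is a unimodular constant via \fref{prop:H2InclusionFunctions} and \fref{cor:HInftyIntersection}. The only cosmetic difference is bookkeeping (your $F_0$ is the paper's $G$), and your appeal to \fref{lem:OuterHomo} for the quotient $F_0/\Lambda$ being outer is exactly what the paper uses implicitly.
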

\begin{proof}
Let \(f \in \ker(\Theta_h - \textbf{1}) \cap H^2_\Lambda(\C_+)\). If \(f = 0\) then obviously \(f \in \{0\} \cup \Out^2_\Lambda(\C_+)\). If \(f \neq 0\), then
\[f = \frac{g}{\Lambda}\]
for some function \(g \in H^2(\C_+)\). Then, by \fref{thm:OuterInnerDecomp}, there exist an inner function \(\phi \in \Inn(\C_+)\) and an outer function \({G \in \Out^2(\C_+)}\) such that
\[g = \phi \cdot G.\]
Setting
\[F \coloneqq \frac G\Lambda \in \Out^2_\Lambda(\C_+)\]
this yields
\[f = \frac{g}{\Lambda} = \frac{\phi \cdot G}{\Lambda} = \phi \cdot F.\]
We now set
\[k \coloneqq (\phi \phi^\flat)^{-1} h \in L^\infty(\R,\C)^\flat.\]
Then
\[h = (\phi \phi^\flat) k\]
and therefore, by \fref{lem:InnerSplitOff} the triples \((L^2(\R,\C),M_\phi^{-1} H^2(\C_+),\theta_k)\) and \((L^2(\R,\C),H^2(\C_+),\theta_k)\) are RPHSs. Since
\[k = (\phi \phi^\flat)^{-1} h = (\phi \phi^\flat)^{-1} \frac{f}{Rf} = \frac{\phi^{-1} f}{R (\phi^{-1} f)} = \frac F{RF},\]
we have
\[F \in \ker(\Theta_k - \textbf{1}) \cap \Out^2_\Lambda(\C_+).\]
By \fref{prop:maxPos} this implies that \((L^2(\R,\C),H^2(\C_+),\theta_k)\) is a maximal RPHS. Since we have \(H^2(\C_+) \subeq M_\phi^{-1} H^2(\C_+)\) and \((L^2(\R,\C),M_\phi^{-1} H^2(\C_+),\theta_k)\) is a RPHS, this implies that
\[H^2(\C_+) = M_\phi^{-1} H^2(\C_+) = M_{\phi^*} H^2(\C_+),\]
which yields \(\phi^* \in H^\infty(\C_+)\) by \fref{prop:H2InclusionFunctions} and therefore \(\phi \in H^\infty(\C_-)\). On the other hand \(\phi \in H^\infty(\C_+)\), so by \fref{cor:HInftyIntersection} we get that \(\phi = C \cdot \textbf{1}\) for some \(C \in \T\). This implies that
\[f = \phi \cdot F = C \cdot F \in \Out^2_\Lambda(\C_+). \qedhere\]
\end{proof}
\begin{cor}\label{cor:kerNonTrivOuterFixExtended}
Let \({h \in L^\infty(\R,\T)^\flat}\) such that the triple \((L^2(\R,\C),H^2(\C_+),\theta_h)\) is a maximal RPHS. Then there exists an outer function \(F \in \Out^2_\Lambda(\C_+)\) such that
\[h = \frac F{RF}.\]
\end{cor}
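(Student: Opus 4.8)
The plan is to combine the two preceding results about the extended involution $\Theta_h$ and then read off the claimed identity. First I would invoke \fref{prop:GeneralizedFixedPoint}: since $(L^2(\R,\C),H^2(\C_+),\theta_h)$ is a maximal RPHS, the extended involution $\Theta_h$ on $L^2(\R,\C,\rho)$ has a non-zero fixed point lying in the extended Hardy space, i.e. $\ker(\Theta_h-\textbf{1}) \cap H^2_\Lambda(\C_+) \neq \{0\}$. Then I would apply \fref{cor:kernelOuterOrZeroExtended}, which says that every element of $\ker(\Theta_h-\textbf{1}) \cap H^2_\Lambda(\C_+)$ is either zero or an outer function in $\Out^2_\Lambda(\C_+)$; hence there exists $F \in \left(\ker(\Theta_h-\textbf{1}) \cap \Out^2_\Lambda(\C_+)\right) \setminus \{0\}$.

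Next I would unwind what it means for $F$ to be a fixed point of $\Theta_h$: by definition $\Theta_h F = h \cdot RF$, so $F = \Theta_h F$ yields $F = h \cdot RF$ as almost everywhere defined functions on $\R$. To divide by $RF$ I would note that $F \in \Out(\C_+)$ is an outer function, hence (cf. the remark following the definition of outer functions) $F$ has no zeros, and in particular $F(x) \neq 0$ for almost every $x \in \R$, so that $(RF)(x) = F(-x) \neq 0$ for almost every $x$. Therefore $h = \frac{F}{RF}$ holds almost everywhere, which is precisely the asserted formula; moreover, since $|F| = |RF|$ almost everywhere this quotient is automatically unimodular and $\flat$-invariant, consistent with the hypothesis $h \in L^\infty(\R,\T)^\flat$.

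I do not expect a genuine obstacle here: the substance of the argument is already carried by \fref{prop:GeneralizedFixedPoint} and \fref{cor:kernelOuterOrZeroExtended}, and the only point requiring a word of care is the legitimacy of forming the quotient $F/RF$, which is handled by the non-vanishing of outer functions on the real line. The corollary is thus an immediate consequence of those two statements.
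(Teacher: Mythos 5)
Your proposal is correct and follows exactly the paper's argument: combine \fref{prop:GeneralizedFixedPoint} with \fref{cor:kernelOuterOrZeroExtended} to obtain a non-zero fixed point \(F \in \ker(\Theta_h-\textbf{1}) \cap \Out^2_\Lambda(\C_+)\), then rewrite \(F = \Theta_h F = h \cdot RF\) as \(h = \frac{F}{RF}\). The extra remark justifying the division (outer functions are non-vanishing, also almost everywhere on \(\R\)) is a point the paper leaves implicit, so there is no discrepancy.
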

\begin{proof}
By \fref{prop:GeneralizedFixedPoint} and \fref{cor:kernelOuterOrZeroExtended} we have
\[\{0\} \neq \ker(\Theta_h - \textbf{1}) \cap H^2_\Lambda(\C_+) \subeq \{0\} \cup \Out^2_\Lambda(\C_+),\]
so there exists an outer function \(F \in \Out^2_\Lambda(\C_+)\) such that \(F \in \ker(\Theta_h - \textbf{1})\), so
\[F = \Theta_h F = h \cdot RF,\]
which is equivalent to \(h = \frac F{RF}\). 
\end{proof}

\subsubsection{General involutions and measures}
Now the question left to answer is for which functions \(F \in \Out^2_\Lambda(\C_+)\), defining \(h = \frac F{RF}\), the triple \((L^2(\R,\C),H^2(\C_+),\theta_h)\) is a maximal RPHS. In \fref{thm:thetaFixedPointCharac} we have seen that examples of such functions are given by the functions \(F_{W(\mu)} \in \Out^2(\C_+)\) for measures \(\mu \in \mathcal{BM}^{\mathrm{fin}}\left(\R_+\right) \setminus \{0\}\). In this subsection, we will first see that moreso, for every measure \(\nu \in \mathcal{BM}^{\mathrm{fin}}\left([0,\infty]\right) \setminus \{0\}\), considering the functions \(F_\nu\) and \(h_\nu = \frac {F_\nu}{RF_\nu}\), one has \(F_\nu \in \Out^2_\Lambda(\C_+)\) and the triple \((L^2(\R,\C),H^2(\C_+),\theta_{h_\nu})\) is a maximal RPHS. In the end it turns out that these are the only functions with this property. We start by showing that in fact \(F_\nu \in \Out^2_\Lambda(\C_+)\) for \(\nu \in \mathcal{BM}^{\mathrm{fin}}\left([0,\infty]\right) \setminus \{0\}\):
\begin{lem}\label{lem:FNuLambdaOuter}
Let \(\nu \in \mathcal{BM}^{\mathrm{fin}}\left([0,\infty]\right) \setminus \{0\}\). Then
\[|\Lambda(x)|^2 \cdot \Psi_\nu(x) \leq \nu([0,\infty]) \cdot \frac 1{1+x^2} \quad \forall x \in \R^\times\]
and \(F_\nu \in \Out^2_\Lambda(\C_+)\).
\end{lem}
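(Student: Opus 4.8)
The plan is to establish the pointwise estimate first and then deduce membership of $F_\nu$ in $\mathrm{Out}^2_\Lambda(\C_+)$ from it. For the inequality, recall that $|\Lambda(x)|^2 = \frac{x^2}{(1+x^2)^2}$ for $x \in \R$ and that, by the definition of $\Psi_\nu$,
\[
\Psi_\nu(x) = \frac 1\pi \int_{[0,\infty]} \frac{1+\lambda^2}{x^2+\lambda^2}\,d\nu(\lambda).
\]
So I would bound the product $|\Lambda(x)|^2 \cdot \Psi_\nu(x)$ by moving the factor $\frac{x^2}{(1+x^2)^2}$ inside the integral and estimating the integrand
\[
\frac{x^2}{(1+x^2)^2} \cdot \frac{1+\lambda^2}{x^2+\lambda^2}
\]
for each fixed $\lambda \in [0,\infty]$. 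The key elementary fact is that $\frac{x^2(1+\lambda^2)}{(x^2+\lambda^2)} \leq 1+x^2$; equivalently $x^2 + x^2\lambda^2 \leq x^2 + \lambda^2 + x^2\lambda^2 + x^4$, i.e. $0 \leq \lambda^2 + x^4$, which is obvious (and also handles the endpoint $\lambda = \infty$ by the convention $\frac{1+\infty^2}{x^2+\infty^2} = 1$, giving $\frac{x^2}{(1+x^2)^2}(1+x^2) = \frac{x^2}{1+x^2} \leq 1 \leq \frac{1+x^2}{1}\cdot\frac{1}{1+x^2}$ — one checks this bound directly). Hence the integrand is $\leq \frac{1}{1+x^2}$, and integrating over $[0,\infty]$ with respect to $\nu$ and dividing by $\pi$ — noting $\frac 1\pi \leq 1$, or simply carrying the $\pi$ along and absorbing it — yields
\[
|\Lambda(x)|^2 \cdot \Psi_\nu(x) \leq \nu([0,\infty]) \cdot \frac{1}{1+x^2}.
\]
(I should double-check whether the intended constant includes the $\tfrac1\pi$; if the statement is as written, the estimate $\frac{1}{\pi}\cdot\frac{1}{1+x^2} \le \frac{1}{1+x^2}$ closes the gap.)

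For the second assertion, I would argue as follows. By \fref{def:FNuHNu} and the remark following it, $F_\nu = \mathrm{Out}(\sqrt{\Psi_\nu}) \in \mathrm{Out}(\C_+)$ and $I(\sqrt{\Psi_\nu}) < \infty$ by \fref{lem:PsiNuIntegral}, so $F_\nu$ is a genuine outer function with $|F_\nu| = \sqrt{\Psi_\nu}$ on $\R$ by \fref{thm:OuterBetrag}. Similarly $\Lambda = \mathrm{Out}(|\Lambda|) \in \mathrm{Out}^2(\C_+) \cap H^\infty(\C_+)$ by \fref{rem:DefLambda}. The product $\Lambda \cdot F_\nu$ is therefore a holomorphic function on $\C_+$, and it is itself outer since a product of outer functions is outer; moreover on the real line
\[
\int_\R |\Lambda(x) F_\nu(x)|^2\,dx = \int_\R |\Lambda(x)|^2 \Psi_\nu(x)\,dx \leq \nu([0,\infty]) \int_\R \frac{dx}{1+x^2} = \pi\,\nu([0,\infty]) < \infty
\]
by the pointwise estimate just proved. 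Hence $\Lambda F_\nu \in L^2(\R,\C)$, and since it is outer it lies in $H^2(\C_+)$ by \fref{thm:OuterBetrag} (an outer function with $L^2$ boundary values is in $H^2(\C_+)$). By the definition of $H^2_\Lambda(\C_+)$ this says precisely $F_\nu \in H^2_\Lambda(\C_+)$, and combined with $F_\nu \in \mathrm{Out}(\C_+)$ we conclude $F_\nu \in \mathrm{Out}^2_\Lambda(\C_+)$.

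I do not anticipate a serious obstacle here; the statement is essentially a bookkeeping lemma. The one place requiring a little care is pinning down the exact constant in the pointwise inequality — i.e. whether the $\frac 1\pi$ in the definition of $\Psi_\nu$ is meant to be absorbed into the bound $\nu([0,\infty])\cdot\frac1{1+x^2}$ or whether a sharper elementary inequality is intended — but in either reading the estimate follows from the single observation $x^2(1+\lambda^2) \leq (1+x^2)(x^2+\lambda^2)$, valid for all $x\in\R$ and $\lambda \in [0,\infty]$, and the conclusion $F_\nu \in \mathrm{Out}^2_\Lambda(\C_+)$ is insensitive to the constant. The remainder is a routine application of \fref{thm:OuterBetrag}, \fref{lem:PsiNuIntegral}, and \fref{rem:DefLambda} together with the multiplicativity of outer functions.
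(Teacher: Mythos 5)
Your proof is correct and takes essentially the same route as the paper: a pointwise bound \(|\Lambda(x)|^2\Psi_\nu(x) \leq \nu([0,\infty])\cdot\frac 1{1+x^2}\), then integration of \(\frac 1{1+x^2}\) to get \(\Lambda F_\nu \in L^2(\R,\C)\), and finally multiplicativity of outer functions together with \fref{thm:OuterBetrag} to conclude \(\Lambda F_\nu \in \Out^2(\C_+)\), i.e. \(F_\nu \in \Out^2_\Lambda(\C_+)\). The only cosmetic difference is that you derive the pointwise bound directly from the integrand inequality \(x^2(1+\lambda^2) \leq (1+x^2)(x^2+\lambda^2)\) (even gaining a harmless extra factor \(\frac 1\pi\)), whereas the paper quotes \fref{lem:PsiEstimate}, whose proof rests on the same elementary estimate.
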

\begin{proof}
By \fref{lem:OuterHomo} and \fref{rem:DefLambda} we have \(\Lambda \cdot F_\nu \in \Out(\C_+)\). Further, by \fref{lem:OuterHomo} and \fref{lem:PsiEstimate} we have
\[|F_\nu(x)|^2 = |\Out(\Psi_\nu)(x)| = \Psi_\nu(x) \leq \nu([0,\infty]) \cdot \left(\frac 1{x^2} + 1\right) = \nu([0,\infty]) \cdot \frac{1 + x^2}{x^2}, \quad \forall x \in \R^\times.\]
Since
\[|\Lambda(x)| = \frac{|x|}{1+x^2}\]
(cf. \fref{rem:DefLambda}), we get
\[|\Lambda(x) \cdot F_\nu(x)|^2 = |\Lambda(x)|^2 \cdot \Psi_\nu(x) \leq \nu([0,\infty]) \cdot \frac{1+x^2}{x^2} \cdot \left(\frac{|x|}{1+x^2}\right)^2 = \nu([0,\infty]) \cdot \frac 1{1+x^2}.\]
Since
\[\int_\R \frac 1{1+x^2} \,dx = \pi < \infty,\]
this shows that \(\Lambda \cdot F_\nu \in L^2(\R,\C)\). Then, by \fref{thm:OuterBetrag}, we get \(\Lambda \cdot F_\nu \in \Out^2(\C_+)\) and therefore
\(F_\nu \in \Out^2_\Lambda(\C_+)\).
\end{proof}
Now, given a measure \(\nu \in \mathcal{BM}^{\mathrm{fin}}\left([0,\infty]\right) \setminus \{0\}\), we want to show that \((L^2(\R,\C),H^2(\C_+),\theta_{h_\nu})\) is a RPHS, i.e. that for every \(f \in H^2(\C_+)\), we have
\[\braket*{f}{\theta_{h_\nu}f} \geq 0.\]
For this, we make the following definition:
\begin{definition}\label{def:defCT}
We define the map
\[\gls*{T}: \mathcal{BM}^{\mathrm{fin}}\left(\left[0,\infty\right]\right) \setminus \{0\} \to \mathcal{BM}\left(\R_+\right)\]
by
\[d(\cT\nu)(\lambda) \coloneqq \left|F_\nu(i\lambda)\right|^{-2} \frac{1+\lambda^2}{\lambda} \,d\nu(\lambda), \quad \lambda \in \R_+.\]
\end{definition}
\begin{remark}\label{rem:scaleable}
For every \(c \in \R_+\) and every measure \(\nu \in \mathcal{BM}^{\mathrm{fin}}\left([0,\infty]\right) \setminus \{0\}\), by \fref{lem:OuterHomo}, we have
\[\left|F_{\left(c \cdot \nu\right)}\left(i\lambda\right)\right|^2 = \left|\Out\left(\Psi_{\left(c \cdot \nu\right)}\right)\left(i\lambda\right)\right| = \left|\Out\left(c \cdot \Psi_\nu\right)\left(i\lambda\right)\right| = c \cdot \left|\Out\left(\Psi_\nu\right)\left(i\lambda\right)\right| = c \cdot \left|F_\nu\left(i\lambda\right)\right|^2\]
so
\begin{align*}
d(\cT\left(c \cdot \nu\right))\left(\lambda\right) &= \left|F_{\left(c \cdot \nu\right)}\left(i\lambda\right)\right|^{-2} \frac{1+\lambda^2}{\lambda} \,d\left(c \cdot \nu\right)\left(\lambda\right)
\\&= \frac{1}{c} \cdot \left|F_\nu\left(i\lambda\right)\right|^{-2} \frac{1+\lambda^2}{\lambda}  \cdot c \,d\nu\left(\lambda\right)
\\&= \left|F_\nu\left(i\lambda\right)\right|^{-2} \frac{1+\lambda^2}{\lambda} \,d\nu\left(\lambda\right) = d(\cT \nu)\left(\lambda\right), \quad \lambda \in \R_+
\end{align*}
and therefore
\begin{equation*}
\cT\left(c \cdot \nu\right) = \cT \nu.
\end{equation*}
\end{remark}
We now want to show that, for every measure \(\nu \in \mathcal{BM}^{\mathrm{fin}}\left([0,\infty]\right) \setminus \{0\}\) and every \(f,g \in H^2(\C_+)\), we have
\[\braket*{f}{\theta_{h_\nu}g} = \int_{\R_+} \overline{f(i\lambda)}\,g(i\lambda) \,d(\cT \nu)(\lambda).\]
For this, we need the following proposition:
\begin{prop}\label{prop:denseL2Measure}
Let \(A \in B(L^2(\R,\C))\) and \(\mu \in \mathcal{BM}\left(\R_+\right)\). Further, for a function \(f \in H^2(\C_+)\) we consider the function
\[f^\sV: \R_+ \to \C, \quad \lambda \mapsto f(i \lambda).\]
If there exists a dense subspace \(V \subeq H^2(\C_+)\) such that
\[\braket*{f}{Ag} = \int_{\R_+} \overline{f^\sV(\lambda)} \,g^\sV(\lambda)\,d\mu(\lambda) \quad \forall f,g \in V,\]
then \(f^\sV \in L^2(\R_+,\C,\mu)\) for every \(f \in H^2(\C_+)\) and
\[\braket*{f}{Ag} = \braket*{f^\sV}{g^\sV}_{L^2(\R_+,\C,\mu)} \quad \forall f,g \in H^2(\C_+).\]
\end{prop}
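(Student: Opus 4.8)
The plan is to recast the hypothesis as a boundedness statement for the evaluation map $f \mapsto f^\sV$, extend that map to all of $H^2(\C_+)$ by density, and then identify the abstract extension with the genuine pointwise function $\lambda \mapsto f(i\lambda)$ by exploiting the reproducing kernel structure of the Hardy space.

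First I would set $g = f$ in the hypothesis, which yields
\[\int_{\R_+} \left|f^\sV(\lambda)\right|^2 \, d\mu(\lambda) = \braket*{f}{Af} \leq \|A\| \cdot \|f\|^2 \qquad \forall f \in V;\]
the middle term is automatically a nonnegative real number, being equal to the integral on the left. Consequently the (clearly linear) map $T \colon V \to L^2(\R_+,\C,\mu)$, $f \mapsto f^\sV$, is well defined and bounded with $\|T\| \leq \sqrt{\|A\|}$, so, since $V$ is dense in $H^2(\C_+)$, it extends uniquely to a bounded operator $\bar T \colon H^2(\C_+) \to L^2(\R_+,\C,\mu)$.

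The crux is to show that $\bar T f = f^\sV$ for every $f \in H^2(\C_+)$, i.e.\ that the abstract limit $\bar T f$, a priori only an element of $L^2(\R_+,\C,\mu)$, is represented by the function $\lambda \mapsto f(i\lambda)$. Here I would use that $H^2(\C_+)$ is a reproducing kernel Hilbert space: by \fref{prop:H2RKHS} the evaluation $f \mapsto f(i\lambda) = \braket*{Q_{i\lambda}}{f}$ is continuous on $H^2(\C_+)$ for each $\lambda \in \R_+$. Picking a sequence $(f_n)_{n \in \N}$ in $V$ with $f_n \to f$ in $H^2(\C_+)$, we get $f_n(i\lambda) \to f(i\lambda)$ for every $\lambda \in \R_+$, while $f_n^\sV = T f_n \to \bar T f$ in $L^2(\R_+,\C,\mu)$, so after passing to a subsequence $f_n^\sV(\lambda) \to (\bar T f)(\lambda)$ for $\mu$-almost every $\lambda$. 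Comparing the two limits gives $f^\sV = \bar T f$ $\mu$-almost everywhere, and in particular $f^\sV \in L^2(\R_+,\C,\mu)$.

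Finally, both sesquilinear forms $(f,g) \mapsto \braket*{f}{Ag}$ and $(f,g) \mapsto \braket*{f^\sV}{g^\sV}_{L^2(\R_+,\C,\mu)} = \braket*{\bar T f}{\bar T g}$ are continuous on $H^2(\C_+) \times H^2(\C_+)$ --- the first because $A$ is bounded, the second because $\bar T$ is bounded --- and they agree on the dense subset $V \times V$ by hypothesis, hence on all of $H^2(\C_+) \times H^2(\C_+)$. The only genuinely delicate point is the identification $\bar T f = f^\sV$, which is exactly where the reproducing kernel property of the Hardy space is used; everything else is a routine density-and-continuity argument, so that is the step I would expect to require the most care.
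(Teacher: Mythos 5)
Your proof is correct, but the middle of your argument is genuinely different from the paper's. You exploit the bound $\left\lVert f^\sV\right\rVert_{L^2(\mu)}^2 = \braket*{f}{Af} \leq \left\lVert A\right\rVert \cdot \left\lVert f\right\rVert^2$ on $V$ to get a bounded operator $T$ with $\left\lVert T\right\rVert \leq \sqrt{\left\lVert A\right\rVert}$, extend it by density, and then identify $\bar T f$ with $f^\sV$ by combining pointwise convergence $f_n(i\lambda) = \braket*{Q_{i\lambda}}{f_n} \to f(i\lambda)$ (the reproducing kernel property, \fref{prop:H2RKHS}) with the standard fact that $L^2(\mu)$-convergence yields a $\mu$-a.e.\ convergent subsequence. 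The paper instead does not use the operator norm of $A$ at this stage: it shows the sequence $(f_n^\sV)$ is bounded in $L^2(\R_+,\C,\mu)$, extracts a weak limit via Banach--Alaoglu, identifies that weak limit with $f^\sV$ by testing against characteristic functions of subsets of compact intervals $[a,b] \subset \R_+$ (using uniform convergence on $\{\Im z \geq a\}$ and local finiteness of $\mu$ there), and then invokes the Closed Graph Theorem to obtain continuity of $f \mapsto f^\sV$, from which the sesquilinear identity follows by density exactly as in your last step. Your route buys a shorter argument with an explicit quantitative bound, bypassing both Banach--Alaoglu and the Closed Graph Theorem; the paper's route is the one that survives if one only knows positivity of the form rather than an a priori operator bound, since it recovers continuity abstractly. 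Both arguments hinge, as you point out, on the reproducing kernel structure to tie the abstract $L^2(\mu)$-limit to actual point evaluations, and your use of an a.e.\ convergent subsequence is a perfectly sound (and slightly more elementary) substitute for the paper's weak-limit-plus-testing step.
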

\begin{proof}
For every \(f \in V\) we have
\begin{equation*}
\int_{\R_+} |f^\sV(\lambda)|^2\,d\mu(\lambda) = \braket*{f}{A f} < \infty,
\end{equation*}
so \(f^\sV \in L^2\left(\R_+,\C,\mu\right)\). Now let \(f \in H^2\left(\C_+\right)\). Since \(V\) is dense in \(H^2(\C_+)\), there exists a sequence \(\left(f_n\right)_{n \in \N}\) in \(V\) such that \({\left\lVert f_n-f\right\rVert \xrightarrow{n \to \infty} 0}\). Then
\begin{equation*}
\left\lVert f_n^\sV\right\rVert_{L^2\left(\R_+,\C,\mu\right)}^2 = \braket*{f_n^\sV}{f_n^\sV}_{L^2(\R_+,\C,\mu)} = \braket*{f_n}{A f_n} \xrightarrow{n \to \infty} \braket*{f}{A f},
\end{equation*}
so the sequence \((f_n^\sV)_{n \in \N}\) is bounded in \(L^2\left(\R_+,\C,\mu\right)\), and therefore, by the Banach-Alaoglu Theorem, has a weakly-convergent subsequence. By replacing \((f_n^\sV)_{n \in \N}\) with that subsequence, we can assume that there exists \(F \in L^2\left(\R_+,\C,\mu\right)\) such that \(f_n^\sV \xrightarrow{n \to \infty} F\) weakly. We now want to see that \(F=f^\sV\). For \(0<a<b<\infty\) and \(A \subseteq \left[a,b\right]\), we have
\begin{equation*}
\chi_A f_n^\sV \xrightarrow{n \to \infty} \chi_A f^\sV
\end{equation*}
uniformly by \fref{prop:uniform}. Then, using that
\[\int_{\R_+} \chi_A^2 \,d\mu = \int_{\R_+} \chi_A \,d\mu = \mu(A) \leq \mu([a,b]) < \infty\]
and therefore \(\chi_A \in L^2(\R_+,\C,\mu) \cap L^1(\R_+,\C,\mu)\), we have
\begin{equation*}
\int_{\R_+} \chi_A F \,d\mu = \lim_{n \to \infty} \int_{\R_+} \chi_A f_n^\sV \,d\mu = \int_{\R_+} \chi_A f^\sV \,d\mu.
\end{equation*}
Choosing
\[A_\pm \coloneqq \left\lbrace x \in \left[a,b\right]: \pm\left(F-f^\sV\right)\left(x\right) \geq 0\right\rbrace,\] we get
\begin{equation*}
0 = \int_{\R_+} \chi_{A_\pm} \left(F-f^\sV\right) \,d\mu
\end{equation*}
and therefore
\begin{equation*}
0 = \int_{\R_+} \chi_{A_+} \left(F-f^\sV\right) - \chi_{A_-} \left(F-f^\sV\right) \,d\mu = \int_{\R_+} \left|F-f^\sV\right|\,d\mu,
\end{equation*}
so \(\mu\)-almost everywhere we have
\[F\big|_{\left[a,b\right]}= f^\sV\big|_{\left[a,b\right]}.\]
Since this is true for all \(a,b \in \R\) with \({0<a<b<\infty}\), we \(\mu\)-almost everywhere have \(F=f^\sV\), which shows that \(f^\sV \in L^2\left(\R_+,\C,\mu\right)\). On the other hand, this argument shows that the graph of the map
\[(\cdot)^\sV: H^2(\C_+) \to L^2\left(\R_+,\C,\mu\right)\]
is closed, and therefore, by the Closed Graph Theorem, this map is continuous. Using this continuity, for two functions \(f,g \in H^2\left(\C_+\right)\), we can select sequences \(\left(f_n\right)_{n \in \N}\) and \(\left(g_n\right)_{n \in \N}\) in \(V\) such that \(f_n \xrightarrow{n \to \infty} f\) and \(g_n \xrightarrow{n \to \infty} g\) and get
\begin{align*}
\braket*{f}{A g} &= \lim_{n \to \infty}\lim_{m \to \infty} \braket*{f_n}{A g_m} = \lim_{n \to \infty}\lim_{m \to \infty} \int_{\R_+} \overline{f_n^\sV(\lambda)} \,g_m^\sV(\lambda)\,d\mu(\lambda)
\\&= \int_{\R_+} \overline{f^\sV(\lambda)} \,g^\sV(\lambda)\,d\mu(\lambda) = \braket*{f^\sV}{g^\sV}_{L^2(\R_+,\C,\mu)}. \qedhere
\end{align*}
\end{proof}
\begin{cor}\label{cor:twistedScalar}
Let \(\mu \in \mathcal{BM}^{\mathrm{fin}}\left(\R_+\right) \setminus \{0\}\). Then, for every \(f,g \in H^2\left(\C_+\right)\), we have
\[\braket*{f}{\theta_{h_{W(\mu)}}g} = \int_{\R_+} \overline{f(i\lambda)}\,g(i\lambda) \,d(\cT W(\mu))(\lambda).\]
\end{cor}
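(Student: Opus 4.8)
The plan is to reduce the general statement to the case already treated in \fref{thm:thetaFixedPointCharac} and its surrounding machinery, and then to pin down the scalar product by a density argument via \fref{prop:denseL2Measure}. Concretely, fix $\mu \in \mathcal{BM}^{\mathrm{fin}}(\R_+) \setminus \{0\}$ and set $\nu \coloneqq W(\mu) \in \mathcal{BM}^{\mathrm{fin}}([0,\infty]) \setminus \{0\}$ (the nonzeroness of $\nu$ follows from the explicit formula $d(W(\mu))(\lambda) = \frac{\lambda}{1+\lambda^2}\,d\mu(\lambda)$ and $\mu \neq 0$). By \fref{thm:thetaFixedPointCharac} we know $h_{W(\mu)} = h_\nu$ gives a maximal RPHS, so in particular $(L^2(\R,\C),H^2(\C_+),\theta_{h_\nu})$ is a RPHS, and $F_\nu \in \ker(\theta_{h_\nu}-\textbf{1}) \cap \Out^2(\C_+)$; also $|F_\nu|^2 = \Psi_\nu = \psi_\mu$ on $\R$ by \fref{lem:BigSmallPsi} and \fref{prop:PhiPsiFourier}.

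Next I would produce a dense subspace $V \subeq H^2(\C_+)$ on which the desired identity can be checked by a direct computation. The natural candidate is $V \coloneqq \Spann S(\R_+) F_\nu$, which is dense in $H^2(\C_+)$ by \fref{thm:outerSpan} since $F_\nu$ is an outer function in $H^2(\C_+)$. For $f = S_s F_\nu$ and $g = S_t F_\nu$ with $s,t \geq 0$, using $\theta_{h_\nu} S_t = S_{-t}\theta_{h_\nu}$ (the intertwining relation, valid since $\theta_{h_\nu}$ has the form $M_h \circ R$, cf. \fref{lem:thetahForm}) and $\theta_{h_\nu} F_\nu = F_\nu$, one gets
\[
\braket*{S_s F_\nu}{\theta_{h_\nu} S_t F_\nu} = \braket*{S_s F_\nu}{S_{-t} F_\nu} = \braket*{S_{s+t} F_\nu}{F_\nu} = \int_\R e^{-i(s+t)x} |F_\nu(x)|^2 \,dx = \phi(s+t),
\]
where $\phi(r) = \braket*{S_r F_\nu}{F_\nu}$. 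So the task reduces to showing that this double-Laplace-type expression equals $\int_{\R_+} \overline{(S_s F_\nu)(i\lambda)}\, (S_t F_\nu)(i\lambda)\, d(\cT\nu)(\lambda)$, i.e. $\int_{\R_+} e^{-s\lambda} e^{-t\lambda} |F_\nu(i\lambda)|^2\, d(\cT\nu)(\lambda)$, using that $(S_s F_\nu)(i\lambda) = e^{is\cdot i\lambda}F_\nu(i\lambda) = e^{-s\lambda} F_\nu(i\lambda)$. By the definition of $\cT\nu$ in \fref{def:defCT}, $|F_\nu(i\lambda)|^2\, d(\cT\nu)(\lambda) = \frac{1+\lambda^2}{\lambda}\,d\nu(\lambda) = \frac{1+\lambda^2}{\lambda}\,d(W(\mu))(\lambda) = d\mu(\lambda)$, so the right-hand side becomes $\int_{\R_+} e^{-(s+t)\lambda}\,d\mu(\lambda) = \phi_\mu(s+t)$. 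Matching the two sides then amounts exactly to $\phi(s+t) = \phi_\mu(s+t)$ for all $s,t \geq 0$, i.e. $\phi|_{\R_{\geq 0}} = \phi_\mu|_{\R_{\geq 0}}$, which holds because $\phi = \cF_1 |F_\nu|^2 = \cF_1 \psi_\mu = \phi_\mu$ by \fref{prop:PhiPsiFourier}. Once the identity holds on $V$, \fref{prop:denseL2Measure} (applied with $A = \theta_{h_{W(\mu)}}$ and $\mu$ there taken to be $\cT W(\mu)$) upgrades it to all of $H^2(\C_+)$ and simultaneously yields $f^\sV \in L^2(\R_+,\C,\cT W(\mu))$.

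The main obstacle I anticipate is the bookkeeping around boundary values versus interior values: one must be careful that $(S_t F_\nu)(i\lambda)$ genuinely equals $e^{-t\lambda}F_\nu(i\lambda)$ — this is a statement about the holomorphic representative, not the $L^2$ boundary function — and that the measure $\cT\nu$ is well-defined and locally finite on $\R_+$ (needed to invoke \fref{prop:denseL2Measure}), which in turn rests on $|F_\nu(i\lambda)|^{-2}$ being finite for $\lambda \in \R_+$, true since $F_\nu \in \Out(\C_+)$ has no zeros in $\C_+$. Everything else is either a direct Fubini-type computation or a citation of the already-established facts $|F_\nu|^2 = \psi_\mu$, $\cF_1\psi_\mu = \phi_\mu$, and the density of $\Spann S(\R_+)F_\nu$.
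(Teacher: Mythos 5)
Your proposal is correct and follows essentially the same route as the paper's proof: compute both sides on the dense subspace $\Spann S(\R_+)F_{W(\mu)}$, reducing each to $\phi_\mu(s+t)$ via $\theta_{h_{W(\mu)}}F_{W(\mu)} = F_{W(\mu)}$, $|F_{W(\mu)}|^2 = \Psi_{W(\mu)} = \psi_\mu$, $\cF_1\psi_\mu = \phi_\mu$ and the definition of $\cT W(\mu)$, and then conclude by sesquilinearity, \fref{thm:outerSpan} and \fref{prop:denseL2Measure}.
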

\begin{proof}
For every \(s,t \in \R_+\), we have
\begin{align*}
\braket*{S_s F_{W(\mu)}}{\theta_{h_{W(\mu)}}S_t F_{W(\mu)}} &= \braket*{S_s F_{W(\mu)}}{S_{-t} \theta_{h_{W(\mu)}} F_{W(\mu)}} = \braket*{S_s F_{W(\mu)}}{S_{-t} F_{W(\mu)}}
\\&= \braket*{S_{s+t} F_{W(\mu)}}{F_{W(\mu)}} = \int_\R e^{-i(s+t)x} |F_{W(\mu)}(x)|^2 \,dx
\\&=\left(\cF_1|F_{W(\mu)}|^2\right)(s+t) = \left(\cF_1\Psi_{W(\mu)}\right)(s+t)
\\&= \left(\cF_1\psi_{\mu}\right)(s+t) = \phi_\mu\left(s+t\right),
\end{align*}
using \fref{lem:BigSmallPsi} and \fref{prop:PhiPsiFourier} for the last two steps. This yields
\begin{align*}
\braket*{S_s F_{W(\mu)}}{\theta_{h_{W(\mu)}}S_t F_{W(\mu)}} &= \phi_\mu\left(s+t\right) = \int_{\R_+} e^{-\lambda\left(s+t\right)} d\mu\left(\lambda\right)
\\&= \int_{\R_+} \overline{e^{-\lambda s}F_{W(\mu)}\left(i\lambda\right)}\,e^{-\lambda t}F_{W(\mu)}\left(i\lambda\right) \left|F_{W(\mu)}\left(i\lambda\right)\right|^{-2} d\mu\left(\lambda\right)
\\&= \int_{\R_+} \overline{\left(S_s F_{W(\mu)}\right)\left(i\lambda\right)}\,\left(S_t F_{W(\mu)}\right)\left(i\lambda\right) \left|F_{W(\mu)}\left(i\lambda\right)\right|^{-2} \frac{1+\lambda^2}{\lambda}\,d(W(\mu))\left(\lambda\right)
\\&= \int_{\R_+} \overline{\left(S_s F_{W(\mu)}\right)\left(i\lambda\right)}\,\left(S_t F_{W(\mu)}\right)\left(i\lambda\right) \,d(\cT W(\mu))\left(\lambda\right).
\end{align*}
Then, by the sesquilinearity of the scalar product, we get
\begin{equation*}
\braket*{f}{\theta_{h_{W(\mu)}} g} = \int_{\R_+} \overline{f\left(i\lambda\right)}\,g\left(i\lambda\right) \,d(\cT W(\mu))\left(\lambda\right)
\end{equation*}
for every \(f,g \in \Spann S(\R_+) F_{W(\mu)}\). Since \(F_{W(\mu)}\) is an outer function, by \fref{thm:outerSpan}, the subspace \(\Spann S(\R_+) F_{W(\mu)}\) is dense in \(H^2\left(\C_+\right)\), so the statement follows by \fref{prop:denseL2Measure}.
\end{proof}
We now want to show that this corollary is not only true for measures of the form \(W(\mu)\) with \(\mu \in \mathcal{BM}^{\mathrm{fin}}\left(\R_+\right) \setminus \{0\}\), but also for general measures \(\nu \in \mathcal{BM}^{\mathrm{fin}}\left([0,\infty]\right) \setminus \{0\}\). For this we will make use of the fact that, in a certain sense, we can approximate measures \(\nu \in \mathcal{BM}^{\mathrm{fin}}\left([0,\infty]\right)\) by measures of the form \(W(\mu)\) with \(\mu \in \mathcal{BM}^{\mathrm{fin}}\left(\R_+\right)\). The topology we will use for this is the weak-\(*\)-topology on the Banach space of complex finite Borel measures:
\begin{definition}
Let \(X\) be a locally compact Hausdorff space. We denote the set of complex Borel
measures on \(X\) by
\[\gls*{BMXC}.\]
Further, we set
\[\mathcal{BM}_\C^{\mathrm{fin}}\left(X\right) \coloneqq \{\mu \in \mathcal{BM}_\C\left(X\right): |\mu|(X)< \infty\}.\]
\end{definition}
\begin{definition}
Let \(V\) be a Banach space over a field \(\cK = \R,\C\) and \(V'\) be its dual space. For \(v \in V'\) and a sequence \((v_n)_{n \in \N}\) in \(V'\) we write
\[v_n \underset{w}{\xrightarrow{n \to \infty}} v,\]
if \((v_n)_{n \in \N}\) converges to \(v\) with respect to the weak-\(*\)-topology, i.e. the topology induced by the linear functionals
\[\ev_x: V' \to \K, \quad w \mapsto w(x), \qquad x \in V.\]
\end{definition}
\begin{remark}
For an interval \(I \subeq \R\), the space \((C_0(I),\left\lVert \cdot \right\rVert_\infty)\) of complex-valued functions on \(I\) that vanish at infinity with the supremum norm is a Banach space. By the Riesz Representation Theorem (cf. \cite[Thm. 6.19]{Ru86}) its dual space is given by \(\mathcal{BM}_\C^{\mathrm{fin}}\left(I\right)\). In particular \(\mathcal{BM}^{\mathrm{fin}}_\C\left([0,\infty]\right)\) is the dual space of the separable Banach space \(C([0,\infty],\C)\) and therefore carries a weak-\(*\)-topology given by the linear functionals
\[I_f: \mathcal{BM}^{\mathrm{fin}}_\C\left([0,\infty]\right) \to \R, \quad \nu \mapsto \int_{[0,\infty]} f \,d\nu\]
with \(f \in C([0,\infty],\C)\). It is easy to see that \(\mathcal{BM}^{\mathrm{fin}}\left([0,\infty]\right)\) is closed in this topology.
\end{remark}
The following lemmata will be useful later:
\begin{lem}\label{lem:PsiPointwise}
Let \(\nu \in \mathcal{BM}^{\mathrm{fin}}\left([0,\infty]\right)\) and let \(\left(\nu_n\right)_{n \in \N}\) be a sequence in \(\mathcal{BM}^{\mathrm{fin}}\left([0,\infty]\right)\) with \(\nu_n \underset{w}{\xrightarrow{n \to \infty}} \nu\). Then
\[\Psi_{\nu_n}(p) \xrightarrow{n \to \infty} \Psi_\nu(p)\]
for every \(p \in \R^\times\).
\end{lem}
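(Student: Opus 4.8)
The plan is to prove this as a routine application of weak-$*$ convergence together with the dominated convergence theorem, exploiting the explicit integral formula defining $\Psi_\nu$. Fix $p \in \R^\times$. The key observation is that, by definition,
\[
\Psi_{\nu_n}(p) = \frac 1\pi \int_{[0,\infty]} g_p(\lambda)\,d\nu_n(\lambda),
\qquad
g_p(\lambda) = \frac{1+\lambda^2}{p^2+\lambda^2},
\]
and the function $g_p$ extends to a \emph{bounded continuous} function on the one-point compactification $[0,\infty]$, with value $g_p(\infty) = 1$ (this is exactly how $\Psi_\nu$ is set up in the definition preceding \fref{lem:BigSmallPsi}, and the continuity at $\infty$ is recorded there). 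So $g_p \in C([0,\infty],\C)$.

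First I would record that $g_p \in C([0,\infty],\C)$, checking continuity at the point $\infty$ by observing that $g_p(\lambda) \to 1 = g_p(\infty)$ as $\lambda \to \infty$ (and continuity on $[0,\infty)$ is clear since $p \neq 0$ means the denominator never vanishes). Then, since $\nu_n \underset{w}{\xrightarrow{n\to\infty}} \nu$ in $\mathcal{BM}^{\mathrm{fin}}_\C([0,\infty])$, the weak-$*$ topology is precisely the one induced by the functionals $\nu \mapsto \int_{[0,\infty]} f\,d\nu$ for $f \in C([0,\infty],\C)$, as spelled out in the remark preceding this lemma. Applying this with $f = g_p$ gives
\[
\int_{[0,\infty]} g_p\,d\nu_n \xrightarrow{n\to\infty} \int_{[0,\infty]} g_p\,d\nu,
\]
and dividing by $\pi$ yields $\Psi_{\nu_n}(p) \to \Psi_\nu(p)$, which is the claim.

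There is essentially no obstacle here; the only point requiring a line of care is the continuity of $g_p$ at $\infty$, i.e. that the integrand is genuinely a member of $C([0,\infty],\C)$ rather than merely of $C([0,\infty),\C)$, so that weak-$*$ convergence of the measures on the compact space $[0,\infty]$ can be invoked directly. Once that is in place, the statement is immediate from the definition of the weak-$*$ topology. I would therefore keep the proof to two or three sentences: note $g_p \in C([0,\infty],\C)$, apply the definition of $\nu_n \underset{w}{\xrightarrow{n\to\infty}} \nu$ to the test function $g_p$, and divide by $\pi$.
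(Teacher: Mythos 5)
Your proof is correct and is essentially the paper's own argument: both rest on the single observation that $\lambda \mapsto \frac{1+\lambda^2}{p^2+\lambda^2}$ belongs to $C([0,\infty],\C)$ (with value $1$ at $\infty$), so weak-$*$ convergence applied to this test function gives the claim immediately. The passing mention of the dominated convergence theorem in your opening sentence is unnecessary --- the argument never uses it --- but this does not affect correctness.
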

\begin{proof}
This follows immediately by the fact that, for every \(p \in \R^\times\), the function
\[f_p: [0,\infty] \ni \lambda \mapsto \frac{1+\lambda^2}{p^2+\lambda^2}\]
is continuous, and therefore, we have
\[\Psi_{\nu_n}(p) = \int_{[0,\infty]} f_p \,d\nu_n \xrightarrow{n \to \infty} \int_{[0,\infty]} f_p \,d\nu = \Psi_\nu(p). \qedhere\]
\end{proof}
\begin{lem}\label{lem:longIntegralApprox}
Let \(\nu \in \mathcal{BM}^{\mathrm{fin}}\left([0,\infty]\right) \setminus \{0\}\) and let \(\left(\nu_n\right)_{n \in \N}\) be a sequence in \(\mathcal{BM}^{\mathrm{fin}}\left([0,\infty]\right)\) with \(\nu_n \underset{w}{\xrightarrow{n \to \infty}} \nu\).
Then, for every \(s,t \in \R_+\), one has
\[\int_{\R_+} e^{-(s+t)\lambda} |\Lambda(i\lambda)|^2 \frac{1+\lambda^2}{\lambda} \,d\nu_n\left(\lambda\right) \xrightarrow{n \to \infty} \int_{\R_+} e^{-(s+t)\lambda} |\Lambda(i\lambda)|^2 \frac{1+\lambda^2}{\lambda} \,d\nu\left(\lambda\right).\]
\end{lem}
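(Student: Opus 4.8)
The plan is to view this as an immediate consequence of the definition of the weak-$*$ topology on $\mathcal{BM}^{\mathrm{fin}}\left([0,\infty]\right)$ as the dual of $C([0,\infty],\C)$: it suffices to check that the integrand, which is a priori only defined on $\R_+$, extends to a continuous function on the compactification $[0,\infty]$. First I would compute it explicitly. Since $\Lambda(z) = \frac{iz}{(z+i)^2}$, one has $\Lambda(i\lambda) = \frac{-\lambda}{-(1+\lambda)^2} = \frac{\lambda}{(1+\lambda)^2}$ for $\lambda \in \R_+$ (note the real-axis formula $|\Lambda(x)| = \frac{|x|}{1+x^2}$ does not apply here, as the argument $i\lambda$ is not real), hence $|\Lambda(i\lambda)|^2 = \frac{\lambda^2}{(1+\lambda)^4}$, so that
\[g_{s,t}(\lambda) \coloneqq e^{-(s+t)\lambda} |\Lambda(i\lambda)|^2 \frac{1+\lambda^2}{\lambda} = e^{-(s+t)\lambda} \frac{\lambda(1+\lambda^2)}{(1+\lambda)^4}, \qquad \lambda \in (0,\infty),\]
and the two integrals in the statement are $\int_{\R_+} g_{s,t}\,d\nu_n$ and $\int_{\R_+} g_{s,t}\,d\nu$.

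Next I would show $g_{s,t}$ extends to an element of $C([0,\infty],\C)$ by analysing the two boundary points. As $\lambda \downarrow 0$ the rational factor $\frac{\lambda(1+\lambda^2)}{(1+\lambda)^4}$ tends to $0$ while $e^{-(s+t)\lambda} \to 1$, so $g_{s,t}(\lambda) \to 0$; as $\lambda \to \infty$ we use that $s,t \in \R_+$ forces $s+t > 0$, so $e^{-(s+t)\lambda} \to 0$ while $\frac{\lambda(1+\lambda^2)}{(1+\lambda)^4}$ stays bounded (in fact also tends to $0$), whence $g_{s,t}(\lambda) \to 0$. Thus $g_{s,t}$ extends continuously to $[0,\infty]$ with value $0$ at $0$ and at $\infty$; in particular $g_{s,t} \in C([0,\infty],\C)$, and since this extension vanishes at $0$ we may replace $\int_{\R_+}$ by $\int_{[0,\infty]}$ for every finite Borel measure on $[0,\infty]$ without changing the value of the integral.

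Finally, applying the weak-$*$ continuous functional $I_{g_{s,t}} \colon \mu \mapsto \int_{[0,\infty]} g_{s,t}\,d\mu$ to the convergence $\nu_n \underset{w}{\xrightarrow{n \to \infty}} \nu$ yields
\[\int_{\R_+} g_{s,t}\,d\nu_n = \int_{[0,\infty]} g_{s,t}\,d\nu_n \xrightarrow{n \to \infty} \int_{[0,\infty]} g_{s,t}\,d\nu = \int_{\R_+} g_{s,t}\,d\nu,\]
which is the claim. The only point requiring any care is the behaviour of $g_{s,t}$ at the two endpoints of $[0,\infty]$ — in particular the harmless discrepancy between integrating over $\R_+$ and over $[0,\infty]$, which is resolved precisely because the continuous extension of $g_{s,t}$ vanishes at $0$ — so I do not anticipate a genuine obstacle.
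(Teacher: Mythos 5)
Your proof is correct and follows essentially the same route as the paper: compute \(|\Lambda(i\lambda)|^2 = \frac{\lambda^2}{(1+\lambda)^4}\), observe that the resulting integrand \(e^{-(s+t)\lambda}\frac{\lambda(1+\lambda^2)}{(1+\lambda)^4}\) extends continuously to \([0,\infty]\) with value \(0\) at both \(0\) and \(\infty\), and then apply the weak-\(*\) convergence \(\nu_n \to \nu\) against this element of \(C([0,\infty],\C)\). Your extra remark about why the integrals over \(\R_+\) and \([0,\infty]\) agree (vanishing at \(0\)) is the same implicit point the paper uses.
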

\begin{proof}
We have
\[|\Lambda(i\lambda)|^2 = \left|\frac{i(i\lambda)}{(i\lambda+i)^2}\right|^2 = \frac{\lambda^2}{(1+\lambda)^4} \quad \forall \lambda \in \R_+.\]
Therefore, the function
\[f: [0,\infty] \ni \lambda \mapsto e^{-(s+t)\lambda} |\Lambda(i\lambda)|^2 \frac{1+\lambda^2}{\lambda} = e^{-(s+t)\lambda} \frac{\lambda(1+\lambda^2)}{(1+\lambda)^4}\]
is continuous with
\[f(0) = 0 = f(\infty).\]
This yields
\[\int_{\R_+} f \,d\nu_n = \int_{[0,\infty]} f \,d\nu_n \xrightarrow{n \to \infty} \int_{[0,\infty]} f \,d\nu = \int_{\R_+} f \,d\nu. \qedhere\]
\end{proof}
\begin{lem}\label{lem:longScalarApprox}
Let \(\nu \in \mathcal{BM}^{\mathrm{fin}}\left([0,\infty]\right) \setminus \{0\}\) and let \(\left(\nu_n\right)_{n \in \N}\) be a sequence in \(\mathcal{BM}^{\mathrm{fin}}\left([0,\infty]\right)\) with
\[\nu_n \underset{w}{\xrightarrow{n \to \infty}} \nu \quad \text{and} \quad \sup_{n \in \N} \nu_n([0,\infty]) < \infty.\]
Then, for every \(s,t \in \R\), one has
\[\braket*{S_s \Lambda F_{\nu_n}}{\theta_{h_{\nu_n}} S_t \Lambda F_{\nu_n}} \xrightarrow{n \to \infty} \braket*{S_s \Lambda F_\nu}{\theta_{h_\nu} S_t \Lambda F_\nu}.\]
\end{lem}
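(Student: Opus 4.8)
The plan is to rewrite $\braket*{S_s \Lambda F_\nu}{\theta_{h_\nu}S_t \Lambda F_\nu}$ as the integral of one fixed, $\nu$-independent continuous bounded function on the compact space $[0,\infty]$ against the measure $\nu$, so that the assertion becomes a plain instance of weak-$*$ convergence. First I would collapse the two real parameters into one. By \fref{lem:thetahForm} the operator $\theta_{h_\nu}=M_{h_\nu}\circ R$ satisfies $\theta_{h_\nu}\circ S_t=S_{-t}\circ\theta_{h_\nu}$, and $S$ is a unitary one-parameter group, so
\[\braket*{S_s \Lambda F_\nu}{\theta_{h_\nu}S_t \Lambda F_\nu}=\braket*{S_s \Lambda F_\nu}{S_{-t}\theta_{h_\nu}\Lambda F_\nu}=\braket*{S_{s+t}\Lambda F_\nu}{\theta_{h_\nu}\Lambda F_\nu}.\]
Since $h_\nu=F_\nu/RF_\nu$, the boundary-value function of $\theta_{h_\nu}\Lambda F_\nu$ is $x\mapsto h_\nu(x)\Lambda(-x)F_\nu(-x)=F_\nu(x)\Lambda(-x)$, and by \fref{lem:FNuLambdaOuter} we have $\Lambda F_\nu\in\Out^2(\C_+)\subseteq L^2(\R,\C)$, so the inner product is a genuine Lebesgue integral. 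Using the elementary identity $\overline{\Lambda(x)}=\Lambda(-x)$ for $x\in\R$ (hence $\overline{\Lambda(x)}\Lambda(-x)=-x^2/(x-i)^4$) and $|F_\nu(x)|^2=\Psi_\nu(x)$ on $\R$ (by \fref{thm:OuterBetrag}), I get, with $r\coloneqq s+t$,
\[\braket*{S_s \Lambda F_\nu}{\theta_{h_\nu}S_t \Lambda F_\nu}=\int_\R e^{-irx}\,\frac{-x^2}{(x-i)^4}\,\Psi_\nu(x)\,dx.\]

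Next I would insert $\Psi_\nu(x)=\frac1\pi\int_{[0,\infty]}\frac{1+\lambda^2}{x^2+\lambda^2}\,d\nu(\lambda)$ (with the integrand read as $1$ at $\lambda=\infty$) and exchange the order of integration. This is permitted because
\[\Bigl|\tfrac{-x^2}{(x-i)^4}\Bigr|\cdot\tfrac{1+\lambda^2}{x^2+\lambda^2}=\tfrac{x^2}{(1+x^2)^2}\cdot\tfrac{1+\lambda^2}{x^2+\lambda^2}\le\tfrac1{1+x^2}\qquad(x\in\R^\times,\ \lambda\in[0,\infty]),\]
which after clearing denominators is just $0\le x^4+\lambda^2$ (the same estimate that underlies \fref{lem:FNuLambdaOuter} and \fref{lem:PsiEstimate}) and, $\nu$ being finite, is integrable over $\R\times[0,\infty]$. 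The outcome is
\[\braket*{S_s \Lambda F_\nu}{\theta_{h_\nu}S_t \Lambda F_\nu}=\frac1\pi\int_{[0,\infty]}\Phi_{s+t}(\lambda)\,d\nu(\lambda),\qquad \Phi_r(\lambda)\coloneqq\int_\R e^{-irx}\,\frac{-x^2}{(x-i)^4}\cdot\frac{1+\lambda^2}{x^2+\lambda^2}\,dx,\]
and $\Phi_r$ does not involve $\nu$.

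The same dominating function $\tfrac1{1+x^2}$ gives $\|\Phi_r\|_\infty\le\pi$ and, via dominated convergence, $\Phi_r\in C([0,\infty],\C)$ — in particular continuity at the endpoints $\lambda=0$ (where $\tfrac{1+\lambda^2}{x^2+\lambda^2}\to x^{-2}$, absorbed by the factor $x^2$) and $\lambda=\infty$ (where it tends to $1$). Finally, after discarding the finitely many indices with $\nu_n=0$ (legitimate since $\nu_n([0,\infty])\to\nu([0,\infty])>0$), the displayed identity holds for every $\nu_n$, so the weak-$*$ convergence $\nu_n\underset{w}{\xrightarrow{n\to\infty}}\nu$ against $\Phi_{s+t}\in C([0,\infty],\C)$ yields $\tfrac1\pi\int\Phi_{s+t}\,d\nu_n\to\tfrac1\pi\int\Phi_{s+t}\,d\nu$, which is exactly the claim. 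For $s,t\in\R_+$ one may instead evaluate $\Phi_{s+t}$ by a residue computation, obtaining $\Phi_{s+t}(\lambda)=\pi e^{-(s+t)\lambda}|\Lambda(i\lambda)|^2\tfrac{1+\lambda^2}{\lambda}$, which reduces the statement directly to \fref{lem:longIntegralApprox}; the weak-$*$ route handles all $s,t\in\R$ at once and shows that the hypothesis $\sup_n\nu_n([0,\infty])<\infty$ is, for this lemma, superfluous.

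The one genuinely delicate point — and the main obstacle — is the behaviour at the compactification point $\lambda=\infty$ and at $\lambda=0$: one must verify that $\Phi_r$ really extends continuously across both endpoints and that the Fubini exchange together with these continuity limits are all controlled by the single $\nu$-independent dominating function $\tfrac1{1+x^2}$. Once this is settled, the remainder is routine.
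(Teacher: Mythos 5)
Your proof is correct, and its key limiting step takes a genuinely different route from the paper's. Both arguments begin with the same reduction
\(\braket*{S_s \Lambda F_\nu}{\theta_{h_\nu} S_t \Lambda F_\nu} = \int_\R e^{-i(s+t)x}\,\overline{\Lambda(x)}^2\,\Psi_\nu(x)\,dx\)
(the paper writes \(\overline{\Lambda(x)}^2\) where you write \(-x^2/(x-i)^4\); these agree). From there the paper keeps the variable \(x\): it dominates \(|\Lambda(x)|^2\Psi_{\nu_n}(x) \leq \nu_n([0,\infty])\cdot\frac 1{1+x^2} \leq M\cdot\frac 1{1+x^2}\) using \fref{lem:FNuLambdaOuter}, invokes the pointwise convergence \(\Psi_{\nu_n}(x) \to \Psi_\nu(x)\) from \fref{lem:PsiPointwise}, and concludes by dominated convergence in \(x\); this is exactly where the hypothesis \(\sup_n \nu_n([0,\infty]) < \infty\) is used. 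You instead integrate out \(x\) first: the Fubini exchange (justified by the same elementary bound \(\frac{x^2}{(1+x^2)^2}\cdot\frac{1+\lambda^2}{x^2+\lambda^2} \leq \frac 1{1+x^2}\)) rewrites everything as \(\frac 1\pi \int_{[0,\infty]} \Phi_{s+t}\,d\nu\) with a fixed, measure-independent kernel, and your verification that \(\Phi_{s+t} \in C([0,\infty],\C)\) (including continuity at \(\lambda = 0\) and \(\lambda = \infty\)) makes the claim a direct instance of the weak-\(*\) convergence \(\nu_n \to \nu\). What each approach buys: the paper's is shorter given the already-established \fref{lem:FNuLambdaOuter} and \fref{lem:PsiPointwise} and requires no analysis of a new kernel; yours shows the hypothesis \(\sup_n \nu_n([0,\infty]) < \infty\) is dispensable (it is in any case automatic, since testing weak-\(*\) convergence against the constant function \(\textbf{1}\) gives \(\nu_n([0,\infty]) \to \nu([0,\infty])\)), and your residue identity \(\Phi_{s+t}(\lambda) = \pi e^{-(s+t)\lambda}|\Lambda(i\lambda)|^2\frac{1+\lambda^2}{\lambda}\) for \(s+t>0\) makes the link to \fref{lem:longIntegralApprox} explicit, which is essentially how the paper later combines the two lemmas in the proof of \fref{thm:cTNuIsCarleson}. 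Your remark that only finitely many \(\nu_n\) can vanish (so that \(F_{\nu_n}\), \(h_{\nu_n}\) are defined for large \(n\)) addresses a point the paper leaves implicit, and is correct.
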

\begin{proof}
For \(\rho \in \mathcal{BM}^{\mathrm{fin}}\left([0,\infty]\right) \setminus \{0\}\), by \fref{lem:FNuLambdaOuter}, we have \(F_\rho \in \Out^2_\Lambda(\C_+) \subeq H^2_\Lambda(\C_+)\) and therefore \(\Lambda F_\rho \in H^2(\C_+)\). Therefore, for \(s,t \in \R\), we can define
\[\Phi_\rho(s,t) \coloneqq \braket*{S_s \Lambda F_\rho}{\theta_{h_\rho} S_t \Lambda F_\rho}.\]
Then, using that
\[(R\Lambda)(x) = \frac{-ix}{(-x+i)^2} = \frac{-ix}{(x-i)^2} = \overline{\Lambda(x)}, \quad x \in \R,\]
we get
\begin{align*}
\Phi_\rho(s,t) &= \braket*{S_s \Lambda F_\rho}{\theta_{h_\rho} S_t \Lambda F_\rho} = \braket*{S_s \Lambda F_\rho}{S_{-t} (R\Lambda) \theta_{h_\rho} F_\rho}
\\&= \braket*{S_{(s+t)} \Lambda F_\rho}{\overline{\Lambda} F_\rho} = \int_\R e^{-i(s+t)x} \overline{\Lambda(x)}^2 |F_\rho(x)|^2 \,dx = \int_\R e^{-i(s+t)x} \overline{\Lambda(x)}^2 \Psi_\rho(x) \,dx.
\end{align*}
Now, setting
\[M \coloneqq \sup_{n \in \N} \nu_n([0,\infty]) < \infty,\]
by \fref{lem:FNuLambdaOuter}, we have
\[\left|e^{-i(s+t)x} \overline{\Lambda(x)}^2 \Psi_{\nu_n}(x)\right| = |\Lambda(x)|^2 \cdot \Psi_{\nu_n}(x) \leq \nu_n([0,\infty]) \cdot \frac 1{1+x^2} \leq M \cdot \frac 1{1+x^2}\]
for all \(x \in \R^\times\)
and by \fref{lem:PsiPointwise} we have
\[\Psi_{\nu_n}(x) \xrightarrow{n \to \infty} \Psi_\nu(x)\]
for every \(x \in \R^\times\). This, by the Dominated Convergence Theorem, yields
\begin{equation*}
\Phi_{\nu_n}(s,t) = \int_\R e^{-i(s+t)x} \overline{\Lambda(x)}^2 \Psi_{\nu_n}(x) \,dx \xrightarrow{n \to \infty} \int_\R e^{-i(s+t)x} \overline{\Lambda(x)}^2 \Psi_{\nu}(x) \,dx = \Phi_\nu(s,t). \qedhere
\end{equation*}
\end{proof}
We now prove that we can approximate measures \(\nu \in \mathcal{BM}^{\mathrm{fin}}\left([0,\infty]\right)\) by measures of the form \(W(\mu)\) with \(\mu \in \mathcal{BM}^{\mathrm{fin}}\left(\R_+\right)\):
\begin{lemma}\label{lem:ImcTDense}
Let \(\nu \in \mathcal{BM}^{\mathrm{fin}}\left([0,\infty]\right)\). Then there exists a sequence \(\left(\mu_n\right)_{n \in \N}\) in \(\mathcal{BM}^{\mathrm{fin}}\left(\R_+\right)\) with
\[(W(\mu_n))([0,\infty]) = \nu([0,\infty]) \quad \forall n \in \N\]
and
\[W(\mu_n) \underset{w}{\xrightarrow{n \to \infty}} \nu.\]
\end{lemma}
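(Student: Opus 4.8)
The plan is to realise an arbitrary $\nu \in \mathcal{BM}^{\mathrm{fin}}([0,\infty])$ as a weak-$*$ limit of measures in the image of $W$, by first identifying a convenient class of measures that lie in that image. The key observation is that whenever $\sigma \in \mathcal{BM}^{\mathrm{fin}}([0,\infty])$ is supported on a compact subset $K \subeq (0,\infty) = \R_+$, the function $\lambda \mapsto \frac{1+\lambda^2}{\lambda}$ is bounded on $K$, so $d\mu(\lambda) \coloneqq \frac{1+\lambda^2}{\lambda}\,d\sigma(\lambda)$ defines a finite Borel measure $\mu \in \mathcal{BM}^{\mathrm{fin}}(\R_+)$, and then $d(W(\mu))(\lambda) = \frac{\lambda}{1+\lambda^2}\cdot\frac{1+\lambda^2}{\lambda}\,d\sigma(\lambda) = d\sigma(\lambda)$, i.e. $W(\mu) = \sigma$. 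Hence it suffices to approximate $\nu$ in the weak-$*$ topology by measures supported on compact subsets of $(0,\infty)$ whose total mass equals $c \coloneqq \nu([0,\infty])$.

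For $n \in \N$ with $n \geq 1$ I would truncate $\nu$ and push its boundary mass inward, setting
\[
\sigma_n \coloneqq \nu\big|_{(1/n,\,n)} + \nu(\{0\})\,\delta_{1/n} + \nu(\{\infty\})\,\delta_n + d_n\,\delta_1, \qquad d_n \coloneqq \nu\big((0,1/n]\big) + \nu\big([n,\infty)\big).
\]
Each $\sigma_n$ is a finite Borel measure supported in $[1/n,n] \subeq (0,\infty)$, so by the observation above $\sigma_n = W(\mu_n)$ for $\mu_n \coloneqq \tfrac{1+\lambda^2}{\lambda}\cdot\sigma_n \in \mathcal{BM}^{\mathrm{fin}}(\R_+)$. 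Counting masses via the disjoint decomposition $(0,\infty) = (0,1/n]\sqcup(1/n,n)\sqcup[n,\infty)$ gives $\sigma_n([0,\infty]) = \nu((1/n,n)) + \nu(\{0\}) + \nu(\{\infty\}) + d_n = \nu([0,\infty]) = c$, which is exactly the normalisation $(W(\mu_n))([0,\infty]) = \nu([0,\infty])$ demanded in the statement.

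It remains to show that $W(\mu_n) = \sigma_n \to \nu$ in the weak-$*$ topology. For $f \in C([0,\infty],\C)$ I would split $\int_{[0,\infty]} f\,d\sigma_n$ into its four contributions and pass to the limit in each: $\int_{(1/n,n)} f\,d\nu \to \int_{(0,\infty)} f\,d\nu$ by dominated convergence (the integrands are dominated by $\|f\|_\infty$ and converge $\nu$-a.e. to $f\,\chi_{(0,\infty)}$); $\nu(\{0\})\,f(1/n) \to \nu(\{0\})\,f(0)$ and $\nu(\{\infty\})\,f(n) \to \nu(\{\infty\})\,f(\infty)$ by continuity of $f$ on $[0,\infty]$; and $d_n\,f(1) \to 0$, since $d_n \to 0$ by continuity from above of the finite measure $\nu$ along $(0,1/n]\cup[n,\infty)\downarrow\eset$. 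Adding the limits gives $\int f\,d\sigma_n \to \int_{(0,\infty)} f\,d\nu + \nu(\{0\})f(0) + \nu(\{\infty\})f(\infty) = \int_{[0,\infty]} f\,d\nu$, which completes the argument. There is no serious obstacle here; the two points needing care are matching the total masses for every $n$ — handled by the correction term $d_n\delta_1$ — and ensuring that $\mu_n$ is a \emph{finite} measure, which is precisely why the approximants $\sigma_n$ must be chosen with support bounded away from both $0$ and $\infty$.
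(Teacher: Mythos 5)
Your proof is correct and follows essentially the same route as the paper: truncate \(\nu\) to \(\left[\frac 1n,n\right]\), relocate the leftover mass to atoms inside \(\R_+\) so that the total mass is preserved, verify weak-\(*\) convergence against \(C([0,\infty],\C)\), and pull the approximants back through \(W\) using that \(\lambda \mapsto \frac{1+\lambda^2}{\lambda}\) is bounded on compact subsets of \(\R_+\) (the paper sends all of the mass of \(\left[0,\frac 1n\right)\) and \((n,\infty]\) to \(\delta_{1/n}\) and \(\delta_n\), whereas you send only the endpoint atoms there and park the rest at \(\delta_1\), which costs you the extra continuity-from-above step \(d_n \to 0\)). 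One small slip: for \(n=1\) the decomposition \((0,\infty)=\left(0,\frac 1n\right]\sqcup\left(\frac 1n,n\right)\sqcup[n,\infty)\) is not disjoint at \(\lambda=1\), so \(\sigma_1([0,\infty]) = \nu([0,\infty]) + \nu(\{1\})\) and the exact-mass condition can fail for that index; starting the construction at \(n=2\) (or reindexing) fixes this.
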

\begin{proof}
For \(n \in \N\) we define a measure \(\nu_n \in \mathcal{BM}^{\mathrm{fin}}\left([0,\infty]\right)\) by
\[\nu_n(A) \coloneqq \nu\left(\left[\frac 1n,n\right] \cap A\right)\]
for every measurable subset \(A \subeq \left[0,\infty\right]\). Further, for \(n \in \N\) set
\[\tilde \nu_n \coloneqq \nu_n + \nu\left(\left[0,\frac 1n\right)\right) \cdot \delta_{\frac 1n} + \nu\left(\left(n,\infty\right]\right) \cdot \delta_n,\]
where for \(x \in \R\) by \(\delta_x\) we denote the Dirac measure in \(x\). Then, for every \(n \in \N\), one has
\[\tilde \nu_n(\left[0,\infty\right]) = \nu\left(\left[\frac 1n,n\right]\right) + \nu\left(\left[0,\frac 1n\right)\right) + \nu\left(\left(n,\infty\right]\right) = \nu(\left[0,\infty\right]).\]
We now want to show that \(\tilde \nu_n \underset{w}{\xrightarrow{n \to \infty}} \nu\). So let \(f \in C(\left[0,\infty\right],\R)\) and \(\epsilon > 0\). Since \(f \in C(\left[0,\infty\right],\R)\), there exists an \(N \in \N\) such that
\[\left|f(0)-f(x)\right| \leq \epsilon \quad \text{for every } x \in \left[0,\frac 1N\right]\]
and
\[\left|f(\infty)-f(x)\right| \leq \epsilon \quad \text{for every } x \in \left[N,\infty\right].\]
Then, for every \(n \geq N\), one has
\begin{align*}
\left|\int_{[0,\infty]}f \,d\nu - \int_{[0,\infty]}f \,d\tilde \nu_n\right| &=\left|\int_{\left[0,\frac 1n\right)}f \,d\nu + \int_{\left(n,\infty\right]}f \,d\nu - \nu\left(\left[0,\frac 1n\right)\right) \cdot f\left(\frac 1n\right) - \nu\left(\left(n,\infty\right]\right) \cdot f\left(n\right)\right|
\\&= \left|\int_{\left[0,\frac 1n\right)}f(\lambda)-f\left(\frac 1n\right) \,d\nu(\lambda) + \int_{\left(n,\infty\right]}f(\lambda)-f\left(n\right) \,d\nu(\lambda)\right|
\\&\leq \int_{\left[0,\frac 1n\right)}\left|f(\lambda)-f\left(\frac 1n\right)\right| \,d\nu(\lambda) + \int_{\left(n,\infty\right]}\left|f(\lambda)-f\left(n\right)\right| \,d\nu(\lambda)
\\&\leq \int_{\left[0,\frac 1n\right)}\epsilon \,d\nu(\lambda) + \int_{\left(n,\infty\right]}\epsilon \,d\nu(\lambda) \leq \int_{\left[0,\infty\right]}\epsilon \,d\nu(\lambda) = \epsilon \cdot \nu([0,\infty]).
\end{align*}
Now, for \(n \in \N\), we define the measure \(\mu_n\) on \(\R_+\) by
\[d\mu_n(\lambda) \coloneqq \frac{1+\lambda^2}{\lambda} \,d\tilde \nu_n(\lambda), \quad \lambda \in \R_+.\]
Since \(\nu_n\) is finite, \(\tilde \nu_n\left(\left[0,\infty\right] \setminus \left[\frac 1n,n\right]\right) = 0\) and the function
\[\lambda \mapsto \frac{1+\lambda^2}{\lambda}\] is bounded on \(\left[\frac 1n,n\right]\), the measure \(\mu_n\) is finite, so we have \(\mu_n \in \mathcal{BM}^{\mathrm{fin}}\left(\R_+\right)\) and per construction we have \(W(\mu_n) = \tilde \nu_n\) for every \(n \in \N\). Therefore
\[W(\mu_n) \underset{w}{\xrightarrow{n \to \infty}} \nu\]
and for every \(n \in \N\) we have
\[(W(\mu_n))([0,\infty]) = \tilde \nu_n([0,\infty]) = \nu([0,\infty]). \qedhere\]
\end{proof}
We can now prove a generalization of \fref{cor:twistedScalar} for all measures \(\nu \in \mathcal{BM}^{\mathrm{fin}}\left([0,\infty]\right) \setminus \{0\}\):
\begin{thm}\label{thm:cTNuIsCarleson}
Let \(\nu \in \mathcal{BM}^{\mathrm{fin}}\left([0,\infty]\right) \setminus \{0\}\). Then, for all \(f,g \in H^2(\C_+)\), one has
\[\braket*{f}{\theta_{h_\nu}g} = \int_{\R_+} \overline{f(i\lambda)}\,g(i\lambda) \,d(\cT \nu)(\lambda).\]
\end{thm}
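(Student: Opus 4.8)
The plan is to reduce the general case to \fref{cor:twistedScalar} by approximating $\nu$ with measures of the form $W(\mu_n)$, verify the desired identity first on a suitable dense subspace of $H^2(\C_+)$, and then upgrade to all of $H^2(\C_+)$ via \fref{prop:denseL2Measure}.

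First I would fix $\nu \in \mathcal{BM}^{\mathrm{fin}}\left([0,\infty]\right) \setminus \{0\}$ and, using \fref{lem:ImcTDense}, choose a sequence $(\mu_n)_{n \in \N}$ in $\mathcal{BM}^{\mathrm{fin}}\left(\R_+\right)$ with $W(\mu_n) \underset{w}{\xrightarrow{n \to \infty}} \nu$ and $(W(\mu_n))([0,\infty]) = \nu([0,\infty])$ for all $n$; in particular $\sup_n (W(\mu_n))([0,\infty]) = \nu([0,\infty]) < \infty$ and each $W(\mu_n) \neq 0$. For fixed $s,t \in \R_+$, by \fref{lem:FNuLambdaOuter} and \fref{rem:DefLambda} the functions $S_s \Lambda F_{W(\mu_n)}$ and $S_t \Lambda F_{W(\mu_n)}$ lie in $H^2(\C_+)$, so \fref{cor:twistedScalar} (applied to $\mu_n$) gives
\[\braket*{S_s \Lambda F_{W(\mu_n)}}{\theta_{h_{W(\mu_n)}} S_t \Lambda F_{W(\mu_n)}} = \int_{\R_+} \overline{(S_s \Lambda F_{W(\mu_n)})(i\lambda)}\,(S_t \Lambda F_{W(\mu_n)})(i\lambda)\,d(\cT W(\mu_n))(\lambda).\]
Substituting $(S_s \Lambda F_\rho)(i\lambda) = e^{-s\lambda}\Lambda(i\lambda)F_\rho(i\lambda)$ and $d(\cT\rho)(\lambda) = |F_\rho(i\lambda)|^{-2}\tfrac{1+\lambda^2}{\lambda}\,d\rho(\lambda)$ with $\rho = W(\mu_n)$, the factors $|F_{W(\mu_n)}(i\lambda)|^{\pm 2}$ cancel and the right-hand side collapses to $\int_{\R_+} e^{-(s+t)\lambda}|\Lambda(i\lambda)|^2 \tfrac{1+\lambda^2}{\lambda}\,d(W(\mu_n))(\lambda)$. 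Letting $n \to \infty$: the left-hand side converges to $\braket*{S_s \Lambda F_\nu}{\theta_{h_\nu} S_t \Lambda F_\nu}$ by \fref{lem:longScalarApprox}, and the right-hand side converges to $\int_{\R_+} e^{-(s+t)\lambda}|\Lambda(i\lambda)|^2 \tfrac{1+\lambda^2}{\lambda}\,d\nu(\lambda)$ by \fref{lem:longIntegralApprox}. Running the same cancellation backwards with $\rho = \nu$, this last integral equals $\int_{\R_+} \overline{(S_s \Lambda F_\nu)(i\lambda)}\,(S_t \Lambda F_\nu)(i\lambda)\,d(\cT\nu)(\lambda)$, so
\[\braket*{S_s \Lambda F_\nu}{\theta_{h_\nu} S_t \Lambda F_\nu} = \int_{\R_+} \overline{(S_s \Lambda F_\nu)(i\lambda)}\,(S_t \Lambda F_\nu)(i\lambda)\,d(\cT\nu)(\lambda), \qquad s,t \in \R_+.\]

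By sesquilinearity of both sides this identity extends to all $f,g$ in $V \coloneqq \Spann S(\R_+)\Lambda F_\nu$. Since $\Lambda F_\nu \in \Out^2(\C_+)$ by \fref{lem:FNuLambdaOuter}, \fref{thm:outerSpan} shows that $V$ is dense in $H^2(\C_+)$. Applying \fref{prop:denseL2Measure} with $A \coloneqq \theta_{h_\nu} \in B(L^2(\R,\C))$ and $\mu \coloneqq \cT\nu \in \mathcal{BM}\left(\R_+\right)$ then yields $f^\sV \in L^2(\R_+,\C,\cT\nu)$ for every $f \in H^2(\C_+)$ together with
\[\braket*{f}{\theta_{h_\nu}g} = \braket*{f^\sV}{g^\sV}_{L^2(\R_+,\C,\cT\nu)} = \int_{\R_+}\overline{f(i\lambda)}\,g(i\lambda)\,d(\cT\nu)(\lambda) \qquad \forall f,g \in H^2(\C_+),\]
which is the claim.

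The approximation bookkeeping is routine; the step I expect to require the most care is the legitimacy of the cancellation of the $|F_\rho(i\lambda)|^{\pm 2}$-factors — that is, checking that $\cT W(\mu_n)$ really is the measure for which the integrand, after pairing with $S_s\Lambda F_{W(\mu_n)}$ and $S_t\Lambda F_{W(\mu_n)}$, reduces exactly to the expression appearing in \fref{lem:longIntegralApprox}, and that the boundary-value evaluations at $i\lambda$ are consistent across $H^2(\C_+)$, $\Out^2(\C_+)$ and $H^2_\Lambda(\C_+)$. Everything else is a direct invocation of the preceding lemmas.
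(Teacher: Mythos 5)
Your proposal is correct and follows essentially the same route as the paper's own proof: approximate $\nu$ by $W(\mu_n)$ via \fref{lem:ImcTDense}, apply \fref{cor:twistedScalar} to the test vectors $S_s\Lambda F_{W(\mu_n)}$, cancel the $|F_{W(\mu_n)}(i\lambda)|^{\pm 2}$-factors, pass to the limit using \fref{lem:longScalarApprox} and \fref{lem:longIntegralApprox}, and conclude by sesquilinearity, \fref{thm:outerSpan} and \fref{prop:denseL2Measure}. No gaps.
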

\begin{proof}
By \fref{lem:ImcTDense}, there exists a sequence \(\left(\mu_n\right)_{n \in \N}\) in \(\mathcal{BM}^{\mathrm{fin}}\left(\R_+\right)\) with
\[(W(\mu_n))([0,\infty])=\nu([0,\infty]) \quad \forall n \in \N\]
such that
\[W(\mu_n) \underset{w}{\xrightarrow{n \to \infty}} \nu.\]
Now, by \fref{cor:twistedScalar}, for \(n \in \N\) and \(s,t \in \R_+\), we have
\begin{align*}
\braket*{S_s \Lambda F_{W(\mu_n)}}{\theta_{h_{W(\mu_n)}} S_t \Lambda F_{W(\mu_n)}} = \int_{\R_+} \overline{(S_s \Lambda F_{W(\mu_n)})(i\lambda)}\,(S_t \Lambda F_{W(\mu_n)})(i\lambda) \,d(\cT W(\mu_n))(\lambda).
\end{align*}
Since, for every measure \(\nu \in \mathcal{BM}^{\mathrm{fin}}\left(\R_+\right)\setminus \{0\}\), we have
\begin{align*}
\int_{\R_+} \overline{(S_s \Lambda F_\nu)(i\lambda)}\,(S_t \Lambda F_\nu)(i\lambda) \,d(\cT \nu)(\lambda) &= \int_{\R_+} e^{-(s+t)\lambda} |\Lambda(i\lambda)|^2 \cdot \left|F_\nu(i\lambda)\right|^2 \,d(\cT \nu)(\lambda)
\\&= \int_{\R_+} e^{-(s+t)\lambda} |\Lambda(i\lambda)|^2 \frac{1+\lambda^2}{\lambda} \,d\nu(\lambda),
\end{align*}
by \fref{lem:longIntegralApprox}, we get
\begin{align*}
\braket*{S_s \Lambda F_{W(\mu_n)}}{\theta_{h_{W(\mu_n)}} S_t \Lambda F_{W(\mu_n)}} &= \int_{\R_+} e^{-(s+t)\lambda} |\Lambda(i\lambda)|^2 \frac{1+\lambda^2}{\lambda} \,d(W(\mu_n))(\lambda)
\\&\xrightarrow{n \to \infty} \int_{\R_+} e^{-(s+t)\lambda} |\Lambda(i\lambda)|^2 \frac{1+\lambda^2}{\lambda} \,d\nu(\lambda) 
\\&= \int_{\R_+} \overline{(S_s \Lambda F_\nu)(i\lambda)}\,(S_t \Lambda F_\nu)(i\lambda) \,d(\cT \nu)(\lambda).
\end{align*}
On the other hand
\[\sup_{n \in \N} \,(W(\mu_n))([0,\infty]) = \nu([0,\infty]) < \infty,\]
so, by \fref{lem:longScalarApprox}, we have
\[\braket*{S_s \Lambda F_{W(\mu_n)}}{\theta_{h_{W(\mu_n)}} S_t \Lambda F_{W(\nu_n)}} \xrightarrow{n \to \infty} \braket*{S_s \Lambda F_\nu}{\theta_{h_\nu} S_t \Lambda F_\nu}.\]
This yields
\[\braket*{S_s \Lambda F_\nu}{\theta_{h_\nu} S_t \Lambda F_\nu} = \int_{\R_+} \overline{(S_s \Lambda F_\nu)(i\lambda)}\,(S_t \Lambda F_\nu)(i\lambda) \,d(\cT \nu)(\lambda) \quad \forall s,t \in \R_+.\]
Now, by the sesquilinearity of the scalar product, we get
\[\braket*{f}{\theta_{h_\nu} g} = \int_{\R_+} \overline{f(i\lambda)}\,g(i\lambda) \,d(\cT \nu)(\lambda)\]
for every \(f,g \in \spann \,S(\R_+)(\Lambda F_\nu)\). Since, by \fref{lem:FNuLambdaOuter}, we have \(\Lambda F_\nu \in \Out^2(\C_+)\), by \fref{thm:outerSpan}, the subspace \(\spann \,S(\R_+)(\Lambda F_\nu)\) is dense in \(H^2\left(\C_+\right)\) and therefore the statement follows by \fref{prop:denseL2Measure}.
\end{proof}
\begin{cor}\label{cor:hNuMaxPos}
Let \(\nu \in \mathcal{BM}^{\mathrm{fin}}\left([0,\infty]\right) \setminus \{0\}\). Then the triple \(\left(L^2\left(\R,\C\right),H^2\left(\C_+\right),\theta_{h_\nu}\right)\) is a maximal RPHS.
\end{cor}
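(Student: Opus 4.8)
The plan is to deduce the statement directly from the Carleson-type identity \fref{thm:cTNuIsCarleson}, the mapping property \fref{lem:FNuLambdaOuter}, and the maximality criterion \fref{prop:maxPos}; almost all of the analytic content has already been isolated in those three results, so the argument is essentially an assembly.

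First I would record that $h_\nu \in L^\infty(\R,\T)^\flat$, which is exactly the content of the second part of the remark following \fref{def:FNuHNu}. Consequently, by \fref{lem:thetahUnitary}, the operator $\theta_{h_\nu} = M_{h_\nu} \circ R$ is a unitary involution on $L^2(\R,\C)$, so that $(L^2(\R,\C),H^2(\C_+),\theta_{h_\nu})$ is a legitimate candidate for a maximal RPHS in the sense of \fref{def:RPHS} and \fref{def:maxRPHS}.

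Next I would verify reflection positivity. By \fref{thm:cTNuIsCarleson}, for every $f \in H^2(\C_+)$ one has $\braket*{f}{\theta_{h_\nu} f} = \int_{\R_+} |f(i\lambda)|^2 \, d(\cT\nu)(\lambda)$, where $\cT\nu \in \mathcal{BM}(\R_+)$ is the measure of \fref{def:defCT}. This measure is genuinely positive, since $\nu$ is positive and its density $|F_\nu(i\lambda)|^{-2}\tfrac{1+\lambda^2}{\lambda}$ is strictly positive on $\R_+$ (recall $F_\nu$ is outer, hence non-vanishing on $\C_+$, so $F_\nu(i\lambda) \neq 0$ for $\lambda>0$). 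Therefore $\braket*{f}{\theta_{h_\nu} f} \geq 0$ for all $f \in H^2(\C_+)$, i.e.\ $(L^2(\R,\C),H^2(\C_+),\theta_{h_\nu})$ is a RPHS. Then I would exhibit a fixed point of the extended involution: by \fref{lem:FNuLambdaOuter} we have $F_\nu \in \Out^2_\Lambda(\C_+) \subseteq H^2_\Lambda(\C_+)$, so $F_\nu$ may be regarded as an element of $L^2(\R,\C,\rho)$ via its boundary values, and directly from the definition of $h_\nu$ one computes $\Theta_{h_\nu} F_\nu = h_\nu \cdot R F_\nu = \tfrac{F_\nu}{R F_\nu} \cdot R F_\nu = F_\nu$. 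Hence $F_\nu \in \ker(\Theta_{h_\nu} - \textbf{1}) \cap \Out^2_\Lambda(\C_+)$, which is therefore non-empty, and \fref{prop:maxPos} gives that $(L^2(\R,\C),H^2(\C_+),\theta_{h_\nu})$ is a maximal RPHS, completing the proof.

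I do not expect a real obstacle here. The only two points requiring a little care are the bookkeeping that $\Theta_{h_\nu}$ is defined on $L^2(\R,\C,\rho) \supseteq H^2_\Lambda(\C_+)$ (so that $\Theta_{h_\nu}F_\nu = F_\nu$ is a meaningful identity in the extended Hardy space rather than in $H^2(\C_+)$ itself) and the positivity of $\cT\nu$; both are immediate once the definitions are unwound. All genuine difficulty was already absorbed into the weak-$*$ approximation argument behind \fref{thm:cTNuIsCarleson} and into the density computation of \fref{prop:maxPos}.
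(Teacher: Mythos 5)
Your proposal is correct and follows essentially the same route as the paper: reflection positivity comes from the identity $\braket*{f}{\theta_{h_\nu}f} = \int_{\R_+}|f(i\lambda)|^2\,d(\cT\nu)(\lambda)$ of \fref{thm:cTNuIsCarleson}, and maximality from \fref{prop:maxPos} together with $F_\nu \in \ker(\Theta_{h_\nu}-\textbf{1}) \cap \Out^2_\Lambda(\C_+)$ via \fref{lem:FNuLambdaOuter}. Your additional bookkeeping (that $h_\nu \in L^\infty(\R,\T)^\flat$ so $\theta_{h_\nu}$ is a unitary involution, the positivity of $\cT\nu$, and the explicit computation $\Theta_{h_\nu}F_\nu = F_\nu$) is left implicit in the paper but is correct and harmless.
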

\begin{proof}
By \fref{thm:cTNuIsCarleson} we get
\[\braket*{f}{\theta_{h_\nu}f} = \int_{\R_+} \overline{f(i\lambda)}\,f(i\lambda) \,d(\cT \nu)(\lambda) = \int_{\R_+} |f(i\lambda)|^2 \,d(\cT \nu)(\lambda) \geq 0 \quad \forall f \in H^2(\C_+),\]
so \(\left(L^2\left(\R,\C\right),H^2\left(\C_+\right),\theta_{h_\nu}\right)\) is a RPHS. The maximality then follows by \fref{prop:maxPos}, since, by \fref{lem:FNuLambdaOuter}, we have
\[F_\nu \in \ker(\Theta_{h_\nu} - \textbf{1}) \cap \Out^2_\Lambda(\C_+). \qedhere\]
\end{proof}
We now know that, starting with a measure \(\nu \in \mathcal{BM}^{\mathrm{fin}}\left([0,\infty]\right) \setminus \{0\}\), we get a maximal RPHS by \(\left(L^2\left(\R,\C\right),H^2\left(\C_+\right),\theta_{h_\nu}\right)\). In the following, we want to see that, in fact, every maximal RPHS is of this form. For this, we need the concept of a Hankel operator:
\begin{definition}\label{def:Hankel}{\rm (cf. \cite{ANS22})}
A linear operator \(H \in B(H^2(\C_+))\) is called \textit{Hankel operator}, if \(S_t^* H = H S_t\) for every \(t \in \R_+\).
\end{definition}
\begin{lemma}\label{lem:RPHSPosHankelEquiv}
Let \(h \in L^\infty(\R,\T)^\flat\). Then the triple \(\left(L^2\left(\R,\C\right),H^2\left(\C_+\right),\theta_h\right)\) is a RPHS, if and only if
\[H_h \coloneqq p_+ \theta_h p_+^*\]
is a positive Hankel operator, where by \(p_+: L^2(\R,\C) \to H^2(\C_+)\) we denote the orthogonal projection from \(L^2(\R,\C)\) onto its subspace \(H^2(\C_+)\).
\end{lemma}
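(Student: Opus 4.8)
The plan is to treat the two words ``positive'' and ``Hankel'' separately: first observe that $H_h$ is \emph{always} a Hankel operator, regardless of reflection positivity, and then show that the reflection positivity inequality for $\left(L^2(\R,\C),H^2(\C_+),\theta_h\right)$ is literally the same statement as positivity of $H_h$ as an operator on $H^2(\C_+)$. Throughout I will write $P_+ = p_+^* p_+ \in B(L^2(\R,\C))$ for the orthogonal projection onto $H^2(\C_+)$ and $P_- = \textbf{1} - P_+$ for the projection onto $H^2(\C_-) = H^2(\C_+)^\perp$, so that $p_+ p_+^* = \textbf{1}_{H^2(\C_+)}$ and $p_+^* p_+ = P_+$. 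Note first that $\theta_h$ is a unitary operator by \fref{lem:thetahUnitary}, so $H_h = p_+ \theta_h p_+^*$ is a bounded operator on $H^2(\C_+)$ (with $\left\lVert H_h \right\rVert \leq 1$); in particular the assertion ``$H_h$ is a positive Hankel operator'' makes sense.

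For the Hankel property I would compute $H_h S_t$ and $S_t^* H_t$ for $t \geq 0$, where $S_t^*$ denotes the Hilbert space adjoint of $S_t$ as an operator on $H^2(\C_+)$ (not on $L^2(\R,\C)$). Since $S_t H^2(\C_+) \subseteq H^2(\C_+)$ for $t \geq 0$ one has $p_+^* S_t|_{H^2(\C_+)} = S_t p_+^*$, and since $S_t$ is an isometry of $H^2(\C_+)$ one has $S_t^*|_{H^2(\C_+)} = p_+ S_{-t} p_+^*$. Using the intertwining relation $\theta_h \circ S_t = S_{-t} \circ \theta_h$ (which holds because $\theta_h = \theta_h$ with $h \in L^\infty(\R,\T)^\flat$; cf. \fref{lem:thetahForm}) this gives
\[
H_h S_t = p_+ \theta_h S_t p_+^* = p_+ S_{-t} \theta_h p_+^*,
\qquad
S_t^* H_h = p_+ S_{-t} P_+ \theta_h p_+^*.
\]
Finally, $P_+ = \textbf{1} - P_-$, and for $t \geq 0$ the function $e_{-t}(x) = e^{-itx}$ lies in $H^\infty(\C_-)$, so $S_{-t} = M_{e_{-t}}$ maps $H^2(\C_-)$ into $H^2(\C_-) = \ker p_+$ (\fref{prop:H2InclusionFunctions}); hence $p_+ S_{-t} P_- \theta_h p_+^* = 0$ and $S_t^* H_h = p_+ S_{-t} \theta_h p_+^* = H_h S_t$. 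Thus $H_h$ is a Hankel operator.

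For the equivalence with reflection positivity, fix $\xi \in H^2(\C_+)$ and decompose $\theta_h p_+^* \xi = P_+ \theta_h p_+^* \xi + P_- \theta_h p_+^* \xi$ in $L^2(\R,\C) = H^2(\C_+) \oplus H^2(\C_-)$. Since $p_+^*\xi \in H^2(\C_+)$ is orthogonal to $H^2(\C_-)$, the second summand contributes nothing, so
\[
\braket*{\xi}{\theta_h \xi} = \braket*{p_+^*\xi}{\theta_h p_+^* \xi} = \braket*{p_+^*\xi}{P_+\theta_h p_+^*\xi} = \braket*{p_+ p_+^*\xi}{p_+\theta_h p_+^*\xi} = \braket*{\xi}{H_h \xi}.
\]
Consequently the reflection positivity condition $\braket*{\xi}{\theta_h\xi} \geq 0$ for all $\xi \in H^2(\C_+)$ is exactly the condition $\braket*{\xi}{H_h\xi} \geq 0$ for all $\xi \in H^2(\C_+)$, and on the complex Hilbert space $H^2(\C_+)$ this numerical positivity forces $H_h = H_h^*$ and hence means precisely that $H_h$ is a positive operator. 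Combining this with the first part, $\left(L^2(\R,\C),H^2(\C_+),\theta_h\right)$ is a RPHS if and only if $H_h$ is a positive Hankel operator, as claimed.

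There is no deep obstacle here; the only thing requiring care is keeping the identifications $p_+$, $p_+^*$, $P_+ = p_+^* p_+$ straight and, in the Hankel computation, remembering that $S_t^*$ is the adjoint taken \emph{inside} $H^2(\C_+)$ (so $S_t^*|_{H^2(\C_+)} = p_+ S_{-t} p_+^*$ rather than $S_{-t}$), together with the observation that $S_{-t}$ preserves $H^2(\C_-)$ for $t \geq 0$.
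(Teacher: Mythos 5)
Your proposal is correct and follows essentially the same route as the paper: the positivity equivalence comes from the identity \(\braket*{\xi}{\theta_h\xi} = \braket*{\xi}{H_h\xi}\) for \(\xi \in H^2(\C_+)\), and the Hankel property from the relation \(\theta_h \circ S_t = S_{-t}\circ\theta_h\), which the paper verifies via matrix coefficients while you carry it out at the operator level using \(p_+S_{-t}P_- = 0\). Apart from the harmless typo \(S_t^*H_t\) for \(S_t^*H_h\), there is nothing to add.
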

\begin{proof}
For every \(f,g\in H^2(\C_+)\) we have
\[\braket*{f}{H_h g} = \braket*{f}{\theta_h g},\]
which immediately implies that \(H_h\) is a positive operator, if and only if \(\left(L^2\left(\R,\C\right),H^2\left(\C_+\right),\theta_h\right)\) is a RPHS. Further, for \(t \in \R_+\) we get
\[\braket*{f}{H_h S_t g} = \braket*{f}{\theta_h S_t g} = \braket*{f}{S_t^* \theta_h g} = \braket*{S_t f}{\theta_h g} = \braket*{S_t f}{H_h g} = \braket*{f}{S_t^* H_h g}\]
and therefore \(S_t^* H = H S_t\), so \(H_h\) is a Hankel operator.
\end{proof}
We have the following theorem about Hankel operators:
\begin{thm}\label{thm:HankelHasMeasure}{\rm (cf. \cite[Sec. 3.2]{ANS22})}
Let \(H \in B(H^2(\C_+))\) be a positive Hankel operator. Then there exists a measure \(\mu \in \mathcal{BM}\left(\R_+\right)\) such that
\[\braket*{f}{H g} = \int_{\R_+} \overline{f(i\lambda)} g(i\lambda) \,d\mu(\lambda) \quad \forall f,g \in H^2(\C_+).\]
\end{thm}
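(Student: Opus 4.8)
The plan, following \cite[Sec. 3.2]{ANS22}, is to transport the problem to the Paley--Wiener model, where the Hankel condition becomes the familiar "convolution in the sum variable" structure, and then to invoke a Bernstein--Widder representation theorem to produce the measure. Concretely, let $\cF\colon H^2(\C_+) \to L^2(\R_+,\C)$ be the unitary isomorphism of the Paley--Wiener theorem \cite[Thm. 5.28]{RR94}, so that $\cF S_t \cF^{-1} = \tau_t$ for $t \geq 0$, where $\tau$ is the right--translation semigroup $(\tau_tf)(p) = f(p-t)$ on $L^2(\R_+,\C)$ (which maps $L^2(\R_+,\C)$ into itself). Put $\widetilde H \coloneqq \cF H \cF^{-1} \in B(L^2(\R_+,\C))$; then $\widetilde H$ is positive and satisfies $\tau_t^* \widetilde H = \widetilde H \tau_t$ for all $t \geq 0$.

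The first step is to show that an operator with this shift--covariance is an integral operator whose kernel $K(s,t)$ satisfies $K(s+r,t) = K(s,t+r)$ for $r \geq 0$, hence is of the form $K(s,t) = h(s+t)$ for a function $h$ on $(0,\infty)$ (a priori only in a distributional sense, since $h$ need not be locally bounded); here one works with the bounded sesquilinear form $(f,g) \mapsto \braket*{f}{\widetilde H g}$ and its invariance under $(f,g) \mapsto (\tau_a f, \tau_{-a}g)$ where this is meaningful. Positivity of $\widetilde H$ then says precisely that $(s,t) \mapsto h(s+t)$ is a positive definite kernel on $\R_+ \times \R_+$.

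The second step is the representation theorem. A function $h$ on $(0,\infty)$ for which $(s,t)\mapsto h(s+t)$ is positive definite is completely monotone, hence, by the Hausdorff--Bernstein--Widder theorem, of the form $h(u) = \int_{[0,\infty)} e^{-u\lambda}\,d\mu_0(\lambda)$ for a positive Borel measure $\mu_0$; in the distributional case one first mollifies to reduce to the genuine--function case and passes to the limit, and one may also deduce the existence of $\mu_0$ from \fref{thm:Widder} after a symmetrization. The estimate $\mu_0([0,N]) \leq e^{uN}h(u) < \infty$ shows $\mu_0$ is locally finite, and boundedness of $\widetilde H$ forces $\mu_0(\{0\}) = 0$ (an atom at $0$ would contribute the unbounded form $f \mapsto \lvert\int_0^\infty f\rvert^2$), so $\mu \coloneqq \mu_0|_{\R_+}$ lies in $\mathcal{BM}\left(\R_+\right)$. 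Substituting and applying Fubini, for $f,g$ in a dense subspace (e.g.\ $C_c^\infty(\R_+)$) one obtains
\[\braket*{f}{\widetilde H g} = \int_0^\infty\!\!\!\int_0^\infty \overline{f(s)}\,h(s+t)\,g(t)\,ds\,dt = \int_{\R_+} \overline{(\cL f)(\lambda)}\,(\cL g)(\lambda)\,d\mu(\lambda),\]
where $(\cL g)(\lambda) = \int_0^\infty e^{-\lambda t} g(t)\,dt$. Since $(\cL g)(\lambda) = \sqrt{2\pi}\,(\cF^{-1}g)(i\lambda)$ by holomorphic continuation of the inverse Fourier integral into $\C_+$, replacing $\mu$ by $2\pi\mu$ and writing $F = \cF^{-1}f$, $G = \cF^{-1}g \in H^2(\C_+)$ gives $\braket*{F}{H G} = \int_{\R_+} \overline{F(i\lambda)}\,G(i\lambda)\,d\mu(\lambda)$; this extends to all $F,G \in H^2(\C_+)$ by density and boundedness, using $\int_{\R_+} \lvert F(i\lambda)\rvert^2\,d\mu(\lambda) = \braket*{F}{H F} < \infty$ (which also reconfirms $\mu \in \mathcal{BM}\left(\R_+\right)$ and that it is of Carleson type).

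The main obstacle is the first step: rigorously extracting the scalar symbol $h$ from the operator identity $\tau_t^*\widetilde H = \widetilde H \tau_t$ and controlling its regularity — the continuous analogue of the statement that a Hankel matrix has entries depending only on the index sum — together with the distributional form of Bernstein's theorem needed in the second step. The remaining computations (the Fubini interchange, the identification of $\cL$ with point evaluation at $i\lambda$, and the density argument) are routine.
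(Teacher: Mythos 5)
First, note that the thesis does not actually prove \fref{thm:HankelHasMeasure}; it imports it from \cite[Sec. 3.2]{ANS22}. So what you have written is compared against the cited argument rather than a proof in the text. Your route --- conjugate by the Paley--Wiener transform, identify the form of \(\widetilde H\) with a distributional kernel \(h(s+t)\) via the Schwartz kernel theorem and the shift covariance, and then apply a Bernstein--Widder (exponential convexity) theorem to represent \(h\) as a Laplace transform of a positive measure --- is a genuinely different, more ``hard-analysis'' path than the soft argument in the cited source, which works directly in \(H^2(\C_+)\) with a single cyclic vector: for an outer \(F\) (e.g. \(F=Q_i\)) the continuous function \(\varphi(t)=\braket*{F}{HS_tF}\) has \(\varphi(s+t)=\braket*{S_sF}{HS_tF}\) positive definite by the Hankel relation, classical Widder applies without any distribution theory, the measure is rescaled by \(|F(i\lambda)|^{-2}\), and the identity is extended from \(\overline{\spann S(\R_+)F}=H^2(\C_+)\) (\fref{thm:outerSpan}) by \fref{prop:denseL2Measure} --- exactly the pattern this thesis uses for \fref{cor:twistedScalar} and \fref{thm:cTNuIsCarleson}. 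Your approach buys a structural statement (every positive Hankel form on \(L^2(\R_+)\) has symbol \(h(s+t)\) with \(h\) completely monotone), at the price of the two technical steps you flag, which are real but classical and your plan for them (kernel theorem plus mollification, local finiteness from \(\mu_0([0,N])\le e^{uN}h(u)\), exclusion of the atom at \(0\) by unboundedness of \(f\mapsto|\int f|^2\), Fubini on \(C_c^\infty\)) does go through.

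Two concrete caveats. First, the aside that one may ``deduce the existence of \(\mu_0\) from \fref{thm:Widder} after a symmetrization'' is circular as stated: \fref{thm:Widder} characterizes reflection positive functions, i.e. it also requires positive definiteness of \((s,t)\mapsto\phi(s-t)\) on all of \(\R\), and for \(\phi(t):=h(|t|)\) this is not known before one already has the Laplace representation. The correct tool is the Bernstein--Widder theorem for exponentially convex functions on \((0,\infty)\) (applied to the mollifications \(h_\epsilon\), followed by a vague-compactness limit of the measures \(\mu_{0,\epsilon}\)), not \fref{thm:Widder}. Second, the final extension from \(\cF^{-1}C_c^\infty((0,\infty))\) to all of \(H^2(\C_+)\) should not be dismissed as ``density and boundedness'': pointwise evaluation \(F\mapsto F(i\lambda)\) is not controlled by \(L^2(\mu)\)-convergence alone, and some care (uniform convergence on compacts, Banach--Alaoglu) is needed. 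Fortunately the thesis provides precisely this step as \fref{prop:denseL2Measure}, which you can invoke verbatim since your dense subspace sits inside \(H^2(\C_+)\); citing it would close that gap cleanly.
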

\begin{cor}\label{cor:thetaHHasMeasure}
Let \(h \in L^\infty(\R,\T)^\flat\) such that the triple \(\left(L^2\left(\R,\C\right),H^2\left(\C_+\right),\theta_h\right)\) is a RPHS. Then there exists a measure \(\mu \in \mathcal{BM}\left(\R_+\right)\) such that
\[\braket*{f}{\theta_h g} = \int_{\R_+} \overline{f(i\lambda)} g(i\lambda) \,d\mu(\lambda) \quad \forall f,g \in H^2(\C_+).\]
\end{cor}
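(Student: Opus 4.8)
The plan is to obtain this corollary directly from the two results that immediately precede it, Lemma~\ref{lem:RPHSPosHankelEquiv} and Theorem~\ref{thm:HankelHasMeasure}; no new construction is required, only a short assembly, so I will present it as such.

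First I would invoke Lemma~\ref{lem:RPHSPosHankelEquiv}. Since by hypothesis the triple $\left(L^2\left(\R,\C\right),H^2\left(\C_+\right),\theta_h\right)$ is a RPHS and $h \in L^\infty(\R,\T)^\flat$, that lemma tells us that the operator
\[H_h \coloneqq p_+ \theta_h p_+^* \in B(H^2(\C_+))\]
is a positive Hankel operator, where $p_+ \colon L^2(\R,\C) \to H^2(\C_+)$ denotes the orthogonal projection onto the subspace $H^2(\C_+)$. Next I would apply Theorem~\ref{thm:HankelHasMeasure} to $H_h$: this yields a measure $\mu \in \mathcal{BM}\left(\R_+\right)$ with
\[\braket*{f}{H_h g} = \int_{\R_+} \overline{f(i\lambda)}\,g(i\lambda)\,d\mu(\lambda) \qquad \forall f,g \in H^2(\C_+).\]

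The only thing left to do is to identify $\braket*{f}{H_h g}$ with $\braket*{f}{\theta_h g}$ for $f,g \in H^2(\C_+)$. Here $p_+^*$ is the isometric inclusion $H^2(\C_+)\hookrightarrow L^2(\R,\C)$, so for $f,g\in H^2(\C_+)$ we have $p_+^* f = f$ and $p_+^* g = g$ as elements of $L^2(\R,\C)$, whence
\[\braket*{f}{H_h g} = \braket*{p_+^* f}{\theta_h p_+^* g} = \braket*{f}{\theta_h g}.\]
Combining the last two displays gives the asserted formula, which completes the argument. This is exactly the same bookkeeping with $p_+$ and its adjoint that already appears in the proof of Lemma~\ref{lem:RPHSPosHankelEquiv}.

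There is no genuine obstacle in this corollary: the real content is carried by Theorem~\ref{thm:HankelHasMeasure}, which is imported from \cite[Sec.~3.2]{ANS22} and is the place where the measure is actually produced. The mild point to be careful about in writing up the proof is only the identification of the sesquilinear form of $H_h$ on $H^2(\C_+)$ with that of $\theta_h$, i.e.\ not conflating the projection $p_+$ with its adjoint inclusion; once that is in place the statement follows by a one-line chain of equalities.
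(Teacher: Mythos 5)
Your argument is correct and is essentially the paper's own proof: the corollary is stated there as an immediate consequence of Lemma~\ref{lem:RPHSPosHankelEquiv} and Theorem~\ref{thm:HankelHasMeasure}, with the identification \(\braket*{f}{H_h g}=\braket*{f}{\theta_h g}\) for \(f,g\in H^2(\C_+)\) already supplied in the proof of that lemma. Your write-up simply makes this one-line assembly explicit, so there is nothing to add.
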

\begin{proof}
This follows immediately by \fref{lem:RPHSPosHankelEquiv} and \fref{thm:HankelHasMeasure}.
\end{proof}
With this corollary, given a function \(h \in L^\infty(\R,\T)^\flat\) such that the triple \(\left(L^2\left(\R,\C\right),H^2\left(\C_+\right),\theta_h\right)\) is a maximal RPHS, we can relate the fixed points \(F \in \ker(\Theta_h - \textbf{1}) \cap \Out^2_\Lambda(\C_+)\) (cf. \fref{cor:kerNonTrivOuterFixExtended}) to measures:
\begin{prop}\label{prop:FixedPointFromConstruction}
Let \(h \in L^\infty(\R,\T)^\flat\) such that the triple \(\left(L^2\left(\R,\C\right),H^2\left(\C_+\right),\theta_h\right)\) is a maximal RPHS and let
\[F \in \ker(\Theta_h - \textbf{1}) \cap \Out^2_\Lambda(\C_+).\]
Then there exists a measure \(\nu \in \mathcal{BM}^{\mathrm{fin}}\left([0,\infty]\right) \setminus \{0\}\) and \(C \in \T\) such that \(F = C \cdot F_\nu\).
\end{prop}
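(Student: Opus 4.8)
The plan is to realise $F$ as a unimodular multiple of some $F_\nu$. Since $F$ and $F_\nu=\Out(\sqrt{\Psi_\nu})$ are both outer and $|F_\nu|=\sqrt{\Psi_\nu}$ on $\R$, it is enough to produce a measure $\nu\in\mathcal{BM}^{\mathrm{fin}}([0,\infty])\setminus\{0\}$ with $|F(x)|^2=\Psi_\nu(x)$ for almost every $x\in\R$; then $F=CF_\nu$ for some $C\in\T$ by uniqueness of outer functions with prescribed modulus (cf.\ \fref{thm:OuterBetrag}). To get hold of $\nu$ I would first pass to the Hankel picture: by \fref{lem:RPHSPosHankelEquiv}, \fref{thm:HankelHasMeasure} and \fref{cor:thetaHHasMeasure} there is $\mu\in\mathcal{BM}(\R_+)$ with $\braket*{f}{\theta_h g}=\int_{\R_+}\overline{f(i\lambda)}\,g(i\lambda)\,d\mu(\lambda)$ for all $f,g\in H^2(\C_+)$. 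From $\Theta_h F=F$ one has $h=F/RF$, and $G:=\Lambda F\in\Out^2(\C_+)\subseteq H^2(\C_+)$ by \fref{lem:FNuLambdaOuter}; using $R\Lambda=\overline\Lambda$ on $\R$ (as in the proof of \fref{lem:longScalarApprox}) one computes $\theta_h G=F\,\overline\Lambda$. Evaluating $\braket*{S_sG}{\theta_h S_tG}$ through $\mu$ on one side and through $\theta_h G=F\overline\Lambda$ and $\theta_h S_t=S_{-t}\theta_h$ on the other yields, for $r=s+t\ge 0$,
\[
\int_{\R_+}e^{-r\lambda}\,|\Lambda(i\lambda)|^2|F(i\lambda)|^2\,d\mu(\lambda)=\int_\R e^{-irx}\,\overline{\Lambda(x)}^2|F(x)|^2\,dx .
\]
In particular $dm(\lambda):=|G(i\lambda)|^2\,d\mu(\lambda)$ is a finite positive measure on $\R_+$ (its mass is $\braket*{G}{\theta_h G}=\braket*{G}{H_h G}<\infty$), and the left-hand side is its Laplace transform.

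Then I would set $d\nu'(\lambda):=\tfrac{\lambda}{|\Lambda(i\lambda)|^2(1+\lambda^2)}\,dm(\lambda)$ on $\R_+$, so that $|\Lambda(i\lambda)|^2\tfrac{1+\lambda^2}{\lambda}\,d\nu'(\lambda)=dm(\lambda)$. The computation inside the proof of \fref{thm:cTNuIsCarleson} (via \fref{lem:longScalarApprox} and \fref{lem:longIntegralApprox}) gives, for every finite nonzero Borel measure $\sigma$ on $[0,\infty]$,
\[
\int_{\R_+}e^{-r\lambda}\,|\Lambda(i\lambda)|^2\tfrac{1+\lambda^2}{\lambda}\,d\sigma(\lambda)=\int_\R e^{-irx}\,\overline{\Lambda(x)}^2\,\Psi_\sigma(x)\,dx,\qquad r\ge 0 .
\]
Granting that $\nu'$ is finite, taking $\sigma=\nu'$ and subtracting from the previous identity gives $\int_\R e^{-irx}\,\overline{\Lambda(x)}^2\bigl(|F(x)|^2-\Psi_{\nu'}(x)\bigr)\,dx=0$ for all $r\ge 0$. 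Writing $u:=|F|^2-\Psi_{\nu'}$ (real, even, with $|\Lambda|^2u\in L^1(\R)$ by \fref{lem:FNuLambdaOuter} and $|G|^2\in L^1(\R)$), this says $w:=\overline\Lambda^2u\in L^1(\R)$ has Fourier transform vanishing on $[0,\infty)$, i.e.\ $w\in H^1(\C_-)$, equivalently $\Lambda^2u\in H^1(\C_+)$.

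Now I would run a rigidity argument on $u$. Cancelling $\Lambda^2(z)=-z^2/(z+i)^4$ and using the uniform bound $\int_\R|Q(x+iy)|\,dx\le\|Q\|_{H^1}$ valid for $H^1$-functions, the holomorphic function $\tilde u$ on $\C_+$ with boundary values $u$ is seen to satisfy that $z^2\tilde u(z)$ extends across $\R$ (Schwarz reflection, $u$ being real a.e.) to an entire function of polynomial growth of order $\le 4$, hence to a polynomial of degree $\le 4$. Thus $\tilde u(z)=\sum_{k=-2}^{2}a_kz^k$ with real coefficients, and evenness of $u$ kills $a_{\pm1}$. Since $|F|^2=u+\Psi_{\nu'}\ge 0$ while $|G|^2=|\Lambda|^2|F|^2\in L^1(\R)$, letting $x\to\infty$ forces $a_2=0$ and $a_0\ge 0$, and letting $x\to 0$ forces $a_{-2}\ge 0$. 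Putting $c_0:=\pi a_{-2}$, $c_\infty:=\pi a_0$ and $\nu:=\nu'+c_0\delta_0+c_\infty\delta_\infty\in\mathcal{BM}^{\mathrm{fin}}([0,\infty])\setminus\{0\}$ (nonzero since $F\neq 0$) gives $\Psi_\nu=\Psi_{\nu'}+\tfrac{c_0}{\pi x^2}+\tfrac{c_\infty}{\pi}=|F|^2$ on $\R$, so $F=CF_\nu$ for a suitable $C\in\T$, as required.

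The hard part, as indicated, is to prove that $\nu'$ is finite, i.e.\ $\int_{\R_+}\tfrac{(1+\lambda)^4}{\lambda(1+\lambda^2)}\,dm(\lambda)<\infty$ — equivalently that $m$ has a finite $\lambda^{-1}$-moment near $0$ and a finite $\lambda$-moment near $\infty$. For the behaviour near $0$ I would integrate the first displayed identity in $r$ over $(0,\infty)$: the left side becomes $\int_{\R_+}\lambda^{-1}\,dm(\lambda)$ and the right side an absolutely convergent integral over $\R$ whose integrand is a bounded multiple of $|G|^2\in L^1(\R)$, so that moment is finite. The part near $\infty$ has to be squeezed out of the contractivity of $H_h$ (testing $\braket*{f}{\theta_h f}\le\|f\|^2$ on the reproducing kernels $Q_{is}$ yields a growth bound on $\mu$), of the decay $\lambda|G(i\lambda)|^2\to 0$ of Hardy functions on the imaginary axis, and — crucially — of the \emph{maximality} of the RPHS; this is where I expect the genuine work to be, since the crude pointwise bounds on $|G(i\lambda)|^2$ are not by themselves enough. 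Once $\nu'$ is finite, the rigidity step above and the sign determinations for $c_0,c_\infty$ are comparatively routine Hardy-space/Phragmén–Lindelöf arguments.
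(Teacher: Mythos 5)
Your overall strategy coincides with the paper's up to the Laplace-transform identity: the paper, too, reduces everything to showing \(|F|^2=\Psi_\nu\) almost everywhere, pulls in the Carleson measure \(\mu\) from \fref{cor:thetaHHasMeasure}, and evaluates reflection-positive pairings built from \(\Lambda F\) in two ways. The genuine gap is exactly the point you flag yourself: the finiteness of \(\nu'\), i.e. of \(\int_{\R_+}\frac{\lambda}{1+\lambda^2}|F(i\lambda)|^2\,d\mu(\lambda)\). You leave the \(\lambda\)-moment at infinity completely open (the appeal to contractivity and maximality is only a hope, not an argument), and the argument you do offer for the \(\lambda^{-1}\)-moment at \(0\) does not work as written: integrating the identity over \(r\in(0,\infty)\) is not a legitimate Fubini step, since \(\int_0^\infty e^{-irx}\,dr\) is not absolutely convergent, and after inserting a damping factor \(e^{-\epsilon r}\) the right-hand integrand is only dominated by \(\frac{|x|}{(1+x^2)^2}|F(x)|^2\), which is \emph{not} a bounded multiple of \(|G|^2=\frac{x^2}{(1+x^2)^2}|F|^2\) near \(x=0\); in the very case you must allow (an atom of \(\nu\) at \(0\), where \(|F(x)|^2\sim c/x^2\)) this majorant fails to be integrable. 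So neither half of the finiteness claim is established, and without it the subtraction and the subsequent \(H^1\)/Liouville rigidity step (itself delicate: boundary values defined only a.e., a boundary singularity at \(0\), and an \(H^1\) growth bound that degenerates near \(\R\), so the reflection argument needs real work) never gets off the ground.

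For comparison, the paper's proof is organized precisely so that this finiteness never has to be proved separately. Instead of the exponentials \(e^{-r\lambda}\) it tests against \(\sqrt{F_{p,n}}\,\Lambda F\) for an explicit family of rational functions \(F_{p,n}\in H^\infty(\C_+)\) approximating the kernel \(\lambda\mapsto\frac{1+\lambda^2}{p^2+\lambda^2}\). Writing \(c_{p,n}=\braket*{\sqrt{F_{p,n}}\Lambda F}{\theta_h\sqrt{F_{p,n}}\Lambda F}\ge 0\) (reflection positivity) and the auxiliary non-negative quantities \(a_n,b_n\), the approximation theorem \fref{thm:bigIntegralApproximation} yields \(|F(p)|^2=\lim_n\bigl(c_{p,n}+\frac{a_n}{\pi p^2}+\frac{b_n}{\pi}\bigr)\) for a.e.\ \(p\); positivity then forces all three sequences to be bounded, subsequential limits \(A,B\) of \(a_n,b_n\) become the masses of \(\nu\) at \(0\) and \(\infty\), and evaluating \(c_{p,n}\) through the Carleson measure and letting \(n\to\infty\) by monotone convergence identifies \(\lim_k c_{p,n_k}\) with \(\frac1\pi\int_{\R_+}\frac{1+\lambda^2}{p^2+\lambda^2}|F(i\lambda)|^2\frac{\lambda}{1+\lambda^2}\,d\mu(\lambda)\), whose finiteness (hence the finiteness of your \(\nu'\)) comes for free because the limit is finite. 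If you want to salvage your route, you need an argument of this kind — extracting finiteness from positivity plus an a.e.\ pointwise limit — rather than from contractivity or maximality; the paper's argument at this step uses only reflection positivity and the fixed-point relation \(\Theta_h F=F\).
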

\begin{proof}
We will show that there exists a measure \(\nu \in \mathcal{BM}^{\mathrm{fin}}\left([0,\infty]\right) \setminus \{0\}\) such that \(|F|^2 = \Psi_\nu\), since then by \fref{thm:OuterBetrag} there exists a constant \(C \in \T\) such that
\[F = \Out(C,\sqrt{\Psi_\nu}) = C \cdot F_\nu.\]
For this, for \(n \in \N\) and \(p \in \R^\times\), we define the function
\[F_{p,n}: \C_+ \to \C^\times, \quad z \mapsto \frac{\left(z+i\right)^4n^2}{\pi\left(\left(z+\frac in\right)^2-p^2\right)\left(zn+i\right)\left(z+in\right)}.\]
Since there exists a continuous extension of \(F_{p,n}\) to \(\overline{\C_+}\) and
\[\lim_{|z| \to \infty} F_{p,n}(z) = \frac n\pi,\]
the function \(F_{p,n}\) is bounded and therefore we have \(F_{p,n} \in H^\infty(\C_+)\). Further, for \(\lambda \in \R_+\), we have
\[F_{p,n}(i\lambda) = \frac{\left(i\lambda+i\right)^4n^2}{\pi\left(\left(i\lambda+\frac in\right)^2-p^2\right)\left(i\lambda n+i\right)\left(i\lambda+in\right)} = \frac 1\pi \cdot \frac{(1+\lambda)^4}{\left(p^2+\left(\lambda+\frac 1n\right)^2\right)\left(\frac 1n+\lambda\right)\left(1+\frac \lambda n\right)} > 0.\]
Then also \(\sqrt{F_{p,n}} \in H^\infty(\C_+)\), where by \(\sqrt{F_{p,n}}\) we denote the one of the two square roots of \(F_{p,n}\) on the simply connected domain \(\C_+\) with \(\sqrt{F_{p,n}}(i\lambda)>0\) for every \(\lambda \in \R_+\). Then
\[\sqrt{F_{p,n}} \cdot \Lambda F \in H^\infty(\C_+) \cdot H^2(\C_+) \subeq H^2(\C_+)\]
for every \(n \in \N\). For \(n \in \N\), we now set
\[g_n(x) \coloneqq \frac{x^2}{n\left(\frac 1{n^2}+x^2\right)\left(1+\frac{x^2}{n^2}\right)}.\]
Since \(\Lambda F \in H^2(\C_+)\), we have
\[\infty > \int_\R |\Lambda(x)|^2|F(x)|^2 \,dx = \int_\R \frac{x^2}{\left(1+x^2\right)^2}|F(x)|^2 \,dx\]
and therefore also
\[\int_\R g_n(x) |F(x)|^2 \,dx < \infty\]
for every \(n \in \N\). Now, for \(n \in \N\) and \(p \in \R^\times\), we define
\[a_n \coloneqq \int_{(-1,1)} g_n(x) |F(x)|^2 \,dx, \quad b_n \coloneqq \int_{(-\infty,-1) \cup (1,\infty)} g_n(x) |F(x)|^2 \,dx\]
and
\[c_{p,n} \coloneqq \braket*{\sqrt{F_{p,n}}\Lambda F}{\theta_h\sqrt{F_{p,n}}\Lambda F}.\]
Note that \(a_n,b_n \geq 0\), since \(g_n \geq 0\) and \(c_{p,n} \geq 0\) since \(\left(L^2\left(\R,\C\right),H^2\left(\C_+\right),\theta_h\right)\) is a RPHS.

Since the function
\[\R \ni x \mapsto |\Lambda(x)|\]
is symmetric and for every \(p \in \R^\times\) and \(n \in \N\) the function \(\R \ni x \mapsto |F_{p,n}(x)|\) is symmetric, by \fref{lem:OuterSymmetric}, we have
\[(R\Lambda)(x) = \overline{\Lambda(x)} \quad \text{and} \quad \left(R\sqrt{F_{p,n}}\right)(x) = \overline{\sqrt{F_{p,n}}(x)}, \qquad x \in \R.\]
Using \(\theta_h F = F\), this yields
\begin{align*}
c_{p,n} &= \braket*{\sqrt{F_{p,n}}\Lambda F}{\theta_h\sqrt{F_{p,n}}\Lambda F} = \braket*{\theta_h\sqrt{F_{p,n}}\Lambda F}{\sqrt{F_{p,n}}\Lambda F}
\\& = \braket*{\left(R\sqrt{F_{p,n}}\right) \left(R\Lambda\right) \theta_h F}{\sqrt{F_{p,n}}\Lambda F} = \braket*{\overline{\sqrt{F_{p,n}}\Lambda}F}{\sqrt{F_{p,n}}\Lambda F}
\\&= \int_\R F_{p,n}(x)\Lambda(x)^2 |F(x)|^2 \,dx
\\&= \int_\R \frac{\left(x+i\right)^4n^2}{\pi\left(\left(x+\frac in\right)^2-p^2\right)\left(xn+i\right)\left(x+in\right)} \cdot \left(\frac{ix}{(x+i)^2}\right)^2 |F(x)|^2 \,dx
\\&=\frac 1\pi \int_\R \frac{-x^2n^2}{\left(\left(x+\frac in\right)^2-p^2\right)\left(xn+i\right)\left(x+in\right)} |F(x)|^2 \,dx
\\&=\frac 1\pi \int_\R \frac{x^2}{\left(\left(x^2 - p^2 - \frac 1{n^2}\right) + i\frac{2x}{n}\right)\left(\frac 1n - ix\right)\left(1-i\frac xn\right)} |F(x)|^2 \,dx
\\&=\frac 1\pi \int_\R \frac{x^2\left(\left(x^2 - p^2 - \frac 1{n^2}\right) - i\frac{2x}{n}\right)\left(\frac 1n + ix\right)\left(1+i\frac xn\right)}{\left(\left(x^2 - p^2 - \frac 1{n^2}\right)^2 + 4x^2\frac{1}{n^2}\right)\left(\frac 1{n^2}+x^2\right)\left(1+\frac{x^2}{n^2}\right)} |F(x)|^2 \,dx
\\&=\frac 1\pi \int_\R g_n(x) \cdot \frac{n\left(\left(x^2 - p^2 - \frac 1{n^2}\right) - i\frac{2x}{n}\right)\left(\frac 1n + ix\right)\left(1+i\frac xn\right)}{\left(x^2 - p^2 - \frac 1{n^2}\right)^2 + 4x^2\frac{1}{n^2}} |F(x)|^2 \,dx.
\end{align*}
Since \(c_{p,n} \in \R\), this implies
\begin{align*}
c_{p,n} &= \Re(c_{p,n}) = \Re\left(\frac 1\pi \int_\R g_n(x) \cdot \frac{n\left(\left(x^2 - p^2 - \frac 1{n^2}\right) - i\frac{2x}{n}\right)\left(\frac 1n + ix\right)\left(1+i\frac xn\right)}{\left(x^2 - p^2 - \frac 1{n^2}\right)^2 + 4x^2\frac{1}{n^2}} |F(x)|^2 \,dx\right)
\\&= \frac 1\pi \int_\R g_n(x) \cdot \frac{\Re\left(n\left(\left(x^2 - p^2 - \frac 1{n^2}\right) - i\frac{2x}{n}\right)\left(\frac 1n + ix\right)\left(1+i\frac xn\right)\right)}{\left(x^2 - p^2 - \frac 1{n^2}\right)^2 + 4x^2\frac{1}{n^2}} |F(x)|^2 \,dx
\\&= \frac 1\pi \int_\R g_n(x) \cdot \frac{\left(x^2 - p^2 - \frac 1{n^2}\right)(1-x^2) + 2x^2\left(\frac 1{n^2}  + 1\right)}{\left(x^2 - p^2 - \frac 1{n^2}\right)^2 + 4x^2\frac{1}{n^2}} |F(x)|^2 \,dx,
\end{align*}
so defining the function
\begin{align*}
d_{p,n}(x) \coloneqq g_n(x) \cdot \frac{\left(x^2-p^2-\frac 1{n^2}\right)\left(1-x^2\right)+2x^2\left(\frac 1{n^2} + 1\right)}{\left(x^2-p^2-\frac 1{n^2}\right)^2+4x^2 \frac 1{n^2}}, \quad x \in \R,
\end{align*}
we have
\[c_{p,n} = \frac 1\pi \int_\R d_{p,n}(x) |F(x)|^2 \,dx.\]
Then defining
\begin{align*}
f_{p,n}(x) \coloneqq d_{p,n}(x) + g_n(x) \cdot \left(\chi_{\left(-1,1\right)}(x) \cdot \frac 1{p^2} + \chi_{\left(-\infty,-1\right) \cup \left(1,\infty\right)}(x)\right), \quad x \in \R,
\end{align*}
we have
\[\frac 1\pi \int_\R f_{p,n}(x) |F(x)|^2 \,dx = c_{p,n} + \frac 1\pi a_n \cdot \frac 1{p^2} + \frac 1\pi b_n \quad \forall n \in \N.\]
This, by \fref{thm:bigIntegralApproximation}, yields
\begin{align*}
|F(p)|^2 &= \frac{|F(p)|^2 + |F(-p)|^2}2 = \lim_{n \to \infty} \frac 1\pi \int_\R f_{p,n}(x) |F(x)|^2 \,dx
\\&= \lim_{n \to \infty} \left(c_{p,n} + \frac 1\pi a_n \cdot \frac 1{p^2} + \frac 1\pi b_n\right)
\end{align*}
for almost every \(p \in \R\). Since \(a_n,b_n,c_{p,n} \geq 0\) this implies that the sequences \((a_n)_{n \in \N}\), \((b_n)_{n \in \N}\) and \((c_{p,n})_{n \in \N}\) all are bounded. Therefore there exists a subsequence \((n_k)_{k \in \N}\) such that \((a_{n_k})_{k \in \N}\) and \((b_{n_k})_{k \in \N}\) are convergent. Setting
\[A \coloneqq \lim_{k \to \infty} a_{n_k} \qquad \text{and} \qquad B \coloneqq \lim_{k \to \infty} b_{n_k},\]
we then get
\[|F(p)|^2 = \lim_{k \to \infty} \left(c_{p,{n_k}} + \frac 1\pi a_{n_k} \cdot \frac 1{p^2} + \frac 1\pi b_{n_k} \cdot 1\right) = \left(\lim_{k \to \infty} c_{p,{n_k}}\right) + \frac 1\pi A \cdot \frac 1{p^2} + \frac 1\pi B\]
for almost every \(p \in \R\). Therefore, for almost every \(p \in \R\), the sequence \((c_{p,n_k})_{k \in \N}\) is convergent. In this case, we set
\[C_p \coloneqq \lim_{k \to \infty} c_{p,{n_k}}.\]
Now, by \fref{cor:thetaHHasMeasure}, there exists a Borel measure \(\mu \in \mathcal{BM}\left(\R_+\right)\) such that
\[\braket*{f}{\theta_h g} = \int_{\R_+} \overline{f(i\lambda)} g(i\lambda) \,d\mu(\lambda) \quad \forall f,g \in H^2(\C_+).\]
This, using that
\[\Lambda(i\lambda) = \frac{ii\lambda}{(i\lambda+i)^2} = \frac \lambda{(1+\lambda)^2}, \quad \lambda \in \R_+\]
and that \(\sqrt{F_{p,n}}(i\lambda) > 0\), yields
\begin{align*}
c_{p,n} &=\braket*{\sqrt{F_{p,n}}\Lambda F}{\theta_h\sqrt{F_{p,n}}\Lambda F}
\\&= \int_{\R_+} \overline{\sqrt{F_{p,n}}(i\lambda)\Lambda(i\lambda)F(i\lambda)} \cdot \sqrt{F_{p,n}}(i\lambda)\Lambda(i\lambda)F(i\lambda) \,d\mu(\lambda)
\\&= \int_{\R_+} F_{p,n}(i\lambda)\Lambda(i\lambda)^2\left|F(i\lambda)\right|^2 \,d\mu(\lambda)
\\&= \int_{\R_+} \frac 1\pi \cdot \frac{(1+\lambda)^4}{\left(p^2+\left(\lambda+\frac 1n\right)^2\right)\left(\frac 1n+\lambda\right)\left(1+\frac \lambda n\right)}\left(\frac \lambda{(1+\lambda)^2}\right)^2\left|F(i\lambda)\right|^2 \,d\mu(\lambda)
\\&= \frac 1\pi \int_{\R_+} \frac{\lambda^2}{\left(p^2+\left(\lambda+\frac 1n\right)^2\right)\left(\frac 1n+\lambda\right)\left(1+\frac \lambda n\right)}\left|F(i\lambda)\right|^2 \,d\mu(\lambda).
\end{align*}
Then, by the Monotone Convergence Theorem, we get
\begin{align*}
C_p = \lim_{k \to \infty} c_{p,n_k} &= \lim_{k \to \infty} \frac 1\pi \int_{\R_+} \frac{\lambda^2}{\left(p^2+\left(\lambda+\frac 1{n_k}\right)^2\right)\left(\frac 1{n_k}+\lambda\right)\left(1+\frac \lambda {n_k}\right)}\left|F(i\lambda)\right|^2 \,d\mu(\lambda)
\\&= \frac 1\pi \int_{\R_+} \frac{\lambda^2}{\left(p^2+\lambda^2\right) \cdot \lambda \cdot 1}\left|F(i\lambda)\right|^2 \,d\mu(\lambda)
\\&= \frac 1\pi \int_{\R_+} \frac{1+\lambda^2}{p^2+\lambda^2}\left|F(i\lambda)\right|^2 \frac{\lambda}{1+\lambda^2}\,d\mu(\lambda).
\end{align*}
This in particular implies that the measure \(\tilde \nu\) on \(\R_+\) defined by
\[d\tilde \nu(\lambda) = \left|F(i\lambda)\right|^2 \frac{\lambda}{1+\lambda^2}\,d\mu(\lambda)\]
is finite. We then get a finite measure \(\nu\) on \([0,\infty]\) by
\[\nu\big|_{\R_+} = \tilde \nu, \quad \nu(\{0\}) = A \quad \text{and} \quad \nu(\{\infty\}) = B.\]
Finally, for almost every \(p \in \R\), we have
\begin{align*}
|F(p)|^2 &= C_p + \frac 1\pi A \cdot \frac 1{p^2} + \frac 1\pi B
\\&= \frac 1\pi \int_{\R_+} \frac{1+\lambda^2}{p^2+\lambda^2}\,d\nu(\lambda) + \frac 1\pi \nu(\{0\}) \cdot \frac 1{p^2} + \frac 1\pi \nu(\{\infty\}) \cdot 1
\\&= \frac 1\pi \int_{[0,\infty]} \frac{1+\lambda^2}{p^2+\lambda^2}\,d\nu(\lambda) = \Psi_\nu(p). \qedhere
\end{align*}
\end{proof}
\begin{thm}\label{thm:GeneralizedMaxPosForm}
Let \(h \in L^\infty(\R,\T)^\flat\).
Then the following are equivalent:
\begin{enumerate}[\rm (a)]
\item \(\left(L^2\left(\R,\C\right),H^2\left(\C_+\right),\theta_h\right)\) is a maximal RPHS.
\item \(h = h_{\nu}\) for some measure \(\nu \in \mathcal{BM}^{\mathrm{fin}}\left([0,\infty]\right) \setminus \{0\}\).
\end{enumerate}
\end{thm}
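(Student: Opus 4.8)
The plan is to obtain both implications by assembling results already proved, so that this theorem is essentially a clean packaging of \fref{cor:hNuMaxPos}, \fref{cor:kerNonTrivOuterFixExtended} and \fref{prop:FixedPointFromConstruction}; there is no new analytic content to produce here.

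For the implication (b) $\Rightarrow$ (a): given $h = h_\nu$ with $\nu \in \mathcal{BM}^{\mathrm{fin}}\left([0,\infty]\right) \setminus \{0\}$, I would simply invoke \fref{cor:hNuMaxPos}, which states exactly that $\left(L^2\left(\R,\C\right),H^2\left(\C_+\right),\theta_{h_\nu}\right)$ is a maximal RPHS. (I would recall in passing that $F_\nu$, and hence $h_\nu$, is well defined for such $\nu$ by \fref{lem:PsiNuIntegral} together with the remark following \fref{def:FNuHNu}, and that indeed $h_\nu \in L^\infty(\R,\T)^\flat$, so that (b) is a meaningful assertion about the object in (a).) Nothing further is needed in this direction.

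For the implication (a) $\Rightarrow$ (b): assume $\left(L^2\left(\R,\C\right),H^2\left(\C_+\right),\theta_h\right)$ is a maximal RPHS. First I would apply \fref{cor:kerNonTrivOuterFixExtended} to produce an outer function $F \in \Out^2_\Lambda(\C_+)$ with $h = \frac{F}{RF}$; this immediately gives $\Theta_h F = h \cdot RF = F$, so $F \in \ker(\Theta_h - \textbf{1}) \cap \Out^2_\Lambda(\C_+)$. Then \fref{prop:FixedPointFromConstruction} supplies a measure $\nu \in \mathcal{BM}^{\mathrm{fin}}\left([0,\infty]\right) \setminus \{0\}$ and a constant $C \in \T$ with $F = C \cdot F_\nu$. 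It remains only to note that the unimodular constant cancels: since $R(C F_\nu) = C \, R F_\nu$,
\[
h = \frac{F}{RF} = \frac{C F_\nu}{C \, R F_\nu} = \frac{F_\nu}{R F_\nu} = h_\nu,
\]
which is (b).

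The genuinely hard work is not in this proof but in its inputs: \fref{prop:FixedPointFromConstruction}, whose argument extracts the representing measure $\nu$ from the positivity of the Hankel operator $H_h$ (via \fref{thm:HankelHasMeasure}) together with the integral-approximation machinery around the functions $F_{p,n}$; and \fref{cor:kerNonTrivOuterFixExtended}, which depends on passing to the extended Hardy space $H^2_\Lambda(\C_+)$ through the auxiliary function $\Lambda$. Granting those, the only point in the present proof that requires any care is the bookkeeping with the constant $C \in \T$ above and checking that the cited lemmas are being applied under precisely their stated hypotheses on $h$ and $\nu$.
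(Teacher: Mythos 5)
Your proposal is correct and follows essentially the same route as the paper's own proof: (b) $\Rightarrow$ (a) by \fref{cor:hNuMaxPos}, and (a) $\Rightarrow$ (b) by combining \fref{cor:kerNonTrivOuterFixExtended} with \fref{prop:FixedPointFromConstruction} and cancelling the unimodular constant $C$. Your explicit remark that $h = F/(RF)$ makes $F$ a fixed point of $\Theta_h$, so that the hypotheses of \fref{prop:FixedPointFromConstruction} are indeed met, is a small but welcome clarification of a step the paper leaves implicit.
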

\begin{proof}
That (b) implies (a) is precisely the statement of \fref{cor:hNuMaxPos}. We now show that also the converse is true. So let \(\left(L^2\left(\R,\C\right),H^2\left(\C_+\right),\theta_h\right)\) be a maximal RPHS. Then, by \fref{cor:kerNonTrivOuterFixExtended}, there exists an outer function \(F \in \Out^2_\Lambda(\C_+)\) such that
\[h = \frac F{RF}.\]
By \fref{prop:FixedPointFromConstruction} this implies that there exists a measure \(\nu \in \mathcal{BM}^{\mathrm{fin}}\left([0,\infty]\right) \setminus \{0\}\) and \(C \in \T\) such that
\[F = C \cdot F_\nu.\] 
We get
\[h = \frac F{RF} = \frac{C \cdot F_\nu}{C \cdot RF_\nu} = \frac{F_\nu}{RF_\nu} = h_\nu. \qedhere\]
\end{proof}

\subsection{Uniqueness of the measure}\label{sec:UniqueSymbol}
In \fref{thm:GeneralizedMaxPosForm} we have seen that, for every function \(h \in L^\infty(\R,\T)^\flat\) for which the triple \(\left(L^2\left(\R,\C\right),H^2\left(\C_+\right),\theta_h\right)\) is a maximal RPHS, there exists measure \(\nu \in \mathcal{BM}^{\mathrm{fin}}\left([0,\infty]\right) \setminus \{0\}\) such that \(h = h_{\nu}\). In this subsection we want to discuss to which extent this measure is unique. For this we first notice that, for a measure \(\nu \in \mathcal{BM}^{\mathrm{fin}}\left([0,\infty]\right) \setminus \{0\}\) and \(c \in \R_+\), by \fref{lem:OuterHomo}, we have
\[F_{c \cdot \nu} = \Out\left(\sqrt{\Psi_{c \cdot \nu}}\right) = \Out\left(\sqrt{c} \cdot \sqrt{\Psi_\nu}\right) = \sqrt{c} \cdot \Out\left(\sqrt{\Psi_\nu}\right) = \sqrt{c} \cdot F_\nu\]
and therefore
\[h_{c \cdot \nu} = \frac{F_{c \cdot \nu}}{RF_{c \cdot \nu}} = \frac{\sqrt{c} \cdot F_\nu}{\sqrt{c} \cdot RF_\nu} = \frac{F_\nu}{RF_\nu} = h_\nu.\]
This shows that the map
\[\mathcal{BM}^{\mathrm{fin}}\left([0,\infty]\right) \setminus \{0\} \ni \nu \mapsto h_\nu\]
is not injective. On the other hand, defining the equivalence relation
\[\nu \gls*{ZZZtilde} \nu' \quad :\Leftrightarrow \quad (\exists c \in \R_+)\, \nu = c \cdot \nu'\]
and denoting the corresponding equivalence classes by \([\nu]\), it shows that the map
\[\left(\mathcal{BM}^{\mathrm{fin}}\left([0,\infty]\right) \setminus \{0\}\right) \slash \sim \,\ni [\nu] \mapsto h_\nu\]
is well-defined. We want to show that this map is injective. For this, we need the following proposition:
\begin{prop}\label{prop:H2Dense0Infty}
For \(z \in \C_r\) we set
\begin{equation*}
q_z: [0,\infty] \to \C, \quad \lambda \mapsto \frac{\lambda+1}{\lambda+z}.
\end{equation*}
Then
\[\overline{\spann_\C \left\lbrace q_z: z \in \C_r\right\rbrace} = C([0,\infty],\C) \quad \text{and} \quad \overline{\spann_\R \left\lbrace q_z: z \in \R_+\right\rbrace} = C([0,\infty],\R)\]
where the closure is taken with respect to the supremum norm on \([0,\infty]\).
\end{prop}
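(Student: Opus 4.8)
The plan is to prove density via the Stone--Weierstrass theorem, after first reducing to a convenient subalgebra. Observe that the functions $q_z$ are all continuous on the compact space $[0,\infty]$ (with $q_z(\infty) = 1$), so the claim makes sense. First I would note that the complex span of $\{q_z : z \in \C_r\}$ is not closed under multiplication, so Stone--Weierstrass does not apply directly; instead I would pass to the unital algebra it generates. The key observation is that for $z, w \in \C_r$ with $z \neq w$ one has the partial fractions identity
\[
q_z(\lambda) q_w(\lambda) = \frac{(\lambda+1)^2}{(\lambda+z)(\lambda+w)} = \frac{1}{w - z}\Bigl((w - 1) q_z(\lambda) - (z - 1) q_w(\lambda)\Bigr) + 1,
\]
so that products of two generators lie in $\spann_\C\{q_z : z \in \C_r\} + \C 1$. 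Hence the unital algebra $\cA$ generated by the $q_z$ is exactly $\spann_\C\{q_z : z \in \C_r\} + \C 1$, and therefore $\overline{\cA} = \overline{\spann_\C\{q_z : z \in \C_r\}} + \C 1$; it suffices to show $\cA$ is dense and then note that $1 = q_1$ already lies in $\spann_\C\{q_z\}$, so the added constant is absorbed.

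Next I would check the hypotheses of (complex) Stone--Weierstrass for $\cA$ on $[0,\infty]$. The algebra is unital by construction. It separates points: given $\lambda_1 \neq \lambda_2$ in $[0,\infty]$, if both are finite then $q_z(\lambda_1) = q_z(\lambda_2)$ for, say, $z = 2$ would force $(\lambda_1+1)(\lambda_2+2) = (\lambda_2+1)(\lambda_1+2)$, i.e. $\lambda_1 = \lambda_2$; and if one of them is $\infty$, then $q_z(\infty) = 1$ while $q_z(\lambda) = \frac{\lambda+1}{\lambda+z} \neq 1$ for $z \neq 1$ and finite $\lambda$. It is also clearly nowhere vanishing since $q_1 \equiv 1 \in \cA$. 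The remaining point is closure under complex conjugation: for $z = a + ib \in \C_r$ one has $\overline{q_z(\lambda)} = \frac{\lambda+1}{\lambda+\bar z} = q_{\bar z}(\lambda)$ for $\lambda \in [0,\infty]$, since $\lambda$ is real; and $\bar z \in \C_r$. Thus $\cA$ is a conjugation-closed, point-separating, unital subalgebra, so by Stone--Weierstrass $\overline{\cA} = C([0,\infty],\C)$, which gives the first assertion.

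For the real statement I would argue analogously using the real Stone--Weierstrass theorem: for $z \in \R_+$ the function $q_z$ is real-valued, and the same partial-fractions identity (now with real coefficients, valid for $z \neq w$ in $\R_+$) shows that $\spann_\R\{q_z : z \in \R_+\} + \R 1$ is a real unital subalgebra of $C([0,\infty],\R)$. Point separation works exactly as above using two distinct parameters in $\R_+$ together with the value at $\infty$, and nowhere-vanishing follows from $q_1 \equiv 1$. Hence the real algebra is dense, and since $q_1 \in \spann_\R\{q_z : z \in \R_+\}$ the constant is again absorbed, giving $\overline{\spann_\R\{q_z : z \in \R_+\}} = C([0,\infty],\R)$.

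The main obstacle is not conceptual but bookkeeping: one must be careful that the partial-fractions identity only holds for $z \neq w$, so to conclude that the algebra generated coincides with the span plus constants one should handle $z = w$ separately, noting that $q_z^2$ can be obtained as a limit of $q_z q_w$ as $w \to z$ within the (already closed) closure, or more cleanly by differentiating the identity in $w$ — but since we only need the \emph{closure} of the algebra, it is enough to observe $q_z q_w \in \overline{\spann\{q_\zeta\}} + \C 1$ for $w$ near $z$, $w \neq z$, and pass to the limit. A cleaner alternative, which I would adopt to avoid this fuss entirely, is to apply Stone--Weierstrass to the full closed unital $*$-subalgebra generated by $\{q_z\}$, verify separation/non-vanishing/conjugation-invariance directly on the generators (which is immediate), conclude that closed algebra is all of $C([0,\infty],\cdot)$, and only \emph{then} use the partial-fractions identity to identify that closed algebra with $\overline{\spann\{q_z\}} + \C 1$ — and finally drop the constant since $q_1 = 1$ is itself a generator.
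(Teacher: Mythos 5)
Your proposal follows essentially the same route as the paper's proof: show that the closed span is a unital, conjugation-closed, point-separating subalgebra of $C([0,\infty],\C)$ via a partial-fractions identity for $q_z q_w$, handle the diagonal case $z=w$ by a limiting argument, use $q_1 \equiv \mathbf{1}$, and invoke Stone--Weierstrass, with the real case running in parallel for parameters in $\R_+$. However, the identity you display is false: at $\lambda=\infty$ its left-hand side equals $1$ while its right-hand side equals $2$, and at $\lambda=0$ it gives $\frac{w+z-1}{zw}+1$ instead of $\frac1{zw}$. The correct computation, using $q_z(\lambda) = 1+\frac{1-z}{\lambda+z}$, gives for $z\neq w$
\[
q_z\, q_w \;=\; \frac{1-z}{w-z}\, q_z \;+\; \frac{1-w}{z-w}\, q_w,
\]
so the product lies in $\spann_\C\{q_\zeta : \zeta \in \C_r\}$ itself, with no constant term (the two coefficients sum to $1$, consistent with the value at $\infty$). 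With this correction your argument goes through and in fact simplifies: you never need to adjoin $\C\mathbf{1}$ and then absorb it, since the closed span is already stable under products of generators with distinct parameters, and $q_z^2$ is recovered as $\lim_{w\to z} q_z q_w$ inside the closure using the elementary estimate $\lVert q_z - q_w\rVert_\infty \le \lVert q_z\rVert_\infty \,\bigl|1-\tfrac{z}{w}\bigr|$ — which is exactly how the paper treats the case $z=w$. Your remaining checks (point separation via a single $q_z$ with $z\neq 1$, conjugation closure $\overline{q_z}=q_{\overline z}$, unitality via $q_1$) are fine, and the real statement follows verbatim since the corrected coefficients are real for $z,w\in\R_+$.
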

\begin{proof}
We set
\[A \coloneqq \overline{\spann_\C \left\lbrace q_z: z \in \C_r\right\rbrace}\subeq C([0,\infty],\C) \quad \text{and} \quad B \coloneqq \overline{\spann_\R \left\lbrace q_z: z \in \R_+\right\rbrace}\subeq C([0,\infty],\R).\]
We now show that \(A\) and \(B\) are algebras. For \(z \in \C_r\) one has
\[q_z(\lambda) = \frac{\lambda+1}{\lambda+z} = \frac{1-z}{\lambda+z} + 1.\]
Further, for \(z,w \in \C_r\), we get
\begin{align*}
(1-z)(1-w) \cdot \frac{w-z}{(\lambda + z)(\lambda + w)} &= (1-z)(1-w) \cdot \left(\frac{1}{\lambda+z} - \frac{1}{\lambda+w}\right)
\\&= (1-w) \cdot \frac{1-z}{\lambda+z} - (1-z) \cdot \frac{1-w}{\lambda+w}
\\&= (1-w) \cdot (q_z(\lambda)-1) - (1-z) \cdot (q_w(\lambda)-1).
\end{align*}
If \(z \neq w\) this yields
\begin{align*}
q_z(\lambda) \cdot q_w(\lambda) &= \left(\frac{1-z}{\lambda+z} + 1\right) \cdot \left(\frac{1-w}{\lambda+w} + 1\right)
\\&= \frac{(1-z)(1-w)}{(\lambda+z)(\lambda+w)} + \frac{1-z}{\lambda+z} + \frac{1-w}{\lambda+w} + 1
\\&= \frac{1}{w-z} \cdot \left((1-w) \cdot (q_z(\lambda)-1) - (1-z) \cdot (q_w(\lambda)-1)\right) + q_z(\lambda) + q_w(\lambda) - 1
\\&= \frac{1-z}{w-z} q_z(\lambda) + \frac{1-w}{z-w} q_w(\lambda).
\end{align*}
Therefore, for \(z,w \in \C_r\) with \(z \neq w\) one has \(q_z \cdot q_w \in A\) and \(q_z \cdot q_w \in B\) if \(z,w \in \R_+\). Further, for \(z,w \in \C_r\), we have
\begin{align*}
\left|q_z(\lambda) - q_w(\lambda)\right| &= \left|\frac{\lambda+1}{\lambda+z} - \frac{\lambda+1}{\lambda+w}\right| = \frac{|\lambda+1| \cdot |w-z|}{|\lambda+z| \cdot |\lambda+w|}
\\&= |q_z(\lambda)| \cdot \frac{|w-z|}{|\lambda+w|} \leq |q_z(\lambda)| \cdot \frac{|w-z|}{|w|} = |q_z(\lambda)| \cdot \left|1-\frac{z}{w}\right|,
\end{align*}
so
\[\left\lVert q_z - q_w\right\rVert_\infty \leq \left\lVert q_z\right\rVert_\infty \cdot \left|1-\frac{z}{w}\right| \xrightarrow{w \to z} 0.\]
This shows that, for \(z \in \C_r\), one has
\[q_z \cdot q_z = \lim_{\substack{w \to z \\ w \neq z}} q_z \cdot q_w \in A.\]
By the same argument we get \(q_z \cdot q_z \in B\) for all \(z \in \R_+\).
We therefore have
\[q_z \cdot q_w \in A \quad \forall z,w \in \C_r \qquad \text{and} \qquad q_z \cdot q_w \in B \quad \forall z,w \in \R_+\]
This implies that
\[A \cdot A \subeq A \quad \text{and} \quad B \cdot B \subeq B,\]
so that \(A\) and \(B\) are algebras. We have
\[\textbf{1} = q_1 \in B \subeq A.\]
Further, for \(\lambda,y \in [0,\infty]\) with \(\lambda \neq y\), we have
\[q_2(\lambda) = \frac{\lambda+1}{\lambda+2} \neq \frac{y+1}{y+2} = q_2(y),\]
so the algebras \(A\) and \(B\) separate points. Since
\[\overline{q_z} = q_{\overline{z}}, \quad \forall z \in \C_r\]
the algebra \(A\) is invariant under complex conjugation. Therefore, by the Stone--Weierstraß Theorem, we have
\[A = \overline{A} = C([0,\infty],\C) \quad \text{and} \quad B = \overline{B} = C([0,\infty],\R). \qedhere\]
\end{proof}
\newpage
\begin{cor}\label{cor:PsiUnique}
Let \(\nu,\nu' \in \mathcal{BM}^{\mathrm{fin}}\left([0,\infty]\right)\). Then \(\Psi_\nu = \Psi_{\nu'}\), if and only if \(\nu = \nu'\).
\end{cor}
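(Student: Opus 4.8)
The implication "$\nu=\nu'\Rightarrow\Psi_\nu=\Psi_{\nu'}$" is immediate from the definition of $\Psi_\nu$, so all the content lies in the converse, and the plan is to reduce it to the density statement of \fref{prop:H2Dense0Infty} by a change of variables. The map $\phi\colon[0,\infty]\to[0,\infty]$, $\phi(\lambda)=\lambda^2$, is a homeomorphism of the one-point compactification (with $\phi(0)=0$, $\phi(\infty)=\infty$ and continuous inverse $t\mapsto\sqrt t$), so push-forward $\nu\mapsto\phi_*\nu$ is a bijection of $\mathcal{BM}^{\mathrm{fin}}([0,\infty])$ onto itself; it therefore suffices to show $\phi_*\nu=\phi_*\nu'$. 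Writing $q_z(t)=\frac{t+1}{t+z}$ as in \fref{prop:H2Dense0Infty}, the change-of-variables formula gives, for every $p\in\R^\times$,
\[
\pi\,\Psi_\nu(p)=\int_{[0,\infty]}\frac{1+\lambda^2}{p^2+\lambda^2}\,d\nu(\lambda)=\int_{[0,\infty]}(q_{p^2}\circ\phi)(\lambda)\,d\nu(\lambda)=\int_{[0,\infty]}q_{p^2}(t)\,d(\phi_*\nu)(t),
\]
and likewise for $\nu'$. Since $\Psi_\nu$ is an even function of $p$ and $\{p^2:p\in\R^\times\}=\R_+$, the hypothesis $\Psi_\nu=\Psi_{\nu'}$ is equivalent to
\[
\int_{[0,\infty]}q_s\,d(\phi_*\nu)=\int_{[0,\infty]}q_s\,d(\phi_*\nu')\qquad\text{for all }s\in\R_+.
\]

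The next step is to upgrade this from the functions $q_s$ to all of $C([0,\infty],\R)$. Both $\phi_*\nu$ and $\phi_*\nu'$ are finite Borel measures, so $f\mapsto\int f\,d(\phi_*\nu)$ and $f\mapsto\int f\,d(\phi_*\nu')$ are bounded linear functionals on $(C([0,\infty],\R),\|\cdot\|_\infty)$ which agree on $\spann_\R\{q_s:s\in\R_+\}$. By \fref{prop:H2Dense0Infty} this span is $\|\cdot\|_\infty$-dense in $C([0,\infty],\R)$, so the two functionals coincide on every continuous real function. The uniqueness part of the Riesz Representation Theorem (cf. \cite[Thm. 6.19]{Ru86}) then forces $\phi_*\nu=\phi_*\nu'$, and applying the inverse homeomorphism $\phi^{-1}$ gives $\nu=\nu'$, completing the proof.

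I do not expect a genuine obstacle here; the argument is essentially a bookkeeping wrapper around \fref{prop:H2Dense0Infty}. The points that need a little care are: checking that $\lambda\mapsto\lambda^2$ is indeed a homeomorphism of the \emph{compactification} $[0,\infty]$, so that push-forward is a bijection on $\mathcal{BM}^{\mathrm{fin}}([0,\infty])$ and the displayed change-of-variables identity is legitimate; verifying that $q_{p^2}\circ\phi$ really is the integrand $\frac{1+\lambda^2}{p^2+\lambda^2}$ of $\pi\Psi_\nu$, including the boundary values at $\lambda=0$ and $\lambda=\infty$ (where $q_{p^2}$ is continuous precisely because $p\in\R^\times$ forces $p^2>0$); and invoking the uniqueness of the Riesz correspondence on the compact Hausdorff space $[0,\infty]$ rather than just the existence part.
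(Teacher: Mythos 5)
Your proof is correct and follows essentially the same route as the paper: both reduce the statement to the density result of \fref{prop:H2Dense0Infty} via the substitution \(\lambda \mapsto \lambda^2\), the only cosmetic difference being that you push the measures forward along this homeomorphism while the paper pulls the test functions back via the composition operator \(f \mapsto f \circ \sqrt{\cdot}\), and that you cite the uniqueness part of the Riesz Representation Theorem explicitly where the paper simply asserts that equality of integrals over \(C([0,\infty],\R)\) forces \(\nu = \nu'\).
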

\begin{proof}
Since the other implication is trivial, we assume that \(\Psi_\nu = \Psi_{\nu'}\) and show that then \(\nu = \nu'\). Setting
\[f_p: [0,\infty] \to \R, \quad \lambda \mapsto \frac{1+\lambda^2}{p^2+\lambda^2}, \qquad p \in \R^\times,\]
by assumption, we have
\[\int_{[0,\infty]} f_p \,d\nu = \Psi_\nu(p) = \Psi_{\nu'}(p) = \int_{[0,\infty]} f_p \,d\nu' \qquad \forall p \in \R^\times.\]
This yields that
\[\int_{[0,\infty]} f \,d\nu = \int_{[0,\infty]} f \,d\nu' \qquad \forall f \in \overline{\spann \{f_p: p \in \R^\times\}}.\]
Now, for \(z \in \R_+\), we consider the function
\begin{equation*}
q_z: [0,\infty] \to \R, \quad \lambda \mapsto \frac{\lambda+1}{\lambda+z}.
\end{equation*}
Then, defining the map
\[\cW: C([0,\infty],\R) \to C([0,\infty],\R), \quad f \mapsto f \circ \sqrt{\cdot},\]
for \(p \in \R^\times\), we get
\[(\cW f_p)(\lambda) = f_p\big(\sqrt{\lambda}\big) = \frac{1+\lambda}{p^2+\lambda} = q_{p^2}(\lambda), \quad \lambda \in \R_+,\]
so \(\cW f_p = q_{p^2}\). Using that \(\cW\) is an isometric isomorphism,
by \fref{prop:H2Dense0Infty}, we get
\begin{align*}
C([0,\infty],\R) &= \cW^{-1} C([0,\infty],\R) = \cW^{-1} \overline{\spann_\R \left\lbrace q_z: z \in \R_+\right\rbrace}
\\&= \cW^{-1} \overline{\spann_\R \left\lbrace q_{p^2}: p \in \R^\times\right\rbrace} = \overline{\spann_\R \left\lbrace \cW^{-1} q_{p^2}: p \in \R^\times\right\rbrace} = \overline{\spann \{f_p: p \in \R^\times\}}.
\end{align*}
Therefore, we have
\[\int_{[0,\infty]} f \,d\nu = \int_{[0,\infty]} f \,d\nu' \quad \forall f \in C([0,\infty],\R),\]
which implies \(\nu = \nu'\).
\end{proof}
Another ingredient we need to prove the injectivity of the map
\[\left(\mathcal{BM}^{\mathrm{fin}}\left([0,\infty]\right) \setminus \{0\}\right) \slash \sim \,\ni [\nu] \mapsto h_\nu\]
is a better understanding of the kernel
\[\ker\left(\Theta_{h_\nu}-{\bf 1}\right) \cap H^2_\Lambda\left(\C_+\right)\]
appearing in \fref{prop:GeneralizedFixedPoint}. This will be provided by the following proposition:
\begin{prop}\label{prop:ker1Dim}
For every measure \(\nu \in \mathcal{BM}^{\mathrm{fin}}\left([0,\infty]\right) \setminus \{0\}\), we have
\begin{equation*}
\ker\left(\Theta_{h_\nu}-{\bf 1}\right) \cap H^2_\Lambda\left(\C_+\right) = \C F_\nu.
\end{equation*}
\end{prop}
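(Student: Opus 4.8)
The plan is to combine three facts already in hand: the kernel in question consists only of outer functions (\fref{cor:kernelOuterOrZeroExtended}), its nonzero elements are represented by measures (\fref{prop:FixedPointFromConstruction}), and $\nu\mapsto\Psi_\nu$ is injective (\fref{cor:PsiUnique}). The inclusion $\C F_\nu\subseteq\ker(\Theta_{h_\nu}-\textbf{1})\cap H^2_\Lambda(\C_+)$ is immediate: by \fref{lem:FNuLambdaOuter} we have $F_\nu\in\Out^2_\Lambda(\C_+)\subseteq H^2_\Lambda(\C_+)$, and $\Theta_{h_\nu}F_\nu=h_\nu\cdot RF_\nu=\frac{F_\nu}{RF_\nu}\cdot RF_\nu=F_\nu$. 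For the reverse inclusion I would take $0\neq F\in\ker(\Theta_{h_\nu}-\textbf{1})\cap H^2_\Lambda(\C_+)$. By \fref{cor:hNuMaxPos} the triple $(L^2(\R,\C),H^2(\C_+),\theta_{h_\nu})$ is a maximal RPHS, so \fref{cor:kernelOuterOrZeroExtended} gives $F\in\Out^2_\Lambda(\C_+)$, and then \fref{prop:FixedPointFromConstruction} produces $\nu'\in\mathcal{BM}^{\mathrm{fin}}([0,\infty])\setminus\{0\}$ and $C\in\T$ with $F=C\cdot F_{\nu'}$. Substituting into $\Theta_{h_\nu}F=F$ yields $h_\nu=\frac{F}{RF}=\frac{F_{\nu'}}{RF_{\nu'}}=h_{\nu'}$, so it remains to deduce from $h_\nu=h_{\nu'}$ that $\nu'=c\nu$ for some $c\in\R_+$; once that is known, \fref{lem:OuterHomo} gives $F_{\nu'}=\sqrt c\,F_\nu$, hence $F=C\sqrt c\,F_\nu\in\C F_\nu$.

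To extract $\nu'=c\nu$ from $h_\nu=h_{\nu'}$ I would set $\phi\coloneqq\frac{F_\nu}{F_{\nu'}}$. Since $F_\nu=\Out(\sqrt{\Psi_\nu})$ and $F_{\nu'}=\Out(\sqrt{\Psi_{\nu'}})$, the two-sided estimate $\nu([0,\infty])\vartheta\leq\Psi_\nu\leq\nu([0,\infty])\eta$ of \fref{lem:PsiEstimate} (and its analogue for $\nu'$) shows $I\big(\sqrt{\Psi_\nu/\Psi_{\nu'}}\big)<\infty$, so $\phi$ is an outer function with $|\phi|=\sqrt{\Psi_\nu/\Psi_{\nu'}}$ on $\R$ by \fref{thm:OuterBetrag}. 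Rewriting $h_\nu=h_{\nu'}$ as $\frac{F_\nu}{F_{\nu'}}=\frac{RF_\nu}{RF_{\nu'}}$ gives $R\phi=\phi$; since $|\phi|$ is even, \fref{lem:OuterSymmetric} upgrades this to $R\phi=\overline\phi$, hence $\phi=\overline\phi$, i.e. $\phi$ has real boundary values a.e. on $\R$. If I can show $\phi$ is then constant, I am done: $\Psi_\nu=|\phi|^2\Psi_{\nu'}=\Psi_{|\phi|^2\nu'}$ on $\R$, so $\nu=|\phi|^2\nu'$ by \fref{cor:PsiUnique}.

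The genuinely hard step is this last constancy, and I expect to argue it using the estimates of \fref{lem:PsiEstimate} essentially (not just $R$-invariance, which alone does not exclude behaviour like $\phi\sim z^{-2}$ near $0$). Concretely, \fref{lem:PsiEstimate} gives $c_1\min(x^2,x^{-2})\leq|\phi(x)|^2\leq c_2\max(x^2,x^{-2})$ for $x\in\R^\times$; combined with $|\Lambda(x)|^2=\frac{x^2}{(1+x^2)^2}$ this forces the boundary moduli of both $\Lambda\phi$ and $\Lambda/\phi$ to be bounded, so $\Lambda\phi,\Lambda/\phi\in H^\infty(\C_+)$ (both are outer with bounded boundary values). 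Since $\phi$ is real on $\R$, $\phi$ and $\Lambda$ both satisfy the reflection identity $f(z)=\overline{f(-\bar z)}$ on $\C_+$ and are in particular real on $i\R_+$; a rigidity argument for a nonvanishing function in $H^\infty(\C_+)$ that is real on the imaginary axis and whose $\Lambda$-weighted reciprocal is again in $H^\infty$ then forces $\phi$ to be a constant. This is the only real obstacle; everything else is assembling results proved earlier in the chapter, and one could equally bypass \fref{prop:FixedPointFromConstruction} and run the same analysis directly on $g\coloneqq F/F_\nu$, which is an $R$-invariant outer function as soon as $F\in\Out^2_\Lambda(\C_+)$ — the analytic core being the same statement either way.
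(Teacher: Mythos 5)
Your outline up to the last step matches the paper's: the easy inclusion, the reduction via \fref{cor:kernelOuterOrZeroExtended} and \fref{prop:FixedPointFromConstruction} to \(F = C\cdot F_{\nu'}\), hence \(h_\nu = h_{\nu'}\), and the plan to finish with \fref{cor:PsiUnique} and \fref{lem:OuterHomo} are all sound, and your observation that \fref{lem:PsiEstimate} forces \(\Lambda\phi,\ \Lambda/\phi \in H^\infty(\C_+)\) for \(\phi = F_\nu/F_{\nu'}\) is correct and is indeed the right preparation. The gap is exactly the step you defer: the constancy of \(\phi\). The rigidity principle you invoke — a nonvanishing \(H^\infty(\C_+)\)-function, real on \(i\R_+\) (even satisfying the reflection identity \(f(z)=\overline{f(-\overline z)}\)), whose \(\Lambda\)-weighted reciprocal is again in \(H^\infty(\C_+)\), must be constant — is false. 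Take \(\phi(z) = \frac{z+i}{z+2i}\): it is outer, bounded with bounded inverse, satisfies \(\overline{\phi(-\overline z)}=\phi(z)\), is real and positive on \(i\R_+\), and \(\Lambda\phi,\ \Lambda/\phi \in H^\infty(\C_+)\), yet it is not constant. What this example fails is precisely the hypothesis you drop in your final sentence: \(R\phi=\phi\) a.e. on \(\R\) (equivalently, real boundary values on the real axis together with the reflection identity). So the conditions that must carry the weight are discarded at the very point where they are needed, and "boundedness plus reality on the imaginary axis" cannot close the argument.

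For comparison, the paper converts the \(R\)-invariance into a finite-dimensionality statement rather than a growth/reality rigidity: with \(g \coloneqq \frac{i}{z+i}\,\Lambda\phi \in H^2(\C_+)\) (here \(\frac{i}{z+i} = \Out\big(1/\sqrt{1+x^2}\big)\), and your \(H^\infty\)-bound on \(\Lambda\phi\) gives \(g\in H^2\)), the identity \(R\phi = \phi\) together with \(\frac{\Lambda}{R\Lambda} = -\frac{\phi_i}{R\phi_i}\) (\fref{lem:phiLambdaIdentity}) yields \(g = \phi_i^3\, Rg\) on \(\R\), hence \(g \in H^2(\C_+)\cap M_{\phi_i}^3 H^2(\C_-)\). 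This intersection is the three-dimensional space \((\C + \C\phi_i + \C\phi_i^2)\,Q_i\) (built from \fref{lem:BlaschkeOrthogonal}), which forces \(\phi(z) = \alpha z + \beta + \gamma z^{-1}\), and then evenness on \(\R\) kills \(\alpha\) and \(\gamma\). If you supply this model-space computation (or an equivalent argument genuinely using \(\phi(-x)=\phi(x)\in\R\) a.e.), your route — whether run on \(F_\nu/F_{\nu'}\) or directly on \(F/F_\nu\), as you note — becomes a complete proof essentially identical to the paper's; as written, the decisive analytic step is missing and the principle proposed to replace it does not hold.
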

\begin{proof}
For every \(C \in \C\) we have
\[\Theta_{h_\nu} (C \cdot F_\nu) = \frac{F_\nu}{RF_\nu} R(C \cdot F_\nu) = C \cdot \frac{F_\nu}{RF_\nu} RF_\nu = C \cdot F_\nu.\]
This yields
\[\C F_\nu \subeq \ker\left(\Theta_{h_\nu}-{\bf 1}\right) \cap H^2_\Lambda\left(\C_+\right).\]
We now show the other inclusion. For this we take a function \(F \in \ker\left(\Theta_{h_\nu}-{\bf 1}\right) \cap H^2_\Lambda\left(\C_+\right)\). We now want to show that \(F \in \C F_\nu\). If \(F=0\), the statement is trivial, so we can assume that \(F \neq 0\). By \fref{cor:kerNonTrivOuterFixExtended}, we have
\[F \in \ker(\Theta_h - \textbf{1}) \cap \Out^2_\Lambda(\C_+)\]
and therefore, by \fref{prop:FixedPointFromConstruction} there exists a measure \(\nu' \in \mathcal{BM}^{\mathrm{fin}}\left([0,\infty]\right) \setminus \{0\}\) and \(C \in \T\) such that
\[F = C \cdot F_{\nu'}.\]
We notice that for the function
\[K(x) \coloneqq \frac 1{|F_\nu(x)| \cdot \sqrt{1+x^2}}, \quad x \in \R,\]
by \fref{lem:OuterExamples} and \fref{lem:OuterHomo}, we have
\[\Out(K)(z) = \frac 1{F_\nu(z)} \cdot \frac i{z+i}, \quad z \in \C_+.\]
Further, by \fref{lem:PsiEstimate}, one has
\[K(x) = \frac 1{|F_\nu(x)| \cdot \sqrt{1+x^2}} = \frac 1{\sqrt{\Psi_\nu(x)} \cdot \sqrt{1+x^2}} \leq 1, \quad x \in \R,\]
so, by \fref{thm:OuterBetrag}, we have \(\Out(K) \in H^\infty(\C_+)\). By \fref{lem:FNuLambdaOuter}, we get
\[\Lambda F = C \cdot \Lambda F_{\nu'} \in H^2(\C_+)\]
and therefore, by \fref{prop:H2InclusionFunctions}, we have
\[g \coloneqq \Out(K) \cdot \Lambda F \in H^2(\C_+).\]
Further
\[g(z) = \frac 1{F_\nu(z)} \cdot \frac i{z+i} \cdot \frac{iz}{(z+i)^2} \cdot F(z) = -\frac z{(z+i)^3} \cdot \frac{F(z)}{F_\nu(z)}, \quad z \in \C_+.\]
Then, using that \(F,F_\nu \in \ker\left(\Theta_{h_\nu}-{\bf 1}\right)\), we get
\begin{align*}
\frac{F(x)}{F_\nu(x)} = \frac{h_\nu(x) \cdot (RF)(x)}{h_\nu(x) \cdot (RF_\nu)(x)} = \frac{F(-x)}{F_\nu(-x)}, \quad x \in \R
\end{align*}
and therefore
\begin{align*}
g(x) &= -\frac x{(x+i)^3} \cdot \frac{F(x)}{F_\nu(x)} = -\frac x{(x+i)^3} \cdot \frac{F(-x)}{F_\nu(-x)}
\\&= - \frac{(-x+i)^3}{(x+i)^3} \cdot g(-x) = \frac{(x-i)^3}{(x+i)^3} \cdot (Rg)(x) = {\phi_i(x)}^3 (Rg)(x), \quad x \in \R.
\end{align*}
Then, since \(Rg \in RH^2(\C_+) = H^2(\C_-)\), we get
\[g \in H^2(\C_+) \cap M_{\phi_i}^3 H^2(\C_-).\]
Now, we have
\begin{align*}
H^2(\C_+) \cap M_{\phi_i} H^2(\C_-) &= H^2(\C_+) \cap M_{\phi_i} \left(H^2(\C_+)^\perp\right) = \C Q_i,
\end{align*}
using \fref{lem:BlaschkeOrthogonal} in the last step. This implies
\begin{align*}
H^2(\C_+) \cap M_{\phi_i}^3 H^2(\C_-) &= H^2(\C_+) \cap M_{\phi_i}^3 \left(H^2(\C_+)^\perp\right) = \bigoplus_{k=1}^3 M_{\phi_i}^{k-1}H^2(\C_+) \cap M_{\phi_i}^k \left(H^2(\C_+)^\perp\right)
\\&= \bigoplus_{k=1}^3 M_{\phi_i}^{k-1}\left(H^2(\C_+) \cap M_{\phi_i} \left(H^2(\C_+)^\perp\right)\right) = \bigoplus_{k=1}^3 M_{\phi_i}^{k-1}\C Q_i
\\&= \C Q_i + \C M_{\phi_i} Q_i + \C M_{\phi_i}^2 Q_i = (\C M_\textbf{1} + \C M_{\phi_i} + \C M_{\phi_i}^2) \,Q_i.
\end{align*}
Therefore, there are \(a,b,c \in \C\) such that
\[g(z) = (a + b \phi_i(z) + c \phi_i(z)^2) \cdot Q_i(z) = \left(a + b \left(\frac{z-i}{z+i}\right) + c \left(\frac{z-i}{z+i}\right)^2\right) \cdot \frac 1{2\pi} \cdot \frac i{z+i}, \quad z \in \C_+.\]
This is equivalent to
\[\frac{F(z)}{F_\nu(z)} = -\frac {(z+i)^3}z \cdot g(z) = \frac 1z \cdot \frac{-i}{2\pi} \left(a(z+i)^2 + b(z+i)(z-i) + c(z-i)^2\right), \quad z \in \C_+.\]
This shows that there are \(\alpha,\beta,\gamma \in \C\) such that
\[\frac{F(z)}{F_\nu(z)} = \frac 1z \cdot \left(\alpha z^2 + \beta z + \gamma\right) = \alpha z + \beta + \gamma\frac 1z, \quad z \in \C_+.\]
Since
\[\alpha x + \beta + \gamma\frac 1x = \frac{F(x)}{F_\nu(x)} = \frac{F(-x)}{F_\nu(-x)} = -\alpha x + \beta -\gamma\frac 1x, \quad x \in \R^\times,\]
we have \(\alpha = \gamma = 0\) and therefore
\[\frac{F(z)}{F_\nu(z)} = \beta, \quad z \in \C_+,\]
so \(F = \beta \cdot F_\nu\).
\end{proof}
\begin{cor}\label{cor:hNuInjektiveSim}
The map
\[\left(\mathcal{BM}^{\mathrm{fin}}\left([0,\infty]\right) \setminus \{0\}\right) \slash \sim \,\ni [\nu] \mapsto h_\nu\]
is injective.
\end{cor}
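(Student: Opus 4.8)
The plan is to show that $h_\nu = h_{\nu'}$ for two measures $\nu,\nu' \in \mathcal{BM}^{\mathrm{fin}}([0,\infty]) \setminus \{0\}$ forces $\nu = c\nu'$ for some $c \in \R_+$; combined with the already-established fact $h_{c\nu'} = h_{\nu'}$, this is exactly injectivity of $[\nu] \mapsto h_\nu$. So assume $h_\nu = h_{\nu'} =: h$, and note that then $\Theta_{h_\nu} = \Theta_{h_{\nu'}} = \Theta_h$ as operators on $L^2(\R,\C,\rho)$.

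First I would invoke \fref{prop:ker1Dim} for both measures: it gives
\[
\C F_\nu = \ker(\Theta_{h_\nu} - \mathbf{1}) \cap H^2_\Lambda(\C_+) = \ker(\Theta_{h_{\nu'}} - \mathbf{1}) \cap H^2_\Lambda(\C_+) = \C F_{\nu'}.
\]
Since outer functions are nowhere vanishing, $F_\nu \neq 0 \neq F_{\nu'}$, so there is $\beta \in \C^\times$ with $F_\nu = \beta F_{\nu'}$ (as elements of $H^2_\Lambda(\C_+)$, equivalently as almost-everywhere defined functions on $\R$ via boundary values).

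Next I would pass to boundary moduli. By \fref{thm:OuterBetrag}, applied to $F_\nu = \Out(\sqrt{\Psi_\nu})$ and $F_{\nu'} = \Out(\sqrt{\Psi_{\nu'}})$, one has $|F_\nu(x)|^2 = \Psi_\nu(x)$ and $|F_{\nu'}(x)|^2 = \Psi_{\nu'}(x)$ for almost every $x \in \R$. Hence $\Psi_\nu(x) = |\beta|^2 \Psi_{\nu'}(x)$ a.e.\ on $\R$, and since both sides are continuous on $\R^\times$ this holds for all $x \in \R^\times$. Now the definition of $\Psi$ is linear in the measure, so $|\beta|^2 \Psi_{\nu'} = \Psi_{|\beta|^2 \nu'}$, which gives $\Psi_\nu = \Psi_{|\beta|^2 \nu'}$; by \fref{cor:PsiUnique} this yields $\nu = |\beta|^2 \nu'$ with $|\beta|^2 \in \R_+$, i.e.\ $\nu \sim \nu'$, as desired.

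I do not expect a genuine obstacle here: the serious work has already been carried out in \fref{prop:ker1Dim} (the one-dimensionality of the fixed-point space, which pins $F_\nu$ down up to a scalar) and in \fref{cor:PsiUnique} (injectivity of $\nu \mapsto \Psi_\nu$). The only point requiring a little care is keeping the two scalar ambiguities — the multiplicative constant $\beta$ for the outer function and the positive scaling for the measure — consistently matched through the relation $|F_\nu|^2 = \Psi_\nu$ and the linearity of $\Psi$ in $\nu$; everything else is bookkeeping.
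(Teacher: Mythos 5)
Your proposal is correct and follows essentially the same route as the paper: identify the one-dimensional fixed space $\ker(\Theta_h-\textbf{1}) \cap H^2_\Lambda(\C_+)$ via \fref{prop:ker1Dim} to get $F_\nu = \beta F_{\nu'}$, pass to boundary moduli $\Psi_\nu = |\beta|^2\Psi_{\nu'} = \Psi_{|\beta|^2\nu'}$, and conclude with \fref{cor:PsiUnique}. The only cosmetic difference is that you invoke \fref{prop:ker1Dim} for both measures, whereas the paper only needs it for one (the other fixed-point membership being immediate), which changes nothing of substance.
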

\begin{proof}
Let \(\nu,\nu' \in \mathcal{BM}^{\mathrm{fin}}\left([0,\infty]\right) \setminus \{0\}\) with \(h_\nu = h_{\nu'}\). Then, by \fref{prop:ker1Dim}, we have
\[F_{\nu'} \in \ker\left(\Theta_{h_{\nu'}}-{\bf 1}\right) \cap H^2_\Lambda\left(\C_+\right) = \ker\left(\Theta_{h_\nu}-{\bf 1}\right) \cap H^2_\Lambda\left(\C_+\right) = \C F_\nu.\]
Since \(F_{\nu'} \neq 0\) this implies that there exists \(c \in \C^\times\) such that
\[F_{\nu'} = c \cdot F_\nu.\]
This yields
\[\Psi_{\nu'}(p) = |F_{\nu'}(p)|^2 = |c|^2 \cdot |F_\nu(p)|^2 = |c|^2 \cdot \Psi_\nu(p) = \Psi_{|c|^2 \cdot \nu}(p), \quad p \in \R.\]
This, by \fref{cor:PsiUnique}, implies \(\nu' = |c|^2 \cdot \nu\) and therefore \(\nu' \sim \nu\), so \([\nu'] = [\nu]\).
\end{proof}

\subsection{The Osterwalder--Schrader transform of the Hardy space}
In this section, given a function \(h \in L^\infty(\R,\T)^\flat\) such that the triple \(\left(L^2\left(\R,\C\right),H^2\left(\C_+\right),\theta_h\right)\) is a RPHS, we want to give an explicit form of the Osterwalder--Schrader transform (OS transform) of this RPHS, which is defined as follows:
\begin{definition}\label{def:OSTrafo}{(cf. \cite[Def. 3.1.1]{NO18})}
Let \(\left(\mathcal{E},\mathcal{E}_+,\theta\right)\) be a RPHS. We set
\begin{equation*}
\gls*{Ntheta} \coloneqq \left\lbrace \eta \in \mathcal{E}_+ : \braket*{\eta}{\theta\eta}=0\right\rbrace,
\end{equation*}
and consider the quotient map
\begin{equation*}
\gls*{qtheta}: \mathcal{E}_+ \to \mathcal{E}_+ \slash \,\mathcal{N}_\theta, \quad \eta \mapsto \eta + \cN_\theta.
\end{equation*}
On \(\mathcal{E}_+ \slash \,\mathcal{N}_\theta\) we define a scalar product by
\[\braket*{\eta + \cN_\theta}{\xi + \cN_\theta}_{\hat \cE} \coloneqq \braket*{\eta}{\theta \xi}, \quad \eta,\xi \in \cE_+\]
and write \(\gls*{Ehat}\) for the Hilbert space completion of \(\mathcal{E}_+ \slash \,\mathcal{N}_\theta\) with respect to the norm
\[\left\lVert \eta + \cN_\theta\right\rVert_{\hat{\cE}} \coloneqq \sqrt{\braket*{\eta + \cN_\theta}{\eta + \cN_\theta}_{\hat \cE}} = \sqrt{\braket*{\eta}{\theta\eta}}, \quad \eta \in \cE_+.\]
We call \(\hat{\mathcal{E}}\) the \textit{Osterwalder--Schrader transform of \(\cE_+\)}.
\end{definition}
\begin{prop}\label{prop:OsterwalderSchraderHardy}
Let \(h \in L^\infty(\R,\T)^\flat\) such that the triple \(\left(L^2\left(\R,\C\right),H^2\left(\C_+\right),\theta_h\right) \eqqcolon (\cE,\cE_+,\theta)\) is a RPHS and let \(\mu \in \mathcal{BM}\left(\R_+\right)\) such that
\[\braket*{f}{\theta_h g} = \int_{\R_+} \overline{f(i\lambda)} g(i\lambda) \,d\mu(\lambda) \quad \forall f,g \in H^2(\C_+).\]
Further, for a function \(f \in H^2(\C_+)\), we consider the function \(f^\sV \in L^2\left(\R_+,\C,\mu\right)\) defined by
\[f^\sV(\lambda) = f(i\lambda), \quad \lambda \in \R_+.\]
Then the map
\begin{equation*}
\Phi: \cE_+ \big \slash \,\cN_\theta \to L^2\left(\R_+,\C,\mu\right), \quad f+\cN_\theta \mapsto f^\sV
\end{equation*}
extends to an isometric isomorphism of Hilbert spaces
\[\tilde \Phi: \hat \cE \to L^2\left(\R_+,\C,\mu\right).\]
\end{prop}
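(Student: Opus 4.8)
The plan is to show that $\Phi$ is a well-defined isometry with dense image, so that it extends uniquely to an isometric isomorphism $\tilde\Phi$. This is a clean application of the machinery already established, in particular \fref{prop:denseL2Measure}, which does the heavy lifting.

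First I would verify that $\Phi$ is well-defined, i.e.\ that it does not depend on the representative $f$ modulo $\cN_\theta$. If $f \in \cN_\theta$, then by definition $\braket*{f}{\theta f} = 0$, and by the integral formula for $\braket*{\cdot}{\theta_h \cdot}$ this means $\int_{\R_+} |f(i\lambda)|^2 \,d\mu(\lambda) = 0$, so $f^\sV = 0$ in $L^2(\R_+,\C,\mu)$. Conversely this same computation shows that $\Phi$ is isometric for the $\hat\cE$-inner product: for $f,g \in \cE_+$ one has
\[
\braket*{\Phi(f+\cN_\theta)}{\Phi(g+\cN_\theta)}_{L^2(\R_+,\C,\mu)} = \int_{\R_+} \overline{f^\sV(\lambda)}\,g^\sV(\lambda)\,d\mu(\lambda) = \braket*{f}{\theta_h g} = \braket*{f+\cN_\theta}{g+\cN_\theta}_{\hat\cE}.
\]
Here I need $f^\sV \in L^2(\R_+,\C,\mu)$ for every $f \in H^2(\C_+)$ — this is exactly the content of \fref{prop:denseL2Measure} applied with $A = H_h = p_+\theta_h p_+^*$, $V = H^2(\C_+)$ (any dense subspace works, e.g.\ $V = H^2(\C_+)$ itself, using \fref{thm:HankelHasMeasure} or \fref{cor:thetaHHasMeasure} as the hypothesis). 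Since $\hat\cE$ is by definition the completion of $\cE_+/\cN_\theta$, the isometry $\Phi$ extends uniquely to an isometry $\tilde\Phi\colon \hat\cE \to L^2(\R_+,\C,\mu)$.

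It remains to show $\tilde\Phi$ is surjective, equivalently that the image $\{f^\sV : f \in H^2(\C_+)\}$ is dense in $L^2(\R_+,\C,\mu)$. For this I would exhibit an explicit dense family. The reproducing kernel functions $Q_w \in H^2(\C_+)$ from \fref{prop:H2RKHS} have $(Q_w)^\sV(\lambda) = Q_w(i\lambda) = \frac{1}{2\pi i}\frac{1}{i\lambda - \overline w}$, and their linear span separates points of $\R_+$ (and contains, after taking suitable combinations, functions decaying at infinity); a Stone--Weierstrass-type argument on the one-point compactification $[0,\infty]$, analogous to \fref{prop:H2Dense0Infty} and its use in \fref{cor:PsiUnique}, shows these are dense in $C_0(\R_+)$, hence in $L^2(\R_+,\C,\mu)$ since $\mu$ is a finite (or at worst $\sigma$-finite, but here Borel and arising from a bounded Hankel operator) measure. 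Alternatively one can use that $\Spann S(\R_+)F$ is dense in $H^2(\C_+)$ for an outer function $F$ (\fref{thm:outerSpan}) and that $(S_t F)^\sV(\lambda) = e^{-t\lambda}F(i\lambda)$, reducing density to density of $\{e^{-t\lambda} : t \geq 0\}$ against the measure $|F(i\lambda)|^2\,d\mu(\lambda)$, a classical Müntz/completeness statement.

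The main obstacle is the surjectivity/density step: one must be careful that the measure $\mu$ can be genuinely infinite near $0$ or have mass at $\infty$, so $C_0(\R_+)$ is not automatically dense in $L^2(\R_+,\C,\mu)$ without knowing $\mu$ is Radon and locally finite — but this is guaranteed since $\mu$ comes from a bounded positive Hankel operator via \fref{thm:HankelHasMeasure}, which forces $\int_{\R_+}\frac{\lambda}{1+\lambda^2}\,d\mu(\lambda) < \infty$ and in particular $\mu$ finite on compact subsets of $\R_+$. Granting this, bounded continuous functions of compact support in $\R_+$ are dense, and the reproducing-kernel (or exponential) span approximates those. Everything else is routine bookkeeping with the isometry extension.
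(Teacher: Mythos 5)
The isometry/well-definedness/extension part of your argument is exactly the paper's (the paper gets $f^\sV\in L^2(\R_+,\C,\mu)$ directly from the hypothesis with $f=g$; your detour through \fref{prop:denseL2Measure} is harmless). The gap is in your primary route to surjectivity. You rightly flag that $\mu$ need not be finite --- in the paper's own \fref{ex:Lebesgue} one has $\mu=\cT\nu=2\lambda_1$ --- but your proposed fix does not close the hole: local finiteness gives density of $C_c(\R_+,\C)$ in $L^2(\R_+,\C,\mu)$, and Stone--Weierstrass gives that the span of the kernels $(Q_w)^\sV$ is dense in $C_0([0,\infty))$ for the \emph{sup norm}; but sup-norm approximation of a compactly supported function by kernel combinations (which have tails on all of $\R_+$) does not yield $L^2(\mu)$-approximation when $\mu(\R_+)=\infty$: a function uniformly of size $\epsilon$ can have infinite $L^2(2\lambda_1)$-norm. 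What is missing is a weight. For instance, write a target $g\in C_c(\R_+,\C)$ as $g=h\cdot Q_i(i\cdot)$ with $h\in C([0,\infty],\C)$, approximate $h$ uniformly by the algebra of \fref{prop:H2Dense0Infty}, and use $Q_i(i\cdot)\in L^2(\R_+,\C,\mu)$ (hypothesis with $f=g=Q_i$) to convert uniform smallness into $L^2(\mu)$-smallness. The paper does exactly this in dual form: it takes $F$ orthogonal to the image, pairs it with $\Phi(Q_z+\cN_\theta)$, factors out $\overline{Q_i(i\cdot)}F\in L^1(\mu)$, applies \fref{prop:H2Dense0Infty}, restricts to $C_c(\R_+,\C)$, and concludes $F=0$; closedness of the isometric image then gives surjectivity. (A small slip in your local-finiteness remark: the hypothesis with $f=g=Q_i$ gives $\int_{\R_+}(1+\lambda)^{-2}\,d\mu(\lambda)<\infty$, not $\int_{\R_+}\lambda(1+\lambda^2)^{-1}\,d\mu(\lambda)<\infty$; either suffices.)

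Your alternative route, by contrast, can be made to work and is genuinely different from the paper's: take an outer $F\in\Out^2(\C_+)$, use \fref{thm:outerSpan} to reduce density of the image to completeness of $\spann\{e^{-t\lambda}:t\ge0\}$ in $L^2(\R_+,\C,|F(i\lambda)|^2\,d\mu(\lambda))$. The point you must add --- and it is precisely what rescues you from the infinite-$\mu$ problem --- is that this weighted measure is \emph{finite}, with total mass $\braket*{F}{\theta_h F}\le\lVert F\rVert^2$, so Stone--Weierstrass for the exponential algebra on $[0,\infty)$ together with density of $C_c$ in $L^2$ of a finite Borel measure settles the completeness; you also need that multiplication by $F(i\cdot)$ maps onto $L^2(\R_+,\C,\mu)$, which holds because outer functions have no zeros on $\C_+$, so $F(i\lambda)\ne0$ for every $\lambda\in\R_+$. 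With these two observations spelled out, your second route yields a complete and arguably cleaner direct proof of surjectivity; as written, both the finiteness of the weighted measure and the surjectivity of the multiplication operator are left implicit, and the appeal to a ``classical M\"untz/completeness statement'' is unjustified.
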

\begin{proof}
For \(f \in H^2\left(\C_+\right)\) we have
\begin{equation*}
\left\lVert f^\sV\right\rVert^2 = \int_{\R_+} \overline{f(i\lambda)} f(i\lambda) \,d\mu(\lambda) = \braket*{f}{\theta_h f} = \braket*{f+\cN_\theta}{f+\cN_\theta}_{\hat \cE},
\end{equation*}
so the map \(\Phi\) is isometric and therefore well defined and injective. This also shows that \(\Phi\) maps every Cauchy sequence in \(\cE_+ \big \slash \,\cN_\theta\) to a Cauchy sequence in \(L^2\left(\R_+,\C,\mu\right)\) and therefore we can continue \(\Phi\) to a map \(\tilde \Phi: \hat \cE \to L^2\left(\R_+,\C,\mu\right)\) in a continuous way. It remains to show that this map \(\tilde \Phi\) is surjective. So let \(F \in \tilde \Phi(\hat \cE)^\perp\). Then, for every \(z \in \C_+\), we have
\begin{equation*}
0 = \braket{\Phi(Q_z + \cN_\theta)}{F} = \int_{\R_+} \overline{Q_z(i\lambda)} F(\lambda) \,d\mu(\lambda).
\end{equation*}
Since
\[Q_z(i\lambda) = \frac 1{2\pi} \frac i{i\lambda-\overline{z}} = \frac 1{2\pi} \frac i{i\lambda+i} \cdot \frac{i\lambda + i}{i\lambda-\overline{z}} = Q_i(i\lambda) \cdot \frac{i\lambda + i}{i\lambda-\overline{z}} = Q_i(i\lambda) \cdot \frac{\lambda + 1}{\lambda+i\overline{z}}, \quad \lambda \in \R_+,\]
we have
\[0 = \int_{\R_+} \overline{Q_z(i\lambda)} F(\lambda) \,d\mu(\lambda) = \int_{\R_+} \frac{\lambda + 1}{\lambda-iz} \cdot \overline{Q_i(i\lambda)} F(\lambda) \,d\mu(\lambda) \quad \forall z \in \C_+.\]
Further, since \(z \in \C_+\) is equivalent to \((-iz) \in \C_r\), this is equivalent to
\[0 = \int_{\R_+} \frac{\lambda + 1}{\lambda+z} \cdot \overline{Q_i(i\lambda)} F(\lambda) \,d\mu(\lambda) \quad \forall z \in \C_r.\]
Then \fref{prop:H2Dense0Infty} implies that
\[0 = \int_{\R_+} f(\lambda) \cdot \overline{Q_i(i\lambda)} F(\lambda) \,d\mu(\lambda) \quad \forall f \in C([0,\infty],\C),\]
so especially
\[0 = \int_{\R_+} f(\lambda) \cdot \overline{Q_i(i\lambda)} F(\lambda) \,d\mu(\lambda) \quad \forall f \in C_c(\R_+,\C).\]
Since the function \(\R_+ \ni \lambda \mapsto \overline{Q_i(i\lambda)}\) is continuous and bounded from below on compact intervals, this yields
\[0 = \int_{\R_+} f(\lambda) F(\lambda) \,d\mu(\lambda) \quad \forall f \in C_c(\R_+,\C),\]
Therefore, by \cite[Thm. 3.14]{Ru86}, we have \(F = 0 \in L^2\left(\R_+,\C,\mu\right)\). This shows that \(\tilde \Phi(\hat \cE)\) is dense in \(L^2\left(\R_+,\C,\mu\right)\), but since \(\tilde \Phi\) is isometric, we know that \(\tilde \Phi(\hat \cE)\) is a closed subspace of \(L^2\left(\R_+,\C,\mu\right)\) and therefore is equal to \(L^2\left(\R_+,\C,\mu\right)\).
\end{proof}
We now want to apply this proposition to maximal RPHS. By \fref{thm:GeneralizedMaxPosForm}, we know that the functions \(h \in L^\infty(\R,\T)^\flat\) for which \(\left(L^2\left(\R,\C\right),H^2\left(\C_+\right),\theta_h\right)\) is a maximal RPHS, are precisely the functions of the form \(h = h_{\nu}\) for some measure \({\nu \in \mathcal{BM}^{\mathrm{fin}}\left([0,\infty]\right) \setminus \{0\}}\). So, we are interested in how the Osterwalder--Schrader transform looks in these cases:
\begin{cor}\label{cor:OSTrafoFromMeasures}
Let \({\nu \in \mathcal{BM}^{\mathrm{fin}}\left([0,\infty]\right) \setminus \{0\}}\) and \((\cE,\cE_+,\theta) \coloneqq \left(L^2\left(\R,\C\right),H^2\left(\C_+\right),\theta_{h_\nu}\right)\). Then
\[\hat \cE \cong L^2\left(\R_+,\C,\cT\nu\right),\]
where the isometric isomorphism between these Hilbert spaces is given by the map
\[\hat \cE \to L^2\left(\R_+,\C,\cT\nu\right), \quad f + \cN_\theta \mapsto f^\sV\]
with
\begin{equation*}
f^\sV(\lambda) \coloneqq f(i\lambda), \quad \lambda \in \R_+, f \in H^2(\C_+).
\end{equation*}
\end{cor}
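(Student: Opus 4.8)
The plan is to obtain this corollary as an immediate specialization of \fref{prop:OsterwalderSchraderHardy} to the symbol $h = h_\nu$ and the measure $\mu = \cT\nu$ on $\R_+$. So the first step is to check that the hypotheses of \fref{prop:OsterwalderSchraderHardy} are met in this situation. By \fref{def:FNuHNu} (together with the remark following it) we have $h_\nu \in L^\infty(\R,\T)^\flat$, and by \fref{cor:hNuMaxPos} the triple $\left(L^2\left(\R,\C\right),H^2\left(\C_+\right),\theta_{h_\nu}\right)$ is a RPHS. Moreover, \fref{def:defCT} exhibits $\cT\nu$ as a Borel measure on $\R_+$, i.e. $\cT\nu \in \mathcal{BM}\left(\R_+\right)$, which is exactly the kind of measure \fref{prop:OsterwalderSchraderHardy} admits as input (finiteness of $\cT\nu$ is not required there).

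The second step is to supply the Carleson-type representation $\braket*{f}{\theta_{h_\nu} g} = \int_{\R_+} \overline{f(i\lambda)} g(i\lambda) \, d(\cT\nu)(\lambda)$ for all $f, g \in H^2(\C_+)$ demanded by \fref{prop:OsterwalderSchraderHardy}; but this is precisely the content of \fref{thm:cTNuIsCarleson}. With all hypotheses in place, \fref{prop:OsterwalderSchraderHardy} asserts that the map $f + \cN_\theta \mapsto f^\sV$, where $f^\sV(\lambda) = f(i\lambda)$, from $\cE_+ / \cN_\theta$ into $L^2\left(\R_+,\C,\cT\nu\right)$ extends to an isometric isomorphism of Hilbert spaces $\hat\cE \to L^2\left(\R_+,\C,\cT\nu\right)$, which is exactly the assertion of the corollary.

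Since all the substantive analysis has been carried out upstream --- the weak-$*$ approximation argument reducing a general $\nu \in \mathcal{BM}^{\mathrm{fin}}\left([0,\infty]\right)\setminus\{0\}$ to the case $\nu = W(\mu)$ in \fref{thm:cTNuIsCarleson}, and the density argument (via \fref{prop:H2Dense0Infty}) establishing surjectivity in \fref{prop:OsterwalderSchraderHardy} --- there is essentially no remaining obstacle in the proof of this corollary. The only point requiring a moment's attention is bookkeeping: verifying that $\cT\nu$, a priori only a (possibly infinite) Borel measure, is a legitimate choice of $\mu$ in \fref{prop:OsterwalderSchraderHardy}, and that the isometric isomorphism produced there is literally the map displayed in the statement.
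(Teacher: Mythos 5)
Your proposal is correct and follows exactly the paper's route: the corollary is obtained by combining \fref{thm:cTNuIsCarleson} with \fref{prop:OsterwalderSchraderHardy}, and your additional checks (that \(h_\nu \in L^\infty(\R,\T)^\flat\), that the triple is a RPHS, and that \(\cT\nu \in \mathcal{BM}(\R_+)\) is admissible) are just the bookkeeping the paper leaves implicit.
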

\begin{proof}
This follows immediately by \fref{thm:cTNuIsCarleson} and \fref{prop:OsterwalderSchraderHardy}.
\end{proof}

\newpage
\section{Application to Hankel operators}
In the last chapter, we answered the question for which functions \(h \in L^\infty(\R,\T)^\flat\) the triple \(\left(L^2\left(\R,\C\right),H^2\left(\C_+\right),\theta_h\right)\) is a maximal RPHS. The triple \(\left(L^2\left(\R,\C\right),H^2\left(\C_+\right),\theta_h\right)\) being a RPHS is, by \fref{lem:RPHSPosHankelEquiv}, equivalent to the operator
\[H_h \coloneqq p_+ \theta_h p_+^* = p_+ M_h R \,p_+^*\]
being a positive operator, where by \(\gls*{p+}: L^2(\R,\C) \to H^2(\C_+)\) we denote the orthogonal projection from \(L^2(\R,\C)\) onto its subspace \(H^2(\C_+)\). We have already seen that this operator \(H_h\) is an example of a Hankel operator (cf. \fref{def:Hankel}). These Hankel operators will be the object of investigation in this chapter. We start with the following definition:
\begin{definition}
\begin{enumerate}[\rm (a)]
\item We write \(\gls*{Han}\) for the set of Hankel operators \(H \in B(H^2(\C_+))\) and write
\[\gls*{HanP} \coloneqq \{H \in \Han: H \geq 0\}\]
for the set of positive Hankel operators.
\item For a function \({h \in L^\infty(\R,\C)}\) we define
\[\gls*{Hh} \coloneqq p_+ M_h R \,p_+^* \in B(H^2(\C_+)).\]
\item A function \({h \in L^\infty(\R,\C)}\) is called a \textit{symbol} of an Hankel operator \(H \in B(H^2(\C_+))\) if \(H = H_h\).
\end{enumerate}
\end{definition}
We have the following result about symbols of Hankel operators:
\begin{theorem}\label{thm:Nehari}{\rm\textbf{(Nehari's Theorem)} (cf. \cite[Cor. 4.7]{Pa88})}
For every \({h \in L^\infty(\R,\C)}\) the operator \(H_h\) is a Hankel operator. Conversely, for every Hankel operator \(H\) there exists \({h \in L^\infty(\R,\C)}\) such that \(H = H_h\). Further, this symbol \(h\) can be choosen in a way such that \(\left\lVert h\right\rVert_\infty = \left\lVert H\right\rVert\).
\end{theorem}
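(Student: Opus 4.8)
The plan is to treat the two implications separately. The forward direction is essentially already done: for any $h\in L^\infty(\R,\C)$ the computation in the proof of \fref{lem:RPHSPosHankelEquiv} — which uses only that $M_h$ commutes with the multiplication operators $S_t$ and that $RS_t=S_{-t}R$ — shows $S_t^*H_h=H_hS_t$ for all $t\ge0$, so $H_h$ is a Hankel operator, and $\|H_h\|\le\|p_+\|\,\|M_h\|\,\|R\|=\|h\|_\infty$. The real content is to produce, for an arbitrary Hankel operator $H$, a symbol $h$ with $\|h\|_\infty=\|H\|$; this is the classical theorem of Nehari.

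For the converse I would first record that $\braket*{f}{H_hg}=\int_\R\overline{f(x)}\,h(x)\,(Rg)(x)\,dx$ for $f,g\in H^2(\C_+)$, so a symbol of $H$ is exactly a function $h\in L^\infty(\R,\C)$ with
\[\braket*{f}{Hg}=\int_\R\overline{f(x)}\,h(x)\,(Rg)(x)\,dx\qquad\forall\,f,g\in H^2(\C_+).\tag{$\ast$}\]
As $f,g$ range over $H^2(\C_+)$ the products $\overline f\cdot Rg$ range over the lower half plane Hardy space $H^1(\C_-)$ — since $\overline f$ and $Rg$ range over $H^2(\C_-)$ and, by the inner--outer factorisation (\fref{thm:OuterInnerDecomp}, \fref{thm:OuterBetrag}), every $H^1$-function is a product of two $H^2$-functions — and this splitting can be arranged so that $\|f\|_2\|g\|_2=\|\overline f\cdot Rg\|_{H^1}$. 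The crucial point is that the form $\beta(f,g):=\braket*{f}{Hg}$ depends only on the product $\overline f\cdot Rg$: the Hankel relation $S_t^*H=HS_t$ ($t\ge0$) together with the weak-$*$ density of $\{e_t:t\ge0\}$ in $H^\infty(\C_+)$ (\fref{prop:etDense}) upgrades to $\braket*{M_\varphi f}{Hg}=\braket*{f}{HM_\varphi g}$ for every $\varphi\in H^\infty(\C_+)$ (where $M_\varphi$ preserves $H^2(\C_+)$), and one deduces from this intertwining that $\beta$ factors through multiplication $H^2(\C_-)\times H^2(\C_-)\to H^1(\C_-)$. Hence there is a well-defined linear functional $L$ on $H^1(\C_-)$ with $L(\overline f\cdot Rg)=\beta(f,g)$, and $|L(\overline f\cdot Rg)|=|\braket*{f}{Hg}|\le\|H\|\,\|f\|_2\|g\|_2$ gives, via the optimal splitting, $\|L\|\le\|H\|$. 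By Hahn--Banach, $L$ extends to a functional on $L^1(\R)$ of the same norm; as the dual of $L^1(\R)$ is $L^\infty(\R)$, the extension is integration against some $h\in L^\infty(\R,\C)$ with $\|h\|_\infty=\|L\|\le\|H\|$, and evaluating on $w=\overline f\cdot Rg$ recovers $(\ast)$, i.e. $H=H_h$. Finally $\|H\|=\|H_h\|\le\|h\|_\infty\le\|H\|$ forces $\|h\|_\infty=\|H\|$.

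The hard part will be the well-definedness of $L$, i.e. showing $\beta(f_1,g_1)=\beta(f_2,g_2)$ whenever $\overline{f_1}\,Rg_1=\overline{f_2}\,Rg_2$. My plan here is to apply $R$ and take complex conjugates, turning the hypothesis into an identity $p_1g_1=p_2g_2$ inside $H^2(\C_+)$ with $p_i:=\overline{Rf_i}\in H^2(\C_+)$ and $f_i=\overline{Rp_i}$, and then to move between the two representations one analytic multiplier $\varphi\in H^\infty(\C_+)$ at a time by means of inner--outer factorisation and greatest common inner divisors in $H^2(\C_+)$, each step being absorbed by $\braket*{M_\varphi f}{Hg}=\braket*{f}{HM_\varphi g}$. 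An equally workable route is to verify consistency only on the dense subspace of $H^1(\C_-)$ spanned by the products $\overline{F_0}\,RF_0\,e_{-\sigma}$ ($\sigma\ge0$) for a fixed outer $F_0\in H^2(\C_+)$, where it reduces to linear independence of the exponentials $\{e_{-\sigma}\}$ together with density statements such as \fref{thm:outerSpan}, and then to extend $L$ by the $\|\cdot\|_{H^1}$-estimate. Pragmatically, one can also bypass this by transporting everything to the unit disc via the Cayley-type isometry $\Gamma$ from the proof of \fref{thm:HalmosUpper} — which preserves the Hardy spaces and, through weak-$*$ density of polynomials, converts $S_t^*H=HS_t$ into $M_{\Id_\T}^*\Gamma=\Gamma M_{\Id_\T}$ — and then invoke the classical Nehari theorem on the disc, as done in \cite[Cor.~4.7]{Pa88}.
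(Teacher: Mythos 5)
First, note what the paper actually does here: it does not prove this statement at all, but imports it from the literature (the citation \cite[Cor. 4.7]{Pa88} \emph{is} the proof). So your third, ``pragmatic'' route --- transport to the disc via the Cayley isometry and invoke the classical Nehari theorem from Partington --- coincides with the paper's approach, and is perfectly acceptable. Your forward direction is also fine: the computation \(\braket*{f}{H_hS_tg}=\braket*{S_tf}{H_hg}\) for \(t\ge 0\) and the bound \(\lVert H_h\rVert\le\lVert h\rVert_\infty\) are exactly as in \fref{lem:RPHSPosHankelEquiv}, and the closing bookkeeping \(\lVert H\rVert=\lVert H_h\rVert\le\lVert h\rVert_\infty\le\lVert H\rVert\) is correct.

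If, however, you intend the self-contained argument, the crux is not yet there. Your architecture (the Hankel form should factor through the product \(\overline f\cdot Rg\in H^1(\C_-)\), then Riesz factorization plus Hahn--Banach and \(L^1\)--\(L^\infty\) duality) is the standard one, and the multiplier upgrade \(\braket*{M_\varphi f}{Hg}=\braket*{f}{HM_\varphi g}\) for \(\varphi\in H^\infty(\C_+)\) via \fref{prop:etDense} is sound. But the well-definedness of \(L\) is precisely the content of Nehari's theorem, and neither of your two ``internal'' plans settles it. Route 1 is only a sketch: the multiplier connecting two factorizations with the same product, e.g. \(p_1/(a_1b_1)^{1/2}\) after splitting into inner and outer parts, need not lie in \(H^\infty\), so ``one analytic multiplier at a time'' requires an additional approximation or limiting argument that you have not supplied. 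Route 2 is actually flawed as stated: on the span of the products \(\overline{F_0}\,RF_0\,e_{-\sigma}=\overline{F_0}\cdot Rg\) with \(g\in\spann\{e_\sigma F_0:\sigma\ge0\}\), the only bound you can prove without well-definedness is \(|L(w)|\le\lVert H\rVert\,\lVert F_0\rVert_2\lVert g\rVert_2\), and Cauchy--Schwarz gives \(\lVert w\rVert_1\le\lVert F_0\rVert_2\lVert g\rVert_2\) --- the inequality goes the wrong way, so the \(\lVert\cdot\rVert_{H^1}\)-estimate needed to extend \(L\) is not available on that dense subspace. To get the sharp bound on a given \(w\) you must be allowed to pass to the optimal factorization of \(w\), which is exactly the well-definedness across factorizations you were trying to avoid; the shortcut is circular. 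So either carry out the full classical factorization argument, or simply do what the paper does and quote \cite[Cor. 4.7]{Pa88} after transferring to the disc.
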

Since the map
\[L^\infty(\R,\C) \ni h \mapsto H_h\]
is linear, the question of whether a given Hankel operator has a unique symbol comes down to the question, for which functions \(h \in L^\infty(\R,\C)\) one has \(H_h = 0\). This question is answered in the following proposition:
\begin{prop}\label{prop:kernelLowerHalfPlane}{\rm (cf. \cite[Cor. 4.8]{Pa88})}
For a function \(h \in L^\infty(\R,\C)\) one has \(H_h = 0\), if and only if \(h \in H^\infty\left(\C_-\right)\).
\end{prop}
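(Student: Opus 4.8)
The plan is to translate the operator equation $H_h = 0$ into a multiplier inclusion on the Hardy space of the \emph{lower} half plane and then quote the (already available) characterization of such multipliers. First I would unwind the definition: recalling that $p_+ \colon L^2(\R,\C) \to H^2(\C_+)$ is the partial isometry onto $H^2(\C_+)$, so that $p_+^*$ is the inclusion $H^2(\C_+) \hookrightarrow L^2(\R,\C)$ and $p_+ p_+^* = \textbf{1}$, for $f,g \in H^2(\C_+)$ one computes
\[\braket*{f}{H_h g} = \braket*{f}{p_+ M_h R\, p_+^* g} = \braket*{p_+^* f}{M_h R\, p_+^* g} = \braket*{f}{h \cdot Rg},\]
the last scalar product being taken in $L^2(\R,\C)$ under the usual boundary-value identification. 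Hence $H_h = 0$ if and only if $h \cdot Rg \perp H^2(\C_+)$ for every $g \in H^2(\C_+)$.

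Next I would use that $R$ is a unitary involution of $L^2(\R,\C)$ which maps $H^2(\C_+)$ bijectively onto $H^2(\C_-)$, and that $H^2(\C_-) = H^2(\C_+)^\perp$ (both facts are recorded around \fref{rem:ROPGAlternative} and used in the proof of \fref{thm:SpecNonDeg}). Substituting $k = Rg$, the condition from the previous paragraph becomes: $h \cdot k \in H^2(\C_-)$ for every $k \in H^2(\C_-)$, i.e.
\[M_h H^2(\C_-) \subeq H^2(\C_-).\]
Thus $H_h = 0$ is equivalent to $h$ being a multiplier of $H^2(\C_-)$.

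Finally I would invoke the lower-half-plane version of \fref{prop:H2InclusionFunctions}: for $h \in L^\infty(\R,\C)$ one has $M_h H^2(\C_-) \subeq H^2(\C_-)$ if and only if $h \in H^\infty(\C_-)$. Combined with the previous step this gives $H_h = 0 \iff h \in H^\infty(\C_-)$; the converse direction ($h \in H^\infty(\C_-)$ clearly multiplies $H^2(\C_-)$ into itself, hence $H_h = 0$) is immediate from the same chain of equivalences. There is no real obstacle here: the only points needing a word of care are the boundary-value identification of $H^2(\C_+)$-functions with their traces in $L^2(\R,\C)$ and the elementary facts $R H^2(\C_+) = H^2(\C_-) = H^2(\C_+)^\perp$, all of which are standard and already in use in the earlier chapters.
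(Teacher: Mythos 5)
Your proposal is correct and follows essentially the same route as the paper: rewrite \(H_h=0\) as the inclusion \(M_h H^2(\C_-) \subeq H^2(\C_-)\) (using \(R\,H^2(\C_+)=H^2(\C_-)=H^2(\C_+)^\perp\)) and then invoke the lower-half-plane analogue of \fref{prop:H2InclusionFunctions}. The only cosmetic difference is that you phrase the first step via the sesquilinear form \(\braket*{f}{H_h g}\) while the paper argues directly with the image \(p_+M_hH^2(\C_-)=\{0\}\); the content is identical.
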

\begin{proof}
One has \(H_h = 0\), if and only if
\[\{0\} = p_+ M_h R p_+^* H^2(\C_+) = p_+ M_h H^2(\C_-),\]
which is equivalent to
\[M_h H^2(\C_-) \subeq H^2(\C_-).\]
This, by \fref{prop:H2InclusionFunctions}, is equivalent to \(h \in H^\infty\left(\C_-\right)\).
\end{proof}
We already know that, given a function \(h \in L^\infty(\R,\T)^\flat\), for which \(\left(L^2\left(\R,\C\right),H^2\left(\C_+\right),\theta_h\right)\) is a maximal RPHS, the Hankel operator \(H_h\) is positive and satisfies
\[\left\lVert H_h\right\rVert = \left\lVert p_+ M_h R p_+^*\right\rVert \leq \left\lVert M_h R \right\rVert = \left\lVert h \right\rVert_\infty = 1.\]
By \fref{thm:GeneralizedMaxPosForm}, we know that these functions are exactly the functions of the form \(h = h_{\nu}\) for some measure \(\nu \in \mathcal{BM}^{\mathrm{fin}}\left([0,\infty]\right) \setminus \{0\}\). So we get a map
\[\mathcal{BM}^{\mathrm{fin}}\left([0,\infty]\right) \setminus \{0\} \mapsto \{H \in \HanP : \left\lVert H\right\rVert \leq 1\}, \quad \nu \mapsto H_{h_\nu}.\]
The goal of this chapter is to show that this map is surjective or in other words: Not only does every maximal RPHS of the form \(\left(L^2\left(\R,\C\right),H^2\left(\C_+\right),\theta_h\right)\) give rise to a positive contractive Hankel operator, but in fact every positive contractive Hankel operator can be obtained in this manner. To prove this, we need to link Hankel operators to measures, which we will do in the following section.

\subsection{Hankel operators, Carleson measures and the WOT}
In this section, we want to see that there is a one-to-one correspondence between positive Hankel operators and a certain class of measures, which are called Carleson measures and defined as follows:
\begin{definition}
\begin{enumerate}[\rm (a)]
\item A measure \(\mu \in \mathcal{BM}^{\mathrm{fin}}\left(\R_+\right)\) is called \textit{Carleson measure}, if the map 
\[H^2(\C_+)^2 \to \C, \quad (f,g) \mapsto \int_{\R_+} \overline{f\left(i\lambda\right)}g\left(i\lambda\right) \,d\mu\left(\lambda\right)\]
defines a continuous sesquilinear form on \(H^2(\C_+)\).
\item We write \(\CM\) for the set of Carleson measures on \(\R_+\).
\item Given a Carleson measure \(\mu \in \CM\), we write \(\gls*{Hmu} \in B(H^2(\C_+))\) for the operator defined by
\[\braket*{f}{H_\mu g} \coloneqq \int_{\R_+} \overline{f\left(i\lambda\right)}g\left(i\lambda\right) \,d\mu\left(\lambda\right) \quad \forall f,g \in H^2(\C_+).\]
\end{enumerate}
\end{definition}
We start by showing that the so obtained operators \(H_\mu \in B(H^2(\C_+))\) with \(\mu \in \CM\) are Hankel operators:
\begin{lemma}
Let \(\mu \in \CM\). Then \(H_\mu \in \HanP\).
\end{lemma}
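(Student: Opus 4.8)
The statement has two halves: $H_\mu$ is positive, and $H_\mu$ is a Hankel operator. Both will follow by testing against matrix coefficients $\braket*{f}{H_\mu g}$ with $f,g \in H^2(\C_+)$, using only the defining formula for $H_\mu$ and the description of $S_t$ on the Hardy space.

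First I would dispose of positivity. Note that $H_\mu$ is a well-defined bounded operator on $H^2(\C_+)$ precisely because $\mu$ is a Carleson measure (the associated sesquilinear form is continuous, so Riesz representation yields $H_\mu \in B(H^2(\C_+))$). For $f \in H^2(\C_+)$ one has
\[
\braket*{f}{H_\mu f} = \int_{\R_+} \overline{f(i\lambda)} f(i\lambda)\,d\mu(\lambda) = \int_{\R_+} |f(i\lambda)|^2 \,d\mu(\lambda) \ge 0,
\]
since $\mu$ is a positive measure; in particular $H_\mu$ is self-adjoint, so $H_\mu \ge 0$.

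Next, the Hankel property $S_t^* H_\mu = H_\mu S_t$ for $t \in \R_+$. The key point is that for $t \ge 0$ the phase function $e_t(x) = e^{itx}$ is the boundary value of the bounded holomorphic function $z \mapsto e^{itz}$ on $\C_+$ (bounded because $|e^{itz}| = e^{-t\operatorname{Im} z} \le 1$ there), so $S_t = M_{e_t}$ maps $H^2(\C_+)$ into itself and, evaluating at the point $i\lambda \in \C_+$ with $\lambda \in \R_+$, one has $(S_t f)(i\lambda) = e^{it(i\lambda)} f(i\lambda) = e^{-t\lambda} f(i\lambda)$. Then for $f,g \in H^2(\C_+)$ and $t \ge 0$,
\[
\braket*{f}{H_\mu S_t g} = \int_{\R_+} \overline{f(i\lambda)}\,e^{-t\lambda} g(i\lambda)\,d\mu(\lambda)
= \int_{\R_+} \overline{e^{-t\lambda} f(i\lambda)}\, g(i\lambda)\,d\mu(\lambda) = \braket*{S_t f}{H_\mu g} = \braket*{f}{S_t^* H_\mu g},
\]
using that $e^{-t\lambda}$ is real. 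Since $f,g$ were arbitrary, $H_\mu S_t = S_t^* H_\mu$, so $H_\mu \in \Han$, and combined with the previous paragraph $H_\mu \in \HanP$.

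\textbf{Main obstacle.} There is essentially no deep obstacle here; the only thing requiring care is the identification of $(S_t f)(i\lambda)$ with $e^{-t\lambda} f(i\lambda)$, i.e.\ that multiplication by $e_t$ on $H^2(\C_+)$ corresponds to multiplication by the holomorphic extension $e^{itz}$ (so that the interior value at $i\lambda$ is what one expects). This is standard for the Hardy space, and that $S_t H^2(\C_+) \subseteq H^2(\C_+)$ for $t \ge 0$ is already recorded in the fact that $(L^2(\R,\C), H^2(\C_+), S)$ is a SROPG.
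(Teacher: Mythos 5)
Your proof is correct and follows essentially the same route as the paper: positivity comes from $\braket*{f}{H_\mu f} = \int_{\R_+}|f(i\lambda)|^2\,d\mu(\lambda) \geq 0$, and the Hankel relation from the identity $(S_t g)(i\lambda) = e^{-t\lambda}g(i\lambda)$ together with the reality of $e^{-t\lambda}$, exactly as in the paper. Your extra remark justifying why the boundary multiplier $e_t$ acts on interior values via the bounded holomorphic extension $e^{itz}$ is a point the paper uses silently, so it is a welcome clarification but not a deviation.
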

\begin{proof}
For every \(t \in \R_+\) and \(f,g \in H^2(\Omega)\) we have
\begin{align*}
\braket*{f}{H_\mu S_t g} &= \int_{\R_+} \overline{f\left(i\lambda\right)}e^{it \cdot i\lambda} g\left(i\lambda\right) \,d\mu\left(\lambda\right) = \int_{\R_+} \overline{f\left(i\lambda\right)}e^{-t\lambda} g\left(i\lambda\right) \,d\mu\left(\lambda\right)
\\&= \int_{\R_+} \overline{e^{-t\lambda}f\left(i\lambda\right)} g\left(i\lambda\right) \,d\mu\left(\lambda\right) = \int_{\R_+} \overline{e^{it \cdot i\lambda}f\left(i\lambda\right)} g\left(i\lambda\right) \,d\mu\left(\lambda\right)
\\&= \braket*{S_t f}{H_\mu g} = \braket*{f}{S_t^* H_\mu g}
\end{align*}
so \(H S_t = S_t^* H\) and therefore \(H_\mu\) is a Hankel operator. That \(H_\mu\) is positive follows from the fact that, for \(f \in H^2(\Omega)\), we have
\[\braket*{f}{H_\mu f} = \int_{\R_+} \overline{f\left(i\lambda\right)}f\left(i\lambda\right) \,d\mu\left(\lambda\right) = \int_{\R_+} \left|f\left(i\lambda\right)\right|^2 \,d\mu\left(\lambda\right) \geq 0. \qedhere\]
\end{proof}
The following result shows that, in fact, every positive Hankel operator can be obtained in this manner from a unique measure \(\mu \in \CM\):
\begin{prop}\label{prop:HankelOneToOne}
The map
\[\CM \to \HanP, \quad \mu \mapsto H_\mu\]
is a bijection.
\end{prop}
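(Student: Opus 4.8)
The plan is to treat well-definedness, surjectivity and injectivity in turn. Well-definedness is already done: the preceding lemma gives $H_\mu\in\HanP$ for every $\mu\in\CM$, so the map makes sense and only bijectivity remains.

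Surjectivity is essentially a restatement of \fref{thm:HankelHasMeasure}. Given $H\in\HanP$, that theorem provides a measure $\mu\in\mathcal{BM}(\R_+)$ with $\braket*{f}{Hg}=\int_{\R_+}\overline{f(i\lambda)}\,g(i\lambda)\,d\mu(\lambda)$ for all $f,g\in H^2(\C_+)$. The associated sesquilinear form $(f,g)\mapsto\int_{\R_+}\overline{f(i\lambda)}\,g(i\lambda)\,d\mu(\lambda)$ then coincides with $(f,g)\mapsto\braket*{f}{Hg}$, which is continuous because $H$ is bounded; hence $\mu$ is a Carleson measure, and by the very definition of $H_\mu$ one has $\braket*{f}{H_\mu g}=\braket*{f}{Hg}$ for all $f,g\in H^2(\C_+)$, i.e. $H=H_\mu$.

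For injectivity I would fix $\mu,\mu'\in\CM$ with $H_\mu=H_{\mu'}$ and set $\nu:=\mu-\mu'$, so that $\int_{\R_+}\overline{f(i\lambda)}\,g(i\lambda)\,d\nu(\lambda)=0$ for all $f,g\in H^2(\C_+)$. Testing with reproducing kernels $f=Q_z$, $g=Q_w$ ($z,w\in\C_+$) and using the identity $Q_z(i\lambda)=Q_i(i\lambda)\,q_{i\overline z}(\lambda)$ from the proof of \fref{prop:OsterwalderSchraderHardy}, where $q_a(\lambda)=\frac{\lambda+1}{\lambda+a}$, $q_{i\overline z}$ has parameter in $\C_r$ and $\overline{q_{i\overline z}(\lambda)}=q_{-iz}(\lambda)$ again has parameter in $\C_r$, together with the fact that $Q_i(i\lambda)$ is a nonzero real number for every $\lambda\in\R_+$, this vanishing becomes
\[
\int_{\R_+}q_a(\lambda)\,q_b(\lambda)\,d\eta(\lambda)=0\qquad\text{for all }a,b\in\C_r,
\]
where $\eta$ is the complex measure given by $d\eta(\lambda)=|Q_i(i\lambda)|^2\,d\nu(\lambda)$. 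This $\eta$ is finite, since $\int_{\R_+}|Q_i(i\lambda)|^2\,d\mu(\lambda)=\braket*{Q_i}{H_\mu Q_i}<\infty$ by boundedness of $H_\mu$, and likewise for $\mu'$. By \fref{prop:H2Dense0Infty} the span of $\{q_a:a\in\C_r\}$ is a conjugation-closed algebra whose sup-norm closure on $[0,\infty]$ is $C([0,\infty],\C)$; in particular it contains all products $q_aq_b$, so $\int_{[0,\infty]}\psi\,d\eta=0$ for every $\psi\in C([0,\infty],\C)$, hence for every $\psi\in C_c(\R_+,\C)$. The Riesz representation theorem \cite[Thm.~3.14]{Ru86} then gives $\eta=0$ on $\R_+$, and since $|Q_i(i\lambda)|^2>0$ for all $\lambda\in\R_+$ this forces $\nu=0$, i.e. $\mu=\mu'$.

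Since surjectivity costs nothing beyond invoking \fref{thm:HankelHasMeasure}, the substance lies in injectivity, and the step I expect to demand the most care is the reduction that makes \fref{prop:H2Dense0Infty} applicable: the boundary traces $f\mapsto f(i\cdot)$ are not individually bounded on $\R_+$, so one must multiply by the weight $|Q_i(i\cdot)|^2$ — which is legitimate precisely because the Carleson condition makes $|Q_i(i\cdot)|^2\,d\mu$ a finite measure — thereby turning the test functions into the products $q_aq_b$ from the algebra of \fref{prop:H2Dense0Infty}, after which separating measures on $\R_+$ is routine.
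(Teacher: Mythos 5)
Your proof is correct and follows essentially the same route as the paper: surjectivity via \fref{thm:HankelHasMeasure}, and injectivity by testing against reproducing kernels, absorbing the weight \(|Q_i(i\lambda)|^2=\tfrac{1}{4\pi^2(\lambda+1)^2}\) to get finite measures, and then applying the density statement of \fref{prop:H2Dense0Infty}. The only (harmless) differences are that you let both kernels vary, producing products \(q_aq_b\) that you reduce back to the \(q_a\) (e.g.\ via \(q_1=\mathbf{1}\)), whereas the paper fixes \(f=Q_i\) from the start, and that your ``\(\nu=\mu-\mu'\)'' should strictly be read as the difference of the two finite weighted measures, exactly as your own finiteness remark already justifies.
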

\begin{proof}
The surjectivity follows immediately by \fref{thm:HankelHasMeasure}. We now show the injectivity. So let \(\mu,\mu' \in \CM\) with \(H_\mu = H_{\mu'}\). This is equivalent to
\[\int_{\R_+} \overline{f\left(i\lambda\right)}g\left(i\lambda\right) \,d\mu\left(\lambda\right) = \int_{\R_+} \overline{f\left(i\lambda\right)}g\left(i\lambda\right) \,d\mu'\left(\lambda\right) \quad \forall f,g \in H^2(\C_+),\]
which especially implies
\[\int_{\R_+} \overline{Q_i\left(i\lambda\right)}Q_z\left(i\lambda\right) \,d\mu\left(\lambda\right) = \int_{\R_+} \overline{Q_i\left(i\lambda\right)}Q_z\left(i\lambda\right) \,d\mu'\left(\lambda\right) \quad \forall z \in \C_+.\]
For \(z \in \C_+\), we have
\[Q_z\left(i\lambda\right) = \frac 1{2\pi} \cdot \frac i{i\lambda-\overline{z}} = \frac 1{2\pi} \cdot \frac 1{\lambda+i\overline{z}},\]
so
\[4\pi^2 \overline{Q_i\left(i\lambda\right)}Q_z\left(i\lambda\right) = \frac 1{\lambda +1} \cdot \frac 1{\lambda+i\overline{z}} = \frac {\lambda+1}{\lambda+i\overline{z}} \cdot \frac 1{(\lambda +1)^2}.\]
Therefore, since \(z \in \C_+\), if and only if \(i\overline{z} \in \C_r\), we have
\[\int_{\R_+} \frac {\lambda+1}{\lambda+z} \cdot \frac 1{(\lambda +1)^2} \,d\mu\left(\lambda\right) = \int_{\R_+} \frac {\lambda+1}{\lambda+z} \cdot \frac 1{(\lambda +1)^2} \,d\mu'\left(\lambda\right) \quad \forall z \in \C_r.\]
By \fref{prop:H2Dense0Infty} this implies
\[\int_{\R_+} f(\lambda) \cdot \frac 1{(\lambda +1)^2} \,d\mu\left(\lambda\right) = \int_{\R_+} f(\lambda) \cdot \frac 1{(\lambda +1)^2} \,d\mu'\left(\lambda\right) \quad \forall f \in C([0,\infty],\C)\]
and therefore \(\mu = \mu'\).
\end{proof}
This theorem shows that there is a one-to-one correspondence between positive Hankel operators and Carleson measures. Hence, we can define:
\begin{definition}
Let \(H\) be a positive Hankel operator. Then we write \(\gls*{muH} \in \CM\) for the unique Carleson measure with \(H = H_{\mu_H}\).
\end{definition}
Since we are interested in positive Hankel operators \(H\) of the form \(H = H_{h_\nu}\) with measures \({\nu \in \mathcal{BM}^{\mathrm{fin}}\left([0,\infty]\right) \setminus \{0\}}\), we want to know what their corresponding Carleson measure is. The following proposition will answer this question:
\begin{prop}\label{prop:cTNuIsCarlesonOfHNu}
Let \(\nu \in \mathcal{BM}^{\mathrm{fin}}\left([0,\infty]\right) \setminus \{0\}\). Then \(H_{h_\nu} \in \HanP\) with \(\left\lVert H_{\nu_n}\right\rVert \leq 1\). Further, \(\cT\nu \in \CM\) (cf. \fref{def:defCT}) and
\[H_{\cT\nu} = H_{h_\nu}.\]
\end{prop}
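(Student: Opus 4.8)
The plan is to read off all three assertions from \fref{thm:cTNuIsCarleson}, which already expresses the sesquilinear form of $\theta_{h_\nu}$ on $H^2(\C_+)$ as integration against $\cT\nu$. First I would note that $H_{h_\nu}$ is a positive contractive Hankel operator: since $h_\nu\in L^\infty(\R,\T)^\flat$, \fref{cor:hNuMaxPos} gives that $\bigl(L^2(\R,\C),H^2(\C_+),\theta_{h_\nu}\bigr)$ is a (maximal) RPHS, and by \fref{lem:RPHSPosHankelEquiv} this is equivalent to $H_{h_\nu}=p_+\theta_{h_\nu}p_+^{*}=p_+M_{h_\nu}R\,p_+^{*}$ being a positive Hankel operator, so $H_{h_\nu}\in\HanP$. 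As $h_\nu$ has values in $\T$ we have $\lVert h_\nu\rVert_\infty=1$, and since $p_+$ is a partial isometry and $R$ is unitary, $\lVert H_{h_\nu}\rVert\le\lVert M_{h_\nu}R\rVert=\lVert h_\nu\rVert_\infty=1$, exactly as observed just before \fref{def:Hankel}.

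The core of the argument is to identify $\cT\nu$ with the Carleson measure $\mu_{H_{h_\nu}}$ of $H_{h_\nu}$. From $H_{h_\nu}=p_+\theta_{h_\nu}p_+^{*}$ one has $\braket*{f}{H_{h_\nu}g}=\braket*{f}{\theta_{h_\nu}g}$ for all $f,g\in H^2(\C_+)$, and \fref{thm:cTNuIsCarleson} identifies the right-hand side with $\int_{\R_+}\overline{f(i\lambda)}g(i\lambda)\,d(\cT\nu)(\lambda)$; in particular this integral converges for all $f,g\in H^2(\C_+)$, and the choice $f=g=Q_i$ shows $\int_{\R_+}\lvert Q_i(i\lambda)\rvert^{2}\,d(\cT\nu)(\lambda)<\infty$. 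Since $\lvert Q_i(i\lambda)\rvert^{2}=\tfrac1{4\pi^{2}(\lambda+1)^{2}}$ is bounded below on compact subsets of $\R_+$ and $\cT\nu\ge 0$ (the density in \fref{def:defCT} is positive on $\R_+$), the measure $\cT\nu$ is a positive Radon measure on $\R_+$. On the other hand, $H_{h_\nu}\in\HanP$, so \fref{prop:HankelOneToOne} gives a unique $\mu:=\mu_{H_{h_\nu}}\in\CM$ with $\braket*{f}{H_{h_\nu}g}=\int_{\R_+}\overline{f(i\lambda)}g(i\lambda)\,d\mu(\lambda)$, whence
\[
\int_{\R_+}\overline{f(i\lambda)}g(i\lambda)\,d(\cT\nu)(\lambda)=\int_{\R_+}\overline{f(i\lambda)}g(i\lambda)\,d\mu(\lambda)\qquad\text{for all }f,g\in H^2(\C_+).
\]

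To turn this into $\cT\nu=\mu$ I would reuse the Stone--Weierstrass step from the proof of \fref{prop:HankelOneToOne}: specializing to $f=Q_i$, $g=Q_z$ with $z\in\C_+$ and using $4\pi^{2}\,\overline{Q_i(i\lambda)}Q_z(i\lambda)=\tfrac{\lambda+1}{\lambda+i\overline z}\cdot\tfrac1{(\lambda+1)^{2}}$ together with the fact that $z\mapsto i\overline z$ maps $\C_+$ onto $\C_r$, \fref{prop:H2Dense0Infty} yields $\int_{\R_+}f(\lambda)\tfrac1{(\lambda+1)^{2}}\,d(\cT\nu)(\lambda)=\int_{\R_+}f(\lambda)\tfrac1{(\lambda+1)^{2}}\,d\mu(\lambda)$ for every $f\in C([0,\infty],\C)$, hence for every $f\in C_c(\R_+,\C)$; Riesz--Markov uniqueness then forces $\cT\nu=\mu$. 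Consequently $\cT\nu=\mu_{H_{h_\nu}}\in\CM$ (so in particular $\cT\nu$ is finite) and $H_{\cT\nu}=H_{\mu_{H_{h_\nu}}}=H_{h_\nu}$. The computation is essentially routine once \fref{thm:cTNuIsCarleson} is available; the only delicate point is this last measure-theoretic identification, i.e.\ making sure $\cT\nu$ is a genuine Radon (and then finite) measure so that the uniqueness argument of \fref{prop:HankelOneToOne} applies verbatim, after which the Carleson property and the operator identity are immediate.
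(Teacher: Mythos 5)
Your proof is correct and rests on the same key ingredient as the paper, namely \fref{thm:cTNuIsCarleson}, but you take a longer route for the second half. The paper's own proof is a one-liner: since \fref{thm:cTNuIsCarleson} gives \(\braket*{f}{H_{h_\nu}g}=\braket*{f}{\theta_{h_\nu}g}=\int_{\R_+}\overline{f(i\lambda)}g(i\lambda)\,d(\cT\nu)(\lambda)\) for all \(f,g\in H^2(\C_+)\), the map \((f,g)\mapsto\int_{\R_+}\overline{f(i\lambda)}g(i\lambda)\,d(\cT\nu)(\lambda)\) coincides with the bounded sesquilinear form of \(H_{h_\nu}\), so \(\cT\nu\in\CM\) holds directly by the definition of a Carleson measure and \(H_{\cT\nu}=H_{h_\nu}\) by the definition of \(H_{\cT\nu}\); positivity is read off from \(f=g\), and the norm bound is as you state. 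You instead first produce the Carleson measure \(\mu_{H_{h_\nu}}\) via \fref{prop:HankelOneToOne} and then identify \(\cT\nu=\mu_{H_{h_\nu}}\) by rerunning the reproducing-kernel and Stone--Weierstra\ss{} uniqueness argument from the injectivity part of that proposition (correctly supplying the one extra point this needs, namely \(\int_{\R_+}|Q_i(i\lambda)|^2\,d(\cT\nu)(\lambda)<\infty\), from \fref{thm:cTNuIsCarleson} with \(f=g=Q_i\)). This detour is sound and causes no circularity, since \fref{cor:hNuMaxPos}, \fref{lem:RPHSPosHankelEquiv} and \fref{prop:HankelOneToOne} all precede the statement and do not depend on it, but it is redundant: once the form identity of \fref{thm:cTNuIsCarleson} is available, the conclusion is immediate from the definitions. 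One small slip: the parenthetical ``so in particular \(\cT\nu\) is finite'' is not correct --- by \fref{ex:Lebesgue} one can have \(\cT\nu=2\lambda_1\); this traces back to the ``\(\mathcal{BM}^{\mathrm{fin}}\)'' in the paper's definition of \(\CM\), which is inconsistent with \fref{thm:HankelHasMeasure} and with that example. Nothing in your argument actually uses finiteness, so the proof stands, but you should drop that remark.
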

\begin{proof}
By \fref{thm:cTNuIsCarleson}, for all \(f,g \in H^2(\C_+)\), one has
\[\int_{\R_+} \overline{f(i\lambda)}\,g(i\lambda) \,d(\cT \nu)(\lambda) = \braket*{f}{\theta_{h_\nu}g} = \braket*{f}{H_{h_\nu}g},\]
which immediately implies the statement.
\end{proof}
Since later we will work with approximations in the weak operator topology, we now want to see how this topology behaves on the set of positive Hankel operators. For this, we make the following definition:
\begin{definition}
Let \(\cH\) be a Hilbert space over a field \(\K = \R,\C\).
\begin{enumerate}[\rm (a)]
\item Given a subset \(M \subeq B(\cH)\), we write
\[\overline{M}^{\gls*{WOT}}\]
for the closure of \(M\) with respect to the weak operator topology, i.e. the topology induced by the functions
\[f_{v,w}: B(\cH) \to \K, \quad A \mapsto \braket*{v}{Aw}, \quad v,w \in \cH.\]
\item Given \(A \in B(\cH)\) and a sequence \((A_n)_{n \in \N}\) in \(B(\cH)\), we write
\[A_n \underset{\mathrm{WOT}}{\xrightarrow{n \to \infty}} A\]
if \(A_n\) converges to \(A\) with respect to the weak operator topology, i.e. if
\[\braket*{v}{A_n w} \xrightarrow{n \to \infty} \braket*{v}{Aw} \quad \forall v,w \in \cH.\]
\end{enumerate}
\end{definition}
\begin{lem}\label{lem:NormSubContinuous}
Let \(\cH\) be a Hilbert space. Let \(A \in B(\cH)\) and \((A_n)_{n \in \N}\) be a sequence in \(B(\cH)\) with
\[A_n \underset{\mathrm{WOT}}{\xrightarrow{n \to \infty}} A.\]
Then
\[\left\lVert A\right\rVert \leq \limsup_{n \to \infty} \left\lVert A_n \right\rVert.\]
\end{lem}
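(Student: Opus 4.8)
The plan is to exploit the standard characterization of the operator norm on a Hilbert space by matrix coefficients, namely $\lVert A\rVert = \sup\{|\braket*{v}{Aw}| : v,w \in \cH, \lVert v\rVert = \lVert w\rVert = 1\}$, and then pass to the limit coefficient-wise. This reduces the statement to an elementary estimate, so I do not expect any genuine obstacle here; the only point requiring a word of care is the interchange of a supremum with a $\limsup$, which goes in the easy direction.

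First I would fix arbitrary unit vectors $v,w \in \cH$. By the definition of WOT-convergence we have $\braket*{v}{A_n w} \xrightarrow{n \to \infty} \braket*{v}{Aw}$, hence $|\braket*{v}{Aw}| = \lim_{n \to \infty} |\braket*{v}{A_n w}|$. For every $n$ the Cauchy--Schwarz inequality gives $|\braket*{v}{A_n w}| \leq \lVert A_n\rVert \, \lVert v\rVert \, \lVert w\rVert = \lVert A_n\rVert$, so taking the $\limsup$ yields
\[
|\braket*{v}{Aw}| = \limsup_{n \to \infty} |\braket*{v}{A_n w}| \leq \limsup_{n \to \infty} \lVert A_n\rVert .
\]

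Since this bound is uniform in the unit vectors $v,w$, I would then take the supremum over all such $v,w$ and invoke the norm formula $\lVert A\rVert = \sup_{\lVert v\rVert = \lVert w\rVert = 1} |\braket*{v}{Aw}|$ to conclude $\lVert A\rVert \leq \limsup_{n \to \infty} \lVert A_n\rVert$, which is exactly the claim. (If one prefers to avoid the bilinear form description, the same argument works verbatim with $w$ a unit vector and $v = Aw/\lVert Aw\rVert$ when $Aw \neq 0$, giving $\lVert Aw\rVert \leq \limsup_n \lVert A_n\rVert$ and then taking the supremum over $w$.) The main thing to remember is simply that $\limsup$ of a product with a fixed bounded factor behaves as expected, and that WOT-convergence is precisely convergence of all matrix coefficients.
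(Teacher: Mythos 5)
Your proof is correct and follows essentially the same route as the paper: the paper likewise bounds the matrix coefficient \(|\braket*{v}{Aw}| = \limsup_{n\to\infty}|\braket*{v}{A_n w}| \leq \limsup_{n\to\infty}\left\lVert A_n\right\rVert \cdot \left\lVert v\right\rVert \cdot \left\lVert w\right\rVert\) via Cauchy--Schwarz and then passes to the supremum over unit vectors. Your write-up simply spells out the norm characterization step that the paper leaves implicit.
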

\begin{proof}
For \(v,w \in \cH\) we have
\[|\braket*{v}{Aw}| = \limsup_{n \to \infty} |\braket*{v}{A_nw}| \leq \limsup_{n \to \infty} \left\lVert A_n \right\rVert \cdot \left\lVert v \right\rVert \cdot \left\lVert w \right\rVert,\]
which immediately implies the statement.
\end{proof}
\begin{lem}\label{lem:ContractiveHankelWOTClosed}
The set
\[\{H \in \HanP : \left\lVert H\right\rVert \leq 1\}\]
is closed in the weak operator topology.
\end{lem}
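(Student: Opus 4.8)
The plan is to write $\{H \in \HanP : \lVert H\rVert \le 1\}$ as an intersection of subsets of $B(H^2(\C_+))$ that are each visibly closed for the weak operator topology, and then use that an arbitrary intersection of closed sets is closed. Phrasing things this way — rather than via sequences — is deliberate, since the weak operator topology on $B(H^2(\C_+))$ need not be first countable, so sequential closedness would not automatically give closedness.

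First I would handle the Hankel condition. By \fref{def:Hankel} we have $\Han = \bigcap_{t \ge 0}\{A \in B(H^2(\C_+)) : S_t^* A = A S_t\}$, and for fixed $t \ge 0$ and fixed $f,g \in H^2(\C_+)$ the map $A \mapsto \braket*{f}{A S_t g} - \braket*{S_t f}{A g}$ is continuous for the weak operator topology, being a difference of two maps of the form $A \mapsto \braket*{v}{Aw}$. Hence $\{A : \braket*{f}{A S_t g} = \braket*{S_t f}{A g}\}$ is weakly closed; intersecting over all $f,g$ and then over all $t \ge 0$ shows that $\Han$ is weakly closed.

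Next I would observe that positivity and contractivity are likewise closed conditions: on the complex Hilbert space $H^2(\C_+)$ one has $\{A : A \ge 0\} = \bigcap_{f}\{A : \braket*{f}{A f} \ge 0\}$ (positivity of the quadratic form already forcing self-adjointness), each factor being the preimage of $[0,\infty)$ under the weakly continuous functional $A \mapsto \braket*{f}{Af}$; and $\{A : \lVert A\rVert \le 1\} = \bigcap_{f,g}\{A : \lvert\braket*{f}{Ag}\rvert \le \lVert f\rVert\,\lVert g\rVert\}$, each factor being the preimage of a closed disc under $A \mapsto \braket*{f}{Ag}$. Intersecting, $\{H \in \HanP : \lVert H\rVert \le 1\} = \Han \cap \{A \ge 0\} \cap \{A : \lVert A\rVert \le 1\}$ is an intersection of weakly closed sets, which is the claim. (The same computation in sequential guise: if a net $(H_\alpha)$ in the set converges weakly to $H$, passing to the limit in $\braket*{f}{H_\alpha S_t g} = \braket*{S_t f}{H_\alpha g}$ and in $\braket*{f}{H_\alpha f} \ge 0$ shows $H$ is a positive Hankel operator, and $\lVert H\rVert \le \limsup_\alpha \lVert H_\alpha\rVert \le 1$ by the net analogue of \fref{lem:NormSubContinuous}.) I do not expect a genuine obstacle here; the only point worth a moment's care is exactly the topological-versus-sequential formulation noted above.
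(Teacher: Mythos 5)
Your proof is correct, and it takes a genuinely different route from the paper. You express $\{H \in \HanP : \lVert H\rVert \le 1\}$ as an intersection, over all test vectors and all $t$, of sets of the form $\{A : \braket*{f}{A S_t g} = \braket*{S_t f}{A g}\}$, $\{A : \braket*{f}{Af} \ge 0\}$ and $\{A : \lvert\braket*{f}{Ag}\rvert \le \lVert f\rVert \lVert g\rVert\}$, each of which is the preimage of a closed set under a functional that is continuous for the weak operator topology; closedness is then immediate and no countability assumptions enter. The paper instead argues sequentially: it takes $H$ in the closure, invokes separability of $H^2(\C_+)$ to get metrizability of the weak operator topology on the closed unit ball (so that the closure point is attained along a sequence $(H_n)$ from the set), and then passes to the limit in the Hankel relation and in positivity, using \fref{lem:NormSubContinuous} for $\lVert H\rVert \le \limsup_n \lVert H_n\rVert \le 1$. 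Your version buys generality and cleanliness — it needs neither separability nor the metrizability fact, and it makes explicit the point you rightly flag, namely that sequential closedness alone would not suffice since the weak operator topology is not first countable; the paper's version is shorter given that \fref{lem:NormSubContinuous} and the metrizability of the ball are already available, though it silently uses that the closure point lies in the (weakly closed) unit ball so that the metrizable relative topology applies. Both arguments are sound, and your parenthetical net formulation matches the paper's computation with nets in place of sequences.
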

\begin{proof}
Let \(H \in \overline{\{H \in \HanP : \left\lVert H\right\rVert \leq 1\}}\). Since \(H^2(\C_+)\) is separable, the weak operator topology restricted to the closed unit ball is metrizable, and therefore there exists a sequence \((H_n)_{n \in \N}\) in \(\{H \in \HanP : \left\lVert H\right\rVert \leq 1\}\) with
\[H_n \underset{\mathrm{WOT}}{\xrightarrow{n \to \infty}} W.\]
By \fref{lem:NormSubContinuous} we then get
\[\left\lVert H\right\rVert \leq \limsup_{n \to \infty} \left\lVert H_n \right\rVert \leq 1.\]
Further, for \(t \in \R_+\) and \(f,g \in H^2(\C_+)\), we have
\[\braket*{f}{HS_t g} = \lim_{n \to \infty} \braket*{f}{H_nS_t g} = \lim_{n \to \infty} \braket*{f}{S_t^*H_n g} = \lim_{n \to \infty} \braket*{S_t f}{H_n g} = \braket*{S_t f}{Hg} = \braket*{f}{S_t^* Hg},\]
so \(HS_t = S_t^* H\) and therefore \(H \in \Han\). Also, for every \(f \in H^2(\C_+)\), one has
\[\braket*{f}{Hf} = \lim_{n \to \infty} \braket*{f}{H_nf} \geq 0,\]
so \(H \in \HanP\).
\end{proof}
We now want to see how convergence of positive Hankel operators in the weak operator topology translates to convergence of their corresponding Carleson measures. For this, we need the following lemmata:
\begin{lemma}\label{lem:WOTDenseFunctionals}
Let \(\cH\) be a Hilbert space and \(V \subeq \cH\) be a dense subspace. Then, for \(A \in B(\cH)\) and a bounded sequence \((A_n)_{n \in \N}\) in \(B(\cH)\), the following are equivalent:
\begin{enumerate}[\rm (a)]
\item \(A_n \underset{\mathrm{WOT}}{\xrightarrow{n \to \infty}} A\).
\item \(\braket*{v}{A_n w} \xrightarrow{n \to \infty} \braket*{v}{Aw} \quad \forall v,w \in V\).
\end{enumerate}
\end{lemma}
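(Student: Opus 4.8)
The plan is to handle the two implications separately, the substance lying entirely in (b) $\Rightarrow$ (a). The implication (a) $\Rightarrow$ (b) is immediate: since $V \subeq \cH$, the defining convergence $\braket*{v}{A_n w} \to \braket*{v}{Aw}$ for all $v,w \in \cH$ specializes in particular to all $v,w \in V$.

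For (b) $\Rightarrow$ (a) I would run a $3\varepsilon$-argument that trades on the uniform boundedness of $(A_n)_{n \in \N}$. First set $M \coloneqq \max\{\sup_{n \in \N}\lVert A_n\rVert,\ \lVert A\rVert\}$, which is finite by the boundedness hypothesis. Fix arbitrary $v,w \in \cH$ and $\varepsilon \in (0,1]$, and by density of $V$ choose $v',w' \in V$ with $\lVert v - v'\rVert \leq \varepsilon$ and $\lVert w - w'\rVert \leq \varepsilon$; note that then $\lVert v'\rVert \leq \lVert v\rVert + \varepsilon$ and $\lVert w'\rVert \leq \lVert w\rVert + \varepsilon$. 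Now decompose
\[
\braket*{v}{A_n w} - \braket*{v}{A w} = \braket*{v - v'}{A_n w} + \braket*{v'}{A_n (w - w')} + \bigl(\braket*{v'}{A_n w'} - \braket*{v'}{A w'}\bigr) + \braket*{v'}{A (w' - w)} + \braket*{v' - v}{A w}.
\]
Each of the four outer terms is bounded in absolute value by $M$ times a product of two norms, one of which is $\leq \varepsilon$; hence all four are $\leq C\varepsilon$ uniformly in $n$, where the constant $C$ depends only on $M$, $\lVert v\rVert$ and $\lVert w\rVert$ (using $\varepsilon \leq 1$). The middle term tends to $0$ as $n \to \infty$ by hypothesis (b), since $v',w' \in V$. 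Therefore $\limsup_{n \to \infty}\lvert \braket*{v}{A_n w} - \braket*{v}{A w}\rvert \leq C\varepsilon$, and since $\varepsilon > 0$ was arbitrary the left side is $0$. As $v,w \in \cH$ were arbitrary, this says exactly $A_n \underset{\mathrm{WOT}}{\xrightarrow{n \to \infty}} A$.

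I do not expect any real obstacle. The only point deserving a line of care is that two of the error terms involve $A$ rather than $A_n$, so the bound on $\lVert A\rVert$ must be in place before the estimate runs; folding it into the constant $M$ as above is the cleanest fix, though one could alternatively first deduce $\lVert A\rVert \leq \sup_n \lVert A_n\rVert$ from (b) and density. Without the boundedness assumption on $(A_n)$ the interchange of limits fails and the lemma becomes false, so it is essential that this hypothesis be invoked.
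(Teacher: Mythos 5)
Your proof is correct and follows essentially the same route as the paper: approximate $v,w$ by elements of $V$ and exploit the uniform bound on the operators in a $3\varepsilon$-type estimate, with the only cosmetic difference that the paper works with $A_n-A$ (three terms, $M=\sup_n\lVert A_n-A\rVert$) while you split into five terms and fold $\lVert A\rVert$ into the constant.
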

\begin{proof}
That (a) implies (b) is trivial. We now assume that (b) holds and let \(v,w \in \cH\) and \(\epsilon > 0\). Further, let
\[M \coloneqq \sup_{n \in \N} \left\lVert A_n-A\right\rVert < \infty.\]
Then there exist \(\tilde v, \tilde w \in V\) with
 \[\left\lVert w-\tilde w\right\rVert < \frac{\epsilon}{3(M \left\lVert v\right\rVert + 1)} \quad \text{and} \quad \left\lVert v-\tilde v\right\rVert < \frac{\epsilon}{3(M \left\lVert \tilde w\right\rVert + 1)}.\]
Further, since
\[\braket*{\tilde v}{A_n \tilde w} \xrightarrow{n \to \infty} \braket*{\tilde v}{A\tilde w},\]
there exists \(N \in \N\) such that
\[\left|\braket*{\tilde v}{(A_n - A) \tilde w}\right| < \frac \epsilon 3 \quad \forall n \geq N.\]
Then, for \(n \geq N\), one has
\begin{align*}
\left|\braket*{v}{(A_n - A) w}\right| &= \left|\braket*{\tilde v}{(A_n - A) \tilde w} + \braket*{v-\tilde v}{(A_n - A) \tilde w}+ \braket*{v}{(A_n - A) (w-\tilde w)}\right|
\\&\leq \left|\braket*{\tilde v}{(A_n - A) \tilde w}\right| + \left|\braket*{v-\tilde v}{(A_n - A) \tilde w}\right| + \left|\braket*{v}{(A_n - A) (w-\tilde w)}\right|
\\&\leq \left|\braket*{\tilde v}{(A_n - A) \tilde w}\right| + \left\lVert v - \tilde v\right\rVert \cdot M \cdot \left\lVert \tilde w\right\rVert + \left\lVert v\right\rVert \cdot M \cdot \left\lVert w - \tilde w\right\rVert
\\&< \frac \epsilon 3 + \frac \epsilon 3 + \frac \epsilon 3 = \epsilon. \qedhere
\end{align*}
\end{proof}
\begin{lemma}\label{lem:WeakDenseFunctionals}
Let \(B\) be a Banach space and \(V \subeq B\) be a dense subspace. Then, for \(\phi \in B'\) and a bounded sequence \((\phi_n)_{n \in \N}\) in \(B'\), the following are equivalent:
\begin{enumerate}[\rm (a)]
\item \(\phi_n \underset{w}{\xrightarrow{n \to \infty}} \phi\).
\item \(\phi_n(v) \xrightarrow{n \to \infty} \phi(v) \quad \forall v \in V\).
\end{enumerate}
\end{lemma}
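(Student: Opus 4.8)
The plan is to copy, essentially verbatim, the $\epsilon/3$ argument used for \fref{lem:WOTDenseFunctionals}, with inner products replaced by evaluation of functionals. The implication (a) $\Rightarrow$ (b) is immediate, since $V \subeq B$ and weak-$*$ convergence means $\phi_n(v) \to \phi(v)$ for \emph{all} $v \in B$. So the only work is (b) $\Rightarrow$ (a).

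For that direction, I would first set
\[M \coloneqq \sup_{n \in \N} \left\lVert \phi_n - \phi\right\rVert,\]
which is finite because $(\phi_n)_{n \in \N}$ is bounded in $B'$ and $\phi \in B'$. Fix an arbitrary $v \in B$ and $\epsilon > 0$. Using density of $V$, choose $\tilde v \in V$ with
\[\left\lVert v - \tilde v\right\rVert < \frac{\epsilon}{2(M+1)}.\]
By hypothesis (b), $\phi_n(\tilde v) \to \phi(\tilde v)$, so there is $N \in \N$ with $\left|\phi_n(\tilde v) - \phi(\tilde v)\right| < \epsilon/2$ for all $n \geq N$. Then for $n \geq N$,
\[\left|\phi_n(v) - \phi(v)\right| \leq \left|(\phi_n - \phi)(\tilde v)\right| + \left|(\phi_n - \phi)(v - \tilde v)\right| < \frac{\epsilon}{2} + M \cdot \frac{\epsilon}{2(M+1)} < \epsilon,\]
which shows $\phi_n(v) \to \phi(v)$. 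Since $v \in B$ was arbitrary, this is exactly $\phi_n \underset{w}{\to} \phi$.

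There is no real obstacle here: the statement is the standard fact that, on bounded subsets of a dual space, the weak-$*$ topology is determined by testing against a dense subspace, and the proof is the routine triangle-inequality splitting above. The only point worth a moment's care is invoking boundedness of the sequence to control the term $(\phi_n - \phi)(v - \tilde v)$ uniformly in $n$; everything else is formal. I would present it in four or five lines, matching the style of the preceding lemma's proof.
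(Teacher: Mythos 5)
Your proposal is correct and follows essentially the same route as the paper: the standard density-plus-uniform-boundedness splitting, with the easy direction being immediate. (Your choice \(M = \sup_n \lVert \phi_n - \phi\rVert\) is in fact marginally cleaner than the paper's \(M = \sup_n \lVert \phi_n\rVert\), since it controls \((\phi_n-\phi)(v-\tilde v)\) directly.)
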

\begin{proof}
That (a) implies (b) is trivial. We now assume that (b) holds and let \(v \in B\) and \(\epsilon > 0\). Further, let
\[M \coloneqq \sup_{n \in \N} \left\lVert \phi_n\right\rVert < \infty.\]
Then there exists \(\tilde v \in V\) with
 \[\left\lVert v-\tilde v\right\rVert < \frac{\epsilon}{2(M + 1)}.\]
Further, since
\[\phi_n(\tilde v) \xrightarrow{n \to \infty} \phi(\tilde v),\]
there exists \(N \in \N\) such that
\[\left|(\phi_n - \phi)(\tilde v)\right| < \frac \epsilon 2 \quad \forall n \geq N.\]
Then, for \(n \geq N\), one has
\begin{align*}
\left|(\phi_n - \phi)(v)\right| &= \left|(\phi_n - \phi)(\tilde v) + (\phi_n - \phi)(v-\tilde v)\right|
\\&\leq \left|(\phi_n - \phi)(\tilde v)\right| + \left|(\phi_n - \phi)(v-\tilde v)\right|
\\&\leq \left|(\phi_n - \phi)(\tilde v)\right| + M \cdot \left\lVert v - \tilde v\right\rVert < \frac \epsilon 2 + \frac \epsilon 2 = \epsilon. \qedhere
\end{align*}
\end{proof}
\begin{prop}\label{prop:topologiesEquiv}
For \(H \in \HanP\) and a bounded sequence \((H_n)_{n \in \N}\) in \(\HanP\) the following are equivalent:
\begin{enumerate}[\rm (a)]
\item \(H_n \underset{\mathrm{WOT}}{\xrightarrow{n \to \infty}} H\).
\item \(\braket*{Q_w}{H_n Q_z} \xrightarrow{n \to \infty} \braket*{Q_w}{H Q_z} \quad \forall w,z \in \C_+\).
\item One has
\[\int_{\R_+} f(\lambda) \frac 1{(1+\lambda)^2}\,d\mu_{H_n}(\lambda) \xrightarrow{n \to \infty} \int_{\R_+} f(\lambda) \frac 1{(1+\lambda)^2}\,d\mu_H(\lambda)\]
for every function \(f \in C([0,\infty],\C)\).
\item One has
\[\int_{\R_+} f(\lambda) \,d\mu_n(\lambda) \xrightarrow{n \to \infty} \int_{\R_+} f(\lambda) \,d\mu(\lambda)\]
for every function \(f \in C_c(\R_+,\C)\) and
\[\int_{\R_+} \frac 1{(1+\lambda)^2}\,d\mu_n(\lambda) \xrightarrow{n \to \infty} \int_{\R_+} \frac 1{(1+\lambda)^2}\,d\mu(\lambda).\]
\end{enumerate}
\end{prop}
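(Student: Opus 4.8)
The plan is to prove the three equivalences (a)$\Leftrightarrow$(b), (b)$\Leftrightarrow$(c) and (c)$\Leftrightarrow$(d) one at a time. The implication (a)$\Rightarrow$(b) is trivial, since $Q_w,Q_z\in H^2(\C_+)$. For (b)$\Rightarrow$(a) I would invoke \fref{lem:WOTDenseFunctionals} with the dense subspace $V=\spann_\C\{Q_z:z\in\C_+\}$ of the reproducing kernel Hilbert space $H^2(\C_+)$ (cf.\ \fref{prop:H2RKHS}); the boundedness of $(H_n)_{n\in\N}$ is exactly the hypothesis needed to apply that lemma, and it yields $\braket*{v}{H_nw}\to\braket*{v}{Hw}$ for all $v,w\in H^2(\C_+)$, i.e.\ (a).

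The link between (b) and (c) is the elementary identity
\[\overline{Q_w(i\lambda)}\,Q_z(i\lambda)=g_{w,z}(\lambda)\cdot\frac{1}{(1+\lambda)^2},\qquad \lambda\in\R_+,\ w,z\in\C_+,\]
where, after the partial-fraction manipulation already carried out in the proof of \fref{prop:HankelOneToOne}, $g_{w,z}$ is a rational function whose poles lie off $\overline{\R_{\geq0}}$ and which extends continuously across $\lambda=\infty$, hence $g_{w,z}\in C([0,\infty],\C)$. Since $\braket*{Q_w}{H_nQ_z}=\int_{\R_+}g_{w,z}(\lambda)(1+\lambda)^{-2}\,d\mu_{H_n}(\lambda)$ and similarly for $H$, the direction (c)$\Rightarrow$(b) is immediate. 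For (b)$\Rightarrow$(c) I would specialize to $w=i$: then $g_{i,z}$ is, up to a fixed constant, the function $q_{i\overline z}$ of \fref{prop:H2Dense0Infty}, and as $z$ ranges over $\C_+$ the point $i\overline z$ ranges over $\C_r$. Thus (b) gives convergence of the functionals $f\mapsto\int_{\R_+}f(\lambda)(1+\lambda)^{-2}\,d\mu_{H_n}(\lambda)$ on $\spann_\C\{q_w:w\in\C_r\}$, which is dense in $C([0,\infty],\C)$ by \fref{prop:H2Dense0Infty}; these functionals are uniformly bounded because their norms are a fixed multiple of $\int_{\R_+}(1+\lambda)^{-2}\,d\mu_{H_n}$, itself a fixed multiple of $\braket*{Q_i}{H_nQ_i}\le(\sup_n\|H_n\|)\,\|Q_i\|^2$. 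An application of \fref{lem:WeakDenseFunctionals} then upgrades the convergence to all of $C([0,\infty],\C)$, which is (c).

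For (c)$\Leftrightarrow$(d) I would pass to the weighted measures $\widetilde\mu_n\coloneqq(1+\lambda)^{-2}\mu_{H_n}$ and $\widetilde\mu\coloneqq(1+\lambda)^{-2}\mu_H$, finite Borel measures on $\R_+$ which I view as elements of $\mathcal{BM}^{\mathrm{fin}}_\C([0,\infty])$ carrying no mass at $0$ or $\infty$. Then (c) says precisely that $\widetilde\mu_n\to\widetilde\mu$ in the weak-$*$ topology of $\mathcal{BM}^{\mathrm{fin}}_\C([0,\infty])=C([0,\infty],\C)'$; and since multiplication by $(1+\lambda)^{\pm2}$ is a bijection of $C_c(\R_+,\C)$, the first clause of (d) is equivalent to $\int f\,d\widetilde\mu_n\to\int f\,d\widetilde\mu$ for all $f\in C_c(\R_+,\C)$, while the second clause is $\widetilde\mu_n([0,\infty])\to\widetilde\mu([0,\infty])$. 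Now (c)$\Rightarrow$(d) is clear because $C_c(\R_+,\C)\subseteq C([0,\infty],\C)$ and the constant function $1$ lies in $C([0,\infty],\C)$. For (d)$\Rightarrow$(c): the convergent total masses are bounded, so $(\widetilde\mu_n)$ lies in a ball of the dual of the separable space $C([0,\infty],\C)$, which is weak-$*$ metrizable; hence it suffices to show that any weak-$*$ limit $\sigma$ of a subsequence equals $\widetilde\mu$. Such a $\sigma$ is a positive measure, it agrees with $\widetilde\mu$ on $C_c(\R_+,\C)$ and therefore, by uniqueness in the Riesz representation theorem, as a Borel measure on $(0,\infty)$; so $\sigma=\widetilde\mu+a\delta_0+b\delta_\infty$ with $a,b\ge0$, and the total-mass clause forces $\widetilde\mu([0,\infty])=\sigma([0,\infty])=\widetilde\mu([0,\infty])+a+b$, i.e.\ $a=b=0$.

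Essentially all of this is bookkeeping once \fref{prop:H2Dense0Infty}, \fref{lem:WOTDenseFunctionals} and \fref{lem:WeakDenseFunctionals} are available. The step that requires genuine care is (d)$\Rightarrow$(c): one has to exclude the possibility that mass of $\widetilde\mu_n$ escapes to the two added points $0$ and $\infty$ of the compactification $[0,\infty]$, and it is precisely the supplementary total-mass convergence in (d) that prevents this. The Banach--Alaoglu/metrizability argument together with the identification of subsequential weak-$*$ limits with $\widetilde\mu$ via the Riesz theorem is the crux of the proof.
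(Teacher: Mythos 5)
Your proof is correct, and its first two equivalences follow the paper's route almost verbatim: density of $\spann\{Q_z : z\in\C_+\}$ in $H^2(\C_+)$ (for which the precise reference is \fref{lem: evDense} rather than \fref{prop:H2RKHS}) combined with \fref{lem:WOTDenseFunctionals} for (a)$\Leftrightarrow$(b), and the identity $\overline{Q_w(i\lambda)}Q_z(i\lambda)=f_{w,z}(\lambda)\cdot(1+\lambda)^{-2}$ with $f_{w,z}\in C([0,\infty],\C)$, the specialization $w=i$ feeding into \fref{prop:H2Dense0Infty}, the uniform bound $\int_{\R_+}(1+\lambda)^{-2}\,d\mu_{H_n}(\lambda)=4\pi^2\braket*{Q_i}{H_nQ_i}\leq 4\pi^2\sup_n\left\lVert H_n\right\rVert\cdot\left\lVert Q_i\right\rVert^2$, and \fref{lem:WeakDenseFunctionals} for (b)$\Leftrightarrow$(c). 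Where you genuinely deviate is in (d)$\Rightarrow$(c). The paper handles this step by claiming, via Stone--Weierstra{\ss}, that $C_c(\R_+,\C)\oplus\C\cdot\textbf{1}$ is dense in $C([0,\infty],\C)$ and then applying \fref{lem:WeakDenseFunctionals}; but that subalgebra does not separate the two points $0$ and $\infty$ of $[0,\infty]$ (every element takes the same value at both), so its uniform closure is only $\{f\in C([0,\infty],\C): f(0)=f(\infty)\}$ and the density claim fails as stated. Your argument -- Banach--Alaoglu plus weak-$*$ metrizability on bounded sets, identification of every subsequential limit $\sigma$ with $\widetilde\mu$ by positivity, Riesz uniqueness on $(0,\infty)$, and the total-mass clause forcing $\sigma(\{0\})=\sigma(\{\infty\})=0$ -- is exactly the mechanism that excludes escape of mass to the two added boundary points, which is the real content of (d)$\Rightarrow$(c); so your route is not merely an alternative but supplies the argument that the paper's density claim was meant to deliver. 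The only small point to make explicit is that the boundedness of the functionals needed for the subsequence extraction comes either from the convergent total masses in (d) or from the assumed boundedness of $(H_n)_{n\in\N}$, as you indicate.
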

\begin{proof}
\begin{itemize}
\item[(a) \(\Leftrightarrow\) (b):] This follows immediately by the sesquilinearity of the scalar product, the fact that, by \fref{lem: evDense}, the set \(\spann \{Q_z: z \in \C_+\}\) is dense in \(H^2(\C_+)\) and \fref{lem:WOTDenseFunctionals}.
\item[(b) \(\Leftrightarrow\) (c):]
For \(z \in \C_+\), we have
\[Q_z\left(i\lambda\right) = \frac 1{2\pi} \cdot \frac i{i\lambda-\overline{z}} = \frac 1{2\pi} \cdot \frac 1{\lambda+i\overline{z}}, \quad \lambda \in \R_+.\]
For \(w,z \in \C_+\) this yields
\[\overline{Q_w\left(i\lambda\right)}Q_z\left(i\lambda\right) = \frac 1{4\pi^2 } \cdot \frac 1{\lambda-iw} \cdot \frac 1{\lambda+i\overline{z}} = \frac 1{4\pi^2 } \cdot \frac {\lambda + 1}{\lambda-iw} \cdot \frac {\lambda+1}{\lambda+i\overline{z}} \cdot \frac 1{(\lambda +1)^2}, \quad \lambda \in \R_+,\]
so setting
\[f_{w,z}: [0,\infty] \to \C, \quad \lambda \mapsto \frac 1{4\pi^2 } \cdot \frac {\lambda + 1}{\lambda-iw} \cdot \frac {\lambda+1}{\lambda+i\overline{z}},\]
we have \(f_{w,z} \in C([0,\infty],\C)\) and
\[\overline{Q_w\left(i\lambda\right)}Q_z\left(i\lambda\right) = f_{w,z}(\lambda) \cdot \frac 1{(\lambda +1)^2}, \quad \lambda \in \R_+.\]
Since
\[f_{i,z}(\lambda) = \frac 1{4\pi^2 } \cdot \frac {\lambda + 1}{\lambda-i \cdot i} \cdot \frac {\lambda+1}{\lambda+i\overline{z}} = \frac 1{4\pi^2 } \cdot \frac {\lambda+1}{\lambda+i\overline{z}},\]
by \fref{prop:H2Dense0Infty}, we have
\[\overline{\spann \{f_{w,z}: w,z \in \C_+\}} = C([0,\infty],\C),\]
where the closure is taken with respect to the supremum norm. The equivalence then follows by \fref{lem:WeakDenseFunctionals}, since we have
\[\overline{Q_i\left(i\lambda\right)}Q_i\left(i\lambda\right) = f_{i,i}(\lambda) \cdot \frac 1{(\lambda +1)^2} = \frac 1{4\pi^2 } \cdot \frac 1{(\lambda +1)^2}\]
and therefore
\begin{align*}
\sup_{n \in \N} \,\int_{\R_+} \frac 1{(1+\lambda)^2}\,d\mu_n(\lambda) &= 4\pi^2 \cdot \sup_{n \in \N}\, \int_{\R_+} \overline{Q_i\left(i\lambda\right)}Q_i\left(i\lambda\right)\,d\mu_n(\lambda)
\\&= 4\pi^2 \cdot \sup_{n \in \N}\,\braket*{Q_i}{H_n Q_i} \leq 4\pi^2 \cdot \sup_{n \in \N} \left\lVert H_n\right\rVert \cdot \left\lVert Q_i\right\rVert^2 < \infty.
\end{align*}
\item[(c) \(\Leftrightarrow\) (d):] By the Stone--Weierstraß Theorem we have
\[\overline{C_c(\R_+,\C) \oplus (\C \cdot \textbf{1})} = C([0,\infty],\C),\]
where the closure is taken with respect to the supremum norm. This, by \fref{lem:WeakDenseFunctionals}, implies that (c) holds, if and only if
\[\int_{\R_+} \frac 1{(1+\lambda)^2}\,d\mu_n(\lambda) \xrightarrow{n \to \infty} \int_{\R_+} \frac 1{(1+\lambda)^2}\,d\mu(\lambda)\]
and
\[\int_{\R_+} f(\lambda) \frac 1{(1+\lambda)^2} \,d\mu_n(\lambda) \xrightarrow{n \to \infty} \int_{\R_+} f(\lambda) \frac 1{(1+\lambda)^2} \,d\mu(\lambda) \quad \forall f \in C_c(\R_+,\C).\]
The latter is equivalent to
\[\int_{\R_+} f(\lambda) \,d\mu_n(\lambda) \xrightarrow{n \to \infty} \int_{\R_+} f(\lambda) \,d\mu(\lambda) \quad \forall f \in C_c(\R_+,\C),\]
since the function
\[\R_+ \ni \lambda \mapsto \frac 1{(1+\lambda)^2}\]
is continuous and bounded from below on compact sets. \qedhere
\end{itemize}
\end{proof}
As mentioned before, our goal is to show that the map
\[\mathcal{BM}^{\mathrm{fin}}\left([0,\infty]\right) \setminus \{0\} \mapsto \{H \in \HanP : \left\lVert H\right\rVert \leq 1\}, \quad \nu \mapsto H_{h_\nu}\]
is surjective. We will do this in two steps. In the next section, we will show that
\[\{H \in \HanP : \left\lVert H\right\rVert \leq 1\} = \overline{\left\{H_{h_{W(\mu)}}: \mu \in \mathcal{BM}^{\mathrm{fin}}\left(\R_+\right) \setminus \{0\}\right\}}^{\mathrm{WOT}}\]
and in the following section, we then will use this to show that
\[\{H \in \HanP : \left\lVert H\right\rVert \leq 1\} = \left\{H_{h_{\nu}}: \nu \in \mathcal{BM}^{\mathrm{fin}}\left([0,\infty]\right) \setminus \{0\}\right\}.\]

\subsection{Hankel operators with fixed points and a density theorem}
In this section, we start by considering positive Hankel operators \(H \in \HanP\) with \(\left\lVert H\right\rVert \leq 1\) and with the additional assumption that \(H\) has a fixed point. A characterization of these operators is given by the following proposition:
\begin{prop}\label{prop:HankelFixedPoint}
Let \(H \in \HanP\) with \(\left\lVert H\right\rVert \leq 1\). Then the following are equivalent:
\begin{enumerate}[\rm (a)]
\item \(\ker(H-\textbf{1}) \neq \{0\}\).
\item There exists a function \(h \in L^\infty(\R,\T)^\flat\) such that \(H_h = H\) and \(\ker\left(\theta_h-{\bf 1}\right) \cap H^2(\C_+) \neq \left\lbrace 0\right\rbrace\).
\item There exists a measure \(\mu \in \mathcal{BM}^{\mathrm{fin}}\left(\R_+\right) \setminus \{0\}\) such that \(H = H_{h_{W(\mu)}}\).
\end{enumerate}
\end{prop}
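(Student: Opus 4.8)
The plan is to prove the cycle of implications $(a) \Rightarrow (b) \Rightarrow (c) \Rightarrow (a)$; only the first implication requires real work, while the other two are essentially translations of \fref{lem:RPHSPosHankelEquiv} and \fref{thm:thetaFixedPointCharac} into the language of Hankel operators.

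For $(a) \Rightarrow (b)$ I would start from a vector $F \in \ker(H - \textbf{1}) \setminus \{0\}$, so $F \in H^2(\C_+)$ and, by \fref{thm:HardyAlmostEverywhereNonZero}, the boundary function of $F$ — hence also of $RF$ — is nonzero almost everywhere. Nehari's Theorem (\fref{thm:Nehari}) supplies a symbol $h_0 \in L^\infty(\R,\C)$ with $H_{h_0} = H$ and $\left\lVert h_0 \right\rVert_\infty = \left\lVert H \right\rVert \le 1$. The key point is a rigidity argument: since $\braket*{F}{\theta_{h_0} F} = \braket*{F}{H_{h_0} F} = \braket*{F}{H F} = \left\lVert F \right\rVert^2$ while $\left\lVert \theta_{h_0} F \right\rVert = \left\lVert M_{h_0} R F \right\rVert \le \left\lVert h_0 \right\rVert_\infty \left\lVert F \right\rVert \le \left\lVert F \right\rVert$, the Cauchy--Schwarz inequality must be an equality, which forces $\theta_{h_0} F = F$ in $L^2(\R,\C)$, i.e. $h_0 = F/(RF)$ almost everywhere. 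I would then set $h \coloneqq F/(RF)$; as an element of $L^\infty(\R,\C)$ it equals $h_0$, so $H_h = H$, and by construction $\theta_h F = h \cdot RF = F$, so $F \in \ker(\theta_h - \textbf{1}) \cap H^2(\C_+)$ is a nonzero fixed point. Finally I would check $h \in L^\infty(\R,\T)^\flat$: from $\left\lVert h \right\rVert_\infty \le 1$ one gets $\lvert F(x)\rvert \le \lvert F(-x)\rvert$ a.e., and reading this at $-x$ as well gives $\lvert F(x)\rvert = \lvert F(-x)\rvert$ a.e.; hence $\lvert h \rvert = 1$ a.e., and the identity $\lvert F(x)\rvert^2 = \lvert F(-x)\rvert^2$ yields $h^\flat(x) = \overline{h(-x)} = h(x)$ by a one-line computation.

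For $(b) \Rightarrow (c)$ I would note that $H = H_h$ is positive by hypothesis, so by \fref{lem:RPHSPosHankelEquiv} the triple $\left(L^2\left(\R,\C\right), H^2\left(\C_+\right), \theta_h\right)$ is a RPHS; together with $\ker(\theta_h - \textbf{1}) \cap H^2(\C_+) \neq \{0\}$, \fref{thm:thetaFixedPointCharac} then produces a measure $\mu \in \mathcal{BM}^{\mathrm{fin}}\left(\R_+\right) \setminus \{0\}$ with $h = h_{W(\mu)}$, whence $H = H_{h_{W(\mu)}}$. For $(c) \Rightarrow (a)$, writing $h \coloneqq h_{W(\mu)}$, \fref{thm:thetaFixedPointCharac} gives a nonzero $F \in \ker(\theta_h - \textbf{1}) \cap H^2(\C_+)$; since $F \in H^2(\C_+)$ we have $p_+^* F = F$ and $p_+ F = F$, so $H F = p_+ \theta_h p_+^* F = p_+ \theta_h F = p_+ F = F$, and hence $\ker(H - \textbf{1}) \neq \{0\}$.

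The hard part is clearly $(a) \Rightarrow (b)$: the Nehari symbol $h_0$ is a priori only an essentially bounded function of norm at most one, and it must be upgraded to a symbol that is simultaneously unimodular and invariant under $\flat$. The Cauchy--Schwarz rigidity above is exactly what makes this possible, since it pins down the action of $\theta_{h_0}$ on the fixed vector $F$ precisely and thereby identifies the symbol with the explicit quotient $F/(RF)$, after which both symmetry properties fall out automatically from the contractivity bound $\left\lVert h_0 \right\rVert_\infty \le 1$.
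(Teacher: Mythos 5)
Your proposal is correct and follows essentially the same route as the paper: a Nehari symbol $h_0$ is upgraded to a unimodular $\flat$-invariant symbol by a norm-rigidity argument forcing $\theta_{h_0}F = F$ (the paper phrases this via the Pythagorean identity $\left\lVert h_0 RF\right\rVert^2 = \left\lVert P_+ h_0 RF\right\rVert^2 + \left\lVert (\textbf{1}-P_+) h_0 RF\right\rVert^2$ rather than Cauchy--Schwarz equality, but the content is identical), after which $h = F/(RF)$, \fref{lem:RPHSPosHankelEquiv} and \fref{thm:thetaFixedPointCharac} give the equivalence with (c). Your cyclic arrangement $(a)\Rightarrow(b)\Rightarrow(c)\Rightarrow(a)$ versus the paper's $(a)\Leftrightarrow(b)$, $(b)\Leftrightarrow(c)$ is only a cosmetic difference.
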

\begin{proof}
\begin{itemize}
\item[(a) \(\Rightarrow\) (b):] Let \(f \in \ker(H-\textbf{1}) \setminus \{0\}\). By Nehari's theorem (\fref{thm:Nehari}) there exists a function \(h \in L^\infty(\R,\C)\) with \(\left\lVert h \right\rVert_\infty = \left\lVert H\right\rVert \leq 1\) such that \(H_h = H\). Then, denoting by \(P_+\) the orthogonal projection onto \(H^2(\C_+)\), we have
\[\left\lVert f\right\rVert^2 = \left\lVert H_hf\right\rVert^2 = \left\lVert P_+hRf\right\rVert^2 = \left\lVert hRf\right\rVert^2 - \left\lVert (\textbf{1}-P_+)hRf\right\rVert^2 \leq \left\lVert f\right\rVert^2 - \left\lVert (\textbf{1}-P_+)hRf\right\rVert^2,\]
which yields \(\left\lVert (\textbf{1}-P_+)hRf\right\rVert^2 = 0\) and therefore \((\textbf{1}-P_+)hRf = 0\), so we get
\[\theta_h f = hRf = P_+hRf = H_hf = f.\]
This yields \(f \in \ker\left(\theta_h-{\bf 1}\right) \cap H^2(\C_+)\) and therefore
\[\ker\left(\theta_h-{\bf 1}\right) \cap H^2(\C_+) \neq \left\lbrace 0\right\rbrace.\]
Further, using \fref{thm:HardyAlmostEverywhereNonZero}, we get
\[h = \frac{f}{Rf}.\]
This yields
\[h = \frac{f}{Rf} = R\left(\frac{Rf}{f}\right) = R\left(\frac{1}{h}\right) = \frac 1{Rh}.\]
Then, for almost every \(x \in \R\), we get
\[1 \geq \left\lVert h\right\rVert_\infty \geq \left|h(x)\right| = \frac{1}{|h(-x)|} \geq \frac 1{\left\lVert h\right\rVert_\infty} \geq 1\]
and therefore \(h(x) \in \T\), so we have \(h \in L^\infty(\R,\T)\). This yields \(\frac 1h = \overline{h}\) and therefore
\[h^\flat = \overline{Rh} = \overline{\left(\frac 1h\right)} = h.\]
\item[(b) \(\Rightarrow\) (a):] Let \(h \in L^\infty(\R,\T)^\flat\) and \(f \in \left(\ker\left(\theta_h-{\bf 1}\right) \cap H^2(\C_+)\right) \setminus \{0\}\). Then
\begin{equation*}
H f = H_h f = p_+ \theta_h p_+^* f = p_+ \theta_h f = p_+ f = f
\end{equation*}
and therefore \(f \in \ker(H-\textbf{1}) \setminus \{0\}\).
\item[(b) \(\Leftrightarrow\) (c):] By \fref{lem:RPHSPosHankelEquiv}, for a function \(h \in L^\infty(\R,\T)^\flat\), one has \(H_h \in \HanP\), if and only if \((L^2(\R,\C),H^2(\C_+),\theta_h)\) is a RPHS. Further, by \fref{thm:thetaFixedPointCharac}, the functions \(h \in L^\infty(\R,\T)^\flat\) for which \((L^2(\R,\C),H^2(\C_+),\theta_h)\) is a RPHS and \(\ker\left(\theta_h-{\bf 1}\right) \cap H^2(\C_+) \neq \left\lbrace 0\right\rbrace\) are precisely the functions of the form \(h_{W(\mu)}\) for some measure \(\mu \in \mathcal{BM}^{\mathrm{fin}}\left(\R_+\right) \setminus \{0\}\). \qedhere
\end{itemize}
\end{proof}
We now want to find a sufficient condition for a Hankel operator to have a fixed point, that is easy to check. One of these conditions is that the Hankel operator is compact:
\begin{lemma}\label{lem:compactFixpoint}
Let \(H\) be a compact positive Hankel operator with \(\left\lVert H \right\rVert = 1\). Then
\[\ker(H-\textbf{1}) \neq \{0\}.\]
\end{lemma}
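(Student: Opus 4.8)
Since $H$ is a positive operator on $H^2(\C_+)$ it is in particular self-adjoint, and $\|H\| = 1$ together with positivity gives
\[\sup_{\|f\| = 1} \braket*{f}{Hf} = \|H\| = 1.\]
So the first step is to pick a sequence $(f_n)_{n \in \N}$ in $H^2(\C_+)$ with $\|f_n\| = 1$ and $\braket*{f_n}{Hf_n} \xrightarrow{n \to \infty} 1$. From $\braket*{f_n}{Hf_n} \leq \|Hf_n\| \leq \|H\| = 1$ one deduces $\|Hf_n\| \xrightarrow{n \to \infty} 1$ as well, and hence
\[\left\lVert (H - \textbf{1})f_n\right\rVert^2 = \left\lVert Hf_n\right\rVert^2 - 2\braket*{f_n}{Hf_n} + \left\lVert f_n\right\rVert^2 \xrightarrow{n \to \infty} 1 - 2 + 1 = 0,\]
so $(f_n)_{n \in \N}$ is an approximate eigenvector for the eigenvalue $1$.

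\textbf{Upgrading to a genuine eigenvector.} Here I would use compactness of $H$: passing to a subsequence I may assume that $(Hf_n)_{n \in \N}$ converges to some $g \in H^2(\C_+)$. Writing $f_n = Hf_n - (H - \textbf{1})f_n$ and using $(H - \textbf{1})f_n \to 0$, it follows that $f_n \to g$ as well. Then $\left\lVert g\right\rVert = \lim_{n \to \infty}\left\lVert f_n\right\rVert = 1$, so $g \neq 0$, and by continuity of $H$,
\[Hg = \lim_{n \to \infty} Hf_n = g,\]
so that $g \in \ker(H - \textbf{1}) \setminus \{0\}$, which proves $\ker(H - \textbf{1}) \neq \{0\}$.

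\textbf{Remarks on difficulty.} There is essentially no hard step: the argument is the standard one showing that for a compact self-adjoint operator the spectral radius is attained as an eigenvalue, specialized to a positive contraction with norm exactly $1$. The only place requiring a little care is the passage from $\braket*{f_n}{Hf_n} \to 1$ to $\left\lVert (H-\textbf{1})f_n\right\rVert \to 0$, which genuinely uses both positivity and self-adjointness of $H$ (it would fail for a general contraction attaining its norm). Alternatively, one could invoke the spectral theorem for compact self-adjoint operators directly, noting that $\mathrm{spec}(H) \subseteq [0,1]$ with $1 \in \mathrm{spec}(H)$ by positivity and $\|H\| = 1$, and that every nonzero spectral value of a compact self-adjoint operator is an eigenvalue; but the elementary argument above is self-contained and avoids citing the spectral theorem.
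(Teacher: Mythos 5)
Your proof is correct. Every step checks out: positivity (hence self-adjointness) of \(H\) gives \(\sup_{\lVert f\rVert=1}\braket*{f}{Hf}=\lVert H\rVert=1\), the identity \(\left\lVert (H-\textbf{1})f_n\right\rVert^2 = \left\lVert Hf_n\right\rVert^2 - 2\braket*{f_n}{Hf_n} + \left\lVert f_n\right\rVert^2\) is legitimate precisely because \(\braket*{f_n}{Hf_n}\) is real, and the compactness argument (pass to a subsequence with \(Hf_{n_k}\to g\), conclude \(f_{n_k}\to g\), \(\lVert g\rVert =1\), \(Hg=g\)) is airtight.

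The route differs from the paper's. The paper simply invokes the spectral theorem for compact self-adjoint operators, writing \(H=\sum_{k}\lambda_k P_k\) with a decreasing null sequence \((\lambda_k)\) of nonnegative eigenvalues and finite-rank spectral projections \(P_k\); then \(\lVert H\rVert=1\) forces \(\lambda_1=1\) and \(P_1\neq 0\), and any nonzero \(f\in P_1H^2(\C_+)\) satisfies \(Hf=f\). That argument is shorter but leans on a quoted structure theorem, whereas your approximate-eigenvector argument is self-contained and effectively reproves the relevant fragment of that theorem (that the norm of a compact positive operator is attained as an eigenvalue); it is also the version that generalizes most directly when one only knows that \(1\) lies in the approximate point spectrum. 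Either proof is perfectly acceptable here; yours trades a citation for a few extra lines of elementary analysis, which you correctly identify as the only point needing care (the passage from \(\braket*{f_n}{Hf_n}\to 1\) to \(\left\lVert (H-\textbf{1})f_n\right\rVert\to 0\), which genuinely uses self-adjointness).
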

\begin{proof}
By the spectral theorem for compact self-adjoint operators, there is a null sequence \(\left(\lambda_k\right)_{k \in \N}\) in \(\R_{\geq 0}\) with
\[\lambda_{k+1} \leq \lambda_k \quad \forall k \in \N\]
and a sequence of orthogonal finite-dimensional projections \(\left(P_k\right)_{k \in \N}\) in \(B(H^2(\C_+))\) such that
\[H = \sum_{k = 1}^\infty \lambda_k P_k.\]
Since \(\left\lVert H \right\rVert = 1\), we have \(\lambda_1 = 1\) and \(P_1 \neq 0\). Therefore we can choose a function \(f \in P_1 H^2(\C_+) \setminus \{0\}\) and get
\[Hf = \lambda_1P_1 f = f. \qedhere\]
\end{proof}
A powerful tool for checking if a Hankel operator is compact is given by Hartman's Theorem:
\begin{thm}\label{thm:Hartman}{\rm \textbf{(Hartman's Theorem for the upper half-plane)}}
A Hankel operator \(H\) is compact, if and only if \(H\) has a continuous symbol \(h \in C(\R)\) with
\[\lim_{p \to -\infty}h(p) = \lim_{p \to \infty}h(p).\] 
\end{thm}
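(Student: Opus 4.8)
The plan is to transfer the statement to the Hardy space of the unit disc via the Cayley transform and then to invoke the classical theorem of Hartman there. Write $\phi_i\colon\C_+\to\D$, $\phi_i(z)=\frac{z-i}{z+i}$, for the Cayley transform (the map called $h$ before \fref{thm:HalmosUpper}) and let $\Gamma\colon L^2(\T,\C)\to L^2(\R,\C)$ be the induced unitary with $\Gamma H^2(\D)=H^2(\C_+)$, so also $\Gamma H^2(\D)^{\perp}=H^2(\C_+)^{\perp}=H^2(\C_-)$. Conjugating $H_h=p_+M_hR\,p_+^{*}$ by $\Gamma^{-1}$ produces a classical Hankel operator on $H^2(\D)$: one has $\Gamma^{-1}M_h\Gamma=M_{h\circ\phi_i^{-1}}$, $\Gamma^{-1}p_+\Gamma$ is the orthogonal projection onto $H^2(\D)$, and, using $\phi_i(-x)=\overline{\phi_i(x)}$ for $x\in\R$ together with the Jacobian factor $\frac{1}{\sqrt\pi(x+i)}$ defining $\Gamma$, a direct computation shows that $\Gamma^{-1}R\Gamma$ is, up to a sign, the standard flip $H^2(\D)\to H^2(\D)^{\perp}$. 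Hence $\Gamma^{-1}H_h\Gamma$ is the classical Hankel operator on $H^2(\D)$ with symbol $\tilde h\coloneqq-\,h\circ\phi_i^{-1}\in L^\infty(\T)$, and $h\mapsto\tilde h$ is a linear bijection $L^\infty(\R,\C)\to L^\infty(\T)$ which sends the coset $h+H^\infty(\C_-)$ of all symbols of $H_h$ (cf.\ \fref{prop:kernelLowerHalfPlane}) onto the coset of all symbols of $\Gamma^{-1}H_h\Gamma$, and which restricts to a bijection between $C(\T)$ and $\{g\in C(\R)\colon\lim_{p\to-\infty}g(p)=\lim_{p\to\infty}g(p)\}$, since $\phi_i$ extends to a homeomorphism of the one-point compactification $\R\cup\{\infty\}$ onto $\T$ with $\infty\mapsto1$.

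With this dictionary in place the theorem reduces to Hartman's theorem on the disc (see \cite{Pa88}): a Hankel operator on $H^2(\D)$ is compact if and only if it admits a symbol in $C(\T)$. Indeed $\Gamma^{-1}(\cdot)\Gamma$ preserves compactness, and by the previous paragraph it preserves, in the precise sense just described, the property of admitting a continuous symbol; so $H$ is compact if and only if it has a symbol in $C(\R)$ with equal limits at $\pm\infty$.

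It is worth pointing out that the ``if'' direction also has a short proof internal to the thesis, bypassing the disc. If $h\in C(\R)$ has equal limits at $\pm\infty$, then $h$ extends continuously over $\infty$, so by the Stone--Weierstra{\ss} theorem applied on $\T\cong\R\cup\{\infty\}$ (via $\phi_i$) it is a uniform limit of Laurent polynomials $r=\sum_{n=-N}^{N}c_n\phi_i^{\,n}$. Now $\phi_i^{-1}\in H^\infty(\C_-)$, so $H_{\phi_i^{-n}}=0$ for all $n\ge1$ by \fref{prop:kernelLowerHalfPlane}, and $H_{\phi_i^{0}}=H_1=0$ as well; and for $n\ge1$ the operator $H_{\phi_i^{\,n}}$ has finite rank, because $\phi_i^{-n}$ is inner for $\C_-$, so $\phi_i^{\,n}H^2(\C_-)=H^2(\C_-)\oplus W_n$ with $W_n\subeq H^2(\C_-)^{\perp}=H^2(\C_+)$ and $\dim W_n=n$, which forces $\mathrm{ran}\,H_{\phi_i^{\,n}}\subeq W_n$. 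Since $\lVert H_g\rVert\le\lVert g\rVert_\infty$, we obtain $H_h=\lim_{r}H_r$ in operator norm with each $H_r=\sum_{n=1}^{N}c_nH_{\phi_i^{\,n}}$ of finite rank, whence $H_h$ is compact.

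The main obstacle is the bookkeeping in the first paragraph: one must pin down precisely how the reflection $R$ — which is what makes $H_h$ a Hankel operator in the convention of \fref{def:Hankel} — corresponds under the Cayley transform to the flip on $H^2(\D)$, and then check that the resulting correspondence of symbols respects both the ambiguity coset $H^\infty(\C_-)$ and the subspace of symbols that extend continuously over $\infty$. Once this is carried out correctly, both implications follow at once from the disc case.
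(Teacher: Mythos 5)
Your proposal is correct and follows essentially the same route as the paper: the thesis proves this theorem precisely by transferring Hartman's theorem \cite[Thm. 3.20]{Pa88} from \(\D\) to \(\C_+\) via the Cayley transform, and your first two paragraphs just carry out that transfer in detail (the identification of \(\Gamma^{-1}R\Gamma\) with \(-J\), the matching of the ambiguity cosets \(H^\infty(\C_-)\) and \(\overline{H^\infty(\D)}\), and continuity at \(\infty\) are all correct). Your additional finite-rank approximation argument for the ``if'' direction is also sound, but it is an optional extra beyond the paper's one-line transfer.
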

\begin{proof}
This follows immediately by transforming Hartman's Theorem (\cite[Thm. 3.20]{Pa88}) from the unit disc to the upper half-plane.
\end{proof}
In general, it is quite difficult to find a concrete symbol for a given Hankel operator, even more so to find a continuous one as it is required in Hartman's Theorem, but for positive Hankel operators, the following theorem provides concrete symbols with good analytic properties:
\begin{theorem}\label{thm:CarlesonRepresentant}{\rm (cf. \cite[Thm. 4.1]{ANS22})}
Let \(H\) be a positive Hankel operator and set
\[h\left(p\right) \coloneqq \frac i\pi \cdot \int_{\R_+} \frac {p}{\lambda^2+p^2} \,d\mu_H\left(\lambda\right), \quad p \in \R.\]
Then \(h \in L^\infty\left(\R,\C\right)\) and \(H_h = H\).
\end{theorem}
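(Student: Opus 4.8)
The plan is to verify separately that $h \in L^\infty(\R,\C)$ and that $H_h = H$, the link between the two being the Carleson measure $\mu_H \in \CM$ with $H = H_{\mu_H}$ furnished by \fref{prop:HankelOneToOne}; recall that this means $\braket*{f}{Hg} = \int_{\R_+}\overline{f(i\lambda)}g(i\lambda)\,d\mu_H(\lambda)$ for all $f,g \in H^2(\C_+)$.

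For the boundedness of $h$ I would test $H$ against the reproducing kernels of $H^2(\C_+)$ sitting on the imaginary axis. Fix $p \in \R^\times$ and set $w = i|p| \in \C_+$; then $Q_w \in H^2(\C_+)$ by \fref{prop:H2RKHS}, and the defining property of $\mu_H$ applied with $f = g = Q_w$ gives
\[\int_{\R_+} |Q_w(i\lambda)|^2 \, d\mu_H(\lambda) = \braket*{Q_w}{H Q_w} \le \|H\| \cdot \|Q_w\|^2 .\]
Since $Q_w(i\lambda)$ is a fixed nonzero multiple of $\tfrac{1}{\lambda+|p|}$ and $\|Q_w\|^2$ a fixed multiple of $\tfrac{1}{|p|}$, the elementary inequality $(\lambda+|p|)^2 \le 2(\lambda^2+p^2)$ turns the last display into the uniform bound $\tfrac1\pi\int_{\R_+}\tfrac{|p|}{\lambda^2+p^2}\,d\mu_H(\lambda) \le 2\|H\|$, i.e.\ $|h(p)| \le 2\|H\|$ for every $p \neq 0$, so $h \in L^\infty(\R,\C)$. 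Running the same computation with $w = iL$ and using instead the lower bound $|Q_{iL}(i\lambda)|^2 \ge c\,L^{-2}$ valid for $0 < \lambda \le L$ yields the Carleson box estimate $\mu_H\big((0,L]\big) \le C_H\,L$ for all $L > 0$, which I will need for the Fubini step.

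For $H_h = H$ it suffices, by \fref{prop:HankelOneToOne} together with sesquilinearity and the density of $\spann\{Q_w : w \in \C_+\}$ in $H^2(\C_+)$ (\fref{lem: evDense}), to check that for all $w,z \in \C_+$
\[\braket*{Q_w}{H_h Q_z} = \int_{\R_+} \overline{Q_w(i\lambda)}\,Q_z(i\lambda)\,d\mu_H(\lambda) .\]
Unwinding $H_h = p_+ M_h R\,p_+^*$ gives $\braket*{Q_w}{H_h Q_z} = \int_\R \overline{Q_w(x)}\,h(x)\,Q_z(-x)\,dx$, which upon substituting the boundary values of $Q_w,Q_z$ becomes a fixed multiple of $\int_\R \tfrac{h(x)}{(x-w)(x+\bar z)}\,dx$. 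Now I insert the definition of $h$ and interchange the $x$-integral with the integration against $d\mu_H$; this is legitimate because the inner integral $\int_\R \tfrac{|x|\,dx}{|x-w|\,|x+\bar z|\,(x^2+\lambda^2)}$ is $O\big((1+|\log\lambda|)(1+\lambda^2)^{-1}\big)$ uniformly in $\lambda$, and the box estimate makes $\int_{\R_+}(1+|\log\lambda|)(1+\lambda^2)^{-1}\,d\mu_H(\lambda)$ finite.

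It then remains to evaluate the inner integral $I(w,z,\lambda) = \int_\R \tfrac{x\,dx}{(x-w)(x+\bar z)(x^2+\lambda^2)}$ by residues: the integrand is $O(x^{-3})$, so one closes the contour in $\C_+$, whose enclosed poles are $x = w$, $x = i\lambda$, and — since $z \in \C_+$ forces $-\bar z \in \C_+$ — also $x = -\bar z$. Summing the three residues and using the identity $(w\bar z + \lambda^2) - i\lambda(\bar z - w) = (\bar z + i\lambda)(w - i\lambda)$, the contribution of the pole at $-\bar z$ merges with that at $w$ and the whole sum collapses to $\tfrac{1}{2(w+i\lambda)(\bar z - i\lambda)}$, so $I(w,z,\lambda) = \tfrac{\pi i}{(w+i\lambda)(\bar z - i\lambda)}$. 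Substituting this back and comparing the prefactor with $\overline{Q_w(i\lambda)}\,Q_z(i\lambda)$, which is the corresponding constant times $\tfrac{1}{(w+i\lambda)(i\lambda - \bar z)}$, produces exactly the asserted identity, hence $H_h = H_{\mu_H} = H$. The two genuinely delicate points are the bookkeeping that makes the residues telescope — the pole at $-\bar z$ is easy to overlook and its residue is essential — and licensing Fubini near $\lambda = 0$, which is precisely why the Carleson box estimate, rather than mere finiteness of $\mu_H$, is needed.
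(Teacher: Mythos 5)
Your argument is correct, but note that the paper itself offers no proof of this statement: it is imported verbatim from \cite[Thm.\ 4.1]{ANS22}, so there is no in-paper argument to compare against, and what you have produced is a self-contained derivation. The two halves both check out. Testing $H$ against $Q_{i|p|}$ and using $\|Q_{i|p|}\|^2=\tfrac{1}{4\pi|p|}$ together with $(\lambda+|p|)^2\le 2(\lambda^2+p^2)$ does give $|h(p)|\le 2\|H\|$ for $p\neq 0$, and the same computation with the lower bound $|Q_{iL}(i\lambda)|\ge \tfrac{1}{4\pi L}$ on $(0,L]$ gives the Carleson box estimate $\mu_H((0,L])\le 4\pi\|H\|L$, which is indeed what licenses Fubini: the inner absolute integral really does behave like $(1+|\log\lambda|)(1+\lambda^2)^{-1}$ near $\lambda=0$, and integrability of $|\log\lambda|$ against $\mu_H$ there needs linear box growth, not just local finiteness. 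The residue evaluation is also right: the three upper half-plane poles $w$, $-\bar z$, $i\lambda$ combine, via $(w\bar z+\lambda^2)-i\lambda(\bar z-w)=(\bar z+i\lambda)(w-i\lambda)$, to $I(w,z,\lambda)=\tfrac{\pi i}{(w+i\lambda)(\bar z-i\lambda)}$, and since $\overline{Q_w(i\lambda)}\,Q_z(i\lambda)=\tfrac{1}{4\pi^2}\tfrac{1}{(w+i\lambda)(\bar z-i\lambda)}$ the constants match and $\braket*{Q_w}{H_hQ_z}=\braket*{Q_w}{HQ_z}$ follows; with $H_h-H$ bounded and $\spann\{Q_w\}$ dense (\fref{lem: evDense}), this gives $H_h=H$. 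Two trivial points you should still button up: measurability of $h$ (so that $h\in L^\infty(\R,\C)$ and $M_h$ makes sense) follows from continuity of $h$ on $\R^\times$ by dominated convergence, using $\int_{\R_+}(1+\lambda^2)^{-1}\,d\mu_H(\lambda)<\infty$, exactly as in the proof of \fref{prop:compactCriterion}; and for the exceptional parameters $\lambda$ with $i\lambda=w$ or $i\lambda=-\bar z$ the contour computation has a higher-order pole, but the stated formula for $I(w,z,\lambda)$ extends there by continuity, so nothing is lost.
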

This result, together with Hartman's Theorem, allows us to provide an easy-to-check criterion for a positive Hankel operator to be compact:
\begin{prop}\label{prop:compactCriterion}
Let \(\mu \in \mathcal{BM}\left(\R_+\right)\) with
\[\int_{\R_+} \frac 1\lambda \,d\mu(\lambda)<\infty.\]
Then \(\mu \in \CM\) and the Hankel operator \(H_\mu\) is compact. 
\end{prop}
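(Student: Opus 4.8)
The plan is to prove the two assertions in turn: first that $\mu$ defines a bounded sesquilinear form on $H^2(\C_+)$ (so that $\mu \in \CM$ and $H_\mu \in \HanP$ makes sense), and then, using the explicit symbol furnished by \fref{thm:CarlesonRepresentant}, to check that $H_\mu$ satisfies the hypotheses of Hartman's Theorem (\fref{thm:Hartman}), which gives compactness.

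For the Carleson property, I would use the pointwise estimate coming from the reproducing kernel. Since $H^2(\C_+)$ is a reproducing kernel Hilbert space with $\langle Q_w,f\rangle = f(w)$ and $\|Q_w\|^2 = \tfrac{1}{4\pi\,\Im w}$ (cf. \fref{prop:H2RKHS}), one gets $|f(i\lambda)|^2 \le \|Q_{i\lambda}\|^2\,\|f\|^2 = \tfrac{1}{4\pi\lambda}\,\|f\|^2$ for all $\lambda \in \R_+$ and $f \in H^2(\C_+)$. Integrating against $\mu$ and invoking the hypothesis yields $\int_{\R_+}|f(i\lambda)|^2\,d\mu(\lambda) \le \tfrac{1}{4\pi}\bigl(\int_{\R_+}\tfrac{1}{\lambda}\,d\mu(\lambda)\bigr)\|f\|^2 < \infty$, and then the Cauchy--Schwarz inequality for the positive sesquilinear form $(f,g) \mapsto \int_{\R_+}\overline{f(i\lambda)}g(i\lambda)\,d\mu(\lambda)$ shows it is bounded (with norm at most $\tfrac{1}{4\pi}\int_{\R_+}\tfrac{1}{\lambda}\,d\mu(\lambda)$), i.e. $\mu$ is a Carleson measure and $H_\mu$ is a well-defined positive contractive-type Hankel operator.

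For compactness, I would apply \fref{thm:CarlesonRepresentant} to $H \coloneqq H_\mu$. By \fref{prop:HankelOneToOne} the Carleson measure attached to $H_\mu$ is $\mu$ itself, so the theorem provides the symbol $h(p) = \tfrac{i}{\pi}\int_{\R_+}\tfrac{p}{\lambda^2+p^2}\,d\mu(\lambda)$ with $H_h = H_\mu$. The key elementary fact is the uniform bound $\bigl|\tfrac{p}{\lambda^2+p^2}\bigr| \le \tfrac{1}{2\lambda}$ for all $p \in \R$ and $\lambda \in \R_+$ (this is just $\lambda^2+p^2 \ge 2\lambda|p|$), together with $\lambda \mapsto \tfrac{1}{2\lambda} \in L^1(\mu)$ by assumption. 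This single dominating function does everything: by dominated convergence $h$ is continuous on all of $\R$ (the integrand is continuous in $p$ for each fixed $\lambda$, and vanishes at $p=0$), and again by dominated convergence $\lim_{p\to\pm\infty}h(p) = 0$ since the integrand tends to $0$ pointwise. Thus $h \in C(\R)$ with $\lim_{p\to-\infty}h(p) = 0 = \lim_{p\to+\infty}h(p)$, and Hartman's Theorem (\fref{thm:Hartman}) gives that $H_\mu = H_h$ is compact.

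I do not expect a real obstacle here; the proof is essentially two applications of dominated convergence plus Cauchy--Schwarz. The one point that deserves care is that the hypothesis $\int_{\R_+}\tfrac{1}{\lambda}\,d\mu(\lambda) < \infty$ is used twice in the same spirit -- once to control the Carleson form and once to supply the dominating function $\tfrac{1}{2\lambda}$ that simultaneously ensures continuity of the symbol at $0$ and its decay at $\pm\infty$ -- so the thing to get right is precisely that $\tfrac{1}{2\lambda}$ dominates $\tfrac{|p|}{\lambda^2+p^2}$ uniformly in $p$, which is the AM--GM inequality. A minor bookkeeping point is to make sure the explicit symbol of \fref{thm:CarlesonRepresentant} is applied with the correct measure, which is guaranteed by the uniqueness half of \fref{prop:HankelOneToOne}.
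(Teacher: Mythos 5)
Your proposal is correct and follows essentially the same route as the paper: the Szeg\H{o}-kernel bound \(|f(i\lambda)|\le \frac{1}{2\sqrt{\pi\lambda}}\|f\|\) gives the Carleson property, and then the symbol from \fref{thm:CarlesonRepresentant} together with the domination \(\frac{|p|}{\lambda^2+p^2}\le\frac{1}{2\lambda}\), dominated convergence, and Hartman's Theorem gives compactness. Your extra remark that \(\mu_{H_\mu}=\mu\) via \fref{prop:HankelOneToOne} just makes explicit a step the paper uses implicitly.
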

\begin{proof}
For \(f \in H^2(\C_+)\) and \(\lambda \in \R_+\), by \fref{prop:H2RKHS}, we have
\[|f(i\lambda)| = |\braket*{Q_{i\lambda}}{f}| \leq \left\lVert Q_{i\lambda}\right\rVert \cdot \left\lVert f\right\rVert = \frac 1{2\sqrt{\pi \lambda}} \cdot \left\lVert f\right\rVert.\]
Therefore, for \(f,g \in H^2(\C_+)\), we get
\[\int_{\R_+} |f(i\lambda)| \cdot |g(i\lambda)| \,d\mu(\lambda) \leq \int_{\R_+} \frac 1{2\sqrt{\pi \lambda}} \cdot \left\lVert f\right\rVert \cdot \frac 1{2\sqrt{\pi \lambda}} \cdot \left\lVert g\right\rVert \,d\mu(\lambda) = \frac 1{4\pi} \int_{\R_+} \frac 1\lambda \,d\mu(\lambda) \cdot \left\lVert f\right\rVert \cdot \left\lVert g\right\rVert,\]
which shows that \(\mu \in \CM\). Further, by \fref{thm:CarlesonRepresentant}, we know that, for the function
\[h(p) \coloneqq \frac i\pi \cdot \int_{\R_+} \frac{p}{\lambda^2+p^2} \,d\mu(\lambda),\]
we have \(h \in L^\infty(\R,\C)\) and \(H_h = H_\mu\). We now show that \(h \in C(\R)\) with
\[\lim_{p \to -\infty}h(p) = \lim_{p \to \infty}h(p),\]
since then the statement follows by \fref{thm:Hartman}. For every \(\lambda \in \R_+\) and \(p \in \R\), we have
\[\left|\frac{p}{\lambda^2+p^2}\right| = \frac{\left|p\right|}{\lambda^2+\left|p\right|^2} \leq \frac{\left|p\right|}{2\lambda \left|p\right|} = \frac 1{2\lambda}.\]
Then, by the Dominated Convergence Theorem, for every \(p \in \R\), one has
\[\lim_{p' \to p} h(p') = \lim_{p' \to p} \frac i\pi \cdot \int_{\R_+} \frac{p'}{\lambda^2+p'^2} \,d\mu(\lambda) = \frac i\pi \cdot \int_{\R_+} \frac{p}{\lambda^2+p^2} \,d\mu(\lambda) = h(p),\]
so \(h \in C(\R)\). Also, by the Dominated Convergence Theorem, we get
\[\lim_{p \to \pm\infty} h(p) = \lim_{p \to \pm\infty} \frac i\pi \cdot \int_{\R_+} \frac{p}{\lambda^2+p^2} \,d\mu(\lambda) = \frac i\pi \cdot \int_{\R_+} 0 \,d\mu(\lambda) = 0,\]
which implies the statement.
\end{proof}
\begin{cor}\label{cor:strongDense}
Let \(H\) be a positive Hankel operator with \(\left\lVert H\right\rVert = 1\). Then there exists a sequence \((\mu_n)_{n \in \N}\) in \(\mathcal{BM}^{\mathrm{fin}}\left(\R_+\right) \setminus \{0\}\) such that
\[H_{h_{W(\mu_n)}} \underset{\mathrm{WOT}}{\xrightarrow{n \to \infty}} H.\]
\end{cor}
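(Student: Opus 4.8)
The plan is to recover $H$ from its Carleson measure, truncate that measure to compact subsets of $\R_+$, and recognize the truncated operators as operators of the form $H_{h_{W(\mu_n)}}$ by appealing to the compactness criterion and the fixed-point lemma. By \fref{thm:HankelHasMeasure} and \fref{prop:HankelOneToOne} there is a unique $\mu_H \in \CM$ with $H = H_{\mu_H}$, and $\mu_H \neq 0$ since $\|H\| = 1$. For $n \in \N$ let $\tilde\mu_n \in \mathcal{BM}^{\mathrm{fin}}(\R_+)$ be the restriction of $\mu_H$ to $[\tfrac 1n, n]$, i.e. $\tilde\mu_n(A) := \mu_H([\tfrac 1n, n] \cap A)$. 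Since $\mu_H$ is finite and $\lambda \mapsto \tfrac 1\lambda$ is bounded on $[\tfrac 1n, n]$, we have $\int_{\R_+} \tfrac 1\lambda \, d\tilde\mu_n < \infty$, so \fref{prop:compactCriterion} gives $\tilde\mu_n \in \CM$ and $H_{\tilde\mu_n}$ compact. As $\tilde\mu_n \le \mu_H$, uniqueness in \fref{prop:HankelOneToOne} yields $\mu_{H_{\tilde\mu_n}} = \tilde\mu_n$, and $\braket*{f}{H_{\tilde\mu_n}f} = \int |f(i\lambda)|^2\,d\tilde\mu_n \le \braket*{f}{Hf}$ shows $0 \le H_{\tilde\mu_n} \le H$, hence $\|H_{\tilde\mu_n}\| \le 1$; and since $[\tfrac 1n, n]$ increases with $n$, the sequence $(H_{\tilde\mu_n})_n$ is increasing in $\HanP$.

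First I would verify, via \fref{prop:topologiesEquiv} (the implication (d) $\Rightarrow$ (a)), that $H_{\tilde\mu_n} \underset{\mathrm{WOT}}{\xrightarrow{n \to \infty}} H$. Indeed $(H_{\tilde\mu_n})_n$ is bounded in $\HanP$, and $\tilde\mu_n(A) \uparrow \mu_H(A)$ for every Borel $A \subeq \R_+$, so for $f \in C_c(\R_+,\C)$ we get $\int f\, d\tilde\mu_n = \int_{[1/n,n]} f\, d\mu_H \to \int_{\R_+} f\, d\mu_H$ by dominated convergence, while $\int_{\R_+} \tfrac{1}{(1+\lambda)^2}\, d\tilde\mu_n \to \int_{\R_+} \tfrac{1}{(1+\lambda)^2}\, d\mu_H$ by monotone convergence, the limit being finite because $\mu_H$ is finite. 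This is exactly condition (d) of \fref{prop:topologiesEquiv}.

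Next, $\mu_H \neq 0$ forces $\tilde\mu_n \neq 0$, hence $\|H_{\tilde\mu_n}\| > 0$, for all large $n$; discarding the finitely many exceptional indices, put $c_n := \|H_{\tilde\mu_n}\|^{-1}$ and $\mu_n' := c_n \tilde\mu_n \in \mathcal{BM}^{\mathrm{fin}}(\R_+) \setminus \{0\}$. Then $H_{\mu_n'} = c_n H_{\tilde\mu_n}$ is a compact positive Hankel operator of norm exactly $1$, so by \fref{lem:compactFixpoint} it has a nonzero fixed point, and \fref{prop:HankelFixedPoint} (the implication (a) $\Rightarrow$ (c)) provides $\mu_n \in \mathcal{BM}^{\mathrm{fin}}(\R_+) \setminus \{0\}$ with $H_{h_{W(\mu_n)}} = H_{\mu_n'} = c_n H_{\tilde\mu_n}$.

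It remains to see that $c_n H_{\tilde\mu_n} \underset{\mathrm{WOT}}{\xrightarrow{n \to \infty}} H$. The sequence $(\|H_{\tilde\mu_n}\|)_n$ is increasing and bounded by $1$, so it converges; combining $H_{\tilde\mu_n} \underset{\mathrm{WOT}}{\xrightarrow{n \to \infty}} H$ with \fref{lem:NormSubContinuous} gives $1 = \|H\| \le \limsup_n \|H_{\tilde\mu_n}\|$, whence $\|H_{\tilde\mu_n}\| \to 1$ and $c_n \to 1$. Together with $H_{\tilde\mu_n} \underset{\mathrm{WOT}}{\xrightarrow{n \to \infty}} H$ this yields $H_{h_{W(\mu_n)}} = c_n H_{\tilde\mu_n} \underset{\mathrm{WOT}}{\xrightarrow{n \to \infty}} H$, as claimed. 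The only mildly delicate point is pinning down $\|H_{\tilde\mu_n}\| \to 1$ (needed for $c_n \to 1$); everything else is bookkeeping that combines \fref{prop:topologiesEquiv}, \fref{prop:compactCriterion}, \fref{lem:compactFixpoint} and \fref{prop:HankelFixedPoint}.
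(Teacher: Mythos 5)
Your argument is essentially the paper's: truncate the Carleson measure $\mu_H$ to $[\tfrac 1n,n]$, invoke \fref{prop:compactCriterion} to get compact positive Hankel operators, normalize to norm one, and use \fref{lem:compactFixpoint} together with \fref{prop:HankelFixedPoint} to recognize the normalized operators as $H_{h_{W(\mu_n)}}$; the only differences are cosmetic (you verify the WOT convergence through \fref{prop:topologiesEquiv}(d) and get $\|H_{\tilde\mu_n}\|\to 1$ from monotonicity, whereas the paper estimates $|\braket*{f}{(H-H_{\rho_t})g}|$ directly by Cauchy--Schwarz and then selects a suitable sequence $t_n$), and both routes are fine.

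One assertion in your write-up is false, though its role is easily repaired: $\mu_H$ need \emph{not} be a finite measure (e.g.\ $\mu_H = 2\lambda_1$ in \fref{ex:Lebesgue}), so you cannot justify $\int_{[\frac 1n,n]}\frac 1\lambda\,d\mu_H < \infty$ or the finiteness of $\int_{\R_+}\frac{1}{(1+\lambda)^2}\,d\mu_H$ by saying ``$\mu_H$ is finite.'' The first integral is finite because a Carleson measure is finite on compact subsets of $\R_+$ (as the paper uses via $\int_{[\frac 1t,t]}\frac 1\lambda\,d\mu_H \le t\cdot\mu_H([\tfrac 1t,t])<\infty$; local finiteness follows e.g.\ from $\mu_H([a,b]) \le 4\pi^2(1+b)^2\braket*{Q_i}{HQ_i}$), and the second because $\int_{\R_+}\frac{1}{(1+\lambda)^2}\,d\mu_H = 4\pi^2\braket*{Q_i}{HQ_i} \le 4\pi^2\left\lVert H\right\rVert\left\lVert Q_i\right\rVert^2 < \infty$ since $Q_i \in H^2(\C_+)$. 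With these substitutions your proof is correct; the remaining steps (injectivity from \fref{prop:HankelOneToOne}, $0\le H_{\tilde\mu_n}\le H$, $c_n\to 1$ via \fref{lem:NormSubContinuous}) all go through as written.
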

\begin{proof}
For \(t \in \left[1,\infty\right)\) we define the measure \(\rho_t\) on \(\R_+\) by
\begin{equation*}
\rho_t(A) \coloneqq \mu_H\left(A \cap \left[\frac 1 t,t\right]\right)
\end{equation*}
for every Borel set \(A \subeq \R_+\). Then
\[\int_{\R_+} \frac 1\lambda \,d\rho_t(\lambda) = \int_{\left[\frac 1t,t\right]} \frac 1\lambda \,d\mu_H(\lambda) \leq \int_{\left[\frac 1t,t\right]} t \,d\mu_H(\lambda) = t \cdot \mu_H\left(\left[\frac 1t,t\right]\right) < \infty,\]
so, by \fref{prop:compactCriterion}, we have \(\rho_t \in \CM\) and the Hankel operator \(H_{\rho_t}\) is compact. Further, for \(f,g \in H^2(\C_+)\) and \(t \in [1,\infty)\), one has
\begin{align*}
\left|\braket*{f}{(H-H_{\rho_t})g}\right| &= \left|\int_{\R \setminus \left[\frac 1 t,t\right]} \overline{f(i\lambda)}g(i\lambda)\,d\mu_H(\lambda)\right|
\\&\leq \sqrt{\int_{\R \setminus \left[\frac 1 t,t\right]} \left|f(i\lambda)\right|^2\,d\mu_H(\lambda)} \cdot \sqrt{\int_{\R \setminus \left[\frac 1 t,t\right]} \left|g(i\lambda)\right|^2\,d\mu_H(\lambda)} \xrightarrow{t \to \infty} 0.
\end{align*}
This implies that \(H_{\rho_t} \underset{\mathrm{WOT}}{\xrightarrow{t \to \infty}} H\). We have
\[\braket*{f}{H_{\rho_t}f} = \int_{\left[\frac 1 t,t\right]} \left|f(i\lambda)\right|^2\,d\mu_H(\lambda) \leq \int_{\R_+} \left|f(i\lambda)\right|^2\,d\mu_H(\lambda) = \braket*{f}{Hf},\]
and therefore \(\left\lVert H_{\rho_t} \right\rVert \leq \left\lVert H \right\rVert\). This, by \fref{lem:NormSubContinuous}, yields
\[1 = \left\lVert H\right\rVert \leq \limsup_{t \to \infty}\left\lVert H_{\rho_t}\right\rVert \leq \limsup_{t \to \infty}\left\lVert H\right\rVert = 1.\]
Therefore, we can choose a sequence \((t_n)_{n \in \N}\) such that
\[t_n \xrightarrow{n \to \infty} 0 \quad \text{and} \quad \left\lVert H_{\rho_{t_n}}\right\rVert \xrightarrow{n \to \infty} 1\]
and
\[H_{\rho_{t_n}} \neq 0 \quad \forall n \in \N.\]
We now set
\[H_n \coloneqq \frac 1{\left\lVert H_{\rho_{t_n}}\right\rVert}H_{\rho_{t_n}}.\]
Then
\[H_n \underset{\mathrm{WOT}}{\xrightarrow{n \to \infty}} H.\]
Further, for every \(n \in \N\), the operator \(H_n\) is a positive compact Hankel operator with \(\left\lVert H_n\right\rVert = 1\), so, by \fref{lem:compactFixpoint} and \fref{prop:HankelFixedPoint}, there exists a measure \(\mu_n \in \mathcal{BM}^{\mathrm{fin}}\left(\R_+\right) \setminus \{0\}\) such that \(H_n = H_{h_{W(\mu_n)}}\). We then have
\[H_{h_{W(\mu_n)}} = H_n \underset{\mathrm{WOT}}{\xrightarrow{n \to \infty}} H. \qedhere\]
\end{proof}
\begin{prop}\label{prop:dDenseNormLeq1}
One has
\[\{H \in \HanP : \left\lVert H\right\rVert \leq 1\} = \overline{\{H_{h_{W(\mu)}}: \mu \in \mathcal{BM}^{\mathrm{fin}}\left(\R_+\right) \setminus \{0\}\}}^{\mathrm{WOT}}.\]
\end{prop}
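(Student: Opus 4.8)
The plan is to prove the two inclusions separately, with the nontrivial one being $\{H\in\HanP:\|H\|\le 1\}\subseteq\overline{\{H_{h_{W(\mu)}}\}}^{\mathrm{WOT}}$. For ``$\supseteq$'': given $\mu\in\mathcal{BM}^{\mathrm{fin}}(\R_+)\setminus\{0\}$, the measure $W(\mu)$ lies in $\mathcal{BM}^{\mathrm{fin}}([0,\infty])\setminus\{0\}$ (it is nonzero because $W$ multiplies by the strictly positive density $\lambda/(1+\lambda^2)$ on $\R_+$), so by \fref{prop:cTNuIsCarlesonOfHNu} the operator $H_{h_{W(\mu)}}$ belongs to $\{H\in\HanP:\|H\|\le 1\}$. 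Since this set is WOT-closed by \fref{lem:ContractiveHankelWOTClosed}, it contains the WOT-closure of $\{H_{h_{W(\mu)}}:\mu\in\mathcal{BM}^{\mathrm{fin}}(\R_+)\setminus\{0\}\}$.

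For the reverse inclusion, fix $H\in\HanP$ with $\|H\|\le 1$ and let $\mu:=\mu_H\in\CM$ be its (finite) Carleson measure. For $k\in\N$ I would set $\rho_k:=\mu|_{[1/k,k]}$; exactly as in the proof of \fref{cor:strongDense}, $\rho_k\in\CM$ with $\int\frac1\lambda\,d\rho_k<\infty$, so $H_{\rho_k}$ is a \emph{compact} positive Hankel operator by \fref{prop:compactCriterion}, with $H_{\rho_k}\le H$, hence $\|H_{\rho_k}\|\le\|H\|\le 1$, and $H_{\rho_k}\xrightarrow{\ \mathrm{WOT}\ }H$. Next, let $u_k:=\|Q_{ik}\|^{-1}Q_{ik}$ and let $R_k$ be the rank-one orthogonal projection onto $\C Q_{ik}$; a direct computation with the reproducing kernel gives $R_k=H_{4\pi k\,\delta_k}$ (with $\delta_k$ the Dirac mass at $k$), so $R_k$ is a norm-one compact positive Hankel operator, and $R_k\xrightarrow{\ \mathrm{WOT}\ }0$ since $\langle u_k,Q_w\rangle\to 0$ for every $w\in\C_+$ (immediate from $\|Q_{ik}\|\sim k^{-1/2}$ and $Q_{ik}(w)\sim k^{-1}$) while $\spann\{Q_w:w\in\C_+\}$ is dense by \fref{lem: evDense}.

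Now put $A_k:=H_{\rho_k}+R_k$, a compact positive Hankel operator, and set $\epsilon_k:=\|H_{\rho_k}u_k\|$. Using $|u_k(i\lambda)|^2=\frac{k}{\pi(\lambda+k)^2}\le\frac1{\pi k}$ for $\lambda\ge 0$ together with $\|H_{\rho_k}u_k\|^2\le\|H_{\rho_k}\|\,\langle u_k,H_{\rho_k}u_k\rangle=\|H_{\rho_k}\|\int_{[1/k,k]}|u_k(i\lambda)|^2\,d\mu(\lambda)$ one gets $\epsilon_k^2\le\mu(\R_+)/(\pi k)\to 0$. From $\langle u_k,A_ku_k\rangle=\langle u_k,H_{\rho_k}u_k\rangle+1\ge 1$ one has $\|A_k\|\ge 1$, and decomposing an arbitrary unit vector along $u_k$ and $\{u_k\}^\perp$ and using $0\le H_{\rho_k}\le\mathbf 1$ one obtains $\|A_k\|\le 1+3\epsilon_k$; hence $\|A_k\|\to 1$. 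Therefore $\widehat A_k:=\|A_k\|^{-1}A_k$ is a compact positive Hankel operator of norm exactly $1$, so $\ker(\widehat A_k-\mathbf 1)\ne\{0\}$ by \fref{lem:compactFixpoint}, and by the equivalence (a) $\Leftrightarrow$ (c) of \fref{prop:HankelFixedPoint} we may write $\widehat A_k=H_{h_{W(\mu_k)}}$ for some $\mu_k\in\mathcal{BM}^{\mathrm{fin}}(\R_+)\setminus\{0\}$. Finally $\widehat A_k=\|A_k\|^{-1}H_{\rho_k}+\|A_k\|^{-1}R_k\xrightarrow{\ \mathrm{WOT}\ }H$ because $\|A_k\|^{-1}\to 1$, $H_{\rho_k}\xrightarrow{\ \mathrm{WOT}\ }H$ and $R_k\xrightarrow{\ \mathrm{WOT}\ }0$, so $H$ lies in the WOT-closure of $\{H_{h_{W(\mu)}}\}$.

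The main obstacle is arranging $\|A_k\|\to 1$: one needs a norm-one ``bump'' $R_k$ both to lift the truncated (hence compact) operator $H_{\rho_k}$ up to norm exactly one, so that \fref{lem:compactFixpoint} applies and $\widehat A_k$ acquires a fixed point, and to do so without destroying the approximation of $H$ --- which is precisely why the bump must sit at the high frequency $ik$ with $k\to\infty$, making $R_k$ almost orthogonal to $H_{\rho_k}$ and weakly null. Everything else (compactness of truncated Hankel operators, the fixed-point characterisation of the $H_{h_{W(\mu)}}$, WOT-closedness of the contractive positive Hankel operators, and the truncation argument) is already available in \fref{prop:compactCriterion}, \fref{prop:HankelFixedPoint}, \fref{lem:compactFixpoint}, \fref{lem:ContractiveHankelWOTClosed} and \fref{cor:strongDense}.
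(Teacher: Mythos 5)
Your first inclusion is fine and matches the paper, and your overall toolkit (truncation for compactness, a rank-one Hankel bump, \fref{lem:compactFixpoint} together with \fref{prop:HankelFixedPoint}, WOT-closedness) is the same as the paper's; the gap is in the step where you force \(\lVert A_k\rVert\to 1\). Your estimate \(\epsilon_k^2\le \mu_H(\R_+)/(\pi k)\) silently assumes that the Carleson measure \(\mu_H\) is \emph{finite}, and this fails in general: by \fref{ex:Lebesgue} and \fref{prop:cTNuIsCarlesonOfHNu} the contractive positive Hankel operator \(H=H_{i\cdot\sgn}\) has \(\mu_H=2\lambda_1\). For this \(H\) one computes \(\braket*{u_k}{H_{\rho_k}u_k}=\frac{2k}{\pi}\int_{1/k}^{k}\frac{d\lambda}{(\lambda+k)^2}\xrightarrow{k\to\infty}\frac1\pi\), so \(\epsilon_k\ge\braket*{u_k}{H_{\rho_k}u_k}\not\to0\) and \(\lVert A_k\rVert\ge 1+\braket*{u_k}{H_{\rho_k}u_k}\to 1+\frac1\pi>1\); consequently \(\hat A_k\) converges (along subsequences) to a multiple \(c^{-1}H\) with \(c\ge 1+\frac1\pi\), not to \(H\). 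Relocating the bump cannot repair this: for the same \(H\) one has \(\braket*{u}{Hu}=\frac2\pi\) for \emph{every} normalised kernel \(u=Q_{ix}/\lVert Q_{ix}\rVert\), \(x\in\R_+\), so no reproducing-kernel bump is ever asymptotically orthogonal to \(H\) or to its truncations.

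The missing idea is that the norm has to be brought to exactly \(1\) by \emph{scaling} the bump, not by approximate orthogonality. This is what the paper does: it adds \(t_xH_{\rho_x}\) (bump at \(ix\), \(x\to0\)) to \(H\) itself, chooses \(t_x\in[0,1]\) by the Intermediate Value Theorem so that \(\lVert H+t_xH_{\rho_x}\rVert=1\), and only then invokes \fref{cor:strongDense}, where \fref{lem:NormSubContinuous} guarantees that the compact truncations of this norm-one operator have norms tending to \(1\). Your construction can be repaired in the same spirit: keep \(H_{\rho_k}\) and \(R_k\), but pick \(t_k\in[0,1]\) with \(\lVert H_{\rho_k}+t_kR_k\rVert=1\) (possible since \(\lVert H_{\rho_k}\rVert\le 1\le\lVert H_{\rho_k}+R_k\rVert\) and \(t\mapsto\lVert H_{\rho_k}+tR_k\rVert\) is continuous); then \(H_{\rho_k}+t_kR_k\) is a compact positive Hankel operator of norm one, hence of the form \(H_{h_{W(\mu_k)}}\), and it still tends to \(H\) in the WOT because \(\lvert t_k\rvert\le1\) and \(R_k\xrightarrow{\mathrm{WOT}}0\). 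As written, however, the claimed limit \(\lVert A_k\rVert\to1\) is false, so the proof does not go through.
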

\begin{proof}
By \fref{prop:cTNuIsCarlesonOfHNu} we have
\[\{H_{h_{W(\mu)}}: \mu \in \mathcal{BM}^{\mathrm{fin}}\left(\R_+\right) \setminus \{0\}\} \subeq \{H \in \HanP : \left\lVert H\right\rVert \leq 1\}\]
and therefore, using \fref{lem:ContractiveHankelWOTClosed}, we get
\begin{align*}
\overline{\{H_{h_{W(\mu)}}: \mu \in \mathcal{BM}^{\mathrm{fin}}\left(\R_+\right) \setminus \{0\}\}}^{\mathrm{WOT}} &\subeq \overline{\{H \in \HanP : \left\lVert H\right\rVert \leq 1\}}^{\mathrm{WOT}}
\\&= \{H \in \HanP : \left\lVert H\right\rVert \leq 1\}.
\end{align*}
We now show the other inclusion. For every \(x \in \R_+\) and every \(f,g \in H^2(\C_+)\), we have
\[\braket*{f}{H_{\delta_x} g} = \int_{\R_+}\overline{f(i\lambda)}g(i\lambda)\,d\delta_x(\lambda) = \overline{f(ix)}g(ix) = \braket{f}{Q_{ix}}\braket*{Q_{ix}}{g}.\]
Then, by \fref{prop:H2RKHS}, we have
\[\left\lVert H_{\delta_x}\right\rVert = \left\lVert Q_{ix}\right\rVert^2 = \frac 1{4\pi x}.\]
Therefore, for the measure
\[\rho_x \coloneqq 4\pi x \cdot \delta_x,\]
we have \(\left\lVert H_{\rho_x}\right\rVert = 1\). Now, let \(H \in \HanP\) with \(\left\lVert H\right\rVert \leq 1\). Then, for every \(f \in H^2(\C_+)\), we have
\[\braket*{f}{(H+H_{\rho_x})f} = \braket*{f}{H f} + \braket*{f}{H_{\rho_x}f} \geq \braket*{f}{H_{\rho_x} f},\]
so
\[\left\lVert H+H_{\rho_x}\right\rVert \geq \left\lVert H_{\rho_x}\right\rVert = 1.\]
Therefore, for the function
\[F_x: [0,1] \to \R, \quad t \mapsto \left\lVert H+t \cdot H_{\rho_x}\right\rVert,\]
we have
\[F_x(0) = \left\lVert H\right\rVert \leq 1 \quad \text{and} \quad F_x(1) = \left\lVert H+H_{\rho_x}\right\rVert \geq 1,\]
so, by the Intermediate Value Theorem, there exists \(t_x \in [0,1]\) such that
\[1 = F_x(t_x) = \left\lVert H+t_x \cdot H_{\rho_x}\right\rVert.\]
This, by \fref{cor:strongDense}, implies
\[H+t_x \cdot H_{\rho_x} \in \overline{\{H_{h_{W(\mu)}}: \mu \in \mathcal{BM}^{\mathrm{fin}}\left(\R_+\right) \setminus \{0\}\}}^{\mathrm{WOT}}, \quad \forall x \in \R_+.\]
Further, for \(w,z \in \C_+\), we have
\begin{align*}
\braket*{Q_w}{H_{\rho_x}Q_z} &= 4\pi x \cdot \int_{\R_+}\overline{Q_w(i\lambda)}Q_z(i\lambda)\,d\delta_x(\lambda)
\\&= 4\pi x \cdot \overline{Q_w(ix)}Q_z(ix) = \frac x{\pi} \cdot \overline{\frac 1{ix - \overline{w}}}\frac 1{ix - \overline{z}} \xrightarrow{x \to 0} 0,
\end{align*}
so, by \fref{prop:topologiesEquiv}, we have
\[H_{\rho_x} \underset{\mathrm{WOT}}{\xrightarrow{x \to 0}} 0\]
and therefore
\[H + t_x \cdot H_{\rho_x} \underset{\mathrm{WOT}}{\xrightarrow{x \to 0}} H,\]
which shows that
\[H \in \overline{\{H_{h_{W(\mu)}}: \mu \in \mathcal{BM}^{\mathrm{fin}}\left(\R_+\right) \setminus \{0\}\}}^{\mathrm{WOT}}.\]
We therefore have
\[\{H \in \HanP : \left\lVert H\right\rVert \leq 1\} \subeq \overline{\{H_{h_{W(\mu)}}: \mu \in \mathcal{BM}^{\mathrm{fin}}\left(\R_+\right) \setminus \{0\}\}}^{\mathrm{WOT}}. \qedhere\]
\end{proof}

\subsection{Convergence in the weak-\(*\)-topology}
In this section, we want to better understand the set
\[\left\{H_{h_{\nu}}: \nu \in \mathcal{BM}^{\mathrm{fin}}\left([0,\infty]\right) \setminus \{0\}\right\}.\]
For this we have to know, given a measure \(\nu \in \mathcal{BM}^{\mathrm{fin}}\left([0,\infty]\right) \setminus \{0\}\) and a sequence \((\nu_n)_{n \in \N}\) in \(\mathcal{BM}^{\mathrm{fin}}\left([0,\infty]\right) \setminus \{0\}\) with
\[\nu_n \underset{w}{\xrightarrow{n \to \infty}} \nu,\]
what are the consequences for the measures \(\cT \nu\) and \((\cT \nu_n)_{n \in \N}\). We start by the following definition:
\begin{definition}
For a measure \(\nu \in \mathcal{BM}^{\mathrm{fin}}\left([0,\infty]\right) \setminus \{0\}\) we define the continuous function
\[\gls*{tnu}: \R_+ \to \R_+, \quad t_\nu(\lambda) = \left|F_\nu(i\lambda)\right|^{-2} \frac{1+\lambda^2}{\lambda}\]
with \(F_\nu\) as in \fref{def:FNuHNu}.
\end{definition}
\begin{remark}
By definition, for \(\nu \in \mathcal{BM}^{\mathrm{fin}}\left([0,\infty]\right) \setminus \{0\}\), we have
\[d(\cT\nu)(\lambda) = t_\nu(\lambda)\,d\nu(\lambda)\]
(cf. \fref{def:defCT}).
\end{remark}
\begin{lem}\label{lem:tNuUniformlyCompact}
Let \(\nu \in \mathcal{BM}^{\mathrm{fin}}\left([0,\infty]\right) \setminus \{0\}\) and \(\left(\nu_n\right)_{n \in \N}\) be a sequence in \(\mathcal{BM}^{\mathrm{fin}}\left([0,\infty]\right) \setminus \{0\}\) with
\[\nu_n \underset{w}{\xrightarrow{n \to \infty}} \nu.\]
Then the functions \((t_{\nu_n})_{n \in \N}\) converge to \(t_\nu\) uniformly on compact sets.
\end{lem}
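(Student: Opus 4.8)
The plan is to obtain an explicit Poisson-type integral formula for $t_\nu$ and then run a dominated-convergence argument, with a three-piece cutoff forced by the logarithmic blow-up of $\log\Psi_\nu$ at $0$ and $\infty$. First I would record two preliminaries. Since $\mathcal{BM}^{\mathrm{fin}}_\C([0,\infty])$ is the dual of the separable Banach space $C([0,\infty],\C)$, a weak-$*$-convergent sequence is norm-bounded by Banach--Steinhaus, so $M := \sup_n \nu_n([0,\infty]) < \infty$; and testing against the constant function $1$ gives $\nu_n([0,\infty]) \to \nu([0,\infty]) > 0$, so after discarding finitely many terms we may assume $\nu_n([0,\infty]) \ge m > 0$ for all $n$. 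Next, unravelling the definition of the outer function (cf.\ \fref{def:Outer}) at $z = i\lambda \in \C_+$: since $\mathrm{Re}\!\bigl(\tfrac{1}{\pi i}\,w\bigr) = \tfrac1\pi\,\mathrm{Im}(w)$ and $\mathrm{Im}\!\bigl(\tfrac{1}{p-i\lambda}\bigr) = \tfrac{\lambda}{p^2+\lambda^2}$ (the term $-\tfrac{p}{1+p^2}$ being real drops out), one gets $|\Out(K)(i\lambda)| = \exp\!\bigl(\tfrac1\pi\int_\R \tfrac{\lambda}{p^2+\lambda^2}\log K(p)\,dp\bigr)$. Applying this to $K = \sqrt{\Psi_\nu}$ (the integral converges by \fref{lem:PsiNuIntegral}) and recalling $F_\nu = \Out(\sqrt{\Psi_\nu})$ and the definition of $t_\nu$ yields
\[
t_\nu(\lambda) = \frac{1+\lambda^2}{\lambda}\,e^{-g_\nu(\lambda)}, \qquad g_\nu(\lambda) := \frac{1}{\pi}\int_\R \frac{\lambda}{p^2+\lambda^2}\,\log\Psi_\nu(p)\,dp,
\]
and the same formula with $\nu_n$ in place of $\nu$. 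Since the prefactor $\tfrac{1+\lambda^2}{\lambda}$ is fixed and bounded on any compact $[a,b]\subseteq\R_+$, it suffices to show $g_{\nu_n}\to g_\nu$ uniformly on $[a,b]$ together with a uniform bound $|g_{\nu_n}|\le G$ there.

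For the uniform bound I would use \fref{lem:PsiEstimate} together with $m \le \nu_n([0,\infty]) \le M$ to get $|\log\Psi_{\nu_n}(p)| \le C_0 + 2|\log|p||$ for a constant $C_0 = C_0(m,M)$ and all $n$; since $p\mapsto \tfrac{b}{p^2+a^2}\bigl(C_0 + 2|\log|p||\bigr)$ is integrable over $\R$ and $\tfrac{\lambda}{p^2+\lambda^2}\le\tfrac{b}{p^2+a^2}$ for $\lambda\in[a,b]$, this gives $|g_{\nu_n}(\lambda)|\le G$ uniformly in $n$ and $\lambda\in[a,b]$ (and likewise for $g_\nu$). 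For the uniform convergence, fix $\varepsilon>0$ and choose $0<\delta<1<R$ so that the integral of $p\mapsto \tfrac{b}{p^2+a^2}\bigl(C_0 + 2|\log|p||\bigr)$ over $\{|p|<\delta\}\cup\{|p|>R\}$ is $<\varepsilon$; then the contributions of these two pieces to $\sup_{\lambda\in[a,b]}|g_{\nu_n}(\lambda)-g_\nu(\lambda)|$ are each $<2\varepsilon$ for all $n$, bounding $|\log\Psi_{\nu_n}-\log\Psi_\nu|$ by the sum of the individual bounds. On the compact middle piece $\{\delta\le|p|\le R\}$ the values $\Psi_{\nu_n}(p),\Psi_\nu(p)$ lie in a fixed compact subset of $(0,\infty)$ by \fref{lem:PsiEstimate}, so \fref{lem:PsiPointwise} and continuity of $\log$ give $\log\Psi_{\nu_n}\to\log\Psi_\nu$ pointwise there, dominated by a constant; dominated convergence then gives $\int_{\delta\le|p|\le R}|\log\Psi_{\nu_n}(p)-\log\Psi_\nu(p)|\,dp\to0$, and bounding $\tfrac{\lambda}{p^2+\lambda^2}\le\tfrac{b}{a^2}$ shows the middle contribution to $\sup_{\lambda\in[a,b]}|g_{\nu_n}(\lambda)-g_\nu(\lambda)|$ tends to $0$. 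Hence $g_{\nu_n}\to g_\nu$ uniformly on $[a,b]$.

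Finally, $|t_{\nu_n}(\lambda)-t_\nu(\lambda)| = \tfrac{1+\lambda^2}{\lambda}\,|e^{-g_{\nu_n}(\lambda)}-e^{-g_\nu(\lambda)}| \le \tfrac{1+\lambda^2}{\lambda}\,e^{G}\,|g_{\nu_n}(\lambda)-g_\nu(\lambda)|$ by the mean value theorem (both exponents lie in $[-G,G]$), so $\sup_{\lambda\in[a,b]}|t_{\nu_n}(\lambda)-t_\nu(\lambda)|\to0$, which is the claim.

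The routine parts are the two preliminaries and the closing $\exp$-Lipschitz step; the main obstacle is arranging the convergence so that it is uniform in $\lambda$. Naive dominated convergence only gives convergence for each fixed $\lambda$, so the logarithmic singularities of $\log\Psi_\nu$ at $p=0$ and $p=\infty$ must be absorbed once and for all into the $\delta/R$ cutoff, where the domination is uniform in $n$ thanks to \fref{lem:PsiEstimate}, leaving a genuinely compact region on which the pointwise convergence of \fref{lem:PsiPointwise} can be upgraded by an $L^1$-dominated convergence argument. A slicker alternative would be to observe that $g_{\nu_n}$ extends to a harmonic function on $\C_+$ (the Poisson integral of $\log\Psi_{\nu_n}$), that these are uniformly bounded on compact subsets of $\C_+$ and converge pointwise, and then invoke a normal-families argument; I would nonetheless prefer the elementary three-piece estimate above to keep the proof self-contained.
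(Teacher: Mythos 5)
Your proof is correct and follows essentially the same route as the paper: the same explicit formula \(t_\nu(\lambda) = \frac{1+\lambda^2}{\lambda}\exp\left(-\frac{1}{\pi}\int_\R \frac{\lambda}{p^2+\lambda^2}\log\Psi_\nu(p)\,dp\right)\), the same kernel bound \(\frac{\lambda}{p^2+\lambda^2}\le\frac{b}{p^2+a^2}\) on \([a,b]\), and the same inputs \fref{lem:PsiEstimate} and \fref{lem:PsiPointwise}. The only difference is cosmetic: where you run a \(\delta\)/\(R\) cutoff by hand, the paper applies dominated convergence once on all of \(\R\) with the integrable dominating function \(2\,\frac{M+|\log(p^2)|}{1+p^2}\), and your explicit exponential-Lipschitz step at the end merely spells out what the paper leaves implicit.
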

\begin{proof}
By \fref{lem:OuterHomo}, we have
\[F_\nu^2 = \Out\left(\sqrt{\Psi_\nu}\right)^2 = \Out(\Psi_\nu)\]
and analogously \(F_{\nu_n}^2 = \Out(\Psi_{\nu_n})\) for \(n \in \N\), so, by \fref{lem:OuterSymmetric}, for every \(\lambda \in \R_+\), one has
\begin{equation*}
t_\nu(\lambda) = \frac{1+\lambda^2}{\lambda} \exp\left(-\frac{1}{\pi} \int_\R \frac{\lambda}{p^2+\lambda^2} \log\left(\Psi_\nu\left(p\right)\right) dp\right)
\end{equation*}
and
\begin{equation*}
t_{\nu_n}(\lambda) = \frac{1+\lambda^2}{\lambda} \exp\left(-\frac{1}{\pi} \int_\R \frac{\lambda}{p^2+\lambda^2} \log\left(\Psi_{\nu_n}\left(p\right)\right) dp\right).
\end{equation*}
Therefore, it suffices to show that
\[\int_\R \frac{\lambda}{p^2+\lambda^2} \log\left(\Psi_{\nu_n}\left(p\right)\right) dp \xrightarrow{n \to \infty} \int_\R \frac{\lambda}{p^2+\lambda^2} \log\left(\Psi_\nu\left(p\right)\right) dp\]
uniformly on \([a,b]\) with \(0<a<b<\infty\). Since, for every \(\lambda \in [a,b]\), one has
\begin{align*}
&\left|\int_\R \frac{\lambda}{p^2+\lambda^2} \log\left(\Psi_\nu\left(p\right)\right) dp - \int_\R \frac{\lambda}{p^2+\lambda^2} \log\left(\Psi_{\nu_n}\left(p\right)\right) dp\right|
\\&\qquad\qquad\qquad\qquad\leq \int_\R \frac{\lambda}{p^2+\lambda^2} \left|\log\left(\Psi_\nu\left(p\right)\right) - \log\left(\Psi_{\nu_n}\left(p\right)\right)\right| dp
\\&\qquad\qquad\qquad\qquad\leq \int_\R \frac{b}{p^2+a^2} \left|\log\left(\Psi_\nu\left(p\right)\right) - \log\left(\Psi_{\nu_n}\left(p\right)\right)\right| dp
\\&\qquad\qquad\qquad\qquad\leq b \cdot \max\left\{1,\frac 1{a^2}\right\} \cdot \int_\R \frac{1}{1+p^2} \left|\log\left(\Psi_\nu\left(p\right)\right) - \log\left(\Psi_{\nu_n}\left(p\right)\right)\right| dp,
\end{align*}
it remains to show that
\begin{equation*}
\int_\R \frac{1}{1+p^2} \left|\log\left(\Psi_\nu\left(p\right)\right) - \log\left(\Psi_{\nu_n}\left(p\right)\right)\right| dp \xrightarrow{n \to \infty} 0.
\end{equation*}
Since by assumption
\[\nu_n([0,\infty]) = \int_{[0,\infty]} \textbf{1} \,d\nu_n \xrightarrow{n \to \infty} \int_{[0,\infty]} \textbf{1} \,d\nu = \nu([0,\infty]) \neq 0,\]
we have that
\[M \coloneqq \sup_{n \in \N} \,\left|\log\left(\nu_n([0,\infty])\right)\right| < \infty.\]
Then, for \(n \in \N\) and \(p \in \R^\times\), by \fref{lem:PsiEstimate}, we have
\[\left|\log\left(\Psi_{\nu_n}\left(p\right)\right)\right| = \left|\log\left(\nu_n([0,\infty])\right)\right| + \left|\log\left(\frac{\Psi_{\nu_n}\left(p\right)}{\nu_n([0,\infty])}\right)\right| \leq M + \left|\log\left(p^2\right)\right|\]
and analogously
\[\left|\log\left(\Psi_\nu\left(p\right)\right)\right| \leq M + \left|\log\left(p^2\right)\right|.\]
For \(n \in \N\) and \(p \in \R^\times\) this yields
\[\frac{1}{1+p^2} \left|\log\left(\Psi_\nu\left(p\right)\right) - \log\left(\Psi_{\nu_n}\left(p\right)\right)\right| \leq \frac{\left|\log\left(\Psi_\nu\left(p\right)\right)\right| + \left|\log\left(\Psi_{\nu_n}\left(p\right)\right)\right|}{1+p^2} \leq 2 \cdot \frac{M + \left|\log\left(p^2\right)\right|}{1+p^2}\]
Then, since
\[\int_\R 2 \cdot \frac{M + \left|\log\left(p^2\right)\right|}{1+p^2} \,dp < \infty,\]
and since, by \fref{lem:PsiPointwise}, we have
\[\left|\log\left(\Psi_\nu\left(p\right)\right) - \log\left(\Psi_{\nu_n}\left(p\right)\right)\right| \xrightarrow{n \to \infty} 0, \quad \forall p \in \R^\times,\]
by the Dominated Convergence Theorem, we get
\begin{equation*}
\int_\R \frac{1}{1+p^2} \left|\log\left(\Psi_\nu\left(p\right)\right) - \log\left(\Psi_{\nu_n}\left(p\right)\right)\right| dp \xrightarrow{n \to \infty} 0. \qedhere
\end{equation*}
\end{proof}
\begin{prop}\label{prop:weakImpliesCcConvergence}
Let \(\nu \in \mathcal{BM}^{\mathrm{fin}}\left([0,\infty]\right) \setminus \{0\}\) and \(\left(\nu_n\right)_{n \in \N}\) be a sequence in \(\mathcal{BM}^{\mathrm{fin}}\left([0,\infty]\right) \setminus \{0\}\) with
\[\nu_n \underset{w}{\xrightarrow{n \to \infty}} \nu.\]
Then
\[\int_{\R_+} f \,d(\cT\nu_n) \xrightarrow{n \to \infty} \int_{\R_+} f \,d(\cT\nu) \quad \forall f \in C_c(\R_+,\C).\]
\end{prop}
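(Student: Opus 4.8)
The plan is to reduce the claim to two ingredients already available: the weak-$*$ convergence $\nu_n \underset{w}{\xrightarrow{n \to \infty}} \nu$, and the uniform-on-compacta convergence of the densities $t_{\nu_n} \to t_\nu$ furnished by \fref{lem:tNuUniformlyCompact}. Fix $f \in C_c(\R_+,\C)$ and pick $0 < a < b < \infty$ with $\supp(f) \subeq [a,b]$. Recalling that $d(\cT\nu)(\lambda) = t_\nu(\lambda)\,d\nu(\lambda)$ and likewise $d(\cT\nu_n)(\lambda) = t_{\nu_n}(\lambda)\,d\nu_n(\lambda)$, I would write
\[
\int_{\R_+} f \,d(\cT\nu_n) - \int_{\R_+} f \,d(\cT\nu) = \int_{\R_+} f\,(t_{\nu_n} - t_\nu)\,d\nu_n + \left(\int_{\R_+} f\,t_\nu\,d\nu_n - \int_{\R_+} f\,t_\nu\,d\nu\right)
\]
and bound the two summands separately.

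For the second summand: since $t_\nu$ is continuous on $\R_+$ and $f$ vanishes outside $[a,b]$, the product $f\,t_\nu$ is continuous on $\R_+$ with compact support, hence extends to a function in $C([0,\infty],\C)$ (with value $0$ at $0$ and at $\infty$). By the very definition of the weak-$*$-topology on $\mathcal{BM}_\C^{\mathrm{fin}}\left([0,\infty]\right)$ this summand tends to $0$.

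For the first summand, I would estimate
\[
\left| \int_{\R_+} f\,(t_{\nu_n} - t_\nu)\,d\nu_n \right| \leq \left\lVert f\right\rVert_\infty \cdot \left(\sup_{\lambda \in [a,b]} |t_{\nu_n}(\lambda) - t_\nu(\lambda)|\right) \cdot \nu_n([0,\infty]).
\]
Applying the weak-$*$ convergence to the constant function $\textbf{1} \in C([0,\infty],\C)$ gives $\nu_n([0,\infty]) \to \nu([0,\infty]) < \infty$, so the last factor stays bounded in $n$. By \fref{lem:tNuUniformlyCompact} the functions $t_{\nu_n}$ converge to $t_\nu$ uniformly on the compact set $[a,b]$, so the middle factor tends to $0$. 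Hence the first summand tends to $0$, and adding the two estimates yields the claim.

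I do not expect a genuine obstacle here beyond assembling the pieces; the one point deserving care is that $t_\nu(\lambda) = \left|F_\nu(i\lambda)\right|^{-2} \frac{1+\lambda^2}{\lambda}$ is only defined on $\R_+$ and may blow up as $\lambda \to 0$ or $\lambda \to \infty$, which is exactly why the hypothesis $f \in C_c(\R_+,\C)$ (compact support inside the open half-line) is used — it guarantees that $f\,t_\nu$ and $f\,(t_{\nu_n}-t_\nu)$ remain bounded and, in the former case, extend continuously to $[0,\infty]$. The substantive analytic input, namely the uniform convergence of $t_{\nu_n}$ on compacta, has already been done in \fref{lem:tNuUniformlyCompact}.
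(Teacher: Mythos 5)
Your proposal is correct and follows essentially the same route as the paper: the same splitting of the difference into $\int f\,(t_{\nu_n}-t_\nu)\,d\nu_n$ plus $\int f\,t_\nu\,d\nu_n - \int f\,t_\nu\,d\nu$, with the first term controlled by \fref{lem:tNuUniformlyCompact} together with the boundedness of $\nu_n([0,\infty])$ (obtained by testing the weak-$*$ convergence against $\textbf{1}$), and the second by noting $f\,t_\nu \in C_c(\R_+,\C) \subeq C([0,\infty],\C)$. Your closing remark about why compact support in the open half-line matters is exactly the point implicit in the paper's argument, so nothing is missing.
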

\begin{proof}
Since by assumption
\[\nu_n([0,\infty]) = \int_{[0,\infty]} \textbf{1} \,d\nu_n \xrightarrow{n \to \infty} \int_{[0,\infty]} \textbf{1} \,d\nu = \nu([0,\infty]),\]
we have that
\[M \coloneqq \sup_{n \in \N} \,\nu_n([0,\infty]) < \infty.\]
Further, by \fref{lem:tNuUniformlyCompact}, the functions \((t_{\nu_n})_{n \in \N}\) converge uniformly to \(t_\nu\) on compact sets. Therefore, for every function \(f \in C_c(\R_+,\C)\) one has
\[\left\lVert f \cdot \left(t_{\nu_n} - t_\nu\right)\right\rVert_\infty \xrightarrow{n \to \infty} 0.\]
This yields
\begin{align*}
\left|\int_{\R_+} f \cdot \left(t_\nu - t_{\nu_n}\right) \,d\nu_n\right|\leq \left\lVert f \cdot \left(t_\nu - t_{\nu_n}\right)\right\rVert_\infty \cdot \nu_n([0,\infty]) = \left\lVert f \cdot \left(t_\nu - t_{\nu_n}\right)\right\rVert_\infty \cdot M \xrightarrow{n \to \infty} 0.
\end{align*}
Also we have \(f \cdot t_\nu \in C_c(\R_+,\C) \subeq C([0,\infty],\C)\), so
\[\left|\int_{\R_+} f \cdot t_\nu \,d\nu - \int_{\R_+} f \cdot t_\nu \,d\nu_n\right| \xrightarrow{n \to \infty} 0.\]
This yields
\begin{align*}
\left|\int_{\R_+} f \,d(\cT\nu) - \int_{\R_+} f \,d(\cT\nu_n)\right| &= \left|\int_{\R_+} f \cdot t_\nu \,d\nu - \int_{\R_+} f \cdot t_{\nu_n} \,d{\nu_n}\right|
\\&\leq \left|\int_{\R_+} f \cdot t_\nu \,d\nu - \int_{\R_+} f \cdot t_\nu \,d\nu_n\right| + \left|\int_{\R_+} f \cdot \left(t_\nu - t_{\nu_n}\right) \,d\nu_n\right|
\\&\xrightarrow{n \to \infty} 0 + 0 = 0. \qedhere
\end{align*}
\end{proof}
\begin{theorem}\label{thm:HankelHasUnimodMeasure}
One has
\[\{H \in \HanP : \left\lVert H\right\rVert \leq 1\} = \left\{H_{h_{\nu}}: \nu \in \mathcal{BM}^{\mathrm{fin}}\left([0,\infty]\right) \setminus \{0\}\right\}.\]
\end{theorem}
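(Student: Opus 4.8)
The inclusion $\supseteq$ is already available: by \fref{prop:cTNuIsCarlesonOfHNu}, for every $\nu \in \mathcal{BM}^{\mathrm{fin}}\left([0,\infty]\right) \setminus \{0\}$ the operator $H_{h_\nu}$ is a positive Hankel operator with $\left\lVert H_{h_\nu}\right\rVert \leq 1$. So the entire content lies in the inclusion $\subseteq$, and the plan is to upgrade the WOT-density statement \fref{prop:dDenseNormLeq1} to an exact description by means of a weak-$*$ compactness argument on the parameter measures, using \fref{prop:topologiesEquiv} and \fref{prop:weakImpliesCcConvergence} to pass convergence back and forth between Hankel operators and their Carleson measures.

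First I would fix $H \in \HanP$ with $\left\lVert H\right\rVert \leq 1$. By \fref{prop:dDenseNormLeq1}, $H$ lies in the WOT-closure of $\{H_{h_{W(\mu)}} : \mu \in \mathcal{BM}^{\mathrm{fin}}\left(\R_+\right) \setminus \{0\}\}$, and since $H^2(\C_+)$ is separable the WOT is metrizable on the norm-bounded set $\{H \in \HanP : \left\lVert H\right\rVert \leq 1\}$ (as already used in the proof of \fref{lem:ContractiveHankelWOTClosed}), so there is a sequence $\mu_n \in \mathcal{BM}^{\mathrm{fin}}\left(\R_+\right) \setminus \{0\}$ with $H_{h_{W(\mu_n)}} \underset{\mathrm{WOT}}{\xrightarrow{n \to \infty}} H$. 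Put $\nu_n \coloneqq W(\mu_n) \in \mathcal{BM}^{\mathrm{fin}}\left([0,\infty]\right)$; since $W$ carries the strictly positive density $\tfrac{\lambda}{1+\lambda^2}$, one has $\nu_n \neq 0$, so I may normalize to the probability measure $\tilde\nu_n \coloneqq \nu_n([0,\infty])^{-1}\,\nu_n$ on $[0,\infty]$. The scaling identity $h_{c\cdot\nu} = h_\nu$ for $c \in \R_+$ (recorded at the start of Section~\ref{sec:UniqueSymbol}) then gives $H_{h_{\tilde\nu_n}} = H_{h_{\nu_n}} = H_{h_{W(\mu_n)}} \underset{\mathrm{WOT}}{\xrightarrow{n \to \infty}} H$.

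Next I would extract a limit. The space $C([0,\infty],\C)$ is a separable Banach space with dual $\mathcal{BM}^{\mathrm{fin}}_\C\left([0,\infty]\right)$, and $(\tilde\nu_n)_{n \in \N}$ is bounded in this dual, so by Banach--Alaoglu there is a subsequence with $\tilde\nu_{n_k} \underset{w}{\xrightarrow{k \to \infty}} \nu$ for some $\nu \in \mathcal{BM}^{\mathrm{fin}}_\C\left([0,\infty]\right)$; testing against nonnegative $f \in C([0,\infty],\R)$ shows $\nu \geq 0$, and testing against $\textbf{1}$ gives $\nu([0,\infty]) = \lim_k \tilde\nu_{n_k}([0,\infty]) = 1$, so $\nu \in \mathcal{BM}^{\mathrm{fin}}\left([0,\infty]\right) \setminus \{0\}$. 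Now I would identify the Carleson measure $\mu_H$ of $H$ (which exists and is unique by \fref{prop:HankelOneToOne}) with $\cT\nu$. On one hand, \fref{prop:cTNuIsCarlesonOfHNu} gives $\mu_{H_{h_{\tilde\nu_{n_k}}}} = \cT\tilde\nu_{n_k}$ and $\mu_{H_{h_\nu}} = \cT\nu$, and the implication (a)$\Rightarrow$(d) of \fref{prop:topologiesEquiv}, applied to $H_{h_{\tilde\nu_{n_k}}} \underset{\mathrm{WOT}}{\xrightarrow{k \to \infty}} H$, yields $\int_{\R_+} f \, d(\cT\tilde\nu_{n_k}) \to \int_{\R_+} f \, d\mu_H$ for every $f \in C_c(\R_+,\C)$. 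On the other hand, \fref{prop:weakImpliesCcConvergence} applied to $\tilde\nu_{n_k} \underset{w}{\xrightarrow{k \to \infty}} \nu$ yields $\int_{\R_+} f \, d(\cT\tilde\nu_{n_k}) \to \int_{\R_+} f \, d(\cT\nu)$ for every $f \in C_c(\R_+,\C)$. Hence $\int_{\R_+} f \, d\mu_H = \int_{\R_+} f \, d(\cT\nu)$ for all $f \in C_c(\R_+,\C)$; since $\mu_H$ is finite and $\cT\nu$ is locally finite on $\R_+$ (its density $t_\nu$ is continuous on $\R_+$ and $\nu|_{\R_+}$ is finite), both are Radon measures, so they coincide. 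Therefore $\mu_H = \cT\nu = \mu_{H_{h_\nu}}$, and the injectivity part of \fref{prop:HankelOneToOne} gives $H = H_{h_\nu}$ with $\nu \in \mathcal{BM}^{\mathrm{fin}}\left([0,\infty]\right) \setminus \{0\}$.

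I expect the main obstacle to be the normalization/compactness step: the total masses $W(\mu_n)([0,\infty]) = \int_{\R_+} \tfrac{\lambda}{1+\lambda^2}\,d\mu_n(\lambda)$ need not be bounded, so one cannot pass to a weak-$*$ limit of the $W(\mu_n)$ directly, and the remedy -- rescaling to probability measures, legitimate only because $\nu \mapsto h_\nu$ is scale-invariant -- is the genuinely nontrivial idea; after that the argument is bookkeeping with \fref{prop:topologiesEquiv}, \fref{prop:weakImpliesCcConvergence} and \fref{prop:cTNuIsCarlesonOfHNu}. A secondary point needing a little care is confirming that the weak-$*$ limit $\nu$ is nonzero (handled by the normalization, since $\textbf{1} \in C([0,\infty],\C)$) and that equality of $\mu_H$ and $\cT\nu$ against all of $C_c(\R_+,\C)$ forces equality of the measures, for which one must note that both are Radon.
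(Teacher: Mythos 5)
Your proposal is correct and follows essentially the same route as the paper's proof: WOT-approximation via \fref{prop:dDenseNormLeq1}, normalization to total mass one (legitimate by scale-invariance, i.e.\ \fref{rem:scaleable}/$h_{c\nu}=h_\nu$), Banach--Alaoglu extraction of a weak-$*$ limit $\nu\neq 0$, and identification $\mu_H=\cT\nu$ by comparing \fref{prop:topologiesEquiv} with \fref{prop:weakImpliesCcConvergence}. The only cosmetic difference is that you invoke the injectivity in \fref{prop:HankelOneToOne} and note Radon-ness explicitly, where the paper concludes directly via \fref{prop:cTNuIsCarlesonOfHNu}.
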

\begin{proof}
By \fref{prop:cTNuIsCarlesonOfHNu} we have
\[\left\{H_{h_\nu}: \nu \in \mathcal{BM}^{\mathrm{fin}}\left([0,\infty]\right) \setminus \{0\}\right\} \subeq \{H \in \HanP : \left\lVert H\right\rVert \leq 1\}.\]
We now show the other inclusion. So let \(H \in \HanP\) with \(\left\lVert H\right\rVert \leq 1\). Then, by \fref{prop:dDenseNormLeq1}, we have
\[H \in \overline{\{H_{h_{W(\mu)}}: \mu \in \mathcal{BM}^{\mathrm{fin}}\left(\R_+\right) \setminus \{0\}\}}^{\mathrm{WOT}}.\]
Since \(H^2(\C_+)\) is separable and therefore the weak operator topology restricted to the closed unit ball is metrizable, this implies that there exists a sequence of measures \((\mu_n)_{n \in \N}\) in \(\mathcal{BM}^{\mathrm{fin}}\left(\R_+\right) \setminus \{0\}\) such that
\[H_{h_{W(\mu_n)}} \underset{\mathrm{WOT}}{\xrightarrow{n \to \infty}} H.\]
For \(n \in \N\) we now set
\[\nu_n \coloneqq \frac 1{(W(\mu_n))([0,\infty])} W(\mu_n).\]
Since, by \fref{rem:scaleable}, we have
\begin{equation*}
\cT\left(\nu_n\right) = \cT\left(W(\mu_n)\right),
\end{equation*}
by \fref{prop:cTNuIsCarlesonOfHNu}, we get
\[H_{\cT \nu_n} = H_{\cT W(\mu_n)} = H_{h_{W(\mu_n)}} \underset{\mathrm{WOT}}{\xrightarrow{n \to \infty}} H.\]
Further, by construction, we have
\[\nu_n([0,\infty]) = 1 \quad \forall n \in \N.\]
The set
\[\{\nu \in \mathcal{BM}^{\mathrm{fin}}\left([0,\infty]\right): \nu([0,\infty]) \leq 1\}\]
is a closed subset of
\[B_1(0) = \{\nu \in \mathcal{BM}^{\mathrm{fin}}_\C\left([0,\infty]\right): |\nu|([0,\infty]) \leq 1\}\]
with respect to the weak-\(*\)-topology on
\[\mathcal{BM}^{\mathrm{fin}}_\C\left([0,\infty]\right) = C([0,\infty],\C)'\]
and therefore sequentially compact by the Banach--Alaoglu Theorem. Therefore, there exists a weakly convergent subsequence of \((\nu_n)_{n \in \N}\). By replacing \((\nu_n)_{n \in \N}\) with that subsequence, we can assume that
\[\nu_n \underset{w}{\xrightarrow{n \to \infty}} \nu\]
for some measure \(\nu \in \mathcal{BM}^{\mathrm{fin}}\left([0,\infty]\right)\). Then
\[1 = \nu_n([0,\infty]) = \int_{[0,\infty]} \textbf{1} \,d\nu_n \xrightarrow{n \to \infty} \int_{[0,\infty]} \textbf{1} \,d\nu = \nu([0,\infty]),\]
so \(\nu([0,\infty]) = 1\) and therefore \(\nu \neq 0\).
 Then, by \fref{prop:weakImpliesCcConvergence}, we have
\[\int_{\R_+} f \,d(\cT\nu_n) \xrightarrow{n \to \infty} \int_{\R_+} f \,d(\cT\nu) \quad \forall f \in C_c(\R_+,\C).\]
But, since
\[H_{\cT \nu_n} \underset{\mathrm{WOT}}{\xrightarrow{n \to \infty}} H,\]
we also have
\[\int_{\R_+} f \,d(\cT\nu_n) \xrightarrow{n \to \infty} \int_{\R_+} f \,d\mu_H \quad \forall f \in C_c(\R_+,\C)\]
by \fref{prop:topologiesEquiv}. This implies \(\mu_H = \cT \nu\) and therefore, by \fref{prop:cTNuIsCarlesonOfHNu}, we have
\[H = H_{\cT \nu} = H_{h_\nu}. \qedhere\]
\end{proof}
\begin{cor}
One has
\[\{\mu \in \CM : \left\lVert H_\mu\right\rVert \leq 1\} = \left\{\cT \nu: \nu \in \mathcal{BM}^{\mathrm{fin}}\left([0,\infty]\right) \setminus \{0\}\right\}.\]
\end{cor}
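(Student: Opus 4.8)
The plan is to transport \fref{thm:HankelHasUnimodMeasure} through the bijection between Carleson measures and positive Hankel operators supplied by \fref{prop:HankelOneToOne}. Since \(\mu \mapsto H_\mu\) is a bijection \(\CM \to \HanP\), it restricts to a bijection between \(\{\mu \in \CM : \left\lVert H_\mu\right\rVert \leq 1\}\) and \(\{H \in \HanP : \left\lVert H\right\rVert \leq 1\}\). By \fref{thm:HankelHasUnimodMeasure} the latter set equals \(\{H_{h_\nu} : \nu \in \mathcal{BM}^{\mathrm{fin}}\left([0,\infty]\right) \setminus \{0\}\}\), so the task reduces to identifying, for each admissible \(\nu\), the unique Carleson measure \(\mu_{H_{h_\nu}}\) associated to \(H_{h_\nu}\).

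First I would prove the inclusion ``\(\subeq\)''. Given \(\mu \in \CM\) with \(\left\lVert H_\mu\right\rVert \leq 1\), apply \fref{thm:HankelHasUnimodMeasure} to the positive contractive Hankel operator \(H_\mu\) to obtain \(\nu \in \mathcal{BM}^{\mathrm{fin}}\left([0,\infty]\right) \setminus \{0\}\) with \(H_\mu = H_{h_\nu}\). By \fref{prop:cTNuIsCarlesonOfHNu} one has \(\cT\nu \in \CM\) and \(H_{\cT\nu} = H_{h_\nu} = H_\mu\), and then the injectivity half of \fref{prop:HankelOneToOne} forces \(\mu = \cT\nu\). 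For the reverse inclusion ``\(\supeq\)'', take any \(\nu \in \mathcal{BM}^{\mathrm{fin}}\left([0,\infty]\right) \setminus \{0\}\); \fref{prop:cTNuIsCarlesonOfHNu} directly gives \(\cT\nu \in \CM\), \(H_{\cT\nu} = H_{h_\nu}\), and \(\left\lVert H_{h_\nu}\right\rVert \leq 1\), so \(\cT\nu\) belongs to the left-hand set.

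I do not expect a genuine obstacle here: this is a bookkeeping corollary that merely repackages \fref{thm:HankelHasUnimodMeasure} in the language of Carleson measures. The only points requiring a little care are that \(\cT\nu\) is honestly a (finite) Carleson measure and that \(H_{\cT\nu}\) really coincides with \(H_{h_\nu}\) — both already recorded in \fref{prop:cTNuIsCarlesonOfHNu} — and that the bijectivity, not merely the surjectivity, in \fref{prop:HankelOneToOne} is invoked, so that \(\mu\) is pinned down uniquely rather than just up to the relation \(H_\mu = H_{\cT\nu}\).
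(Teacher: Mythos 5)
Your argument is correct and is exactly the paper's proof, just spelled out: the paper likewise combines \fref{prop:cTNuIsCarlesonOfHNu} (to identify \(H_{h_\nu}\) with \(H_{\cT\nu}\)), \fref{thm:HankelHasUnimodMeasure}, and the bijectivity in \fref{prop:HankelOneToOne} to transport the statement to Carleson measures. No issues.
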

\begin{proof}
By \fref{prop:cTNuIsCarlesonOfHNu} we have
\[\left\{H_{h_{\nu}}: \nu \in \mathcal{BM}^{\mathrm{fin}}\left([0,\infty]\right) \setminus \{0\}\right\} = \left\{H_{\cT \nu}: \nu \in \mathcal{BM}^{\mathrm{fin}}\left([0,\infty]\right) \setminus \{0\}\right\}\]
The statement then follows by \fref{thm:HankelHasUnimodMeasure} and \fref{prop:HankelOneToOne}.
\end{proof}
\begin{example}\label{ex:Lebesgue}
For the measure \(\nu \in \mathcal{BM}^{\mathrm{fin}}\left(\left[0,\infty\right]\right)\) defined by
\[d\nu(\lambda) = \frac 2{1+\lambda^2} \,d\lambda\]
we get
\begin{align*}
\Psi_\nu\left(p\right) &= \frac{1}{\pi}\int_{[0,\infty]} \frac{1+\lambda^2}{p^2 + \lambda^2} \cdot \frac 2{1+\lambda^2} \,d\lambda = \frac{2}{\pi |p|}\int_{\R_+} \frac{1}{1 + \big(\frac \lambda{|p|}\big)^2} \cdot \frac 1{|p|} \,d\lambda
\\&= \frac{2}{\pi |p|}\int_{\R_+} \frac{1}{1 + x^2} \,dx = \frac{2}{\pi |p|} \cdot [\arctan(x)]_0^\infty  = \frac 1{\left|p\right|}.
\end{align*}
Then the outer function \(F_\nu = \Out\left(\sqrt{\Psi_\nu}\right)\), by \fref{lem:OuterExamples}, is given by
\[F_\nu(z) = \frac{1+i}{\sqrt{2z}}, \quad z \in \C_+,\]
where, for \(z \in \C_+\), by \(\sqrt{z}\) we denote the square root of \(z\) with \(\mathrm{Re}(\sqrt{z})>0\).
For \(x > 0\) this yields
\[h_\nu(x) = \frac{F_\nu(x)}{RF_\nu(x)} = \frac{\frac{1+i}{\sqrt{2x}}}{\frac{1+i}{\sqrt{-2x}}} = \frac {\sqrt{-x}}{\sqrt{x}} = i\]
and analogously we get \(h_\nu(x) = -i\) for every \(x<0\), so we have
\[h_\nu = i \cdot \sgn.\]
Further, for \(\lambda \in \R_+\), we have
\[F_\nu(i\lambda)^2 = \left(\frac{1+i}{\sqrt{2i\lambda}}\right)^2 = \frac{(1+i)^2}{2i\lambda} = \frac 1\lambda,\]
so
\[d(\cT\nu)(\lambda) = \left|F_\nu(i\lambda)\right|^{-2} \frac{1+\lambda^2}{\lambda} \,d\nu(\lambda) = \lambda \cdot \frac{1+\lambda^2}{\lambda} \cdot \frac 2{1+\lambda^2} \,d\lambda = 2 \,d\lambda\]
and therefore
\[\cT \nu = 2 \lambda_1.\]
\end{example}

\subsection{Uniqueness of the measure}
In \fref{sec:UniqueSymbol} we have seen, that, defining on \(\mathcal{BM}^{\mathrm{fin}}\left([0,\infty]\right) \setminus \{0\}\) the equivalence relation
\[\nu \sim \nu' \quad :\Leftrightarrow \quad (\exists c \in \R_+)\, \nu = c \cdot \nu',\]
one has
\[h_\nu = h_{\nu'} \quad \Leftrightarrow \quad \nu \sim \nu'.\]
This, of course, implies that
\[\nu \sim \nu' \quad \Rightarrow \quad H_{h_\nu} = H_{h_{\nu'}}\]
and therefore, we can define the map
\[\left(\mathcal{BM}^{\mathrm{fin}}\left([0,\infty]\right) \setminus \{0\}\right) \slash \sim \,\to \{H \in \HanP: \left\lVert H\right\rVert \leq 1\}, \quad [\nu] \mapsto H_{h_\nu}.\]
The goal of this section is to answer the question of whether it is possible to prove an analogue of \fref{cor:hNuInjektiveSim}, i.e. to answer the question of whether the above map is injective. The following lemma shows that, in this generality, this is not true:
\begin{lemma}
Let \(\nu \in \mathcal{BM}^{\mathrm{fin}}\left([0,\infty]\right) \setminus \{0\}\). Then the following are equivalent:
\begin{enumerate}[\rm (a)]
\item \(\cT \nu = 0\).
\item \(\nu = s \cdot \delta_0 + t \cdot \delta_\infty\) for some \(s,t \in \R_{\geq 0}\).
\end{enumerate}
\end{lemma}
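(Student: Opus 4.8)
The plan is to reduce everything to a single observation about the weight through which $\cT$ factors. By the Remark following the definition of $t_\nu$ we have $d(\cT\nu)(\lambda) = t_\nu(\lambda)\,d\nu(\lambda)$ as measures on $\R_+$, where $t_\nu\colon \R_+ \to \R_+$, $t_\nu(\lambda) = |F_\nu(i\lambda)|^{-2}\tfrac{1+\lambda^2}{\lambda}$, is continuous and, crucially, \emph{strictly positive}: it is positive because $\tfrac{1+\lambda^2}{\lambda} > 0$ for $\lambda \in \R_+$ and because $F_\nu \in \mathrm{Out}(\C_+)$ is zero-free, so $|F_\nu(i\lambda)|^{-2}$ is a well-defined positive number. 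Thus $\cT\nu$ is, up to the everywhere-positive continuous weight $t_\nu$, just the restriction of $\nu$ to the open subset $\R_+ = (0,\infty) \subseteq [0,\infty]$. Everything then follows from this.

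Granting this, the implication (b) $\Rightarrow$ (a) is immediate: if $\nu = s\,\delta_0 + t\,\delta_\infty$ with $s,t \in \R_{\geq 0}$, then $\nu$ gives no mass to $\R_+$, hence $d(\cT\nu)(\lambda) = t_\nu(\lambda)\,d\nu(\lambda)$ is the zero measure and $\cT\nu = 0$.

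For (a) $\Rightarrow$ (b) I would localize: assume $\cT\nu = 0$. For each pair $0 < a < b < \infty$, continuity of $t_\nu$ on the compact interval $[a,b]$ gives $c \coloneqq \min_{\lambda\in[a,b]} t_\nu(\lambda) > 0$, and then
\[ 0 = (\cT\nu)([a,b]) = \int_{[a,b]} t_\nu\,d\nu \;\geq\; c\cdot\nu([a,b]) \;\geq\; 0, \]
forcing $\nu([a,b]) = 0$. Applying this with $a = \tfrac1n$, $b = n$ and using that $\R_+ = \bigcup_{n} [\tfrac1n, n]$ is an increasing union, continuity from below of $\nu$ yields $\nu(\R_+) = \nu\big((0,\infty)\big) = 0$, so $\nu$ is carried by $\{0,\infty\}$. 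Hence $\nu = \nu(\{0\})\,\delta_0 + \nu(\{\infty\})\,\delta_\infty$, which is of the asserted form with $s = \nu(\{0\}) \in \R_{\geq 0}$ and $t = \nu(\{\infty\}) \in \R_{\geq 0}$.

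I do not expect a real obstacle here; the only point needing care is the strict positivity and continuity of $t_\nu$ on $\R_+$, and both are already built into its definition together with the fact that outer functions have no zeros in $\C_+$.
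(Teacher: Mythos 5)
Your proof is correct and follows essentially the same route as the paper: both reduce the statement to the fact that $d(\cT\nu)(\lambda) = t_\nu(\lambda)\,d\nu(\lambda)$ with $t_\nu$ strictly positive on $\R_+$ (since $F_\nu$ is outer and hence zero-free), so that $\cT\nu = 0$ is equivalent to $\nu(\R_+) = 0$, i.e.\ to $\nu$ being carried by $\{0,\infty\}$. Your compact-exhaustion argument via $[\tfrac1n,n]$ merely spells out a step the paper leaves implicit, and is perfectly fine.
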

\begin{proof}
That \(\cT \nu = 0\) is equivalent to
\[0 = d(\cT \nu)(\lambda) = \left|F_\nu(i\lambda)\right|^{-2} \frac{1+\lambda^2}{\lambda} \,d\nu(\lambda), \quad \lambda \in \R_+\]
and therefore to
\[\nu(\R_+) = 0,\]
since
\[\left|F_\nu(i\lambda)\right|^{-2} \frac{1+\lambda^2}{\lambda} > 0 \quad \forall \lambda \in \R_+.\]
This shows that for a measure \(\nu \in \mathcal{BM}^{\mathrm{fin}}\left(\left[0,\infty\right]\right) \setminus \{0\}\) one has
\[\cT \nu = 0 \quad \Leftrightarrow \quad \nu(\R_+)=0 \quad \Leftrightarrow \quad (\exists s,t \in \R_{\geq 0}:  \nu = s \cdot \delta_0 + t \cdot \delta_\infty). \qedhere\]
\end{proof}
\begin{example}
\begin{enumerate}[\rm (a)]
\item For the measure \(\nu = \pi \cdot \delta_0\), one gets
\[\Psi_\nu\left(p\right) = \int_{[0,\infty]} \frac{1+\lambda^2}{p^2 + \lambda^2} \cdot \,d\delta_0(\lambda) = \frac{1+0^2}{p^2 + 0^2} = \frac 1{p^2}.\]
Then, by \fref{lem:OuterExamples}, one has
\[F_\nu(z) = \frac iz, \quad z \in \C_+\]
and therefore
\[h_\nu(x) = \frac{F_\nu(x)}{RF_\nu(x)} = \frac{\frac ix}{\frac i{-x}} = -1, \quad x \in \R^\times\]
and indeed, one has \(H_{-\textbf{1}} = -p_+ R p_+^* = 0\).
\item For the measure \(\nu = \pi \cdot \delta_\infty\), one gets
\[\Psi_\nu\left(p\right) = \int_{[0,\infty]} \frac{1+\lambda^2}{p^2 + \lambda^2} \cdot \,d\delta_\infty(\lambda) = 1.\]
Then, one has \(F_\nu \equiv 1\) and therefore
\[h_\nu = \frac{F_\nu}{RF_\nu} = \frac{\textbf{1}}{R\textbf{1}} = \textbf{1}\]
and indeed, one has \(H_{\textbf{1}} = p_+ R p_+^* = 0\).
\end{enumerate}
\end{example}
This example shows that the map
\[\left(\mathcal{BM}^{\mathrm{fin}}\left([0,\infty]\right) \setminus \{0\}\right) \slash \sim \,\to \{H \in \HanP: \left\lVert H\right\rVert \leq 1\}, \quad [\nu] \mapsto H_{h_\nu}\]
is not injective, since we have \(\pi \cdot \delta_0 \not\sim \pi \cdot \delta_\infty\) but
\[H_{h_{\pi \cdot \delta_0}} = 0 = H_{h_{\pi \cdot \delta_\infty}}.\] However, we will now see that, when one restricts it to positive Hankel operators with a non-zero fixed point, it is, in fact, a bijection:
\begin{prop}
The map
\begin{align*}
W\left(\mathcal{BM}^{\mathrm{fin}}\left(\R_+\right) \setminus \{0\}\right) \slash \sim \,&\to \{H \in \HanP: \left\lVert H\right\rVert \leq 1, \,\ker(H - \textbf{1}) \neq \{0\}\}
\\ [W(\mu)] &\mapsto H_{h_{W(\mu)}}
\end{align*}
is a bijection.
\end{prop}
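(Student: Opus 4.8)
The plan is to read off both surjectivity and well-definedness from \fref{prop:HankelFixedPoint} and \fref{prop:cTNuIsCarlesonOfHNu}, and to obtain injectivity by a fixed-point recovery argument that then feeds into \fref{cor:hNuInjektiveSim}. First I would verify that the map is well defined with the stated codomain. For $\mu \in \mathcal{BM}^{\mathrm{fin}}(\R_+) \setminus \{0\}$ the density $\lambda \mapsto \tfrac{\lambda}{1+\lambda^2}$ defining $W(\mu)$ is strictly positive on $\R_+$, so $W(\mu) \in \mathcal{BM}^{\mathrm{fin}}([0,\infty]) \setminus \{0\}$; by \fref{prop:cTNuIsCarlesonOfHNu} the operator $H_{h_{W(\mu)}}$ is a positive Hankel operator with $\lVert H_{h_{W(\mu)}}\rVert \le 1$, and by the implication (c) $\Rightarrow$ (a) of \fref{prop:HankelFixedPoint} it has a non-zero fixed point. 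That the assignment factors through $\sim$ is the scaling computation recorded just before \fref{cor:hNuInjektiveSim}: $W(\mu) = c\,W(\mu')$ with $c \in \R_+$ forces $h_{W(\mu)} = h_{W(\mu')}$, hence $H_{h_{W(\mu)}} = H_{h_{W(\mu')}}$. Surjectivity is then immediate: given $H \in \HanP$ with $\lVert H\rVert \le 1$ and $\ker(H-\mathbf{1}) \neq \{0\}$, the implication (a) $\Rightarrow$ (c) of \fref{prop:HankelFixedPoint} produces $\mu \in \mathcal{BM}^{\mathrm{fin}}(\R_+) \setminus \{0\}$ with $H = H_{h_{W(\mu)}}$, so $[W(\mu)]$ is a preimage.

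For injectivity, suppose $H := H_{h_{W(\mu)}} = H_{h_{W(\mu')}}$ and fix any $F \in \ker(H-\mathbf{1}) \setminus \{0\}$. Since $h_{W(\mu)} \in L^\infty(\R,\T)^\flat$ has modulus $1$ almost everywhere, the norm identity used in the proof of \fref{prop:HankelFixedPoint} applies without change; writing $P_+$ for the orthogonal projection onto $H^2(\C_+)$, one gets
\[
\lVert F\rVert^2 = \lVert H F\rVert^2 = \lVert P_+ h_{W(\mu)} R F\rVert^2 = \lVert h_{W(\mu)} R F\rVert^2 - \lVert (\mathbf{1}-P_+) h_{W(\mu)} R F\rVert^2 = \lVert F\rVert^2 - \lVert (\mathbf{1}-P_+) h_{W(\mu)} R F\rVert^2,
\]
so $(\mathbf{1}-P_+) h_{W(\mu)} R F = 0$, and therefore $\theta_{h_{W(\mu)}} F = h_{W(\mu)} R F = P_+ h_{W(\mu)} R F = H F = F$. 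Using \fref{thm:HardyAlmostEverywhereNonZero} to make division by $R F$ legitimate a.e., this says $h_{W(\mu)} = F / R F$. Running the same argument for $\mu'$ gives $h_{W(\mu')} = F / R F$, hence $h_{W(\mu)} = h_{W(\mu')}$, and \fref{cor:hNuInjektiveSim} then forces $W(\mu) \sim W(\mu')$, i.e.\ $[W(\mu)] = [W(\mu')]$.

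I do not expect a genuine obstacle: the statement is essentially a repackaging of \fref{prop:HankelFixedPoint}, \fref{prop:cTNuIsCarlesonOfHNu} and \fref{cor:hNuInjektiveSim}. The one step needing care is the recovery $\theta_{h_{W(\mu)}} F = F$ from $H_{h_{W(\mu)}} F = F$, which hinges on the symbol being unimodular so that $H_{h_{W(\mu)}}$ is a norm-one compression of the unitary $\theta_{h_{W(\mu)}}$; this is precisely the inequality already exploited in \fref{prop:HankelFixedPoint}, so it transfers verbatim, and in particular the value $F/RF$ does not depend on the choice of $F$ since it equals the fixed function $h_{W(\mu)}$. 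It is also worth recording why the domain must be restricted to measures of the form $W(\mu)$: over all of $\mathcal{BM}^{\mathrm{fin}}([0,\infty]) \setminus \{0\}$ the analogous map is onto all positive contractive Hankel operators by \fref{thm:HankelHasUnimodMeasure} but is no longer injective, since the point masses $\pi\,\delta_0$ and $\pi\,\delta_\infty$ both yield the zero operator.
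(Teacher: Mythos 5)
Your proof is correct, and the injectivity step takes a genuinely different route from the paper. The paper works with the two canonical fixed points $f=F_{W(\mu)}$ and $g=F_{W(\mu')}$ of the respective involutions and shows, using the relations $\theta_{h}S_t=S_{-t}\theta_h$ and $H S_t = S_t^* H$ together with $H_{h_{W(\mu)}}=H_{h_{W(\mu')}}$, that $\braket*{S_t f}{(\theta_{h_{W(\mu)}}-\textbf{1})g}=0$ for all $t\in\R$ (split into the cases $t\ge 0$ and $t\le 0$); since $f$ is outer, $\spann S(\R)f$ is dense in $L^2(\R,\C)$ by \fref{cor:OuterDense}, whence $\theta_{h_{W(\mu)}}g=g$ and $h_{W(\mu)}=\frac{g}{Rg}=h_{W(\mu')}$. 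You instead take an arbitrary non-zero $F\in\ker(H-\textbf{1})$ and upgrade $HF=F$ to $\theta_{h_{W(\mu)}}F=F$ by the Pythagoras/compression argument, which is legitimate here precisely because the given symbol $h_{W(\mu)}$ is unimodular, so $\lVert h_{W(\mu)}RF\rVert=\lVert F\rVert$ holds as an equality (in \fref{prop:HankelFixedPoint} the same mechanism was run with a Nehari symbol and an inequality); this pins down both symbols as $F/RF$ at once and avoids the density and case analysis entirely. Both arguments then conclude with \fref{cor:hNuInjektiveSim}, and your treatment of well-definedness and surjectivity matches the paper's (the latter being immediate from \fref{prop:HankelFixedPoint}). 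Your version is somewhat more economical; the paper's version has the mild advantage of only ever manipulating the explicit outer fixed points $F_{W(\mu)}$, $F_{W(\mu')}$ from \fref{def:FNuHNu}, but nothing in your argument depends on the choice of $F$, as you note.
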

\begin{proof}
The surjectivity follows immediately by \fref{prop:HankelFixedPoint}. We now show the injectivity. So let \(\mu,\mu' \in \mathcal{BM}^{\mathrm{fin}}\left(\R_+\right) \setminus \{0\}\) with
\[H_{h_{W(\mu)}} = H_{h_{W(\mu')}}.\]
We set
\[f \coloneqq F_{W(\mu)} \in \ker\left(\theta_{h_{W(\mu)}}-{\bf 1}\right) \cap \Out^2(\C_+)\]
and
\[g \coloneqq F_{W(\mu')} \in \ker\left(\theta_{h_{W(\mu')}}-{\bf 1}\right) \cap \Out^2(\C_+)\]
(cf. \fref{def:FNuHNu}). Then, for \(t \geq 0\), we have
\begin{align*}
\braket*{S_t f}{\left(\theta_{h_{W(\mu)}}-{\bf 1}\right)g} &= \braket*{S_t f}{(H_{h_{W(\mu)}}-\textbf{1}) g}
\\&= \braket*{S_t f}{(H_{h_{W(\mu')}}-\textbf{1}) g} = \braket*{S_t f}{\left(\theta_{h_{W(\mu')}}-{\bf 1}\right)g} = 0
\end{align*}
and for \(t \leq 0\) we have
\begin{align*}
\braket*{S_t f}{\theta_{h_{W(\mu)}} g} &= \braket*{\theta_{h_{W(\mu)}} S_t f}{ g} = \braket*{S_{-t} \theta_{h_{W(\mu)}} f}{g} = \braket*{S_{-t} f}{g}
\\&= \braket*{S_{-t} f}{\theta_{h_{W(\mu')}} g} = \braket*{S_{-t} f}{H_{h_{W(\mu')}} g} = \braket*{S_{-t} f}{H_{h_{W(\mu)}} g}
\\&= \braket*{S_{-t} f}{\theta_{h_{W(\mu)}} g} = \braket*{\theta_{h_{W(\mu)}} S_{-t} f}{g} = \braket*{S_t \theta_{h_{W(\mu)}} f}{g} = \braket*{S_t f}{g}
\end{align*}
and therefore
\begin{align*}
\braket*{S_t f}{\left(\theta_{h_{W(\mu)}}-{\bf 1}\right)g} &= \braket*{S_t f}{\theta_{h_{W(\mu)}} g} - \braket*{S_t f}{g} = 0.
\end{align*}
This yields
\[\braket*{S_t f}{\left(\theta_{h_{W(\mu)}}-{\bf 1}\right)g} = 0 \quad \forall t \in \R.\]
Since, by \fref{cor:OuterDense}, we have \(\overline{\spann S(\R)f} = L^2(\R,\C)\), we get \(\left(\theta_{h_{W(\mu)}}-{\bf 1}\right)g = 0\). This implies \(\theta_{h_{W(\mu)}} g = g\) and therefore
\[h_{W(\mu)} = \frac g{Rg} = \frac{F_{W(\mu')}}{RF_{W(\mu')}} = h_{W(\mu')},\]
so, by \fref{cor:hNuInjektiveSim}, we get \([W(\mu)] = [W(\mu')]\).
\end{proof}

\newpage
\section{Application to standard subspaces}
In this chapter, we will investigate so-called standard subspaces and certain unitary one-parameter groups that are compatible with a standard subspace. We will relate these unitary one-parameter groups to real (reflection positive) ROPGs using Borchers' Theorem. To understand Borchers' Theorem, we need some basic theory of standard subspaces as well as basics of the representation theory of the affine group \(\mathrm{Aff}(\R)\).

\subsection{Standard subspaces}
We start with the definition and basic theory of standard subspaces:
\begin{definition}(cf. \cite[Def. 5.1.3]{NO18})
Let \(\cH\) be a complex Hilbert space.
\begin{enumerate}[\rm (a)]
\item A closed real subspace \(\gls*{V} \subeq \cH\) is called a \textit{standard subspace}, if
\[\sV \cap i\sV = \{0\} \qquad \text{and} \qquad \oline{\sV + i\sV} = \cH.\]
\item Given a standard subspace \(\sV\) we consider the densely defined \textit{Tomita operator} given by
\[\gls*{TV} : \sV+i\sV \to \cH, \quad x+iy \mapsto x-iy.\]
\item We write \(\gls*{JV}\) for the anti-unitary involution and \(\gls*{DeltaV}\) for the strictly positive, densely defined operator one gets from the polar decomposition
\[T_\sV = J_\sV \Delta_\sV^{\frac 12}\]
of the Tomita operator. We call \((\Delta_\sV,J_\sV)\) the pair of \textit{modular objects} of \(\sV\).
\end{enumerate}
\end{definition}
\begin{prop}\label{prop:StandardSubspaceFixed}{\rm (cf. \cite[Rem. 5.1.4]{NO18})}
Let \(\cH\) be a complex Hilbert space and \(\sV \subeq \cH\) be a standard subspace. Then the pair \((\Delta_\sV,J_\sV)\) of modular objects of \(\sV\) satisfies the relation
\[J_V \Delta_V = \Delta_V^{-1}J_V\]
and one has
\[\sV = \mathrm{Fix}\big(J_\sV \Delta_\sV^{\frac 12}\big)\]
Conversely, for every pair of a strictly positive selfadjoint operator \(\Delta\) and an anti-unitary involution \(J\) satisfying \(J\Delta = \Delta^{-1}J\), the space \(\sV \coloneqq \mathrm{Fix}\big(J \Delta^{\frac 12}\big)\) is a standard subspace.
\end{prop}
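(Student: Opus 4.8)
Proof proposal. The plan is to prove both implications by identifying the operator $J_\sV\Delta_\sV^{\frac12}$ (respectively $J\Delta^{\frac12}$) with a Tomita operator and exploiting that a Tomita operator is a conjugate-linear involution.

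For the first assertion I would begin with the elementary properties of $T_\sV$. It is well defined, because $x+iy=x'+iy'$ with $x,y,x',y'\in\sV$ forces $x-x'=i(y'-y)\in\sV\cap i\sV=\{0\}$; it is conjugate-linear; and $T_\sV^{2}$ is the identity on $\sV+i\sV$. It is moreover a closed operator: its graph $\{(x+iy,x-iy):x,y\in\sV\}$ is closed because $\sV$ is closed (if $x_n+iy_n\to\xi$ and $x_n-iy_n\to\eta$ then $x_n\to\tfrac12(\xi+\eta)$ and $y_n\to\tfrac1{2i}(\xi-\eta)$, both limits lying in $\sV$). Since $T_\sV$ maps its dense domain bijectively onto itself, it is injective with dense range, so the polar decomposition of the closed, densely defined, conjugate-linear operator $T_\sV$ has the form $T_\sV=J_\sV\Delta_\sV^{\frac12}$ with $J_\sV$ antiunitary and $\Delta_\sV=T_\sV^{*}T_\sV$ strictly positive. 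From $T_\sV=T_\sV^{-1}$ (as an identity of closed operators) together with $(J_\sV\Delta_\sV^{\frac12})^{-1}=\Delta_\sV^{-\frac12}J_\sV$ (using $J_\sV^{-1}=J_\sV$) I obtain $J_\sV\Delta_\sV^{\frac12}=\Delta_\sV^{-\frac12}J_\sV$, equivalently $J_\sV\Delta_\sV^{\frac12}J_\sV=\Delta_\sV^{-\frac12}$; squaring gives $J_\sV\Delta_\sV J_\sV=\Delta_\sV^{-1}$, i.e. $J_\sV\Delta_\sV=\Delta_\sV^{-1}J_\sV$. (The same computation also shows $T_\sV^{2}=J_\sV^{2}$, so $J_\sV^{2}=\mathrm{id}$.) Finally $\sV=\mathrm{Fix}(T_\sV)$: every $x\in\sV$ satisfies $T_\sV x=x$, and $T_\sV(x+iy)=x+iy$ with $x,y\in\sV$ forces $y=0$; since $\cD(T_\sV)=\cD(\Delta_\sV^{\frac12})$ and $T_\sV=J_\sV\Delta_\sV^{\frac12}$, this is exactly $\sV=\mathrm{Fix}(J_\sV\Delta_\sV^{\frac12})$.

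For the converse, given $\Delta$ strictly positive selfadjoint and $J$ an antiunitary involution with $J\Delta=\Delta^{-1}J$, I would set $T:=J\Delta^{\frac12}$, a closed, densely defined, conjugate-linear operator with $\cD(T)=\cD(\Delta^{\frac12})$. By the Borel functional calculus the intertwining relation upgrades to $Jg(\Delta)J=g(\Delta^{-1})$ for every Borel function $g$ on $(0,\infty)$; for $g(\lambda)=\lambda^{\frac12}$ this yields $J\Delta^{\frac12}J=\Delta^{-\frac12}$, hence $\cD(\Delta^{-\frac12})=J\cD(\Delta^{\frac12})$ and $T^{-1}=\Delta^{-\frac12}J=J\Delta^{\frac12}=T$, so $T$ is a conjugate-linear involution on $\cD(T)$. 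Now put $\sV:=\mathrm{Fix}(T)$ and check: (i) $\sV$ is a real subspace (real-linearity of $T$ on fixed points, from conjugate-linearity) and closed (closedness of $T$ passes $T\xi_n=\xi_n\to\xi$ to $T\xi=\xi$); (ii) $i\sV=\ker(T+\mathbf 1)$, so $\sV\cap i\sV=\ker(T-\mathbf 1)\cap\ker(T+\mathbf 1)=\{0\}$; (iii) for $\zeta\in\cD(T)$ one has $\zeta=\tfrac12(\zeta+T\zeta)+\tfrac12(\zeta-T\zeta)$ with $\zeta+T\zeta\in\sV$ and $\zeta-T\zeta\in i\sV$ (using $T^{2}\zeta=\zeta$ and conjugate-linearity), so $\sV+i\sV\supseteq\cD(T)=\cD(\Delta^{\frac12})$, which is dense; hence $\sV$ is standard. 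Its Tomita operator satisfies $T_\sV(x+iy)=x-iy=Tx+T(iy)=T(x+iy)$ for $x,y\in\sV$, so $T_\sV=T=J\Delta^{\frac12}$; in particular $\sV=\mathrm{Fix}(J\Delta^{\frac12})$, and by uniqueness of polar decomposition $(\Delta,J)$ are the modular objects of $\sV$.

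The main obstacle is the unbounded-operator bookkeeping: verifying that $T_\sV$ (resp. $T$) is closed, that the polar decomposition of a conjugate-linear operator behaves as expected, and above all that the manipulations $J\Delta^{\frac12}=\Delta^{-\frac12}J$ and $J\Delta J=\Delta^{-1}$ are genuine operator identities with matching domains rather than formal inversions. This is standard Tomita--Takesaki material but must be stated with care; everything else reduces to a short eigenspace computation for the involution $T$.
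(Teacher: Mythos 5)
Your proposal is correct and is exactly the standard Tomita-operator argument that the paper itself does not spell out but delegates to the cited reference \cite[Rem. 5.1.4]{NO18}: identify \(J_\sV\Delta_\sV^{\frac 12}\) with the closed conjugate-linear involution \(T_\sV\), read off \(J_\sV\Delta_\sV^{\frac 12}J_\sV = \Delta_\sV^{-\frac 12}\) from \(T_\sV = T_\sV^{-1}\), and in the converse direction decompose \(\cD(\Delta^{\frac 12})\) into the \(\pm 1\)-eigenspaces of \(T = J\Delta^{\frac 12}\). One cosmetic point: your parenthetical derivation of \(J_\sV^2 = \mathbf{1}\) from ``\(T_\sV^2 = J_\sV^2\)'' is circular as written, since you already used \(J_\sV^{-1} = J_\sV\) to invert \(T_\sV\); either take the involution property as part of the definition of the modular objects (as the paper does) or obtain it from uniqueness of the polar decomposition applied to \(T_\sV = T_\sV^{-1} = \Delta_\sV^{-\frac 12}J_\sV^{*}\). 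Likewise the functional-calculus identity \(Jg(\Delta)J = g(\Delta^{-1})\) requires \(g\) real-valued (otherwise a complex conjugate appears), which is harmless here since you only apply it to \(g(\lambda)=\lambda^{\frac 12}\).
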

This proposition shows that, starting with a standard subspace \(\sV\) and considering its modular pair \((\Delta_\sV,J_\sV)\), one gets another modular pair by \((\Delta_\sV^{-1},J_\sV)\), which gives rise to another standard subspace. To get a more explicit form of that standard subspace, we make the following definition:
\begin{definition}
Let \(\cH\) be a Hilbert space and \(\sV \subeq \cH\) be a standard subspace. Then we set
\[\gls*{Vprime} \coloneqq (i\sV)^{\perp_\R},\]
where, by \(\perp_\R\), we denote the orthogonal complement with respect to the real scalar product
\[\braket*{\cdot}{\cdot}_\R: \cH \times \cH \to \R, \quad (v,w) \mapsto \Re \,\braket*{v}{w}.\]
\end{definition}
\begin{lemma}\label{lem:JSymplCompl}{\rm (cf. \cite[Lem. 3.7]{NO17})}
Let \(\cH\) be a Hilbert space and \(\sV \subeq \cH\) a standard subspace. Then \(\sV'\) is also a standard subspace and
\[J_{\sV'} = J_{\sV} \qquad \text{and} \qquad \Delta_{\sV'} = \Delta_{\sV}^{-1}.\]
\end{lemma}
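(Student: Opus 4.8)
The idea is to identify the symplectic complement $\sV' = (i\sV)^{\perp_\R}$ with the fixed-point space of the \emph{adjoint} of the Tomita operator and then to read off the modular objects from its polar decomposition. Write $S \coloneqq T_\sV = J_\sV\Delta_\sV^{1/2}$ for the (densely defined, closed, antilinear, involutive) Tomita operator of $\sV$, so that $\sV = \Fix(S)$ by \fref{prop:StandardSubspaceFixed}, and let $S^*$ denote its adjoint as an antilinear operator (again a closed, densely defined, antilinear involution). The plan is to prove $\sV' = \Fix(S^*)$, then to compute $S^* = J_\sV\Delta_\sV^{-1/2}$, and finally to invoke \fref{prop:StandardSubspaceFixed} for the pair $(\Delta_\sV^{-1},J_\sV)$.

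First I would establish $\sV' = \Fix(S^*)$, using that $w\in(i\sV)^{\perp_\R}$ is equivalent to $\Im\braket*{v}{w}=0$ for all $v\in\sV$, i.e. to $\braket*{w}{v}\in\R$ for all $v\in\sV$. If $w\in\Fix(S^*)$ and $v\in\sV=\Fix(S)$, then $\braket*{w}{v}=\braket*{S^*w}{v}=\overline{\braket*{w}{Sv}}=\overline{\braket*{w}{v}}$, so $\braket*{w}{v}\in\R$ and $w\in\sV'$. Conversely, if $\braket*{w}{v}\in\R$ for all $v\in\sV$, then for any $v=v_1+iv_2\in D(S)=\sV+i\sV$ with $v_1,v_2\in\sV$ one computes $\overline{\braket*{w}{Sv}}=\overline{\braket*{w}{v_1-iv_2}}=\braket*{w}{v_1}+i\braket*{w}{v_2}=\braket*{w}{v}$; since $v\mapsto\braket*{w}{v}$ is bounded this shows $w\in D(S^*)$ with $S^*w=w$, i.e. $w\in\Fix(S^*)$.

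Next I would compute $S^*$ from the polar decomposition. For an antilinear operator of the form $S=J_\sV A$ with $A=\Delta_\sV^{1/2}\ge 0$ self-adjoint and $J_\sV$ an anti-unitary involution one has $S^*=AJ_\sV=J_\sV\bigl(J_\sV\Delta_\sV^{1/2}J_\sV\bigr)$, and the modular commutation relation $J_\sV\Delta_\sV=\Delta_\sV^{-1}J_\sV$ from \fref{prop:StandardSubspaceFixed} gives, via functional calculus, $J_\sV\Delta_\sV^{1/2}J_\sV=\Delta_\sV^{-1/2}$, hence $S^*=J_\sV\Delta_\sV^{-1/2}$. Combining with the previous step, $\sV'=\Fix\bigl(J_\sV\Delta_\sV^{-1/2}\bigr)=\Fix\bigl(J_\sV(\Delta_\sV^{-1})^{1/2}\bigr)$. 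The same commutation relation yields $J_\sV\Delta_\sV^{-1}=\Delta_\sV J_\sV=(\Delta_\sV^{-1})^{-1}J_\sV$, so the pair $(\Delta_\sV^{-1},J_\sV)$ satisfies the hypotheses of the converse part of \fref{prop:StandardSubspaceFixed}; thus $\sV'$ is a standard subspace, and since the Tomita operator of $\Fix(J\Delta^{1/2})$ is $J\Delta^{1/2}$ (so its modular pair is determined by uniqueness of the polar decomposition), the modular objects of $\sV'$ are precisely $(\Delta_\sV^{-1},J_\sV)$, i.e. $J_{\sV'}=J_\sV$ and $\Delta_{\sV'}=\Delta_\sV^{-1}$.

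The main obstacle I anticipate is the careful bookkeeping with unbounded, antilinear operators: one must justify that $S^*$ is a closed antilinear involution, that the adjoint formula $S^*=\Delta_\sV^{1/2}J_\sV$ holds on the correct domain, and that the identification of the modular pair of $\sV'$ with $(\Delta_\sV^{-1},J_\sV)$ really follows from uniqueness of the polar decomposition of the Tomita operator. These are all standard facts of Tomita--Takesaki-type, and indeed the statement is exactly \cite[Lem. 3.7]{NO17}, so the shortest route is simply to cite that reference; the argument above is essentially a reconstruction of its proof adapted to the notation of the present text.
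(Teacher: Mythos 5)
Your proposal is correct, and it reconstructs the standard argument behind the result; note that the paper itself gives no proof of this lemma but simply defers to \cite[Lem. 3.7]{NO17}, whose proof is exactly the route you take (identify \(\sV'\) with \(\Fix(T_\sV^*)\), compute \(T_\sV^* = J_\sV\Delta_\sV^{-1/2}\), read off the modular objects from the polar decomposition). All the individual computations check out: the equivalence \(w\in\sV'\Leftrightarrow\braket*{w}{v}\in\R\;\forall v\in\sV\), the two inclusions giving \(\sV'=\Fix(T_\sV^*)\), and the identities \(T_\sV^*=\Delta_\sV^{1/2}J_\sV=J_\sV\Delta_\sV^{-1/2}\) via \(J_\sV\Delta_\sV^{1/2}J_\sV=\Delta_\sV^{-1/2}\).

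The one step you compress — "its modular pair is determined by uniqueness of the polar decomposition" — deserves a sentence more, since \fref{prop:StandardSubspaceFixed} as stated only asserts that \(\Fix\big(J\Delta^{1/2}\big)\) is a standard subspace, not that its modular objects are \((\Delta,J)\). What you need is that the Tomita operator of \(\sV'\) is literally \(S^*\coloneqq T_\sV^*\), i.e. that \(D(S^*)=\sV'+i\sV'\) and that \(S^*\) acts there as \(x+iy\mapsto x-iy\). This follows because \(S^*=J_\sV\Delta_\sV^{-1/2}\) is a closed antilinear involution (\((S^*)^2=\mathbf{1}\) on \(D(\Delta_\sV^{-1/2})\) by the same commutation relation), so every \(v\in D(S^*)\) decomposes as \(v=\tfrac12(v+S^*v)+i\cdot\tfrac1{2i}(v-S^*v)\) with both summands in \(\Fix(S^*)=\sV'\). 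With that, \(T_{\sV'}=S^*\) and uniqueness of the polar decomposition gives \(J_{\sV'}=J_\sV\) and \(\Delta_{\sV'}=\Delta_\sV^{-1}\), completing your argument.
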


\subsection{Positive energy representations of the affine group}
In this section, we provide some basic results of the representation theory of the affine group \(\mathrm{Aff}(\R)\). The affine group is an example of a so-called graded group, defined as follows:
\begin{definition}
A {\it graded group} is a pair \(\left(G,\epsilon_G\right)\) of a group \(G\) and a group homomorphism \(\epsilon_G:G \to \left\lbrace \pm 1 \right\rbrace \). We set \(G_{\pm}\coloneqq \epsilon_G^{-1}\left(\left\lbrace\pm 1\right\rbrace\right)\).
\end{definition}
\begin{example}
\begin{enumerate}[\rm (a)]
\item The affine group \(\gls*{Aff} = \R \rtimes \R^\times\) becomes a graded group with the grading
\[\gls*{Aff+} = \R \rtimes \R_+ \quad \text{and} \quad \mathrm{Aff}(\R)_- = \R \rtimes \R_-.\]
\item Given a complex Hilbert space \(\cH\) we consider the group \(\gls*{AUH}\) of unitary and anti-unitary operators on \(\cH\). This becomes a graded group with the grading
\[\mathrm{AU}\left(\cH\right)_+ = \U(\cH) \quad \text{and} \quad \mathrm{AU}\left(\cH\right)_- = \mathrm{AU}\left(\cH\right) \setminus \U(\cH).\]
\end{enumerate}
\end{example}
Now that we have the concept of a graded group, we want to define what a representation of a graded group is:
\begin{definition}
An {\it anti-unitary representation} of a graded group \((G,\epsilon_G)\) on a Hilbert space \(\cH\) is a morphism of graded groups \({\rho:G \to \mathrm{AU}\left(\cH\right)}\).
\end{definition}
We are especially interested in so-called positive energy representations of the affine group, which are defined as follows:
\begin{definition}\label{def:PosEnergyRep}
\begin{enumerate}[\rm (a)]
\item Given a unitary representation \({\rho:\mathrm{Aff}(\R)_+ \to \mathrm{U}\left(\cH\right)}\) of the \(ax+b\)-group \(\mathrm{Aff}(\R)_+\), we say that \(\rho\) is a \textit{positive energy representation}, if, for the one-parameter group
\[U: \R \to \U(\cH), \quad t \mapsto \rho(t,1),\]
one has \(\partial U \geq 0\). We call the representation \(\rho\) a \textit{strictly positive energy representation} if additionally \({\ker (\partial U) = \{0\}}\).
\item An anti-unitary representation \({\rho:\mathrm{Aff}(\R) \to \mathrm{AU}\left(\cH\right)}\) of the affine group is a (strictly) positive energy representation if the restriction \({\rho\big|_{\mathrm{Aff}(\R)_+}:\mathrm{Aff}(\R)_+ \to \mathrm{AU}\left(\cH\right)}\) is a (strictly) positive energy representation.
\end{enumerate}
\end{definition}
\begin{example}\label{ex:HK}
Let \(\cK\) be a real Hilbert space. We set
\[\gls*{HK} \coloneqq L^2(\R,\cK_\C)^\sharp.\]
Then \(\cH_\cK\) becomes a complex Hilbert space with the complex structure
\[I: \cH_\cK \to \cH_\cK, \quad f \mapsto i \cdot \sgn \cdot f.\]
Then the map \(\gls*{rhoK}: \mathrm{Aff}(\R) \to \mathrm{AU}(\cH_\cK)\) with
\[(\rho_\cK(t,1)f)(x) \coloneqq e^{itx} f(x), \quad (\rho_\cK(0,e^s)f)(x) \coloneqq e^{\frac s2}f(e^s x) \qquad s,t \in \R\]
and
\[(\rho_\cK(0,-1)f)(x) \coloneqq i \cdot \sgn(x) \cdot f(-x)\]
defines a strictly positive energy representation of the affine group \(\mathrm{Aff}(\R)\).
\end{example}
The following proposition shows that, in fact, every strictly positive energy representation of \(\mathrm{Aff}(\R)\) is of this form:
\begin{prop}\label{prop:RepAffineForm}
Let \((\rho,\cH)\) be a strictly positive energy representation of the affine group \(\mathrm{Aff}(\R)\). Then there exists a real Hilbert space \(\cK\) such that \((\rho,\cH)\) is equivalent to the anti-unitary representation \((\rho_\cK,\cH_\cK)\). Further, the representation \((\rho_\cK,\cH_\cK)\) is irreducible, if and only if \(\cK = \R\).
\end{prop}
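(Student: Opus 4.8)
The plan is to peel off the analysis in two stages. First I would restrict $\rho$ to the $ax+b$-group $\mathrm{Aff}(\R)_+$, where the classification of strictly positive energy unitary representations produces a \emph{complex} multiplicity space $\cM$ together with a unitary identification of $\rho|_{\mathrm{Aff}(\R)_+}$ with the standard model tensored with $\1_\cM$; then I would use the anti-unitary reflection $J:=\rho(0,-1)$ and Schur's lemma to equip $\cM$ with a conjugation, which forces $\cM=\cK_\C$ for a real Hilbert space $\cK$. Matching the resulting model with $(\rho_\cK,\cH_\cK)$ gives the first assertion, and a one-line invariance argument gives the irreducibility statement. The only substantial input is the structure theory of the $ax+b$-group in the first stage; everything after that is bookkeeping and a Schur argument.

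\emph{Step 1 (structure of $\rho_+:=\rho|_{\mathrm{Aff}(\R)_+}$).} Write $U_t:=\rho(t,1)$ and $P:=\partial U\geq 0$. By Stone's theorem $U_t=\int_{[0,\infty)}e^{it\lambda}\,dE(\lambda)$ for a projection-valued measure $E$; positivity of $P$ gives $\mathrm{supp}\,E\subseteq[0,\infty)$, and $\ker P=E(\{0\})\cH=\{0\}$ gives $\mathrm{supp}\,E\subseteq(0,\infty)$. From $(0,e^s)(t,1)(0,e^s)^{-1}=(e^st,1)$ we get $\rho(0,e^s)U_t\rho(0,e^s)^{-1}=U_{e^st}$, so $\rho(0,e^s)$ carries $E$ to $B\mapsto E(e^{-s}B)$. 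Passing to the coordinate $x=\log\lambda$ turns $(\cH,E,\{\rho(0,e^s)\}_s)$ into a system of imprimitivity for the transitive action of $\R$ on $\R$; the imprimitivity theorem, together with the fact that a continuous $\UU$-valued cocycle over $\R$ is a coboundary, yields a complex Hilbert space $\cM$ and a unitary $W$ intertwining $\rho_+$ with $\rho_\R|_{\mathrm{Aff}(\R)_+}\otimes\1_\cM$ on $\cH_\R\otimes_\C\cM$, where $\cH_\R=L^2(\R,\C)^\sharp$. Equivalently, $\pi_0:=\rho_\R|_{\mathrm{Aff}(\R)_+}$ is the unique irreducible strictly positive energy unitary representation of $\mathrm{Aff}(\R)_+$ — the one-dimensional representations are trivial on the translations and hence excluded by $\ker P=\{0\}$, and the ``negative'' series is excluded by $P\geq 0$ — and every strictly positive energy unitary representation is a multiple of $\pi_0$.

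\emph{Step 2 (the reflection yields a conjugation).} Conjugation by $(0,-1)$ on $\mathrm{Aff}(\R)_+$ acts by $\sigma(t,1)=(-t,1)$, $\sigma(0,a)=(0,a)$, and $J:=\rho(0,-1)$ is an anti-unitary involution with $J\rho_+(g)J=\rho_+(\sigma(g))$. Transporting $J$ through $W$ gives an anti-unitary involution $\tilde J$ on $\cH_\R\otimes_\C\cM$ with $\tilde J(\rho_\R(g)\otimes\1)\tilde J=\rho_\R(\sigma(g))\otimes\1$ for $g\in\mathrm{Aff}(\R)_+$. Since $J_0:=\rho_\R(0,-1)$ is an anti-unitary involution on $\cH_\R$ implementing the same $\sigma$, slicing $\tilde J$ against an orthonormal basis of $\cM$ and composing each anti-linear slice with $J_0$ produces linear operators on $\cH_\R$ commuting with $\pi_0(\mathrm{Aff}(\R)_+)$; by Schur's lemma these are scalars, and they assemble into an anti-unitary $C$ on $\cM$ with $\tilde J=J_0\otimes C$ (well-defined on $\cH_\R\otimes_\C\cM$ precisely because $C$ is anti-linear). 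Then $\tilde J^2=\1$ and $J_0^2=\1$ force $C^2=\1$, so $C$ is a conjugation; putting $\cK:=\mathrm{Fix}(C)=\cM^{C}$ gives $\cM=\cK_\C$ and $C=\cC_\cK$.

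\emph{Step 3 (identification and irreducibility).} Composing $W$ with the obvious unitary $\cH_\R\otimes_\C\cK_\C\to L^2(\R,\cK_\C)^\sharp=\cH_\cK$ that sends $f\otimes w$ (with $w\in\cK$) to $x\mapsto f(x)w$ and is $\C$-linear, a routine verification shows that $\rho_\R(t,1)\otimes\1$, $\rho_\R(0,e^s)\otimes\1$ and $J_0\otimes\cC_\cK$ go over to $\rho_\cK(t,1)$, $\rho_\cK(0,e^s)$ and $\rho_\cK(0,-1)$; hence $(\rho,\cH)\cong(\rho_\cK,\cH_\cK)$. For irreducibility, if $\dim_{\R}\cK\geq 2$ fix $0\neq w\in\cK$: then $\cH_\R\otimes_\C\C w$ is a proper nonzero closed $\C$-subspace of $\cH_\cK$ invariant under $\rho_\cK(\mathrm{Aff}(\R)_+)$ and under $J_0\otimes\cC_\cK$ (since $\cC_\cK w=w$), so $\rho_\cK$ is reducible; and if $\cK=\R$ then $\cH_\cK=\cH_\R$, and any closed $\C$-subspace invariant under $\rho_\cK(\mathrm{Aff}(\R))$ is in particular invariant under the irreducible $\pi_0=\rho_\R|_{\mathrm{Aff}(\R)_+}$, hence $\{0\}$ or $\cH_\R$. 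Thus $\rho_\cK$ is irreducible if and only if $\cK=\R$. The hard part is Step 1 (the isotypic/imprimitivity analysis and the cocycle trivialization over $\R$); given that classical fact, the remaining steps are the Schur extraction of the real structure and a check of the standard model.
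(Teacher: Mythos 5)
Your argument is correct, but it reaches the conclusion by a more self-contained route than the paper. The paper's proof simply cites Longo: \cite[Thm. 1.6.1]{Lo08} for the fact that \(\mathrm{Aff}(\R)_+\) has, up to equivalence, exactly one irreducible unitary representation with strictly positive energy and that every such representation is a multiple of it, and \cite[Thm. 1.6.3]{Lo08} for the existence and uniqueness of the anti-unitary extension to \(\mathrm{Aff}(\R)\) and for the equivalence ``\(\rho_\cK\) irreducible \(\Leftrightarrow\) its restriction to \(\mathrm{Aff}(\R)_+\) is irreducible \(\Leftrightarrow\) \(\cK=\R\)''. Your Step 1 is essentially the classical proof (Stone's theorem, dilation covariance, Mackey imprimitivity or cocycle triviality) of the first cited theorem, so there you re-derive rather than diverge. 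The genuine difference lies in the treatment of \(J=\rho(0,-1)\): instead of invoking uniqueness of anti-unitary extensions, you transport \(J\) into the multiplicity model and use Schur's lemma to write \(\tilde J=J_0\otimes C\) with \(C\) an anti-unitary involution on the complex multiplicity space \(\cM\), then set \(\cK:=\mathrm{Fix}(C)\), so that \(\cM=\cK_\C\) and \(C=\cC_\cK\); this pins down the real form explicitly in terms of the given reflection and also yields the irreducibility criterion by a short direct argument rather than by citation. What the paper's route buys is brevity; what yours buys is self-containedness and an explicit construction of \(\cK\) from \(J\). Two small points to keep straight when writing it up: the slices of \(\tilde J\) must be composed with \(J_0\) so that the resulting linear operators commute with \(\pi_0\) (using \(J_0\pi_0(\sigma(g))J_0=\pi_0(g)\)) before Schur's lemma is applied, and in Step 3 the map \(f\otimes w\mapsto f(\cdot)w\) is complex-linear only for the twisted complex structure \(I=M_{i\cdot \mathrm{sgn}}\) on \(\cH_\R\) and \(\cH_\cK\), which is exactly why it must be defined on \(w\in\cK\) and extended complex-linearly, as you in fact do.
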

\begin{proof}
By \cite[Thm. 1.6.1]{Lo08} there exists -- up to equivalence -- exactly one irreducible unitary representation of \(\mathrm{Aff}(\R)_+\) with strictly positive energy. By \cite[Sec. 2.4.1]{NO17} this representation \(U\) can be realized on the Hilbert space \(L^2(\R_+,\C)\) by
\[(U(t,1)f)(x) = e^{itx} f(x) \quad \text{and} \quad (U(0,e^s)f)(x) \coloneqq e^{\frac s2}f(e^s x), \qquad f \in L^2(\R_+,\C), s,t,x \in \R.\]
Via the map
\[\Psi: L^2\left(\R_+,\C\right) \to L^2\left(\R,\C\right)^\sharp, \quad (\Psi f)(x) \coloneqq \begin{cases} f(x) & \text{if } x > 0 \\[0.75ex] \overline{f(-x)} & \text{if } x < 0,\end{cases}\]
this representation is equivalent to the representation \(\rho_\R\big|_{\mathrm{Aff}(\R)_+}\).

Further, by \cite[Thm. 1.6.1]{Lo08}, every unitary representation of \(\mathrm{Aff}(\R)_+\) with strictly positive energy is a multiple of this unique irreducible representation, so every strictly positive energy representation \((\rho,\cH)\) of the group \(\mathrm{Aff}(\R)_+\) is equivalent to
\[\tilde \rho_\cK \coloneqq \rho_\cK\big|_{\mathrm{Aff}(\R)_+}: \mathrm{Aff}(\R)_+ \to \U(\cH)\]
for some real Hilbert space \(\cK\). Now, by \cite[Thm. 1.6.3]{Lo08}, for every unitary representation of \(\mathrm{Aff}(\R)_+\), there exists an -- up to equivalence -- unique extension to an anti-unitary representation of \(\mathrm{Aff}(\R)\) on the same Hilbert space. Since \(\rho_\cK\) is such an extension of \(\tilde \rho_\cK\), every anti-unitary representation of \(\mathrm{Aff}(\R)\) is equivalent to \(\rho_\cK\) for some real Hilbert space \(\cK\). Finally, by \mbox{\cite[Thm. 1.6.3]{Lo08}} the representation \(\rho_\cK\) is irreducible, if and only if \(\tilde \rho_\cK\) is irreducible, which is the case, if and only if \(\cK = \R\).
\end{proof}

\subsection{Extending Borchers' Theorem beyond standard pairs}
To state Borchers' Theorem, which links strictly positive energy representations of the affine group to standard subspaces, we need one more definition:
\begin{definition}
Let \(\cH\) be a Hilbert space, \(\sV \subeq \cH\) a standard subspace and \(U: \R \to \U(\cH)\) a strongly continuous one-parameter group. We call the pair \((\sV,U)\) a \textit{standard pair} on \(\cH\), if
\[U_t \sV \subeq \sV \quad \forall t \in \R_+ \qquad \text{and} \qquad \partial U \geq 0.\]
We say that a standard pair \((\sV,U)\) is non-degenerate if \(\ker (\partial U) = \{0\}\).
\end{definition}
\begin{thm}\label{thm:Borchers}{\rm \textbf{(Borchers' Theorem (one-particle))}(\cite[Thm. II.9]{Bo92}, \cite[Thm. 2.2.1]{Lo08})}
Let \(\cH\) be a Hilbert space and let \((\sV,U)\) be a non-degenerate standard pair on \(\cH\). Then the following commutation relations hold:
\[\Delta_\sV^{is} U_t \Delta_\sV^{-is} = U_{e^{-2\pi s} t}, \qquad J_\sV U_t J_\sV = U_{-t} \qquad \forall s,t \in \R.\]
Therefore
\[\rho(t,1) \coloneqq U_t, \quad \rho(0,e^s) \coloneqq \Delta_\sV^{-\frac{is}{2\pi}} \quad \text{and} \quad \rho(0,-1) \coloneqq J_\sV \qquad s,t \in \R\]
defines a strictly positive energy representation \(V\) of \(\mathrm{Aff}(\R)\) on \(\cH\).
\end{thm}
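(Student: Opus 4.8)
The statement has two layers, and the plan is to peel off the easy one first. Granting the two commutation relations $\Delta_\sV^{is} U_t \Delta_\sV^{-is} = U_{e^{-2\pi s}t}$ and $J_\sV U_t J_\sV = U_{-t}$, the fact that $\rho$ is a well-defined anti-unitary representation of strictly positive energy is routine: $\mathrm{Aff}(\R) = \R \rtimes \R^\times$ is generated by the translations $(t,1)$, the dilations $(0,e^s)$ and the reflection $(0,-1)$, with relations $(0,e^s)(t,1)(0,e^{-s}) = (e^st,1)$, $(0,-1)(t,1)(0,-1) = (-t,1)$, $(0,-1)^2 = e$ and $(0,-1)(0,e^s)(0,-1) = (0,e^s)$. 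Under $\rho$ the first relation becomes $\Delta_\sV^{-is/2\pi} U_t \Delta_\sV^{is/2\pi} = U_{e^st}$, which is the first commutation relation after $s \mapsto -s/2\pi$; the second is the second commutation relation; $(0,-1)^2 = e$ holds because $J_\sV$ is an involution; and the last holds because $J_\sV \Delta_\sV J_\sV = \Delta_\sV^{-1}$ (\fref{prop:StandardSubspaceFixed}) forces $J_\sV \Delta_\sV^{it} J_\sV = \Delta_\sV^{it}$, hence $J_\sV \Delta_\sV^{-is/2\pi} J_\sV = \Delta_\sV^{-is/2\pi}$. Since $U_t$ and $\Delta_\sV^{is}$ are unitary while $J_\sV$ is anti-unitary, $\rho$ is a morphism of graded groups, and it has strictly positive energy precisely by the non-degeneracy hypothesis $\partial U \geq 0$, $\ker(\partial U) = \{0\}$ together with \fref{def:PosEnergyRep}.

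The core is thus the two commutation relations, and the approach is analytic continuation of an operator family across a strip. By \fref{prop:StandardSubspaceFixed}, $\sV = \mathrm{Fix}(J_\sV \Delta_\sV^{1/2})$; hence for $v \in \sV$ and $t \geq 0$ the invariance $U_t v \in \sV$ gives $\Delta_\sV^{1/2} U_t v = J_\sV U_t v = (J_\sV U_t J_\sV)(J_\sV v) = (J_\sV U_t J_\sV)\Delta_\sV^{1/2} v$, and a standard core argument on $\sV + i\sV$ (together with $J_\sV \Delta_\sV^{1/2} = \Delta_\sV^{-1/2} J_\sV$) upgrades this to the operator identity $\Delta_\sV^{1/2} U_t \Delta_\sV^{-1/2} = J_\sV U_t J_\sV$ for $t \geq 0$. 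Consequently the holomorphic (in $z$) operator family $\Phi_t(z) := \Delta_\sV^{iz} U_t \Delta_\sV^{-iz}$, initially defined for real $z$ as a unitary, extends to the strip $\{-\tfrac12 \leq \Im z \leq 0\}$ with boundary value the unitary $\Delta_\sV^{ix} U_t \Delta_\sV^{-ix}$ on $\Im z = 0$ and, using the identity just derived and $J_\sV \Delta_\sV^{it} = \Delta_\sV^{it} J_\sV$, boundary value $J_\sV \Delta_\sV^{ix} U_t \Delta_\sV^{-ix} J_\sV$ on $\Im z = -\tfrac12$. On the other side, $\partial U \geq 0$ makes $U$ extend to a contractive holomorphic semigroup $\{U_\zeta : \Im \zeta \geq 0\}$, and for $z$ in the strip the argument $e^{-2\pi z}t$ lies in $\overline{\C_+}$, so $z \mapsto U_{e^{-2\pi z}t}$ is a contractive holomorphic candidate for $\Phi_t(z)$; it also has unitary boundary values at $\Im z = 0$, and its value at $\Im z = -\tfrac12$ is $U_{-e^{-2\pi x}t}$. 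The task is to show $\Phi_t(z) = U_{e^{-2\pi z}t}$, which by restriction to real $z$ gives the first commutation relation (for $t \geq 0$, whence for all $t$ by taking adjoints) and by matching the lower edge of the strip gives the second. I would carry this out by a Phragm\'en--Lindel\"of / three-lines analysis of suitable matrix elements, exploiting the group law of $\Phi_t$ in $t$ (its generator $\Delta_\sV^{iz}\partial U\,\Delta_\sV^{-iz}$ at real $z$ is again positive) and the positivity of $\partial U$ to pin the family down.

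I expect the last step — the actual identification $\Phi_t(z) = U_{e^{-2\pi z}t}$ inside the strip — to be the main obstacle; this is the analytic heart of Borchers' theorem and cannot be reduced to a soft maximum-principle argument alone, since $\Phi_t(z)$ is genuinely a strict contraction in the interior of the strip when $\partial U$ has nontrivial spectrum. Two recurring technical nuisances accompany it: the domain and core bookkeeping needed to pass from the operator identity $\Delta_\sV^{1/2} U_t \Delta_\sV^{-1/2} = J_\sV U_t J_\sV$ to bounded holomorphic matrix elements on the strip, and the anti-linearity of $J_\sV$, which converts holomorphic dependence into anti-holomorphic dependence wherever $J_\sV$ is inserted, so that the two edges of the strip carry conjugate data and the continuation argument must be arranged accordingly. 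Once the two commutation relations are established, the reduction of the first paragraph concludes the proof; since the analytic core is classical, one may alternatively invoke \cite{Bo92} or \cite{Lo08} for it and present only the reduction here.
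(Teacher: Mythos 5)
The paper does not prove this theorem: it is imported as a classical result, with the proof delegated to \cite[Thm. II.9]{Bo92} and \cite[Thm. 2.2.1]{Lo08}, so there is no in-paper argument to compare yours against. Within your proposal, the first layer is correct and complete: granting the two commutation relations, the checks you list (the conjugation relation for dilations, $J_\sV^2=\mathbf{1}$, and $J_\sV\Delta_\sV^{it}J_\sV=\Delta_\sV^{it}$, which does follow from $J_\sV\Delta_\sV J_\sV=\Delta_\sV^{-1}$ together with the anti-linearity of $J_\sV$) show that $\rho$ is a morphism of graded groups $\mathrm{Aff}(\R)\to\mathrm{AU}(\cH)$, and strict positivity of the energy is literally the non-degeneracy hypothesis via \fref{def:PosEnergyRep}. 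Your setup of the analytic core is also the standard one: $U_t\sV\subeq\sV$ for $t\geq 0$ gives $T_\sV U_t=U_tT_\sV$ on $\sV+i\sV$, hence $\Delta_\sV^{1/2}U_t\Delta_\sV^{-1/2}=J_\sV U_tJ_\sV$ on a dense domain, and one wants to interpolate on the strip $-\tfrac12\leq\Im z\leq 0$ against the contractive holomorphic extension of $U$ furnished by $\partial U\geq 0$.

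However, as you acknowledge yourself, the identification $\Delta_\sV^{iz}U_t\Delta_\sV^{-iz}=U_{e^{-2\pi z}t}$ inside the strip is not carried out, and that step is the entire mathematical content of Borchers' theorem; a soft three-lines/Phragm\'en--Lindel\"of argument on matrix elements does not by itself pin the family down, and the known proofs (Borchers' original argument, or Florig's simplification as presented in \cite{Lo08}) require genuinely more work. So, read as a self-contained proof, your proposal has a gap at exactly that point. Since you explicitly offer to invoke \cite{Bo92} or \cite{Lo08} for it, your treatment then coincides with the paper's, which cites these references and supplies no proof at all; indeed your reduction of the representation statement to the commutation relations is more detail than the paper itself provides.
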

Borchers' Theorem shows us that for every non-degenerate standard pair, we get a strictly positive energy representation of the affine group \(\mathrm{Aff}(\R)\). The following proposition shows that conversely, starting with a strictly positive energy representation of the affine group, we also obtain a non-degenerate standard pair:
\begin{prop}\label{prop:WhatIsStandard}
For every real Hilbert space \(\cK\), the space \(\gls*{VK} \coloneqq H^2(\C_+,\cK_\C)^\sharp\) is a standard subspace in the Hilbert space \(\cH_\cK\) with
\[J_{\sV_\cK} = \rho_\cK(0,-1) \qquad \text{and} \qquad \Delta_{\sV_\cK}^{-\frac{is}{2\pi}} = \rho_\cK(0,e^s), \quad s \in \R.\]
Further, defining the one-parameter group
\[U_\cK: \R \to \U(\cH), \quad t \mapsto \rho_\cK(t,1),\]
the pair \((\sV_\cK,U_\cK)\) is a non-degenerate standard pair.
\end{prop}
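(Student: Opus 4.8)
The plan is to establish, in order, that $\sV_\cK$ is a standard subspace, that its modular objects are the two operators prescribed by $\rho_\cK$, and that $(\sV_\cK,U_\cK)$ is a non-degenerate standard pair. For standardness I would argue directly. On the Fourier side one has $\widehat{f^\sharp}=\cC_\cK\hat f$, so $\sharp$ preserves each of $H^2(\C_+,\cK_\C)$ and $H^2(\C_-,\cK_\C)$, and together with $L^2(\R,\cK_\C)=H^2(\C_+,\cK_\C)\oplus H^2(\C_-,\cK_\C)$ this yields the real-orthogonal decomposition $\cH_\cK=\sV_\cK\oplus_\R H^2(\C_-,\cK_\C)^\sharp$ with $\sV_\cK$ closed. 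If $f\in\sV_\cK\cap I\sV_\cK$ (with $I=M_{i\sgn}$), then $f$ and $M_{\sgn}f=-i\,If$ both lie in $H^2(\C_+,\cK_\C)$, hence so do $f\chi_{\R_+}=\tfrac12(f+M_{\sgn}f)$ and $f\chi_{\R_-}$; since a nonzero Hardy function is a.e.\ nonzero (\fref{thm:HardyAlmostEverywhereNonZero}), this forces $f=0$. For density of $\sV_\cK+I\sV_\cK$ in $\cH_\cK$, I would take $g\in\cH_\cK$ orthogonal to $\sV_\cK$ in $\cH_\cK$; separating real and imaginary parts of $\braket*{g}{f}$ ($f\in\sV_\cK$) and using $M_{i\sgn}^\ast=-M_{i\sgn}$ shows that $g$ and $M_{i\sgn}g$ both lie in $(\sV_\cK)^{\perp_\R}\cap\cH_\cK=H^2(\C_-,\cK_\C)^\sharp$, and the same Hardy-space argument on $\C_-$ gives $g=0$.

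For the modular objects I would invoke \fref{prop:StandardSubspaceFixed}. Let $\Delta$ be the strictly positive selfadjoint operator with $\Delta^{-\frac{is}{2\pi}}=\rho_\cK(0,e^s)$ (Stone's theorem), and set $J:=\rho_\cK(0,-1)$; a short computation shows $J$ is an anti-unitary involution for the complex structure $I$. Since $\rho_\cK$ is a morphism of graded groups and $(0,-1)(0,e^s)(0,-1)^{-1}=(0,e^s)$ in $\mathrm{Aff}(\R)$, the operator $J$ commutes with each $\rho_\cK(0,e^s)$, and—taking care of the anti-linearity of $J$ inside the functional calculus—this is equivalent to $J\Delta=\Delta^{-1}J$. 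Then \fref{prop:StandardSubspaceFixed} shows $\mathrm{Fix}\big(J\Delta^{\frac12}\big)$ is a standard subspace with modular pair $(\Delta,J)$, and it remains to prove $\mathrm{Fix}\big(J\Delta^{\frac12}\big)=\sV_\cK$.

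I expect this last identity to be the main obstacle: $\Delta^{\frac12}$ is the value at $s=i\pi$ of the analytically continued dilation orbit $s\mapsto\rho_\cK(0,e^s)f$, and its domain consists exactly of those $f\in\cH_\cK$ whose orbit extends holomorphically across the strip $0<\Im s<\pi$; transported along the biholomorphism $\{0<\Im s<\pi\}\to\C_+$, $s\mapsto e^s$, this strip-holomorphy is precisely membership in $H^2(\C_+,\cK_\C)$, and on such $f$ the equation $J\Delta^{\frac12}f=f$ holds automatically because $\cH_\cK=L^2(\R,\cK_\C)^\sharp$. (This is the computation underlying the standard realization of the free positive-energy representation, cf.\ \cite{Lo08}, \cite{NO17}, from which it may also be quoted.) Granting $\mathrm{Fix}\big(J\Delta^{\frac12}\big)=\sV_\cK$, \fref{prop:StandardSubspaceFixed} yields $J_{\sV_\cK}=J=\rho_\cK(0,-1)$ and $\Delta_{\sV_\cK}=\Delta$, i.e.\ $\Delta_{\sV_\cK}^{-\frac{is}{2\pi}}=\rho_\cK(0,e^s)$. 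Finally, $\sV_\cK$ is standard by the first step, $U_\cK(t)=S_t$ maps $\sV_\cK$ into $\sV_\cK$ for $t\ge0$ because $(\cH_\cK,\sV_\cK,S)=(L^2(\R,\cK_\C)^\sharp,H^2(\C_+,\cK_\C)^\sharp,S)$ is a real ROPG by \fref{thm:LaxPhillipsReal}, and $\partial U_\cK\ge0$ with $\ker\partial U_\cK=\{0\}$ since $\rho_\cK$ is a strictly positive energy representation (concretely, with respect to $I=M_{i\sgn}$ one computes $\partial U_\cK=M_{|x|}$); hence $(\sV_\cK,U_\cK)$ is a non-degenerate standard pair.
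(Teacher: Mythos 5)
Your route is genuinely different from the paper's. The paper does not verify standardness or compute the modular objects directly: it quotes \cite[Lem.\ 4.1]{LL15}, which says that the symmetric Hardy space on the strip \(\cS_\pi\) is standard in \(L^2(\R,\C)\) with \(J_H\) = conjugation and \(\Delta_H^{-is/2\pi}\) = translation, transports this via the explicit unitary \(\Psi\) (the \(\log\)-change of variables) to \((\cH_\R,\sV_\R)\), identifies the transported modular data with \(\rho_\R(0,-1)\) and \(\rho_\R(0,e^s)\), and then handles general \(\cK\) by taking direct sums. You instead work entirely in the half-plane/dilation picture: a direct Hardy-space verification that \(\sV_\cK\) is standard, then \fref{prop:StandardSubspaceFixed} applied to \(\Delta\) and \(J=\rho_\cK(0,-1)\), reducing everything to the identity \(\Fix\big(J\Delta^{\frac 12}\big)=\sV_\cK\). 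Both your preliminary steps are correct (including the commutation relation \(J\Delta=\Delta^{-1}J\), the invariance \(S_t\sV_\cK\subeq\sV_\cK\) via \fref{thm:LaxPhillipsReal}, and \(\partial U_\cK = M_{|x|}\) with respect to the complex structure \(M_{i\sgn}\)); note though that your first step becomes redundant once the fixed-point identity is available, since \fref{prop:StandardSubspaceFixed} already yields standardness of \(\Fix\big(J\Delta^{\frac12}\big)\).

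The one place where your sketch is thinner than it sounds is the identity \(\Fix\big(J\Delta^{\frac 12}\big)=\sV_\cK\) itself. The inclusion \(\sV_\cK\subeq\Fix\big(J\Delta^{\frac12}\big)\) does follow from the continuation argument you describe (for \(f\in H^2(\C_+,\cK_\C)^\sharp\) the orbit \(s\mapsto e^{s/2}f(e^s\,\cdot)\), read on \(\R_+\) and propagated to \(\R_-\) by \(\sharp\), continues to the strip, and the boundary value at \(s=i\pi\) is \(i\sgn(x)f(-x)=Jf\)), but even this needs the \(L^2\)-bounds of Hardy functions on the rays \(e^{i\theta}\R_+\) to get a genuine \(\cH_\cK\)-valued holomorphic extension. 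More importantly, the phrase ``holds automatically'' does not cover the reverse inclusion: to conclude \(\Fix\big(J\Delta^{\frac12}\big)\subeq\sV_\cK\) you must know the domain of \(\Delta^{\frac12}\) exactly, i.e.\ that every \(\sharp\)-invariant \(f\) whose dilation orbit continues to the strip comes from an \(H^2(\C_+,\cK_\C)\)-function (and a graph inclusion of Tomita operators alone does not force equality of the standard subspaces). This domain characterization is precisely the non-trivial content that the paper outsources to \cite[Lem.\ 4.1]{LL15}; since you explicitly allow quoting it from \cite{Lo08}/\cite{NO17}, your argument closes, but you should state that this is the step being quoted rather than something that follows formally from \(\cH_\cK=L^2(\R,\cK_\C)^\sharp\).
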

\begin{proof}
We consider the set
\[\cS_\pi \coloneqq \{z \in \C: 0 < \Im(z) < \pi\}\]
and the corresponding Hardy space
\[H^2(\cS_\pi) \coloneqq \left\{f \in \cO(\cS_\pi): \sup_{y \in (0,\pi)} \int_\R |f(x+iy)|^2 \,dx < \infty \right\}.\]
By \cite[Sec. 4]{LL15} for every function \(f \in H^2(\cS_\pi)\) the lower and upper boundary values
\[f(x) \coloneqq \lim_{\epsilon \downarrow 0} f(x+i\epsilon) \quad \text{and} \quad f(x+i\pi) \coloneqq \lim_{\epsilon \downarrow 0} f(x+i(\pi-\epsilon))\]
exist for almost every \(x \in \R\) and the lower boundary values define a function in \(L^2(\R,\C)\). Now, by \cite[Lem. 4.1]{LL15}, the space
\[H \coloneqq \{f \in H^2(\cS_\pi): \overline{f(x+i\pi)} = f(x) \text{ for almost every }x \in \R\}\]
is a standard subspace in \(L^2(\R,\C)\) with
\[J_H f = \overline{f} \qquad \text{and} \qquad \left(\Delta_H^{-\frac{is}{2\pi}}f\right)(x) = f(x+s), \quad f \in L^2(\R,\C), s,x \in \R.\]
We now consider the unitary map
\[\Psi: L^2(\R,\C) \to L^2(\R,\C)^\sharp = \cH_\R, \quad \begin{cases} \frac {(1+i)}{2\sqrt{x}} f(\log(x)) & \text{if } x > 0 \\[1.5ex] \frac {(1+i)}{2\sqrt{-x}} \overline{f(\log(-x))} & \text{if } x < 0.\end{cases}\]
Then
\[\Psi(H) = H^2(\C_+)^\sharp = \sV_\R.\]
This shows that \(\sV_\R\) is a standard subspace in the Hilbert space \(\cH_\R\) with
\[J_{\sV_\R} = \Psi \circ J_H \circ \Psi^{-1} = \rho_\R(0,-1)\]
and
\[\Delta_{\sV_\R}^{-\frac{is}{2\pi}} = \Psi \circ \Delta_H^{-\frac{is}{2\pi}} \circ \Psi^{-1} = \rho_\R(0,e^s), \quad s \in \R.\]
The first statement then follows by taking direct sums. The second statement follows immediately by the fact that
\[\left(U_\cK\right)_t H^2(\C_+,\cK_\C)^\sharp = S_t H^2(\C_+,\cK_\C)^\sharp \subeq H^2(\C_+,\cK_\C)^\sharp \quad \forall t \in \R_+\]
(cf. \fref{thm:LaxPhillipsReal}) and that \(\rho_\cK\) is a strictly positive energy representation.
\end{proof}
\begin{cor}\label{cor:NormalformStandardPair}
Let \(\cH\) be a Hilbert space and let \((\sV,U)\) be a non-degenerate standard pair on \(\cH\). Then there exists a real Hilbert space \(\cK\) and a unitary operator \(\psi: \cH \to \cH_\cK\) such that
\[\psi(\sV) = \sV_\cK, \qquad \psi \circ J_\sV \circ \psi^{-1} = \theta_{i \cdot \sgn} \qquad \text{and} \qquad \psi \circ U_t \circ \psi^{-1} = S_t \quad \forall t \in \R\]
(cf. \fref{def:defTheta}, \fref{thm:LaxPhillipsReal}).
\end{cor}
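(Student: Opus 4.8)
The plan is to read the statement off Borchers' Theorem together with the structure theory of strictly positive energy representations of the affine group and the explicit picture of $\sV_\cK$. First I would invoke \fref{thm:Borchers}: since $(\sV,U)$ is a non-degenerate standard pair on $\cH$, the formulas $\rho(t,1) \coloneqq U_t$, $\rho(0,e^s) \coloneqq \Delta_\sV^{-\frac{is}{2\pi}}$ and $\rho(0,-1) \coloneqq J_\sV$ define a strictly positive energy anti-unitary representation $\rho$ of $\mathrm{Aff}(\R)$ on $\cH$. By \fref{prop:RepAffineForm} there is then a real Hilbert space $\cK$ and a unitary operator $\psi \colon \cH \to \cH_\cK$ with $\psi \circ \rho(g) \circ \psi^{-1} = \rho_\cK(g)$ for every $g \in \mathrm{Aff}(\R)$. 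Specialising to $g = (t,1)$ and using that $(\rho_\cK(t,1)f)(x) = e^{itx}f(x) = (S_t f)(x)$ (cf.\ \fref{thm:LaxPhillipsReal}) yields $\psi \circ U_t \circ \psi^{-1} = S_t$ for all $t \in \R$; specialising to $g = (0,-1)$ and using that $(\rho_\cK(0,-1)f)(x) = i\,\sgn(x)\,f(-x) = (\theta_{i\cdot\sgn}f)(x)$ (cf.\ \fref{def:defTheta}) yields $\psi \circ J_\sV \circ \psi^{-1} = \theta_{i\cdot\sgn}$.

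It then remains to show $\psi(\sV) = \sV_\cK$, and here I would use \fref{prop:StandardSubspaceFixed}, which identifies a standard subspace with the fixed-point set of its Tomita operator $T = J\Delta^{\frac12}$. From the previous step and \fref{prop:WhatIsStandard} we have $\psi J_\sV \psi^{-1} = \theta_{i\cdot\sgn} = \rho_\cK(0,-1) = J_{\sV_\cK}$, and moreover $\psi \Delta_\sV^{-\frac{is}{2\pi}}\psi^{-1} = \rho_\cK(0,e^s) = \Delta_{\sV_\cK}^{-\frac{is}{2\pi}}$ for all $s \in \R$. Since $\bigl(\psi\Delta_\sV^{it}\psi^{-1}\bigr)_{t\in\R}$ and $\bigl(\Delta_{\sV_\cK}^{it}\bigr)_{t\in\R}$ then agree as one-parameter unitary groups, uniqueness of their infinitesimal generators forces $\psi\Delta_\sV\psi^{-1} = \Delta_{\sV_\cK}$, hence $\psi\Delta_\sV^{\frac12}\psi^{-1} = \Delta_{\sV_\cK}^{\frac12}$ (with matching domains) by functional calculus. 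Thus $\psi$ conjugates $T_\sV = J_\sV\Delta_\sV^{\frac12}$ onto $T_{\sV_\cK} = J_{\sV_\cK}\Delta_{\sV_\cK}^{\frac12}$, so $\psi(\sV) = \psi\bigl(\mathrm{Fix}(T_\sV)\bigr) = \mathrm{Fix}(T_{\sV_\cK}) = \sV_\cK$ by \fref{prop:StandardSubspaceFixed}.

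There is no genuine obstacle: the argument is a concatenation of the cited results. The only points that require a little care are the two identifications $\rho_\cK(t,1) = S_t$ and $\rho_\cK(0,-1) = \theta_{i\cdot\sgn}$, i.e.\ matching the explicit kernels of the affine representation with the operators $S$ and $\theta_h$ of \fref{def:defTheta}, and the passage from the intertwined dilation one-parameter unitary groups to the identity $\psi\Delta_\sV\psi^{-1} = \Delta_{\sV_\cK}$ of the positive generators (equivalently of their square roots), which is precisely what makes the fixed-point description of the standard subspace applicable.
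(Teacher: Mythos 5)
Your proposal is correct and follows essentially the same route as the paper: Borchers' Theorem to produce the strictly positive energy representation, \fref{prop:RepAffineForm} to conjugate it onto \((\rho_\cK,\cH_\cK)\), the explicit kernel identifications \(\rho_\cK(t,1)=S_t\) and \(\rho_\cK(0,-1)=\theta_{i\cdot\sgn}\), and then \fref{prop:WhatIsStandard} together with the fixed-point description of \fref{prop:StandardSubspaceFixed} to conclude \(\psi(\sV)=\sV_\cK\). The only difference is cosmetic: you spell out the passage from the intertwined modular one-parameter groups to \(\psi\Delta_\sV\psi^{-1}=\Delta_{\sV_\cK}\) via Stone's theorem, which the paper leaves implicit when it asserts that \(\psi(\sV)\) and \(\sV_\cK\) have the same modular objects.
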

\begin{proof}
By \fref{prop:RepAffineForm} there exists a real Hilbert space \(\cK\) and a unitary operator \(\psi: \cH \to \cH_\cK\) such that
\[\psi \circ J_\sV \circ \psi^{-1} = \rho_\cK(0,-1) = \theta_{i \cdot \sgn} \qquad \text{and} \qquad \psi \circ U_t \circ \psi^{-1} = \rho_\cK(t,1) = S_t \quad \forall t \in \R\]
and
\[\psi \circ \Delta_\sV^{-\frac{is}{2\pi}} \circ \psi^{-1} = \rho_\cK(0,e^s) \quad \forall s \in \R.\]
This, by \fref{prop:WhatIsStandard}, implies that \(\psi(\sV)\) and \(\sV_\cK\) are both standard subspaces in \(\cH_\cK\) with
\[J_{\psi(\sV)} = J_{\sV_\cK} \qquad \text{and} \qquad \Delta_{\psi(\sV)} = \Delta_{\sV_\cK}\]
and therefore, we have
\[\psi(\sV) = \Fix\left(J_{\psi(\sV)}\Delta_{\psi(\sV)}^{\frac 12}\right) = \Fix\left(J_{\sV_\cK}\Delta_{\sV_\cK}^{\frac 12}\right) = \sV_\cK. \qedhere\]
\end{proof}
This corollary provides a normal form for non-degenerate standard pairs. The goal of this section is to provide similar normal form theorems for pairs \((V,U)\) of a standard subspace \(\sV\) and a unitary one-parameter group \(U\) on \(\cH\) with \(U_t \sV \subeq \sV\) for all \(t \in \R_+\), but without the additional assumption that the infinitesimal generator of \(U\) is positive. We want to replace this assumption by the weaker assumption that -- as it was the case for non-degenerate standard pairs by \fref{cor:NormalformStandardPair} -- there exists an isomorphism between \((\cH,\sV,U)\) and \((\cH_\cK,\sV_\cK,S)\) for some real Hilbert space \(\cK\). This condition, by the Lax--Phillips Theorem (\fref{thm:LaxPhillipsReal}), is equivalent to \((\cH,\sV,U)\) being a real ROPG. This motivates the following definition:
\begin{definition}\label{def:RegPair}
Let \(\cH\) be a complex Hilbert space, \(\sV \subeq \cH\) a standard subspace, and \(U: \R \to \U(\cH)\) a strongly continuous one-parameter group.
\begin{enumerate}[\rm (a)]
\item We call \((\sV,U)\) a regular pair, if \((\cH,\sV,U)\) is a real ROPG.
\item We call \((\sV,U)\) a reflection positive regular pair, if \((\cH,\sV,U,J_\sV)\) is a real reflection positive ROPG.
\end{enumerate}
\end{definition}
\begin{remark}
It follows directly from the definition that every reflection positive regular pair \((\sV,U)\) is, in particular, a regular pair.
\end{remark}
First, we prove that both of these conditions are fulfilled if \((\sV,U)\) is a non-degenerate standard pair to show that we are, in fact, dealing with a generalization of non-degenerate standard pairs. For this, we need the following proposition:
\begin{prop}\label{prop:StandardSubspaceMaxPos}
Let \(\cH\) be a complex Hilbert space and \(\sV \subeq \cH\)  be a standard subspace. Then \((\cH,\sV,J_\sV)\) is a maximal real RPHS.
\end{prop}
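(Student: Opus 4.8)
The plan is to verify the two defining properties of a maximal real RPHS separately: first that $(\cH,\sV,J_\sV)$ is reflection positive when $\cH$ is regarded as a real Hilbert space with scalar product $\langle\cdot,\cdot\rangle_\R = \Re\,\langle\cdot,\cdot\rangle$ and $J_\sV$ as the orthogonal (hence unitary) involution it induces there, and then that it is maximal by means of \fref{thm:EPlusMaximal}. Throughout I would write $J := J_\sV$ and $\Delta := \Delta_\sV$, and use the description $\sV = \Fix(J\Delta^{\frac 12})$ from \fref{prop:StandardSubspaceFixed}; in particular $\sV \subseteq \cD(\Delta^{\frac 12})$ and, by \fref{lem:JSymplCompl}, $\sV' = \Fix(J\Delta^{-\frac 12}) \subseteq \cD(\Delta^{-\frac 12})$.

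For reflection positivity, given $\xi \in \sV$ one has $J\Delta^{\frac 12}\xi = \xi$, hence $\Delta^{\frac 12}\xi = J\xi$ because $J^2 = \mathbf 1$. Therefore $\langle\xi,J\xi\rangle_\R = \Re\,\langle\xi,\Delta^{\frac 12}\xi\rangle$, and since $\Delta^{\frac 12}$ is positive self-adjoint and $\xi \in \cD(\Delta^{\frac 12})$, the right-hand side equals the nonnegative real number $\langle\xi,\Delta^{\frac 12}\xi\rangle$. This gives that $(\cH,\sV,J)$ is a real RPHS.

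For maximality I would introduce $P := \tfrac 12(\mathbf 1 + J)$, the orthogonal projection of $\cH$ (as a real Hilbert space) onto $\cH^J = \Fix(J)$, note that $\cH^{-J} = i\cH^J$ and that $\cH = \cH^J \oplus \cH^{-J}$ orthogonally, and then apply \fref{thm:EPlusMaximal}\,(b) with the subspace $V := P(\sV) \subseteq \cH^J$. The inclusion $V \subseteq \sV + \cH^{-J}$ is immediate, since for $\xi \in \sV$ one has $P\xi = \xi - (\xi - P\xi)$ with $\xi - P\xi = \tfrac 12(\xi - J\xi) \in \cH^{-J}$. Density of $V$ in $\cH^J$ reduces, using $P = P^{*}$ and $Pv = v$ for $v \in \cH^J$, to the implication: if $v \in \cH^J$ satisfies $\langle v, P\xi\rangle_\R = \langle v,\xi\rangle_\R = 0$ for all $\xi \in \sV$, i.e. $v \in \sV^{\perp_\R} = i\sV'$, then $v = 0$; here $\sV^{\perp_\R} = i\sV'$ follows from $\sV' = (i\sV)^{\perp_\R}$ together with the fact that $x \mapsto ix$ is an isometry of $\cH$ over $\R$ and $-\sV' = \sV'$.

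It therefore remains to show $\cH^J \cap i\sV' = \{0\}$. Writing such a vector as $v = iw$ with $w \in \sV'$ and $Jv = v$, anti-linearity of $J$ gives $Jw = -w$, while $w \in \sV' = \Fix(J\Delta^{-\frac 12})$ gives $J\Delta^{-\frac 12}w = w$; applying $J$ yields $\Delta^{-\frac 12}w = Jw = -w$, and positivity of $\Delta^{-\frac 12}$ forces $\lVert w\rVert^2 = -\langle w,\Delta^{-\frac 12}w\rangle \le 0$, so $w = 0$ and $v = 0$. Then $V$ is a dense subspace of $\cH^J$ contained in $\sV + \cH^{-J}$, and \fref{thm:EPlusMaximal} gives maximality. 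The only things needing care are the domain bookkeeping for the possibly unbounded $\Delta$ — taken care of by $\sV \subseteq \cD(\Delta^{\frac 12})$ and $\sV' \subseteq \cD(\Delta^{-\frac 12})$ — and the correct reduction of the abstract criterion of \fref{thm:EPlusMaximal} to the identity $\cH^J \cap i\sV' = \{0\}$; I do not expect a serious obstacle beyond these.
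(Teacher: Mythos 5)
Your proof is correct, and while the reflection-positivity half coincides with the paper's argument (for \(\xi \in \sV\) one has \(\Delta_\sV^{\frac 12}\xi = J_\sV\xi\), hence \(\braket*{\xi}{J_\sV\xi} = \braket*{\xi}{\Delta_\sV^{\frac 12}\xi} \geq 0\)), your maximality argument takes a genuinely different route. The paper does not invoke \fref{thm:EPlusMaximal} at all: it runs the same positivity computation for \(\sV'\) (using \(J_{\sV'} = J_\sV\) from \fref{lem:JSymplCompl}) to get \(\braket*{v}{J_\sV v} > 0\) on \(\sV' \setminus \{0\}\), deduces \(\braket*{v}{J_\sV v} < 0\) on \(\sV^{\perp_\R} \setminus \{0\} = (i\sV') \setminus \{0\}\), and then argues directly from the definition of maximality: any reflection positive enlargement can be cut down to \(\sV + \R w\) with \(w \in \sV^{\perp_\R}\), and the strict negativity forces \(w = 0\). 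You instead verify condition (b) of \fref{thm:EPlusMaximal} with \(V = P(\sV)\), \(P = \tfrac 12(\mathbf{1} + J_\sV)\), reducing density of \(V\) in \(\cH^{J_\sV}\) to the identity \(\cH^{J_\sV} \cap i\sV' = \{0\}\), which you obtain from \(J_\sV\Delta_\sV^{-\frac 12}w = w\), \(J_\sV w = -w\) and positivity of \(\Delta_\sV^{-\frac 12}\). Both arguments ultimately rest on the same modular ingredients (\fref{prop:StandardSubspaceFixed} applied to \(\sV'\), \fref{lem:JSymplCompl}, and \(i\sV' = \sV^{\perp_\R}\)); the paper's version is shorter and avoids the projection and density bookkeeping, whereas yours makes explicit how the standard subspace realizes the abstract criterion of \fref{thm:EPlusMaximal} and isolates the clean fixed-point fact \(\cH^{J_\sV} \cap i\sV' = \{0\}\). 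Your individual steps (real-orthogonality and real-self-adjointness of \(J_\sV\) and \(P\), \(\sV^{\perp_\R} = i\sV'\), the domain inclusions \(\sV \subseteq \cD(\Delta_\sV^{\frac 12})\), \(\sV' \subseteq \cD(\Delta_\sV^{-\frac 12})\)) all check out, so there is no gap.
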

\begin{proof}
Given a standard subspace \(\sV \subeq \cH\), by \fref{prop:StandardSubspaceFixed}, for every \(v \in \sV \setminus \{0\}\), one has
\[J_\sV \Delta_\sV^{\frac 12}v = v\]
and therefore \(\Delta_\sV^{\frac 12}v \neq 0\), so
\[\braket*{v}{J_\sV v} = \braket*{v}{J_\sV \big(J_\sV \Delta_\sV^{\frac 12}\big)v} = \braket*{v}{\Delta_\sV^{\frac 12} v} > 0,\]
hence the triple \((\cH,\sV,J_\sV)\) is a RPHS. By the same argument, using that \(J_\sV = J_{\sV'}\) by \fref{lem:JSymplCompl}, we have that
\[\braket*{v}{J_\sV v} = \braket*{v}{J_{\sV'} v} > 0\]
for every \(v \in \sV' \setminus \{0\}\). This yields
\[\braket*{iv}{J_\sV iv} = \braket*{iv}{-iJ_\sV v} = -\braket*{v}{J_\sV v} < 0\]
for every \(v \in \sV' \setminus \{0\}\). Since
\[i\sV' = \sV^{\perp_\R},\]
this yields that
\[\braket*{v}{J_\sV v} < 0 \quad \forall v \in \sV^{\perp_\R}.\]
We now assume that \(w \in \cH\) such that \((\cH,\sV+\R w,J_\sV)\) is a real RPHS. Then \(\sV+\R w = \sV+\R P_{\sV^{\perp_\R}} w\), so we can assume that \(w \in \sV^{\perp_\R}\), which by the above implies that \(w = 0\), so \((\cH,\sV,J_\sV)\) is a maximal RPHS.
\end{proof}
\begin{cor}\label{cor:standardPairIsRPRegPair}
Let \(\cH\) be a complex Hilbert space, \(\sV \subeq \cH\) a standard subspace, and \(U: \R \to \U(\cH)\) a strongly continuous one-parameter group. If \((\sV,U)\) is a non-degenerate standard pair, then \((\sV,U)\) is a reflection positive regular pair.
\end{cor}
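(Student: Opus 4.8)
The plan is to unwind \fref{def:RegPair}(b) and verify its constituent conditions one at a time. Regarding $\cH$ as a real Hilbert space (with inner product $\Re\langle\cdot,\cdot\rangle$), we must show that the quadruple $(\cH,\sV,U,J_\sV)$ is a real reflection positive ROPG; by \fref{def:RPHS}(b),(c) this splits into: (i) $(\cH,\sV,J_\sV)$ is a real RPHS; (ii) $J_\sV$ is a unitary involution satisfying $J_\sV U_t = U_{-t} J_\sV$ for all $t\in\R$; (iii) $U_t\sV\subeq\sV$ for all $t\geq 0$; and (iv) $(\cH,\sV,U)$ is a real ROPG.

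Conditions (i)--(iii) follow at once from results already at our disposal. Condition (iii) is part of the definition of a standard pair. Condition (i) is precisely \fref{prop:StandardSubspaceMaxPos}, which in fact yields the stronger statement that $(\cH,\sV,J_\sV)$ is a maximal real RPHS. For (ii), note that $J_\sV$ is an anti-unitary involution of $\cH$, hence an orthogonal involution of the underlying real Hilbert space, and that Borchers' Theorem (\fref{thm:Borchers}) --- applicable because $(\sV,U)$ is non-degenerate --- supplies the relation $J_\sV U_t J_\sV = U_{-t}$, which together with $J_\sV^2 = \textbf{1}$ is exactly what is required.

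The only clause calling on the heavier machinery is (iv). Here I would invoke \fref{cor:NormalformStandardPair} to obtain a real Hilbert space $\cK$ and a unitary $\psi: \cH\to\cH_\cK$ with $\psi(\sV)=\sV_\cK$ and $\psi\circ U_t\circ\psi^{-1}=S_t$ for every $t\in\R$. Viewed as a map of the underlying real Hilbert spaces, $\psi$ is an isomorphism of the triple $(\cH,\sV,U)$ onto $(\cH_\cK,\sV_\cK,S)=(L^2(\R,\cK_\C)^\sharp,H^2(\C_+,\cK_\C)^\sharp,S)$, and the latter is a real ROPG by the real Fourier version of the Lax--Phillips Theorem (\fref{thm:LaxPhillipsReal}). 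Since the three defining conditions of a ROPG ($U_t\cE_+\subeq\cE_+$ for $t\geq 0$, $\bigcap_{t\in\R}U_t\cE_+=\{0\}$, $\overline{\bigcup_{t\in\R}U_t\cE_+}=\cE$) are transported by any unitary intertwiner --- applying $\psi^{-1}$ to $\bigcap_t S_t\sV_\cK=\{0\}$ and $\overline{\bigcup_t S_t\sV_\cK}=\cH_\cK$ gives $\bigcap_t U_t\sV=\{0\}$ and $\overline{\bigcup_t U_t\sV}=\cH$ --- it follows that $(\cH,\sV,U)$ is a real ROPG. Assembling (i)--(iv), $(\cH,\sV,U,J_\sV)$ is a real reflection positive ROPG, i.e.\ $(\sV,U)$ is a reflection positive regular pair. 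I anticipate no genuine obstacle: the substantive content (Borchers' Theorem together with the representation theory of $\mathrm{Aff}(\R)$) is already absorbed into \fref{cor:NormalformStandardPair} and \fref{thm:Borchers}, so the work is essentially bookkeeping --- unfolding the nested definitions of ``reflection positive ROPG'' and ``regular pair'' and matching each clause with the appropriate earlier result.
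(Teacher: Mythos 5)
Your proposal is correct and follows essentially the same route as the paper: \fref{prop:StandardSubspaceMaxPos} for the reflection positivity of \((\cH,\sV,J_\sV)\), Borchers' Theorem for the relation \(J_\sV U_t = U_{-t}J_\sV\), the standard pair hypothesis for \(U_t\sV\subeq\sV\), and \fref{cor:NormalformStandardPair} together with the Lax--Phillips Theorem (\fref{thm:LaxPhillipsReal}) for the real ROPG condition. The only difference is that you spell out explicitly that the ROPG axioms are transported along the unitary intertwiner \(\psi\), which the paper leaves implicit.
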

\begin{proof}
By \fref{prop:StandardSubspaceMaxPos} the triple \((\cH,\sV,J_\sV)\) is a RPHS and by Borchers' Theorem (\fref{thm:Borchers}) we have \(J_\sV \circ U_t = U_{-t} \circ J_\sV\). Together with the assumption
\[U_t \sV \subeq \sV, \quad \forall t \in \R_+,\]
this implies that \((\cH,\sV,U,J_\sV)\) is a reflection positive unitary one-parameter group. Further, by \fref{cor:NormalformStandardPair} there exists a real Hilbert space \(\cK\) such that the triple \((\cH,\sV,U)\) is isomorphic to \((\cH_\cK,\sV_\cK,S)\), which is a real ROPG by the Lax--Phillips Theorem (\fref{thm:LaxPhillipsReal}). This implies that \((\cH,\sV,U,J_\sV)\) is a reflection positive ROPG.
\end{proof}
By the Lax--Phillips Theorem (\fref{thm:LaxPhillipsReal}) we know that, for every regular pair \((\sV,U)\) on a Hilbert space \(\cH\), the ROPG \((\cH,\sV,U)\) is equivalent to \((\cH_\cK,\sV_\cK,S)\) for some real Hilbert space \(\cK\). We now want to see which ROPGs \((\cH,\sV,U)\) correspond to the multiplicity-free case \(\cK = \R\). For this, we make the following definition:
\begin{definition}
Let \(\cH\) be a complex Hilbert space and let \(U: \R \to \U(\cH)\) be a strongly continuous one-parameter group. We say that \(U\) is \(\R\)-cyclic, if there exists \(v \in \cH\) such that
\[\overline{\spann_\R U(\R)v} = \cH.\]
\end{definition}
\begin{prop}\label{prop:MultiplicityRCyclic}
Let \(\cH\) be a complex Hilbert space and \((\sV,U)\) be a regular pair on \(\cH\). Then the multiplicity space of \((\cH,\sV,U)\) is isomorphic to \(\R\), if and only if \(U\) is \(\R\)-cyclic.
\end{prop}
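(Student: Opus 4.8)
The plan is to reduce to the Lax--Phillips standard model and then recast $\R$-cyclicity as a pointwise condition on the fibres of a putative cyclic vector.

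First I would use that, since $(\sV,U)$ is a regular pair, $(\cH,\sV,U)$ is a real ROPG, so by the Lax--Phillips Theorem (\fref{thm:LaxPhillipsReal}) and \fref{thm:kernelAdjointGeneralReal} it is equivalent, via a unitary intertwiner $\psi$, to $(\cH_\cK,\sV_\cK,S)=(L^2(\R,\cK_\C)^\sharp,H^2(\C_+,\cK_\C)^\sharp,S)$ for the (up to isomorphism unique) multiplicity space $\cK$. Since $\psi$ is unitary and in particular $\R$-linear, it maps $\overline{\spann_\R U(\R)v}$ onto $\overline{\spann_\R S(\R)\psi v}$; hence $U$ is $\R$-cyclic if and only if $S$ is $\R$-cyclic on $L^2(\R,\cK_\C)^\sharp$. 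It therefore suffices to show that $S$ is $\R$-cyclic on $L^2(\R,\cK_\C)^\sharp$ if and only if $\cK\cong\R$.

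The key step is to compute, for $v\in L^2(\R,\cK_\C)^\sharp$, the real orthogonal complement of $\overline{\spann_\R\{S_t v:t\in\R\}}$. I claim it equals
\[
\bigl\{f\in L^2(\R,\cK_\C)^\sharp : f(x)\perp v(x)\text{ in }\cK_\C\text{ for almost every }x\in\R\bigr\}.
\]
To see this, fix $f\in L^2(\R,\cK_\C)^\sharp$ and put $\phi(x):=\braket*{f(x)}{v(x)}_{\cK_\C}$, which lies in $L^1(\R,\C)$ by Cauchy--Schwarz. From $f(-x)=\cC_\cK f(x)$ and $v(-x)=\cC_\cK v(x)$ one gets $\overline{\phi(-x)}=\phi(x)$, and this symmetry forces $\cF_1\phi$ to be real-valued. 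Since $\braket*{f}{S_t v}=\int_\R e^{itx}\phi(x)\,dx=(\cF_1\phi)(-t)$, we obtain $\braket*{f}{S_t v}_\R=(\cF_1\phi)(-t)$ for all $t$; hence $f\perp_\R S(\R)v$ is equivalent to $\cF_1\phi\equiv 0$, and injectivity of $\cF_1$ yields $\phi\equiv 0$, i.e. $f(x)\perp v(x)$ a.e. The reverse inclusion is the same chain of equalities read backwards.

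It remains to read off both implications from this formula. If $\cK\cong\R$, take $v(x)=e^{-x^2}$, which belongs to $L^2(\R,\C)^\sharp$ and is nowhere zero; then the displayed complement is $\{0\}$, so $S$ is $\R$-cyclic. Conversely, suppose $\dim_\R\cK\geq 2$. Then for every $v\in L^2(\R,\cK_\C)^\sharp$ and almost every $x$ the subspace $\{v(x)\}^\perp\subseteq\cK_\C$ is nonzero; writing $P_v(x)$ for the orthogonal projection of $\cK_\C$ onto $\{v(x)\}^\perp$, one checks $\cC_\cK P_v(x)\cC_\cK=P_v(-x)$, so $M_{P_v}$ maps $L^2(\R,\cK_\C)^\sharp$ into itself, and $M_{P_v}\bigl(L^2(\R,\cK_\C)^\sharp\bigr)$ is a nonzero subspace contained in the complement above. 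Hence $\overline{\spann_\R S(\R)v}\neq L^2(\R,\cK_\C)^\sharp$ for every $v$, so $S$ is not $\R$-cyclic. The fiddly point in this last step is verifying weak-$*$-measurability of $x\mapsto P_v(x)$ and its compatibility with $\cC_\cK$; alternatively one may exhibit an explicit nonzero element of the complement by a measurable selection of a unit vector field $x\mapsto w(x)\in\{v(x)\}^\perp$ supported on a bounded subset of $\R_+$, extended to $\R_-$ by $\sharp$-invariance.
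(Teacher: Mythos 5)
Your proof is correct, but it takes a genuinely different route from the paper. You reduce to the Lax--Phillips model and then compute, fibrewise, the real orthocomplement of \(\overline{\spann_\R S(\R)v}\) in \(L^2(\R,\cK_\C)^\sharp\): since \(\phi(x)=\braket*{f(x)}{v(x)}\) lies in \(L^1\), the \(\sharp\)-symmetry makes \(\braket*{f}{S_t v}=(\cF_1\phi)(-t)\) real, and injectivity of \(\cF_1\) on \(L^1\) identifies the complement with \(\{f: f(x)\perp v(x)\ \text{a.e.}\}\); the Gaussian then settles the case \(\cK\cong\R\), and a fibrewise projection settles \(\dim_\R\cK\geq 2\). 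The paper instead proves the forward direction by exhibiting an outer cyclic vector via \fref{cor:OuterDense}, and the converse by applying Wiener's Tauberian Theorem to a cyclic vector \(F\), building the intertwining isometry \(g\cdot F\mapsto g\cdot\lVert F(\cdot)\rVert\) onto \(L^2(\R,\C)^\sharp\), and invoking \fref{thm:IndependentFromSubspace}; both arguments ultimately use uniqueness of the multiplicity space (\fref{thm:kernelAdjointGeneralReal}), which you also cite. What your route buys is economy: no outer functions, no Wiener's theorem, no independence-of-\(\cE_+\) theorem — only \(L^1\)-Fourier injectivity, which suffices precisely because in the multiplication picture the ``translates'' are modulations. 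What it costs is the bookkeeping you flag in the \(\dim_\R\cK\geq2\) case: the weak-\(*\)-measurability of \(x\mapsto P_v(x)\) is in fact immediate from the formula \(P_v(x)=\textbf{1}-\lVert v(x)\rVert^{-2}\braket*{v(x)}{\,\cdot\,}\,v(x)\) on \(\{v(x)\neq0\}\) and \(P_v(x)=\textbf{1}\) elsewhere, but you should still add the one-line check that \(M_{P_v}L^2(\R,\cK_\C)^\sharp\neq\{0\}\) (e.g. test against \(e^{-x^2}e_j\) for an orthonormal basis \((e_j)\) of \(\cK\), using separability), and note that your contrapositive tacitly assumes \(\cK\neq\{0\}\), which holds whenever \(\cH\neq\{0\}\). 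The paper's construction, by contrast, directly exhibits the equivalence with the multiplicity-one model \((L^2(\R,\C)^\sharp,H^2(\C_+)^\sharp,S)\) rather than arguing by contraposition, which is more in keeping with the structural machinery of Chapter 2.
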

\begin{proof}
If the multiplicity space of \((\cH,\sV,U)\) is isomorphic to \(\R\), by the Lax--Phillips Theorem (\fref{thm:LaxPhillipsReal}), we can without loss of generality assume that \((\cH,\sV,U) = (L^2(\R,\C)^\sharp,H^2(\C_+)^\sharp,S)\). Then, for the function
\[F(z) \coloneqq \frac{iz}{(z+i)^2}, \quad z \in \C_+,\]
by \fref{lem:OuterExamples}, \fref{lem:OuterSymmetric} and \fref{thm:OuterBetrag}, we have \(F \in \Out^2(\C_+)^\sharp\). This, by \fref{cor:OuterDense}, yields
\[L^2(\R,\C)^\sharp = \overline{\spann_\R S(\R)F}\]
and therefore \(S\) is \(\R\)-cyclic.

Now, for the converse, we assume that \((\cH,\sV,U)\) is a real ROPG such that \(U\) is \(\R\)-cyclic. Then, by the Lax--Phillips Theorem (\fref{thm:LaxPhillipsReal}), we can without loss of generality assume that \((\cH,\sV,U) = (L^2(\R,\cK_\C)^\sharp,H^2(\C_+,\cK_\C)^\sharp,S)\) for some real Hilbert space \(\cK\). The \(\R\)-cyclicity then implies that there exists \(F \in L^2(\R,\cK_\C)^\sharp\) such that
\[L^2(\R,\cK_\C)^\sharp = \overline{\spann_\R S(\R)F}.\]
This in particular implies that, for \(v \in \cK \setminus \{0\}\), setting \(F_v \coloneqq \braket*{v}{F(\cdot)} \in L^2(\R,\C)^\sharp\), we have
\[\overline{\spann_\R S(\R)F_v} \otimes v = \overline{\spann_\R S(\R)P_{\C v}F} = P_{\C v} \,\overline{\spann_\R S(\R)F} = P_{\C v} L^2(\R,\cK_\C)^\sharp = L^2(\R,\C)^\sharp \otimes v\]
and therefore
\[\overline{\spann_\R S(\R)F_v} = L^2(\R,\C)^\sharp,\]
so, by Wiener's Tauberian Theorem (\cite[Thm. I]{Wi32}), we have
\[0 \neq F_v(x) = \braket*{v}{F(x)}\]
for almost every \(x \in \R\). We now define the function \(f \in L^2(\R,\C)^\sharp\) by
\[f(x) = \left\lVert F(x)\right\rVert, \quad x \in \R.\]
Then also \(f(x) \neq 0\) for almost every \(x \in \R\), so, by Wiener's Tauberian Theorem (\cite[Thm. I]{Wi32}), we have
\[\overline{\spann_\R S(\R)f} = L^2(\R,\C)^\sharp.\]
We now consider the following densely defined linear operator
\[\phi: L^2(\R,\cK_\C)^\sharp \supseteq L^\infty(\R,\C)^\sharp \cdot F \to L^2(\R,\C)^\sharp, \quad g \cdot F \mapsto g \cdot f.\]
For \(g \in L^\infty(\R,\C)^\sharp\) we have
\[\left\lVert \phi(g \cdot F)\right\rVert^2 = \left\lVert g \cdot f\right\rVert^2 = \int_\R |g(x)|^2 \cdot |f(x)|^2 \,dx = \int_\R |g(x)|^2 \cdot \left\lVert F(x)\right\rVert^2 \,dx = \left\lVert g \cdot F\right\rVert^2,\]
so \(\phi\) is isometric, and therefore we can extend it to a unitary operator
\[\tilde \phi: L^2(\R,\cK_\C)^\sharp \to \overline{\spann_\R S(\R)f} = L^2(\R,\C)^\sharp.\]
By construction, we have
\[\phi \circ S_t = S_t \circ \phi \quad \forall t \in \R\]
and therefore also
\[\tilde \phi \circ S_t = S_t \circ \tilde \phi \quad \forall t \in \R.\]
So, setting
\[\cE_+ \coloneqq \tilde \phi(H^2(\C_+,\cK_\C)^\sharp),\]
the operator \(\tilde \phi\) is an isomorphism between the ROPG \((\cH,\sV,U) = (L^2(\R,\cK_\C)^\sharp,H^2(\C_+,\cK_\C)^\sharp,S)\) and the ROPG \((L^2(\R,\C)^\sharp,\cE_+,S)\). By \fref{thm:IndependentFromSubspace} the latter is equivalent to the real ROPG \((L^2(\R,\C)^\sharp,H^2(\C_+)^\sharp,S)\). Since, by \fref{thm:kernelAdjointGeneralReal}, the multiplicity space is unique up to isomorphism, this yields that \(\cK = \R\).
\end{proof}
We now summarize our results in the following three theorems that give a normal form for regular pairs, reflection positive regular pairs, and non-degenerate standard pairs respectively:
\begin{thm}\label{thm:StandardSubspaceROPG}
Let \(\cH\) be a complex Hilbert space and \((\sV,U)\) be a regular pair on \(\cH\). Then the following hold:
\begin{enumerate}[\rm (a)]
\item There exists a real Hilbert space \(\cK\) such that the real ROPG \((\cH,\sV,U)\) is equivalent to \((\cH_\cK,\sV_\cK,S)\).
\item Denoting by \(\phi\) the isomorphism between \((\cH,\sV,U)\) and \((\cH_\cK,\sV_\cK,S)\) and setting
\[\theta \coloneqq \phi \circ J_\sV \circ \phi^{-1} \in B(\cH_\cK),\]
the triple \((\cH_\cK,\sV_\cK,\theta)\) is a maximal RPHS.
\item One has \(\cK = \R\), if and only if \(U\) is \(\R\)-cyclic. 
\end{enumerate}
\end{thm}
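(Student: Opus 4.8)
The plan is to deduce all three assertions from material already in place. For part (a), I would observe that $(\sV,U)$ being a regular pair means precisely that $(\cH,\sV,U)$ --- with $\cH$ regarded as a real Hilbert space with inner product $\Re\braket*{\cdot}{\cdot}$ --- is a real ROPG (\fref{def:RegPair}). The real Fourier version of the Lax--Phillips Theorem (\fref{thm:LaxPhillipsReal}) then supplies a real Hilbert space $\cK$ and a real-orthogonal isomorphism of ROPGs $\phi\colon\cH\to L^2(\R,\cK_\C)^\sharp$ with $\phi(\sV)=H^2(\C_+,\cK_\C)^\sharp$ and $\phi\circ U_t=S_t\circ\phi$ for all $t\in\R$. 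Since $\cH_\cK=L^2(\R,\cK_\C)^\sharp$ and $\sV_\cK=H^2(\C_+,\cK_\C)^\sharp$ by definition (cf.\ the construction preceding \fref{prop:WhatIsStandard}), this is exactly the claimed equivalence $(\cH,\sV,U)\cong(\cH_\cK,\sV_\cK,S)$.

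For part (b), I would transport the conclusion of \fref{prop:StandardSubspaceMaxPos} along $\phi$. By that proposition $(\cH,\sV,J_\sV)$ is a maximal real RPHS; here $J_\sV$, although anti-unitary for the complex structure, is an orthogonal involution of $\cH$ viewed as a real Hilbert space, so the triple is of the type covered by \fref{def:RPHS} and \fref{def:maxRPHS}. Because $\phi$ preserves the real inner product, for every $\eta\in\sV$ one has $\braket*{\phi\eta}{\theta\,\phi\eta}_\R=\braket*{\eta}{J_\sV\eta}_\R\ge 0$ with $\theta=\phi\circ J_\sV\circ\phi^{-1}$, and $\theta$ is again an orthogonal involution; hence $(\cH_\cK,\sV_\cK,\theta)$ is a real RPHS. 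Maximality transports in the same way: if a closed subspace $\cE_+'\supseteq\sV_\cK$ makes $(\cH_\cK,\cE_+',\theta)$ a real RPHS, then $\phi^{-1}(\cE_+')\supseteq\sV$ and $(\cH,\phi^{-1}(\cE_+'),J_\sV)$ is a real RPHS, whence $\phi^{-1}(\cE_+')=\sV$ by maximality of the latter, so $\cE_+'=\sV_\cK$.

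For part (c), the statement is essentially already proved: by \fref{thm:kernelAdjointGeneralReal} the Hilbert space $\cK$ produced in part (a) is the multiplicity space of the real ROPG $(\cH,\sV,U)$ and is unique up to isomorphism, and \fref{prop:MultiplicityRCyclic} asserts exactly that this multiplicity space is isomorphic to $\R$ if and only if $U$ is $\R$-cyclic. So part (c) is just a restatement of \fref{prop:MultiplicityRCyclic} in the present notation.

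I do not anticipate a genuine obstacle; the only thing requiring care is the consistent bookkeeping between the complex and real categories --- that ``real ROPG'' and ``maximal real RPHS'' here refer to $\cH$ equipped with its real inner product, that $J_\sV$ enters as a real-orthogonal involution, and that conjugating it by the real-orthogonal isomorphism $\phi$ preserves both reflection positivity and maximality. Once this is spelled out, each of (a), (b) and (c) reduces to a one-line invocation of \fref{thm:LaxPhillipsReal}, \fref{prop:StandardSubspaceMaxPos} and \fref{prop:MultiplicityRCyclic} respectively.
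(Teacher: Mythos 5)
Your proposal is correct and follows essentially the same route as the paper, whose proof simply cites \fref{thm:LaxPhillipsReal} for (a), \fref{prop:StandardSubspaceMaxPos} for (b), and \fref{prop:MultiplicityRCyclic} for (c). The extra detail you supply --- viewing $\cH$ with its real inner product, noting that $J_\sV$ is a real-orthogonal involution, and transporting reflection positivity and maximality along the ROPG isomorphism $\phi$ --- is exactly the bookkeeping the paper leaves implicit.
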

\begin{proof}
Statement (a) follows immediately by the Lax--Phillips Theorem (\fref{thm:LaxPhillipsReal}), (b) follows by \fref{prop:StandardSubspaceMaxPos} and (c) by \fref{prop:MultiplicityRCyclic}.
\end{proof}
\begin{thm}\label{thm:StandardSubspaceRPROPG}
Let \(\cH\) be a complex Hilbert space and \((\sV,U)\) be a reflection positive regular pair on \(\cH\). Then the following hold:
\begin{enumerate}[\rm (a)]
\item There exists a real Hilbert space \(\cK\) and a function \(h \in L^\infty(\R,\U(\cK_\C))^\flat\) with \(h = h^\sharp\) such that the real reflection positive ROPG \((\cH,\sV,U,J)\) is equivalent to \((\cH_\cK,\sV_\cK,S,\theta_h)\).
\item The triple \((\cH_\cK,\sV_\cK,\theta_h)\) is a maximal RPHS.
\item One has \(\cK = \R\), if and only if \(U\) is \(\R\)-cyclic. In this case there exists an -- up to multiplication with a positive constant -- unique measure \({\nu \in \mathcal{BM}^{\mathrm{fin}}([0,\infty]) \setminus \{0\}}\) such that \(h = h_\nu\).
\end{enumerate}
\end{thm}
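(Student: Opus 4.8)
The plan is to derive the theorem as a synthesis of results already in the excerpt: the normal form for real reflection positive ROPGs (\fref{thm:NormalformRPROPG}), the maximality of standard subspaces as RPHSs (\fref{prop:StandardSubspaceMaxPos}), the characterization of the multiplicity‑free case (\fref{prop:MultiplicityRCyclic}), and the classification of maximal RPHSs on the Hardy space together with its uniqueness statement (\fref{thm:GeneralizedMaxPosForm}, \fref{cor:hNuInjektiveSim}). For part (a) I would argue as follows. By \fref{def:RegPair}, a reflection positive regular pair $(\sV,U)$ is exactly the data making $(\cH,\sV,U,J_\sV)$ a real reflection positive ROPG. Applying \fref{thm:NormalformRPROPG} to this quadruple produces a real Hilbert space $\cK$, unique up to isomorphism, and a function $h \in L^\infty(\R,\U(\cK_\C))^\flat$ with $h = h^\sharp$, unique up to conjugation by a unitary operator, together with an isomorphism $\phi$ of real reflection positive ROPGs from $(\cH,\sV,U,J_\sV)$ onto $(L^2(\R,\cK_\C)^\sharp,H^2(\C_+,\cK_\C)^\sharp,S,\theta_h) = (\cH_\cK,\sV_\cK,S,\theta_h)$; note that $\sV_\cK$ is genuinely a standard subspace by \fref{prop:WhatIsStandard}. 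This is precisely (a), and I record that $\phi$ is a real‑linear unitary satisfying $\phi(\sV) = \sV_\cK$, $\phi U_t = S_t\phi$ and $\phi J_\sV = \theta_h\phi$.

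For part (b) the isomorphism $\phi$ of (a) restricts to an isomorphism of the real RPHSs $(\cH,\sV,J_\sV)$ and $(\cH_\cK,\sV_\cK,\theta_h)$. Since $\sV$ is a standard subspace, \fref{prop:StandardSubspaceMaxPos} shows that $(\cH,\sV,J_\sV)$ is a maximal real RPHS, so it only remains to observe that maximality transports along isomorphisms of RPHSs: if $\cE_+' \supeq \sV_\cK$ and $(\cH_\cK,\cE_+',\theta_h)$ is an RPHS, then $\phi^{-1}(\cE_+') \supeq \sV$, and since $\theta_h$ (hence $J_\sV$) is a self‑adjoint involution the quantity $\braket*{\xi}{\theta\xi}$ is real and is preserved by the real‑orthogonal map $\phi^{-1}$, so $(\cH,\phi^{-1}(\cE_+'),J_\sV)$ is an RPHS; by maximality $\phi^{-1}(\cE_+') = \sV$ and thus $\cE_+' = \sV_\cK$. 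Hence $(\cH_\cK,\sV_\cK,\theta_h)$ is a maximal RPHS.

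For part (c), the underlying real ROPG of the normal form $(\cH_\cK,\sV_\cK,S,\theta_h)$ is $(L^2(\R,\cK_\C)^\sharp,H^2(\C_+,\cK_\C)^\sharp,S)$, whose multiplicity space is $\cK$ by definition; therefore $\cK$ is the multiplicity space of $(\cH,\sV,U)$, and \fref{prop:MultiplicityRCyclic} gives $\cK \cong \R$ if and only if $U$ is $\R$‑cyclic. Assume now $\cK = \R$. Then $\cK_\C = \C$ and $\U(\cK_\C) = \T$, so $h \in L^\infty(\R,\T)^\flat$; moreover the condition $h = h^\sharp$ is then automatic, since $h^\sharp(x) = \overline{h(-x)} = h(-x)^* = h^\flat(x) = h(x)$. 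By (b), $(L^2(\R,\C)^\sharp,H^2(\C_+)^\sharp,\theta_h)$ is a maximal real RPHS. Passing to complexifications via \fref{cor:EPlusMaximalReal}, together with the identifications $L^2(\R,\C)^\sharp + iL^2(\R,\C)^\sharp = L^2(\R,\C)$, $H^2(\C_+)^\sharp + iH^2(\C_+)^\sharp = H^2(\C_+)$ from the proof of \fref{prop:ROPGRealToComplex}, and the fact that $\theta_h$ is given by the same defining formula in both pictures, one obtains that $(L^2(\R,\C),H^2(\C_+),\theta_h)$ is a maximal RPHS. Then \fref{thm:GeneralizedMaxPosForm} yields a measure $\nu \in \mathcal{BM}^{\mathrm{fin}}\left([0,\infty]\right) \setminus \{0\}$ with $h = h_\nu$, and \fref{cor:hNuInjektiveSim} shows that any such $\nu$ is unique up to multiplication by a positive constant, completing the proof.

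Since the theorem is essentially a corollary that collects earlier results, I do not expect a deep obstacle; the delicate part is the bookkeeping of real versus complex structures — that $\cH_\cK = L^2(\R,\cK_\C)^\sharp$ is treated with its twisted complex structure while its real complexification is $L^2(\R,\cK_\C)$, that $\theta_h$ is consistently the relevant involution in each of these pictures, and that the Hilbert space $\cK$ furnished by \fref{thm:NormalformRPROPG} is identical to the Lax–Phillips multiplicity space so that \fref{prop:MultiplicityRCyclic} is applicable. Once these identifications are pinned down, the only genuinely new ingredient is the short transport‑of‑maximality argument used in (b).
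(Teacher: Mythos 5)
Your proposal is correct and follows essentially the same route as the paper's proof: (a) from \fref{thm:NormalformRPROPG}, (b) from \fref{prop:StandardSubspaceMaxPos} (you merely make explicit the transport of maximality along the isomorphism, which the paper leaves implicit), and (c) from \fref{prop:MultiplicityRCyclic}, \fref{cor:EPlusMaximalReal}, \fref{thm:GeneralizedMaxPosForm} and \fref{cor:hNuInjektiveSim}. The extra bookkeeping you supply on real versus complex structures is consistent with the paper and introduces no deviation in method.
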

\begin{proof}
Statement (a) follows immediately by \fref{thm:NormalformRPROPG} and (b) follows by \fref{prop:StandardSubspaceMaxPos}. We now prove (c). By \fref{prop:MultiplicityRCyclic} we have that \(\cK = \R\), if and only if \(U\) is \(\R\)-cyclic. By (b) we have that \((\cH_\cK,\sV_\cK,\theta_h)\) is a maximal real RPHS, which by \fref{cor:EPlusMaximalReal} is equivalent to \((L^2(\R,\C),H^2(\C_+),S)\) being a maximal complex RPHS. By \fref{thm:GeneralizedMaxPosForm}, this, in turn, is equivalent to the existence of a measure \({\nu \in \mathcal{BM}^{\mathrm{fin}}([0,\infty]) \setminus \{0\}}\) such that \(h = h_\nu\). The uniqueness assertion about this measure then follows by \fref{cor:hNuInjektiveSim}.
\end{proof}
\begin{thm}\label{thm:StandardSubspaceSP}
Let \(\cH\) be a complex Hilbert space and \((\sV,U)\) be a non-degenerate standard pair on \(\cH\). Then the following hold:
\begin{enumerate}[\rm (a)]
\item There exists a real Hilbert space \(\cK\) such that the real reflection positive ROPG \((\cH,\sV,U,J)\) is equivalent to \((\cH_\cK,\sV_\cK,S,\theta_{i \cdot \sgn})\).
\item The triple \((\cH_\cK,\sV_\cK,\theta_{i \cdot \sgn})\) is a maximal RPHS.
\item One has \(\cK = \R\), if and only if \(U\) is \(\R\)-cyclic. In this case one has \(\theta_{i \cdot \sgn} = \theta_\nu\) for the measure \(\nu \in \mathcal{BM}^{\mathrm{fin}}([0,\infty]) \setminus \{0\}\) given by
\[d\nu(\lambda) = \frac 2{1+\lambda^2}, \quad \lambda \in [0,\infty].\]
\end{enumerate}
\end{thm}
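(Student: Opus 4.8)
The plan is to assemble the three assertions from results already established, so the theorem should follow with essentially no new analysis. For (a), I would first invoke \fref{cor:NormalformStandardPair}: since $(\sV,U)$ is a non-degenerate standard pair, there is a real Hilbert space $\cK$ and a unitary $\psi\colon \cH \to \cH_\cK$ with $\psi(\sV) = \sV_\cK$, $\psi \circ J_\sV \circ \psi^{-1} = \theta_{i \cdot \sgn}$ and $\psi \circ U_t \circ \psi^{-1} = S_t$ for all $t \in \R$. By \fref{cor:standardPairIsRPRegPair} the quadruple $(\cH,\sV,U,J_\sV)$ is a real reflection positive ROPG, and the transported quadruple $(\cH_\cK,\sV_\cK,S,\theta_{i \cdot \sgn})$ is one as well; alternatively this is immediate from scratch, since $(\cH_\cK,\sV_\cK,S)$ is a real ROPG by \fref{thm:LaxPhillipsReal}, the operator $\theta_{i \cdot \sgn}$ is a unitary involution on $\cH_\cK$ with $\theta_{i \cdot \sgn} S_t = S_{-t}\theta_{i \cdot \sgn}$, and $(\cH_\cK,\sV_\cK,\theta_{i \cdot \sgn})$ is a RPHS because $\theta_{i \cdot \sgn} = J_{\sV_\cK}$ by \fref{prop:WhatIsStandard}. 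Thus $\psi$ is an isomorphism of reflection positive ROPGs, which is precisely (a).

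For (b), I would argue that by \fref{prop:StandardSubspaceMaxPos} the triple $(\cH_\cK,\sV_\cK,J_{\sV_\cK})$ is a maximal real RPHS, and $J_{\sV_\cK} = \theta_{i \cdot \sgn}$ again by \fref{prop:WhatIsStandard}; equivalently, one transports the maximal RPHS $(\cH,\sV,J_\sV)$ along the unitary $\psi$ from (a). Either route gives (b) at once.

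For (c), the equivalence ``$\cK \cong \R$ if and only if $U$ is $\R$-cyclic'' is exactly \fref{prop:MultiplicityRCyclic} applied to the regular pair $(\sV,U)$, whose multiplicity space is the $\cK$ produced in (a). In the case $\cK = \R$ it remains to identify $\theta_{i \cdot \sgn}$ with $\theta_\nu \coloneqq \theta_{h_\nu}$ for the measure $d\nu(\lambda) = \frac{2}{1+\lambda^2}$: by \fref{ex:Lebesgue} this measure satisfies $h_\nu = i \cdot \sgn$, so that $\theta_{h_\nu} = M_{i \cdot \sgn} \circ R = \theta_{i \cdot \sgn}$ by the definition of $\theta_h$ (\fref{def:defTheta}), and one records in passing that $\nu \in \mathcal{BM}^{\mathrm{fin}}([0,\infty]) \setminus \{0\}$.

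Since every ingredient is already in place, there is no genuine obstacle; the only points demanding a little care are the bookkeeping needed to see that the equivalence furnished by \fref{cor:NormalformStandardPair} respects all four pieces of structure simultaneously (the subspace, the one-parameter group, and the reflection), and the short verification that $\theta_{i \cdot \sgn}$ really defines a unitary involution on $\cH_\cK = L^2(\R,\cK_\C)^\sharp$, i.e. that the constant symbol $i \cdot \sgn$ lies in $L^\infty(\R,\U(\cK_\C))^\flat$ and satisfies $h = h^\sharp$, which is straightforward from the definitions of $\flat$ and $\sharp$.
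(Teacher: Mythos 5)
Your proposal is correct and follows essentially the same route as the paper, which also deduces (a) from \fref{cor:NormalformStandardPair}, (b) from \fref{prop:StandardSubspaceMaxPos}, and (c) from \fref{prop:MultiplicityRCyclic} together with \fref{ex:Lebesgue}. The extra details you supply (invoking \fref{cor:standardPairIsRPRegPair} and \fref{prop:WhatIsStandard}, and checking that \(i \cdot \sgn\) is an admissible symbol) are harmless elaborations of the same argument.
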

\begin{proof}
Statement (a) follows immediately by \fref{cor:NormalformStandardPair}, (b) follows by \fref{prop:StandardSubspaceMaxPos} and (c) by \fref{prop:MultiplicityRCyclic} and \fref{ex:Lebesgue}.
\end{proof}
The results of these three theorems are presented in the following table:
\vspace{0.2cm}
\begin{center}
\addtolength\tabcolsep{1.3pt}
\begin{tabular}{|c||c|c|c|} 
\hline
\raisebox{30pt}{\phantom{M}}\boldmath\((\sV,U)\)\raisebox{-30pt}{\phantom{M}} & \bf regular pair & \(\substack{\text{\normalsize \bf reflection positive} \\ \text{\normalsize \bf regular pair}}\) & \(\substack{\text{\normalsize \bf non-degenerate} \\ \text{\normalsize \bf standard pair}}\) \\
\hline\hline
\raisebox{30pt}{\phantom{M}} \bf Group \boldmath\(G\)\raisebox{-30pt}{\phantom{M}} & \(\R\) & \(\R \rtimes \{-1,1\}\) & \(\mathrm{Aff}(\R) = \R \rtimes \R^\times\) \\
\hline
\raisebox{30pt}{\phantom{M}}\(\substack{\text{\normalsize \bf Representation} \\ \text{\normalsize \boldmath\(\rho: G \to \mathrm{AU}(\cH)\)}}\)\raisebox{-30pt}{\phantom{M}}  & \(\rho(t) = U_t\) & \(\substack{\text{\normalsize \(\rho(t,1) = U_t\)} \\ \text{\normalsize \(\rho(0,-1) = J_\sV\)} \\ \text{\normalsize (\fref{def:RPHS})}}\) & \(\substack{\text{\normalsize \(\rho(t,1) = U_t\)} \\ \text{\normalsize \(\rho(0,e^s) = \Delta^{-\frac{is}{2\pi}}\)} \\ \text{\normalsize \(\rho(0,-1) = J_\sV\)} \\ \text{\normalsize (\fref{thm:Borchers})}}\) \\
\hline
\raisebox{30pt}{\phantom{M}}\(\substack{\text{\normalsize \bf Normal form} \\ \text{\normalsize \bf for \boldmath\((\cH,\sV,U,J_\sV)\)}}\) \raisebox{-30pt}{\phantom{M}} & \(\substack{\text{\normalsize \((\cH_\cK,\sV_\cK,S,\theta)\) with} \\ \text{\normalsize \((\cH_\cK,\sV_\cK,\theta)\) being a} \\ \text{\normalsize maximal RPHS} \\ \text{\normalsize (\fref{thm:StandardSubspaceROPG})}}\) & \(\substack{\text{\normalsize \((\cH_\cK,\sV_\cK,S,\theta_h)\) with} \\ \text{\normalsize \(h = h^\sharp = h^\flat\) and}\\ \text{\normalsize \((\cH_\cK,\sV_\cK,\theta_h)\) being a} \\ \text{\normalsize maximal RPHS} \\ \text{\normalsize (\fref{thm:StandardSubspaceRPROPG})}}\) & \(\substack{\text{\normalsize \((\cH_\cK,\sV_\cK,S,\theta_{i \cdot \sgn})\)} \\ \text{\normalsize (\fref{thm:StandardSubspaceSP})}}\) \\
\hline
\raisebox{30pt}{\phantom{M}}\(\substack{\text{\normalsize \bf Normal form} \\ \text{\normalsize \bf for \boldmath\((\cH,\sV,U,J_\sV)\)}\\ \text{\normalsize \bf if \boldmath\(U\) is \boldmath\(\R\)-cyclic}}\) \raisebox{-30pt}{\phantom{M}} & \(\substack{\text{\normalsize \((\cH_\R,\sV_\R,S,\theta)\) with} \\ \text{\normalsize \((\cH_\R,\sV_\R,\theta)\) being a} \\ \text{\normalsize maximal RPHS} \\ \text{\normalsize (\fref{thm:StandardSubspaceROPG})}}\) & \(\substack{\text{\normalsize \((\cH_\R,\sV_\R,S,\theta_{h_\nu})\) with} \\ \text{\normalsize \(\nu \in \mathcal{BM}^{\mathrm{fin}}([0,\infty]) \setminus \{0\}\)} \\ \text{\normalsize (\fref{thm:StandardSubspaceRPROPG})}}\) & \(\substack{\text{\normalsize \((\cH_\R,\sV_\R,S,\theta_{i \cdot \sgn}) =\)} \\ \text{\normalsize \((\cH_\R,\sV_\R,S,\theta_\nu)\) with} \\ \text{\normalsize \(d\nu(\lambda) = \frac 2{1+\lambda^2}\),} \\ \text{\normalsize (\fref{thm:StandardSubspaceSP})}}\) \\
\hline
\raisebox{30pt}{\phantom{M}}\(\substack{\text{\normalsize \bf OS transform} \\ \text{\normalsize \bf of \boldmath\((\cH,\sV,J_\sV)\)}\\ \text{\normalsize \bf if \boldmath\(U\) is \boldmath\(\R\)-cyclic}}\)\raisebox{-30pt}{\phantom{M}} & \(\substack{\text{\normalsize\(\overline{\sV_\R \slash \cN_\theta}\)} \\ \text{\normalsize (\fref{def:OSTrafo})}}\) & \(\substack{\text{\normalsize\(L^2(\R_+,\C,\cT \nu)\)} \\ \text{\normalsize (\fref{cor:OSTrafoFromMeasures})}}\) & \(\substack{\text{\normalsize\(L^2(\R_+,\C,2\lambda_1)\)} \\ \text{\normalsize (\fref{cor:OSTrafoFromMeasures},} \\ \text{\normalsize \fref{ex:Lebesgue})}}\) \\
\hline
\end{tabular}
\end{center}
\vspace{0.5cm}

\subsection{Extending the Longo--Witten Theorem beyond standard pairs}
In \cite{LW11}, given a non-degenerate standard pair \((\sV,U)\) on a complex Hilbert space \(\cH\), the authors consider the semigroup
\[\cE(\sV,U) \coloneqq \{A \in \U(\cH): A\sV \subeq \sV, \,(\forall t \in \R)\, U_t A = A U_t\}.\]
The authors give the following theorem characterizing this semigroup as the semigroup of symmetric inner functions (cf. \fref{def:Inner}):
\begin{thm}\label{thm:LW}{\rm \textbf{(Longo-Witten)}(\cite[Thm. 2.6]{LW11})}
Let \(\cH\) be a complex Hilbert space and \((\sV,U)\) be a non-degenerate standard pair on \(\cH\). Further, let \(\cK\) be the multiplicity space of the real reflection positive ROPG \((\cH,\sV,U,J_\sV)\) and let \(\psi: \cH \to \cH_\cK\) be an isomorphism between \((\cH,\sV,U,J_\sV)\) and \((\cH_\cK,\sV_\cK,S,\theta_{i \cdot \sgn})\). Then
\[\cE(\sV,U) = \psi^{-1} \circ M(\Inn(\C_+,B(\cK_\C))^\sharp) \circ \psi.\]
\end{thm}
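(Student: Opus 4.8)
The plan is to move the whole question to the standard model of \fref{cor:NormalformStandardPair} and there compute the semigroup $\cE(\sV_\cK, S)$ by hand, reusing the commutant bookkeeping from the proof of \fref{thm:SchurDimReal}. Since $\psi \colon \cH \to \cH_\cK$ is a unitary that intertwines $U$ with $S$, carries $J_\sV$ to $\theta_{i\cdot\sgn}$ and $\sV$ onto $\sV_\cK$, the map $A \mapsto \psi A \psi^{-1}$ is a multiplicative bijection of $\cE(\sV,U)$ onto $\cE(\sV_\cK, S) := \{A \in \U(\cH_\cK) : A\sV_\cK \subseteq \sV_\cK \text{ and } S_t A = A S_t\ \forall t\in\R\}$. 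So it suffices to show $\cE(\sV_\cK, S) = M(\Inn(\C_+,B(\cK_\C))^\sharp)$; the semigroup formulation then follows because products of symmetric inner functions are again symmetric inner functions and $M$ as well as conjugation by $\psi$ are multiplicative.

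For the inclusion $\supseteq$, given $h \in \Inn(\C_+,B(\cK_\C))^\sharp$, I would verify directly that $M_h$ restricts to an element of $\cE(\sV_\cK,S)$. Because $h$ has unitary boundary values, $M_h$ is unitary on $L^2(\R,\cK_\C)$; writing $M_h = \theta_h \circ R$ and using that $R$ preserves $\cH_\cK$ together with $h = h^\sharp$ (\fref{lem:thetahSymmetric} and its proof), $M_h$ restricts to a bijection of $\cH_\cK$, and it is isometric for the complex inner product of $\cH_\cK$ because it commutes with the complex structure $I = M_{i\cdot\sgn}$. It commutes with each $S_t = M_{e_t}$ since multiplication operators commute, and $M_h$ preserves $\sV_\cK = H^2(\C_+,\cK_\C)^\sharp$ because $h \in H^\infty(\C_+,B(\cK_\C))$ gives $M_h H^2(\C_+,\cK_\C) \subseteq H^2(\C_+,\cK_\C)$ by \fref{prop:H2InclusionFunctions}, which restricts to the $\sharp$-invariant part.

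For $\subseteq$, let $A \in \cE(\sV_\cK,S)$. Mirroring the argument in \fref{thm:SchurDimReal}, I would extend $A$ along the real-orthogonal decomposition $L^2(\R,\cK_\C) = \cH_\cK \oplus i\,\cH_\cK$ to a $\C$-linear unitary $\tilde A$ on $L^2(\R,\cK_\C)$ that still commutes with $S(\R)$; \fref{prop:SCommutant} then forces $\tilde A = M_f$ for some $f \in L^\infty(\R,B(\cK_\C))$. Unitarity gives $f(x) \in \U(\cK_\C)$ for a.e.\ $x$; the fact that $M_f$ preserves $\cH_\cK$ gives $f = f^\sharp$ (\fref{lem:thetahSymmetric}); and $M_f \sV_\cK \subseteq \sV_\cK$, combined with $H^2(\C_+,\cK_\C) = H^2(\C_+,\cK_\C)^\sharp + i H^2(\C_+,\cK_\C)^\sharp$ and \fref{prop:H2InclusionFunctions}, gives $f \in H^\infty(\C_+,B(\cK_\C))$. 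Hence $f \in \Inn(\C_+,B(\cK_\C))^\sharp$ and $A = M_f|_{\cH_\cK}$, which closes the equality.

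I expect the main obstacle to be the careful handling of the two complex structures carried by $L^2(\R,\cK_\C)^\sharp$ — the structure $I = M_{i\cdot\sgn}$ that turns it into the complex Hilbert space $\cH_\cK$ versus the ambient standard one — so that the extension $\tilde A$ is genuinely a complex-linear \emph{unitary} on $L^2(\R,\cK_\C)$ and \fref{prop:SCommutant} is legitimately applicable, and dually so that $M_h|_{\cH_\cK}$ really lies in $\U(\cH_\cK)$. The remaining verifications (stability of $\Inn$ under products, multiplicativity of $M$ and of conjugation) are routine.
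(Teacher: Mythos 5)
Your proposal is correct and takes essentially the same route as the paper: the paper cites \cite[Thm. 2.6]{LW11} for this statement but proves its generalization \fref{thm:LWExtended} by exactly your argument -- transfer via \(\psi\) to \((\cH_\cK,\sV_\cK,S)\), use \fref{prop:SCommutant} to identify operators commuting with \(S(\R)\) as multiplication operators, \fref{prop:H2InclusionFunctions} to get \(H^\infty\)-symbols from invariance of the Hardy space, and unitarity plus \(\sharp\)-invariance to land in \(\Inn(\C_+,B(\cK_\C))^\sharp\). Your explicit complex-linear extension of \(A\) from \(\cH_\cK\) to \(L^2(\R,\cK_\C)\) merely spells out the real-versus-complex bookkeeping that the paper leaves implicit (it is the same device as in the proof of \fref{thm:SchurDimReal}), so no gap remains.
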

The goal of this section is to provide a generalization of this theorem from non-degenerate standard pairs to regular pairs. So, we make the following definition:
\begin{definition}
Let \(\cH\) be a complex Hilbert space and \((\sV,U)\) be a regular pair. We set
\[\gls*{EVU} \coloneqq \{A \in \U(\cH): A\sV \subeq \sV, \,(\forall t \in \R)\, U_t A = A U_t\}.\]
\end{definition}
\begin{thm}\label{thm:LWExtended}
Let \(\cH\) be a complex Hilbert space and \((\sV,U)\) be a regular pair on \(\cH\). Further, let \(\cK\) be the multiplicity space of the real ROPG \((\cH,\sV,U)\) and let \(\psi: \cH \to \cH_\cK\) be an isomorphism between \((\cH,\sV,U)\) and \((\cH_\cK,\sV_\cK,S)\). Then, considering the complex structure \(I\) on \(\cH_\cK\) defined by
\[If = \psi(i\psi^{-1}f),\]
there exists a function \(h \in L^\infty(\R,\U(\cK_\C))^\sharp\) such that
\[I = M_h\]
and one has
\[\cE(\sV,U) = \psi^{-1} \circ \left[M(\Inn(\C_+,B(\cK_\C))^\sharp) \cap \{M_h\}'\right] \circ \psi.\]
Further, in the case \(\dim \cK < \infty\), let \(\left(W_t\right)_{t \in \R_+} \subeq \cE(\sV,U)\) be a strongly continuous one-parameter semigroup. Then, setting
\[F \coloneqq \psi \circ \partial W \circ \psi^{-1} \in \mathrm{Pick}(\C_+,B(\cK_\C))_\R\]
(cf. \fref{thm:OPGOfInnerFunctions}) and extending \(W\) to a unitary one-parameter group \(\tilde W: \R \to \U(\cH)\), we have that \((\sV,\tilde W)\) is a regular pair, if and only if one of the following equivalent conditions hold:
\begin{enumerate}[\rm (a)]
\item \(\Spec(F(z)) \subeq \C_+\) for all \(z \in \C_+\).
\item \(\ker(x\textbf{1}-M_{F_*}) \cap H^2(\C_+,\cH) = \{0\}\) for all \(x \in \R\).
\item \(\R \setminus D(F,x)\) has non-zero Lebesgue measure for all \(x \in \R\).
\item \(D(F,x)\) is a Lebesgue zero-set for all \(x \in \R\).
\item \(D(F,M)\) is a Lebesgue zero-set for all Lebesgue zero-sets \(M \subeq \R\).
\end{enumerate}
(cf. \fref{def:DFM} for the definition of \(D(F,x)\) and \(D(F,M)\)). In this case, the multiplicity space \(\cM\) of \((\cH,\sV,\tilde W)\) is given by
\[\cM \cong \begin{cases} \R^{\deg F} & \text{if } F \in \mathrm{Rat}(\cK_\C) \\ \ell^2(\N,\R) & \text{if } F \notin \mathrm{Rat}(\cK_\C).\end{cases}\]
(cf. \fref{def:RationalPick}).
\end{thm}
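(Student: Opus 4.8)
The plan is to first establish the Longo--Witten identification $\cE(\sV,U) = \psi^{-1} \circ M(\Inn(\C_+,B(\cK_\C))^\sharp) \circ \psi$ by transporting everything through $\psi$ to the standard model, and then to analyse strongly continuous one-parameter semigroups inside $\cE(\sV,U)$ by feeding the inner functions they produce into the Pick-function machinery of Chapter~3. For the first statement, since $\psi$ is an isomorphism of ROPGs carrying $\sV$ to $\sV_\cK = H^2(\C_+,\cK_\C)^\sharp$ and $U$ to $S$, conjugation by $\psi$ identifies $\cE(\sV,U)$ with the set of $A \in \U(\cH_\cK)$ commuting with all $S_t$ and satisfying $A\sV_\cK \subeq \sV_\cK$. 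This is exactly the situation handled in the proof of \fref{thm:SchurDimReal}: passing to the complexification $A_\C$ on $L^2(\R,\cK_\C) = L^2(\R,\cK_\C)^\sharp + iL^2(\R,\cK_\C)^\sharp$, \fref{prop:SCommutant} forces $A_\C = M_f$ with $f \in L^\infty(\R,B(\cK_\C))$, while commuting with $\sharp$ gives $f = f^\sharp$; unitarity makes $f$ unitary-valued a.e.; and the inclusion $M_f H^2(\C_+,\cK_\C)^\sharp \subeq H^2(\C_+,\cK_\C)^\sharp$ extends to $M_f H^2(\C_+,\cK_\C) \subeq H^2(\C_+,\cK_\C)$ (since $H^2(\C_+,\cK_\C) = H^2(\C_+,\cK_\C)^\sharp + iH^2(\C_+,\cK_\C)^\sharp$), whence $f \in H^\infty(\C_+,B(\cK_\C))$ by \fref{prop:H2InclusionFunctions}, so $f \in \Inn(\C_+,B(\cK_\C))^\sharp$. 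The reverse inclusion is immediate, because each such $f$ yields a unitary $M_f$ commuting with $S$ and mapping $\sV_\cK$ into itself.

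For the second statement, assume $\dim\cK < \infty$ and transport $(W_t)_{t \geq 0}$ through $\psi$ to obtain a strongly continuous one-parameter semigroup $(M_{f_t})_{t \geq 0}$ in $M(\Inn(\C_+,B(\cK_\C))^\sharp)$. By \fref{thm:OPGOfInnerFunctions} this semigroup is infinitesimally generated by an operator-valued Pick function, i.e. $f_t = e^{itF_*}$ with a unique $F \in \mathrm{Pick}(\C_+,B(\cK_\C))_\R$ satisfying $\psi \circ \partial W \circ \psi^{-1} = M_{F_*} = \partial U^F$, and the $\sharp$-invariance of every $f_t$ forces $F = F^\sharp$ by the computation underlying \fref{thm:RealToComplexPick}. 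Since $U^F_t = M_{e^{itF_*}}$ is defined for all $t \in \R$, the (unique) unitary one-parameter group extending $W$ is $\tilde W_t \coloneqq \psi^{-1} U^F_t \psi$, and $\psi$ is then an isomorphism between $(\cH,\sV,\tilde W)$ and $(L^2(\R,\cK_\C)^\sharp,H^2(\C_+,\cK_\C)^\sharp,U^F)$.

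Consequently $(\sV,\tilde W)$ is a regular pair (i.e. $(\cH,\sV,\tilde W)$ is a real ROPG) if and only if $(L^2(\R,\cK_\C)^\sharp,H^2(\C_+,\cK_\C)^\sharp,U^F)$ is a real ROPG, which by \fref{thm:RealToComplexPick} holds if and only if $(L^2(\R,\cK_\C),H^2(\C_+,\cK_\C),U^F)$ is a complex ROPG, i.e. (by the remark following the definition of a regular Pick function) if and only if $F$ is a regular Pick function. The equivalence of this with conditions (a)--(e) is precisely the content of \fref{thm:SpecNonDeg} together with \fref{thm:SpecNonDegZeroSet}, applied with $\cK_\C$ in the role of the finite-dimensional Hilbert space. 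Finally, when these conditions hold, \fref{thm:BigLWTheoremReal} (valid since $F = F^\sharp$ is a regular Pick function and $\dim\cK < \infty$) gives $\dim\cM = \deg F$ for the multiplicity space $\cM$ of $(L^2(\R,\cK_\C)^\sharp,H^2(\C_+,\cK_\C)^\sharp,U^F)$, hence $\cM \cong \R^{\deg F}$ when $F \in \mathrm{Rat}(\cK_\C)$ and $\cM \cong \ell^2(\N,\R)$ otherwise; since the multiplicity space is an isomorphism invariant (\fref{thm:kernelAdjointGeneralReal}) and $(\cH,\sV,\tilde W) \cong (L^2(\R,\cK_\C)^\sharp,H^2(\C_+,\cK_\C)^\sharp,U^F)$, this $\cM$ is also the multiplicity space of $(\cH,\sV,\tilde W)$.

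I expect the main obstacle to be bookkeeping rather than a new idea: one must carefully track the $\sharp$-involution through the three pictures $\cH$, the real model $L^2(\R,\cK_\C)^\sharp$, and its complexification $L^2(\R,\cK_\C)$, and verify that the generator of $(M_{f_t})$ genuinely lands in $\mathrm{Pick}(\C_+,B(\cK_\C))_\R$ with $F = F^\sharp$ — in particular that $F$ has self-adjoint boundary values, which is exactly where one uses that the $f_t$ are inner and that $\dim\cK < \infty$, via \fref{thm:OPGOfInnerFunctions}. Once $F$ is in hand, the remaining assertions are a matter of quoting the already-established results of Chapters~2 and~3.
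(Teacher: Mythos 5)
Your proposal is correct and follows essentially the same route as the paper: transport everything through $\psi$ to the standard model, identify the commutant of $S(\R)$ via \fref{prop:SCommutant}, use \fref{prop:H2InclusionFunctions} and unitarity to land in $M(\Inn(\C_+,B(\cK_\C))^\sharp)$, and then settle regularity and the multiplicity space by \fref{thm:SpecNonDeg}, \fref{thm:SpecNonDegZeroSet} and \fref{thm:BigLWTheoremReal}. The only difference is that you spell out the complexification and $\sharp$-bookkeeping (as in \fref{thm:SchurDimReal} and \fref{thm:RealToComplexPick}) which the paper's terse proof leaves implicit; this is a faithful elaboration, not a different argument.
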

\begin{proof}
By \fref{thm:StandardSubspaceROPG}, we can, without loss of generality, assume that \((\cH,\sV,U) = (\cH_\cK,\sV_\cK,S)\). Since we assumed \(U\) to be unitary, i.e. complex linear, we have \(S_tI=IS_t\), \(t \in \R\). This, by \fref{prop:SCommutant}, implies that \(I = M_h\) for some function \(h \in L^\infty(\R,B(\cK_\C))\). That \(h \in L^\infty(\R,\U(\cK_\C))^\sharp\) then follows by
\[M_h^* = I^* = I^{-1} = M_h^{-1},\]
using that \(I\) is a complex structure and the fact that
\[M_h \cH_\cK = I \cH_\cK = \cH_\cK.\]
To calculate the semigroup \(\cE(\sV,U)\), we notice that, by \fref{prop:SCommutant}, the commutation relation \(S_tA=AS_t\), \(t \in \R\), is equivalent to \(A \in M(L^\infty(\R,B(\cK_\C)))\). Then the condition \(A\sV_\cK \subeq \sV_\cK\), by \fref{prop:H2InclusionFunctions}, is equivalent to \(A \in M(H^\infty(\C_+,B(\cK_\C))^\sharp)\) and then finally \(A \in \U(\cH)\) is equivalent to \(A \in M(\Inn(\C_+,B(\cK_\C))^\sharp) \cap \{M_h\}'\).

The statement about the regularity of the pair \((\sV,\tilde W)\) follows by \fref{thm:SpecNonDeg} and \fref{thm:SpecNonDegZeroSet}. In this case, we get the formula for the multiplicity space \(\cM\) by \fref{thm:BigLWTheoremReal}.
\end{proof}
\begin{remark}
If \((\sV,U)\) is a non-degenerate standard pair, by \fref{ex:HK} and \fref{cor:NormalformStandardPair}, we have \(I = M_{i \cdot \sgn \cdot \textbf{1}}\) and therefore
\[M(\Inn(\C_+,B(\cK_\C))^\sharp) \cap \{M_{i \cdot \sgn \cdot \textbf{1}}\}' = M(\Inn(\C_+,B(\cK_\C))^\sharp).\]
So, in this case, \fref{thm:LWExtended} specializes to \fref{thm:LW}.
\end{remark}

\newpage
\appendix
\section{Holomorphic functions on the upper half-plane}

\subsection{Hardy spaces}\label{sec:HardySpaces}
In this section, we will introduce the concept of Hardy spaces and collect some basic facts about them. We will restrict ourselves mostly to Hardy spaces on the upper half-plane \(\C_+\), but everything can analogously also be done for the lower half-plane \(\C_-\). We start with the definition of a Hardy space:
\begin{definition}
For a function \(f \in \cO(\C_+)\), we set
\[\left\lVert f\right\rVert_{H^p} \coloneqq \sup_{y > 0} \left(\int_\R \left|f(x+iy)\right|^p \,dx\right)^{\frac 1p} \quad \text{for } 1 \leq p < \infty\]
and
\[\left\lVert f\right\rVert_{H^\infty} \coloneqq \sup_{z \in \C_+} \left|f(z)\right|.\]
Now, for \(1 \leq p \leq \infty\), we define the \textit{Hardy space} \(\gls*{Hp}\) by
\[H^p(\C_+) \coloneqq \{f \in \cO(\C_+) : \left\lVert f\right\rVert_{H^p} < \infty\}.\]
\end{definition}
\begin{theorem}{\rm (cf. \cite[Cor. 5.24, Sec. 5.29]{RR94})}
For \(1 \leq p \leq \infty\), the Hardy space \(H^p(\C_+)\) is a Banach space. Further, \(H^2(\C_+)\) becomes a Hilbert space when endowed with the scalar product
\[\braket*{f}{g}_{H^2} \coloneqq \lim_{y \downarrow 0} \int_\R \overline{f(x+iy)} g(x+iy) dx, \quad f,g \in H^2(\C_+,\cH).\]
\end{theorem}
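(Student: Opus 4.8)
The plan is to obtain the normed-vector-space structure of $H^p(\C_+)$ directly from the definition, deduce completeness from the subharmonicity of $|f|^p$, and then read off the Hilbert-space description of $H^2(\C_+)$ from the boundary-value (Paley--Wiener) theory. For the first point: homogeneity of $\|\cdot\|_{H^p}$ is clear; the triangle inequality for $1\le p<\infty$ follows by applying Minkowski's inequality on each horizontal slice $x\mapsto f(x+iy)$ and then taking the supremum over $y>0$, while for $p=\infty$ it is the supremum norm on $\C_+$; and if $\|f\|_{H^p}=0$ with $p<\infty$ then $\int_\R|f(x+iy)|^p\,dx=0$ for every $y>0$, so $f(\cdot+iy)$ vanishes a.e.\ for each fixed $y$ and continuity of $f$ forces $f\equiv0$ (the case $p=\infty$ being immediate).

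For completeness the key ingredient is a pointwise bound. For $1\le p<\infty$ the function $|f|^p$ is subharmonic, so for $z_0\in\C_+$ with $\Im(z_0)=\delta$ the sub-mean-value inequality over the disc $D(z_0,\delta/2)$ gives $|f(z_0)|^p\le\frac{4}{\pi\delta^2}\int_{D(z_0,\delta/2)}|f|^p\,dA\le\frac{4}{\pi\delta}\,\|f\|_{H^p}^p$, the last step integrating the slice estimate over the strip $\Im z\in(\delta/2,3\delta/2)$; for $p=\infty$ the maximum principle gives $|f(z_0)|\le\|f\|_{H^\infty}$ directly. Hence any $\|\cdot\|_{H^p}$-Cauchy sequence is uniformly Cauchy on compact subsets of $\C_+$ and converges locally uniformly to some $f\in\cO(\C_+)$; Fatou's lemma applied slicewise then yields $\|f\|_{H^p}\le\liminf_n\|f_n\|_{H^p}<\infty$ and $\|f_n-f\|_{H^p}\to0$, so $H^p(\C_+)$ is complete.

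For $H^2(\C_+)$ I would invoke the Paley--Wiener theorem in the form of \cite[Cor. 5.24, Sec. 5.29]{RR94}: via nontangential boundary values and the Fourier transform, $H^2(\C_+)$ is isometrically identified with $L^2(\R_+)$, so each $f\in H^2(\C_+)$ has a boundary function $f_*\in L^2(\R,\C)$ with $f(\cdot+iy)\to f_*$ in $L^2$ as $y\downarrow0$ and $\|f\|_{H^2}=\|f_*\|_{L^2}$. By continuity of the $L^2$ inner product the limit $\lim_{y\downarrow0}\int_\R\overline{f(x+iy)}g(x+iy)\,dx$ exists and equals $\braket*{f_*}{g_*}_{L^2(\R)}$; this form is sesquilinear, conjugate-symmetric and positive definite (if $f_*=0$ then $f=0$), and it induces the norm $\|\cdot\|_{H^2}$, which we have already shown to be complete, so $H^2(\C_+)$ is a Hilbert space. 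The $\cH$-valued version $H^2(\C_+,\cH)$ appearing in the displayed formula is handled identically, with $|\cdot|$ replaced by the norm of $\cH$ and $L^2(\R_+)$ by $L^2(\R_+,\cH)$.

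The one genuinely nontrivial step is the existence of $L^2$ boundary traces (and hence of the limit defining the inner product): a self-contained proof would require reproving the Paley--Wiener/Fatou theory for $H^2$, which is why this point is cited rather than carried out; everything else — the norm axioms and the completeness argument above — is elementary.
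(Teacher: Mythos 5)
Your proposal is correct. Note that the paper does not prove this statement at all: it is quoted from \cite[Cor.\ 5.24, Sec.\ 5.29]{RR94}, so there is no internal proof to compare against. Your argument is the standard one and it holds up: the norm axioms via slicewise Minkowski, the pointwise bound $|f(z_0)|^p\le \frac{4}{\pi\delta}\|f\|_{H^p}^p$ from subharmonicity of $|f|^p$ (your constants are right: mean value over $D(z_0,\delta/2)$ gives $4/(\pi\delta^2)$, and the disc sits in a strip of height $\delta$), local uniform convergence of Cauchy sequences, and Fatou on slices. The only glossed point is the final claim $\|f_n-f\|_{H^p}\to 0$: this needs one more application of Fatou, applied to $\int_\R|f_n(x+iy)-f_m(x+iy)|^p\,dx$ with $m\to\infty$ at fixed $y$ and $n\ge N$, before taking the supremum over $y$ — routine, but worth writing. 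For the Hilbert-space half you lean on the same boundary-value theory the thesis itself records right after this statement (existence of $f_*\in L^2$ with $f(\cdot+iy)\to f_*$ in $L^2$ and $\|f\|_{H^2}=\|f_*\|_2$, again cited to \cite{RR94}); granting that, the convergence of $\int_\R\overline{f(x+iy)}g(x+iy)\,dx$ to $\braket*{f_*}{g_*}$ and the identification of the induced norm with $\|\cdot\|_{H^2}$ follow exactly as you say, so the sesquilinear form is well defined, positive definite, and complete. In short: the Banach-space part of your write-up is genuinely self-contained, and the Hilbert-space part correctly reduces to the same external ingredient the paper cites, which is a reasonable division of labour.
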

\begin{theorem}\label{thm:HpBoundaryValues}{\rm (cf. \cite[Cor. 5.17, Thm. 5.19, Thm. 5.23, Sec. 5.29]{RR94})}
For \(1 \leq p \leq \infty\), for every \(f \in H^p(\C_+)\), the non-tangential limit
\[f_*(x) \coloneqq \lim_{\epsilon \downarrow 0} f(x+i\epsilon)\]
exists for almost every \(x \in \R\) and defines a function \(f_* \in L^p(\R,\C)\) with \(\left\lVert f_*\right\rVert_p = \left\lVert f\right\rVert_{H^p}\). In particular, for the scalar product on \(H^2(\C_+)\), one has
\[\braket*{f}{g}_{H^2} = \braket*{f_*}{g_*} \quad \forall f,g \in H^2(\C_+).\]
\end{theorem}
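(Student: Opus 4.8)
The plan is to reduce the statement to the corresponding classical assertion on the unit disc $\D$ by means of the Cayley transform $h(z) = \frac{z-i}{z+i}$, which maps $\C_+$ biholomorphically onto $\D$ and carries $\R$ (minus one point) onto $\T$. First I would fix, for each $1 \le p \le \infty$, the conformal correspondence between $H^p(\D)$ and $H^p(\C_+)$. For $p = \infty$ this is simply $F \mapsto F \circ h$, an isometry of $H^\infty(\D)$ onto $H^\infty(\C_+)$. For $1 \le p < \infty$ the correct intertwiner carries the Jacobian weight: setting $(\Gamma_p F)(z) \coloneqq c_p (z+i)^{-2/p} (F \circ h)(z)$ for a suitable constant $c_p$ (for $p = 2$ this is precisely the isometry $\Gamma$ already used in the excerpt, satisfying $\Gamma(H^2(\D)) = H^2(\C_+)$), a change of variables with $|h'(x)|\,dx \leftrightarrow d\theta$ shows $\Gamma_p$ to be an isometric isomorphism of $H^p(\D)$ onto $H^p(\C_+)$. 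The essential geometric point to verify is that $h$ sends non-tangential approach regions at a boundary point $x \in \R$ to non-tangential (Stolz) regions at $h(x) \in \T$, so that the two notions of non-tangential boundary limit correspond under $h$.

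Granting this correspondence, the a.e. existence of $f_*$ and the membership $f_* \in L^p(\R,\C)$ follow from the classical Fatou theorem on the disc: every $F \in H^p(\D)$ has non-tangential boundary limits $F_*$ almost everywhere on $\T$, with $F_* \in L^p(\T)$ and $\|F_*\|_{L^p(\T)} = \|F\|_{H^p(\D)}$. Transporting $F = \Gamma_p^{-1} f$ and taking boundary limits along cones, the preservation of non-tangential regions gives that $f_*(x) = \lim_{\epsilon \downarrow 0} f(x+i\epsilon)$ exists for almost every $x$, and the built-in weight in $\Gamma_p$ turns $\|F_*\|_{L^p(\T)} = \|F\|_{H^p(\D)}$ into $\|f_*\|_{L^p(\R)} = \|f\|_{H^p(\C_+)}$.

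As an alternative that avoids citing the disc case wholesale, I would argue intrinsically: since $\log|f|$ is subharmonic, $|f|^p$ is subharmonic for every $p>0$; the translates $f_y \coloneqq f(\,\cdot\, + iy)$ are bounded in $L^p(\R)$ by $\|f\|_{H^p}$ directly from the definition, while subharmonicity underpins the representation of $f$ as the Poisson integral of its boundary datum. For $1 < p \le \infty$ one extracts a weak-$*$ limit $f_*$ as $y \downarrow 0$, identifies $f$ with the Poisson integral of $f_*$, and invokes the a.e. non-tangential convergence of Poisson integrals to obtain $f(x+i\epsilon) \to f_*(x)$ a.e.; the norm equality then follows by combining weak-$*$ lower semicontinuity, which gives $\|f_*\|_p \le \|f\|_{H^p}$, with the contractivity of Poisson integration on $L^p$, which gives $\|f_y\|_p \le \|f_*\|_p$ and hence $\|f\|_{H^p} \le \|f_*\|_p$. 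The genuinely delicate case is $p = 1$, where $L^1$ is not a dual space and the boundary datum could a priori be only a finite measure; the extra input there is the theorem of F. and M. Riesz, guaranteeing that the boundary measure of an $H^1$ function is absolutely continuous, so that $f_*$ is again an honest $L^1$ function. This is the step I expect to be the main obstacle in a self-contained treatment.

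Finally, the inner-product assertion for $p = 2$ follows from the norm equality with no further boundary analysis. Having shown $\|f_y\|_2 \to \|f_*\|_2$ together with the weak convergence $f_y \rightharpoonup f_*$ in $L^2(\R)$, weak convergence plus convergence of norms yields strong convergence $f_y \to f_*$ in $L^2(\R)$, and likewise $g_y \to g_*$. Continuity of the scalar product then gives
\[\braket*{f}{g}_{H^2} = \lim_{y \downarrow 0} \int_\R \overline{f(x+iy)}\,g(x+iy)\,dx = \int_\R \overline{f_*(x)}\,g_*(x)\,dx = \braket*{f_*}{g_*},\]
which may equivalently be obtained by polarizing the identity $\|f_*\|_2 = \|f\|_{H^2}$.
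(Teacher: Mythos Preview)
The paper does not prove this theorem at all: it is stated with a reference to \cite[Cor.~5.17, Thm.~5.19, Thm.~5.23, Sec.~5.29]{RR94} and treated as a classical result from the literature, with no argument supplied. So there is no ``paper's own proof'' to compare against.

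That said, your proposal is a correct and standard outline of how one would actually prove the result. Both routes you sketch are legitimate: the Cayley-transform reduction to Fatou's theorem on the disc is essentially how \cite{RR94} proceeds (hence the reference to Section~5.29 there, which handles the transfer to the half-plane), and your intrinsic argument via Poisson integrals, weak-$*$ compactness, and the F.~and~M.~Riesz theorem for $p=1$ is the other textbook approach. Your identification of the $p=1$ case as the delicate one, and of F.~and~M.~Riesz as the missing ingredient, is exactly right. The final paragraph deriving the inner-product identity from strong $L^2$ convergence (or by polarization) is also correct.
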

This theorem shows that we have an isometric embedding
\begin{equation*}
\iota_p: H^p\left(\C_+\right) \to L^p\left(\R,\C\right), \quad f \mapsto f_*.
\end{equation*}
From now on, we will identify \(H^p\left(\C_+\right)\) with \(\iota_p\left(H^p\left(\C_+\right)\right) \subeq L^p\left(\R,\C\right)\) and we will write \(f(x)\) instead of \(f_*(x)\) for \(f \in H^p\left(\C_+\right)\) and \(x \in \R\).
\begin{theorem}\label{thm:HardyAlmostEverywhereNonZero}{\rm (cf. \cite[Cor. 5.17]{RR94}, \cite[Sec. 5.14(v)]{RR94})}
Let \(1 \leq p \leq \infty\). Then, for every \({f \in H^p(\C_+,\cH) \setminus \{0\}}\), one has 
\[\int_{\R}\frac{|\log |f(x)||}{1+x^2} \,dx < \infty,\]
so especially
\[f(x) \neq 0\]
for almost every \(x \in \R\).
\end{theorem}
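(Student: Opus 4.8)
The plan is to prove the stronger integrability statement $\int_\R \frac{|\log\|f(x)\||}{1+x^2}\,dx < \infty$ and then read off the pointwise non-vanishing as an immediate consequence, since $\log\|f(x)\| = -\infty$ exactly where $f(x) = 0$. Throughout write $\|\cdot\|$ for $\|\cdot\|_\cH$. I would use two background facts: first, the analogue of \fref{thm:HpBoundaryValues} for $\cH$-valued functions, so that the non-tangential boundary values $f_*$ exist a.e.\ and lie in $L^p(\R,\cH)$ with $\|f_*\|_p = \|f\|_{H^p}$ (this can be obtained componentwise, or directly); and second, that for a holomorphic $\cH$-valued function the functions $z \mapsto \|f(z)\|^p$ (for $1 \le p < \infty$) and $z \mapsto \log\|f(z)\|$ are subharmonic, being suprema over $\|v\|\le 1$ of the subharmonic functions $|\langle v,f(z)\rangle|^p$, resp.\ $\log|\langle v,f(z)\rangle|$, and continuous in $z$.

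The upper bound $\int_\R \frac{\log^+\|f(x)\|}{1+x^2}\,dx < \infty$ is elementary: for $p < \infty$ one has $p\log^+ t = \log^+ t^p \le t^p$, so $\log^+\|f(x)\| \le \tfrac1p\|f_*(x)\|^p$ a.e., and since $\|f_*\|^p \in L^1(\R)$ while $\frac{1}{1+x^2}\le 1$ this gives $\int_\R \frac{\log^+\|f(x)\|}{1+x^2}\,dx \le \tfrac1p\|f\|_{H^p}^p$; for $p = \infty$ one has $\log^+\|f(x)\| \le \log^+\|f\|_{H^\infty}$ and $\int_\R \frac{dx}{1+x^2} = \pi$. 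So it only remains to control $\int_\R \frac{\log^-\|f(x)\|}{1+x^2}\,dx$, where $\log^- t = \max(-\log t,0) \ge 0$.

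For the lower bound I would pass to the unit disc via the Cayley transform $h(z) = \frac{z-i}{z+i}$, producing $g \in H^p(\D,\cH)$ with $g \ne 0$ and with $\|g(e^{i\theta})\|$ related to $\|f(x)\|$ along $e^{i\theta} = h(x)$, where $\frac{d\theta}{2\pi}$ corresponds to $\frac1\pi\frac{dx}{1+x^2}$ and the normalizing factor introduced by the transform contributes only a term of size $O(\log(1+x^2))$, which already lies in $L^1\!\big(\frac{dx}{1+x^2}\big)$. Multiplying $g$ by a suitable power of $z$ (this leaves $|g|$ unchanged on $\T$ and keeps $g$ in $H^p$, since $\|g_1\|^p$ is subharmonic and its circular means increase up to $\|g\|_{H^p}^p$) I may assume $g(0) \ne 0$. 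Then the submean value inequality for $\log\|g\|$ gives, for all $0 < r < 1$,
\[
-\infty < \log\|g(0)\| \;\le\; \frac{1}{2\pi}\int_0^{2\pi}\log\|g(re^{i\theta})\|\,d\theta \;=\; \frac{1}{2\pi}\int_0^{2\pi}\log^+\|g(re^{i\theta})\|\,d\theta \;-\; \frac{1}{2\pi}\int_0^{2\pi}\log^-\|g(re^{i\theta})\|\,d\theta,
\]
and the $\log^+$-integral is bounded uniformly in $r$ exactly as in the previous paragraph, so $\int_0^{2\pi}\log^-\|g(re^{i\theta})\|\,d\theta$ is bounded uniformly in $r$. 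Letting $r\uparrow 1$ along a sequence on which $g(re^{i\theta}) \to g(e^{i\theta})$ for a.e.\ $\theta$, Fatou's lemma yields $\int_0^{2\pi}\log^-\|g(e^{i\theta})\|\,d\theta < \infty$.

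Combining the two estimates gives $\log\|g\| \in L^1(\T)$, and undoing the change of variables produces $\int_\R \frac{|\log\|f(x)\||}{1+x^2}\,dx < \infty$; in particular $\log\|f(x)\| > -\infty$, i.e.\ $f(x) \ne 0$, for a.e.\ $x \in \R$. I expect the main obstacle to be the two ``soft analysis'' points rather than the classical Jensen/Fatou core: verifying subharmonicity of $\|f(z)\|^p$ and $\log\|f(z)\|$ in the vector-valued setting (and the attendant increasing-means argument used to normalize $g(0)\ne 0$), and keeping careful track of the boundary weight under the Cayley transform so that the $O(\log(1+x^2))$ correction is correctly absorbed.
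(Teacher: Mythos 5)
The paper itself offers no proof of this statement: it is quoted from Rosenblum--Rosnyak (\cite[Cor.\ 5.17, Sec.\ 5.14(v)]{RR94}), so there is no internal argument to compare with. Your proposal is essentially the classical Nevanlinna/Jensen-inequality proof of that cited result, carried out directly in the vector-valued setting, and its skeleton (the \(\log^+\)-bound from \(\log^+ t\le t^p/p\), subharmonicity of \(\log\lVert f\rVert\) as a continuous supremum of \(\log|\langle v,f\rangle|\), Cayley transform with the \(O(\log(1+x^2))\) correction absorbed into \(L^1\big(\tfrac{dx}{1+x^2}\big)\), normalization \(g(0)\neq 0\) via increasing circular means, submean inequality plus Fatou) is sound.

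The one genuine soft spot is the Fatou step. You justify the boundary-value input by saying it ``can be obtained componentwise'', but componentwise (weak) non-tangential limits --- which is also all that the paper's own discussion after \fref{thm:HpBoundaryValues} provides in the vector-valued case --- only give \(\lVert g(e^{i\theta})\rVert\le\liminf_{r\uparrow 1}\lVert g(re^{i\theta})\rVert\). Since \(\log^-\) is decreasing, this inequality points the wrong way: a drop of the norm in the weak limit could make \(\log^-\lVert g(e^{i\theta})\rVert\) strictly larger than \(\liminf_r\log^-\lVert g(re^{i\theta})\rVert\), and Fatou then gives no control of the boundary integral. You really need a.e.\ convergence of the norms, i.e.\ strong non-tangential boundary limits for \(\cH\)-valued \(H^p\); this is true (separable Hilbert spaces have the analytic Radon--Nikodym property), but it is a nontrivial input that must be invoked explicitly, not derived componentwise. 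Alternatively, the whole issue disappears if you reduce to the scalar case, which is presumably how the citation is meant to be used: choose a unit vector \(v\) with \(\langle v,f\rangle\not\equiv 0\) (possible since \(f\neq 0\)), apply the scalar theorem to \(\langle v,f\rangle\in H^p(\C_+)\setminus\{0\}\), and combine \(\log^-\lVert f(x)\rVert\le\log^-|\langle v,f(x)\rangle|\) with your easy \(\log^+\) estimate; this only uses the weak boundary values already available and yields both the integrability and \(f(x)\neq 0\) a.e.\ at once.
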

After considering Hardy spaces in general, we will now focus on the Hardy space \(H^2(\C_+)\). \(H^2(\C_+)\) as a Hilbert space is an example of a so-called reproducing kernel Hilbert space, defined as follows:
\begin{definition}
Let \(X\) be an arbitrary set and \(H\) be a Hilbert space that is a subset of the complex-valued functions on \(X\). We say that \(H\) is a \textit{reproducing kernel Hilbert space} if the linear functionals
\begin{equation*}
\ev_x: H \to \C, \quad f \mapsto f\left(x\right)
\end{equation*}
are continuous for every \(x \in X\). Then, by the Riesz Representation Theorem, for every \(x \in X\), there exists a function \(Q_x \in H\) such that
\begin{equation*}
f\left(x\right) = \ev_x\left(f\right) = \braket*{Q_x}{f}.
\end{equation*}
\end{definition}
\begin{lemma}\label{lem: evDense}
Let \(H\) be a reproducing kernel Hilbert space. Then
\begin{equation*}
\overline{\mathrm{span}\left\lbrace Q_x: x \in X\right\rbrace} = H.
\end{equation*}
\end{lemma}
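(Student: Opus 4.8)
The plan is to show that the orthogonal complement of $\spann\{Q_x : x \in X\}$ in $H$ is trivial; since that span is, by definition, a subspace whose closure $\cW$ is a closed subspace of the Hilbert space $H$, the double orthogonal complement identity $\cW = (\cW^\perp)^\perp$ will then force $\cW = H$.

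First I would fix $f \in H$ with $f \perp Q_x$ for every $x \in X$ (equivalently $f \in \cW^\perp$, since orthogonality to each $Q_x$ is the same as orthogonality to their span and hence to its closure). Using the defining property of the reproducing kernel together with the convention that $\braket*{\cdot}{\cdot}$ is linear in the second argument, I would compute, for each $x \in X$,
\[
f(x) = \ev_x(f) = \braket*{Q_x}{f} = 0.
\]
Here it is essential that elements of $H$ are genuine complex-valued functions on $X$ (this is part of the hypothesis that $H$ is a reproducing kernel Hilbert space), so that vanishing of $f(x)$ at every point $x$ means $f$ is the zero element of $H$, not merely zero in some almost-everywhere sense.

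Consequently $\cW^\perp = \{0\}$, and taking orthogonal complements gives $\cW = \cW^{\perp\perp} = \{0\}^\perp = H$, which is the claim.

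Honestly there is no real obstacle here: the statement is essentially a restatement of the Riesz representation theorem combined with the trivial observation that a function vanishing everywhere is the zero function. The only points requiring a moment of care are keeping track of which slot of the inner product is linear and noting explicitly that pointwise vanishing in $H$ yields the zero vector — but neither is a genuine difficulty.
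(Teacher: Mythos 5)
Your proof is correct and follows essentially the same route as the paper: take $f$ orthogonal to every $Q_x$, use the reproducing property to get $f(x)=\braket*{Q_x}{f}=0$ for all $x$, conclude $f=0$, and invoke the double orthogonal complement. The only difference is that you spell out the $\cW=\cW^{\perp\perp}$ step explicitly, which the paper leaves implicit.
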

\begin{proof}
Let \(f \in \left\lbrace Q_x: x \in X\right\rbrace^\perp\). Then for every \(x \in X\) we have
\begin{equation*}
0 = \braket*{Q_x}{f} = f\left(x\right)
\end{equation*}
and therefore \(f=0\).
\end{proof}
\begin{prop}\label{prop:H2RKHS}{\rm (cf. \cite[Ex. C.2]{ANS22})}
The Hilbert space \(H^2\left(\C_+\right)\) is a reproducing kernel Hilbert space, and its reproducing kernel functions are given by the Szegö kernel
\begin{equation*}
\gls*{Qw}: \C_+ \to \C, \quad z \mapsto \frac{1}{2\pi} \cdot \frac{i}{z-\overline{w}}, \qquad w \in \C_+.
\end{equation*}
Further, for \(w \in \C_+\), we have
\begin{equation*}
\left\lVert Q_w\right\rVert_2 = \frac 1{2\sqrt{\pi \mathrm{Im}(w)}}.
\end{equation*}
\end{prop}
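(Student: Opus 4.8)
The plan is to pass to the Fourier-transformed (Paley--Wiener) picture, where both the reproducing property and the norm come out as one-line integrals. By \cite[Thm. 5.28]{RR94} (the identification recorded before \fref{thm:LaxPhillipsComplex}) the Fourier transform $\cF$ restricts to an isometric isomorphism of $L^2(\R_+,\C)$ onto $H^2(\C_+)$, and by \fref{thm:HpBoundaryValues} the $H^2$-scalar product agrees with the $L^2(\R)$-scalar product of boundary values. Concretely, writing $\hat f \coloneqq \cF(f_*) \in L^2(\R_+,\C)$ for $f \in H^2(\C_+)$, the holomorphic representative is
\[
f(z) = \frac{1}{\sqrt{2\pi}} \int_0^\infty e^{ipz} \hat f(p)\,dp, \qquad z \in \C_+,
\]
which converges and is holomorphic because $\Im(z) > 0$ forces the decay $|e^{ipz}| = e^{-p\Im(z)}$, and $\left\lVert f\right\rVert_{H^2} = \left\lVert \hat f\right\rVert_2$ by Plancherel.

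First I would turn point evaluation into an inner product. For fixed $w \in \C_+$ set $k_w(p) \coloneqq \tfrac{1}{\sqrt{2\pi}} e^{-ip\overline w}$ for $p \geq 0$. Since $|k_w(p)| = \tfrac{1}{\sqrt{2\pi}} e^{-p\Im(w)}$ and $\Im(w) > 0$, we have $k_w \in L^2(\R_+,\C)$. Using that the scalar product is linear in the second argument and that $\overline{e^{-ip\overline w}} = e^{ipw}$ for real $p$, the formula above reads
\[
\ev_w(f) = f(w) = \int_0^\infty \overline{k_w(p)}\,\hat f(p)\,dp = \braket*{k_w}{\hat f}.
\]
Cauchy--Schwarz then gives $|f(w)| \leq \left\lVert k_w\right\rVert_2 \left\lVert f\right\rVert_{H^2}$, so each $\ev_w$ is continuous and $H^2(\C_+)$ is a reproducing kernel Hilbert space.

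Next I would identify the kernel function. Because $\cF$ is unitary and intertwines the two scalar products, $\braket*{k_w}{\hat f} = \braket*{\cF^{-1}k_w}{f}$, so the Riesz representative of $\ev_w$ is $Q_w = \cF^{-1}(k_w) \in H^2(\C_+)$. Computing this inverse transform,
\[
Q_w(z) = \frac{1}{2\pi} \int_0^\infty e^{ip(z-\overline w)}\,dp = \frac{1}{2\pi}\cdot\frac{i}{z-\overline w},
\]
where convergence and the evaluation use $\Im(z-\overline w) = \Im(z) + \Im(w) > 0$ together with $-1/i = i$; this is exactly the claimed Szeg\"o kernel.

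Finally, for the norm I would invoke the reproducing property directly, which avoids any further integration: $\left\lVert Q_w\right\rVert_2^2 = \braket*{Q_w}{Q_w} = Q_w(w)$. Since $w - \overline w = 2i\,\Im(w)$, this equals $\tfrac{1}{2\pi}\cdot\tfrac{i}{2i\,\Im(w)} = \tfrac{1}{4\pi\,\Im(w)}$, hence $\left\lVert Q_w\right\rVert_2 = \tfrac{1}{2\sqrt{\pi\,\Im(w)}}$; as a cross-check the same value drops out of $\left\lVert Q_w\right\rVert_2 = \left\lVert k_w\right\rVert_2$ and the elementary integral $\int_0^\infty \tfrac{1}{2\pi} e^{-2p\,\Im(w)}\,dp$. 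There is no deep obstacle in this argument; the only genuine care is bookkeeping of the Fourier normalization, the fact that the scalar product is linear in its second slot, and the sign in $z-\overline w$ — all of which are already packaged in the cited Paley--Wiener identification, so the proof reduces to the two displayed integrals.
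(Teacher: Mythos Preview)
Your argument is correct but takes a genuinely different route from the paper. The paper proceeds directly: it first verifies that $Q_w \in H^2(\C_+)$ by computing $\sup_{y>0}\int_\R |Q_w(x+iy)|^2\,dx$ (a Poisson-type integral that evaluates to $\tfrac{1}{4\pi\Im(w)}$), and then establishes the reproducing identity $\braket*{Q_w}{f} = f(w)$ by invoking Cauchy's Integral Formula on the boundary values. No Fourier transform enters.

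Your approach instead passes through the Paley--Wiener isomorphism $\cF: H^2(\C_+) \to L^2(\R_+)$, identifies the kernel as $\cF^{-1}$ of an explicit exponential, and then reads off the norm from the reproducing property $\left\lVert Q_w\right\rVert_2^2 = Q_w(w)$. This is arguably more conceptual --- it explains \emph{why} the Szeg\H{o} kernel has this form (it is the Fourier dual of the exponential that implements point evaluation) --- and the norm computation is a one-liner rather than a real integral. The trade-off is that you are importing the Paley--Wiener identification from \cite{RR94}, whereas the paper's proof is entirely self-contained within elementary complex analysis. Both are clean; the paper's version has the virtue of keeping the appendix independent of the Fourier machinery set up in Section~2.
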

\begin{proof}
Obviously, \(Q_w\) is holomorphic and we have
\begin{align*}
\left\lVert Q_w\right\rVert_2^2 &= \sup_{y>0} \int_\R \left|Q_w(x+iy)\right|^2 dx = \sup_{y>0} \frac 1{(2\pi)^2}\int_\R \frac 1{(x - \mathrm{Re}(w))^2+(y+\mathrm{Im}(w))^2} dx
\\&= \sup_{y>0} \frac 1{(2\pi)^2} \frac{\pi}{y+\mathrm{Im}(w)} = \frac 1{(2\pi)^2} \frac{\pi}{\mathrm{Im}(w)} = \frac{1}{4 \pi \mathrm{Im}(w)} < \infty,
\end{align*}
so \(Q_w \in H^2\left(\C_+\right)\) with
\[\left\lVert Q_w\right\rVert_2 = \frac 1{2\sqrt{\pi \mathrm{Im}(w)}}.\]
Further, by Cauchy's Integral Formula, we have
\begin{equation*}
\braket*{Q_w}{f} = \frac{1}{2 \pi i} \int_{\R} \frac{1}{p-w}f\left(p\right) dp = f\left(w\right),
\end{equation*}
which shows, that \(Q_w\) is the reproducing kernel function.
\end{proof}
\begin{cor}\label{prop:uniform}
Let \(f \in H^2\left(\C_+\right)\) and \(\left(f_n\right)_{n \in \N} \subseteq H^2\left(\C_+\right)\) such that \(\left\lVert f_n-f\right\rVert_2 \xrightarrow{n \to \infty} 0\). Then, for every \(\beta>0\), the sequence \(\left(f_n\right)_{n \in \N}\) converges uniformly to \(f\) on the set
\[I_\beta \coloneqq \left\lbrace z \in \C: \mathrm{Im}\left(z\right) \geq \beta\right\rbrace.\]
\end{cor}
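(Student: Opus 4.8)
The plan is to use that $H^2(\C_+)$ is a reproducing kernel Hilbert space, so that pointwise evaluation at a point $z \in \C_+$ is a bounded functional whose norm depends only on $\Im(z)$; on the half-plane $I_\beta$ this norm is uniformly bounded, which immediately upgrades $L^2$-convergence to uniform convergence there. First I would fix $\beta > 0$ and an arbitrary $z \in I_\beta$, noting $\Im(z) \ge \beta > 0$ so that indeed $z \in \C_+$. Using the identification of elements of $H^2(\C_+)$ with their boundary-value functions in $L^2(\R,\C)$ (under which the $H^2$- and $L^2$-norms agree, by \fref{thm:HpBoundaryValues}), I would write the difference of holomorphic values as an inner product against the Szegő kernel from \fref{prop:H2RKHS}:
\[
f_n(z) - f(z) = (f_n - f)(z) = \braket*{Q_z}{f_n - f}.
\]

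Then the Cauchy--Schwarz inequality together with the explicit norm $\lVert Q_z \rVert_2 = \tfrac{1}{2\sqrt{\pi \Im(z)}}$ from \fref{prop:H2RKHS} gives
\[
\lvert f_n(z) - f(z) \rvert \le \lVert Q_z \rVert_2 \, \lVert f_n - f \rVert_2 = \frac{1}{2\sqrt{\pi\, \Im(z)}}\, \lVert f_n - f \rVert_2 \le \frac{1}{2\sqrt{\pi\beta}}\, \lVert f_n - f \rVert_2,
\]
where the last step uses $\Im(z) \ge \beta$. Since the final bound no longer depends on $z \in I_\beta$, taking the supremum over $z \in I_\beta$ and letting $n \to \infty$ yields $\sup_{z \in I_\beta} \lvert f_n(z) - f(z) \rvert \to 0$, which is the asserted uniform convergence.

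I do not expect a genuine obstacle here: the argument is a one-line estimate once the reproducing kernel machinery of \fref{prop:H2RKHS} is in place. The only point that deserves a little care is bookkeeping — the hypothesis $\lVert f_n - f \rVert_2 \to 0$ refers to the $L^2$-norm of boundary values, while the reproducing kernel estimate is phrased in terms of the $H^2$-inner product, so I would invoke \fref{thm:HpBoundaryValues} to make this identification precise before applying Cauchy--Schwarz.
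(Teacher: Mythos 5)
Your proof is correct and is essentially the same argument as the paper's: both write \(f_n(z)-f(z)=\braket*{Q_z}{f_n-f}\), apply Cauchy--Schwarz with the explicit norm \(\lVert Q_z\rVert_2 = \tfrac{1}{2\sqrt{\pi\,\Im(z)}}\) from \fref{prop:H2RKHS}, and bound this uniformly by \(\tfrac{1}{2\sqrt{\pi\beta}}\) on \(I_\beta\). No gaps; the remark about identifying the \(H^2\)- and \(L^2\)-norms via \fref{thm:HpBoundaryValues} is fine and matches the paper's standing identification.
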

\begin{proof}
By \fref{prop:H2RKHS}, we have
\begin{align*}
\sup_{z \in I_\beta}\left|f_n\left(z\right)-f\left(z\right)\right| &= \sup_{z \in I_\beta}\left|\braket*{Q_z}{f_n-f}\right| \leq \sup_{z \in I_\beta}\left\lVert Q_z\right\rVert_2 \cdot \left\lVert f_n-f\right\rVert_2
\\&= \sup_{z \in I_\beta}\frac{1}{2\sqrt{\pi \mathrm{Im}(z)}} \cdot \left\lVert f_n-f\right\rVert_2 = \frac{1}{2\sqrt{\pi \beta}} \cdot \left\lVert f_n-f\right\rVert_2 \xrightarrow{n \to \infty} 0
\end{align*}
and therefore the convergence is uniformly on \(I_\beta\).
\end{proof}
After considering Hardy spaces of scalar-valued functions, we now want to introduce Hardy spaces with values in a Hilbert space or in the bounded operators of some Hilbert space respectively:
\begin{definition}
Let \(\cH\) be a complex Hilbert space.
\begin{enumerate}[\rm (a)]
\item For an open subset \(U \subset \C\) we write \(\cO(U,\cH)\) for the set of holomorphic functions on \(U\) with values in \(\cH\) and \(\cO(U,B(\cH))\) for the set of holomorphic functions on \(U\) with values in \(B(\cH)\).
\item For a function \(f \in \cO(\C_+,\cH)\), we set
\[\left\lVert f\right\rVert_{H^2} \coloneqq \sup_{y > 0} \left(\int_\R \left\lVert f(x+iy)\right\rVert^2 \,dx\right)^{\frac 12}\]
and define
\[\gls*{H2H} \coloneqq \{f \in \cO(\C_+,\cH) : \left\lVert f\right\rVert_{H^2} < \infty\}.\]
\item For a function \(f \in \cO(\C_+,B(\cH))\), we set
\[\left\lVert f\right\rVert_{H^\infty} \coloneqq \sup_{z \in \C_+} \left\lVert f(z)\right\rVert\]
and define
\[\gls*{H8BH} \coloneqq \{f \in \cO(\C_+,B(\cH)) : \left\lVert f\right\rVert_{H^\infty} < \infty\}.\]
\end{enumerate}
\end{definition}
\begin{remark}
As before in the scalar-valued case, \(H^\infty(\C_+,B(\cH))\) is a Banach space, and \(H^2(\C_+,\cH)\) becomes a Hilbert space when endowed with the scalar product
\[\braket*{f}{g}_{H^2} \coloneqq \lim_{y \downarrow 0} \int_\R \braket*{f(x+iy)}{g(x+iy)}_\cH \,dx, \quad f,g \in H^2(\C_+,\cH)\]
denoting by \(\braket*{\cdot}{\cdot}_\cH\) the scalar product on \(\cH\).
\end{remark}
By \fref{thm:HpBoundaryValues}, for every \(f \in H^2(\C_+,\cH)\) and \(v \in \cH\), the limit
\[f_v(x) \coloneqq \lim_{\epsilon \downarrow 0} \braket*{v}{f(x+i\epsilon)}_\cH\]
exists for almost every \(x \in \R\). Analogously, for every \(f \in H^\infty(\C_+,B(\cH))\) and every trace-class operator \(T \in B_1(\cH)\), the limit
\[f_T(x) \coloneqq \lim_{\epsilon \downarrow 0} \mathrm{Tr}(T f(x+i\epsilon))\]
exists for almost every \(x \in \R\). This shows that, by taking weak limits, we can identify \(H^2(\C_+,\cH)\) with a subspace of \(L^2(\R,\cH)\) and \(H^\infty(\C_+,B(\cH))\) with a subspace of \(L^\infty(\R,B(\cH))\) respectively.
\begin{prop}\label{prop:HInftyRealBoundary}
Let \(f \in H^\infty(\C_+)\) with \(f(x) \in \R\) for almost every \(x \in \R\). Then \(f\) is constant.
\end{prop}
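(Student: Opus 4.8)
The plan is to reflect $f$ across the real axis to produce a bounded holomorphic function on the lower half plane whose boundary values coincide (almost everywhere) with those of $f$, and then to invoke the identification of $H^\infty(\C_+)$ and $H^\infty(\C_-)$ with subspaces of $L^\infty(\R,\C)$ together with the fact (\fref{cor:HInftyIntersection}) that their intersection consists only of the constants. Concretely, I would first define
\[
h: \C_- \to \C, \quad h(z) \coloneqq \overline{f(\overline z)},
\]
which is holomorphic on $\C_-$ (it is the composition of complex conjugation, the holomorphic map $f$, and complex conjugation again) and satisfies $\left\lVert h\right\rVert_{H^\infty} = \left\lVert f\right\rVert_{H^\infty} < \infty$, so $h \in H^\infty(\C_-)$.

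Next I would identify the boundary values. For $x \in \R$ one has, along the non-tangential approach $z = x - i\epsilon$ with $\epsilon \downarrow 0$, that $\overline z = x + i\epsilon$ approaches $x$ non-tangentially from above, so $h(x-i\epsilon) = \overline{f(x+i\epsilon)} \to \overline{f_*(x)}$ for almost every $x$, where $f_*$ denotes the non-tangential boundary function of $f$. Hence, under the identification of $H^\infty(\C_-)$ with a subspace of $L^\infty(\R,\C)$, the element $h$ corresponds to the $L^\infty$-class of $x \mapsto \overline{f(x)}$. Now the hypothesis that $f(x) \in \R$ for almost every $x$ means precisely that $f = \overline{f}$ as elements of $L^\infty(\R,\C)$; therefore the $L^\infty$-class of $f$ lies simultaneously in the image of $H^\infty(\C_+)$ and in the image of $H^\infty(\C_-)$. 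By \fref{cor:HInftyIntersection} (applied with $\cH = \C$, so that $B(\C) = \C$) we get
\[
f \in H^\infty(\C_+) \cap H^\infty(\C_-) = \C \cdot \textbf{1},
\]
i.e. $f$ is constant.

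\textbf{Where the work really is.} There is no deep obstacle here; the only point requiring care is the bookkeeping around the boundary-value identifications. One must make sure that the non-tangential limits used to embed $H^\infty(\C_+)$ and $H^\infty(\C_-)$ into $L^\infty(\R,\C)$ are taken from the correct side in each case, so that the reflected function $h$ genuinely has boundary function $\overline{f}$ rather than, say, $\overline{f}$ composed with $x \mapsto -x$ or a function differing on a null set. Once this is pinned down, the statement follows immediately from \fref{cor:HInftyIntersection}, which already packages the Liouville-type rigidity (a bounded holomorphic function on $\C_+$ that also extends boundedly and holomorphically across $\R$ to $\C_-$ is constant).
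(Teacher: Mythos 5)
Your reduction to \fref{cor:HInftyIntersection} is circular in the context of this paper: that corollary is stated and proved immediately \emph{after} the present proposition, and its proof (in the scalar case) consists precisely of writing $\Re(f)$ and $\Im(f)$ as elements of $H^\infty(\C_+)$ with real boundary values and invoking the present proposition. So the ``Liouville-type rigidity'' you appeal to is not available at this point --- it is exactly what has to be established. The reflection step itself (that $h(z) \coloneqq \overline{f(\overline z)}$ lies in $H^\infty(\C_-)$ with boundary function $\overline{f_*}$) is fine, but it only reformulates the problem; the actual content, namely that an $L^\infty$-class lying in the image of both $H^\infty(\C_+)$ and $H^\infty(\C_-)$ must be constant, is dismissed in your final paragraph as ``no deep obstacle,'' whereas it is the entire point of the statement.

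For comparison, the paper's argument is short, self-contained, and avoids any two-sided gluing: since $f$ has real boundary values, $g_\pm \coloneqq e^{\pm i f} \in H^\infty(\C_+)$ have unimodular boundary values almost everywhere; since by \fref{thm:HpBoundaryValues} the $H^\infty$-norm equals the essential supremum of the boundary values, one gets $|e^{\pm i f(z)}| \le 1$ on all of $\C_+$, i.e. $\Im f \ge 0$ and $\Im f \le 0$, so $f$ is real-valued on $\C_+$ and the Cauchy--Riemann equations force it to be constant. If you want to keep your route, you must supply an independent proof of the intersection statement (e.g. via the Poisson-integral representation of bounded holomorphic functions on a half-plane, or a Morera/Schwarz-reflection gluing argument); that is a genuine piece of work, not bookkeeping about which side the non-tangential limits are taken from.
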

\begin{proof}
Since \(f \in H^\infty(\C_+)\) and \(\exp: \C \to \C\) maps bounded sets to bounded sets, we have
\[g_+ \coloneqq e^{if} \in H^\infty(\C_+) \quad \text{and} \quad g_- \coloneqq e^{-if} \in H^\infty(\C_+)\]
with
\[|g_+(x)| = |g_-(x)| = 1\]
for almost every \(x \in \R\). This implies that
\[|g_+(z)| \leq \left\lVert g_+\right\rVert_\infty = 1 \quad \text{and} \quad |g_-(z)| \leq \left\lVert g_-\right\rVert_\infty = 1 \qquad \forall z \in \C_+.\]
This yields
\[\Im(f(z)) \geq 0 \quad \text{and} \quad \Im(f(z)) \leq 0 \qquad \forall z \in \C_+\]
and therefore \(f(z) \in \R\) for every \(z \in \R\). This, by the Cauchy--Riemann equations, implies that \(f\) is constant.
\end{proof}
\begin{cor}\label{cor:HInftyIntersection}
Let \(\cH\) be a complex Hilbert space. Then
\[H^\infty(\C_+,B(\cH)) \cap H^\infty(\C_-,B(\cH)) = B(\cH) \cdot \textbf{1}.\]
\end{cor}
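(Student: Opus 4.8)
The inclusion $B(\cH) \cdot \textbf{1} \subeq H^\infty(\C_+,B(\cH)) \cap H^\infty(\C_-,B(\cH))$ is immediate, since a constant function is holomorphic and bounded on every domain. For the reverse inclusion the plan is to reduce to scalar-valued functions by passing to matrix coefficients, and then to settle the scalar statement by a Schwarz-type reflection argument that invokes \fref{prop:HInftyRealBoundary}.

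First I would take $f \in H^\infty(\C_+,B(\cH)) \cap H^\infty(\C_-,B(\cH))$, understood --- via the identifications of $H^\infty(\C_\pm,B(\cH))$ with subspaces of $L^\infty(\R,B(\cH))$ --- as a single essentially bounded function on $\R$ which admits a bounded holomorphic extension $f_+$ to $\C_+$ and a bounded holomorphic extension $f_-$ to $\C_-$. For fixed $v,w \in \cH$ the matrix coefficient $g_{v,w}(z) \coloneqq \langle v, f(z) w \rangle$ is holomorphic on each of $\C_+$ and $\C_-$, is bounded there by $\left\lVert f\right\rVert_\infty \left\lVert v\right\rVert \left\lVert w\right\rVert$, and has a.e. boundary function $x \mapsto \langle v, f(x) w\rangle$ on $\R$ --- the same from above and from below, since $f_+$ and $f_-$ restrict to the one function $f \in L^\infty(\R,B(\cH))$. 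Hence $g_{v,w} \in H^\infty(\C_+) \cap H^\infty(\C_-)$, and it suffices to prove: every $g \in H^\infty(\C_+) \cap H^\infty(\C_-)$ agrees almost everywhere on $\R$ with a constant.

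For this scalar claim, let $g_+ \in H^\infty(\C_+)$ and $g_- \in H^\infty(\C_-)$ be the holomorphic extensions sharing the boundary function $g \in L^\infty(\R)$, and set $\tilde g(z) \coloneqq \overline{g_-(\overline z)}$ for $z \in \C_+$. Since $z \mapsto \overline z$ maps $\C_+$ into $\C_-$ and $\tilde g$ is obtained from the holomorphic $g_-$ by conjugating both the argument and the value, $\tilde g$ is holomorphic on $\C_+$; it is bounded, so $\tilde g \in H^\infty(\C_+)$, and its non-tangential boundary values are $\tilde g(x) = \overline{g_-(x)} = \overline{g(x)}$ for almost every $x$. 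Consequently $g_+ + \tilde g \in H^\infty(\C_+)$ has almost everywhere real boundary values $g + \overline g = 2\Re g$, so it is constant by \fref{prop:HInftyRealBoundary}; likewise $i(g_+ - \tilde g) \in H^\infty(\C_+)$ has boundary values $i(g - \overline g) = -2\Im g \in \R$ a.e. and is therefore also constant. Adding these two constants shows that $g_+$, and hence $g$, is constant a.e.

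Finally I would reassemble the operator-valued conclusion: every matrix coefficient $g_{v,w}$ of $f$ is constant on $\C_+$, so fixing any $z_0 \in \C_+$ gives $\langle v, f(z) w\rangle = \langle v, f(z_0) w\rangle$ for all $z \in \C_+$ and all $v,w \in \cH$, whence $f(z) = f(z_0) \eqqcolon C \in B(\cH)$ throughout $\C_+$; passing to boundary values yields $f = C \cdot \textbf{1}$ in $L^\infty(\R,B(\cH))$. The only genuinely delicate point is the reflection step --- that $\tilde g$ is holomorphic on $\C_+$ and that its boundary values really are $\overline{g}$ --- which rests on the compatibility of non-tangential limits with reflection across $\R$ and with complex conjugation; everything else is bookkeeping with the $L^\infty$-identifications and one application of \fref{prop:HInftyRealBoundary}.
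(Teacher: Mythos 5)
Your proposal is correct and follows essentially the same route as the paper: reduce to scalar matrix coefficients, reflect the lower-half-plane extension to obtain $\overline{g} \in H^\infty(\C_+)$, and apply \fref{prop:HInftyRealBoundary} to the real and imaginary parts (your $g_+ + \tilde g$ and $i(g_+ - \tilde g)$ are exactly $2\Re(g)$ and $-2\Im(g)$ in the paper's notation). The only difference is that you spell out the Schwarz-reflection and boundary-value bookkeeping that the paper leaves implicit.
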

\begin{proof}
We first consider the scalar-valued case, so let \(f \in H^\infty(\C_+) \cap H^\infty(\C_-)\). Then \(f \in H^\infty(\C_+)\) and \(f \in H^\infty(\C_-)\), where the latter implies that \(\overline{f} \in H^\infty(\C_+)\). This yields
\[\Re(f),\Im(f) \in H^\infty(\C_+),\]
which, by \fref{prop:HInftyRealBoundary} implies that \(\Re(f)\) and \(\Im(f)\) are constant and therefore also \(f\) is constant, so \(f = c \cdot \textbf{1}\) for some \(c \in \C\).

We now consider the operator-valued case, so let \(f \in H^\infty(\C_+,B(\cH)) \cap H^\infty(\C_-,B(\cH))\). Then, for every \(v,w \in \cH\), we have
\[\braket*{v}{f(\cdot)w} \in H^\infty(\C_+) \cap H^\infty(\C_-) = \C \cdot \textbf{1}.\]
This implies that \(f \in B(\cH) \cdot \textbf{1}\).
\end{proof}
\begin{prop}\label{prop:H2InclusionFunctions}
Let \(\cH\) be a complex Hilbert space and let \(f \in L^\infty(\R,B(\cH))\). Then
\[M_fH^2(\C_+,\cH) \subeq H^2(\C_+,\cH),\]
if and only if \(f \in H^\infty(\C_+,B(\cH))\).
\end{prop}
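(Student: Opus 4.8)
The statement to prove is \fref{prop:H2InclusionFunctions}: for $f \in L^\infty(\R,B(\cH))$, one has $M_f H^2(\C_+,\cH) \subeq H^2(\C_+,\cH)$ if and only if $f \in H^\infty(\C_+,B(\cH))$.

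\textbf{Plan.} The backward implication is the easy one: if $f \in H^\infty(\C_+,B(\cH))$, then for any $g \in H^2(\C_+,\cH)$ the product $fg$ is a holomorphic $\cH$-valued function on $\C_+$ (product of a bounded holomorphic operator-valued function with a holomorphic vector-valued function), and $\|f(x+iy)g(x+iy)\| \le \|f\|_{H^\infty} \|g(x+iy)\|$ pointwise, so $\|fg\|_{H^2} \le \|f\|_{H^\infty}\|g\|_{H^2} < \infty$; hence $fg \in H^2(\C_+,\cH)$. One should be a little careful that the boundary-value identification is consistent, i.e. that the holomorphic product has boundary values equal a.e.\ to the pointwise product of boundary values, but this follows from non-tangential convergence (\fref{thm:HpBoundaryValues}) together with boundedness of $f$.

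The forward implication is the main content. Assume $M_f H^2(\C_+,\cH) \subeq H^2(\C_+,\cH)$. First reduce to the scalar case: for fixed $v,w \in \cH$, consider the scalar function $x \mapsto \braket*{v}{f(x)w}$ and the goal of showing it lies in $H^\infty(\C_+)$; if all such matrix coefficients are boundary values of bounded holomorphic functions on $\C_+$, then $f \in H^\infty(\C_+,B(\cH))$ (defining $f(z)$ weakly and checking holomorphy via Morera/weak-holomorphy, boundedness via uniform boundedness since $\|f(z)\| = \sup_{\|v\|=\|w\|=1} |\braket*{v}{f(z)w}|$ and each coefficient is bounded by $\|f\|_\infty$). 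So fix $v,w$ and the constant-valued function $g_w \in L^\infty \cap L^2$ won't do directly; instead use the reproducing-kernel functions: for $\omega \in \C_+$ the function $Q_\omega \cdot w \in H^2(\C_+,\cH)$ (with $Q_\omega$ the Szegö kernel of \fref{prop:H2RKHS}), so by hypothesis $M_f(Q_\omega \cdot w) = Q_\omega \cdot f(\cdot)w \in H^2(\C_+,\cH)$. Now define
\[
F_{v,w}(z) \coloneqq \frac{\braket*{v}{(M_f(Q_z \cdot w))(z')}}{Q_z(z')}
\]
evaluated appropriately — more cleanly: the function $z' \mapsto (M_f(Q_\omega \cdot w))(z')$ is the boundary function of an $H^2$-function, call its holomorphic extension $h_{\omega,w}$; then I expect $h_{\omega,w}(z') = Q_\omega(z') \cdot (\text{something holomorphic in }z')$ is too optimistic, so instead I would pair against reproducing kernels from the other side: for $z' \in \C_+$,
\[
\braket*{v}{h_{\omega,w}(z')} = \braket*{Q_{z'} \cdot v}{M_f(Q_\omega \cdot w)} .
\]
The right-hand side, as a function of $z'$, is antiholomorphic-paired; the cleaner route is: define $\Phi(z) \coloneqq 2\pi i \cdot (z+i)^{?}\cdots$ — rather than guess the normalization, I would use that $M_f$ commutes with the multiplication operators $M_{e_t}$ for $t \ge 0$ is false in general, so instead I would directly show: since $M_f(Q_\omega\cdot w)\in H^2$ for every $\omega$, the function $z \mapsto f(z)w \coloneqq \lim_{z'\to z}\frac{(M_f(Q_\omega\cdot w))(z')}{Q_\omega(z')}$ — here note $\frac{h_{\omega,w}(z')}{Q_\omega(z')}$ is holomorphic on $\C_+\setminus\{\bar\omega\}$ but $\bar\omega \notin \C_+$, so it is holomorphic on all of $\C_+$, and it is \emph{independent of $\omega$} because two choices $\omega_1,\omega_2$ give $h_{\omega_1,w}/Q_{\omega_1} = h_{\omega_2,w}/Q_{\omega_2}$ on the dense set where both are defined (this uses that $M_f$ acts by pointwise multiplication: $(M_f g)(x) = f(x)g(x)$ a.e., so $h_{\omega,w}(x) = f(x)Q_\omega(x)w$ a.e., hence $h_{\omega,w}(x)/Q_\omega(x) = f(x)w$ a.e., the same for all $\omega$). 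Call this common holomorphic function $\tilde f(\cdot)w$; its boundary values are $f(x)w$ a.e., and $\|\tilde f(z)w\|$ is controlled by... this is where boundedness must be extracted.

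\textbf{The main obstacle} is establishing the uniform bound $\sup_{z\in\C_+}\|\tilde f(z)\| < \infty$, i.e.\ that the holomorphic extension is actually in $H^\infty$ and not merely holomorphic with a.e.-bounded boundary values (which for a single holomorphic function would follow from a Phragmén--Lindelöf / maximal function argument, but one must be careful). I would argue: $\|\tilde f(z)w\| = \sup_{\|v\|\le 1}|\braket*{v}{\tilde f(z)w}|$, and $\braket*{v}{\tilde f(z)w}$ is a scalar holomorphic function on $\C_+$ whose boundary values $\braket*{v}{f(x)w}$ are bounded in modulus by $\|f\|_\infty\|v\|\|w\|$ for a.e.\ $x$; a bounded-boundary-value holomorphic function on $\C_+$ that is known a priori to be "nice at $\infty$" is bounded — to get niceness at $\infty$, note $\braket*{v}{\tilde f(z)w} = \braket*{v}{h_{\omega,w}(z)}/Q_\omega(z)$ and $h_{\omega,w}\in H^2$ decays while $1/Q_\omega(z)$ grows linearly, so $\braket*{v}{\tilde f(z)w}$ has at most linear growth; a holomorphic function on $\C_+$ of at most polynomial growth with bounded boundary values is bounded by the Poisson representation / Phragmén--Lindelöf in a half-plane. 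Then $\|\tilde f(z)\|\le \|f\|_\infty$ for all $z\in\C_+$, giving $\tilde f \in H^\infty(\C_+,B(\cH))$ with $\tilde f = f$ as elements of $L^\infty(\R,B(\cH))$. This completes the forward direction. I expect the Phragmén--Lindelöf/growth-control step and the careful handling of the weak-$*$-measurable $B(\cH)$-valued setting (separability, choosing countably many $v,w$ to test holomorphy) to be the parts requiring the most care; the rest is bookkeeping with reproducing kernels and \fref{thm:HpBoundaryValues}.
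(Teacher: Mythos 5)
Your backward implication and your reduction of the operator-valued case to matrix coefficients (sesquilinear forms, uniform bound, weak holomorphy implies holomorphy) match the paper's argument. The divergence, and the problem, is in the scalar boundedness step. The principle you invoke -- ``a holomorphic function on \(\C_+\) of at most polynomial growth with bounded boundary values is bounded by Poisson/Phragm\'en--Lindel\"of'' -- does not apply to the function you have constructed, for two reasons. First, your growth estimate is not polynomial up to the boundary: from \fref{prop:H2RKHS} the best pointwise bound for an \(H^2\)-function is \(|h(z)| \leq \lVert h\rVert_2 \,(2\sqrt{\pi \Im z})^{-1}\), so after multiplying by \(1/Q_\omega(z)\) you only get \(|\braket*{v}{\tilde f(z)w}| \lesssim (1+|z|)/\sqrt{\Im z}\), which blows up as \(\Im z \downarrow 0\); the claim of ``at most linear growth'' is unjustified. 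Second, Phragm\'en--Lindel\"of in the half-plane needs boundary control in a genuine sense (continuity up to \(\R\), or a Smirnov-class hypothesis); with only a.e.\ non-tangential boundary values, ``holomorphic with bounded boundary values'' is simply false as a criterion for boundedness (\(e^{-iz}\) has unimodular boundary values and is unbounded on \(\C_+\)), so some structural input beyond the growth heuristic is indispensable. As written, the key step of the forward implication is therefore a gap.

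The gap is repairable within your own construction: your function \(\braket*{v}{\tilde f(\cdot)w}\) is an \(H^2(\C_+)\)-function divided by \(Q_\omega\), and \(Q_\omega\) is a bounded \emph{outer} function, so the quotient lies in the Smirnov class \(N^+(\C_+)\); the Smirnov maximum principle (an \(N^+\)-function whose boundary values lie in \(L^\infty\) belongs to \(H^\infty\) with the same bound) then yields \(\braket*{v}{\tilde f(\cdot)w} \in H^\infty(\C_+)\) with norm \(\leq \lVert f\rVert_\infty \lVert v\rVert \lVert w\rVert\), after which your operator-valued assembly goes through. The paper avoids this issue altogether in the scalar case by a different route: from \(f \cdot Q_i \in H^2(\C_+)\) it invokes the Cauchy-integral vanishing property of \(H^2\)-functions (\cite[Thm.~5.19(i)]{RR94}) and the resulting integral characterization of \(H^\infty(\C_+)\) (\cite[Sec.~3.1]{Ma09}), and only then passes to matrix coefficients exactly as you do, using \(\lVert f_{v,w}\rVert_{H^\infty} = \lVert f_{v,w}\rVert_\infty\) from \fref{thm:HpBoundaryValues}. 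So either cite the Smirnov maximum principle explicitly, or replace your growth argument by the Cauchy-integral characterization; the Phragm\'en--Lindel\"of argument as you stated it would not survive scrutiny.
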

\begin{proof}
Let \(f \in H^\infty(\C_+,B(\cH))\) and \(g \in H^2(\C_+,\cH)\). Then obviously \(M_f g = f \cdot g \in \cO(\C_+,\cH)\). Further
\begin{align*}
\left\lVert M_f g\right\rVert_2^2 &= \left\lVert f \cdot g\right\rVert_2^2 = \sup_{y > 0} \int_\R \left\lVert f(x+iy) g(x+iy)\right\rVert^2 \,dx \leq \sup_{y > 0} \int_\R \left\lVert f(x+iy)\right\rVert^2 \cdot \left\lVert g(x+iy)\right\rVert^2 \,dx
\\&\leq \sup_{y > 0} \int_\R \left\lVert f\right\rVert_\infty^2 \cdot \left\lVert g(x+iy)\right\rVert^2 \,dx = \left\lVert f\right\rVert_\infty^2 \cdot \left\lVert g\right\rVert_2^2,
\end{align*}
so \(M_f g \in H^2(\C_+,\cH)\). This implies that
\[M_fH^2(\C_+,\cH) \subeq H^2(\C_+,\cH).\]
Now, for the converse, we first consider the scalar-valued case. So let \(f \in L^\infty(\R,\C)\) with
\[M_fH^2(\C_+) \subeq H^2(\C_+).\]
This, in particular, implies
\[f \cdot Q_i \in H^2(\C_+).\]
Therefore, by \cite[Thm. 5.19(i)]{RR94}, we have
\[0 = \frac 1{2\pi i} \int_\R \frac{f(x) \cdot Q_i(x)}{x-\overline{z}} \,dx = \frac 1{4\pi^2} \int_\R \frac{f(x)}{(x-\overline{z})(x+i)} \,dx \quad \forall z \in \C_+,\]
which, by \cite[Sec. 3.1]{Ma09} implies that \(f \in H^\infty(\C_+)\).

We now consider the operator-valued case, so let \(f \in L^\infty(\R,B(\cH))\) with
\[M_fH^2(\C_+,\cH) \subeq H^2(\C_+,\cH).\]
Then, for every \(v,w \in \cH\), defining
\[f_{v,w} \coloneqq \braket*{v}{f(\cdot)w} \in L^\infty(\R,\C),\]
we get
\[M_{f_{v,w}}H^2(\C_+) = \braket*{v}{M_f (H^2(\C_+) \otimes w)} \subeq \braket*{v}{M_fH^2(\C_+,\cH)} \subeq \braket*{v}{H^2(\C_+,\cH)} = H^2(\C_+),\]
so
\[f_{v,w} \in H^\infty(\C_+).\]
This means that, for \(z \in \C_+\), we can define the map
\[F_z: \cH \times \cH \to \C, \quad (v,w) \mapsto f_{v,w}(z),\]
which is sesquilinear by construction. Further, for every \(z \in \C_+\), we have
\[|F_z(v,w)| = |f_{v,w}(z)| \leq \left\lVert f_{v,w}\right\rVert_\infty = \left\lVert \braket*{v}{f(\cdot)w}\right\rVert_\infty \leq \left\lVert f\right\rVert_\infty \cdot \left\lVert v\right\rVert \cdot \left\lVert w\right\rVert, \quad \forall v,w \in \cH,\]
so there exists \(A(z) \in B(\cH)\) with \(\left\lVert A(z) \right\rVert \leq \left\lVert f\right\rVert_\infty\) such that
\[F_z(v,w) = \braket*{v}{A(z) w}, \quad \forall v,w \in \cH.\]
Then
\[A: \C_+ \to B(\cH), \quad z \mapsto A(z)\]
is a bounded function for which all the functions
\[\braket*{v}{A(\cdot)w} = f_{v,w}, \quad v,w \in \cH\]
are holomorphic. This, by \cite[Cor. A.III.5]{Ne00}, implies that \(A\) is holomorphic and therefore \(A \in H^\infty(\C_+,B(\cH))\). Now, by construction, for every \(v,w \in \cH\) and almost every \(x \in \R\), we have
\[\braket*{v}{A(x)w} = f_{v,w}(x) = \braket*{v}{f(x)w},\]
which implies that \(f = A \in H^\infty(\C_+,B(\cH))\).
\end{proof}
\begin{cor}\label{cor:H2EqualFunctions}
Let \(\cH\) be a complex Hilbert space and let \(f,g \in L^\infty(\R,B(\cH))\) such that \(f(x),g(x) \in U(\cH)\) for almost all \(x \in \R\). Then, one has
\[M_f H^2(\C_+,\cH) = M_g H^2(\C_+,\cH),\]
if and only if there exists \(u \in \U(\cH)\) such that
\[f = g \cdot u.\]
\end{cor}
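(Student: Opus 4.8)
The plan is to reduce the statement to the single-function case already settled inside the proof of \fref{prop:thetahUnique}: if $h \in L^\infty(\R,B(\cH))$ has a.e.\ unitary values and $M_h$ maps $H^2(\C_+,\cH)$ onto itself, then $h$ is a constant $u \cdot \textbf{1}$ with $u \in \U(\cH)$.

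The ``if'' direction is immediate. If $f = g \cdot u$ with $u \in \U(\cH)$, then $M_f = M_g M_u$, and since $u$ is a constant operator, $M_u$ is a unitary bijection of $L^2(\R,\cH)$ which restricts to a bijection of $H^2(\C_+,\cH)$ onto itself (multiplying a holomorphic $H^2$-function by the constant $u$, resp.\ $u^*$, stays in $H^2(\C_+,\cH)$). Hence $M_f H^2(\C_+,\cH) = M_g M_u H^2(\C_+,\cH) = M_g H^2(\C_+,\cH)$.

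For the converse, suppose $M_f H^2(\C_+,\cH) = M_g H^2(\C_+,\cH)$. As $f(x),g(x) \in \U(\cH)$ a.e., the operators $M_f,M_g$ are unitary on $L^2(\R,\cH)$ with inverses $M_{f^*},M_{g^*}$, where $f^*(x) \coloneqq f(x)^*$; taking pointwise adjoints preserves the essential supremum and the weak-$*$-measurable class, so $f^* \in L^\infty(\R,B(\cH))$. Setting $h \coloneqq f^* \cdot g \in L^\infty(\R,B(\cH))$ we have $M_h = M_{f^*} M_g$, $h(x) \in \U(\cH)$ a.e., and, by hypothesis,
\[
M_h H^2(\C_+,\cH) = M_{f^*}\bigl(M_g H^2(\C_+,\cH)\bigr) = M_{f^*}\bigl(M_f H^2(\C_+,\cH)\bigr) = H^2(\C_+,\cH).
\]
Now I would argue exactly as in \fref{prop:thetahUnique}: since $M_h$ is unitary and fixes $H^2(\C_+,\cH)$, it also fixes the orthogonal complement $H^2(\C_-,\cH) = H^2(\C_+,\cH)^{\perp}$, so $M_h H^2(\C_\pm,\cH) = H^2(\C_\pm,\cH)$; by \fref{prop:H2InclusionFunctions} and its analogue for the lower half plane, $h \in H^\infty(\C_+,B(\cH)) \cap H^\infty(\C_-,B(\cH))$; and \fref{cor:HInftyIntersection} gives $h = u \cdot \textbf{1}$ for some $u \in B(\cH)$, which must be unitary because $h$ has a.e.\ unitary values. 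Therefore $f^* \cdot g = u \cdot \textbf{1}$, i.e.\ $g = f \cdot u$ and hence $f = g \cdot u^*$ with $u^* \in \U(\cH)$.

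No genuine obstacle is expected: the whole argument rides on the already-proved \fref{prop:thetahUnique}. The only points that need a line of care are the bookkeeping that the pointwise adjoint $f^*$ and the pointwise product $f^* g$ still lie in the weak-$*$-measurable space $L^\infty(\R,B(\cH))$ used throughout the paper, and the appeal to the lower-half-plane version of \fref{prop:H2InclusionFunctions}, which the appendix declares is proved analogously.
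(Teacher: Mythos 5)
Your proof is correct and takes essentially the same route as the paper: reduce to a single a.e.\ unitary function (your \(h = f^*g\), the paper's \(g^{-1}f\) and \(f^{-1}g\)), place it in \(H^\infty(\C_+,B(\cH)) \cap H^\infty(\C_-,B(\cH))\) via \fref{prop:H2InclusionFunctions}, and conclude it is a unitary constant by \fref{cor:HInftyIntersection}. The only cosmetic difference is how membership in \(H^\infty(\C_-,B(\cH))\) is obtained — the paper uses the identity \(g^{-1}f = (f^{-1}g)^*\), while you use that the unitary \(M_h\) fixing \(H^2(\C_+,\cH)\) also fixes its orthogonal complement \(H^2(\C_-,\cH)\), which is exactly the argument of \fref{prop:thetahUnique} that you cite.
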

\begin{proof}
By multiplying with \(M_{g^{-1}}\) and \(M_{f^{-1}}\) respectively from the left, we get
\[M_{g^{-1}f} H^2(\C_+,\cH) = H^2(\C_+,\cH) \quad \text{and} \quad M_{f^{-1}g} H^2(\C_+,\cH) = H^2(\C_+,\cH).\]
By \fref{prop:H2InclusionFunctions} this implies \(g^{-1}f,f^{-1}g \in H^\infty(\C_+,B(\cH))\). On the other hand
\[g^{-1}f = (f^{-1}g)^{-1} = (f^{-1}g)^* \in H^\infty(\C_-,B(\cH)),\]
so, by \fref{cor:HInftyIntersection}, we get
\[g^{-1}f \in H^\infty(\C_+,B(\cH)) \cap H^\infty(\C_-,B(\cH)) = B(\cH) \cdot \textbf{1}\]
and therefore, there exists \(u \in B(\cH)\) such that
\[f = g \cdot u.\]
That \(u \in \U(\cH)\) follows from the fact that \(f\) and \(g\) have unitary values almost everywhere.
\end{proof}
We finish this section with some density theorems:
\begin{prop}\label{prop:etDense}
For \(t \in \R\) we consider the function \(e_t \in L^\infty(\R,\C)\) with
\[e_t(x) \coloneqq e^{itx}, \quad x \in \R.\]
Then one has
\[\overline{\spann \{e_t: t \in \R\}} = L^\infty(\R,\C) \qquad \text{and} \qquad \overline{\spann \{e_t: t \in \R_+\}} = H^\infty(\C_+),\]
where the closures are taken with respect to the weak-\(*\)-topology on \(L^\infty(\R,\C)\).
\end{prop}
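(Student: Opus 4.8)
The plan is to prove both density statements by Banach-space duality, using that $L^\infty(\R,\C)$ is the dual of $L^1(\R,\C)$. By the bipolar theorem, the weak-$*$ closure of a subspace $V\subeq L^\infty(\R,\C)$ equals $({}_\perp V)^\perp$, where ${}_\perp V\coloneqq\{g\in L^1(\R,\C):\int_\R fg=0\text{ for all }f\in V\}$ and the outer annihilator is taken in $L^\infty(\R,\C)$; in particular $V$ is weak-$*$ dense precisely when ${}_\perp V=\{0\}$. For the first statement, with $V=\spann\{e_t:t\in\R\}$, one has ${}_\perp V=\{g\in L^1(\R,\C):(\cF_1g)(-t)=0\text{ for all }t\in\R\}$, since $\int_\R e^{itx}g(x)\,dx=(\cF_1g)(-t)$. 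This says $\cF_1g\equiv0$, and as $\cF_1\colon L^1(\R,\C)\to L^\infty(\R,\C)$ is injective we get $g=0$; hence $\spann\{e_t:t\in\R\}$ is weak-$*$ dense in $L^\infty(\R,\C)$.

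For the second statement put $W=\spann\{e_t:t\geq0\}$. Each $e_t$ with $t\geq0$ is holomorphic and bounded on $\C_+$, since $|e^{itz}|=e^{-t\Im z}\leq1$ there, so $W\subeq H^\infty(\C_+)$. Next I would show that $H^\infty(\C_+)$ is weak-$*$ closed in $L^\infty(\R,\C)$: by \fref{prop:H2InclusionFunctions} a function $f\in L^\infty(\R,\C)$ lies in $H^\infty(\C_+)$ if and only if $M_fH^2(\C_+)\subeq H^2(\C_+)$, and since $L^2(\R,\C)=H^2(\C_+)\oplus H^2(\C_-)$ this is equivalent to $\int_\R\overline{h(x)}f(x)g(x)\,dx=0$ for all $g\in H^2(\C_+)$ and $h\in H^2(\C_-)$; for each such pair the weight $\overline{h}g$ lies in $L^1(\R,\C)$ by Cauchy--Schwarz, so $f\mapsto\int_\R\overline{h}fg$ is weak-$*$ continuous. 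Thus $H^\infty(\C_+)$ is an intersection of weak-$*$ closed hyperplanes, hence weak-$*$ closed, and therefore the weak-$*$ closure of $W$ is contained in $H^\infty(\C_+)$.

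For the reverse inclusion I must show $\int_\R fg=0$ whenever $f\in H^\infty(\C_+)$ and $g\in{}_\perp W$. Here $g\in{}_\perp W$ means $(\cF_1g)(-t)=0$ for all $t\geq0$, i.e.\ (by continuity of $\cF_1g$) that $\cF_1g$ vanishes on $(-\infty,0]$. Since such a $g$ need not be square-integrable, I would first mollify: taking an approximate identity $(\phi_n)$ in $L^1(\R,\C)\cap L^2(\R,\C)$ (e.g.\ rescaled Gaussians), the functions $g_n\coloneqq g*\phi_n$ lie in $L^1(\R,\C)\cap L^2(\R,\C)$, converge to $g$ in $L^1$, and satisfy $\cF_1g_n=\cF_1g\cdot\cF_1\phi_n$, so $\cF_1g_n$ still vanishes on $(-\infty,0]$; hence $g_n\in H^2(\C_+)$. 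By \fref{prop:H2InclusionFunctions}, $fg_n=M_fg_n\in H^2(\C_+)$, and since $fg_n\in L^1(\R,\C)$ as well, its Fourier transform $\cF_1(fg_n)$ is continuous and vanishes on $(-\infty,0]$, so $\int_\R fg_n=(\cF_1(fg_n))(0)=0$. Letting $n\to\infty$ and using $\bigl|\int_\R f(g-g_n)\bigr|\leq\|f\|_\infty\|g-g_n\|_1\to0$ gives $\int_\R fg=0$, which completes the argument.

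Apart from the bipolar theorem, injectivity of $\cF_1$, the Riemann--Lebesgue lemma and standard facts about approximate identities, the proof rests on two points: the weak-$*$ closedness of $H^\infty(\C_+)$, which I reduce to \fref{prop:H2InclusionFunctions} together with the Paley--Wiener splitting $L^2(\R,\C)=H^2(\C_+)\oplus H^2(\C_-)$, and the final vanishing step. I expect the latter to be the main (if minor) obstacle: one has to promote the fact that $g$ annihilates all $e_t$, $t\geq0$, to $\int_\R fg=0$ for $f\in H^\infty(\C_+)$ without assuming $g\in L^2$, and the mollification is exactly what keeps this inside the $L^2$ Hardy-space framework already developed instead of forcing an appeal to Paley--Wiener theory for $H^1(\C_+)$.
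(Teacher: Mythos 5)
Your argument for the first equality is exactly the paper's: test against $L^1(\R,\C)$ and use injectivity of the Fourier transform. For the second equality the paper gives no argument of its own — it simply cites \cite[Lem. B.6(b)]{ANS22} — so your contribution is a self-contained proof, and it is correct. The three ingredients all check out: $e_t \in H^\infty(\C_+)$ for $t \geq 0$; $H^\infty(\C_+)$ is weak-$*$ closed because, by \fref{prop:H2InclusionFunctions} together with the splitting $L^2(\R,\C) = H^2(\C_+) \oplus H^2(\C_-)$, it is the common kernel of the weak-$*$ continuous functionals $f \mapsto \int_\R \overline{h}\,f g\,dx$ with $g \in H^2(\C_+)$, $h \in H^2(\C_-)$; and the mollification step correctly upgrades ``$g \in L^1$ annihilates all $e_t$, $t \geq 0$'' to ``$g$ annihilates $H^\infty(\C_+)$'': indeed $g_n = g*\phi_n \in L^1 \cap H^2(\C_+)$, hence $fg_n \in L^1 \cap H^2(\C_+)$ by \fref{prop:H2InclusionFunctions}, so its continuous $L^1$-Fourier transform vanishes on $(-\infty,0]$ and in particular $\int_\R f g_n\,dx = (\cF_1(fg_n))(0) = 0$, and $\left\lVert g - g_n\right\rVert_1 \to 0$ finishes it; the bipolar theorem then gives equality. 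Compared with the paper you trade brevity for self-containedness, staying entirely inside the $L^2$-Hardy machinery already developed (multiplier characterization plus Paley--Wiener splitting) instead of outsourcing to the cited lemma, and the mollification is precisely what avoids any appeal to $H^1$ Paley--Wiener theory. One cosmetic remark: the statement takes $t \in \R_+$ while you work with $t \geq 0$; this is harmless, since the pre-annihilators coincide by continuity of $\cF_1 g$ at $0$, equivalently $e_0$ lies in the weak-$*$ closure of $\spann\{e_t : t > 0\}$.
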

\begin{proof}
Let \(f \in L^1(\R,\C)\) with
\[0 = \int_\R e_t(x) \cdot f(x) \,dx = \hat f(-t) \quad \forall t \in \R.\]
Then \(\hat f = 0\) and therefore \(f = 0\), since the Fourier transform is injective. This shows that
\[\overline{\spann \{e_t: t \in \R\}} = L^\infty(\R,\C).\]
The second statement follows by \cite[Lem. B.6(b)]{ANS22}.
\end{proof}
We need a similar statement to the one of \fref{prop:etDense} also for the Hilbert space \(L^2(\T,\C)\) and its corresponding Hardy space on the unit disc \(\D\). For this, we first have to introduce Hardy spaces on the disc:
\begin{definition}\label{def:HardySpaceDisc}
\begin{enumerate}[\rm (a)]
\item For a function \(f \in \cO(\D)\), we set
\[\left\lVert f\right\rVert_{H^p} \coloneqq \sup_{0 < r < 1} \left(\int_\T \left|f(rz)\right|^p \,dz\right)^{\frac 1p} \quad \text{for } 1 \leq p < \infty\]
and
\[\left\lVert f\right\rVert_{H^\infty} \coloneqq \sup_{z \in \D} \left|f(z)\right|.\]
Now, for \(1 \leq p \leq \infty\), we define the \textit{Hardy space} \(H^p(\D)\) by
\[\gls*{HpD} \coloneqq \{f \in \cO(\D) : \left\lVert f\right\rVert_{H^p} < \infty\}.\]
\item For a function \(f \in \cO(\C_+,\cH)\), we set
\[\left\lVert f\right\rVert_{H^2} \coloneqq \sup_{0 < r < 1} \left(\int_\T \left\lVert f(rz)\right\rVert^2 \,dz\right)^{\frac 12}\]
and define
\[H^2(\D,\cH) \coloneqq \{f \in \cO(\D,\cH) : \left\lVert f\right\rVert_{H^2} < \infty\}.\]
\item For a function \(f \in \cO(\D,B(\cH))\), we set
\[\left\lVert f\right\rVert_{H^\infty} \coloneqq \sup_{z \in \D} \left\lVert f(z)\right\rVert\]
and define
\[H^\infty(\D,B(\cH)) \coloneqq \{f \in \cO(\D,B(\cH)) : \left\lVert f\right\rVert_{H^\infty} < \infty\}.\]
\end{enumerate}
\end{definition}
\begin{rem}
Similar to the Hardy spaces on the upper half-plane \(\C_+\), by taking boundary values
\[f(z) \coloneqq \lim_{r \uparrow 1} f(rz), \quad z \in \T,\]
one gets an isometric embedding of \(H^p(\D)\) into the Banach space \({L^p(\T,\C) \coloneqq L^p(\T,\C,\mu)}\) (cf. \cite[Cor. 4.26]{RR94}), where by \(\mu\) we denote the unique Haar measure on the group \(\T\) normalized by \(\mu(\T) = 1\). In the same fashion one also has embeddings of \(H^2(\D,\cH)\) into \(L^2(\T,\cH)\) and of \(H^\infty(\D,B(\cH))\) into \(L^\infty(\T,B(\cH))\).
\end{rem}
\begin{prop}\label{prop:PolynomialsDense}
One has
\[\overline{\spann \{\Id_\T^n: n \in \Z\}} = L^\infty(\T,\C) \qquad \text{and} \qquad \overline{\spann \{\Id_\T^n: n \in \N_0\}} = H^\infty(\D),\]
where the closure is taken with respect to the weak-\(*\)-topology on \(L^\infty(\T,\C)\).
\end{prop}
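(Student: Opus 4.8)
The plan is to handle both assertions through the duality $L^\infty(\T,\C) = L^1(\T,\C)^*$ and the annihilator description of weak-$*$ closures: a linear subspace $V \subeq L^\infty(\T,\C)$ is weak-$*$ dense in a given weak-$*$ closed subspace $W \supeq V$ exactly when the preannihilators of $V$ and of $W$ in $L^1(\T,\C)$ agree. In particular, since $L^\infty(\T,\C)$ is itself trivially weak-$*$ closed with trivial preannihilator, the first equality reduces to showing that the only $g \in L^1(\T,\C)$ annihilating all $\Id_\T^n$ is $g = 0$.

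For the first equality I would take $g \in L^1(\T,\C)$ with $\int_\T \Id_\T^n\, g\, d\mu = 0$ for every $n \in \Z$. Up to reindexing these integrals are the Fourier coefficients of $g$, so $g$ has vanishing Fourier series and hence $g = 0$ by uniqueness of Fourier coefficients. Thus the preannihilator of $\spann\{\Id_\T^n : n \in \Z\}$ is trivial, which is precisely weak-$*$ density in $L^\infty(\T,\C)$.

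For the second equality I would first recall that, under the boundary-value embedding $H^\infty(\D) \into L^\infty(\T,\C)$, one has $H^\infty(\D) = \{f \in L^\infty(\T,\C) : \hat f(n) = 0 \text{ for all } n < 0\}$. This presents $H^\infty(\D)$ as the annihilator of $\spann\{\Id_\T^n : n \le -1\} \subeq L^1(\T,\C)$, hence as a weak-$*$ closed subspace of $L^\infty(\T,\C)$. Since $\Id_\T^n$ is the boundary value of $z \mapsto z^n$ for $n \ge 0$, we have $\spann\{\Id_\T^n : n \ge 0\} \subeq H^\infty(\D)$, and therefore $\overline{\spann\{\Id_\T^n : n \ge 0\}}^{w*} \subeq H^\infty(\D)$. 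For the reverse inclusion I would approximate a given $f \in H^\infty(\D)$ weak-$*$ by analytic polynomials using the Fej\'{e}r (Ces\`{a}ro) means $\sigma_N f = f * K_N$, where $K_N$ is the Fej\'{e}r kernel. Since $\hat f(n) = 0$ for $n < 0$, each $\sigma_N f$ is a polynomial in $\Id_\T$ with non-negative exponents; since $K_N \ge 0$ with $\int_\T K_N\, d\mu = 1$, one has $\|\sigma_N f\|_\infty \le \|f\|_\infty$; and for every $g \in L^1(\T,\C)$ the identity $\int_\T (\sigma_N f)\, g\, d\mu = \int_\T f\, (\sigma_N g)\, d\mu$ combined with $\sigma_N g \to g$ in $L^1(\T,\C)$ (Fej\'{e}r's theorem) and $f \in L^\infty(\T,\C)$ gives $\int_\T (\sigma_N f)\, g\, d\mu \to \int_\T f\, g\, d\mu$. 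Hence $\sigma_N f \to f$ weak-$*$, so $f$ lies in the weak-$*$ closure of $\spann\{\Id_\T^n : n \ge 0\}$. As an alternative one could use the dilates $f_r(z) = f(rz)$, whose Taylor partial sums converge uniformly to $f_r$ on $\overline\D$, and let $r \uparrow 1$; but keeping everything inside the single norm-bounded sequence $(\sigma_N f)_N$ is cleaner, since $L^1(\T,\C)$ is separable and the weak-$*$ topology is metrizable on bounded sets.

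The genuinely routine ingredients — uniqueness of Fourier coefficients, the Fourier-coefficient characterization of $H^\infty(\D)$, and the elementary properties of the Fej\'{e}r kernel — are all classical and I would simply cite them from the Hardy-space references already in use (e.g.\ \cite{RR94}). The one point deserving care is the reverse inclusion in the second equality: one must both exhibit the approximants as genuine analytic polynomials and make the uniform bound $\|\sigma_N f\|_\infty \le \|f\|_\infty$ explicit, since without a norm bound the passage to the weak-$*$ limit is not automatic. That bound is exactly where positivity of the Fej\'{e}r kernel is used, and it is the step I would spell out in detail.
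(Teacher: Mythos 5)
Your proof is correct. For the first equality you argue exactly as the paper does: pair against $g \in L^1(\T,\C)$, observe that the pairings with the powers $\Id_\T^n$ are the Fourier coefficients of $g$, and conclude $g=0$ by uniqueness of Fourier coefficients, which by Hahn--Banach (trivial preannihilator) gives weak-$*$ density in $L^\infty(\T,\C)$.

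For the second equality the paper does not give an argument at all -- it simply cites \cite[Lem. B.6(a)]{ANS22} -- whereas you supply a self-contained proof: you identify $H^\infty(\D)$ with the boundary-value space $\{f \in L^\infty(\T,\C): \hat f(n)=0 \text{ for } n<0\}$, note that this is an annihilator and hence weak-$*$ closed (giving the inclusion of the closure of the analytic polynomials into $H^\infty(\D)$), and then obtain the reverse inclusion via the Fej\'er means $\sigma_N f$, which for $f \in H^\infty(\D)$ are analytic polynomials, satisfy $\lVert \sigma_N f\rVert_\infty \le \lVert f\rVert_\infty$ by positivity and normalization of the kernel, and converge to $f$ weak-$*$ because $\int_\T (\sigma_N f)\,g\,d\mu = \int_\T f\,(\sigma_N g)\,d\mu$ and $\sigma_N g \to g$ in $L^1(\T,\C)$. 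All of these steps are sound (the duality identity uses that the Fej\'er kernel is even, which holds), and your emphasis on the uniform norm bound is exactly the right point to make explicit, since weak-$*$ convergence of the approximants is not automatic without it. The trade-off is simply self-containedness versus brevity: the paper outsources this standard fact, while your argument proves it from classical Fej\'er-kernel properties and would serve as a drop-in replacement for the citation.
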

\begin{proof}
Let \(f \in L^1(\T,\C)\) with
\[0 = \int_\R x^n \cdot f(x) \,dx = \hat f(-n) \quad \forall n \in \Z.\]
Then \(\hat f = 0\) and therefore \(f = 0\), since the Fourier transform is injective. This shows that
\[\overline{\spann \{\Id_\T^n: n \in \Z\}} = L^\infty(\T,\C).\]
The second statement follows by \cite[Lem. B.6(a)]{ANS22}.
\end{proof}

\subsection{Outer functions}
In this section, we introduce the concept of outer functions and present some basic facts about them, as well as about their interplay with Hardy spaces. We start with the definition of outer functions:
\begin{definition}\label{def:Outer}{(cf. \cite[Thm. 5.13]{RR94})}
\begin{enumerate}[\rm (a)]
\item For \(C \in \T\) and an almost everywhere defined function \(K: \R \to \R_{\geq 0}\) with
\begin{equation*}
\gls*{IK} \coloneqq \int_\R \frac{\left|\log\left(K\left(p\right)\right)\right|}{1 + p^2} \,dp < \infty
\end{equation*}
we define the function \(\Out(C,K) \in \cO(\C_+)\) by
\begin{equation*}
\Out(C,K)\left(z\right) = C \cdot \exp\left(\frac{1}{\pi i} \int_\R \left[\frac{1}{p-z} - \frac{p}{1 + p^2}\right] \log\left(K\left(p\right)\right) dp\right), \quad z \in \C_+
\end{equation*}
and call such a function an {\it outer function}.
\item We set \(\gls*{OutK} \coloneqq \Out(1,K)\).
\item We write \(\gls*{Out}\) for the set of outer functions on \(\C_+\) and set
\[\Out^2(\C_+) = \Out(\C_+) \cap H^2(\C_+).\]
\end{enumerate} 
\end{definition}
\begin{thm}\label{thm:OuterBetrag}{\rm (cf. \cite[Thm. 5.13]{RR94}, \cite[Thm. 17.16]{Ru86})}
Let \(C \in \T\) and \(K: \R \to \R_+\) with \(I(K) < \infty\). Then the limit
\[\Out(C,K)(x) \coloneqq \lim_{\epsilon \downarrow 0} \Out(C,K)(x+i\epsilon)\]
exists for almost all \(x \in \R\) and satisfies
\[|\Out(C,K)(x)| = K(x).\]
Further, one has \(\Out(C,K) \in H^p(\C_+)\), if and only if \(K \in L^p(\R,\C)\).
\end{thm}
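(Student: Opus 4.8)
The plan is to read the defining formula for $\Out(C,K)$ as the exponential of the analytic completion of the Poisson integral of $\log K$, so that the boundary modulus is recovered by the Fatou theorem for Poisson integrals and the $H^p$ membership by Jensen's inequality. First I would record the elementary identity that, writing $z=x+iy$ with $y>0$,
\[
\Re\!\left(\frac{1}{\pi i}\left[\frac{1}{p-z}-\frac{p}{1+p^2}\right]\right)=\frac{1}{\pi}\frac{y}{(x-p)^2+y^2},
\]
i.e.\ the real part of the kernel is the Poisson kernel $P_y(x-p)$ of $\C_+$, the correcting term $p/(1+p^2)$ being purely imaginary. Since $|C|=1$, taking real parts in the definition gives
\[
\log\bigl|\Out(C,K)(x+iy)\bigr|=\int_\R P_y(x-p)\,\log K(p)\,dp,
\]
the integral converging absolutely because $P_y(x-p)=O\bigl((1+p^2)^{-1}\bigr)$ as $|p|\to\infty$ and $I(K)<\infty$; equivalently $|\Out(C,K)(x+iy)|=\exp\bigl((P_y*\log K)(x)\bigr)$.

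Next I would establish existence of boundary values together with the modulus identity. Splitting $\log K=(\log K)^+-(\log K)^-$ and setting $K_\pm=\exp((\log K)^\pm)\ge 1$ (so $I(K_\pm)\le I(K)<\infty$), the definition immediately yields the factorization
\[
\Out(C,K)=C\,\frac{\Out(1,K_+)}{\Out(1,K_-)}=:C\,\frac{F_+}{F_-},
\]
with $F_-$ nonvanishing on $\C_+$. By the first step, $|F_\pm(z)|=\exp\bigl((P_y*\log K_\pm)(x)\bigr)\ge 1$ on $\C_+$, since $\log K_\pm\ge 0$ and $P_y\ge 0$; hence $1/F_+$ and $1/F_-$ lie in $H^\infty(\C_+)$. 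By \fref{thm:HpBoundaryValues} the functions $1/F_\pm$ have non-tangential boundary values almost everywhere, and by \fref{thm:HardyAlmostEverywhereNonZero} these are nonzero almost everywhere; therefore $F_\pm$, and hence $\Out(C,K)=CF_+/F_-$, have non-tangential (in particular vertical) boundary values almost everywhere. For the modulus, the classical Fatou theorem for Poisson integrals — if $f$ is measurable with $\int_\R|f(p)|(1+p^2)^{-1}\,dp<\infty$ then $(P_y*f)(x)\to f(x)$ for a.e.\ $x$ as $y\downarrow 0$ — applied to $f=\log K_\pm$ gives $|F_\pm(x+iy)|\to K_\pm(x)$ a.e.; comparing with the non-tangential convergence of $F_\pm$ forces $|F_\pm(x)|=K_\pm(x)$ a.e., whence $|\Out(C,K)(x)|=K_+(x)/K_-(x)=K(x)$ a.e.

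For the $H^p$ characterization, one direction is immediate from \fref{thm:HpBoundaryValues} and the previous step: if $\Out(C,K)\in H^p(\C_+)$ then its boundary modulus $K$ lies in $L^p(\R,\C)$ with $\|K\|_p=\|\Out(C,K)\|_{H^p}$. For the converse, let $K\in L^p(\R,\C)$ with $1\le p<\infty$; since $P_y(x-\cdot)\,dp$ is a probability measure, Jensen's inequality for the convex function $\exp$ gives
\[
\bigl|\Out(C,K)(x+iy)\bigr|^p=\exp\!\left(\int_\R P_y(x-p)\log\bigl(K(p)^p\bigr)\,dp\right)\le\int_\R P_y(x-p)\,K(p)^p\,dp,
\]
and integrating in $x$ and applying Tonelli with $\int_\R P_y(x-p)\,dx=1$ yields $\int_\R|\Out(C,K)(x+iy)|^p\,dx\le\|K\|_p^p$ for every $y>0$, so $\Out(C,K)\in H^p(\C_+)$. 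The case $p=\infty$ is the same with the supremum in place of the integral: $\log K\le\log\|K\|_\infty$ gives $|\Out(C,K)|\le\|K\|_\infty$ on $\C_+$, and conversely the boundary modulus $K$ is $\le\|\Out(C,K)\|_{H^\infty}$ a.e.

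The first and third steps are routine. The main obstacle is the input in the second step: the almost-everywhere convergence $P_y*\log K_\pm\to\log K_\pm$ for data that is merely integrable against $(1+p^2)^{-1}\,dp$ rather than bounded. This is precisely the Fatou-type theorem underlying \cite[Thm.~5.13]{RR94} and \cite[Thm.~17.16]{Ru86}, which it seems reasonable simply to quote; a self-contained alternative would be to first prove the modulus identity for bounded $\log K$ (where the a.e.\ convergence is the Lebesgue differentiation theorem) and then pass to the general case through the truncations $\min(K_\pm,n)$ by a monotone limiting argument, or to transport the whole statement from the unit disc, where it is \cite[Thm.~17.16]{Ru86}, via the Cayley transform $h$ after checking that composition with $h$ carries disc outer functions to half-plane outer functions up to a nonvanishing factor.
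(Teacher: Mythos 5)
The paper itself gives no proof of this theorem: it is stated as a quoted classical result, with references to \cite[Thm.~5.13]{RR94} and \cite[Thm.~17.16]{Ru86}, so there is no in-paper argument to compare against. Your reconstruction is correct and follows the standard route underlying those references: the computation that \(\Re\bigl(\tfrac{1}{\pi i}[\tfrac{1}{p-z}-\tfrac{p}{1+p^2}]\bigr)\) is the Poisson kernel, so \(\log|\Out(C,K)(x+iy)|=(P_y*\log K)(x)\); the splitting \(\log K=(\log K)^+-(\log K)^-\) giving \(\Out(C,K)=C\,F_+/F_-\) with \(1/F_\pm\in H^\infty(\C_+)\), whence boundary values exist a.e.\ by \fref{thm:HpBoundaryValues} and are a.e.\ nonzero by \fref{thm:HardyAlmostEverywhereNonZero}; Fatou's theorem for Poisson integrals of functions integrable against \((1+p^2)^{-1}\,dp\) to identify \(|F_\pm|=K_\pm\) on the boundary; and Jensen's inequality together with Tonelli (resp.\ the trivial \(L^\infty\) bound) for the \(H^p\) equivalence. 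Quoting the Fatou-type a.e.\ convergence is entirely legitimate here, since the paper quotes the full theorem without proof anyway, and you correctly isolate it as the only nonelementary input.

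One small caveat on your proposed self-contained fallback: the monotone truncation \(\min(K_\pm,n)\) only yields the one-sided bound \(\liminf_{y\downarrow 0}(P_y*\log K_\pm)(x)\ge\log K_\pm(x)\) a.e.\ (and the analogous argument applied to \(1/F_\pm\) reproduces the same inequality), because truncating an unbounded nonnegative function from above always produces a minorant; the matching upper bound requires genuine control of the Poisson integral of the unbounded tail near the boundary, i.e.\ a Hardy--Littlewood maximal-function estimate, which is essentially the proof of Fatou's theorem itself. So if you want a self-contained version, the Cayley-transform transfer from \cite[Thm.~17.16]{Ru86} is the cleaner option; as written, your main argument stands.
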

\begin{theorem}\label{thm:outerSpan}{\rm (\cite[Thm. 17.23]{Ru86})}
Let \(F \in H^2(\C_+)\) and consider, for \(t \in \R\), the unitary operator \(S_t \coloneqq M_{e_t}\) with \(e_t(x) \coloneqq e^{itx}\), \(x \in \R\).
Then \(F \in \Out^2(\C_+)\), if and only if
\[H^2(\C_+) = \oline{\Spann S(\R_+) F}.\] 
\end{theorem}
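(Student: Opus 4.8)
The plan is to deduce the statement from the operator-theoretic Beurling--Lax theorem \fref{thm:HalmosUpper}, the inner--outer factorization \fref{thm:OuterInnerDecomp}, the fact (contained in \fref{thm:OuterBetrag}) that an outer function is determined up to a unimodular constant by its boundary modulus, and \fref{cor:HInftyIntersection}. The whole argument hinges on the following lemma, which I would prove first: \emph{if $\psi \in \Inn(\C_+)$, $F \in \Out^2(\C_+)$ and $F \in M_\psi H^2(\C_+)$, then $\psi$ is constant.}

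For the lemma, write $F = \psi H$ with $H \in H^2(\C_+)$ and factor $H = \phi_H H_o$ with $\phi_H \in \Inn(\C_+)$ and $H_o$ outer (\fref{thm:OuterInnerDecomp}), so that $F = (\psi \phi_H) H_o$. On $\R$ one has $|F| = |H_o|$ a.e.\ since $|\psi| = |\phi_H| = 1$ a.e.; as $F$ and $H_o$ are both outer with the same boundary modulus, \fref{thm:OuterBetrag} gives $F = c H_o$ for some $c \in \T$ (the integrability needed to invoke \fref{thm:OuterBetrag} holds by \fref{thm:HardyAlmostEverywhereNonZero}, because $F \neq 0$). Comparing $F = (\psi\phi_H)H_o$ with $F = cH_o$ and cancelling $H_o$, which is nonzero a.e.\ by \fref{thm:HardyAlmostEverywhereNonZero}, yields $\psi \phi_H = c$ a.e.\ on $\R$, hence $\psi = c\,\overline{\phi_H}$ a.e.\ (using $\phi_H^{-1} = \overline{\phi_H}$ where $|\phi_H| = 1$). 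Now $\overline{\phi_H}$ is the boundary function of $z \mapsto \overline{\phi_H(\bar z)}$, which is bounded and holomorphic on $\C_-$, so $\psi \in H^\infty(\C_+) \cap H^\infty(\C_-)$; by \fref{cor:HInftyIntersection} this means $\psi$ is constant.

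Granting the lemma I would conclude as follows. Suppose first $F \in \Out^2(\C_+)$; then $F \neq 0$ since $0 \notin \Out(\C_+)$, so $M := \overline{\Spann S(\R_+)F} \neq \{0\}$. Because $e_t \in H^\infty(\C_+)$ for $t \geq 0$ we get $S_t M \subseteq M \subseteq H^2(\C_+)$ for all $t \geq 0$, while $\bigcap_{t \in \R}S_t M \subseteq \bigcap_{t \in \R}S_t H^2(\C_+) = \{0\}$; thus $(L^2(\R,\C), M, S)$ is a SROPG, and \fref{thm:HalmosUpper} provides a rigid $\phi \in L^\infty(\R,\C)$ with $M = M_\phi H^2(\C_+)$. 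Since $M \neq \{0\}$ the initial subspace attached to $\phi$ is all of $\C$, so $|\phi| = 1$ a.e.; then $M_\phi H^2(\C_+) = M \subseteq H^2(\C_+)$ forces $\phi \in H^\infty(\C_+)$ by \fref{prop:H2InclusionFunctions}, i.e.\ $\phi \in \Inn(\C_+)$. As $F \in M = M_\phi H^2(\C_+)$ and $F$ is outer, the lemma gives $\phi$ constant, so $M = H^2(\C_+)$. Conversely, suppose $M := \overline{\Spann S(\R_+)F} = H^2(\C_+)$; then $F \neq 0$, and factoring $F = \psi F_o$ with $\psi \in \Inn(\C_+)$ and $F_o$ outer (\fref{thm:OuterInnerDecomp}), the relation $S_t(\psi F_o) = \psi \cdot S_t F_o$ together with the unitarity of $M_\psi$ on $L^2(\R,\C)$ gives $M = M_\psi\overline{\Spann S(\R_+)F_o} \subseteq M_\psi H^2(\C_+) \subseteq H^2(\C_+)$. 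Hence $M_\psi H^2(\C_+) = H^2(\C_+)$; since $\Out^2(\C_+)$ is nonempty and contained in $H^2(\C_+) = M_\psi H^2(\C_+)$, the lemma (applied to any element of $\Out^2(\C_+)$) forces $\psi$ constant, and therefore $F = \psi F_o$ is outer, i.e.\ $F \in \Out^2(\C_+)$.

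The main obstacle is the lemma — concretely, the assertion that an inner divisor of a nonzero outer function must be constant — since this is precisely where the rigidity of outer functions under the inner--outer factorization (via \fref{thm:OuterBetrag}) is used. Everything else is bookkeeping: checking that $\overline{\Spann S(\R_+)F}$ fulfils the SROPG axioms so that \fref{thm:HalmosUpper} applies, and the modest point that $|\phi| = 1$ a.e.\ together with $M_\phi H^2(\C_+) \subseteq H^2(\C_+)$ yields $\phi \in \Inn(\C_+)$, which is \fref{prop:H2InclusionFunctions}.
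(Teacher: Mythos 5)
Your argument is correct, but note that the paper itself contains no proof of \fref{thm:outerSpan}: the statement is only cited from the disc version in Rudin (Thm.\ 17.23), with the transfer to the half plane left implicit. So what you have produced is a genuinely different (self-contained) route: instead of quoting Beurling's theorem on \(\D\) and Cayley-transforming, you rederive the half-plane statement from material already in the paper, namely the half-plane Beurling--Lax theorem \fref{thm:HalmosUpper} (which is itself Halmos's theorem transported by the Cayley transform), the inner--outer factorization \fref{thm:OuterInnerDecomp}, the determination of an outer function by its boundary modulus via \fref{thm:OuterBetrag}, and \fref{cor:HInftyIntersection}; your key lemma, that an inner divisor of a nonzero outer function is a unimodular constant, is exactly the rigidity that Rudin's theorem encodes. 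I checked that none of the ingredients you use depends on \fref{thm:outerSpan} (only \fref{cor:OuterDense} and later results do), so there is no circularity; the price of your route is reliance on the operator-theoretic \fref{thm:HalmosUpper}, while its benefit is that the half-plane statement never has to be translated back to the disc by hand.

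Two small points you leave implicit and should spell out. First, since \(\R_+=(0,\infty)\), the membership \(F\in\oline{\Spann S(\R_+)F}=M_\phi H^2(\C_+)\), which you need in order to apply the lemma in the forward direction, is not a tautology; it follows from \(S_tF\to F\) as \(t\downarrow 0\) by strong continuity of \(S\). Second, in the lemma the step from \(|F|=|H_o|\) a.e.\ to \(F=cH_o\) with \(c\in\T\) uses more than the literal statement of \fref{thm:OuterBetrag}: one compares the explicit representations \(F=\Out(C_F,|F|)\) and \(H_o=\Out(C_H,|H_o|)\) from \fref{def:Outer}, whose integral parts coincide once the boundary moduli agree a.e., so that \(c=C_F/C_H\). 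Both are one-line additions; with them the proof is complete.
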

\begin{cor}\label{cor:OuterDense}
For every outer function \(F \in \Out^2(\C_+)\) one has
\[\overline{\spann \,S(\R) F} = \overline{\left(H^\infty(\C_+) + H^\infty(\C_-)\right)F} = \overline{L^\infty(\R,\C)F} = L^2(\R,\C).\]
\end{cor}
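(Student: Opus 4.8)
The plan is to prove the chain of equalities by showing the first three sets are nested inside one another and inside $L^2(\R,\C)$ by entirely soft arguments, and then closing the loop with the one nontrivial ingredient, \fref{thm:outerSpan}, which is the only place where outerness of $F$ enters. Write $W \coloneqq \overline{\spann \,S(\R)F}$; the goal is to establish
\[W \subeq \overline{\left(H^\infty(\C_+) + H^\infty(\C_-)\right)F} \subeq \overline{L^\infty(\R,\C)F} \subeq L^2(\R,\C) \subeq W,\]
which forces all four spaces to coincide.

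For the first inclusion, recall $S_t = M_{e_t}$ with $e_t(x) = e^{itx}$, and that $e_t$ is the boundary value of $z \mapsto e^{itz}$, which for $t \geq 0$ is bounded on $\C_+$ (since $|e^{itz}| = e^{-t\,\Im(z)} \leq 1$ there) and hence lies in $H^\infty(\C_+)$, while for $t \leq 0$ it lies in $H^\infty(\C_-)$. Thus $S(\R)F \subeq \left(H^\infty(\C_+) \cup H^\infty(\C_-)\right)F \subeq \left(H^\infty(\C_+) + H^\infty(\C_-)\right)F$, and taking closed linear spans gives the first inclusion. The second inclusion is immediate from $H^\infty(\C_\pm) \subeq L^\infty(\R,\C)$ via boundary values. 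For the third, if $g \in L^\infty(\R,\C)$ then $gF \in L^2(\R,\C)$ because $F \in H^2(\C_+) \subeq L^2(\R,\C)$ and $g$ is essentially bounded; since $L^2(\R,\C)$ is closed, $\overline{L^\infty(\R,\C)F} \subeq L^2(\R,\C)$.

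The remaining inclusion $L^2(\R,\C) \subeq W$ carries the content. By \fref{thm:outerSpan}, the outer function $F \in \Out^2(\C_+)$ satisfies $H^2(\C_+) = \overline{\spann \,S(\R_+)F} \subeq W$. Moreover $W$ is invariant under $S_s$ for every $s \in \R$, since $S_s$ permutes $S(\R)F$ via $S_s S_t = S_{s+t}$; hence $S_s H^2(\C_+) \subeq W$ for all $s \in \R$, and therefore $\overline{\bigcup_{s \in \R} S_s H^2(\C_+)} \subeq W$. Now $(L^2(\R,\C),H^2(\C_+),S)$ is a complex ROPG by \fref{thm:LaxPhillipsComplex} (the Lax--Phillips normal form with multiplicity space $\C$), so by the equivalent description of ROPGs in \fref{rem:ROPGAlternative} one has $\overline{\bigcup_{s \in \R} S_s H^2(\C_+)} = L^2(\R,\C)$. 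This gives $L^2(\R,\C) \subeq W$ and closes the chain.

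I do not expect a genuine obstacle: the argument is a bookkeeping of inclusions together with one citation, and the only points needing slight care are verifying $e_t \in H^\infty(\C_+)$ for $t \geq 0$ (so that $S(\R_+)F$ lands where \fref{thm:outerSpan} expects) and keeping all spans over $\C$. One could alternatively prove the middle equality $\overline{\left(H^\infty(\C_+) + H^\infty(\C_-)\right)F} = \overline{L^\infty(\R,\C)F}$ directly, using \fref{prop:etDense} to approximate $L^\infty$-functions weak-$*$ by trigonometric polynomials, but since the two-sided squeeze against $L^2(\R,\C)$ already collapses everything, that refinement is unnecessary.
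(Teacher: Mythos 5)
Your proof is correct and follows essentially the same route as the paper: the soft chain of inclusions through \((H^\infty(\C_+)+H^\infty(\C_-))F\) and \(L^\infty(\R,\C)F\) into \(L^2(\R,\C)\), closed by the key inclusion \(L^2(\R,\C) \subeq \overline{\spann\,S(\R)F}\) obtained from \fref{thm:outerSpan} together with the Lax--Phillips fact that \(\overline{\bigcup_{t\in\R} S_t H^2(\C_+)} = L^2(\R,\C)\). The only cosmetic difference is that you phrase this last step via the \(S\)-invariance of \(\overline{\spann\,S(\R)F}\) while the paper writes it as \(\overline{\bigcup_t S_t\,\overline{\spann\,S(\R_+)F}}\); these are the same argument.
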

\begin{proof}
By \fref{thm:outerSpan}, we have
\[\overline{\spann \,S(\R) F} \supeq \overline{\bigcup_{t \in \R} \overline{\spann \,S(\R_{> t}) F}} = \overline{\bigcup_{t \in \R} S_t \overline{\spann \,S(\R_+) F}} = \overline{\bigcup_{t \in \R} S_t H^2(\C_+)} = L^2(\R,\C),\]
using \fref{thm:LaxPhillipsComplex} in the last step.
Since
\[\spann \,\{e_t: t \in \R\} = \spann \,\{e_t: t \in \R_+\} + \spann \,\{e_t: t \in \R_{\leq 0}\} \subeq H^\infty(\C_+) + H^\infty(\C_-) \subeq L^\infty(\R,\C),\]
we get
\[L^2(\R,\C) \subeq \overline{\spann \,S(\R) F} \subeq  \overline{\left(H^\infty(\C_+) + H^\infty(\C_-)\right)F} \subeq \overline{L^\infty(\R,\C)F} \subeq L^2(\R,\C),\]
which implies the statement.
\end{proof}
We now show some basic properties of outer functions:
\begin{lemma}\label{lem:OuterHomo}
Let \(c \in \R_+\) and let \(f,g: \R \to \R_{\geq 0}\) with \(I(f) < \infty\) and \(I(g) < \infty\). Then
\[I(c \cdot f) < \infty, \quad I(f \cdot g) < \infty \quad \text{and} \quad I\left(\frac{f}{g}\right) < \infty\]
and
\[\Out(c \cdot f) = c \cdot \Out(f), \quad \Out(f \cdot g) =  \Out(f) \cdot \Out(g) \quad \text{and} \quad \Out\left(\frac{f}{g}\right) = \frac{\Out(f)}{\Out(g)}.\]
\end{lemma}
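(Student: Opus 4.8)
The plan is to exploit two structural features of the definition: the functional $I(\cdot)$ sees $K$ only through $|\log K|$, and the exponent in $\Out(C,K)$ depends on $K$ only through the fixed linear integral functional $\log K \mapsto \frac{1}{\pi i}\int_\R\bigl[\frac{1}{p-z}-\frac{p}{1+p^2}\bigr]\log K(p)\,dp$. Since $\log$ carries products to sums and $\exp$ carries sums to products, all three identities should collapse to linearity together with one genuine computation in the constant case.

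First I would handle the finiteness claims. Using $\log(fg)=\log f+\log g$ and $\log(f/g)=\log f-\log g$ almost everywhere, the triangle inequality gives $|\log(fg)|,|\log(f/g)|\le|\log f|+|\log g|$, hence $I(fg),I(f/g)\le I(f)+I(g)<\infty$. For the constant, $|\log(cf)|\le|\log c|+|\log f|$ and $\int_\R\frac{|\log c|}{1+p^2}\,dp=\pi|\log c|<\infty$, so $I(cf)\le\pi|\log c|+I(f)<\infty$. I would also record the elementary bound, for fixed $z\in\C_+$,
\[
\Bigl|\frac{1}{p-z}-\frac{p}{1+p^2}\Bigr|=\frac{|1+pz|}{|p-z|\,(1+p^2)}\le\frac{C_z}{1+p^2},
\]
with $C_z$ depending only on $z$ (the quotient $\frac{|1+pz|}{|p-z|}$ is continuous on $\R$ with limit $|z|$ at $\pm\infty$, and $p-z$ never vanishes). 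Together with $I(f)<\infty$ this makes the integral defining $\Out(f)(z)$ absolutely convergent, and similarly for $g$. Consequently, in $\Out(fg)(z)$ and $\Out(f/g)(z)$ the integrand splits as the kernel times $\log f(p)\pm\log g(p)$ with each resulting integral absolutely convergent; applying $\exp(a+b)=\exp(a)\exp(b)$ then yields $\Out(fg)=\Out(f)\Out(g)$ and $\Out(f/g)=\Out(f)/\Out(g)$ on $\C_+$ (and hence, via boundary values, as the objects they name).

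The one point requiring an actual computation — and the step I would flag as the crux — is $\Out(cf)=c\,\Out(f)$. By the multiplicativity just established, applied with the constant function $c$ in place of $g$, it suffices to show that the outer function of the constant function $c$ equals $c$ on $\C_+$. That outer function is $z\mapsto\exp\bigl(\log c\cdot G(z)\bigr)$ with $G(z)\coloneqq\frac{1}{\pi i}\int_\R\bigl[\frac{1}{p-z}-\frac{p}{1+p^2}\bigr]\,dp$, and I claim $G\equiv 1$. Differentiating under the integral sign (justified by the bound above) gives $G'(z)=\frac{1}{\pi i}\int_\R(p-z)^{-2}\,dp=0$ for $z\in\C_+$, so $G$ is constant there; evaluating at $z=i$ and using $1+p^2=(p-i)(p+i)$ to rewrite $\frac{1}{p-i}-\frac{p}{1+p^2}=\frac{1+pi}{(p-i)(1+p^2)}=\frac{i}{1+p^2}$ gives $G(i)=\frac{1}{\pi i}\int_\R\frac{i}{1+p^2}\,dp=1$. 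Since $c>0$, $\log c$ is real and $\exp(\log c)=c$, so the constant outer function is indeed $c$, whence $\Out(cf)=c\,\Out(f)$. I do not expect any further obstacle; everything beyond the $G\equiv 1$ computation is bookkeeping about absolute convergence that licenses interchanging $\exp$ with the additive splitting of $\log$.
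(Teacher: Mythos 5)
Your proof is correct and follows essentially the same route as the paper: the finiteness claims are obtained from the triangle inequality for \(|\log(\cdot)|\) exactly as in the paper's proof, and the identities for \(\Out\) come from splitting the logarithm inside the exponential. The only difference is that the paper declares the second set of identities ``immediate from the definition,'' whereas you explicitly justify the absolute convergence and verify the normalization \(\frac{1}{\pi i}\int_\R\bigl[\frac{1}{p-z}-\frac{p}{1+p^2}\bigr]dp=1\) needed for \(\Out(c\cdot f)=c\cdot\Out(f)\), a detail the paper leaves implicit.
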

\begin{proof}
For \(p \in \R\) one has
\[\left|\log\left((c \cdot f)\left(p\right)\right)\right| = \left|\log\left(c\right) + \log\left(f\left(p\right)\right)\right| \leq \left|\log\left(c\right)\right| + \left|\log\left(f\left(p\right)\right)\right|\]
and
\[\left|\log\left((f \cdot g)\left(p\right)\right)\right| = \left|\log\left(f\left(p\right)\right) + \log\left(g\left(p\right)\right)\right| \leq \left|\log\left(f\left(p\right)\right)\right| + \left|\log\left(g\left(p\right)\right)\right|\]
and
\[\left|\log\left(\left(\frac fg\right)\left(p\right)\right)\right| = \left|\log\left(f\left(p\right)\right) - \log\left(g\left(p\right)\right)\right| \leq \left|\log\left(f\left(p\right)\right)\right| + \left|\log\left(g\left(p\right)\right)\right|,\]
which immediately implies
\[I(c \cdot f) < \infty, \quad I(f \cdot g) < \infty \quad \text{and} \quad I\left(\frac{f}{g}\right) < \infty.\]
The second statement then follows immediately from the definition of outer functions.
\end{proof}
\begin{lemma}\label{lem:OuterSymmetric}
Let \(K: \R \to \R_{\geq 0}\) be a symmetric function with \(I(K) < \infty\).
Then
\[\overline{\Out(K)(-\overline{z})} = \Out(K)(z) \quad \forall z \in \C_+.\]
Further
\[\Out(K)(i\lambda) = \exp\left(\frac{1}{\pi} \int_\R \frac{\lambda}{p^2+\lambda^2} \log\left(K\left(p\right)\right) dp\right) \in \R_+ \quad \forall \lambda \in \R_+.\]
\end{lemma}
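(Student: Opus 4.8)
The plan is to work directly with the integral in the definition of $\Out(K)=\Out(1,K)$. Write, for $z \in \C_+$,
$$\Phi(z) := \frac{1}{\pi i}\int_\R\left[\frac{1}{p-z} - \frac{p}{1+p^2}\right]\log(K(p))\,dp,$$
so that $\Out(K)(z) = e^{\Phi(z)}$. First I would record the one analytic fact used throughout: since
$$\frac{1}{p-z} - \frac{p}{1+p^2} = \frac{1+pz}{(p-z)(1+p^2)} = O(p^{-2}) \quad (|p| \to \infty),$$
is bounded on bounded subsets of $\R$ (as $z \notin \R$), and since $\log(K(\cdot))$ is locally integrable (from $I(K)<\infty$ one gets $\int_{-R}^{R}|\log(K(p))|\,dp \le (1+R^2)I(K)<\infty$) with $\int_\R \frac{|\log(K(p))|}{1+p^2}\,dp = I(K) < \infty$, the integrand defining $\Phi(z)$ lies in $L^1(\R)$ for every $z \in \C_+$. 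Consequently all manipulations below — change of variables, pulling complex conjugation through the integral, and splitting the kernel into real and imaginary parts — are legitimate operations on Lebesgue integrals.

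For the first identity, observe that $-\overline{z} \in \C_+$ whenever $z \in \C_+$, so $\Out(K)(-\overline z)$ is defined. I would compute $\overline{\Phi(-\overline z)}$: complex conjugation sends $\tfrac{1}{\pi i}$ to $-\tfrac{1}{\pi i}$, fixes $\tfrac{p}{1+p^2}$ and $\log(K(p))$, and sends $\tfrac{1}{p+\overline z}$ to $\tfrac{1}{p+z}$ (as $p$ is real), so
$$\overline{\Phi(-\overline z)} = -\frac{1}{\pi i}\int_\R\left[\frac{1}{p+z} - \frac{p}{1+p^2}\right]\log(K(p))\,dp.$$
Substituting $p \mapsto -p$ and using the symmetry $K(-p) = K(p)$ turns the bracket into $\tfrac{1}{z-p} + \tfrac{p}{1+p^2}$; combined with the elementary identity $\tfrac{1}{p-z} - \tfrac{p}{1+p^2} = -\big(\tfrac{1}{z-p} + \tfrac{p}{1+p^2}\big)$ this exhibits the right-hand side as $\Phi(z)$. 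Hence $\Phi(z) = \overline{\Phi(-\overline z)}$, and since $\exp$ commutes with conjugation, $\Out(K)(z) = e^{\Phi(z)} = \overline{e^{\Phi(-\overline z)}} = \overline{\Out(K)(-\overline z)}$.

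For the second identity, set $z = i\lambda$ with $\lambda > 0$ and split $\tfrac{1}{p - i\lambda} = \tfrac{p}{p^2+\lambda^2} + \tfrac{i\lambda}{p^2+\lambda^2}$. The real part of the kernel then becomes $\tfrac{p}{p^2+\lambda^2} - \tfrac{p}{1+p^2} = \tfrac{(1-\lambda^2)p}{(p^2+\lambda^2)(1+p^2)}$, an odd function of $p$; multiplied by the even function $\log(K(p))$ it is an odd function in $L^1(\R)$, so its integral over $\R$ vanishes. What survives is
$$\Phi(i\lambda) = \frac{1}{\pi i}\int_\R \frac{i\lambda}{p^2+\lambda^2}\log(K(p))\,dp = \frac{1}{\pi}\int_\R \frac{\lambda}{p^2+\lambda^2}\log(K(p))\,dp \in \R,$$
the absolute convergence being immediate from $\tfrac{\lambda}{p^2+\lambda^2} \le C_\lambda \tfrac{1}{1+p^2}$ and $I(K) < \infty$. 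Therefore $\Out(K)(i\lambda) = e^{\Phi(i\lambda)}$ is a positive real number and equals the claimed exponential.

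I do not expect a genuine obstacle: the argument is bookkeeping with the outer (Herglotz-type) kernel $\tfrac{1}{p-z}-\tfrac{p}{1+p^2}$, and the only delicate point — the absolute integrability needed to justify the change of variables $p\mapsto -p$ in the first part and the termwise splitting in the second — has already been settled by the $O(p^{-2})$ decay of the kernel together with $I(K)<\infty$. I would additionally note the harmless special case $\lambda = 1$ of the second identity, where the odd term vanishes identically. (One could instead derive the first identity more softly from the uniqueness of outer functions up to a unimodular constant — $z\mapsto\overline{\Out(K)(-\overline z)}$ is zero-free and holomorphic on $\C_+$ with a.e. boundary modulus $K$, hence equals $\Out(C,K)$ for some $C\in\T$, and evaluation at $z=i$, where $\Out(K)(i)\in\R_+$ by the second identity, pins down $C=1$ — but this hinges on the same integral representation, so the direct computation is preferable.)
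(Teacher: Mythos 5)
Your proof is correct. For the first identity you do exactly what the paper does: conjugate the defining integral, substitute \(p \mapsto -p\), and use the symmetry of \(K\); the integrability remark justifying these manipulations (kernel \(= O(p^{-2})\) plus \(I(K)<\infty\)) is fine. For the second identity your route differs from the paper's. You split the kernel at \(z=i\lambda\) into its real part \(\frac{p}{p^2+\lambda^2}-\frac{p}{1+p^2}\), which is odd and, against the even function \(\log K\), integrable (e.g.\ \(p^2+\lambda^2\ge 2\lambda|p|\) dominates it by a multiple of \(\frac{|\log K(p)|}{1+p^2}\)), hence integrates to zero, leaving a manifestly real exponent; this gives the formula and the positivity \(\Out(K)(i\lambda)\in\exp(\R)=\R_+\) in one stroke. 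The paper instead first applies the symmetry identity to \(\sqrt{K}\) (using \(I(\sqrt K)=\tfrac12 I(K)<\infty\)) and \fref{lem:OuterHomo} to write \(\Out(K)(i\lambda)=|\Out(\sqrt K)(i\lambda)|^2\in\R_+\), and then computes \(\Out(K)(i\lambda)=|\Out(K)(i\lambda)|=\exp\big(\Re\,\Phi(i\lambda)\big)\), where taking the real part of the integrand yields \(\frac{\lambda}{\pi(p^2+\lambda^2)}\log K(p)\) without ever invoking the vanishing of the odd integral. So your argument is a bit more direct and avoids the auxiliary lemma and the \(\sqrt K\) detour, at the price of having to justify explicitly that the odd part is in \(L^1\); the paper's modulus computation sidesteps that cancellation but leans on the multiplicativity of \(\Out\). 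Both are complete proofs, and your closing alternative via uniqueness of outer functions is a valid, if heavier, third option.
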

\begin{proof}
For every \(z \in \C_+\), we have
\begin{align*}
\overline{\Out(K)(-\overline{z})} &= \overline{\exp\left(\frac{1}{\pi i} \int_\R \left[\frac{1}{p+\overline{z}} - \frac{p}{1 + p^2}\right] \log\left(K\left(p\right)\right) dp\right)}
\\&= \exp\left(\frac{-1}{\pi i} \int_\R \left[\frac{1}{p+z} - \frac{p}{1 + p^2}\right] \log\left(K\left(p\right)\right) dp\right)
\\&= \exp\left(\frac{-1}{\pi i} \int_\R \left[\frac{1}{-p+z} - \frac{-p}{1 + (-p)^2}\right] \log\left(K\left(-p\right)\right) dp\right)
\\&=\exp\left(\frac{1}{\pi i} \int_\R \left[\frac{1}{p-z} - \frac{p}{1 + p^2}\right] \log\left(K\left(p\right)\right) dp\right) = F_K(z).
\end{align*}
Further, since \(K\) is symmetric, also \(\sqrt{K}\) is symmetric and therefore also
\[\overline{\Out\big(\sqrt{K}\big)(-\overline{z})} = \Out\big(\sqrt{K}\big)(z) \quad \forall z \in \C_+.\]
This in particular yields
\[\Out\big(\sqrt{K}\big)(i\lambda) = \overline{\Out\big(\sqrt{K}\big)(-\overline{i\lambda})} = \overline{\Out\big(\sqrt{K}\big)(i\lambda)} \quad \forall \lambda \in \R_+.\]
Then, by \fref{lem:OuterHomo}, for \(\lambda \in \R_+\), we have
\begin{align*}
\Out(K)(i\lambda) &= \Out\big(\sqrt{K}\big)(i\lambda) \cdot \Out\big(\sqrt{K}\big)(i\lambda)
\\&=  \Out\big(\sqrt{K}\big)(i\lambda) \cdot \overline{\Out\big(\sqrt{K}\big)(i\lambda)} = \left|\Out\big(\sqrt{K}\big)(i\lambda)\right|^2 \geq 0,
\end{align*}
so \(\Out(K)(i\lambda) \in \R_{\geq 0}\). Further, for every \(\lambda \in \R_+\), we have \(\Out(K)(i\lambda) \in \exp(\C) = \C^\times\), so
\[\Out(K)(i\lambda) \in \R_+.\]
Therefore, for \(\lambda \in \R_+\), we get
\begin{align*}
\Out(K)(i\lambda) &= \left|\Out(K)(i\lambda)\right| = \left|\exp\left(\frac{1}{\pi i} \int_\R \left[\frac{1}{p-i\lambda} - \frac{p}{1 + p^2}\right] \log\left(K\left(p\right)\right) dp\right)\right|
\\&= \exp\left(\mathrm{Re}\left(\frac{1}{\pi i} \int_\R \left[\frac{1}{p-i\lambda} - \frac{p}{1 + p^2}\right] \log\left(K\left(p\right)\right) dp\right)\right)
\\&= \exp\left(\mathrm{Re}\left(\frac{1}{\pi i} \int_\R \left[\frac{p+i\lambda}{p^2+\lambda^2} - \frac{p}{1 + p^2}\right] \log\left(K\left(p\right)\right) dp\right)\right)
\\&= \exp\left(\frac{1}{\pi} \int_\R \frac{\lambda}{p^2+\lambda^2} \log\left(K\left(p\right)\right) dp\right). \qedhere
\end{align*}
\end{proof}
Finally, we provide some concrete examples of outer functions:
\begin{lemma}\label{lem:OuterExamples}
Let \(\sqrt{\cdot}: \C_+ \to \C\) denote the square root on \(\C_+\) with \(\sqrt{\C_+} \subeq \C_+\). Then, for every \(z \in \C_+\), given the function \(K: \R \to \R_{\geq 0}\) with
\begin{enumerate}[\rm (a)]
\item \(K(p) = |p|\), one has \(\Out(K)(z) = -iz\).
\item \(K(p) = \frac 1{|p|}\), one has \(\Out(K)(z) = \frac i{z}\).
\item \(K(p) = \frac 1{\sqrt{|p|}}\), one has \(\Out(K)(z) = \frac {i+1}{\sqrt{2z}}\).
\item \(K(p) = 1+p^2\), one has \(\Out(K)(z) = -(z+i)^2\).
\item \(K(p) = \frac {\left|p\right|}{1+p^2}\), one has \(\Out(K)(z) = \frac {iz}{(z+i)^2}\).
\item \(K(p) = \frac 1{\sqrt{1+p^2}}\), one has \(\Out(K)(z) = \frac i{z+i}\).
\end{enumerate}
\end{lemma}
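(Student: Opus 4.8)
The plan is to reduce all six identities to the value of $\Out(K)$ on the positive imaginary axis and then to invoke the identity theorem. In each of the cases (a)--(f) the function $K$ is symmetric, $K(-p)=K(p)$, and satisfies $I(K)<\infty$ (the only features of $\log K$ are a logarithmic singularity at $p=0$ and logarithmic growth as $p\to\pm\infty$, both integrable against the weight $\tfrac{1}{1+p^2}$); hence $\Out(K)\in\cO(\C_+)$ is well defined, and \fref{lem:OuterSymmetric} gives
\[
\Out(K)(i\lambda)\;=\;\exp\!\left(\frac{1}{\pi}\int_\R\frac{\lambda\,\log K(p)}{p^2+\lambda^2}\,dp\right)\;\in\;\R_+,\qquad\lambda\in\R_+.
\]
Moreover, in every one of the six cases $\log K$ is an $\R$-linear combination of the two functions $\log|p|$ and $\log(1+p^2)$, with coefficient pair $(1,0)$, $(-1,0)$, $(-\tfrac12,0)$, $(0,1)$, $(1,-1)$, $(0,-\tfrac12)$ for (a)--(f) respectively. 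Since the integral above is linear in $\log K$ and $\exp$ turns sums into products, it suffices to evaluate the two basic integrals.

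For $\log|p|$, the substitution $p\mapsto\lambda p$ gives
\[
\frac{1}{\pi}\int_\R\frac{\lambda\,\log|p|}{p^2+\lambda^2}\,dp\;=\;\log\lambda+\frac{1}{\pi}\int_\R\frac{\log|p|}{p^2+1}\,dp\;=\;\log\lambda,
\]
the remaining integral vanishing because $p\mapsto1/p$ sends it to its negative. For $\log(1+p^2)$ I would put $I(a):=\int_0^\infty\frac{\log(1+a^2p^2)}{p^2+\lambda^2}\,dp$, differentiate under the integral sign, decompose $\frac{2ap^2}{(1+a^2p^2)(p^2+\lambda^2)}$ into partial fractions to get $I'(a)=\frac{\pi}{a\lambda+1}$, and integrate from $a=0$ (where $I(0)=0$) to $a=1$, so that $I(1)=\frac{\pi}{\lambda}\log(1+\lambda)$ and $\frac{1}{\pi}\int_\R\frac{\lambda\,\log(1+p^2)}{p^2+\lambda^2}\,dp=\frac{2\lambda}{\pi}I(1)=2\log(1+\lambda)$. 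Feeding these two values into the displayed formula yields $\Out(K)(i\lambda)$ equal to $\lambda$, $1/\lambda$, $\lambda^{-1/2}$, $(1+\lambda)^2$, $\lambda/(1+\lambda)^2$, $1/(1+\lambda)$ in the cases (a)--(f) respectively. (These six identities can alternatively be assembled from (a) and (d) via the multiplicativity of $\Out(\cdot)$ in \fref{lem:OuterHomo}, but the uniform route through the imaginary axis is cleaner.)

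Finally I would evaluate the claimed right-hand sides at $z=i\lambda$: $-i\cdot i\lambda=\lambda$; $i/(i\lambda)=1/\lambda$; $(1+i)/\sqrt{2i\lambda}=1/\sqrt\lambda$, using $\sqrt{2i\lambda}=\sqrt\lambda\,(1+i)$ for the branch of $\sqrt{\cdot}$ with $\sqrt{\C_+}\subeq\C_+$; $-(i\lambda+i)^2=(1+\lambda)^2$; $i\cdot i\lambda/(i\lambda+i)^2=\lambda/(1+\lambda)^2$; and $i/(i\lambda+i)=1/(1+\lambda)$ --- in each case exactly the value of $\Out(K)(i\lambda)$ computed above. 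Since $\Out(K)$ and the candidate are both holomorphic on the connected domain $\C_+$ (the candidates in (a), (b), (d), (e), (f) are rational with all zeros and poles located at $0$ or $-i$, neither in $\C_+$, and in (c) the map $z\mapsto(1+i)/\sqrt{2z}$ is holomorphic and zero-free on $\C_+$ for the chosen square root), and they agree on the set $i\R_+$, which has accumulation points in $\C_+$, the identity theorem forces them to agree on all of $\C_+$. The only genuine work, and the step most exposed to sign or branch slips, is the two explicit integral evaluations together with the branch bookkeeping for $\sqrt{\cdot}$ in case (c); everything else is formal.
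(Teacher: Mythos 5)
Your proposal is correct and follows essentially the same strategy as the paper: use \fref{lem:OuterSymmetric} to evaluate \(\Out(K)\) on \(i\R_+\) via the two basic integrals for \(\log|p|\) and \(\log(1+p^2)\), then identify the holomorphic functions by the identity theorem. The only differences are organizational — you treat all six cases uniformly through linearity of the integral in \(\log K\) instead of reducing (b), (c), (e), (f) to (a) and (d) via \fref{lem:OuterHomo} with a sign determination on \(i\R_+\), and you evaluate the \(\log(1+p^2)\) integral by differentiating in the scaling parameter \(a\) rather than via the paper's auxiliary function \(g(\lambda)\) (note only the harmless degenerate point \(a\lambda=1\) in your partial fractions, which the formula \(I'(a)=\pi/(1+a\lambda)\) covers by continuity, just as the paper handles \(\lambda=1\) separately).
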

\begin{proof}
\begin{enumerate}[\rm (a)]
\item We have
\begin{align*}
\int_{\R_+} \frac{\log\left(p\right)}{p^2+1} \,dp = \int_\R \frac{\log\left(e^x\right)}{e^{2x}+1} \cdot e^x \,dx = \int_\R \frac{x}{e^x+e^{-x}} \,dx = 0,
\end{align*}
where the last equality follows from the fact that the integrand is skew-symmetric.

Then, for \(\lambda \in \R_+\), by \fref{lem:OuterSymmetric}, we have
\begin{align*}
\Out(K)(i\lambda) &= \exp\left(\frac{1}{\pi} \int_\R \frac{\lambda}{p^2+\lambda^2} \log\left|p\right| dp\right) = \exp\left(\frac{2}{\pi} \int_{\R_+} \frac{\lambda}{p^2+\lambda^2} \log\left(p\right) dp\right)
\\&= \exp\left(\frac{2}{\pi} \int_{\R_+} \frac{\lambda^2}{(\lambda p)^2+\lambda^2} \log\left(\lambda p\right) dp\right) = \exp\left(\frac{2}{\pi} \int_{\R_+} \frac{1}{p^2+1} \log\left(\lambda p\right) dp\right)
\\&= \exp\left(\frac{2}{\pi} \int_{\R_+} \frac{1}{p^2+1} \log\left(\lambda\right) dp + \frac{2}{\pi} \int_{\R_+} \frac{1}{p^2+1} \log\left(p\right) dp\right)
\\&= \exp\left(\log\left(\lambda\right) + 0\right) = \lambda = -i(i\lambda).
\end{align*}
This shows that the holomorphic functions \(\Out(K)\) and \(z \mapsto -iz\) coincide on \(i\R_+\) and are therefore equal.
\item By \fref{lem:OuterHomo} and (a) we have
\[\Out(K)(z) = \frac 1{-iz} = \frac iz, \quad z \in \C_+.\]
\item By \fref{lem:OuterHomo} and (b), for \(z \in \C_+\), we have
\[\Out(K)(z) \cdot \Out(K)(z) = \frac iz,\]
so
\[\Out(K)(z) = \pm \frac{\frac {i+1}{\sqrt{2}}}{\sqrt{z}} = \pm \frac {i+1}{\sqrt{2z}}.\]
The statement then follows by \fref{lem:OuterSymmetric} since
\[+ \frac {i+1}{\sqrt{2i}} = 1 \in \R_+.\]
\item We define the function
\[g: (-1,\infty) \to \R, \quad \lambda \mapsto \int_{\R_+} \frac{1}{p^2+1} \left[\log\left(1+\lambda^2p^2\right) - 2\log\left(\lambda+1\right)\right] dp.\]
For \(\lambda \neq 1\), we have
\begin{align*}
g'(\lambda) &= \int_{\R_+} \frac{1}{p^2+1} \left[\frac{2\lambda p^2}{1+\lambda^2p^2} - \frac 2{\lambda+1}\right] dp
\\&= \int_{\R_+} \frac{2\lambda}{\lambda^2-1}\left[\frac{1}{p^2+1} - \frac{1}{1+\lambda^2p^2}\right] - \frac 2{\lambda+1}\frac{1}{p^2+1} \,dp
\\&= \left[\frac{2\lambda}{\lambda^2-1} - \frac{2}{\lambda+1}\right]\int_{\R_+} \frac{1}{p^2+1} \,dp - \frac{2}{\lambda^2-1} \int_{\R_+} \frac{\lambda}{1+\lambda^2p^2} \,dp
\\&=\frac{2}{\lambda^2-1} \int_{\R_+} \frac{1}{p^2+1} \,dp - \frac{2}{\lambda^2-1} \int_{\R_+} \frac{1}{1+p^2} \,dp = 0.
\end{align*}
Further
\begin{align*}
g'(1) &= \int_{\R_+} \frac{1}{p^2+1} \left[\frac{2p^2}{1+p^2} - \frac 2{1+1}\right] dp = \int_{\R_+} \frac{p^2-1}{(p^2+1)^2} \,dp
\\&= \int_\R \frac{e^{2x}-1}{(e^{2x}+1)^2} \cdot e^x \,dx = \int_\R \frac{e^x-e^{-x}}{(e^x+e^{-x})^2} \,dx = 0,
\end{align*}
where the last equality follows from the fact that the integrand is skew-symmetric. This shows that \(g' \equiv 0\) and therefore \(g\) is constant. Since
\begin{align*}
g(0) = \int_{\R_+} \frac{1}{p^2+1} \left[\log\left(1+0\right) - 2\log\left(0+1\right)\right] dp = 0
\end{align*}
this implies that \(g \equiv 0\). This yields that, for \(\lambda \in \R_+\), we have
\[0 = g(\lambda) = \int_{\R_+} \frac{1}{p^2+1} \left[\log\left(1+\lambda^2p^2\right) - 2\log\left(\lambda+1\right)\right] dp,\]
so
\[\int_{\R_+} \frac{1}{p^2+1} \log\left(1+\lambda^2p^2\right) dp = 2\log\left(\lambda+1\right) \int_{\R_+} \frac{1}{p^2+1} dp = \pi \log\left(\lambda+1\right)\]
and therefore
\begin{align*}
\int_\R \frac{\lambda}{p^2+\lambda^2} \log\left(1+p^2\right) dp &= 2\int_{\R_+} \frac{\lambda}{p^2+\lambda^2} \log\left(1+p^2\right) dp
\\&= 2\int_{\R_+} \frac{\lambda^2}{(\lambda p)^2+\lambda^2} \log\left(1+(\lambda p)^2\right) dp
\\&= 2 \int_{\R_+} \frac{1}{p^2+1} \log\left(1+\lambda^2p^2\right) dp = 2\pi \log\left(\lambda+1\right).
\end{align*}
This, by \fref{lem:OuterSymmetric}, yields
\begin{align*}
\Out(K)(i\lambda) &= \exp\left(\frac{1}{\pi} \int_\R \frac{\lambda}{p^2+\lambda^2} \log\left(1+p^2\right) dp\right)
\\&= \exp\left(2 \log\left(\lambda+1\right)\right) = (\lambda+1)^2 = -(i\lambda+i)^2, \quad \lambda \in \R_+.
\end{align*}
This shows that the holomorphic functions \(\Out(K)\) and \(z \mapsto -(z+i)^2\) coincide on \(i\R_+\) and are therefore equal.
\item By (a), (d) and \fref{lem:OuterHomo} we have
\[\Out(K)(z) = \frac{-iz}{-(z+i)^2} = \frac {iz}{(z+i)^2}, \quad z \in \C_+.\]
\item By \fref{lem:OuterHomo} and (d), for \(z \in \C_+\), we have
\[\Out(K)(z) \cdot \Out(K)(z) = \frac 1{-(z+i)^2},\]
so
\[\Out(K)(z) = \pm \frac i{z+i}.\]
The statement then follows by \fref{lem:OuterSymmetric} since
\[+ \frac {i}{i+i} = \frac 12 \in \R_+. \qedhere\]
\end{enumerate}
\end{proof}

\subsection{Inner functions}
In this section, we will introduce the concept of inner functions and present some of their basic properties: 
\begin{definition}\label{def:Inner}
Let \(\cH\) be a complex Hilbert space.
\begin{enumerate}[\rm (a)]
\item We call a function \(\phi \in H^\infty(\C_+,B(\cH))\) an \textit{inner fuction}, if \(\phi(x) \in \U(\cH)\) for almost every \(x \in \R\).
\item We write \(\gls*{Inn}\) for the set of inner functions.
\item We set \(\Inn(\C_+) \coloneqq \Inn(\C_+,B(\C))\), i.e. \(\Inn(\C_+)\) is the set of functions \(\phi \in H^\infty(\C_+)\) with \(\left|\phi(x)\right| = 1\) for almost every \(x \in \R\).
\end{enumerate}
\end{definition}
The following theorem links Hardy spaces, outer functions, and inner functions:
\begin{theorem}\label{thm:OuterInnerDecomp}{\rm (\cite[Sec. 5.14(iv), Cor. 5.17, Lem. 5.21]{RR94})}
Let \(f \in H^2(\C_+) \setminus \{0\}\). Then there exists an inner function \(\phi \in \Inn(\C_+)\) and an outer function \(F \in \Out^2(\C_+)\) such that
\[f = \phi \cdot F.\]
\end{theorem}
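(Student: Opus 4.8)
The plan is to produce the outer factor explicitly as the outer function whose boundary modulus equals that of $f$, and then to verify that the leftover quotient is inner. The whole argument is driven by one nontrivial input about $H^2$ functions — a Poisson-domination estimate for $\log|f|$ — everything else being bookkeeping with the definitions recalled above.

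First I would record the boundary behaviour of $f$. By \fref{thm:HardyAlmostEverywhereNonZero}, since $f \in H^2(\C_+) \setminus \{0\}$, the boundary function satisfies $f(x) \neq 0$ for almost every $x \in \R$ and $\int_\R \frac{|\log|f(x)||}{1+x^2}\,dx < \infty$. Hence the almost everywhere defined function $K \colon \R \to \R_{\geq 0}$, $K(x) \coloneqq |f(x)|$, has $I(K) < \infty$, so the outer function $F \coloneqq \Out(K) = \Out(1,K)$ is defined (\fref{def:Outer}). By \fref{thm:HpBoundaryValues} we have $\|K\|_2 = \|f\|_{H^2} < \infty$, i.e. $K \in L^2(\R,\C)$, so \fref{thm:OuterBetrag} gives $F \in H^2(\C_+)$ and therefore $F \in \Out^2(\C_+)$, with $|F(x)| = K(x) = |f(x)|$ for almost every $x \in \R$. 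Directly from the defining integral formula, $F(z) \in \exp(\C) = \C^\times$ for every $z \in \C_+$, so $F$ is zero-free on $\C_+$ and
\[
\phi \coloneqq \frac{f}{F} \in \cO(\C_+)
\]
is a well-defined holomorphic function with $f = \phi \cdot F$ on $\C_+$ and, on the boundary, $|\phi(x)| = |f(x)|/|F(x)| = 1$ for almost every $x \in \R$.

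It then remains to show $\phi \in \Inn(\C_+)$, i.e. that $\phi$ is bounded; together with the boundary modulus computation this finishes the proof. The key point is the submean-value estimate
\[
\log|f(z)| \;\leq\; \frac{1}{\pi} \int_\R \frac{\Im(z)}{|p-z|^2}\,\log|f(p)|\,dp, \qquad z \in \C_+,
\]
valid for every $f \in H^2(\C_+)$: on $\C_+$ the function $\log|f|$ is dominated by the Poisson integral of its own boundary values. Taking real parts in the formula defining $\Out(1,K)$ shows that the right-hand side is exactly $\log|F(z)|$, so the estimate reads $\log|f(z)| \leq \log|F(z)|$, that is, $|\phi(z)| \leq 1$ for all $z \in \C_+$. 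Thus $\phi \in H^\infty(\C_+)$ with $\|\phi\|_{H^\infty} \leq 1$, and since $|\phi(x)| = 1$ almost everywhere we conclude $\phi \in \Inn(\C_+)$ and $f = \phi \cdot F$ with $F \in \Out^2(\C_+)$, as claimed.

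The main obstacle is precisely the Poisson-domination inequality for $\log|f|$, which is not among the facts recalled above: it rests on the theory of the Smirnov (Nevanlinna) class — an $H^p$ function has a least harmonic majorant for $\log^+|f|$, hence lies in the Smirnov class, and for such functions $\log|f|$ is majorized by the Poisson extension of its boundary data — and this is exactly why the statement is cited from \cite{RR94}. A self-contained argument would first derive a harmonic majorant for $\log^+|f|$ (e.g. from $\log^+ t \leq \tfrac12 t^2$ and $f \in H^2$), deduce membership in the Smirnov class, and then invoke the standard representation theorem; with that in hand the factorization above goes through verbatim. The only genuinely routine points left are that redefining $f$ on a boundary null set does not affect $K$ or $F$, and that the identity $f = \phi F$ holds as holomorphic functions on $\C_+$ and passes to the boundary.
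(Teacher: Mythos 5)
Your argument is correct and is essentially the classical proof that the cited results in \cite{RR94} encapsulate: the paper itself gives no proof of this theorem but simply quotes it, and your construction (take \(F = \Out(|f|)\), which is licit by \fref{thm:HardyAlmostEverywhereNonZero} and \fref{thm:OuterBetrag}, then bound \(\phi = f/F\) by the Poisson majorization of \(\log|f|\)) is exactly the standard route, with the one nontrivial input correctly identified as the Smirnov-class/Poisson-domination inequality that the citation supplies. Nothing further is needed beyond the routine remarks you already make about boundary values of the quotient.
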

We now consider a special class of inner functions, namely the so-called Blaschke products:
\begin{definition}\label{def:Blaschke}
For \(\omega \in \C_+\), we define the \textit{Blaschke factor} \(\gls*{phio} \in \Inn(\C_+)\) by
\[\phi_\omega(z) \coloneqq \frac{z-\omega}{z-\overline{\omega}}, \quad z \in \C_+.\]
A \textit{finite Blaschke product} is a function \(\phi \in \Inn(\C_+)\) of the form
\[\phi = c \cdot \prod_{n = 1}^N \phi_{\omega_n}\]
with \(c \in \T\), \(N \in \N_0\) and \(\omega_1,\dots,\omega_N \in \C_+\).
\end{definition}
\begin{remark}
By construction the zeros of a finite Blaschke product \(\phi = c \cdot \prod_{n = 1}^N \phi_{\omega_n}\) are precisely the numbers \(\omega_1,\dots,\omega_N\) counted with multiplicity.
\end{remark}
\begin{prop}\label{prop:BlaschkeFactorization}
Let \(\omega \in \C_+\) and \(f \in H^2(\C_+)\) with \(f(\omega) = 0\). Then there exists \(g \in H^2(\C_+)\) such that
\[f = \phi_\omega \cdot g.\]
\end{prop}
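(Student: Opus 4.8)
The plan is to set $g := f/\phi_\omega$ and show that $g$ is a well-defined element of $H^2(\C_+)$. Holomorphy is immediate: $\phi_\omega(z) = \frac{z-\omega}{z-\overline{\omega}}$ has a simple zero at $\omega$ and no other zero in $\C_\infty$, so since $f$ vanishes at $\omega$ the quotient $g = f/\phi_\omega$ has a removable singularity there and defines a function in $\cO(\C_+)$. The only real content is the estimate $\lVert g\rVert_{H^2} < \infty$, and the cleanest route is to transport the question to the unit disc via the Cayley transform, where subharmonicity of $|g|^2$ appears in the convenient form of monotonicity of the circular means of a holomorphic function.

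Concretely, with $h(z) = \frac{z-i}{z+i}$ as in \fref{sec:HardySpaces} and the induced unitary $\Gamma\colon H^2(\D) \to H^2(\C_+)$, given on holomorphic representatives by $(\Gamma F)(z) = \frac{1}{\sqrt{\pi}\,(z+i)}F(h(z))$, put $\alpha := h(\omega) \in \D$ and let $\tilde f := \Gamma^{-1} f \in H^2(\D)$. Since $z+i\neq 0$ on $\C_+$, the identity $f(z) = \frac{1}{\sqrt{\pi}(z+i)}\tilde f(h(z))$ forces $\tilde f(\alpha) = 0$. Next I would prove the disc version of the statement: for $\tilde f\in H^2(\D)$ with $\tilde f(\alpha) = 0$ there is $\tilde g\in H^2(\D)$ with $\tilde f = b_\alpha\tilde g$, where $b_\alpha(w) = \frac{w-\alpha}{1-\overline{\alpha}w}$. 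Here $\tilde g := \tilde f/b_\alpha$ is holomorphic on $\D$ as above, and on the circle $|w|=r$ with $r>|\alpha|$ one has the elementary pointwise bound $|b_\alpha(w)|\geq \rho(r) := \frac{r-|\alpha|}{1-|\alpha|r} > 0$ with $\rho(r)\to 1$ as $r\uparrow 1$; hence $\int_\T|\tilde g(r\zeta)|^2\,d\zeta \leq \rho(r)^{-2}\lVert\tilde f\rVert_{H^2}^2$. Because $\tilde g$ is holomorphic, $r\mapsto \int_\T|\tilde g(r\zeta)|^2\,d\zeta$ is nondecreasing (Parseval in the Taylor coefficients), so $\lVert\tilde g\rVert_{H^2}^2 = \lim_{r\uparrow 1}\int_\T|\tilde g(r\zeta)|^2\,d\zeta \leq \lim_{r\uparrow 1}\rho(r)^{-2}\lVert\tilde f\rVert_{H^2}^2 = \lVert\tilde f\rVert_{H^2}^2 < \infty$, so $\tilde g\in H^2(\D)$.

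Finally I would transport back. Since $\phi_\omega$ is a Möbius automorphism of $\C_+$ sending $\R\cup\{\infty\}$ onto $\T$ with its single zero at $\omega$, the map $\phi_\omega\circ h^{-1}$ is an automorphism of $\D$ vanishing exactly at $\alpha$, hence $\phi_\omega\circ h^{-1} = c\,b_\alpha$ for some $c\in\T$, i.e. $b_\alpha\circ h = \overline{c}\,\phi_\omega$ on $\C_+$. Applying $\Gamma$ to $\tilde f = b_\alpha\tilde g$ and using $(\Gamma(b_\alpha\tilde g))(z) = b_\alpha(h(z))\,(\Gamma\tilde g)(z)$ gives $f = (b_\alpha\circ h)\cdot\Gamma\tilde g = \phi_\omega\cdot(\overline{c}\,\Gamma\tilde g)$, so $g := \overline{c}\,\Gamma\tilde g \in H^2(\C_+)$ satisfies $f = \phi_\omega\cdot g$ (and one reads off the extra fact $\lVert g\rVert_{H^2} = \lVert f\rVert_{H^2}$, since $|\phi_\omega| = 1$ a.e. on $\R$). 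The main obstacle is exactly the $H^2$-bound in the middle step: showing $g$ is not merely holomorphic but of finite Hardy norm. A naive estimate directly on $\C_+$ via $|\phi_\omega(x+iy)|\geq |y-\operatorname{Im}\omega|/(y+\operatorname{Im}\omega)$ degenerates as $y\to\operatorname{Im}\omega$, which is why I would route through the disc, where the monotonicity of the integral means of a holomorphic function closes the argument cleanly. (This factorization is classical and can also be found in Rosenblum--Rovnyak, around \cite[Lem.~5.21]{RR94}.)
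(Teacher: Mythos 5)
Your argument is correct, but it follows a genuinely different route from the one in the paper. The paper first invokes the inner--outer factorization (\fref{thm:OuterInnerDecomp}) to write \(f = \phi \cdot F\) with \(\phi \in \Inn(\C_+)\) and \(F \in \Out^2(\C_+)\); since the outer factor has no zeros, the zero at \(\omega\) sits in \(\phi\), and the paper divides only the \emph{inner} function by \(\phi_\omega\), checking via the elementary bound \(\bigl|\tfrac{z-\overline{\omega}}{z-\omega}\phi(z)\bigr| \leq 1 + \tfrac{2\Im(\omega)}{\epsilon}\) off a small disc around \(\omega\) (plus continuity near \(\omega\)) that the quotient lies in \(H^\infty(\C_+)\), and then multiplies back by \(F \in H^2(\C_+)\). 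You instead divide \(f\) itself by the Blaschke factor, transporting to the disc via the Cayley unitary \(\Gamma\) and using the pointwise lower bound \(|b_\alpha(w)| \geq \tfrac{r-|\alpha|}{1-|\alpha|r}\) on \(|w|=r\) together with the monotonicity of the integral means \(r \mapsto \int_\T |\tilde g(r\zeta)|^2\,d\zeta\) (Parseval in the Taylor coefficients) — essentially the classical F.~Riesz argument. Your bound on \(|b_\alpha|\) is the sharp one (the minimum over \(|w|=r\) is exactly \(\tfrac{r-|\alpha|}{1-|\alpha|r}\)), though even the cruder bound \(\tfrac{r-|\alpha|}{1+|\alpha|r}\) would suffice for membership in \(H^2(\D)\). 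What each approach buys: the paper's proof is short once the (deep, quoted) inner--outer decomposition is granted and avoids any change of domain; yours avoids that decomposition entirely, is more elementary modulo the Cayley bookkeeping (which the paper already sets up and uses elsewhere, e.g.\ in \fref{thm:HalmosUpper}), correctly sidesteps the degenerating direct estimate on \(\C_+\) near \(\Im(z) = \Im(\omega)\), and yields the extra information \(\lVert g\rVert_{H^2} = \lVert f\rVert_{H^2}\) for free from \(|\phi_\omega| = 1\) a.e.\ on \(\R\).
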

\begin{proof}
If \(f = 0\) the statement follows by choosing \(g = 0\). Therefore, we can assume that \({f \neq 0}\). Then, by \fref{thm:OuterInnerDecomp}, there exists an inner function \(\phi \in \Inn(\C_+)\) and an outer function \({F \in \Out^2(\C_+)}\) such that
\[f = \phi \cdot F.\]
Since \(F(\C_+) \subeq \exp(\C) = \C^\times\), we have \(F(\omega) \neq 0\) and therefore \(\phi(\omega) = 0\). This implies that there exists a function \(m \in \cO(\C_+)\) such that \(\phi(z) = (z-\omega) \cdot m(z)\), \(z \in \C_+\). Then, defining \(h \in \cO(\C_+)\) by \(h(z) \coloneqq (z-\overline{\omega}) \cdot m(z)\), \(z \in \C_+\), we have
\[\phi = \phi_\omega \cdot h.\]
Since \(h\) is holomorphic and therefore continuous, for \(\epsilon \in (0,\Im(\omega))\), we have
\[\sup_{z \in U_{\leq \epsilon}(\omega)} |h(z)| < \infty.\]
Further, for every \(z \in \C_+ \setminus U_{\leq \epsilon}(\omega)\), one has
\begin{align*}
|h(z)| &= \left|\frac{z-\overline{\omega}}{z-\omega} \cdot \phi(z)\right| \leq \left|\frac{z-\overline{\omega}}{z-\omega}\right| \cdot \left\lVert \phi\right\rVert_\infty = \left|1 + \frac{\omega-\overline{\omega}}{z-\omega}\right| \cdot 1 \leq 1 + \frac{|\omega-\overline{\omega}|}{|z-\omega|} \leq 1 + \frac{2 \Im(\omega)}{\epsilon},
\end{align*}
so also
\[\sup_{z \in \C_+ \setminus U_{\leq \epsilon}(\omega)} |h(z)| < \infty.\]
This shows that \(h \in H^\infty(\C_+)\). Then, setting
\[g \coloneqq h \cdot F \in H^\infty(\C_+) \cdot \Out^2(\C_+) \subeq H^2(\C_+),\]
we get
\[f = \phi \cdot F = \phi_\omega \cdot h \cdot F = \phi_\omega \cdot g. \qedhere\]
\end{proof}

\subsection{Pick functions}
After considering inner functions, in this section, we want to take a closer look at the infinitesimal generators of one-parameter semigroups
\[\phi: \R_{\geq 0} \to \Inn(\C_+,B(\cH)).\]
We will see that these infinitesimal generators are precisely operator-valued Pick functions, defined as follows:
\begin{definition}\label{def:Pick}
Let \(\cH\) be a finite-dimensional complex Hilbert space.
\begin{enumerate}[\rm (a)]
\item We call a function \({F \in \cO(\C_+,B(\cH))}\) an \textit{operator-valued Pick function} if
\[\Im(F(z)) \coloneqq \frac 1{2i} (F(z)-F(z)^*) \geq 0 \quad \forall z \in \C_+.\]
\item We write \(\gls*{Pick}\) for the set of operator-valued Pick functions.
\item We write \(\gls*{PickR}\) for the set of Pick functions \(F \in \mathrm{Pick}(\C_+,B(\cH))\), for which the limit
\[\gls*{Fstar}(x) \coloneqq \lim_{\epsilon \downarrow 0} F(x+i\epsilon)\]
exists for almost every \(x \in \R\) and is self-adjoint. We say such a Pick function has self-adjoint boundary values.
\item We set
\[\mathrm{Pick}(\C_+) \coloneqq \mathrm{Pick}(\C_+,B(\C)) \quad \text{and} \quad \mathrm{Pick}(\C_+)_\R \coloneqq \mathrm{Pick}(\C_+,B(\C))_\R\]
and, in the latter case, we say these functions have real boundary values.
\end{enumerate}
\end{definition}
To better understand this definition of Pick functions, we first want to see some consequences of the condition \(\Im(F(z)) \geq 0\) and what happens if \(0\) is contained in the spectrum of \(\Im(F(z))\):
\begin{prop}\label{prop:SpecImPos}
Let \(\cH\) be a finite-dimensional Hilbert space and let \(A \in B(\cH)\) such that \({\Im(A) \geq 0}\). Then the following hold:
\begin{enumerate}[\rm (a)]
\item \(\Spec(A) \subeq \overline{\C_+}\).
\item If \(x \in \Spec(A) \cap \R\) and \(v \in \cH \setminus \{0\}\) with \(Av = xv\), then \(\Im(A)v=0\) and \(A^*v = xv\).
\end{enumerate}
\end{prop}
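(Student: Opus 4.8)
Both parts rest on a single elementary computation: for any eigenvector $v \in \cH \setminus \{0\}$ of $A$ with eigenvalue $\lambda$, one evaluates the quadratic form of $\Im(A)$ at $v$. Since $Av = \lambda v$ and $\langle v, A^* v \rangle = \langle Av, v\rangle = \overline{\lambda}\,\langle v,v\rangle$, we get
\[
\langle v, \Im(A) v\rangle = \frac{1}{2i}\bigl(\langle v, Av\rangle - \langle v, A^* v\rangle\bigr) = \frac{\lambda - \overline{\lambda}}{2i}\,\lVert v\rVert^2 = \Im(\lambda)\,\lVert v\rVert^2.
\]
I would state and verify this identity first, as it is the common engine for (a) and (b).

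\textbf{Proof of (a).} Since $\cH$ is finite dimensional, every $\lambda \in \Spec(A)$ is an eigenvalue, so there is $v \neq 0$ with $Av = \lambda v$. By the identity above and the hypothesis $\Im(A) \geq 0$, one has $\Im(\lambda)\lVert v\rVert^2 = \langle v, \Im(A) v\rangle \geq 0$, hence $\Im(\lambda) \geq 0$, i.e. $\lambda \in \overline{\C_+}$. This gives $\Spec(A) \subeq \overline{\C_+}$.

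\textbf{Proof of (b).} Suppose $x \in \Spec(A) \cap \R$ and $Av = xv$ with $v \neq 0$. Applying the identity with $\lambda = x \in \R$ yields $\langle v, \Im(A) v\rangle = \Im(x)\lVert v\rVert^2 = 0$. Now I would use that $\Im(A)$ is a positive operator and therefore admits a positive square root $\Im(A)^{1/2} \in B(\cH)$; then $\lVert \Im(A)^{1/2} v\rVert^2 = \langle v, \Im(A) v\rangle = 0$, so $\Im(A)^{1/2} v = 0$ and hence $\Im(A) v = \Im(A)^{1/2}\bigl(\Im(A)^{1/2} v\bigr) = 0$. Finally, from $\frac{1}{2i}(A - A^*) v = \Im(A) v = 0$ we conclude $A^* v = A v = x v$.

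\textbf{Main obstacle.} There is no substantial obstacle here; the only point requiring a little care is the passage from $\langle v, \Im(A) v\rangle = 0$ to $\Im(A) v = 0$, which is exactly where positivity of $\Im(A)$ (rather than mere self-adjointness) is used, via the positive square root. Everything else is linear algebra over $\C$.
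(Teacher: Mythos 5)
Your proof is correct and follows essentially the same route as the paper: evaluate the quadratic form of $\Im(A)$ on an eigenvector (using finite-dimensionality so spectral points are eigenvalues) to get (a), and in (b) pass from $\braket*{v}{\Im(A)v}=0$ together with $\Im(A)\geq 0$ to $\Im(A)v=0$ and hence $A^*v=Av=xv$. Your only addition is making the last implication explicit via the positive square root, a detail the paper leaves unstated.
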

\begin{proof}
\begin{enumerate}[(a)]
\item For every eigenvalue \(\lambda \in \C\) of \(F(z)\) and every normalized eigenvector \(v \in \cH\) to this eigenvalue \(\lambda\), we have
\[\lambda = \braket*{v}{\lambda v} = \braket*{v}{A v} = \braket*{v}{\frac 12(A+A^*) v} + i \braket*{v}{\frac 1{2i}(A-A^*) v} \in \overline{\C_+},\]
so \(\Spec(A) \subeq \overline{\C_+}\).
\item Since \(x \in \R\), we have
\[0 = \braket*{v}{xv} - \braket*{xv}{v} = \braket*{v}{Av} - \braket*{Av}{v} = \braket*{v}{Av} - \braket*{v}{A^*v} = 2i \braket*{v}{\Im(A)v}.\]
Since \(\Im(A) \geq 0\), this implies \(\Im(A)v = 0\) and therefore
\[A^*v = Av = xv. \qedhere\]
\end{enumerate}
\end{proof}
\begin{lemma}\label{lem:PickMaxModPrin}
Let \(f \in \mathrm{Pick}(\C_+)\). If there exists \(z_0 \in \C_+\) such that \(\Im(f(z_0)) = 0\), then \(f\) is constant.
\end{lemma}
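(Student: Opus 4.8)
The plan is to invoke the minimum principle for harmonic functions. Since $\cH = \C$ here, the condition $f \in \mathrm{Pick}(\C_+)$ means $f \in \cO(\C_+)$ with $\Im(f(z)) = \frac{1}{2i}(f(z)-\overline{f(z)}) \geq 0$ for all $z \in \C_+$; equivalently, writing $f = u + iv$ with $u = \Re f$ and $v = \Im f$, the imaginary part $v$ is a harmonic function on $\C_+$ that is everywhere $\geq 0$, and the hypothesis is that $v(z_0) = 0$ for some $z_0 \in \C_+$.

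First I would observe that $\C_+$ is convex, hence connected, so the strong minimum principle applies: a harmonic function on a connected open set which attains its infimum at an interior point is constant. As $0$ is a lower bound for $v$ and is attained at the interior point $z_0$, this forces $v \equiv 0$ on $\C_+$, so $f$ takes only real values.

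Finally I would conclude that a holomorphic function on a connected domain with purely real values is constant — either via the Cauchy--Riemann equations (from $v \equiv 0$ one gets $u_x = v_y = 0$ and $u_y = -v_x = 0$, so $u$ is locally constant and hence constant on the connected set $\C_+$), or via the open mapping theorem (a non-constant holomorphic function has open image, which cannot be contained in $\R$). Either way $f$ is constant.

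I do not anticipate a genuine obstacle; this is a short classical argument. The only point requiring care is to use the strong form of the minimum principle on the connected domain $\C_+$, rather than merely the weak boundary version. An alternative, less elementary route would be to write $f$ through its Herglotz representation (cf.\ \fref{thm:GePick}) and note that $\Im f(z_0) = 0$ forces the linear coefficient and the representing measure to vanish; but the harmonic-function argument above is cleaner and avoids appealing to the representation theorem.
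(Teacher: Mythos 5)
Your argument is correct. It does, however, take a different route from the paper: there the lemma is proved by composing \(f\) with the Cayley transform \(h(z) = \frac{z-i}{z+i}\), which maps \(\overline{\C_+}\) into \(\overline{\D}\) and \(\R\) into \(\T\); since \(\Im(f(z_0)) = 0\) gives \(|h(f(z_0))| = 1\) while \(|h(f(z))| \leq 1\) everywhere, the maximum modulus principle forces \(h \circ f\), and hence \(f\), to be constant. You instead apply the strong minimum principle directly to the nonnegative harmonic function \(v = \Im f\) on the connected domain \(\C_+\), conclude \(v \equiv 0\), and then finish with Cauchy--Riemann (or the open mapping theorem). The two arguments are close cousins -- both ultimately rest on a maximum/minimum principle -- but yours avoids introducing the conformal change of variables and works with the defining positivity of a Pick function directly, which is arguably the more natural phrasing here; the paper's version has the mild advantage of quoting only the maximum modulus principle for holomorphic functions, without invoking the minimum principle for harmonic functions as a separate tool. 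Your closing remark about the Herglotz representation would also work but would be circular in spirit for this paper, since \fref{thm:GePick} is quoted as a known result rather than developed; the elementary argument you chose is the right one.
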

\begin{proof}
Using the biholomorphic Cayley transform
\[h(z) \coloneqq \frac{z-i}{z+i}\]
with
\[h(\R) = \T \setminus \{1\} \quad \text{and} \quad h(\C_+) = \D,\]
we get
\[h(f(z_0)) \in h(\R) \subeq \T\]
and
\[h(f(z)) \in h(\overline{\C_+}) \subeq \overline{\D} \quad \forall z \in \C_+.\]
Therefore
\[|h(f(z))| \leq 1 = h(f(z_0)) \quad \forall z \in \C_+.\]
This, by the Maximum modulus principle, implies that \(h \circ f\) and therefore also \(f\) is constant.
\end{proof}
We now present some results about Pick functions with self-adjoint/real boundary values:
\begin{theorem}\label{thm:OPGOfInnerFunctions}{\rm (cf. \cite[Prop. 3.4.6]{Sc20})}
Let \(\cH\) be a finite-dimensional complex Hilbert space and let \(\phi: \R_{\geq 0} \to \Inn(\C_+,B(\cH))\) be a strongly continuous one-parameter semigroup of inner functions. Then there exists a Pick function \({F \in \mathrm{Pick}(\C_+,B(\cH))_\R}\) such that
\[\phi_t(z) = e^{itF(z)}, \quad z \in \C_+.\]
\end{theorem}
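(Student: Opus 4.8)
The plan is to obtain $F$ as (a rotation by $-i$ of) the infinitesimal generator of the semigroup $t\mapsto\phi_t$, constructed first pointwise in $z$ and then shown to be holomorphic and to have self-adjoint boundary values. First I would record that $\phi_0=\mathbf 1$: the relation $\phi_0=\phi_0\cdot\phi_0$ forces $\phi_0(z)$ to be idempotent for every $z\in\C_+$, and since $\phi_0(x)\in\U(\cH)$ for a.e.\ $x$ an idempotent unitary is $\mathbf 1$, whence $\phi_0\equiv\mathbf 1$ by the identity theorem. Now fix $z\in\C_+$. Because $\phi_t$ is inner, $\phi_t(z)$ is a contraction, so $t\mapsto\phi_t(z)$ is a one-parameter semigroup of contractions on the finite-dimensional space $\cH$; strong continuity of $t\mapsto\phi_t$ makes it norm-continuous, hence of the form $\phi_t(z)=e^{tA(z)}$ for a bounded generator $A(z)\in B(\cH)$, and the Lumer--Phillips theorem (a contraction semigroup has dissipative generator) gives $\tfrac12(A(z)+A(z)^*)\le 0$. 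Setting $F(z):=-iA(z)$ we get $\phi_t(z)=e^{itF(z)}$ and $\Im F(z)=-\tfrac12(A(z)+A(z)^*)\ge 0$ for every $z\in\C_+$.

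The key analytic step is to exhibit $A(z)$ through its resolvent at $1$, namely $\Psi(z):=\int_0^\infty e^{-s}\phi_s(z)\,ds$. Since $\|\phi_s(z)\|\le 1$ and $(s,z)\mapsto\phi_s(z)$ is jointly continuous (separate continuity plus local boundedness of the holomorphic family $\{\phi_s\}$), this integral converges and defines a holomorphic $B(\cH)$-valued function with $\|\Psi\|_{H^\infty}\le 1$, i.e.\ $\Psi\in H^\infty(\C_+,B(\cH))$. As $\Spec(A(z))\subseteq\{\Re\le 0\}$, the standard identity $\int_0^\infty e^{s(A(z)-\mathbf 1)}\,ds=(\mathbf 1-A(z))^{-1}$ shows $\Psi(z)=(\mathbf 1-A(z))^{-1}$, which is invertible for every $z$. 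Therefore $z\mapsto\Psi(z)^{-1}$ is holomorphic and $F(z)=i\big(\Psi(z)^{-1}-\mathbf 1\big)$ is a holomorphic function $\C_+\to B(\cH)$; together with $\Im F\ge 0$ this already gives $F\in\mathrm{Pick}(\C_+,B(\cH))$.

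It remains to verify that $F_*(x):=\lim_{\epsilon\downarrow 0}F(x+i\epsilon)$ exists a.e.\ and is self-adjoint. Since $\Psi\in H^\infty(\C_+,B(\cH))$, it has non-tangential boundary values $\Psi_*(x)$ a.e.; the scalar function $\det\Psi$ lies in $H^\infty(\C_+)$ and is nowhere zero on $\C_+$, so by \fref{thm:HardyAlmostEverywhereNonZero} $\det\Psi_*(x)=\lim_{\epsilon}\det\Psi(x+i\epsilon)\neq 0$ for a.e.\ $x$, i.e.\ $\Psi_*(x)$ is invertible a.e.; hence $F_*(x)=i(\Psi_*(x)^{-1}-\mathbf 1)$ exists a.e. For self-adjointness, fix a countable dense $D\subseteq\R_{\ge 0}$: off a common null set one has $\phi_{t,*}(x)=e^{itF_*(x)}$ (continuity of $\exp$ and of boundary passage) and $\phi_{t,*}(x)\in\U(\cH)$ for all $t\in D$, hence for all $t\ge 0$ by real-analyticity in $t$; differentiating $e^{itF_*(x)}=e^{itF_*(x)^*}$ at $t=0$ yields $F_*(x)=F_*(x)^*$. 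Thus $F\in\mathrm{Pick}(\C_+,B(\cH))_\R$, and $\phi_t(z)=e^{itF(z)}$ by construction.

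I expect the main friction to be twofold. First, one must pin down the precise meaning of ``strongly continuous'' and extract from it both the joint continuity of $(s,z)\mapsto\phi_s(z)$ (needed to make the Bochner integral $\Psi$ holomorphic) and the pointwise-in-$z$ norm continuity of $t\mapsto\phi_t(z)$ (needed to invoke finite-dimensional semigroup theory); both follow from separability of $H^2(\C_+,\cH)$ and the normal-families behaviour of locally bounded holomorphic families, but they carry the analytic bookkeeping. Second, the boundary-value step requires care with null sets: one must upgrade the a.e.\ statements ``$\phi_{t,*}(x)$ is unitary'' and ``$\phi_{t,*}(x)=e^{itF_*(x)}$'', valid for each fixed $t$, to a statement valid simultaneously for all $t\ge 0$ off a single null set, which is where the countable-dense-set-plus-analyticity argument is essential. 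Everything else --- the resolvent identity, the invertibility of $\Psi(z)$, and the final derivative computation --- is short and routine.
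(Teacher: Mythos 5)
The paper contains no proof of this theorem: it is imported verbatim from the author's master's thesis \cite[Prop.~3.4.6]{Sc20}, so there is no internal argument to compare yours against. Judged on its own, your proof is correct, and the two delicate points are exactly the ones you flag. For (i), pointwise continuity of \(t \mapsto \phi_t(z)\) follows at once from the reproducing kernel: \((M_{\phi_t}(Q_z\cdot v))(z) = \phi_t(z)Q_z(z)v\) and evaluation at \(z\) is bounded by \(\lVert Q_z\rVert\) on \(H^2(\C_+,\cH)\), so strong continuity of the multiplication semigroup transfers to \(B(\cH)\)-valued continuity in \(t\) for each fixed \(z\); note also that \(\lVert\phi_t(z)\rVert\le 1\) needs the standard fact that an \(H^\infty\) function is dominated in the interior by the essential sup of its boundary values (apply it to matrix coefficients). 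For (ii), your countable-dense-set argument does close the null-set gap: off one null set the entire function \(t\mapsto e^{itF_*(x)}e^{-itF_*(x)^*}-\mathbf 1\) vanishes on a set with accumulation points, hence identically, and differentiating at \(t=0\) gives \(F_*(x)=F_*(x)^*\). Two simplifications are available: joint continuity of \((s,z)\mapsto\phi_s(z)\) is not needed for holomorphy of \(\Psi\) --- continuity in \(s\) for fixed \(z\), the bound \(\lVert\phi_s(z)\rVert\le 1\), dominated convergence and Morera applied to matrix coefficients suffice --- and in finite dimensions Lumer--Phillips is overkill, since differentiating \(\lVert e^{tA(z)}v\rVert^2\le\lVert v\rVert^2\) at \(t=0^+\) gives dissipativity directly. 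The resolvent device \(\Psi(z)=(\mathbf 1-A(z))^{-1}\) is the right way to obtain holomorphy of \(z\mapsto A(z)\) without controlling the uniformity of difference quotients, and the a.e.\ invertibility of \(\Psi_*\) via \(\det\Psi\in H^\infty(\C_+)\setminus\{0\}\) and \fref{thm:HardyAlmostEverywhereNonZero} is sound (the same boundary-uniqueness theorem, rather than the interior identity theorem, is also what justifies \(\phi_0\equiv\mathbf 1\) from its a.e.\ unitary idempotent boundary values).
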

\begin{lem}\label{lem:BoundaryMeasurable}
Let \(\cH\) be a finite-dimensional complex Hilbert space and let \({F \in \mathrm{Pick}(\C_+,B(\cH))_\R}\). Then, the function \(F_*\) is measurable.
\end{lem}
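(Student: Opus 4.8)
The plan is to realize $F_*$ as an almost-everywhere pointwise limit of a sequence of continuous $B(\cH)$-valued functions on $\R$ and then invoke the elementary fact that such limits are measurable. Since $\dim \cH < \infty$, the space $B(\cH)$ is a finite-dimensional normed space, hence a complete separable metric space, and the various reasonable notions of measurability for $B(\cH)$-valued functions (Borel measurability for the norm topology, entrywise measurability, weak-$*$-measurability) all agree; I will work with Borel measurability throughout.

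First I would fix $n \in \N$ and set $F_n \colon \R \to B(\cH)$, $F_n(x) \coloneqq F(x + i/n)$. Since $F \in \cO(\C_+,B(\cH))$ is in particular continuous on $\C_+$ and $x \mapsto x + i/n$ is continuous from $\R$ into $\C_+$, each $F_n$ is continuous, hence Borel measurable. Next I would consider
\[\tilde F(x) \coloneqq \begin{cases} \lim_{n \to \infty} F_n(x) & \text{if this limit exists,} \\ 0 & \text{otherwise.} \end{cases}\]
The set on which the limit exists is precisely the set on which $(F_n(x))_{n}$ is Cauchy in $B(\cH)$ (using completeness of $B(\cH)$), namely the measurable set $\bigcap_{k} \bigcup_{N} \bigcap_{m,m' \geq N} \{ x \in \R : \|F_m(x) - F_{m'}(x)\| \leq 1/k \}$, and on this set $\tilde F$ is a pointwise limit of measurable functions; hence $\tilde F$ is Borel measurable. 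Finally, by the very definition of $\mathrm{Pick}(\C_+,B(\cH))_\R$, the limit $\lim_{\epsilon \downarrow 0} F(x + i\epsilon)$, and therefore also $\lim_{n \to \infty} F_n(x)$, exists and equals $F_*(x)$ for almost every $x \in \R$. Thus $F_* = \tilde F$ almost everywhere, so $F_*$ is measurable.

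There is essentially no obstacle here: the only place a hypothesis enters is $\dim \cH < \infty$, which guarantees that $B(\cH)$ is a well-behaved target space so that ``a pointwise almost-everywhere limit of measurable functions is measurable'' applies verbatim. Should one wish to avoid even this, one may instead observe that a $B(\cH)$-valued function is measurable if and only if each scalar matrix coefficient $x \mapsto \braket*{v}{F(x)w}$, $v,w \in \cH$, is measurable, and run the same three-step argument coefficientwise, reducing everything to the classical statement for $\C$-valued functions.
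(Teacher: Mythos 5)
Your proof is correct and follows essentially the same route as the paper: both realize \(F_*\) almost everywhere as a pointwise limit of the continuous functions \(x \mapsto F\bigl(x + \tfrac in\bigr)\), the paper doing this coefficientwise via \(\limsup_n \braket*{v}{F(x+\tfrac in)w}\) while you do it directly in \(B(\cH)\) (and mention the coefficientwise reduction as an alternative). If anything, your explicit treatment of the set where the limit exists via the Cauchy criterion is slightly more careful than the paper's use of a \(\limsup\) of complex-valued functions.
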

\begin{proof}
Let \(v,w \in \cH\). For \(n \in \N\), we define
\[g_n: \R \to \C, \quad x \mapsto \braket*{v}{F\left(x+\frac in\right)w}.\]
Then, for every \(n \in \N\), the function \(g_n\) is continuous and therefore measurable. This implies that also the function
\[\braket*{v}{F_*(\cdot)w} = \limsup_{n \to \infty} g_n\]
is measurable. Since \(v,w \in \cH\) were chosen arbitrarily, this implies that \(F_*\) is measurable.
\end{proof}
\begin{theorem}\label{thm:GePick}{\rm \textbf{(Nevanlinna Representation)}(cf. \cite[Thm. 5.3, Thm. 5.13(iii)]{RR94}, \cite[Thm. 7]{Ge17})}
Let \(F \in \cO(\C_+)\). Then \(F \in \mathrm{Pick}(\C_+)\), if and only if there exist \(C \in \R\), \(D \geq 0\) and some Borel measure \(\mu\) with
\[\int_\R \frac{d\mu(\lambda)}{1+\lambda^2} < \infty\]
such that
\[F(z) = C + Dz + \int_\R \frac 1{\lambda - z} - \frac{\lambda}{1+\lambda^2} \,d\mu(\lambda) \quad \forall z \in \C_+.\]
In this case, one has \(F \in \mathrm{Pick}(\C_+)_\R\), if and only if \(\mu\) is singular with respect to the Lebesgue measure on \(\R\). If \(\mu\) is singular, a minimal support for it is given by
\[\gls*{Smu} \coloneqq \left\{x \in \R: \lim_{\epsilon \downarrow 0} \Im(F(x+i\epsilon)) = +\infty\right\}.\]
\end{theorem}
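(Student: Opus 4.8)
The plan is to reduce the statement to the classical Riesz--Herglotz representation of holomorphic functions with nonnegative real part on the unit disc $\D$ and then transport everything to $\C_+$ through the Cayley transform $h(z)=\frac{z-i}{z+i}$, keeping careful track of the boundary point $h(\infty)=1$. The two "in this case" assertions will then be read off from the boundary behaviour of Herglotz functions via Stieltjes inversion.

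For the easy implication, suppose $C\in\R$, $D\ge 0$ and $\mu$ is a positive Borel measure with $\int_\R\frac{d\mu(\lambda)}{1+\lambda^2}<\infty$. I would first note that on any compact subset of $\C_+$ the integrand $\frac{1}{\lambda-z}-\frac{\lambda}{1+\lambda^2}$ is $O\!\big(\frac{1}{1+\lambda^2}\big)$ uniformly, so the integral converges locally uniformly and $F$ is holomorphic; then, using $\Im\frac{1}{\lambda-z}=\frac{\Im z}{|\lambda-z|^2}$ and the fact that $\frac{\lambda}{1+\lambda^2}$ is real, one gets $\Im F(z)=D\,\Im z+\int_\R\frac{\Im z}{|\lambda-z|^2}\,d\mu(\lambda)\ge 0$, so $F\in\mathrm{Pick}(\C_+)$. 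For the converse, let $F\in\mathrm{Pick}(\C_+)$ and set $g:=-i\,(F\circ h^{-1})\colon\D\to\C$, which is holomorphic with $\Re g=(\Im F)\circ h^{-1}\ge 0$. By the Herglotz theorem there are $\beta\in\R$ and a finite positive Borel measure $\nu$ on $\T$ with $g(w)=i\beta+\int_\T\frac{\zeta+w}{\zeta-w}\,d\nu(\zeta)$, hence $F(z)=-\beta+\int_\T i\,\frac{\zeta+h(z)}{\zeta-h(z)}\,d\nu(\zeta)$. The decisive computation is the Möbius identity: for $\lambda\in\R$ and $\zeta=h(\lambda)$ one checks $i\,\frac{h(\lambda)+h(z)}{h(\lambda)-h(z)}=\frac{\lambda z+1}{\lambda-z}=(1+\lambda^2)\big(\frac{1}{\lambda-z}-\frac{\lambda}{1+\lambda^2}\big)$, while for the atom $\zeta=1=h(\infty)$ one gets $i\,\frac{1+h(z)}{1-h(z)}=z$. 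Writing $\nu=\nu(\{1\})\,\delta_1+\nu'$ with $\nu'(\{1\})=0$, pulling $\nu'$ back along $h$ to a finite measure $\tilde\nu$ on $\R$, and setting $D:=\nu(\{1\})\ge 0$, $C:=-\beta$ and $d\mu(\lambda):=(1+\lambda^2)\,d\tilde\nu(\lambda)$, I obtain the claimed representation, with $\int_\R\frac{d\mu}{1+\lambda^2}=\tilde\nu(\R)\le\nu(\T)<\infty$.

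The remaining points I would handle as follows. For uniqueness of $(C,D,\mu)$: from $\Im F(x+i\epsilon)=D\epsilon+\int_\R\frac{\epsilon}{(\lambda-x)^2+\epsilon^2}\,d\mu(\lambda)$ one sees that $\frac1\pi\,\Im F(\cdot+i\epsilon)$ acts on $\mu$ as an approximate identity, so Stieltjes inversion gives $\frac1\pi\int_a^b\Im F(x+i\epsilon)\,dx\to\mu((a,b))+\tfrac12(\mu(\{a\})+\mu(\{b\}))$, which determines $\mu$; moreover $D=\lim_{y\to\infty}\Im F(iy)/y$ and $C=\Re F(i)$ (at $z=i$ the integrand is purely imaginary, and so is $Di$). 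For the characterization of $\mathrm{Pick}(\C_+)_\R$: by Fatou's theorem the non-tangential limit $F_*(x)=\lim_{\epsilon\downarrow 0}F(x+i\epsilon)$ exists a.e., and the Radon--Nikodym derivative of the absolutely continuous part of $\mu$ with respect to Lebesgue measure equals $\frac1\pi\Im F_*$ a.e.; hence $\mu$ is Lebesgue-singular if and only if $\Im F_*=0$ a.e., i.e. if and only if $F$ has self-adjoint (here real) boundary values. Finally, for $\mu$ singular, the Aronszajn--Donoghue/de~Valiron theory of boundary behaviour of Herglotz functions identifies $S_\mu=\{x:\lim_{\epsilon\downarrow0}\Im F(x+i\epsilon)=+\infty\}$ as a Borel set with $\mu(\R\setminus S_\mu)=0$, minimal in the sense that it is itself carried by $\mu$ and no essentially smaller carrier exists.

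The main obstacle is not the algebra, which is routine once the Cayley transform is set up correctly, but marshalling the classical analytic facts: the Herglotz representation on $\D$, Stieltjes inversion for uniqueness, and above all the fine boundary statements (a.e.\ existence of $F_*$, the identification of the a.c.\ density with $\frac1\pi\Im F_*$, and the $+\infty$-set description of the support of a singular Herglotz measure). I would quote these from \cite{RR94} and \cite{Ge17} as the statement already indicates, and devote the actual write-up to the transfer lemma between $\D$ and $\C_+$ and the bookkeeping of the atom at infinity together with the normalizing term $\frac{\lambda}{1+\lambda^2}$.
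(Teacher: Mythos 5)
The paper does not prove this theorem at all: it is imported verbatim from the cited literature ([RR94, Thm.~5.3, Thm.~5.13(iii)] and [Ge17, Thm.~7]), so there is no internal proof to compare against. Your sketch is the standard argument behind those references and is essentially correct: the Cayley-transform reduction to the Herglotz representation on \(\D\) is set up properly (your Möbius identity \(i\frac{h(\lambda)+h(z)}{h(\lambda)-h(z)}=\frac{1+\lambda z}{\lambda-z}=(1+\lambda^2)\bigl(\frac1{\lambda-z}-\frac{\lambda}{1+\lambda^2}\bigr)\) and the atom at \(\zeta=1\) giving the term \(Dz\) are exactly the right bookkeeping), the easy direction via \(\Im F(z)=D\Im z+\int\frac{\Im z}{|\lambda-z|^2}d\mu\) is fine, and Stieltjes inversion is the right tool for uniqueness. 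The difference from the paper is only that you prove (modulo classical input) what the paper merely quotes, while still delegating the genuinely hard analytic facts — a.e.\ existence of \(F_*\), identification of the absolutely continuous density of \(\mu\) with \(\frac1\pi\Im F_*\), and the \(+\infty\)-set description of supports of singular Herglotz measures — to the same sources the theorem already cites, which is a legitimate division of labour here.

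Two small points to tighten if you write this up. First, for the \(\mathrm{Pick}(\C_+)_\R\) characterization you need not just a.e.\ existence but a.e.\ \emph{finiteness} of the vertical limit \(F_*(x)=\lim_{\epsilon\downarrow0}F(x+i\epsilon)\); this is standard (compose with a Cayley transform of the range and use that a non-constant bounded holomorphic function cannot have boundary value equal to a fixed unimodular constant on a set of positive measure), but say a word about it, since the paper's definition of \(\mathrm{Pick}(\C_+)_\R\) demands a finite real limit. Second, "minimal support" is only determined up to sets null for both \(\mu\) and Lebesgue measure, so pin your final claim to the precise notion used in [Ge17] rather than asserting that "no essentially smaller carrier exists" in an unqualified sense; and the name you want there is de la Vall\'ee Poussin (together with Aronszajn--Donoghue), not "de Valiron".
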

\begin{remark}\label{rem:GePick}
If one takes the measure \(\mu\) from this theorem and defines a measure \(\tilde \nu\) on \(\R\) by
\[d\tilde \nu(\lambda) \coloneqq \frac{d\mu(\lambda)}{1+\lambda^2},\]
one gets that
\[F(z) = C + Dz + \int_\R \frac {1+\lambda z}{\lambda - z} \,d\tilde \nu(\lambda)\]
for a finite measure \(\tilde \nu\) on \(\R\). Notice that
\[\lim_{\lambda \to \pm\infty} \frac {1+\lambda z}{\lambda - z} = z,\]
so, if we extend our measure \(\tilde \nu\) to a measure \(\nu\) on \(\overline{\R}\coloneqq \R \cup \{\infty\}\) by \(\nu(\{\infty\}) \coloneqq D\), we get
\[F(z) = C + \int_{\overline{\R}} \frac {1+\lambda z}{\lambda - z} \,d\nu(\lambda).\]
Then, setting
\[\gls*{SF} \coloneqq \begin{cases} S_\mu & \text{if }D = 0 \\ S_\mu \cup \{\infty\} & \text{if }D > 0,\end{cases}\] 
in the case that \(\mu\) was singular, the set \(S_F\) is a minimal support for the measure \(\nu\).
\end{remark}
Given a Pick function \(F \in \mathrm{Pick}(\C_+)_\R\), we want to understand how the function \(F_*\) behaves on \(\overline{\R} \setminus S_F\). For this, we need the following definition:
\begin{definition}
Let \(\overline{\R}\coloneqq \R \cup \{\infty\}\) be the one-point compactification of \(\R\). For \(a,b \in \overline{\R}\), we set
\[(a,b) \coloneqq \begin{cases} \{x \in \R: a<x<b\} & \text{if }a,b \in \R, a \leq b \\ \{x \in \R: b < x\} \cup \{\infty\} \cup \{x \in \R: x < a\} & \text{if }a,b \in \R, a > b \\ \{x \in \R: a<x\} & \text{if }a \in \R,b = \infty \\ \{x \in \R: x<b\} & \text{if }b \in \R, a = \infty \\ \eset & \text{if }a,b = \infty.\end{cases}\] 
\end{definition}
\begin{lem}\label{lem:countableUnion}
Let \(A \subeq \overline{\R}\) be a closed subset. Then there exists a countable family \(\left(a_j,b_j\right)_{j \in J}\) in \(\overline{\R} \times \overline{\R}\) such that \(\overline{\R} \setminus A = \bigsqcup_{j \in J} \left(a_j,b_j\right)\).
\end{lem}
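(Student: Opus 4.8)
The plan is to recognize this as the classical structure theorem for open subsets of a circle: identifying $\overline{\R}$ with the circle via the Cayley transform $h$ extended by $h(\infty)\coloneqq 1$ (so that $h\colon\overline{\R}\to\T$ becomes a homeomorphism), every open subset of $\overline{\R}$ is a disjoint union of countably many open arcs, and in the coordinates of $\overline{\R}$ these arcs are exactly the sets $(a,b)$ introduced above. So I would apply this to $U\coloneqq\overline{\R}\setminus A$, which is open precisely because $A$ is closed.

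Concretely, the first step is to record that $\overline{\R}$ is compact and metrizable, hence second countable, and that it is locally connected: every point, including $\infty$, has a neighbourhood basis consisting of sets of the form $(a,b)$, each of which is connected. Local connectedness implies that the connected components $(V_j)_{j\in J}$ of the open set $U$ are open; they are pairwise disjoint and their union is $U$. Choosing for each $j$ a nonempty connected member of a fixed countable basis contained in $V_j$, and using that distinct components are disjoint, shows that $J$ is countable.

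The second step is to identify each component $V_j$ with one of the sets $(a_j,b_j)$. If $\infty\notin V_j$, then $V_j$ is a nonempty connected open subset of $\R$, hence an open interval, which is of the form $(a_j,b_j)$ with $a_j\le b_j$ in $\R$, or a half-line $\{x>a_j\}=(a_j,\infty)$, or $\{x<b_j\}=(\infty,b_j)$. If $\infty\in V_j$, then $V_j$ contains a punctured neighbourhood $\{x>d\}\cup\{\infty\}\cup\{x<c\}$ of $\infty$, and connectedness forces $V_j$ to be $\{x>b_j\}\cup\{\infty\}\cup\{x<a_j\}=(a_j,b_j)$ with $a_j>b_j$, or a half-line through $\infty$, or $\overline{\R}\setminus\{p\}$, or all of $\overline{\R}$; in each case one selects $a_j,b_j\in\overline{\R}$ so that the matching clause of the definition of $(a,b)$ reproduces $V_j$ exactly. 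Collecting these choices yields $\overline{\R}\setminus A=\bigsqcup_{j\in J}(a_j,b_j)$, which is the assertion.

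The one-dimensional input — that an open subset of $\R$ is the disjoint union of its component intervals — is standard; the only delicate point is the bookkeeping at $\infty$, in particular orienting the arc that surrounds $\infty$ so that the two labels $a_j$, $b_j$ appear in the correct order, and naming the fully degenerate components $\overline{\R}$ and $\overline{\R}\setminus\{p\}$ (which occur exactly when $A=\emptyset$ or $A$ is a singleton). I expect this case analysis, rather than any substantial topology, to be the main thing to get right.
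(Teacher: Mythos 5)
Your proposal is correct and follows essentially the same route as the paper: the paper likewise decomposes \(\overline{\R}\setminus A\) into its maximal connected subsets, asserts that these are of the form \((a,b)\), and gets countability by choosing a rational in each piece, which is the same device as your countable-basis argument. The delicate point you flag at the end is real but is equally present in the paper's own proof: with the definition of \((a,b)\) as printed, the degenerate components \(\overline{\R}\setminus\{p\}\) (arising for \(A=\{p\}\)) and \(\R\) (arising for \(A=\{\infty\}\)) are not reproduced by any clause, so strictly speaking both arguments require the definition of \((a,b)\) to be read or amended to cover these cases; this is a defect of the statement rather than of your argument as compared with the paper's.
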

\begin{proof}
The maximally connected subsets of \(\overline{\R} \setminus A\) are of the form \((a,b)\) for some \(a,b \in \overline{\R}\). Therefore
\[\overline{\R} \setminus A = \bigsqcup_{j \in J} \left(a_j,b_j\right)\]
for some family \(\left(a_j,b_j\right)_{j \in J}\) in \(\overline{\R} \times \overline{\R}\) with \(\left(a_j,b_j\right) \neq \eset\) for every \(j \in J\). Since \(\left(a_j,b_j\right) \neq \eset\) implies that \(\left(a_j,b_j\right) \cap \Q \neq \eset\), for every \(j \in J\), there exists \(q_j \in \Q\) such that \(q_j \in \left(a_j,b_j\right)\). From this follows the existence of a surjective function \(f: \Q \to J\), and therefore \(J\) is countable.
\end{proof}
\begin{prop}\label{prop:PickBoundaryDiffeo}
Let \(F \in \mathrm{Pick}(\C_+)_\R\). Further, let \(a,b \in \overline{\R}\) with \(a \neq b\) such that \((a,b) \cap S_F = \eset\) and \(a,b \in S_F\). Then the map
\[F_*\big|_{\left(a,b\right)}: (a,b) \to \R\]
is monotonically increasing, bijective, and continuously differentiable.
\end{prop}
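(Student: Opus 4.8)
The plan is to use the Nevanlinna representation (\fref{thm:GePick}) to produce a holomorphic extension of $F$ across the open interval $(a,b)$, to check that this extension is real-valued and has strictly positive derivative there, and to show that it tends to $-\infty$ and to $+\infty$ at the two endpoints; the endpoint analysis is where the hypothesis $a,b\in S_F$ enters, and it is the heart of the argument.

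\emph{Reduction to $(a,b)\subseteq\R$.} If $\infty\in(a,b)$ then $\infty\notin S_F$, so $D=0$ in the Nevanlinna representation; picking $\xi\in\R$ in the nonempty open complementary arc $\oline{\R}\setminus\oline{(a,b)}$ and replacing $F$ by $F\circ m^{-1}$ with $m(z)=\frac{1}{\xi-z}\in\mathrm{Mob}(\C_+)$ turns the problem into one with $(a,b)\subseteq\R$: this $F\circ m^{-1}$ is again a Pick function with real boundary values (Möbius transformations preserving $\C_+$ preserve $\mathrm{Pick}(\C_+)_\R$, as used repeatedly above), its singular support is $m(S_F)$, and $m$ restricts to an orientation-preserving real-analytic diffeomorphism of the relevant arcs, so the statement transfers. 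So assume $(a,b)\subseteq\R$ is an ordinary interval and write
\[F(z)=C+Dz+\int_\R\Big(\frac1{\lambda-z}-\frac\lambda{1+\lambda^2}\Big)\,d\mu(\lambda),\qquad \mu \text{ singular.}\]
Since $S_\mu\subseteq S_F$ is a support of $\mu$ and $(a,b)\cap S_F=\eset$, we get $\mu((a,b))=0$, so $F$ coincides on $\C_+$ with
\[\tilde F(z)\coloneqq C+Dz+\int_{\R\setminus(a,b)}\Big(\frac1{\lambda-z}-\frac\lambda{1+\lambda^2}\Big)\,d\mu(\lambda),\]
which is holomorphic on $\C_+\cup(a,b)\cup\C_-$. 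For $x\in(a,b)$ the boundary value $F_*(x)$ is the non-tangential limit of $F$, hence equals $\tilde F(x)$; and $\oline{\tilde F(\bar z)}=\tilde F(z)$ because $C,D\in\R$ and $\mu\geq0$, so $\tilde F$ is real-analytic and real-valued on $(a,b)$. Differentiating under the integral sign (legitimate since $\frac1{(\lambda-x)^2}\le\frac{C_K}{1+\lambda^2}$ uniformly for $x$ in compact $K\subseteq(a,b)$) gives $\tilde F'(x)=D+\int_{\R\setminus(a,b)}\frac{d\mu(\lambda)}{(\lambda-x)^2}\geq0$; this cannot vanish, for otherwise $D=0$ and $\mu(\R\setminus(a,b))=0$, hence $\mu\equiv0$ and $F$ constant (\fref{lem:PickMaxModPrin}), contradicting $\eset\neq\{a,b\}\subseteq S_F$. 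Thus $F_*|_{(a,b)}=\tilde F|_{(a,b)}$ is continuously differentiable and strictly increasing.

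\emph{Endpoint limits (the main point).} Consider $x\downarrow a$; near $a$ the measure $\mu$ lives on $(-\infty,a]$ only. That $a\in S_\mu$ means $\Im F(a+i\epsilon)=D\epsilon+\int_\R\frac{\epsilon}{(\lambda-a)^2+\epsilon^2}\,d\mu(\lambda)\to+\infty$, so $\int_\R\frac{\epsilon}{(\lambda-a)^2+\epsilon^2}\,d\mu(\lambda)\to+\infty$. For any fixed $\delta>0$ the contributions from $\{\lambda\geq b\}$ and from $\{\lambda\leq a-\delta\}$ are $\le C\epsilon\int_\R\frac{d\mu}{1+\lambda^2}=O(\epsilon)$, so $\int_{[a-\delta,a]}\frac{\epsilon}{(\lambda-a)^2+\epsilon^2}\,d\mu(\lambda)\to+\infty$; combined with the elementary inequality $\frac{\epsilon}{u^2+\epsilon^2}\le\frac1u$ (valid for all $u,\epsilon>0$, since $u^2-\epsilon u+\epsilon^2>0$) this forces $\int_{[a-\delta,a]}\frac{d\mu(\lambda)}{a-\lambda}=+\infty$. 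Now in the formula for $\tilde F(x)$ the term $\int_{[a-\delta,a]}\frac1{\lambda-x}\,d\mu(\lambda)=-\int_{[a-\delta,a]}\frac{d\mu(\lambda)}{x-\lambda}\to-\infty$ by monotone convergence as $x\downarrow a$, while $C+Dx$ and the integral of $\frac1{\lambda-x}-\frac\lambda{1+\lambda^2}$ over $(-\infty,a-\delta]\cup[b,\infty)$ tend to finite limits (dominated convergence, the combination $\frac1{\lambda-x}-\frac\lambda{1+\lambda^2}$ being $O((1+\lambda^2)^{-1})$ in $\lambda$ uniformly for $x$ near $a$). Hence $\tilde F(x)\to-\infty$ as $x\downarrow a$, and symmetrically $\tilde F(x)\to+\infty$ as $x\uparrow b$. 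A continuous strictly increasing function on $(a,b)$ with these two limits is a bijection onto $\R$, and it is continuously differentiable by the previous paragraph, which proves the proposition. The main obstacle is exactly this endpoint step: converting the defining condition of $S_\mu$, namely $\Im F\to+\infty$, into the divergence $\int\frac{d\mu}{a-\lambda}=\infty$ that makes $\tilde F$ escape to $-\infty$.
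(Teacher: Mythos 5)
Your argument for a finite interval $(a,b)\subseteq\R$ with $a,b\in\R$ is correct, and it takes a slightly different and in places more self-contained route than the paper: you build the holomorphic extension $\tilde F$ directly from the Nevanlinna representation using $\mu((a,b))=0$ (the paper instead invokes Donoghue's reflection lemma), you obtain \emph{strict} monotonicity by differentiating under the integral (the paper only gets weak monotonicity and then injectivity from analyticity), and at a finite endpoint you convert $a\in S_\mu$ into $\int_{[a-\delta,a]}\frac{d\mu(\lambda)}{a-\lambda}=\infty$ and conclude by monotone convergence, whereas the paper argues by contradiction: a finite limit at $b$ would force $\lambda\mapsto\frac1{\lambda-b}$ into $L^1(\nu)$ and then dominated convergence would make $\lim_{\epsilon\downarrow 0}F(b+i\epsilon)$ finite, contradicting $b\in S_F$. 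All the estimates in that part of your proposal check out.

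There is, however, one genuine gap: the case in which one endpoint is the point at infinity. Your Möbius reduction is only invoked when $\infty\in(a,b)$, i.e.\ when $\infty$ lies in the \emph{interior} of the arc; but the proposition also allows $a=\infty$ or $b=\infty$, in which case $(a,b)$ is a half-line and your reduction does nothing. Your endpoint analysis then does not apply to the infinite endpoint: for $b=\infty$ the hypothesis $b\in S_F$ does not mean $\Im F(b+i\epsilon)\to\infty$ at a finite boundary point — by \fref{rem:GePick} it means $D>0$ — and what must be shown is $\lim_{x\to+\infty}F_*(x)=+\infty$. This is not covered by your estimates, since for large $x$ the integral term is only controlled by $x\int_{\{|\lambda|\ge M\}}\frac{d\mu(\lambda)}{1+\lambda^2}$ in the tail (recall $\mu$ need not be finite), so it can a priori grow linearly. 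One repair is a direct estimate: split off $\{|\lambda|\ge M\}$, bound its contribution by $x\,\epsilon_M$ with $\epsilon_M\to0$ as $M\to\infty$, choose $M$ with $\epsilon_M<D$, and let the term $Dx$ dominate. The other repair is to strengthen the reduction so that a Möbius transformation in $\mathrm{Mob}(\C_+)$ also moves an infinite \emph{endpoint} to a finite one (this is what the paper's ``suitable Möbius transformation'' accomplishes); but then you must verify how $S_F$ — including the point $\infty$, whose membership is encoded in $D$ rather than in a boundary limit — transforms under such maps, a verification that your proposal (like the paper) currently asserts rather than proves. As written, the half-line case is simply not treated.
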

\begin{proof}
By replacing \(F\) with \(F \circ M\), where \(M: \C_+ \to \C_+\) is a suitable Möbius transformation, we can, without loss of generality, assume that \(a,b \in \R\) with
\[a<0<b.\]
Now, since \((a,b) \cap S_F = \eset\), by \cite[Lem. II.2]{Do74}, there exists a function \(\tilde F \in \cO(\C_+ \cup (a,b) \cup \C_-)\) with
\[\tilde F(z) = \overline{\tilde F(\overline{z})} \quad \forall z \in \C_+ \cup (a,b) \cup \C_-\]
such that
\[\tilde F \big|_{\C_+} = F.\]
Since \(\tilde F\) is holomorphic, we have that \(F_*\big|_{(a,b)}= \tilde F\big|_{a,b}\) is continuously differentiable. Further, we have
\[\Im \tilde F(z) = \Im F(z) \geq 0 \quad \forall z \in \C_+,\]
which implies that \(\Im \tilde F(z) \leq 0\) for all \(z \in \C_-\) and therefore
\[\frac d{dy}\bigg|_{y=0} \Im \tilde F(x+iy) \geq 0 \quad \forall x \in (a,b).\]
This, by the Cauchy--Riemann equations, implies that
\[0 \leq \frac d{dx} \Re \tilde F(x) = \frac d{dx} F_*(x) \quad \forall x \in (a,b),\]
so \(F_*\big|_{(a,b)}\) is monotonically increasing.
\newpage
Now we assume that \(F_*\big|_{(a,b)} = \tilde F\big|_{(a,b)}\) is not injective, i.e. that there exist \(c,d \in (a,b)\) with \(c<d\) such that
\[\tilde F(c) = \tilde F(d).\]
Then by the monotony of \(F_*\big|_{(a,b)} = \tilde F\big|_{(a,b)}\), the function \(\tilde F\) would be constant on \([c,d]\) and therefore constant in contradiction to the assumption that \(a,b \in S_F\). Therefore \(F_*\big|_{\left(a,b\right)}\) is injective.

To show that the function \(F_*\big|_{\left(a,b\right)}\) is surjective, due to its monotony, it suffices to show that
\[\lim_{x \downarrow a} F_*(x) = -\infty \qquad \text{and} \qquad \lim_{x \uparrow b} F_*(x) = +\infty.\]
For \(x \in (a,b)\), \(\epsilon \in (0,1)\) and \(\lambda \in \overline{\R} \setminus (a,b)\), setting \(m \coloneqq \max\{|a|,|b|\}\), we have
\begin{align*}
\left|\frac{1+ \lambda (x+i\epsilon)}{\lambda - (x+i\epsilon)}\right| &= \left|(x+i\epsilon) + \frac{1+(x+i\epsilon)^2}{\lambda - (x+i\epsilon)}\right| \leq |(x+i\epsilon)| + \frac{1+\left|x+i\epsilon\right|^2}{|\lambda - (x+i\epsilon)|}
\\&\leq \sqrt{1+m^2} + \frac{2+m^2}{|\lambda-x|} \leq \sqrt{1+m^2} + \frac{2+m^2}{\min\{|x-a|,|x-b|\}}.
\end{align*}
Therefore, writing
\[F(z) = C + \int_{\overline{\R}} \frac {1+\lambda z}{\lambda - z} \,d\nu(\lambda), \quad z \in \C_+,\]
with \(\nu\) defined as in \fref{rem:GePick}, by the Dominated Convergence Theorem, we get
\begin{align*}
F_*(x) &= \lim_{\epsilon \downarrow 0} F(x+i\epsilon) = C + \lim_{\epsilon \downarrow 0} \int_{\overline{\R}} \frac {1+\lambda (x+i\epsilon)}{\lambda - (x+i\epsilon)} \,d\nu(\lambda)
\\&= C + \lim_{\epsilon \downarrow 0} \int_{\overline{\R} \setminus (a,b)} \frac {1+\lambda (x+i\epsilon)}{\lambda - (x+i\epsilon)} \,d\nu(\lambda) = C + \int_{\overline{\R} \setminus (a,b)} \lim_{\epsilon \downarrow 0} \frac {1+\lambda (x+i\epsilon)}{\lambda - (x+i\epsilon)} \,d\nu(\lambda)
\\&= C + \int_{\overline{\R} \setminus (a,b)} \frac {1+\lambda x}{\lambda - x} \,d\nu(\lambda) = C + \int_{\overline{\R}} \frac {1+\lambda x}{\lambda - x} \,d\nu(\lambda).
\end{align*}
For \(x \in \left(0,b\right)\) and \(\lambda \in \overline{\R} \setminus (a,b+1)\), we have
\begin{align*}
\left|\frac{1+ \lambda x}{\lambda - x}\right| &= \left|x + \frac{1+x^2}{\lambda - x}\right| \leq |x| + \frac{1+x^2}{|\lambda - x|} \leq b + \frac{1+b^2}{\min\left\{|a|,1\right\}},
\end{align*}
so, by the Dominated Convergence Theorem, we get
\begin{align*}
\lim_{x \uparrow b} \int_{\overline{\R} \setminus (a,b+1)} \frac {1+\lambda x}{\lambda - x} \,d\nu(\lambda) = \int_{\overline{\R} \setminus (a,b+1)} \frac {1+\lambda b}{\lambda - b}. \,d\nu(\lambda)
\end{align*}
Since
\[\frac {1+\lambda x}{\lambda - x} \geq 0 \quad \forall x \in (0,b), \lambda \geq b\]
and since for every \(\lambda \geq b\) the function
\[(0,b) \ni x \mapsto \frac {1+\lambda x}{\lambda - x} = \frac{1+\lambda^2}{\lambda - x} - \lambda\]
is monotonically increasing, by the Monotone Convergence Theorem, we get
\[\lim_{x \uparrow b} \int_{[b,b+1)} \frac {1+\lambda x}{\lambda - x} \,d\nu(\lambda) = \int_{[b,b+1)} \frac {1+\lambda b}{\lambda - b} \,d\nu(\lambda).\]
This yields
\begin{align*}
\lim_{x \uparrow b} F_*(x) &= C + \lim_{x \uparrow b} \int_{\overline{\R}} \frac {1+\lambda x}{\lambda - x} \,d\nu(\lambda) = C + \lim_{x \uparrow b} \int_{\overline{\R} \setminus (a,b)} \frac {1+\lambda x}{\lambda - x} \,d\nu(\lambda)
\\&= C + \lim_{x \uparrow b} \int_{\overline{\R} \setminus (a,b+1)} \frac {1+\lambda x}{\lambda - x} \,d\nu(\lambda) + \lim_{x \uparrow b} \int_{[b,b+1)} \frac {1+\lambda x}{\lambda - x} \,d\nu(\lambda)
\\&= C + \int_{\overline{\R} \setminus (a,b+1)} \frac {1+\lambda b}{\lambda - b} \,d\nu(\lambda) + \int_{[b,b+1)} \frac {1+\lambda b}{\lambda - b} \,d\nu(\lambda) = C + \int_{\overline{\R}} \frac {1+\lambda b}{\lambda - b} \,d\nu(\lambda).
\end{align*}
If \(\lim_{x \uparrow b} F_*(x)<\infty\), this would imply that \(\left(\lambda \mapsto \frac {1+\lambda b}{\lambda - b}\right) \in L^1(\overline{\R},\C;\nu)\) and therefore also
\[\left(\lambda \mapsto \frac1{1+b^2} \cdot \left(\frac {1+\lambda b}{\lambda - b} - b\right) = \frac 1{\lambda - b}\right) \in L^1(\overline{\R},\C;\nu).\]
Then, for \(\epsilon \in (0,1)\), we have
\begin{align*}
\left|\frac{1+ \lambda (b+i\epsilon)}{\lambda - (b+i\epsilon)}\right| &= \left|(b+i\epsilon) + \frac{1+\left(b+i\epsilon\right)^2}{\lambda - (b+i\epsilon)}\right| \leq \left|b+i\epsilon\right| + \frac{1+\left|b+i\epsilon\right|^2}{\left|\lambda - (b+i\epsilon)\right|} \leq \sqrt{1+b^2} + \frac{2+b^2}{\left|\lambda - b\right|},
\end{align*}
so, by the Dominated Convergence Theorem, we would get
\begin{align*}
\lim_{\epsilon \downarrow 0} F(b+i\epsilon) &= C + \lim_{\epsilon \downarrow 0} \int_{\overline{\R}} \frac {1+\lambda (b+i\epsilon)}{\lambda - (b+i\epsilon)} \,d\nu(\lambda)
\\&= C + \int_{\overline{\R}} \lim_{\epsilon \downarrow 0} \frac {1+\lambda (b+i\epsilon)}{\lambda - (b+i\epsilon)} \,d\nu(\lambda) = C + \int_{\overline{\R}} \frac {1+\lambda b}{\lambda - b} \,d\nu(\lambda) = \lim_{x \uparrow b} F_*(x)<\infty
\end{align*}
in contradiction to the assumption \(b \in S_F\). Therefore
\[\lim_{x \uparrow b} F_*(x) = \infty.\]
Analogously one shows that
\begin{align*}
\lim_{x \downarrow a} F_*(x) = C + \int_{\overline{\R}} \frac {1+\lambda a}{\lambda - a} \,d\nu(\lambda)
\end{align*}
and sees that \(\lim_{x \downarrow a} F_*(x) > -\infty\) is in contradiction to the assumption \(a \in S_F\), which shows that
\[\lim_{x \downarrow a} F_*(x) = -\infty. \qedhere\]
\end{proof}

\newpage
\section{An approximation theorem}
In this chapter, we will prove the existence of a limit appearing in the proof of \fref{prop:FixedPointFromConstruction} and calculate its value. Since this proof is very technical and lengthy, we decided to remove it from the proof of \fref{prop:FixedPointFromConstruction} and include it here as an appendix. We start with a definition:
\begin{definition}
Let \(n \in \N\) and \(p \in \R_+\). We define the following three functions:
\begin{align*}
\gls*{gn}(x) &\coloneqq \frac{x^2}{n\left(\frac 1{n^2}+x^2\right)\left(1+\frac{x^2}{n^2}\right)}, \quad x \in \R
\\~
\\\gls*{dpn}(x) &\coloneqq g_n(x) \cdot \frac{\left(x^2-p^2-\frac 1{n^2}\right)\left(1-x^2\right)+2x^2\left(\frac 1{n^2} + 1\right)}{\left(x^2-p^2-\frac 1{n^2}\right)^2+4x^2 \frac 1{n^2}}, \quad x \in \R
\\~
\\\gls*{fpn}(x) &\coloneqq d_{p,n}(x) + g_n(x) \cdot \left(\chi_{\left(-1,1\right)}(x) \cdot \frac 1{p^2} + \chi_{\left(-\infty,-1\right) \cup \left(1,\infty\right)}(x)\right), \quad x \in \R
\end{align*}
\end{definition}
\begin{remark}
\begin{enumerate}[\rm (a)]
\item For every \(n \in \N\) and \(x \in \R^\times\), one has
\[g_n(x) = \frac{x^2}{n\left(\frac 1{n^2}+x^2\right)\left(1+\frac{x^2}{n^2}\right)} \leq \frac {x^2}{n \cdot x^2 \cdot 1} = \frac 1n,\]
so
\[\lim_{n \to \infty} g_n(x) = 0.\]
\item For every \(x \in \R^\times\) with \(|x| \neq p\) one has
\[\lim_{n \to \infty} d_{p,n}(x) = \lim_{n \to \infty} g_n(x) \cdot \frac{\left(x^2-p^2\right)\left(1-x^2\right)+2x^2}{\left(x^2-p^2\right)^2} = 0.\]
\end{enumerate}
\end{remark}
The goal of this section is to show that for any measurable function \(\phi: \R \to \C\) with
\[\int_\R \frac{x^2}{\left(1+x^2\right)^2} \left|\phi(x)\right| \,dx < \infty\]
one has
\[\frac{\phi(p) + \phi(-p)}2 = \lim_{n \to \infty} \frac 1\pi \int_\R f_{p,n}(x) \,\phi(x) \,dx\]
for almost every \(p \in \R_+\). We start by the following lemma:
\begin{lem}\label{lem:approximationZero}
For any measurable function \(\phi: \R \to \C\) with
\[\int_\R \frac{x^2}{\left(1+x^2\right)^2} \left|\phi(x)\right| \,dx < \infty\]
and every \(p \in \R_+\) one has
\[\lim_{n \to \infty} \int_{\left(-p+\frac 1{\sqrt n},p-\frac 1{\sqrt n}\right)} \left(d_{p,n}(x)+ \frac{g_n(x)}{p^2}\right)\phi(x) \,dx = 0.\]
\end{lem}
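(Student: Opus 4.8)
The plan is to expose a cancellation between $d_{p,n}$ and $g_n/p^2$ near the origin, split the (moving) interval $\bigl(-p+\tfrac1{\sqrt n},p-\tfrac1{\sqrt n}\bigr)$ at $|x|=p/2$, and treat the two resulting pieces by completely different arguments. First I would record two consequences of the hypothesis $\int_\R\frac{x^2}{(1+x^2)^2}|\phi(x)|\,dx<\infty$: since $\frac{x^2}{(1+x^2)^2}\ge c_p\,x^2$ on $\{|x|\le p/2\}$ for some $c_p>0$ and is bounded below by a positive constant on the compact set $\{p/2\le|x|\le p\}$, we obtain $\int_{|x|\le p/2}x^2|\phi(x)|\,dx<\infty$ and $\phi\in L^1(\{p/2\le|x|\le p\})$; these will serve respectively to estimate the inner piece directly and to furnish a dominating function for the outer piece.

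\textbf{Rewriting the integrand.} Next I would write, with $u_n(x)=x^2-p^2-\tfrac1{n^2}$ and $v_n(x)=\tfrac{2x}{n}$,
\[
d_{p,n}(x)+\frac{g_n(x)}{p^2}=g_n(x)\cdot\frac{N_n(x)}{p^2\bigl(u_n(x)^2+v_n(x)^2\bigr)},\qquad
N_n(x)=p^2\Bigl[u_n(x)(1-x^2)+2x^2(\tfrac1{n^2}+1)\Bigr]+u_n(x)^2+v_n(x)^2 .
\]
A short computation gives
\[
N_n(x)=(1-p^2)x^4+\Bigl(p^2(1+p^2)+\tfrac{3p^2+2}{n^2}\Bigr)x^2+\frac{p^2n^2+1}{n^4},
\]
whose decisive feature is that the constant term $N_n(0)=\frac{p^2n^2+1}{n^4}$ is $O(n^{-2})$ — this is exactly the cancellation at $x=0$. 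Combined with the two elementary bounds $g_n(x)\le\tfrac1n$ and $g_n(x)\le nx^2$, both immediate from the definition of $g_n$, this controls the region near the origin.

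\textbf{The two regions.} On $\{|x|\le p/2\}$ one has $u_n(x)^2+v_n(x)^2\ge u_n(x)^2=(p^2+\tfrac1{n^2}-x^2)^2\ge\tfrac{9p^4}{16}$, and $|N_n(x)|\le N_n(0)+B_p x^2$ with $B_p$ depending only on $p$ (using $x^4\le\tfrac{p^2}{4}x^2$ there). Hence
\[
\Bigl|d_{p,n}(x)+\tfrac{g_n(x)}{p^2}\Bigr|\le\frac{16}{9p^6}\,g_n(x)\bigl(N_n(0)+B_p x^2\bigr)\le\frac{16}{9p^6}\Bigl(nx^2\cdot\tfrac{p^2+1}{n^2}+B_p\tfrac{x^2}{n}\Bigr)=\frac{D_p}{n}\,x^2 ,
\]
using $g_n(x)\le nx^2$ on the first summand and $g_n(x)\le\tfrac1n$ on the second; integrating against $|\phi|$ and invoking $\int_{|x|\le p/2}x^2|\phi|<\infty$ shows this piece is $O(1/n)\to0$, with no convergence theorem needed. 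On $\{p/2<|x|<p-\tfrac1{\sqrt n}\}$, for $n$ large one has $x^2<(p-\tfrac1{\sqrt n})^2$, whence $|u_n(x)|=p^2+\tfrac1{n^2}-x^2>\tfrac{p}{\sqrt n}$ and $u_n(x)^2+v_n(x)^2>\tfrac{p^2}{n}$; together with $|N_n(x)|\le C_p$ uniformly on $\{|x|\le p\}$ and $g_n\le\tfrac1n$ this yields $\bigl|d_{p,n}(x)+\tfrac{g_n(x)}{p^2}\bigr|\le\tfrac{C_p}{p^4}$ on that set. Writing the corresponding integral as $\int_\R\chi_{\{p/2<|x|<p-1/\sqrt n\}}(x)\bigl(d_{p,n}(x)+\tfrac{g_n(x)}{p^2}\bigr)\phi(x)\,dx$, the integrand is dominated for all large $n$ by the integrable function $\tfrac{C_p}{p^4}\,|\phi|\,\chi_{\{p/2\le|x|\le p\}}$ and tends to $0$ for almost every $x$ (for $p/2<|x|<p$ one has $g_n(x)\to0$ while $\frac{N_n(x)}{p^2(u_n^2+v_n^2)}$ converges to a finite limit because $(x^2-p^2)^2>0$; outside $[-p,p]$ and inside $[-p/2,p/2]$ the indicator is eventually $0$). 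Dominated convergence kills this piece, and adding the two pieces proves the lemma.

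\textbf{Main obstacle.} The difficulty lies at the two ends of the interval. Near $x=0$ neither $d_{p,n}$ nor $g_n/p^2$ is small on its own, and $\phi$ need not be locally integrable there, so one is forced to use the cancellation $N_n(0)=O(n^{-2})$ in tandem with the bound $g_n(x)\le nx^2$, which supplies precisely the integrable weight $x^2$. Near $|x|=p$ the denominator $u_n^2+v_n^2$ degenerates, and the cutoff $\tfrac1{\sqrt n}$ is calibrated exactly so that $g_n(x)/(u_n(x)^2+v_n(x)^2)\lesssim\tfrac{1/n}{1/n}$ stays bounded; any slower cutoff would let this quotient blow up.
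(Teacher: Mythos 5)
Your proof is correct and follows essentially the same route as the paper's: both combine \(d_{p,n}+\frac{g_n}{p^2}\) over the common denominator so that the constant term of the resulting numerator is \(O(n^{-2})\) (the cancellation), and both use the \(\frac 1{\sqrt n}\)-cutoff to bound the denominator below by \(\frac{p^2}{n}\) near the endpoints, concluding with an integrable \(\phi\)-weighted bound and dominated convergence. The only difference is organizational: the paper derives a single \(n\)-independent dominating function of the form \(C_p\,\frac{x^2}{(1+x^2)^2}\left|\phi(x)\right|\) valid on the whole interval and applies the Dominated Convergence Theorem once, whereas you split at \(|x|=\frac p2\), obtain an explicit \(O(1/n)\) rate on the inner piece, and invoke dominated convergence only on the outer annulus --- a harmless reorganization of the same estimates.
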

\begin{proof}
For \(n \in \N\) with \(\frac 1{\sqrt n} < p\) and \(x \in \left(-p+\frac 1{\sqrt n},p-\frac 1{\sqrt n}\right)\) one has
\begin{align*}
&\left|d_{p,n}(x)+ \frac{g_n(x)}{p^2}\right| = \frac{g_n(x)}{p^2} \cdot \left|\frac{\left(x^2-p^2-\frac 1{n^2}\right)\left(1-x^2\right)+2x^2\left(\frac 1{n^2} + 1\right)}{\left(x^2-p^2-\frac 1{n^2}\right)^2+4x^2 \frac 1{n^2}} \cdot p^2 + 1\right|
\\&\qquad=\frac{g_n(x)}{p^2} \cdot \left|\frac{\left(x^2-p^2-\frac 1{n^2}\right)\left(1-x^2\right)p^2+2x^2\left(\frac 1{n^2} + 1\right)p^2 + \left(x^2-p^2-\frac 1{n^2}\right)^2+4x^2 \frac 1{n^2}}{\left(x^2-p^2-\frac 1{n^2}\right)^2+4x^2 \frac 1{n^2}}\right|
\\&\qquad=\frac{g_n(x)}{p^2} \cdot \left|\frac{x^4\left(1-p^2\right) + x^2\left(p^2 + \frac 3{n^2}p^2 + p^4 + \frac 2{n^2}\right) + \left(\frac 1{n^4} + \frac {p^2}{n^2}\right)}{\left(x^2-p^2-\frac 1{n^2}\right)^2+4x^2 \frac 1{n^2}}\right|
\\&\qquad\leq\frac{g_n(x)}{p^2} \cdot \frac{x^2\left|x^2\left(1-p^2\right) + \left(p^2 + \frac 3{n^2}p^2 + p^4 + \frac 2{n^2}\right)\right| + \left(\frac 1{n^4} + \frac {p^2}{n^2}\right)}{\left(p^2+\frac 1{n^2}-x^2\right)^2+4x^2 \frac 1{n^2}}
\\&\qquad\leq\frac{g_n(x)}{p^2} \cdot \frac{x^2\left(p^2\left(1+p^2\right) + \left(4p^2 + p^4 + 2\right)\right) + \frac 1{n^2}\left(\frac 1{n^2} + p^2\right)}{\left(p^2+\frac 1{n^2}-\left(p-\frac 1{\sqrt n}\right)^2\right)^2}
\\&\qquad\leq\frac{g_n(x)}{p^2} \cdot \frac{x^2\left(2p^4 + 5p^2 + 2\right) + \frac 1{n^2}\left(1 + p^2\right)}{\left(\frac 1{n^2}-\frac 1n + 2p \frac 1{\sqrt n}\right)^2}
\\&\qquad\leq\frac{x^2}{p^2n\left(\frac 1{n^2}+x^2\right)\left(1 + \frac{x^2}{n^2}\right)} \cdot \frac{x^2\left(2p^4 + 5p^2 + 2\right) + \frac 1{n^2}\left(1 + p^2\right)}{\left(0 - \frac p{\sqrt n} + 2p \frac 1{\sqrt n}\right)^2}
\\&\qquad\leq\frac{x^2}{p^2n\left(\frac 1{n^2}+x^2\right)} \cdot \frac{x^2\left(2p^4 + 5p^2 + 2\right) + \frac 1{n^2}\left(1 + p^2\right)}{\frac {p^2} n}
\\&\qquad=\frac{x^2}{p^2n\left(\frac 1{n^2}+x^2\right)} \cdot \frac{x^2\left(2p^4 + 5p^2 + 2\right)}{\frac {p^2} n} + \frac{x^2}{p^2n\left(\frac 1{n^2}+x^2\right)} \cdot \frac{\frac 1{n^2}\left(1 + p^2\right)}{\frac {p^2} n}
\\&\qquad=\frac{x^2}{\frac 1{n^2}+x^2} \cdot \frac{x^2\left(2p^4 + 5p^2 + 2\right)}{p^4} + \frac{x^2}{1+n^2x^2} \cdot \frac{\left(1 + p^2\right)}{p^4}
\\&\qquad\leq 1 \cdot \frac{x^2\left(2p^4 + 5p^2 + 2\right)}{p^4} + \frac{x^2}{1} \cdot \frac{\left(1 + p^2\right)}{p^4} = x^2 \cdot \frac{2p^4 + 6p^2 + 3}{p^4}
\\&\qquad\leq \frac{x^2}{\left(1+x^2\right)^2} \cdot \frac{\left(2p^4 + 6p^2 + 3\right)\left(1+p^2\right)^2}{p^4}.
\end{align*}
By assumption, we have
\[\int_{(-p,p)}\frac{x^2}{\left(1+x^2\right)^2} \cdot \frac{\left(2p^4 + 6p^2 + 3\right)\left(1+p^2\right)^2}{p^4} \left|\phi(x)\right| \,dx < \infty\]
and therefore, by the Dominated Convergence Theorem, we have
\begin{align*}
&\lim_{n \to \infty} \int_{\left(-p+\frac 1{\sqrt n},p-\frac 1{\sqrt n}\right)} \left(d_{p,n}(x)+ \frac{g_n(x)}{p^2}\right)\phi(x) \,dx
\\&\qquad= \lim_{n \to \infty} \int_{\left(-p,p\right)} \chi_{\left(-p+\frac 1{\sqrt n},p-\frac 1{\sqrt n}\right)}(x) \left(d_{p,n}(x)+ \frac{g_n(x)}{p^2}\right)\phi(x) \,dx
\\&\qquad= \int_{\left(-p,p\right)} \lim_{n \to \infty}\chi_{\left(-p+\frac 1{\sqrt n},p-\frac 1{\sqrt n}\right)}(x) \left(d_{p,n}(x)+ \frac{g_n(x)}{p^2}\right)\phi(x) \,dx
\\&\qquad= \int_{\left(-p,p\right)} 0 \cdot \phi(x) \,dx = 0. \qedhere
\end{align*}
\end{proof}
\begin{lem}\label{lem:approximationInfinity}
For any measurable function \(\phi: \R \to \C\) with
\[\int_\R \frac{x^2}{\left(1+x^2\right)^2} \left|\phi(x)\right| \,dx < \infty\]
and every \(p \in \R_+\), one has
\[\lim_{n \to \infty} \int_{\left(-\infty,-p-\frac 1{\sqrt n}\right) \cup \left(p+\frac 1{\sqrt n},\infty\right)} \left(d_{p,n}(x)+g_n(x)\right)\phi(x) \,dx = 0.\]
\end{lem}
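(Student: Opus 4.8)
The proof will run exactly parallel to \fref{lem:approximationZero}: write the integral as $\int_\R \chi_{(-\infty,-p-\frac 1{\sqrt n})\cup(p+\frac 1{\sqrt n},\infty)}(x)\,(d_{p,n}(x)+g_n(x))\,\phi(x)\,dx$, check that the integrand tends to $0$ for a.e.\ $x$, produce an $n$-independent dominating function that is integrable against $\frac{x^2}{(1+x^2)^2}|\phi|$, and invoke the Dominated Convergence Theorem. The only real content is the estimate, which is in fact easier here than in \fref{lem:approximationZero} because on the region of integration we stay uniformly away from the singularities $x=\pm p$ of the fractions defining $d_{p,n}$.

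First I would rewrite the integrand in closed form. Putting $u=x^2$, $q=p^2+\tfrac1{n^2}$ and writing $D_n(x)\coloneqq (x^2-p^2-\tfrac 1{n^2})^2+4x^2\tfrac 1{n^2}$ for the common denominator, collecting $d_{p,n}(x)+g_n(x)$ over $D_n(x)$ gives, after the telescoping identity $(u-q)(1-u)+(u-q)^2=(u-q)(1-q)$,
\[
d_{p,n}(x)+g_n(x)=g_n(x)\cdot\frac{N_n(x)}{D_n(x)},\qquad N_n(x)=\Bigl(x^2-p^2-\tfrac1{n^2}\Bigr)\Bigl(1-p^2-\tfrac1{n^2}\Bigr)+x^2\Bigl(2+\tfrac6{n^2}\Bigr).
\]
Thus $N_n$ is affine in $x^2$ with coefficients bounded for $n\ge 1$ by a constant depending only on $p$, so $|N_n(x)|\le C_1(p)\,(1+x^2)$ for all $n\ge1$ and all $x$. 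For fixed $x$ with $|x|>p$ one has $g_n(x)\to0$ and $N_n(x)/D_n(x)\to\frac{(x^2-p^2)(1-x^2)+2x^2}{(x^2-p^2)^2}<\infty$, hence $d_{p,n}(x)+g_n(x)\to0$; since for $|x|\le p$ the cutoff kills the $n$-th term, the integrand converges to $0$ a.e.

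For the domination I would fix $R_p\coloneqq\sqrt{2(p^2+1)}$ (so $R_p>p$ and $R_p>1$) and split the region $\{|x|>p+\tfrac1{\sqrt n}\}$ into $\{|x|\ge R_p\}$ and the bounded annulus $\{p+\tfrac1{\sqrt n}\le|x|<R_p\}$. On the first piece $x^2-p^2-\tfrac1{n^2}\ge x^2-p^2-1\ge\tfrac{x^2}{2}$, so $D_n(x)\ge\tfrac{x^4}{4}$, and with $g_n\le1$ and $|N_n(x)|\le 2C_1(p)x^2$ one gets $|d_{p,n}(x)+g_n(x)|\le\tfrac{8C_1(p)}{x^2}\le C_2(p)\,\tfrac{x^2}{(1+x^2)^2}$. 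On the annulus $|x|\ge p+\tfrac1{\sqrt n}$ forces $x^2-p^2-\tfrac1{n^2}\ge\tfrac{2p}{\sqrt n}$, hence $D_n(x)\ge\tfrac{4p^2}{n}$, while $g_n(x)\le\tfrac1n$ and $|N_n(x)|\le C_1(p)(1+R_p^2)$; the two powers of $n$ cancel, so $|d_{p,n}(x)+g_n(x)|$ is bounded by a constant depending only on $p$, and since $p^2\le x^2<R_p^2$ this constant is $\le C_3(p)\,\tfrac{x^2}{(1+x^2)^2}$. With $C(p)\coloneqq\max\{C_2(p),C_3(p)\}$ we obtain $|\chi_{\{|x|>p+1/\sqrt n\}}(x)(d_{p,n}(x)+g_n(x))\phi(x)|\le C(p)\,\tfrac{x^2}{(1+x^2)^2}|\phi(x)|$ for all $n\ge1$, whose right-hand side is integrable by hypothesis, and the Dominated Convergence Theorem finishes the proof. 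The main obstacle is just the bookkeeping of this estimate: choosing $R_p$ so that $D_n(x)$ is genuinely comparable to $x^4$ far out, and pairing $D_n(x)\ge 4p^2/n$ with $g_n\le 1/n$ on the annulus --- which is where it is essential that $p\in\R_+$ is strictly positive, since for $p=0$ the annulus would degenerate.
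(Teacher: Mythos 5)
Your proof is correct and follows essentially the same route as the paper's: almost-everywhere convergence of the integrand to zero plus an $n$-uniform domination by a constant (depending only on $p$) times $\frac{x^2}{(1+x^2)^2}\,|\phi(x)|$, and then the Dominated Convergence Theorem --- the paper just establishes the domination in a single global chain of estimates (using $|x| \ge p + \frac{1}{\sqrt n}$ to bound the denominator below by a constant times $x^4/n$, which cancels against $g_n \le \frac 1n$), while your split into $\{|x|\ge R_p\}$ and the bounded annulus is merely a different bookkeeping of the same bound. One cosmetic slip: the limit of $N_n(x)/D_n(x)$ is $\frac{(x^2-p^2)(1-p^2)+2x^2}{(x^2-p^2)^2}$ (your displayed expression is the limit of $d_{p,n}(x)/g_n(x)$, i.e.\ before adding the extra $1$), but since only finiteness is needed there, nothing is affected.
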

\begin{proof}
For \(n \in \N\) with \(\frac 1{\sqrt n} < p\) and \(x \in \left(-\infty,-p-\frac 1{\sqrt n}\right) \cup \left(p+\frac 1{\sqrt n},\infty\right)\), one has
\begin{align*}
&\left|d_{p,n}(x)+g_n(x)\right| = g_n(p) \cdot \left|\frac{\left(x^2-p^2-\frac 1{n^2}\right)\left(1-x^2\right)+2x^2\left(\frac 1{n^2} + 1\right)}{\left(x^2-p^2-\frac 1{n^2}\right)^2+4x^2 \frac 1{n^2}} + 1\right|
\\&\quad\leq \frac 1n \cdot \left|\frac{\left(x^2-p^2-\frac 1{n^2}\right)\left(1-x^2\right)+2x^2\left(\frac 1{n^2} + 1\right) + \left(x^2-p^2-\frac 1{n^2}\right)^2+4x^2 \frac 1{n^2}}{\left(x^2-p^2-\frac 1{n^2}\right)^2+4x^2 \frac 1{n^2}}\right|
\\&\quad=\frac 1n \cdot \left|\frac{x^2\left(\frac 5{n^2} + 3 - p^2\right) - \left(p^2 + \frac 1{n^2}\right) + \left(p^2 + \frac 1{n^2}\right)^2}{\left(x^2-p^2-\frac 1{n^2}\right)^2+4x^2 \frac 1{n^2}}\right|
\\&\quad\leq \frac 1n \cdot \frac{x^2\left(\frac 5{n^2} + 3 + p^2\right) + \left(p^2 + \frac 1{n^2}\right)\left(p^2 + \frac 1{n^2} + 1\right)}{\left(x^2-p^2-\frac 1{n^2}\right)^2}
\\&\quad\leq \frac 1n \cdot \frac{x^2\left(8 + p^2\right) + x^2\left(p^2+2\right)}{\left(x^2 \cdot \frac{p^2 + \frac 1{n^2}}{\left(p + \frac 1{\sqrt n}\right)^2}-p^2-\frac 1{n^2} + x^2 \left(1-\frac{p^2 + \frac 1{n^2}}{\left(p + \frac 1{\sqrt n}\right)^2}\right)\right)^2} \leq \frac 1n \cdot \frac{2x^2\left(p^2+5\right)}{\left(x^2 \left(1-\frac{p^2 + \frac 1{n^2}}{\left(p + \frac 1{\sqrt n}\right)^2}\right)\right)^2}
\\&\quad= \frac 1n \cdot \frac{2x^2\left(p^2+5\right)}{\left(x^2 \cdot \frac{2p \frac 1{\sqrt n} + \frac 1n-\frac 1{n^2}}{\left(p + \frac 1{\sqrt n}\right)^2}\right)^2} = \frac 1{nx^2} \cdot \frac{2\left(p^2+5\right)\left(p + \frac 1{\sqrt n}\right)^2}{\left(2p \frac 1{\sqrt n} + \frac 1n-\frac 1{n^2}\right)^2} \leq \frac 1{nx^2} \cdot \frac{2\left(p^2+5\right)\left(p + 1\right)^2}{\left(2p \frac 1{\sqrt n}\right)^2}
\\&\quad= \frac 1{x^2} \cdot \frac{\left(p^2+5\right)\left(p + 1\right)^2}{2p^2} = \frac {x^2\left(\frac 1{x^2} + 1\right)^2}{\left(1+x^2\right)^2} \cdot \frac{\left(p^2+5\right)\left(p + 1\right)^2}{2p^2}
\\&\quad\leq \frac {x^2\left(\frac 1{p^2} + 1\right)^2}{\left(1+x^2\right)^2} \cdot \frac{\left(p^2+5\right)\left(p + 1\right)^2}{2p^2} = \frac {x^2}{\left(1+x^2\right)^2} \cdot \frac{\left(1 + p^2\right)^2\left(p^2+5\right)\left(p + 1\right)^2}{2p^6}.
\end{align*}
By assumption, we have
\[\int_{(-\infty,-p) \cup (p,\infty)}\frac {x^2}{\left(1+x^2\right)^2} \cdot \frac{\left(1 + p^2\right)^2\left(p^2+5\right)\left(p + 1\right)^2}{2p^6} \left|\phi(x)\right| \,dx < \infty\]
and therefore, by the Dominated Convergence Theorem, we have
\begin{align*}
&\lim_{n \to \infty} \int_{\left(-\infty,-p-\frac 1{\sqrt n}\right) \cup \left(p+\frac 1{\sqrt n},\infty\right)} \left(d_{p,n}(x)+g_n(x)\right)\phi(x) \,dx
\\&\qquad= \lim_{n \to \infty} \int_{\left(-\infty,-p\right) \cup \left(p,\infty\right)} \chi_{\left(-\infty,-p-\frac 1{\sqrt n}\right) \cup \left(p+\frac 1{\sqrt n},\infty\right)}(x) \left(d_{p,n}(x)+g_n(x)\right)\phi(x) \,dx
\\&\qquad= \int_{\left(-p,p\right)} \lim_{n \to \infty}\chi_{\left(-\infty,-p-\frac 1{\sqrt n}\right) \cup \left(p+\frac 1{\sqrt n},\infty\right)}(x) \left(d_{p,n}(x)+g_n(x)\right)\phi(x) \,dx
\\&\qquad= \int_{\left(-p,p\right)} 0 \cdot \phi(x) \,dx = 0. \qedhere
\end{align*}
\end{proof}
\begin{definition}
Let \(n \in \N\) and \(p \in \R_+\). We define the following functions:
\begin{align*}
\tilde g_{p,n}(x) \coloneqq \frac{px}{n\left(\frac 1{n^2}+p^2\right)\left(1+\frac{p^2}{n^2}\right)}, \quad x \in \R
\\~
\\\gls*{dpntilde}(x) \coloneqq \tilde g_{p,n}(x) \cdot \frac{2p^2\left(\frac 1{n^2} + 1\right)}{\left(x^2-p^2-\frac 1{n^2}\right)^2+4p^2 \frac 1{n^2}}, \quad x \in \R
\end{align*}
\end{definition}
\begin{lem}\label{lem:dSlightChange}
For every \(p \in \R_+\), there exist sequences \(\left(b_n\right)_{n \in \N}\) and \(\left(B_n\right)_{n \in \N}\) in \(\R\) with
\[\lim_{n \to \infty} b_{p,n} = 1 = \lim_{n \to \infty} B_{p,n}\]
such that for sufficiently large \(n \in \N\), one has
\[b_{p,n} \cdot \tilde d_{p,n}(x) \leq d_{p,n}(x) \leq B_{p,n} \cdot \tilde d_{p,n}(x)\]
for every \(x \in \left(p-\frac 1{\sqrt n},p+\frac 1{\sqrt n}\right)\).
\end{lem}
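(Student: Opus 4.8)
The plan is to reduce the two--sided estimate to a single assertion about the pointwise ratio $d_{p,n}(x)/\tilde d_{p,n}(x)$. Fix $p\in\R_+$ and write $I_n:=\left(p-\tfrac1{\sqrt n},p+\tfrac1{\sqrt n}\right)$. For $n\geq n_0(p)$ one has $\tfrac1{\sqrt n}\leq\tfrac p2$, hence $x>p/2>0$ for every $x\in I_n$; in particular $\tilde g_{p,n}(x)>0$, and since the denominator $\tilde D_n(x):=\bigl(x^2-p^2-\tfrac1{n^2}\bigr)^2+\tfrac{4p^2}{n^2}\geq\tfrac{4p^2}{n^2}>0$ is strictly positive, $\tilde d_{p,n}(x)>0$ on $I_n$. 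Thus for $n\geq n_0(p)$ the claimed inequality $b_{p,n}\tilde d_{p,n}(x)\leq d_{p,n}(x)\leq B_{p,n}\tilde d_{p,n}(x)$ is equivalent to $b_{p,n}\leq d_{p,n}(x)/\tilde d_{p,n}(x)\leq B_{p,n}$, so it suffices to show
\[
\varepsilon_n:=\sup_{x\in I_n}\left|\frac{d_{p,n}(x)}{\tilde d_{p,n}(x)}-1\right|\xrightarrow{n\to\infty}0.
\]
Granting this, one defines $b_{p,n}:=\inf_{x\in I_n}d_{p,n}(x)/\tilde d_{p,n}(x)$ and $B_{p,n}:=\sup_{x\in I_n}d_{p,n}(x)/\tilde d_{p,n}(x)$ for $n\geq n_0(p)$ (and, say, $b_{p,n}=B_{p,n}=1$ otherwise); both lie in $[1-\varepsilon_n,1+\varepsilon_n]$ and hence converge to $1$.

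The computation of $\varepsilon_n$ I would organise through a factorisation. Writing $D_n(x):=\bigl(x^2-p^2-\tfrac1{n^2}\bigr)^2+\tfrac{4x^2}{n^2}$ for the denominator of $d_{p,n}$ and $N_n(x):=\bigl(x^2-p^2-\tfrac1{n^2}\bigr)(1-x^2)+2x^2\bigl(\tfrac1{n^2}+1\bigr)$ for its numerator, one has, directly from the definitions,
\[
\frac{d_{p,n}(x)}{\tilde d_{p,n}(x)}=\frac{g_n(x)}{\tilde g_{p,n}(x)}\cdot\frac{N_n(x)}{2p^2\bigl(\tfrac1{n^2}+1\bigr)}\cdot\frac{\tilde D_n(x)}{D_n(x)},
\]
and I would show that each of the three factors equals $1+O(1/\sqrt n)$ uniformly in $x\in I_n$, the implied constant depending only on $p$. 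This hinges on the elementary bound $|x^2-p^2|=|x-p|(x+p)\leq\tfrac1{\sqrt n}(2p+1)$ valid on $I_n$, together with the lower bounds $\tfrac1{n^2}+x^2\geq x^2\geq p^2/4$ and $D_n(x)\geq\tfrac{4x^2}{n^2}\geq\tfrac{p^2}{n^2}$ for $n\geq n_0(p)$.

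Concretely: the first factor simplifies to $g_n(x)/\tilde g_{p,n}(x)=\dfrac xp\cdot\dfrac{\bigl(\tfrac1{n^2}+p^2\bigr)\bigl(1+\tfrac{p^2}{n^2}\bigr)}{\bigl(\tfrac1{n^2}+x^2\bigr)\bigl(1+\tfrac{x^2}{n^2}\bigr)}$, which is $1+O(1/\sqrt n)$ because $|x/p-1|\leq\tfrac1{p\sqrt n}$, because $\bigl|(\tfrac1{n^2}+x^2)-(\tfrac1{n^2}+p^2)\bigr|=|x^2-p^2|$ is small relative to $\tfrac1{n^2}+x^2\geq p^2/4$, and because $\tfrac{x^2}{n^2},\tfrac{p^2}{n^2}\to0$ uniformly. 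For the second factor one checks the identity $N_n(x)-2p^2\bigl(\tfrac1{n^2}+1\bigr)=\bigl(x^2-p^2-\tfrac1{n^2}\bigr)(1-x^2)+2(x^2-p^2)\bigl(\tfrac1{n^2}+1\bigr)$, whose modulus is $O(1/\sqrt n)$ by the bound on $|x^2-p^2|$, while the denominator stays $\geq 2p^2$. For the third factor the decisive point is the exact cancellation $D_n(x)-\tilde D_n(x)=\tfrac4{n^2}(x^2-p^2)$, whence
\[
\left|\frac{\tilde D_n(x)}{D_n(x)}-1\right|=\frac{\tfrac4{n^2}|x^2-p^2|}{D_n(x)}\leq\frac{\tfrac4{n^2}|x^2-p^2|}{\tfrac{4x^2}{n^2}}=\frac{|x^2-p^2|}{x^2}\leq\frac{4(2p+1)}{p^2\sqrt n}.
\]
Multiplying the three estimates yields $\varepsilon_n=O(1/\sqrt n)\to0$, completing the reduction.

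I expect the only delicate point — the one that is not pure bookkeeping — to be the third factor: there the numerator and denominator of $\tilde D_n/D_n$ are both of order $1/n^2$, so a crude bound would be useless, and one genuinely needs the exact identity $D_n(x)-\tilde D_n(x)=\tfrac4{n^2}(x^2-p^2)$ together with the observation that on $I_n$ the factor $x^2$ is bounded below by $p^2/4$ while $|x^2-p^2|$ is only $O(1/\sqrt n)$. This is precisely where the choice to let the interval shrink at rate $1/\sqrt n$ (rather than, say, $1/n$) is what makes everything work, in the same spirit as the estimates used in the proofs of \fref{lem:approximationZero} and \fref{lem:approximationInfinity}. Beyond that, no real obstacle is anticipated; the remaining steps are the routine algebraic manipulations sketched above.
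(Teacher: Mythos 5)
Your proposal is correct and follows essentially the same route as the paper: the same factorisation of \(d_{p,n}/\tilde d_{p,n}\) into the ratio of the \(g\)-factors, the ratio of numerators, and the ratio of denominators, with the same decisive cancellation \(D_n(x)-\tilde D_n(x)=\tfrac{4}{n^2}(x^2-p^2)\) against the lower bound of order \(p^2/n^2\). The only difference is organisational — you package the three estimates as a uniform \(O(1/\sqrt n)\) error and define \(b_{p,n},B_{p,n}\) as the infimum and supremum of the ratio, whereas the paper bounds each factor explicitly by evaluating at the endpoints \(p\pm\tfrac{1}{\sqrt n}\); both are sound.
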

\begin{proof}
We have
\[\frac{g_n(x)}{\tilde g_{p,n}(x)} = \frac{\frac{x^2}{n\left(\frac 1{n^2}+x^2\right)\left(1+\frac{x^2}{n^2}\right)}}{\frac{px}{n\left(\frac 1{n^2}+p^2\right)\left(1+\frac{p^2}{n^2}\right)}} = \frac{x\left(\frac 1{n^2}+p^2\right)\left(1+\frac{p^2}{n^2}\right)}{p\left(\frac 1{n^2}+x^2\right)\left(1+\frac{x^2}{n^2}\right)},\]
so for \(x \in \left(p-\frac 1{\sqrt n},p+\frac 1{\sqrt n}\right)\) one has
\[\frac{g_n(x)}{\tilde g_{p,n}(x)} \leq \frac{\left(p+\frac 1{\sqrt n}\right)\left(\frac 1{n^2}+p^2\right)\left(1+\frac{p^2}{n^2}\right)}{p\left(\frac 1{n^2}+\left(p-\frac 1{\sqrt n}\right)^2\right)\left(1+\frac{\left(p-\frac 1{\sqrt n}\right)^2}{n^2}\right)} \eqqcolon A_{p,n} \xrightarrow{n \to \infty} 1\]
and
\[\frac{g_n(x)}{\tilde g_{p,n}(x)} \geq \frac{\left(p-\frac 1{\sqrt n}\right)\left(\frac 1{n^2}+p^2\right)\left(1+\frac{p^2}{n^2}\right)}{p\left(\frac 1{n^2}+\left(p+\frac 1{\sqrt n}\right)^2\right)\left(1+\frac{\left(p+\frac 1{\sqrt n}\right)^2}{n^2}\right)} \eqqcolon a_{p,n} \xrightarrow{n \to \infty} 1.\]
Further, we have
\begin{align*}
\frac{d_{p,n}(x)}{\tilde d_{p,n}(x)} &= \frac{g_n(x)}{\tilde g_{p,n}(x)} \cdot \frac{\frac{\left(x^2-p^2-\frac 1{n^2}\right)\left(1-x^2\right)+2x^2\left(\frac 1{n^2} + 1\right)}{\left(x^2-p^2-\frac 1{n^2}\right)^2+4x^2 \frac 1{n^2}}}{\frac{2p^2\left(\frac 1{n^2} + 1\right)}{\left(x^2-p^2-\frac 1{n^2}\right)^2+4p^2 \frac 1{n^2}}}
\\&= \frac{g_n(x)}{\tilde g_{p,n}(x)} \cdot \frac{\left(x^2-p^2-\frac 1{n^2}\right)\left(1-x^2\right)+2x^2\left(\frac 1{n^2} + 1\right)}{2p^2\left(\frac 1{n^2} + 1\right)} \cdot \left(\frac{\left(x^2-p^2-\frac 1{n^2}\right)^2+4x^2 \frac 1{n^2}}{\left(x^2-p^2-\frac 1{n^2}\right)^2+4p^2 \frac 1{n^2}}\right)^{-1}.
\end{align*}
For \(n \in \N\) with \(\frac 1{\sqrt n} \leq p\) and \(x \in \left(p-\frac 1{\sqrt n},p+\frac 1{\sqrt n}\right)\) one has
\begin{align*}
&\left(x^2-p^2-\frac 1{n^2}\right)\left(1-x^2\right)+2x^2\left(\frac 1{n^2} + 1\right) = -x^4 + x^2\left(\frac 3{n^2} + 3 + p^2\right) - \left(p^2 + \frac 1{n^2}\right)
\\&\qquad\geq -\left(p + \frac 1{\sqrt n}\right)^4 + \left(p - \frac 1{\sqrt n}\right)^2\left(\frac 3{n^2} + 3 + p^2\right) - \left(p^2 + \frac 1{n^2}\right)
\\&\qquad\eqqcolon v_{p,n} \xrightarrow{n \to \infty} -p^4 + p^2\left(3+p^2\right) - p^2 =  2p^2 > 0
\end{align*}
and
\begin{align*}
&\left(x^2-p^2-\frac 1{n^2}\right)\left(1-x^2\right)+2x^2\left(\frac 1{n^2} + 1\right) = -x^4 + x^2\left(\frac 3{n^2} + 3 + p^2\right) - \left(p^2 + \frac 1{n^2}\right)
\\&\qquad\leq -\left(p - \frac 1{\sqrt n}\right)^4 + \left(p + \frac 1{\sqrt n}\right)^2\left(\frac 3{n^2} + 3 + p^2\right) - \left(p^2 + \frac 1{n^2}\right)
\\&\qquad\eqqcolon V_{p,n} \xrightarrow{n \to \infty} -p^4 + p^2\left(3+p^2\right) - p^2 =  2p^2 > 0
\end{align*}
and
\begin{align*}
2p^2\left(\frac 1{n^2} + 1\right) \eqqcolon w_{p,n} \xrightarrow{n \to \infty} 2p^2 > 0
\end{align*}
and
\begin{align*}
\frac{\left(x^2-p^2-\frac 1{n^2}\right)^2+4x^2 \frac 1{n^2}}{\left(x^2-p^2-\frac 1{n^2}\right)^2+4p^2 \frac 1{n^2}} &= 1 + \frac{\frac 4{n^2}\left(x^2-p^2\right)}{\left(x^2-p^2-\frac 1{n^2}\right)^2+4p^2 \frac 1{n^2}} \geq 1 + \frac{\frac 4{n^2}\left(\left(p - \frac 1{\sqrt n}\right)^2-p^2\right)}{\left(x^2-p^2-\frac 1{n^2}\right)^2+4p^2 \frac 1{n^2}}
\\&\geq 1 + \frac{\frac 4{n^2}\left(\left(p - \frac 1{\sqrt n}\right)^2-p^2\right)}{4p^2 \frac 1{n^2}} = 1 + \frac {\frac 1n - \frac 2{\sqrt n}}{p^2} \eqqcolon u_{p,n} \xrightarrow{n \to \infty} 1
\end{align*}
and
\begin{align*}
\frac{\left(x^2-p^2-\frac 1{n^2}\right)^2+4x^2 \frac 1{n^2}}{\left(x^2-p^2-\frac 1{n^2}\right)^2+4p^2 \frac 1{n^2}} &= 1 + \frac{\frac 4{n^2}\left(x^2-p^2\right)}{\left(x^2-p^2-\frac 1{n^2}\right)^2+4p^2 \frac 1{n^2}} \leq 1 + \frac{\frac 4{n^2}\left(\left(p + \frac 1{\sqrt n}\right)^2-p^2\right)}{\left(x^2-p^2-\frac 1{n^2}\right)^2+4p^2 \frac 1{n^2}}
\\&\leq 1 + \frac{\frac 4{n^2}\left(\left(p + \frac 1{\sqrt n}\right)^2-p^2\right)}{4p^2 \frac 1{n^2}} = 1 + \frac {\frac 1n + \frac 2{\sqrt n}}{p^2} \eqqcolon U_{p,n} \xrightarrow{n \to \infty} 1.
\end{align*}
Therefore, for sufficiently large \(n \in \N\) and \(x \in \left(p-\frac 1{\sqrt n},p+\frac 1{\sqrt n}\right)\), one has
\[\frac{d_{p,n}(x)}{\tilde d_{p,n}(x)} \leq A_{p,n} \cdot \frac{V_{p,n}}{w_{p,n}} \cdot u_{p,n}^{-1} \eqqcolon B_{p,n} \xrightarrow{n \to \infty} 1 \cdot \frac{2p^2}{2p^2} \cdot 1 = 1\]
and
\[\frac{d_{p,n}(x)}{\tilde d_{p,n}(x)} \geq a_{p,n} \cdot \frac{v_{p,n}}{w_{p,n}} \cdot U_{p,n}^{-1} \eqqcolon b_{p,n} \xrightarrow{n \to \infty} 1 \cdot \frac{2p^2}{2p^2} \cdot 1 = 1.  \qedhere\]
\end{proof}
\begin{definition}
Let \(f \in L^1_{\mathrm{loc}}(\R,\C)\). Then \(p \in \R\) is called a \textit{Lebesgue point of \(f\)}, if
\[\lim_{\epsilon \downarrow 0} \frac 1{2\epsilon} \int_{(p-\epsilon,p+\epsilon)} |f(x)-f(p)| \,dx = 0.\]
\end{definition}
\begin{lem}\label{lem:LebesguePointApprox}
Let \(f \in L^1_{\mathrm{loc}}(\R,\C)\) and \(p \in \R\) be a Lebesgue point of \(f\). Further, let \(\left(k_n\right)_{n \in \N}\) be a sequence of non-negative functions with
\[\int_{\left(p-\frac 1{\sqrt n},p+\frac 1{\sqrt n}\right)} k_n(x) \,dx = 1,\]
such that there exist constants \(r,R>0\) such that for every \(n \in \N\), one has
\[\max\left\{k_n\left(p-\frac 1{\sqrt n}\right),k_n\left(p+\frac 1{\sqrt n}\right)\right\} \leq R\sqrt{n}\]
and \(k_n \in C^1\left(\left(p-\frac 1{\sqrt n},p+\frac 1{\sqrt n}\right)\right)\) with
\begin{align*}
k_n'(x) &\geq 0 &&\forall x \in \left(p-\frac 1{\sqrt n},p-\frac 1n\right)
\\\left|k_n'(x)\right| &\leq rn^2 &&\forall x \in \left(p-\frac 1n,p+\frac 1n\right)
\\k_n'(x) &\leq 0 &&\forall x \in \left(p+\frac 1n,p+\frac 1{\sqrt n}\right).
\end{align*}
Then
\[f(p) = \lim_{n \to \infty} \int_{\left(p-\frac 1{\sqrt n},p+\frac 1{\sqrt n}\right)}k_n(x)f(x) \,dx.\]
\end{lem}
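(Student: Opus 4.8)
The plan is to put the Lebesgue point hypothesis into quantitative form and then isolate the "bulk" of $k_n$ near $p$ from the two outer tails, where the monotonicity hypotheses make two successive integrations by parts available. Write $g \coloneqq f - f(p)$ and, for $\rho > 0$, set $H_+(\rho) \coloneqq \int_p^{p+\rho}\lvert g(t)\rvert\,dt$ and $H_-(\rho) \coloneqq \int_{p-\rho}^{p}\lvert g(t)\rvert\,dt$. That $p$ is a Lebesgue point of $f$ says precisely that $H_\pm(\rho) = o(\rho)$ as $\rho\downarrow 0$, hence $\omega_n \coloneqq \sup_{0<\rho\le 1/\sqrt n}\bigl(H_+(\rho)+H_-(\rho)\bigr)/\rho \to 0$ as $n\to\infty$. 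Since $\int_{(p-1/\sqrt n,\,p+1/\sqrt n)}k_n = 1$, we have $\int_{(p-1/\sqrt n,\,p+1/\sqrt n)}k_n f = f(p) + \int k_n g$, so it suffices to show $\int k_n g \to 0$; I bound this by $\int k_n\lvert g\rvert$, split over the central interval $M_n \coloneqq \bigl(p-\tfrac1n,\,p+\tfrac1n\bigr)$ and the two outer intervals $L_n \coloneqq \bigl(p-\tfrac1{\sqrt n},\,p-\tfrac1n\bigr)$ and $R_n \coloneqq \bigl(p+\tfrac1n,\,p+\tfrac1{\sqrt n}\bigr)$.

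\textbf{The central piece.} First I would prove an a priori bound $\sup_{\overline{M_n}}k_n \le Cn$ with a constant $C = C(r)$ \emph{independent of $n$}; this is the only place the hypothesis $\lvert k_n'\rvert \le rn^2$ on $M_n$ is used. Since $\lvert x-(p-\tfrac1n)\rvert \le \tfrac2n$ for $x\in\overline{M_n}$, the derivative bound gives $\lvert k_n(x) - k_n(p-\tfrac1n)\rvert \le 2rn$, so $k_n \ge k_n(p-\tfrac1n) - 2rn$ on $M_n$; integrating and using $\int_{M_n}k_n \le \int k_n = 1$ and $\lvert M_n\rvert = \tfrac2n$ yields $k_n(p-\tfrac1n) \le (\tfrac12+2r)n$, and likewise $k_n(p+\tfrac1n) \le (\tfrac12+2r)n$, whence $\sup_{\overline{M_n}}k_n \le (\tfrac12+4r)n \le Cn$ with $C \coloneqq \tfrac12+4r$. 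Consequently $\int_{M_n}k_n\lvert g\rvert \le Cn\bigl(H_+(\tfrac1n)+H_-(\tfrac1n)\bigr) \le C\,\omega_n \to 0$.

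\textbf{The outer pieces.} For $R_n$ (the piece $L_n$ being symmetric, using $k_n' \ge 0$ on $\bigl(p-\tfrac1{\sqrt n},p-\tfrac1n\bigr)$ in place of $k_n'\le 0$ on $R_n$), set $\psi(u)\coloneqq k_n(p+u)$ and $\eta(u)\coloneqq\int_{1/n}^u\lvert f(p+v)-f(p)\rvert\,dv$ for $u\in[\tfrac1n,\tfrac1{\sqrt n}]$. By hypothesis $\psi$ is $C^1$ and non-increasing there, with $\psi(\tfrac1n)\le Cn$, $\psi(\tfrac1{\sqrt n})\le R\sqrt n$ and $\int_{1/n}^{1/\sqrt n}\psi \le 1$; also $\eta(u)\le H_+(u)\le u\,\omega_n$. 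Integrating by parts,
\[
\int_{1/n}^{1/\sqrt n}\psi(u)\,d\eta(u) \;=\; \psi\!\bigl(\tfrac1{\sqrt n}\bigr)\eta\!\bigl(\tfrac1{\sqrt n}\bigr) \;-\; \int_{1/n}^{1/\sqrt n}\psi'(u)\,\eta(u)\,du .
\]
The boundary term is at most $R\sqrt n\cdot H_+(\tfrac1{\sqrt n}) \le R\,\omega_n$; since $-\psi'\ge 0$ and $\eta(u)\le u\,\omega_n$, the remaining term is at most $\omega_n\int_{1/n}^{1/\sqrt n}(-\psi'(u))u\,du$, and a second integration by parts bounds $\int_{1/n}^{1/\sqrt n}(-\psi'(u))u\,du$ by $\tfrac1n\psi(\tfrac1n) + \int_{1/n}^{1/\sqrt n}\psi \le C+1$. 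Hence $\int_{R_n}k_n\lvert g\rvert \le (R+C+1)\omega_n$, and the same for $L_n$. Combining the three contributions, $\bigl\lvert\int k_n g\bigr\rvert \le \int k_n\lvert g\rvert \le (2R+3C+2)\,\omega_n \to 0$, which gives the claim.

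\textbf{Main obstacle.} The one genuinely delicate point is the uniform central bound $\sup_{\overline{M_n}}k_n \le Cn$: it is crucial that $C$ depend only on $r$, so that the factor $n$ coming from $\lvert M_n\rvert^{-1}$ is absorbed into the $o(1)$ coming from the Lebesgue point condition. The rest is bookkeeping — one must only verify that the constants produced by the two integrations by parts depend solely on $r$ and $R$, and, for full rigour, justify the integrations by parts by first working on a slightly shrunk interval $[\tfrac1n+\delta,\tfrac1{\sqrt n}-\delta]$ and letting $\delta\downarrow 0$ (legitimate since a monotone $C^1$ function with finite endpoint values is absolutely continuous).
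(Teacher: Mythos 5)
Your argument is correct, and it reaches the limit by a genuinely different device for the troublesome middle interval than the paper uses. The paper never bounds \(k_n\) pointwise on \(\left(p-\frac 1n,p+\frac 1n\right)\): it adds the tent function \(m_n(x)=rn-rn^2|x-p|\) supported there, so that \(k_n+m_n\) is non-decreasing on \(\left(p-\frac 1{\sqrt n},p\right)\) and non-increasing on \(\left(p,p+\frac 1{\sqrt n}\right)\), and then runs a single Fubini/integration-by-parts computation against the modulus \(c_\epsilon=\frac 1{2\epsilon}\int_{(p-\epsilon,p+\epsilon)}|f-f(p)|\,dx\) on each half; the price of the majorant is only the additive constant \(\frac r2=\int_p^{p+\frac 1n}m_n\), and the endpoint hypothesis \(k_n\left(p\pm\frac 1{\sqrt n}\right)\le R\sqrt n\) enters exactly as in your boundary term. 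You instead split the domain, and your key extra ingredient is the a priori bound \(\sup_{\left[p-\frac 1n,p+\frac 1n\right]}k_n\le\left(\frac 12+4r\right)n\), deduced from \(|k_n'|\le rn^2\) together with the normalization \(\int k_n=1\) — an estimate the paper never needs — after which the central contribution is \(O(\omega_n)\) directly and the two tails are handled by Stieltjes integration by parts using monotonicity, closely parallel to the paper's computation (your \(\omega_n\) is the paper's \(2C_{1/\sqrt n}\)). Both routes yield constants depending only on \(r\) and \(R\); yours makes more transparent where each hypothesis is used, the paper's \(m_n\)-trick is shorter but more ad hoc. One technical remark: both proofs implicitly assume the fundamental theorem of calculus is available up to the endpoints \(p\pm\frac 1{\sqrt n}\) where the hypothesis value of \(k_n\) is invoked; your shrunk-interval limiting argument covers this, and in fact monotonicity of \(k_n\) on the tail plus \(\int k_n\le 1\) bounds the one-sided limit there by a quantity of order \(\sqrt n\) anyway, so your boundary estimate survives even in the degenerate case — no gap results.
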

\begin{proof}
For \(\epsilon>0\) we set
\[c_\epsilon \coloneqq \frac 1{2\epsilon} \int_{(p-\epsilon,p+\epsilon)} |f(x)-f(p)| \,dx.\]
Then, since \(p\) is a Lebesgue point, we have \(\lim_{\epsilon \downarrow 0}c_\epsilon = 0\). We set
\[C_\epsilon \coloneqq \sup_{t \in (0,\epsilon]} c_t.\]
Then also \(\lim_{\epsilon \downarrow 0}C_\epsilon = 0\). We now define the function
\[m_n(x) \coloneqq \begin{cases} rn - rn^2|x-p| & \text{if } x \in \left(p-\frac 1n,p+\frac 1n\right) \\ 0 & \text{else}\end{cases}.\]
Then, for every \(x \in \left(p,p+\frac 1n\right)\), we have
\[k_n'(x) + m_n'(x) = k_n'(x) - rn^2 \leq rn^2 - rn^2 = 0\]
and for every \(x \in \left(p-\frac 1n,p\right)\), we have
\[k_n'(x) + m_n'(x) = k_n'(x) + rn^2 \geq -rn^2 + rn^2 = 0,\]
so \(k_n'(x)+m_n'(x) \leq 0\) for \(x \in \left(p,p+\frac 1{\sqrt n}\right)\) and \(k_n'(x)+m_n'(x) \geq 0\) for \(x \in \left(p-\frac 1{\sqrt n},p\right)\).
This yields
\begin{align*}
&\int_p^{p+\frac 1{\sqrt n}} \left(k_n(x) + m_n(x)\right) |f(x)-f(p)| \,dx
\\&\qquad= \int_p^{p+\frac 1{\sqrt n}} \left(k_n\left(p+\frac 1{\sqrt n}\right) - \int_x^{p+\frac 1{\sqrt n}} k_n'(t) + m_n'(t) \,dt\right) |f(x)-f(p)| \,dx
\\&\qquad= k_n\left(p+\frac 1{\sqrt n}\right) \cdot \int_p^{p+\frac 1{\sqrt n}} |f(x)-f(p)| \,dx
\\&\qquad\qquad + \int_p^{p+\frac 1{\sqrt n}} \int_x^{p+\frac 1{\sqrt n}} -\left(k_n'(t) + m_n'(t)\right) |f(x)-f(p)| \,dt \,dx
\\&\qquad\leq R\sqrt{n} \cdot 2\frac 1{\sqrt n}c_{\frac 1{\sqrt n}} + \int_p^{p+\frac 1{\sqrt n}} \int_p^t -\left(k_n'(t) + m_n'(t)\right) |f(x)-f(p)| \,dx \,dt
\\&\qquad= 2R \cdot c_{\frac 1{\sqrt n}} + \int_p^{p+\frac 1{\sqrt n}} -\left(k_n'(t) + m_n'(t)\right)\int_p^t  |f(x)-f(p)| \,dx \,dt
\\&\qquad\leq 2R \cdot c_{\frac 1{\sqrt n}} + \int_p^{p+\frac 1{\sqrt n}} -\left(k_n'(t) + m_n'(t)\right) 2\left(t-p\right)c_{t-p} \,dt
\\&\qquad\leq 2R \cdot c_{\frac 1{\sqrt n}} + 2C_{\frac 1{\sqrt n}} \int_p^{p+\frac 1{\sqrt n}} -\left(k_n'(t) + m_n'(t)\right) \left(t-p\right) \,dt
\\&\qquad= 2R \cdot c_{\frac 1{\sqrt n}} + 2C_{\frac 1{\sqrt n}} \left(\left[-\left(k_n(t) + m_n(t)\right) \left(t-p\right)\right]_p^{p+\frac 1{\sqrt n}} + \int_p^{p+\frac 1{\sqrt n}} \left(k_n(t) + m_n(t)\right) \,dt\right)
\\&\qquad= 2R \cdot c_{\frac 1{\sqrt n}} + 2C_{\frac 1{\sqrt n}} \left(-k_n\left(p+\frac 1{\sqrt n}\right)\frac 1{\sqrt n} + \int_p^{p+\frac 1{\sqrt n}} k_n(t) \,dt + \frac r2\right)
\\&\qquad\leq 2R \cdot c_{\frac 1{\sqrt n}} + 2C_{\frac 1{\sqrt n}} \left(0 + 1 + \frac r2\right) \xrightarrow{n \to \infty} 0
\end{align*}
and analogously
\[\int_{p-\frac 1{\sqrt n}}^p \left(k_n(x) + m_n(x)\right) |f(x)-f(p)| \,dx \xrightarrow{n \to \infty} 0.\]
Finally, we have
\begin{align*}
\left|\int_{\left(p-\frac 1{\sqrt n},p+\frac 1{\sqrt n}\right)}k_n(x)f(x) \,dx - f(p)\right| &= \left|\int_{\left(p-\frac 1{\sqrt n},p+\frac 1{\sqrt n}\right)}k_n(x)\left(f(x)-f(p)\right) \,dx\right|
\\&\leq \int_{\left(p-\frac 1{\sqrt n},p+\frac 1{\sqrt n}\right)}k_n(x)\left|f(x)-f(p)\right| \,dx
\\&\leq \int_{\left(p-\frac 1{\sqrt n},p+\frac 1{\sqrt n}\right)}\left(k_n(x) + m_n(x)\right)\left|f(x)-f(p)\right| \,dx \xrightarrow{n \to \infty} 0,
\end{align*}
so
\[f(p) = \lim_{n \to \infty} \int_{\left(p-\frac 1{\sqrt n},p+\frac 1{\sqrt n}\right)}k_n(x)f(x) \,dx. \qedhere\]
\end{proof}
\begin{lem}\label{lem:dIntegral1}
For every \(p \in \R_+\), one has
\[\lim_{n \to \infty} \frac 1\pi \int_{\left(p-\frac 1{\sqrt n},p+\frac 1{\sqrt n}\right)} \tilde d_{p,n}(x) \,dx = \frac 12.\]
\end{lem}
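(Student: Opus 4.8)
The plan is to reduce the integral to an elementary one by the substitution $v = x^2 - p^2 - \frac{1}{n^2}$, compute it in closed form, and read off the limit. There is no genuine obstacle here; the main point is careful bookkeeping of the constants and of the endpoint arguments of the arctangent, which is where the choice of interval width $\frac{1}{\sqrt n}$ pays off.

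First I would pull out all factors not depending on $x$. Setting
\[
C_{p,n} \coloneqq \frac{2p^3\left(1 + \frac{1}{n^2}\right)}{n\left(p^2 + \frac{1}{n^2}\right)\left(1 + \frac{p^2}{n^2}\right)},
\]
the definition of $\tilde d_{p,n}$ gives $\tilde d_{p,n}(x) = C_{p,n}\cdot \dfrac{x}{\left(x^2 - p^2 - \frac{1}{n^2}\right)^2 + \frac{4p^2}{n^2}}$ for all $x \in \R$, and one checks directly from the formula that $n\,C_{p,n} \xrightarrow{n\to\infty} 2p$.

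Next, the substitution $v = x^2 - p^2 - \frac{1}{n^2}$, $dv = 2x\,dx$, yields the antiderivative
\[
\int \frac{x\,dx}{\left(x^2 - p^2 - \frac{1}{n^2}\right)^2 + \frac{4p^2}{n^2}} = \frac{n}{4p}\arctan\!\left(\frac{n\left(x^2 - p^2 - \frac{1}{n^2}\right)}{2p}\right),
\]
using $\int \frac{dv}{v^2+a^2} = \frac1a\arctan\frac va$ with $a = \frac{2p}{n}$. At the endpoints $x = p \pm \frac{1}{\sqrt n}$ one has $x^2 - p^2 - \frac{1}{n^2} = \pm\frac{2p}{\sqrt n} + \frac1n - \frac{1}{n^2}$, hence $\frac{n\left(x^2 - p^2 - \frac{1}{n^2}\right)}{2p} = \pm\sqrt n + \frac{1 - \frac1n}{2p}$, so that
\[
\frac1\pi \int_{\left(p - \frac1{\sqrt n},\, p + \frac1{\sqrt n}\right)} \tilde d_{p,n}(x)\,dx = \frac{n\,C_{p,n}}{4p\pi}\left[\arctan\!\left(\sqrt n + \frac{1 - \frac1n}{2p}\right) - \arctan\!\left(-\sqrt n + \frac{1 - \frac1n}{2p}\right)\right].
\]

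Finally I would let $n \to \infty$: the prefactor $\frac{n\,C_{p,n}}{4p\pi}$ tends to $\frac{2p}{4p\pi} = \frac{1}{2\pi}$, while the bracket tends to $\frac\pi2 - \left(-\frac\pi2\right) = \pi$, so the product tends to $\frac12$, which is the claim. The only steps needing a little care are the exact evaluation of the arctangent arguments (the width $\frac1{\sqrt n}$ is chosen precisely so that they diverge to $\pm\infty$) and the asymptotics $n\,C_{p,n}\to 2p$; both follow by direct inspection.
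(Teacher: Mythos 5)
Your proof is correct and is essentially the paper's argument: the paper writes down the same antiderivative (a constant times $\arctan\bigl(\tfrac{x^2-p^2-\frac 1{n^2}}{2p/n}\bigr)$, verified by differentiation, whereas you derive it via the substitution $v=x^2-p^2-\tfrac1{n^2}$) and evaluates it at the same endpoints, with your prefactor $\tfrac{nC_{p,n}}{4p}$ equal to the paper's constant. The endpoint asymptotics and the limit computation coincide as well.
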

\begin{proof}
We define the function
\[D_{p,n}(x) \coloneqq \frac{p^2\left(1+\frac 1{n^2}\right)}{2\left(\frac 1{n^2}+p^2\right)\left(1+\frac{p^2}{n^2}\right)} \cdot \arctan\left(\frac{x^2-p^2-\frac 1{n^2}}{\frac{2p}n}\right).\]
Then
\begin{align*}
D_{p,n}'(x) &= \frac{p^2\left(1+\frac 1{n^2}\right)}{2\left(\frac 1{n^2}+p^2\right)\left(1+\frac{p^2}{n^2}\right)} \cdot \frac{\frac{nx}p}{1 + \left(\frac{x^2-p^2-\frac 1{n^2}}{\frac{2p}n}\right)^2}
\\&= \frac{px}{n\left(\frac 1{n^2}+p^2\right)\left(1+\frac{p^2}{n^2}\right)} \cdot \frac{2p^2\left(1+\frac 1{n^2}\right)}{\left(x^2-p^2-\frac 1{n^2}\right)^2+4p^2 \frac 1{n^2}} = \tilde d_{p,n}(x).
\end{align*}
Since
\[\arctan\left(\frac{\left(p\pm\frac 1{\sqrt n}\right)^2-p^2-\frac 1{n^2}}{\frac{2p}n}\right) = \arctan\left(\frac{\frac 1n \pm \frac {2p}{\sqrt n} - \frac 1{n^2}}{\frac{2p}n}\right) = \arctan\left(\frac 1{2p} \pm \sqrt{n} - \frac 1{2pn}\right),\]
we get
\begin{align*}
&\int_{\left(p-\frac 1{\sqrt n},p+\frac 1{\sqrt n}\right)} \tilde d_{p,n}(x) \,dx = \left[D_{p,n}(x)\right]_{p-\frac 1{\sqrt n}}^{p+\frac 1{\sqrt n}}
\\&\qquad= \frac{p^2\left(1+\frac 1{n^2}\right)}{2\left(\frac 1{n^2}+p^2\right)\left(1+\frac{p^2}{n^2}\right)} \cdot  \left(\arctan\left(\frac 1{2p} + \sqrt{n} - \frac 1{2pn}\right) - \arctan\left(\frac 1{2p} - \sqrt{n} - \frac 1{2pn}\right)\right)
\\&\qquad \xrightarrow{n \to \infty} \frac 12 \cdot \frac \pi 2 - \frac 12 \cdot \left(-\frac \pi 2\right) = \frac \pi 2. \qedhere
\end{align*}
\end{proof}
\begin{lem}\label{lem:localPhiIntegral}
Let \(\phi: \R \to \C\) be a measurable function with
\[\int_\R \frac{x^2}{\left(1+x^2\right)^2} \left|\phi(x)\right| \,dx < \infty.\]
Further, let \(p \in \R_+\) be a Lebesgue point of \(\phi\). Then
\[\lim_{n \to \infty} \frac 1\pi \int_{\left(p-\frac 1{\sqrt n},p+\frac 1{\sqrt n}\right)} \tilde d_{p,n}(x) \phi(x) \,dx = \frac 12 \phi(p).\]
\end{lem}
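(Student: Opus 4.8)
The plan is to recognise $\tfrac1\pi\,\tilde d_{p,n}$, restricted to the interval $\left(p-\tfrac1{\sqrt n},\,p+\tfrac1{\sqrt n}\right)$, as an approximate identity of exactly the shape handled by \fref{lem:LebesguePointApprox}, after a harmless renormalisation. Concretely, set $I_n \coloneqq \int_{(p-1/\sqrt n,\,p+1/\sqrt n)}\tilde d_{p,n}(x)\,dx$ and $k_n \coloneqq I_n^{-1}\,\tilde d_{p,n}$ on that interval, extended by zero; by \fref{lem:dIntegral1} we have $I_n\to\tfrac\pi2>0$, so for $n$ large the interval lies in $\R_+$, $k_n\ge 0$, and $\int k_n=1$. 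If the remaining hypotheses of \fref{lem:LebesguePointApprox} hold for $(k_n)$ and $\phi$, that lemma gives $\phi(p)=\lim_n\int k_n\,\phi=\lim_n I_n^{-1}\int\tilde d_{p,n}\,\phi$, hence $\lim_n\tfrac1\pi\int\tilde d_{p,n}\,\phi=\tfrac1\pi\bigl(\lim_n I_n\bigr)\phi(p)=\tfrac12\phi(p)$, which is the claim. The integrability hypothesis on $\phi$ makes this legitimate because $p>0$: on a fixed closed interval around $p$ the weight $\tfrac{x^2}{(1+x^2)^2}$ is bounded below by a positive constant, so $\phi$ is integrable there (in particular $p$ being a Lebesgue point is meaningful), and replacing $\phi$ by its restriction to that interval changes neither the hypothesis nor $\int k_n\,\phi$ for large $n$.

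To verify the hypotheses I would use the computation already present in the proof of \fref{lem:dIntegral1}. Writing $u_n(x)\coloneqq\tfrac{n}{2p}\bigl(x^2-p^2-\tfrac1{n^2}\bigr)$ and $C_{p,n}\coloneqq\tfrac{p^2(1+1/n^2)}{2(1/n^2+p^2)(1+p^2/n^2)}$ (so $C_{p,n}\to\tfrac12$), one has $\tilde d_{p,n}=D_{p,n}'$ with $D_{p,n}=C_{p,n}\arctan(u_n)$, hence
\[
\tilde d_{p,n}(x)=C_{p,n}\,\frac{nx/p}{1+u_n(x)^2},\qquad
\tilde d_{p,n}'(x)=C_{p,n}\,\frac np\;\frac{1+u_n(x)\bigl(u_n(x)-\tfrac{2nx^2}{p}\bigr)}{\bigl(1+u_n(x)^2\bigr)^2}.
\]
Non-negativity of $k_n$ on its interval is immediate, $\tilde d_{p,n}$ is rational with strictly positive denominator $\bigl(x^2-p^2-\tfrac1{n^2}\bigr)^2+\tfrac{4p^2}{n^2}$ so $k_n\in C^1$ there, and since $u_n\bigl(p\pm\tfrac1{\sqrt n}\bigr)=\pm\sqrt n+\tfrac1{2p}-\tfrac1{2pn}$ a short computation shows $\tilde d_{p,n}\bigl(p\pm\tfrac1{\sqrt n}\bigr)\to\tfrac12$, so these endpoint values stay bounded and in particular satisfy $k_n\bigl(p\pm\tfrac1{\sqrt n}\bigr)\le R\sqrt n$ for $n$ large.

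The substantive point — the one I expect to be the only real work — is the sign and size of $k_n'$, which I would read off from the numerator $N_n(x)\coloneqq 1+u_n(x)\bigl(u_n(x)-\tfrac{2nx^2}{p}\bigr)$ of $\tilde d_{p,n}'$, using that $u_n(x)-\tfrac{2nx^2}{p}=-\tfrac n{2p}\bigl(3x^2+p^2+\tfrac1{n^2}\bigr)<0$ always, that $u_n$ is increasing on $\R_+$, and the identities $u_n\bigl(p-\tfrac1n\bigr)=-1$, $u_n\bigl(p+\tfrac1n\bigr)=1$. On $\bigl(p-\tfrac1{\sqrt n},p-\tfrac1n\bigr)$ one has $u_n\le 0$, so $u_n\bigl(u_n-\tfrac{2nx^2}{p}\bigr)\ge 0$ and $N_n\ge 1>0$, giving $k_n'\ge 0$; on $\bigl(p+\tfrac1n,p+\tfrac1{\sqrt n}\bigr)$ one has $u_n\ge 1$ and $x>p$, so $u_n\bigl(u_n-\tfrac{2nx^2}{p}\bigr)\le u_n-\tfrac{2nx^2}{p}<-2np$, giving $N_n<1-2np<0$ for $n$ large and hence $k_n'\le 0$; and on $\bigl(p-\tfrac1n,p+\tfrac1n\bigr)$ one has $u_n\in(-1,1)$, so $1+u_n^2\in[1,2)$ and $|N_n|\le 2+\tfrac{2n(p+1)^2}{p}$, whence $|k_n'(x)|=I_n^{-1}|\tilde d_{p,n}'(x)|\le I_n^{-1}C_{p,n}\tfrac np\bigl(2+\tfrac{2n(p+1)^2}{p}\bigr)\le r\,n^2$ for a constant $r=r(p)$ and $n$ large. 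All hypotheses of \fref{lem:LebesguePointApprox} then hold for $n$ large — which, since we are computing a limit, is all that is needed — and the lemma yields the assertion as in the first paragraph. The only obstacle is bookkeeping: ensuring each elementary inequality is invoked on the correct one of the three subintervals and that the $O(1)$, $O(n)$, $O(n^2)$ scalings line up.
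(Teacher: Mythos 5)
Your proposal is correct and follows essentially the same route as the paper: normalize \(\tilde d_{p,n}\) by its integral (whose limit \(\tfrac\pi2\) comes from \fref{lem:dIntegral1}), verify positivity, the \(R\sqrt n\) endpoint bound, monotonicity on \(\left(p-\tfrac1{\sqrt n},p-\tfrac1n\right)\) and \(\left(p+\tfrac1n,p+\tfrac1{\sqrt n}\right)\), and the \(rn^2\) derivative bound on \(\left(p-\tfrac1n,p+\tfrac1n\right)\), then apply \fref{lem:LebesguePointApprox}. Your only deviation is organizational — carrying out the derivative sign/size analysis through the substitution \(u_n(x)=\tfrac n{2p}\left(x^2-p^2-\tfrac1{n^2}\right)\) rather than the paper's direct quotient-rule estimates — and your formulas and case distinctions check out.
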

\begin{proof}
For \(n \in \N\) with \(\frac 1{\sqrt n} < p\) set
\[D_n \coloneqq \frac 1\pi \int_{\left(p-\frac 1{\sqrt n},p+\frac 1{\sqrt n}\right)} \tilde d_{p,n}(x) \,dx.\]
Then, by \fref{lem:dIntegral1}, we have \(\lim_{n \to \infty}D_n = \frac 12\). Further, we define
\[k_n(x) \coloneqq \frac{\tilde d_{p,n}(x)}{\pi D_n}.\]
Then
\[\int_{\left(p-\frac 1{\sqrt n},p+\frac 1{\sqrt n}\right)} k_n(x) \,dx = 1\]
and \(k_n(x) \geq 0\) for \(x \in \left(p-\frac 1{\sqrt n},p+\frac 1{\sqrt n}\right)\).
Then, for sufficiently large \(n \in \N\), one has
\begin{align*}
k_n\left(p\pm\frac 1{\sqrt n}\right) &= \frac 1{\pi D_n} \frac{p\left(p\pm\frac 1{\sqrt n}\right)}{n\left(\frac 1{n^2}+p^2\right)\left(1+\frac{p^2}{n^2}\right)} \cdot \frac{2p^2\left(\frac 1{n^2} + 1\right)}{\left(\left(p\pm\frac 1{\sqrt n}\right)^2-p^2-\frac 1{n^2}\right)^2+4p^2 \frac 1{n^2}}
\\&\leq \frac 1{\pi D_n} \frac{p\left(p+1\right)}{n \cdot p^2 \cdot 1} \cdot \frac{4p^2}{\left(\frac 1n \pm \frac{2p}{\sqrt n}-\frac 1{n^2}\right)^2 + 0}
\\&= \frac 1{\pi D_n} \frac{4p\left(p+1\right)}{\left(\frac 1{\sqrt n} \pm 2p -\frac 1{\sqrt{n^3}}\right)^2} \xrightarrow{n \to \infty} \frac 2\pi \frac{4p\left(p+1\right)}{4p^2} = \frac{2(p+1)}{\pi p},
\end{align*}
so for sufficiently large \(n \in \N\), we have
\[k_n\left(p\pm\frac 1{\sqrt n}\right) \leq \frac{p+1}{p} \leq \frac{p+1}{p} \sqrt{n}.\]
Further, we have
\begin{align*}
k_n'(x) &= \frac 1{\pi D_n} \frac{2p^3\left(\frac 1{n^2} + 1\right)}{n\left(\frac 1{n^2}+p^2\right)\left(1+\frac{p^2}{n^2}\right)} \cdot \left(\frac{x}{\left(x^2-p^2-\frac 1{n^2}\right)^2+4p^2 \frac 1{n^2}}\right)'
\\&= \frac 1{\pi D_n} \frac{2p^3\left(\frac 1{n^2} + 1\right)}{n\left(\frac 1{n^2}+p^2\right)\left(1+\frac{p^2}{n^2}\right)} \cdot \left(\frac{1}{\left(x^2-p^2-\frac 1{n^2}\right)^2+4p^2 \frac 1{n^2}} - \frac{2x^2\left(x^2-p^2-\frac 1{n^2}\right)}{\left(\left(x^2-p^2-\frac 1{n^2}\right)^2+4p^2 \frac 1{n^2}\right)^2}\right)
\end{align*}
For \(x \in \left(p-\frac 1n,p+\frac 1n\right)\), using
\[\frac{2p^3\left(\frac 1{n^2} + 1\right)}{n\left(\frac 1{n^2}+p^2\right)\left(1+\frac{p^2}{n^2}\right)} \leq \frac{4p^3}{n \cdot p^2 \cdot 1} = \frac{4p}{n}\]
and
\begin{align*}
\left|x^2-p^2-\frac 1{n^2}\right| &= \left|\left(p+(x-p)\right)^2-p^2-\frac 1{n^2}\right| = \left|2p(x-p) + (x-p)^2-\frac 1{n^2}\right|
\\&\leq 2p\frac 1n + \left(\frac 1n\right)^2 + \frac 1{n^2} \leq \frac{2(p+1)}n,
\end{align*}
this yields
\begin{align*}
\left|k_n'(x)\right| &\leq\frac 1{\pi D_n} \frac{4p}{n} \cdot \left(\frac{1}{\left(x^2-p^2-\frac 1{n^2}\right)^2+4p^2 \frac 1{n^2}} + \frac{2x^2\left|x^2-p^2-\frac 1{n^2}\right|}{\left(\left(x^2-p^2-\frac 1{n^2}\right)^2+4p^2 \frac 1{n^2}\right)^2}\right)
\\&\leq\frac 1{\pi D_n} \frac{4p}{n} \cdot \left(\frac{1}{4p^2 \frac 1{n^2}} + \frac{2(p+1)^2 \cdot \frac{2(p+1)}n}{\left(4p^2 \frac 1{n^2}\right)^2}\right)
\\&=\frac 1{\pi D_n} \left(\frac{n}{p} + \frac{(p+1)^3n^2}{p^3}\right) \leq \frac 1{\pi D_n} \frac{2(p+1)^3}{p^3} \cdot n^2.
\end{align*}
Then the fact that \(\lim_{n \to \infty}D_n = \frac 12\) implies that there is \(r>0\) such that \(\left|k_n'(x)\right| \leq rn^2\) for every \(x \in \left(p-\frac 1n,p+\frac 1n\right)\).

Further, for \(x \in \left(p-\frac 1{\sqrt n},p-\frac 1n\right)\), we have
\[\left(x^2-p^2-\frac 1{n^2}\right) \leq 0,\]
so
\begin{align*}
k_n'(x) &= \frac 1{\pi D_n} \frac{2p^3\left(\frac 1{n^2} + 1\right)}{n\left(\frac 1{n^2}+p^2\right)\left(1+\frac{p^2}{n^2}\right)} \cdot \left(\frac{1}{\left(x^2-p^2-\frac 1{n^2}\right)^2+4p^2 \frac 1{n^2}} - \frac{2x^2\left(x^2-p^2-\frac 1{n^2}\right)}{\left(\left(x^2-p^2-\frac 1{n^2}\right)^2+4p^2 \frac 1{n^2}\right)^2}\right)
\\&\geq \frac 1{\pi D_n} \frac{2p^3\left(\frac 1{n^2} + 1\right)}{n\left(\frac 1{n^2}+p^2\right)\left(1+\frac{p^2}{n^2}\right)} \cdot \left(0 - 0\right) = 0.
\end{align*}
For \(x \in \left(p+\frac 1n,p+\frac 1{\sqrt n}\right)\), we get
\begin{align*}
&\left(x^2-p^2-\frac 1{n^2}\right)^2+4p^2 \frac 1{n^2} - 2x^2\left(x^2-p^2-\frac 1{n^2}\right) = \left(-x^2-p^2-\frac 1{n^2}\right)\left(x^2-p^2-\frac 1{n^2}\right)+4p^2 \frac 1{n^2}
\\&\qquad\qquad\qquad= \left(p^2+\frac 1{n^2}\right)^2-x^4+4p^2 \frac 1{n^2} \leq \left(p^2+\frac 1{n^2}\right)^2-\left(p+\frac 1n\right)^4+4p^2 \frac 1{n^2}
\\&\qquad\qquad\qquad= \left(p^2+\frac 1{n^2}\right)^2-\left(p^2+\frac 1{n^2} + 2\frac pn\right)^2+\left(2\frac pn\right)^2 \leq 0,
\end{align*}
so
\begin{align*}
k_n'(x) &= \frac 1{\pi D_n} \frac{2p^3\left(\frac 1{n^2} + 1\right)}{n\left(\frac 1{n^2}+p^2\right)\left(1+\frac{p^2}{n^2}\right)} \cdot \left(\frac{1}{\left(x^2-p^2-\frac 1{n^2}\right)^2+4p^2 \frac 1{n^2}} - \frac{2x^2\left(x^2-p^2-\frac 1{n^2}\right)}{\left(\left(x^2-p^2-\frac 1{n^2}\right)^2+4p^2 \frac 1{n^2}\right)^2}\right)
\\&= \frac 1{\pi D_n} \frac{2p^3\left(\frac 1{n^2} + 1\right)}{n\left(\frac 1{n^2}+p^2\right)\left(1+\frac{p^2}{n^2}\right)} \cdot \left(\frac{\left(x^2-p^2-\frac 1{n^2}\right)^2+4p^2 \frac 1{n^2} - 2x^2\left(x^2-p^2-\frac 1{n^2}\right)}{\left(\left(x^2-p^2-\frac 1{n^2}\right)^2+4p^2 \frac 1{n^2}\right)^2}\right)
\\&\leq \frac 1{\pi D_n} \frac{2p^3\left(\frac 1{n^2} + 1\right)}{n\left(\frac 1{n^2}+p^2\right)\left(1+\frac{p^2}{n^2}\right)} \cdot \left(\frac{0}{\left(\left(x^2-p^2-\frac 1{n^2}\right)^2+4p^2 \frac 1{n^2}\right)^2}\right) = 0.
\end{align*}
These estimates show that we can apply \fref{lem:LebesguePointApprox} and get
\[\frac 1\pi \int_{\left(p-\frac 1{\sqrt n},p+\frac 1{\sqrt n}\right)} \tilde d_{p,n}(x) \phi(x) \,dx = D_n \cdot \int_{\left(p-\frac 1{\sqrt n},p+\frac 1{\sqrt n}\right)} k_n(x) \phi(x) \,dx \xrightarrow{n \to \infty} \frac 12 \cdot \phi(p). \qedhere\]
\end{proof}
\begin{lem}\label{lem:LebesguePointKonvergence}
Let \(\phi: \R \to \C\) be a measurable function with
\[\int_\R \frac{x^2}{\left(1+x^2\right)^2} \left|\phi(x)\right| \,dx < \infty.\]
Further, let \(p \in \R_+\) such that \(p\) and \(-p\) are Lebesgue points of \(\phi\). Then
\[\frac{\phi(p) + \phi(-p)}2 = \lim_{n \to \infty} \frac 1\pi \int_{\left(-p-\frac 1{\sqrt n},-p+\frac 1{\sqrt n}\right) \cup \left(p-\frac 1{\sqrt n},p+\frac 1{\sqrt n}\right)} d_{p,n}(x) \,\phi(x) \,dx.\]
\end{lem}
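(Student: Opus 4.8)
The plan is to reduce the statement to the two already-established ingredients, namely Lemma~\ref{lem:dSlightChange} together with Lemma~\ref{lem:localPhiIntegral} on the one hand, and Lemma~\ref{lem:approximationZero} and Lemma~\ref{lem:approximationInfinity} on the other. First I would split the domain of integration, writing
\[\int_\R d_{p,n}(x)\phi(x)\,dx = \int_{\left(-p+\frac 1{\sqrt n},p-\frac 1{\sqrt n}\right)} d_{p,n}\phi + \int_{\left(-\infty,-p-\frac 1{\sqrt n}\right) \cup \left(p+\frac 1{\sqrt n},\infty\right)} d_{p,n}\phi + \int_{\left(-p-\frac 1{\sqrt n},-p+\frac 1{\sqrt n}\right) \cup \left(p-\frac 1{\sqrt n},p+\frac 1{\sqrt n}\right)} d_{p,n}\phi,\]
but with the bookkeeping done so that the inner and outer integrals are exactly the ones handled by Lemma~\ref{lem:approximationZero} and Lemma~\ref{lem:approximationInfinity}; here one has to be slightly careful, since those two lemmas are stated for \(d_{p,n}(x)+\frac{g_n(x)}{p^2}\) and \(d_{p,n}(x)+g_n(x)\) respectively rather than for \(d_{p,n}\) alone. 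So in practice I would not integrate \(d_{p,n}\phi\) directly but rather work with \(f_{p,n}\phi\), since by the definition of \(f_{p,n}\) one has \(f_{p,n}(x) = d_{p,n}(x) + \frac{g_n(x)}{p^2}\) on \((-1,1)\) and \(f_{p,n}(x) = d_{p,n}(x) + g_n(x)\) off \([-1,1]\), which is precisely the combination appearing in the two approximation lemmas; the statement to be proved is really about \(f_{p,n}\) via \(\tfrac1\pi\int f_{p,n}\phi \to \tfrac{\phi(p)+\phi(-p)}2\) as anticipated in the discussion after Lemma~\ref{lem:approximationInfinity}, and Lemma~\ref{lem:LebesguePointKonvergence} isolates the contribution of the two shrinking neighbourhoods of \(\pm p\).

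For that central piece, on the neighbourhood \(\left(p-\frac 1{\sqrt n},p+\frac 1{\sqrt n}\right)\) I would invoke Lemma~\ref{lem:dSlightChange} to squeeze \(d_{p,n}\) between \(b_{p,n}\tilde d_{p,n}\) and \(B_{p,n}\tilde d_{p,n}\) with \(b_{p,n},B_{p,n}\to 1\); combined with Lemma~\ref{lem:localPhiIntegral}, which gives \(\tfrac1\pi\int_{\left(p-\frac1{\sqrt n},p+\frac1{\sqrt n}\right)}\tilde d_{p,n}\phi \to \tfrac12\phi(p)\) at any Lebesgue point \(p\) of \(\phi\), this yields \(\tfrac1\pi\int_{\left(p-\frac1{\sqrt n},p+\frac1{\sqrt n}\right)}d_{p,n}\phi \to \tfrac12\phi(p)\). (A small point to address: the squeeze via \(b_{p,n},B_{p,n}\) gives the limit of \(\tfrac1\pi\int\tilde d_{p,n}|\phi|\) only if \(\phi\) is nonnegative; in general I would decompose \(\phi\) into real and imaginary parts and each into positive and negative parts, apply the squeeze to each piece, or equivalently note that Lemma~\ref{lem:dSlightChange} already controls \(d_{p,n}/\tilde d_{p,n}\) pointwise so that \(d_{p,n}-\tilde d_{p,n} = (d_{p,n}/\tilde d_{p,n}-1)\tilde d_{p,n}\) with a coefficient tending to \(0\) uniformly on the neighbourhood, and \(\tfrac1\pi\int\tilde d_{p,n}|\phi|\) stays bounded by Lemma~\ref{lem:localPhiIntegral} applied to \(|\phi|\).) Now I would repeat the argument on \(\left(-p-\frac 1{\sqrt n},-p+\frac 1{\sqrt n}\right)\): note that \(d_{p,n}\) is an even function of \(x\) (inspect its definition: \(g_n\) is even and the rational factor depends on \(x\) only through \(x^2\)), so the substitution \(x\mapsto -x\) turns \(\int_{\left(-p-\frac1{\sqrt n},-p+\frac1{\sqrt n}\right)}d_{p,n}(x)\phi(x)\,dx\) into \(\int_{\left(p-\frac1{\sqrt n},p+\frac1{\sqrt n}\right)}d_{p,n}(x)\phi(-x)\,dx\), and since \(-p\) is a Lebesgue point of \(\phi\) precisely when \(p\) is a Lebesgue point of \(x\mapsto\phi(-x)\), the same squeeze-plus-Lemma~\ref{lem:localPhiIntegral} argument gives \(\tfrac12\phi(-p)\) for this piece. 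Adding the two contributions produces \(\tfrac{\phi(p)+\phi(-p)}2\), which is Lemma~\ref{lem:LebesguePointKonvergence}.

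The main obstacle, as I see it, is not any one of the three regions individually — each is handed to us by an earlier lemma — but making sure the three pieces are assembled \emph{consistently}: the approximation lemmas are formulated for the combined integrands \(d_{p,n}+\tfrac{g_n}{p^2}\) and \(d_{p,n}+g_n\), and the final statement for the raw integrand \(d_{p,n}\), so I must carefully track where the extra \(g_n\)-terms go. The cleanest route is to prove the intermediate assertion \(\tfrac1\pi\int_\R f_{p,n}(x)\phi(x)\,dx \to \tfrac{\phi(p)+\phi(-p)}2\) — which is exactly what is needed at the end of the proof of Lemma~\ref{prop:FixedPointFromConstruction} — by writing \(f_{p,n}=\bigl(d_{p,n}+\tfrac{g_n}{p^2}\bigr)\chi_{(-1,1)}+\bigl(d_{p,n}+g_n\bigr)\chi_{(-\infty,-1)\cup(1,\infty)}\), splitting off the neighbourhoods of \(\pm p\) (which for large \(n\) lie inside \((-\infty,-1)\cup(1,\infty)\) when \(p>1\), and inside \((-1,1)\) when \(p<1\), so only one of the two regimes is relevant near \(\pm p\) and in either case the extra term \(g_n\) or \(g_n/p^2\) on that small neighbourhood is \(O(1/n)\cdot\tfrac1{\sqrt n}\to 0\) times a bounded quantity, hence negligible), applying Lemma~\ref{lem:approximationZero} and Lemma~\ref{lem:approximationInfinity} to the remaining inner and outer pieces, and finally applying Lemma~\ref{lem:LebesguePointKonvergence} to the two shrinking neighbourhoods. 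Since almost every \(p\in\R_+\) is a Lebesgue point of both \(\phi\) and \(x\mapsto\phi(-x)\), the conclusion holds for almost every \(p\in\R_+\), as required.
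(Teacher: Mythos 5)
Your central argument — squeezing \(d_{p,n}\) between \(b_{p,n}\tilde d_{p,n}\) and \(B_{p,n}\tilde d_{p,n}\) via Lemma \ref{lem:dSlightChange}, invoking Lemma \ref{lem:localPhiIntegral} at the Lebesgue point \(p\), and using the evenness of \(d_{p,n}\) together with the fact that \(-p\) is a Lebesgue point of \(\phi\) (equivalently, \(p\) is a Lebesgue point of \(x\mapsto\phi(-x)\)) to obtain the contribution \(\tfrac12\phi(-p)\) from the other neighbourhood — is exactly the paper's proof, and your extra care about complex-valued \(\phi\) (writing \(d_{p,n}-\tilde d_{p,n}=(d_{p,n}/\tilde d_{p,n}-1)\tilde d_{p,n}\) and bounding \(\int\tilde d_{p,n}\lvert\phi\rvert\)) correctly fills in a detail the paper leaves implicit. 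The surrounding discussion of \(f_{p,n}\) and the splitting of \(\int_\R\) belongs to the subsequent Theorem \ref{thm:bigIntegralApproximation} rather than to this lemma, but it does not affect the correctness of your proof of the lemma itself.
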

\begin{proof}
By \fref{lem:localPhiIntegral} and \fref{lem:dSlightChange}, we have
\[\lim_{n \to \infty} \frac 1\pi \int_{\left(p-\frac 1{\sqrt n},p+\frac 1{\sqrt n}\right)} d_{p,n}(x)\phi(x) \,dx = \frac 12 \phi(p).\]
Since for every \(p \in \R_+\) and every \(n \in \N\) the function \(d_{p,n}\) is symmetric, we then also get
\[\lim_{n \to \infty} \frac 1\pi \int_{\left(-p-\frac 1{\sqrt n},-p+\frac 1{\sqrt n}\right)} d_{p,n}(x)\phi(x) \,dx = \frac 12 \phi(-p),\]
so
\[\frac{\phi(p) + \phi(-p)}2 = \lim_{n \to \infty} \frac 1\pi \int_{\left(-p-\frac 1{\sqrt n},-p+\frac 1{\sqrt n}\right) \cup \left(p-\frac 1{\sqrt n},p+\frac 1{\sqrt n}\right)} d_{p,n}(x) \,\phi(x) \,dx. \qedhere\]
\end{proof}
\begin{theorem}\label{thm:LebesguePointAe}{\rm\textbf{(Lebesgue Differentiation Theorem)}(cf. \cite{Le10})}
Let \(f \in L^1_{\mathrm{loc}}(\R,\C)\). Then almost every \(p \in \R\) is a Lebesgue point of \(f\).
\end{theorem}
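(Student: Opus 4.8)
The plan is to prove this classical statement via the Hardy--Littlewood maximal function, following the standard argument. Since the property of being a Lebesgue point is local, I would first reduce to the case $f \in L^1(\R,\C)$: for each $N \in \N$ the function $f \cdot \chi_{(-N-1,N+1)}$ lies in $L^1(\R,\C)$ and agrees with $f$ on $(-N,N)$, so if almost every point of $(-N,N)$ is a Lebesgue point of $f \cdot \chi_{(-N-1,N+1)}$, then, taking the union over $N$, almost every $p \in \R$ is a Lebesgue point of $f$.

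Next I would introduce the Hardy--Littlewood maximal operator
\[ (Mf)(x) \coloneqq \sup_{r > 0} \frac{1}{2r} \int_{(x-r,x+r)} |f(t)| \,dt, \qquad x \in \R, \]
and establish the weak-type $(1,1)$ maximal inequality: there is a constant $C > 0$ such that $\lambda_1(\{x \in \R : (Mf)(x) > \alpha\}) \le \frac{C}{\alpha} \|f\|_1$ for every $\alpha > 0$ and every $f \in L^1(\R,\C)$. This is proved by a Vitali-type covering argument: every compact subset of $\{Mf > \alpha\}$ is covered by finitely many open intervals on each of which the mean of $|f|$ exceeds $\alpha$, and from such a finite cover one extracts a pairwise disjoint subfamily whose threefold dilates still cover the compact set, giving the bound with $C = 3$ on the line.

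Then comes the approximation step. Given $\epsilon > 0$, choose $g \in C_c(\R,\C)$ with $\|f - g\|_1 < \epsilon$, which is possible by density of $C_c(\R,\C)$ in $L^1(\R,\C)$. Since $g$ is continuous, $\lim_{r \downarrow 0} \frac{1}{2r}\int_{(p-r,p+r)} |g(x) - g(p)|\,dx = 0$ for every $p \in \R$, so every point is a Lebesgue point of $g$. Writing $h \coloneqq f - g$ and setting
\[ (\Omega f)(p) \coloneqq \limsup_{r \downarrow 0} \frac{1}{2r} \int_{(p-r,p+r)} |f(x) - f(p)| \,dx, \]
the triangle inequality yields $(\Omega f)(p) \le (Mh)(p) + |h(p)|$ for all $p \in \R$. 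Hence, for every $\alpha > 0$, the maximal inequality together with Chebyshev's inequality gives
\[ \lambda_1(\{\Omega f > \alpha\}) \le \lambda_1\bigl(\{Mh > \tfrac{\alpha}{2}\}\bigr) + \lambda_1\bigl(\{|h| > \tfrac{\alpha}{2}\}\bigr) \le \frac{2C}{\alpha}\|h\|_1 + \frac{2}{\alpha}\|h\|_1 < \frac{2(C+1)}{\alpha}\,\epsilon. \]
Letting $\epsilon \downarrow 0$ forces $\lambda_1(\{\Omega f > \alpha\}) = 0$ for every $\alpha > 0$; taking $\alpha = \frac1n$ and a union over $n \in \N$ shows $\Omega f = 0$ almost everywhere, which is precisely the assertion that almost every $p \in \R$ is a Lebesgue point of $f$.

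The main obstacle is the weak-type maximal inequality, i.e. the covering lemma; the reduction to $L^1$, the use of continuity for $g$, and the final Chebyshev/union argument are routine. Alternatively, as this is a textbook result, one may simply invoke it from the cited reference, but the argument sketched above is self-contained modulo the Vitali covering lemma.
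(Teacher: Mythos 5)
Your proposal is correct. Note, however, that the paper does not prove this statement at all: it is quoted as a classical result with a citation to Lebesgue's original 1910 paper and then used as a black box in the proof of \fref{cor:LebesguePointPhi}, so there is no internal argument to compare against. What you supply is the standard self-contained proof via the Hardy--Littlewood maximal operator: localization to $L^1(\R,\C)$, the weak-type $(1,1)$ inequality obtained from the finite Vitali covering lemma (your constant $3$ on the line is fine), approximation by $C_c(\R,\C)$, the pointwise bound $(\Omega f)(p) \le (Mh)(p) + |h(p)|$ for $h = f-g$, and the Chebyshev/union argument. All steps are sound; the only cosmetic point is that one should either check measurability of $\{\Omega f > \alpha\}$ or phrase the estimate in terms of Lebesgue outer measure, which costs nothing since the final conclusion is that a set of outer measure zero is null. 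So your argument buys self-containedness at the price of importing the covering lemma, whereas the paper simply delegates the whole statement to the literature; either choice is legitimate here, since the theorem plays a purely auxiliary role in \fref{thm:bigIntegralApproximation}.
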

\begin{cor}\label{cor:LebesguePointPhi}
For any measurable function \(\phi: \R \to \C\) with
\[\int_\R \frac{x^2}{\left(1+x^2\right)^2} \left|\phi(x)\right| \,dx < \infty\]
almost every \(p \in \R\) is a Lebesgue point of \(\phi\).
\end{cor}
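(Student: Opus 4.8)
The statement will follow from the Lebesgue Differentiation Theorem (\fref{thm:LebesguePointAe}) once the weighted integrability hypothesis is converted into genuine local integrability. The plan is essentially a localisation argument: the weight $w(x) \coloneqq \frac{x^2}{(1+x^2)^2}$ vanishes at $x=0$ and also tends to $0$ as $|x| \to \infty$, so $\int_\R w(x)\,|\phi(x)|\,dx < \infty$ does \emph{not} give $\phi \in L^1_{\mathrm{loc}}(\R,\C)$ outright — it only controls $\phi$ on sets that stay away both from $0$ and from $\infty$. Reconciling this with the classical theorem, which is stated for $L^1_{\mathrm{loc}}(\R,\C)$, is the only genuinely delicate point, and it is handled by exhausting $\R \setminus \{0\}$ by suitable open subsets.

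Concretely, for $n \in \N$ I would set $U_n \coloneqq \{x \in \R : \frac 1n < |x| < n\}$. On the compact set $\overline{U_n} = \{x : \frac 1n \le |x| \le n\}$ the continuous function $w$ is strictly positive, hence bounded below by some $c_n > 0$, so
\[\int_{U_n} |\phi(x)|\,dx \;\le\; \frac{1}{c_n}\int_{U_n} w(x)\,|\phi(x)|\,dx \;\le\; \frac{1}{c_n}\int_\R w(x)\,|\phi(x)|\,dx \;<\; \infty.\]
I would then define $\phi_n \colon \R \to \C$ to be $\phi$ on $U_n$ and $0$ off $U_n$, so that $\phi_n \in L^1(\R,\C) \subseteq L^1_{\mathrm{loc}}(\R,\C)$, and $\phi_n = \phi$ pointwise on $U_n$.

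Next I would apply \fref{thm:LebesguePointAe} to each $\phi_n$: the set of points of $\R$ that fail to be Lebesgue points of $\phi_n$ is a Lebesgue null set, and intersecting it with $U_n$ gives a null set $N_n$. For $p \in U_n \setminus N_n$, the point $p$ is a Lebesgue point of $\phi_n$; since $U_n$ is open, there is $\epsilon_0 > 0$ with $(p-\epsilon,p+\epsilon) \subseteq U_n$ for all $\epsilon \le \epsilon_0$, and on such intervals $\phi_n \equiv \phi$ while $\phi_n(p) = \phi(p)$, whence
\[\frac{1}{2\epsilon}\int_{(p-\epsilon,p+\epsilon)} |\phi(x) - \phi(p)|\,dx \;=\; \frac{1}{2\epsilon}\int_{(p-\epsilon,p+\epsilon)} |\phi_n(x) - \phi_n(p)|\,dx \;\xrightarrow{\epsilon \downarrow 0}\; 0,\]
so $p$ is a Lebesgue point of $\phi$ as well. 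Finally I would put $N \coloneqq \{0\} \cup \bigcup_{n \in \N} N_n$, a countable union of null sets (plus one point), hence null; every $p \in \R \setminus N$ lies in some $U_n$ and avoids the corresponding $N_n$, so $p$ is a Lebesgue point of $\phi$. As indicated, the argument is elementary throughout — the only thing one must be careful about is localising away from the zeros of the weight at $x=0$ and "at infinity" before invoking \fref{thm:LebesguePointAe}.
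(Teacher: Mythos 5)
Your proposal is correct and follows essentially the same route as the paper: bound the weight $\frac{x^2}{(1+x^2)^2}$ from below on annuli avoiding $0$ and $\infty$ to get integrability of $\phi$ there, then invoke \fref{thm:LebesguePointAe}. You merely spell out the exhaustion-and-localisation step (the sets $U_n$, the truncations $\phi_n$, and the union of null sets) that the paper's proof leaves implicit after its estimate on $[-b,-a]\cup[a,b]$, which is a fine and careful way to present it.
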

\begin{proof}
For \(0<a<b<\infty\), one has
\begin{align*}
\int_{[-b,-a] \cup [a,b]} |\phi(x)| \,dx &\leq \int_{[-b,-a] \cup [a,b]} \frac{\left(1+b^2\right)^2}{a^2} \cdot \frac{x^2}{\left(1+x^2\right)^2} |\phi(x)| \,dx
\\&\leq \frac{\left(1+b^2\right)^2}{a^2} \cdot \int_\R \frac{x^2}{\left(1+x^2\right)^2} \left|\phi(x)\right| \,dx < \infty.
\end{align*}
The statement then follows by \fref{thm:LebesguePointAe}.
\end{proof}
\begin{theorem}\label{thm:bigIntegralApproximation}
For any measurable function \(\phi: \R \to \C\) with
\[\int_\R \frac{x^2}{\left(1+x^2\right)^2} \left|\phi(x)\right| \,dx < \infty\]
one has
\[\frac{\phi(p) + \phi(-p)}2 = \lim_{n \to \infty} \frac 1\pi \int_\R f_{p,n}(x) \,\phi(x) \,dx\]
for almost every \(p \in \R^\times\).
\end{theorem}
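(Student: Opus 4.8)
The plan is to split $\R$ into three ranges matched to the three lemmas \fref{lem:approximationZero}, \fref{lem:LebesguePointKonvergence} and \fref{lem:approximationInfinity}, and to add up the limits coming from each. It suffices to prove the statement for $p \in \R_+$ (which is what \fref{prop:FixedPointFromConstruction} uses, and both sides are unchanged under $p \mapsto -p$), and among those to keep only the $p$ for which both $p$ and $-p$ are Lebesgue points of $\phi$; by \fref{cor:LebesguePointPhi} this discards only a Lebesgue null set. Fix such a $p$. For every $n$ with $n^{-1/2} < p$ the sets
\[
A_n = \bigl(-p+n^{-1/2},\,p-n^{-1/2}\bigr),\quad
B_n = \bigl(-p-n^{-1/2},\,-p+n^{-1/2}\bigr) \cup \bigl(p-n^{-1/2},\,p+n^{-1/2}\bigr),
\]
\[
C_n = \bigl(-\infty,\,-p-n^{-1/2}\bigr) \cup \bigl(p+n^{-1/2},\,\infty\bigr)
\]
partition $\R$ up to finitely many points, so $\tfrac1\pi\int_\R f_{p,n}\phi = \tfrac1\pi\bigl(\int_{A_n} + \int_{B_n} + \int_{C_n}\bigr)f_{p,n}\phi$.

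The only technical nuisance is that $f_{p,n}$ carries the $p$-independent cutoff at $\pm 1$, whereas \fref{lem:approximationZero}, \fref{lem:LebesguePointKonvergence} and \fref{lem:approximationInfinity} are phrased with the integrands $d_{p,n}+\tfrac{g_n}{p^2}$, $d_{p,n}$ and $d_{p,n}+g_n$ respectively. I would bridge this with the elementary identities $f_{p,n} = \bigl(d_{p,n}+\tfrac{g_n}{p^2}\bigr) + \bigl(1-\tfrac1{p^2}\bigr)g_n\,\chi_{\{|x|>1\}} = \bigl(d_{p,n}+g_n\bigr) + \bigl(\tfrac1{p^2}-1\bigr)g_n\,\chi_{\{|x|<1\}}$. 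On $A_n$ use the first: $\int_{A_n}\bigl(d_{p,n}+\tfrac{g_n}{p^2}\bigr)\phi \to 0$ by \fref{lem:approximationZero}, while the correction $\int_{A_n \cap \{|x|>1\}} g_n\phi$ is nonzero only when $p>1$, in which case for large $n$ that set lies in the fixed compact set $\{1 \le |x| \le p\}$, on which $\phi$ is integrable (this is exactly the estimate in the proof of \fref{cor:LebesguePointPhi}) and $0 \le g_n \le \tfrac1n$, so the correction is $O(1/n)$. Symmetrically, on $C_n$ use the second identity together with \fref{lem:approximationInfinity}, the correction set $C_n \cap \{|x|<1\}$ being nonempty only for $p<1$ and then contained in $\{p \le |x| \le 1\}$, again a fixed compact set bounded away from $0$. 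On $B_n$, write $\int_{B_n}f_{p,n}\phi = \int_{B_n}d_{p,n}\phi + \int_{B_n} g_n\bigl(\tfrac1{p^2}\chi_{\{|x|<1\}}+\chi_{\{|x|>1\}}\bigr)\phi$; the first term tends to $\pi\cdot\tfrac{\phi(p)+\phi(-p)}2$ by \fref{lem:LebesguePointKonvergence}, and since $B_n$ shrinks to $\{\pm p\}$ and lies in a fixed compact set away from $0$ for large $n$ one has $\int_{B_n}|\phi| \to 0$, so the remaining term is bounded by $\max\bigl(1,\tfrac1{p^2}\bigr)\tfrac1n\int_{B_n}|\phi| \to 0$. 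Adding the three contributions gives $\tfrac1\pi\int_\R f_{p,n}\phi \to \tfrac{\phi(p)+\phi(-p)}2$, as claimed.

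I do not expect any serious obstacle, since all the hard analysis already sits inside the three lemmas: \fref{lem:approximationZero} and \fref{lem:approximationInfinity} are dominated-convergence arguments resting on the explicit pointwise bounds for $d_{p,n}+\tfrac{g_n}{p^2}$ and $d_{p,n}+g_n$ away from $\pm p$, and \fref{lem:LebesguePointKonvergence} is the heart of the matter, reducing via \fref{lem:dSlightChange} and \fref{lem:localPhiIntegral} to the fact that $\tfrac1\pi\tilde d_{p,n}$ behaves like an approximate identity at the Lebesgue point $p$ (the monotonicity and derivative bounds that feed \fref{lem:LebesguePointApprox}, together with the normalization $\tfrac1\pi\int\tilde d_{p,n} \to \tfrac12$ from \fref{lem:dIntegral1}). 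The only point requiring genuine care in the present proof is the bookkeeping described above: tracking, according to whether $p<1$, $p=1$ or $p>1$, how the fixed cutoff $\pm1$ and the moving cutoffs $\pm p \pm n^{-1/2}$ are nested, and checking that each resulting mismatch region is, for large $n$, compact and bounded away from the origin, so that the crude bound $0 \le g_n \le \tfrac1n$ disposes of it.
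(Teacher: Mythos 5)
Your proposal is correct and follows essentially the same route as the paper's proof: restrict to those \(p\) for which \(\pm p\) are Lebesgue points (\fref{cor:LebesguePointPhi}), split \(\R\) at \(\pm\bigl(p\pm n^{-1/2}\bigr)\), apply \fref{lem:approximationZero}, \fref{lem:LebesguePointKonvergence} and \fref{lem:approximationInfinity} on the three pieces, and absorb the mismatch coming from the fixed cutoff at \(\pm 1\) using \(0\le g_n\le \tfrac1n\) together with integrability of \(\phi\) on compact sets away from the origin. The only difference is cosmetic bookkeeping: you distribute the correction terms over the regions \(A_n\), \(B_n\), \(C_n\), whereas the paper collects them into the two case-defined quantities \(c_n\) and \(k_n\).
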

\begin{proof}
Let \(p \in \R_+\) such that \(p\) and \(-p\) are Lebesgue points of \(\phi\). For \(n \in \N\), we set
\[c_n \coloneqq \int_{\left[-p - \frac 1{\sqrt n},-1\right) \cup \left(1,p + \frac 1{\sqrt n}\right]} g_n(x) \phi(x) \,dx \quad \text{if } p + \frac 1{\sqrt n} \geq 1\]
and
\[c_n \coloneqq - \int_{\left[-1,-p - \frac 1{\sqrt n}\right) \cup \left(p + \frac 1{\sqrt n},1\right]} g_n(x) \phi(x) \,dx \quad \text{if } p + \frac 1{\sqrt n} < 1.\]
Then, for \(p + \frac 1{\sqrt n} \geq 1\), one has
\begin{align*}
|c_n| \leq \int_{\left[-p - 1,-1\right) \cup \left(1,p + 1\right]} \frac 1n |\phi(x)| \,dx &\leq \int_{\left[-p - 1,-1\right) \cup \left(1,p + 1\right]} \frac 1n \frac{\left(1+\left(p+1\right)^2\right)^2}{1^2} \cdot \frac{x^2}{\left(1+x^2\right)^2} |\phi(x)| \,dx
\\&\leq \frac{\left(1+\left(p+1\right)^2\right)^2}{n} \cdot \int_\R \frac{x^2}{\left(1+x^2\right)^2} |\phi(x)| \,dx \xrightarrow{n \to \infty} 0
\end{align*}
and for \(p + \frac 1{\sqrt n} < 1\), one has
\begin{align*}
|c_n| \leq \int_{\left[-1,-p - \frac 1{\sqrt n}\right) \cup \left(p + \frac 1{\sqrt n},1\right]} \frac 1n |\phi(x)| \,dx &\leq \int_{\left[-1,-p\right) \cup \left(p,1\right]} \frac 1n \frac{\left(1+\left(1\right)^2\right)^2}{p^2} \cdot \frac{x^2}{\left(1+x^2\right)^2} |\phi(x)| \,dx
\\&\leq \frac 4{np^2} \cdot \int_\R \frac{x^2}{\left(1+x^2\right)^2} |\phi(x)| \,dx \xrightarrow{n \to \infty} 0,
\end{align*}
so in every case
\[\lim_{n \to \infty} c_n = 0.\]
Further, for \(n \in \N\) with \(\frac 1{\sqrt n} \leq \frac p2\), we set
\[k_n \coloneqq \int_{\left[-p + \frac 1{\sqrt n},-1\right) \cup \left(1,p - \frac 1{\sqrt n}\right]} g_n(x) \phi(x) \,dx \quad \text{if } p - \frac 1{\sqrt n} \geq 1\]
and
\[k_n \coloneqq - \int_{\left[-1,-p + \frac 1{\sqrt n}\right) \cup \left(p - \frac 1{\sqrt n},1\right]} g_n(x) \phi(x) \,dx \quad \text{if } p - \frac 1{\sqrt n} < 1.\]
Then, for \(p - \frac 1{\sqrt n} \geq 1\), one has
\begin{align*}
|k_n| \leq \int_{\left[-p,-1\right) \cup \left(1,p\right]} \frac 1n |\phi(x)| \,dx &\leq \int_{\left[-p,-1\right) \cup \left(1,p\right]} \frac 1n \frac{\left(1+p^2\right)^2}{1^2} \cdot \frac{x^2}{\left(1+x^2\right)^2} |\phi(x)| \,dx
\\&\leq \frac{\left(1+p^2\right)^2}{n} \cdot \int_\R \frac{x^2}{\left(1+x^2\right)^2} |\phi(x)| \,dx \xrightarrow{n \to \infty} 0
\end{align*}
and for \(p - \frac 1{\sqrt n} < 1\), one has
\begin{align*}
|k_n| \leq \int_{\left[-1,-\frac p2\right) \cup \left(\frac p2,1\right]} \frac 1n |\phi(x)| \,dx &\leq \int_{\left[-1,-\frac p2\right) \cup \left(\frac p2,1\right]} \frac 1n \frac{\left(1+\left(1\right)^2\right)^2}{\left(\frac p2\right)^2} \cdot \frac{x^2}{\left(1+x^2\right)^2} |\phi(x)| \,dx
\\&\leq \frac {16}{np^2} \cdot \int_\R \frac{x^2}{\left(1+x^2\right)^2} |\phi(x)| \,dx \xrightarrow{n \to \infty} 0,
\end{align*}
so, in every case
\[\lim_{n \to \infty} k_n = 0.\]
Then, by \fref{lem:approximationZero}, \fref{lem:approximationInfinity} and \fref{lem:LebesguePointKonvergence}, one has
\begin{align*}
\lim_{n \to \infty} \frac 1\pi \int_\R f_{p,n}(x) \,\phi(x) \,dx &= \lim_{n \to \infty} \frac 1\pi \int_{\left(-p-\frac 1{\sqrt n},-p+\frac 1{\sqrt n}\right) \cup \left(p-\frac 1{\sqrt n},p+\frac 1{\sqrt n}\right)} d_{p,n}(x) \,\phi(x) \,dx
\\&\qquad + \lim_{n \to \infty} \frac 1\pi \int_{\left(-p+\frac 1{\sqrt n},p-\frac 1{\sqrt n}\right)} \left(d_{p,n}(x)+ \frac{g_n(x)}{p^2}\right)\phi(x) \,dx
\\&\qquad + \lim_{n \to \infty} \frac 1\pi \int_{\left(-\infty,-p-\frac 1{\sqrt n}\right) \cup \left(p+\frac 1{\sqrt n},\infty\right)} \left(d_{p,n}(x)+g_n(x)\right)\phi(x) \,dx
\\&\qquad + \lim_{n \to \infty} \frac 1\pi c_n + \lim_{n \to \infty} \frac 1\pi \frac{k_n}{p^2}
\\&= \frac{\phi(p) + \phi(-p)}2 + 0 + 0 + 0 + 0
\\&= \frac{\phi(p) + \phi(-p)}2.
\end{align*}
The assertion then follows, since by \fref{cor:LebesguePointPhi} almost every \(p \in \R^\times\) is a Lebesgue point of the function \(\phi\).
\end{proof}

\newpage
\printnoidxglossary[type=symbols,style=long,title={List of Symbols}]

\newpage

\end{document}